\documentclass[a4paper,leqno]{article1}   
\usepackage{amsfonts}
\usepackage{fancyhdr}
\usepackage[centertags]{amsmath}
\usepackage{amssymb,amsthm,stmaryrd,enumerate,multirow,bigstrut,xcolor}
\usepackage[british]{babel}
\selectlanguage{british}
\usepackage[T1]{fontenc}
\frenchspacing

\newcommand{\HRule}{\rule{\linewidth}{0.5mm}}

\usepackage{array}
\makeatletter
\newcommand{\thickhline}{%
    \noalign {\ifnum 0=`}\fi \hrule height 1pt
    \futurelet \reserved@a \@xhline
}
\newcolumntype{"}{@{\hskip\tabcolsep\vrule width 1pt\hskip\tabcolsep}}
\makeatother

\tracingparagraphs=1


\renewcommand{\labelenumi}{\emph{(\roman{enumi})}}

\usepackage[left=2.38cm,right=2.38cm,top=2.66cm,bottom=2.66cm,bindingoffset=1cm]{geometry}

\newcommand{\thmref}[1]{The\-o\-rem~\ref{#1}}
\newcommand{\propref}[1]{Prop\-o\-si\-tion~\ref{#1}}
\newcommand{\lemref}[1]{Lem\-ma~\ref{#1}}
\newcommand{\corref}[1]{Co\-rol\-la\-ry~\ref{#1}}
\newcommand{\dfnref}[1]{Def\-i\-ni\-tion~\ref{#1}}

\newlength{\defbaselineskip}
\setlength{\defbaselineskip}{\baselineskip}
\newcommand{\setlinespacing}[1]%
           {\setlength{\baselineskip}{#1 \defbaselineskip}}

\numberwithin{equation}{section}
\newtheorem{thm}{Theorem}[section]
\newtheorem{prop}[thm]{Proposition}
\newtheorem{cor}[thm]{Corollary}
\newtheorem{lem}[thm]{Lemma}

\theoremstyle{definition}
\newtheorem{dfn}{Definition}[section]
\newtheorem{rem}[dfn]{Remark}


\newcommand{\tr}{{\rm tr}}

\newcommand{\II}{\mathcal{I}}
\newcommand{\R}{\mathbb{R}}

\newcommand{\KKK}{\mathcal{K}}
\newcommand{\C}{\mathbb{C}}
\newcommand{\HC}{\mathcal{H}}
\newcommand{\HCC}{\mathcal{HC}}

\newcommand{\z}{\mathfrak{z}}
\newcommand{\F}{\mathcal{F}}
\newcommand{\T}{\mathcal{T}}
\newcommand{\U}{\mathcal{U}}
\newcommand{\PP}{\mathcal{P}}
\newcommand{\CC}{\mathcal{C}}
\newcommand{\LL}{\mathcal{L}}
\newcommand{\LLL}{\mathfrak{L}}
\newcommand{\GG}{\mathcal{G}}
\newcommand{\KK}{\mathcal{K}}
\newcommand{\MM}{\mathcal{M}}
\newcommand{\SSS}{\mathcal{S}}
\newcommand{\N}{\widehat{N}}
\newcommand{\f}{\varphi}
\newcommand{\Id}{{\rm Id}}
\newcommand{\tg}{\widetilde{g}}
\newcommand{\h}{\widetilde{h}}
\newcommand{\tPP}{\widetilde{F}}
\newcommand{\DDD}{\operatorname{D}}
\newcommand{\M}{(\MM,\allowbreak{}\f,\allowbreak{}\xi,\allowbreak{}\eta,\allowbreak{}g)}
\newcommand{\Span}{\mathrm{span}}
\newcommand{\D}{{\rm d}}
\newcommand{\ddt}{\dfrac{\D}{\D t}}

\newcommand{\pd}{\partial}
\newcommand{\ddx}{\dfrac{\pd}{\pd x^i}}
\newcommand{\ddy}{\dfrac{\pd}{\pd y^i}}
\newcommand{\ddu}{\dfrac{\pd}{\pd u^i}}
\newcommand{\ddv}{\dfrac{\pd}{\pd v^i}}
\newcommand{\dda}{\dfrac{\pd}{\pd a}}
\newcommand{\ddb}{\dfrac{\pd}{\pd b}}
\newcommand{\ddc}{\dfrac{\pd}{\pd c}}

\newcommand{\iim}{{\rm im}}
\newcommand{\ddr}{\frac{\D}{\D r}}
\newcommand{\HH}{\mathcal{H}}
\newcommand{\VV}{\mathcal{V}}
\newcommand{\tD}{\widetilde{\DDD}}
\newcommand{\nSvK}{\n^{\circ}}
\newcommand{\tnSvK}{\widetilde{\n}^{\circ}}

\newcommand{\Alt}{\mathrm{Alt}}
\newcommand{\Ric}{\mathrm{Ric}}
\newcommand{\Scal}{\mathrm{Scal}}

\newcommand{\tn}{\widetilde\nabla}

\newcommand{\tR}{\widetilde{R}}

\newcommand{\tF}{\widetilde{F}}
\newcommand{\tS}{\widetilde{S}}

\newcommand{\tA}{\widetilde{A}}
\newcommand{\tP}{\widetilde{\Phi}}
\newcommand{\tN}{\widetilde{[J,J]}}
\newcommand{\wP}{\widetilde{P}}

\newcommand{\Div}{{\rm div \hspace{0.01in}}}

\newcommand{\norm}[1]{\left\Vert#1\right\Vert}

\newcommand{\const}{\operatorname{const}}

\newcommand{\dd}{{\rm d}}
\newcommand{\grad}{{\rm grad}\hspace{0.01in}}
\newcommand{\X}{\mathfrak{X}}
\newcommand{\W}{\mathcal{W}}
\newcommand{\TT}{\mathcal{T}}

\newcommand{\n}{\nabla}
\newcommand{\nJJ}{\norm{\DDD\hspace{-2pt}  J}^2}
\newcommand{\DDJ}{\norm{\operatorname{D}J}^2}
\newcommand{\nJ}[1]{\norm{\DDD J_{#1}}^2}

\newcommand{\nn}{\widetilde{\nabla}}
\newcommand{\cn}{\check{\nabla}}
\newcommand{\cg}{\check{g}}
\newcommand{\g}{\widetilde{g}}

\newcommand{\ee}{\end{equation}}
\newcommand{\be}[1]{\begin{equation}\label{#1}}
\newcommand{\al}{\alpha}
\newcommand{\bt}{\beta}
\newcommand{\gm}{\gamma}
\newcommand{\om}{\omega}
\newcommand{\lm}{\lambda}
\newcommand{\ta}{\theta}
\newcommand{\ep}{\varepsilon}
\newcommand{\ea}{\varepsilon_\alpha}
\newcommand{\eb}{\varepsilon_\bt}
\newcommand{\eg}{\varepsilon_\gamma}
\newcommand{\s}{\mathfrak{S}}
\newcommand{\sx}{\mathop{\mathfrak{S}}\limits_{x,y,z}}

\newcommand{\wh}[1]{\widehat{#1}}
\newcommand{\ii}{\bar{i}}
\newcommand{\jj}{\bar{j}}
\newcommand{\tnJ}{\Vert\tD J\Vert^2}

\newcommand{\xia}{\xi_{\alpha}}
\newcommand{\etaa}{\eta_{\alpha}}
\newcommand{\Ja}{J_{\alpha}}

\hyphenation{ma-ni-fold ma-ni-folds Nor-den}

\def\dfrac#1#2{\displaystyle\frac{#1}{#2}}


\pagestyle{fancy} \fancyhead{} \cfoot{\thepage}

\hfuzz1pc 

\begin{document}




\thispagestyle{empty}

\LARGE{
\begin{center}
\textsc{University of Plovdiv Paisii Hilendarski \\
Faculty of Mathematics and Informatics\\
Department of Algebra and Geometry}\\[6pt]\hrule
\end{center}
}

\
\vskip 1cm

\LARGE{
\begin{center}
\textsc{Mancho Hristov Manev}
\end{center}
}

\vskip 1cm

\begin{center}
\Huge{\textsc{\textbf{
On Geometry of Manifolds \\
with Some Tensor Structures \\
and
Metrics of Norden Type
%
%
}}}
\end{center}


\
\\
\smallskip

\LARGE{
\begin{center}
\textsc{Dissertation} \\[12pt]
Submitted for the Scientific Degree: \emph{Doctor of Science}\\[12pt]
Area of Higher Education: 4. Natural Sciences, Mathematics and Informatics\\
Professional Stream: 4.5. Mathematics \\
Scientific Specialty: Geometry and Topology
\\
\end{center}
}

\vspace{100pt}
\vfill

\Large{
\begin{center}\hrule\ \\
\textsc{Plovdiv, 2017}
\end{center}
}

%
%
%
%
%

%



\label{str}

\Large{

\
\\[6pt]
\bigskip

\
\\[6pt]
\bigskip

\thispagestyle{empty}

\lhead{\emph{}}

\noindent  {\hfill\LARGE{\emph{To my wife Rositsa}}
}

\newpage

\Large{

\
\\[6pt]
\bigskip

\
\\[6pt]
\bigskip

\lhead{\emph{Structure of the Dissertation}}

\noindent  {\Huge\bf Structure of the Dissertation
}

\vskip 1cm

The present dissertation consists of an introduction, a main body, a conclusion and a bibliography.
The introduction consists of two parts: a scope of the topic and the purpose of the dissertation.
The main body includes two chapters containing 15 sections.
The conclusion provides a brief summary of the main contributions of the dissertation, a list of the publications on the results given in the dissertation, a declaration of originality and acknowledgements.
The bibliography contains a list of \ref{count-cite} publications used in the text.


\vspace{20pt}

\begin{center}
$\divideontimes\divideontimes\divideontimes$
\end{center}

%


\newpage

\Large{

\
\\[6pt]
\bigskip

\
\\[6pt]
\bigskip

\lhead{\emph{Contents}}

\noindent  {\Huge\bf Contents
}

\vskip 1cm


\noindent
Introduction
\hrulefill\hbox to 2pc{\hfill \pageref{intro}}\\[-9pt]

\emph{Scope of the Topic}
\dotfill\hbox to 2pc{\hfill \pageref{intro}}\\[-9pt]

\emph{Purpose of the Dissertation}
\dotfill\hbox to 2pc{\hfill \pageref{purp}}\\[-9pt]





\noindent
Chapter I. \textsc{On manifolds with almost complex structures } \\
\noindent\phantom{Chapter I.}
\textsc{and almost contact structures, equipped with }\\
\noindent\phantom{Chapter I.}
\textsc{metrics of Norden type}
\hrulefill\hbox to 2pc{\hfill \pageref{chap1}}\\[-9pt]

\indent
\S1. Almost complex manifolds with Norden metric
\dotfill\hbox to 2pc{\hfill \pageref{par:2n}}\\[-9pt]

\indent
\S2. Invariant tensors under the twin interchange of Norden \\
\phantom{\indent\S2. }metrics on almost complex manifolds
\dotfill \hbox to 2pc{\hfill \pageref{par:inv}}\\[-9pt]

\indent
\S3. Canonical-type connections on almost complex manifolds\\
\phantom{\indent\S3. }with Norden metric
\dotfill \hbox to 2pc{\hfill \pageref{par:conn}}\\[-9pt]


\indent
\S4. Almost contact manifolds with B-metric
\dotfill \hbox to 2pc{\hfill \pageref{par:2n+1}}\\[-9pt]

\indent
\S5. Canonical-type connection on almost contact manifolds\\
\phantom{\indent\S5. }with B-metric
\dotfill \hbox to 2pc{\hfill \pageref{par:can.conn}}\\[-9pt]

\indent
\S6. Classification of affine connections on almost contact \\
\phantom{\indent\S6. }manifolds with B-metric
\dotfill \hbox to 2pc{\hfill \pageref{par:classT}}\\[-9pt]


\indent
\S7.  Pair of associated Schouten-van Kampen connections adapted\\
\phantom{\indent\S7. }to an almost contact B-metric structure
\dotfill \hbox to 2pc{\hfill \pageref{par:pair}}\\[-9pt]

\indent
\S8. Sasaki-like almost contact complex Riemannian manifolds
\dotfill \hbox to 2pc{\hfill \pageref{par:sas}}\\[-9pt]
\\[-6pt]

%

\newpage
\noindent
Chapter II. \textsc{On manifolds with almost hypercomplex } \\
\noindent\phantom{Chapter II.}
\textsc{structures and almost contact 3-structures, }\\
\noindent\phantom{Chapter II.}
\textsc{equipped with metrics of Hermitian-Norden}\\
\noindent\phantom{Chapter II.}
\textsc{type}
\hrulefill\hbox to 2pc{\hfill \pageref{chap2}}\\[-9pt]

\indent
\S9. Almost hypercomplex manifolds with Hermitian-Norden \\
\phantom{\indent\S9. }metrics
\dotfill \hbox to 2pc{\hfill \pageref{par:4n}}\\[-9pt]

\indent
\S10. Hypercomplex structures with Hermitian-Norden metrics \\
\phantom{\indent\S10. }on 4-dimensional Lie algebras
\dotfill \hbox to 2pc{\hfill \pageref{par:4Lie}}\\[-9pt]

\indent
\S11. Tangent bundles with complete lift of the base metric and\\
\phantom{\indent\S11. }almost hypercomplex Hermitian-Norden structure
\dotfill \hbox to 2pc{\hfill \pageref{par:bund}}\\[-9pt]

\indent
\S12. Associated Nijenhuis tensors on almost hypercomplex \\
\phantom{\indent\S12. }manifolds with Hermitian-Norden metrics
\dotfill \hbox to 2pc{\hfill \pageref{par:assNij}}\\[-9pt]

\indent
\S13. Quaternionic K\"ahler manifolds with Hermitian-Norden\\
\phantom{\indent\S13. }metrics
\dotfill \hbox to 2pc{\hfill \pageref{par:quatK}}\\[-9pt]

%

\indent
\S14. Manifolds with almost contact 3-structure and metrics of\\
\phantom{\indent\S14. }Hermitian-Norden type
\dotfill \hbox to 2pc{\hfill \pageref{par:4n+3}}\\[-9pt]

\indent
\S15. Associated Nijenhuis tensors on manifolds with almost \\
\phantom{\indent\S15. }contact 3-structure and metrics of Hermitian-Norden type
\dotfill \hbox to 2pc{\hfill \pageref{par:4n+3assNij}}\\[-9pt]

%
%

\noindent
Conclusion
\hrulefill\hbox to 2pc{\hfill\ \pageref{zakl}}\\[-9pt]

\emph{Contributions of the Dissertation}
\dotfill\hbox to 2pc{\hfill \pageref{zakl}}\\[-9pt]

\emph{Publications on the Dissertation}
\dotfill\hbox to 2pc{\hfill \pageref{publ}}\\[-9pt]

\emph{Declaration of Originality}
\dotfill\hbox to 2pc{\hfill \pageref{dekl}}\\[-9pt]

\emph{Acknowledgements}
\dotfill\hbox to 2pc{\hfill \pageref{blag}}\\[-9pt]

\noindent
Bibliography
\hrulefill\hbox to 2pc{\hfill\ \pageref{bib}}\\[-9pt]



\vspace{20pt}

\begin{center}
$\divideontimes\divideontimes\divideontimes$
\end{center} 

\newpage

%

\label{intro}

 \Large{

\
\\[6pt]
\bigskip

\
\\[6pt]
\bigskip

\lhead{\emph{Introduction. Scope of the Topic
}}

\noindent  {\Huge\bf Introduction
}

\vskip 1cm
%
%
%
%
%
%
%
%
%

\vskip 0.2in

\noindent  {\huge\bf \emph{Scope of the Topic}
}

\vskip 1cm

Among additional tensor structures on a smooth manifold, one of the most studied is \emph{almost complex structure}, i.e. an endomorphism of the tangent bundle whose square, at each point, is minus the identity. The manifold must be even-dimensional, i.e. $\dim=2n$.
Usually it is equipped with a \emph{Hermitian metric} which is a (Riemannian or pseudo-Riemannian) metric that preserves the almost complex structure, i.e. the almost complex structure acts as an isometry with respect to the (pseudo-)Riemannian metric. The associated (0,2)-tensor of the Hermitian metric is a 
2-form and hence the relationship with symplectic geometry. 

A relevant counterpart is the case when the almost complex structure acts as an anti-isometry regarding a pseudo-Riemannian metric. Such a metric is known as an anti-Hermitian metric or a \emph{Norden metric} (first studied by and named after A.\thinspace{}P. Norden \cite{N1,N2}).
The Norden metric is necessary pseudo-Riemannian of neutral signature whereas the Hermitian metric can be Riemannian or pseudo-Riemannian of signature $(2n_1,2n_2)$, $n_1+n_2=n$.
The associated (0,2)-tensor of any Norden metric is also a Norden metric. So, in this case we dispose with a pair of mutually associated Norden metrics, known also as twin Norden metrics.
This manifold can be considered as an $n$-dimensional manifold with a complex Riemannian metric whose real and imaginary parts are the twin Norden metrics.
Such a manifold is known as
a \emph{generalized B-manifold} \cite{GriMekDje85a,Mek85c,GriMekDje85d,GriDjeMek85b},
an \emph{almost complex manifold with Norden metric} \cite{v,GaGrMi85,GaBo,BonCasHer,CasHerRio,KO,Olsz05,KS},
an \emph{almost complex manifold with B-metric} \cite{GaGrMi87,GaMi87},
an \emph{almost complex manifold with anti-Hermitian metric} \cite{BoFeFrVo99,BorFraVol,DraFra}
or a \emph{manifold with complex Riemannian metric} \cite{LeB83,Manin,GaIv92,Low}.


Supposing a manifold is of an odd dimension, i.e. $\dim=2n+1$, then there exists a contact structure. The codimension one contact distribution 
can be considered as the horizontal distribution of the sub-Riemannian manifold.
This distribution allows an almost complex structure which is
the restriction of a contact endomorphism on the contact distribution.
%
%
The vertical distribution is spanned by the corresponding Reeb vector field.
Then the odd-dimensional manifold is equipped with an \emph{almost contact
structure}.

If we dispose of a Hermitian metric on the contact distribution then the almost contact
manifold is called an \emph{almost contact metric manifold}.
In another case, when a Norden metric is available on the contact distribution
then we have an \emph{almost contact manifold with B-metric}. Any
B-metric as an odd-dimensional counterpart of a Norden metric is a
pseudo-Riemannian metric of signature $(n+1,n)$.


A natural generalization of almost complex structure is almost hypercomplex structure.
An almost hypercomplex manifold is a manifold which tangent bundle equipped with an action by the algebra of quaternions in such a way that the unit quaternions define almost complex structures.
Then an \emph{almost hypercomplex structure} on a $4n$-dimensional manifold is a triad
of anti-commuting almost complex structures whose triple composition is minus the identity.
%
%

It is known that, if the almost hypercomplex manifold is equipped with a Hermitian metric, the
derived metric structure is a hyper-Hermitian
structure. It consists of the given Hermitian metric with
respect to the three almost complex structures and the three associated  K\"ahler forms \cite{AlMa}.
Almost hypercomplex structures (under the terminology of C. Ehresmann of almost quaternionic structures) with Hermitian metrics 
were studied in many works, e.g. \cite{Ehr,Oba,Bon,YaAk,So,Boy,AlMaPo}.


An object of our interest is
a metric structure on the almost hypercomplex manifold derived by a Norden metric.
Then the existence of a Norden metric with respect to one of the three almost complex structures implies the existence of one more Norden metric and a Hermitian metric with respect to the other two almost complex structures.
Such a metric is called a \emph{Hermitian-Norden metric} on an almost hypercomplex manifold.
Furthermore,
the derived metric structure contains the given metric and three (0,2)-tensors associated by
the almost hypercomplex structure -- a K\"ahler form and two Hermitian-Norden metrics
for which the roles of the almost complex structures change cyclically.
%
Thus, the derived manifold is called an
\emph{almost hypercomplex manifold with Hermitian-Norden metrics}.
Manifolds of this type are studied in several papers with the author participation \cite{GriManDim12,Ma05,ManSek,GriMan24,Man28,ManGri32}.

%


The notion of \emph{almost contact 3-structure} is introduced by Y.\thinspace{}Y. Kuo in \cite{Kuo} and independently under the name \emph{almost coquaternion structure} by C. Udri\c{s}te in \cite{Udr}. Later, it is studied by several authors, e.g. \cite{Kuo,KuoTac,TacYu,YaIsKo}. It is well known that the product of a man\-i\-fold with almost contact 3-structure and a real line admits an almost hypercomplex structure
\cite{Kuo,AlMa}.
All authors have previously considered the case when there exists a Riemannian metric compatible with each of the three structures in the given almost contact 3-structure. Then the object of study is the so-called \emph{almost contact metric 3-structure} (see also \cite{BaikBla95,BoyGalMann,BoyGal,BoyGal08}).

Compatibility of an almost contact 3-structure with a B-metric
is not yet considered before the publications that are part of this dissertation. 
In the present work we launch such a metric structure on a manifold with almost contact 3-structure.

\vspace{20pt}

\begin{center}
$\divideontimes\divideontimes\divideontimes$
\end{center}

\newpage

%

\label{purp}

 \Large{

\
\\[6pt]
\bigskip

\
\\[6pt]
\bigskip

\lhead{\emph{Introduction. Purpose of the Dissertation
}}

\noindent  {\huge\bf \emph{Purpose of the Dissertation}
}

\vskip 1cm

%
%
%
%

The object of study in the present dissertation are some topics in differential geometry of smooth manifolds with additional tensor structures and metrics of Norden type.
%
There are considered four cases depending on dimension of the man\-i\-fold: $2n$, $2n+1$, $4n$ and $4n+3$.
%
The studied tensor structures, which are counterparts in the different related dimensions, are: the almost complex structure, the almost contact structure, the almost hypercomplex structure and the almost contact 3-structure.
The considered metric on the $2n$-dimen\-sion\-al manifold is the Norden metric.
The metrics on the manifolds in the other three cases are generated by the Norden metric and they are: the B-metric, the Hermitian-Norden metric and the metric of Hermitian-Norden type, respectively.
The four types of tensor structures with metrics of Norden type 
are considered in their interrelationship.

\vskip12pt

The purpose of the dissertation is to carry out the following:
\begin{enumerate}[1.]
  \item
        Further investigations of almost complex manifolds with Norden metric and, in particular, studying of natural connections with conditions for their torsion and invariant tensors under the twin interchange of Norden metrics.
  \item
        Further investigations of almost contact manifolds with B-metric including studying of natural connections with conditions for their torsion and associated Schouten-van Kampen connections as well as classification of affine connections.
  \item
        Introducing and studying of Sasaki-like almost contact complex Rie\-mann\-ian manifolds.
  \item
        Further investigations of almost hypercomplex manifolds with Her\-mit\-ian-Norden metrics including: studying of integrable structures of the considered type on 4-dimensional Lie algebra and tangent bundles with complete lift of the base metric; introducing and studying of associated Nijenhuis tensors in relation with natural connections having totally skew-symmetric torsion as well as quaternionic K\"ahler manifolds with Hermitian-Norden metrics.
  \item
        Introducing and studying of manifolds with almost contact 3-structures and metrics of Hermitian-Norden type and, in particular, associated Nijenhuis tensors and their relationship with natural connections having totally skew-symmetric torsion.
\end{enumerate}


\vspace{20pt}

\begin{center}
$\divideontimes\divideontimes\divideontimes$
\end{center}

%


\newpage
\thispagestyle{empty}


\label{chap1}

\Large{

\
\\
\bigskip\
\\
\bigskip\
\\
\bigskip

\begin{center}
\Huge{
\textbf{
Chapter I. \\[6pt]
}}
\HRule
\Huge{
\textbf{\\[12pt]
                                \textsc{On manifolds with almost \\[12pt]
                                        complex structures and almost \\[12pt]
                                        contact structures, equipped\\[12pt]
                                        with metrics of Norden type}}}
\end{center}

}

%

\newpage

\addtocounter{section}{1}\setcounter{subsection}{0}\setcounter{subsubsection}{0}
\label{par:2n}

\setcounter{thm}{0}\setcounter{equation}{0}

 \Large{

\
\\[6pt]
\bigskip

\
\\[6pt]
\bigskip

\lhead{\emph{Chapter I $|$ \S\thesection. Almost complex manifolds with
Norden metric
}}


\noindent
\begin{tabular}{r"l}
\hspace{-6pt}{\Huge\bf \S\thesection.}  & {\Huge\bf Almost complex manifolds} \\[12pt]
                             & {\Huge\bf with Norden metric}
\end{tabular}

\vskip 1cm

\begin{quote}
\begin{large}
In the present section we recall some notions and knowledge for the almost complex manifolds with
Norden metric \cite{GriMekDje85a,GaBo,GaGrMi87,MekMan}.
\end{large}
\end{quote}

%

\vskip 0.15in

%
%
%
%
%

\vskip 0.2in \addtocounter{subsection}{1} \setcounter{subsubsection}{0}

\noindent  {\Large\bf \thesubsection. Almost complex structure and Norden metric}

\vskip 0.15in

Let $(\MM,J,{g})$ be a $2n$-dimensional almost complex manifold with
Norden metric or briefly an \emph{almost Norden manifold}.
This means that $J$ is an almost complex structure and ${g}$ is
a pseudo-Riemannian metric on $\MM$ such that
\begin{equation}\label{2.1}
J^2x=-x,\quad {g}(Jx,Jy)=-{g}(x,y).
\end{equation}

Here and further, $x$, $y$, $z$, $w$ will stand for
arbitrary differentiable vector fields on the considered manifold (or vectors in its
tangent space at an arbitrary point of the manifold).

On any almost Norden manifold, there exists an \emph{associated metric} $\g$ of its metric ${g}$ defined by
\[
\g(x,y)={g}(Jx,y).
\]
It is also a Norden
metric since $\g(Jx,Jy)=-\g(x,y)$ and the manifold $(\MM,J,\g)$ is an almost Norden manifold, too. Both metrics are necessarily of neutral signature $(n,n)$.

The elements of the pair of Norden metrics of an almost Norden manifold are also known as \emph{twin Norden metrics} on $\MM$ because of the associated metric of $g$ is $\g$ and the associated metric of $\g$ is $-g$, i.e.
\begin{equation}\label{twin}
 \g(x,y)=g(Jx,y),\qquad \g(Jx,y)=-g(x,y).
\end{equation}

Let the Levi-Civita connections of ${g}$ and $\g$ be denoted by $\DDD$ and $\tD$, respectively.

The structure group $\GG$ of almost Norden manifolds is determined, according to \cite{GaBo}, by the following way
\begin{equation}\label{GG}
\GG=\mathcal{GL}(n;\mathbb{C})\cap\mathcal{O}(n,n),
\end{equation}
i.e. it is the intersection of the general linear group of degree $n$ over the set of complex numbers and the indefinite orthogonal group for the neutral signature $(n,n)$.
Therefore, $\GG$ consists of the real square matrices of
order $2n$ having the following type
\[
\left(%
\begin{array}{r|c}
  A & B \\ \hline
  -B & A
\end{array}%
\right),
\]
such that the matrices $A$ and $B$ belongs to $\mathcal{GL}(n;\mathbb{R})$ and their corresponding transposes $A^\top$ and $B^\top$ satisfy the following identities
\begin{equation}\label{AB}
\begin{split}
&
  A^\top A-B^\top B=I_n,\\[6pt]
&
  B^\top A+A^\top B=O_n,
\end{split}
\end{equation}
where $I_n$ and $O_n$ are the
unit matrix and the zero matrix of size $n$, respectively.

\vskip 0.2in \addtocounter{subsection}{1} \setcounter{subsubsection}{0}

\noindent  {\Large\bf \thesubsection. First covariant derivatives}


\vskip 0.2in \addtocounter{subsubsection}{1}

\noindent  {\Large\bf{\emph{\thesubsubsection. Fundamental tensor $F$}}}

\vskip 0.15in

The fundamental $(0,3)$-tensor $F$ on $\MM$ is defined by
\begin{equation}\label{F'}
F(x,y,z)={g}\bigl( \left(\DDD_x J \right)y,z\bigr).
\end{equation}
It has the following basic properties: \cite{GriMekDje85a}
\begin{equation}\label{F'-prop}
F(x,y,z)=F(x,z,y)=F(x,Jy,Jz)
\end{equation}
and their consequence
\[
F(x,Jy,z)=-F(x,y,Jz).
\]

Let $\{e_i\}$ ($i=1,2,\dots,2n$) be an arbitrary
basis of the tangent space of $\MM$ at
any its point and let ${g}^{ij}$ be the corresponding components
of the inverse matrix of ${g}$.
Then, the corresponding Lee forms of $F$ with respect to ${g}$ and
$\widetilde {g}$ are defined by
\[
\ta(z)={g}^{ij}F(e_i,e_j,z),\qquad
\widetilde \ta(z)=\widetilde g^{ij}F(e_i,e_j,z),
\]
respectively. They imply
the relation
\begin{equation}\label{wtataJ}
\widetilde \ta=\ta\circ J
\end{equation}
because of
\[
\widetilde g^{ij}F(e_i,e_j,z)=-g^{ij}F(e_i,Je_j,z)=g^{ij}F(e_i,e_j,Jz).
\]

Somewhere, instead of $\widetilde \ta$ it is used the 1-form ${\ta^*}$ associated with $F$, which is defined by
\begin{equation}\label{ta'}
{\ta^*}(z)={g}^{ij}F(e_i,Je_j,z).
\end{equation}
Using $\g$, we have the following
\[
\begin{split}
  {\ta^*}(z)  &={g}^{ij}F(e_i,Je_j,z)=J^j_k {g}^{ik}F(e_i,e_j,z) \\[6pt]
                &=-\g^{ij}F(e_i,e_j,z).
\end{split}
\]
Then the identity
\begin{equation}\label{ta*taJ}
{\ta^*}=-\widetilde{\ta}
\end{equation}
holds by means of \eqref{F'-prop}, because of \eqref{wtataJ} and
\[
\begin{split}
{\ta^*}(z)&={g}^{ij}F(e_i,Je_j,z)=-{g}^{ij}F(e_i,e_j,Jz)\\[6pt]
            &=-{\ta}(Jz).
\end{split}
\]

The almost Norden manifolds are classified into
basic classes $\W_i$ $(i=1,2,3)$ with respect to $F$ by G.~Ganchev and A.~Borisov in \cite{GaBo}.
All classes are determined  as follows:
\begin{subequations}\label{Wi}
\begin{equation}
\begin{split}
\W_0:\quad &F(x,y,z)=0;\\[6pt]
\W_1:\quad &F(x,y,z)=\dfrac{1}{2n}\bigl\{{g}(x,y)\ta(z)
+\g(x,y)\widetilde\ta(z)\\[6pt]
&\phantom{F(x,y,z)=\dfrac{1}{2n}\bigl\{}
+{g}(x,z)\ta(y)\\[6pt]
&\phantom{F(x,y,z)=\dfrac{1}{2n}\bigl\{}
+\g(x,z)\widetilde\ta(y)\bigr\};
\\[6pt]
\W_2:\quad &\sx F(x,y,Jz)=0,\quad \ta=0;\\[6pt]
\W_3:\quad &\sx F(x,y,z)=0;
\\[6pt]
\W_1\oplus\W_2:\quad &\sx F(x,y,Jz)=0;
\\[6pt]
\W_1\oplus\W_3:\quad &\sx F(x,y,z)=\\[6pt]
&\phantom{}
=\dfrac{1}{n}\sx\bigl\{{g}(x,y)\ta(z)
+\widetilde{g}(x,y)\widetilde\ta(z)\bigr\};
\\[6pt]
\end{split}
\end{equation}
\begin{equation}
\begin{split}
\W_2\oplus\W_3:\quad &\ta=0;\\[6pt]
\W_1\oplus\W_2\oplus\W_3:\quad &\textrm{no conditions}.
\end{split}
\end{equation}
\end{subequations}

The class $\W_0$ is a special class that belongs to any other class, i.e. it is their intersection. It contains K\"ahler manifolds with Norden metric (known also as \emph{K\"ahler-Norden manifolds}, a K\"ahler manifolds with B-metric or a holomorphic
complex Riemannian manifolds).




\vskip 0.2in \addtocounter{subsubsection}{1}

\noindent  {\Large\bf{\emph{\thesubsubsection. Isotropic K\"ahler-Norden manifolds}}}

\vskip 0.15in

The square norm $\DDJ$
 of $\DDD{J}$ with respect to the metric $g$ is defined in \cite{GRMa} as follows
 \[
    \DDJ=g^{ij}g^{kl}
        g\bigl((\DDD_{e_i} J)e_k,(\DDD_{e_j} J)e_l\bigr).
\]
By means of \eqref{F'} and \eqref{F'-prop}, we obtain the following equivalent formula,
which is given in terms of the components of $F$
\begin{equation}\label{snorm}
    \DDJ=g^{ij}g^{kl}g^{st}(F)_{iks}(F)_{jlt},
\end{equation}
where $(F)_{iks}$ denotes $F(e_i,e_k,e_s)$.

An almost Norden manifold satisfying the
condition $\DDJ=0$ is called an isotropic K\"ahler manifold
with Norden metrics \cite{MekMan} or an \emph{isotrop\-ic K\"ahler-Norden manifold}.

Let us remark that if a manifold belongs to $\W_0$, then
it is an isotrop\-ic K\"ahler-Norden manifold but the inverse statement is not
always true. It can be noted that 
the class of isotrop\-ic K\"ahler-Norden manifolds is
the closest larger class of almost Norden manifolds containing the class of K\"ahler-Norden manifolds.

\vskip 0.2in \addtocounter{subsubsection}{1}

\noindent  {\Large\bf{\emph{\thesubsubsection. Fundamental tensor $\Phi$}}}

\vskip 0.15in

Let us consider the tensor $\Phi$ of type (1,2) defined in \cite{GaGrMi87} as the difference of the Levi-Civita connections $\tD$ and $\DDD$ of the corresponding Norden metrics as follows
\begin{equation}\label{Phi}
    \Phi(x,y)=\tD_x y-\DDD_x y.
\end{equation}
This tensor is known also as the \emph{potential} of $\tD$ regarding $\DDD$ because of the formula
\begin{equation}\label{tn=nPhi}
    \tD_x y=\DDD_x y+\Phi(x,y).
\end{equation}
Since both the connections are torsion-free, then $\Phi$ is symmetric, i.e. $\Phi(x,y)=\Phi(y,x)$ holds.
Let the corresponding tensor of type $(0,3)$ with respect to $g$ be defined by
\begin{equation}\label{Phi03}
    \Phi(x,y,z)=g(\Phi(x,y),z).
\end{equation}
     By virtue of properties \eqref{F'-prop}, the following interrelations between $F$ and $\Phi$ are valid \cite{GaGrMi87}
\begin{equation}\label{PhiJFJ}
\begin{split}
      \Phi(x,y,z) =\dfrac{1}{2}\bigl\{F(Jz,x,y) &-F(x,y,Jz)\\[6pt]
                                                    &-F(y,x,Jz)\bigr\},
\end{split}
\end{equation}
\begin{equation}\label{FPhi'}
    F(x,y,z)=\Phi(x,y,Jz)+\Phi(x,z,Jy).
\end{equation}

Let the corresponding 1-form of $\Phi$ be denoted by $f$ and let be defined as follows
\[
f(z)=g^{ij}\Phi(e_i,e_j,z).
\]
Using \eqref{PhiJFJ} and \eqref{wtataJ}, we obtain the relation $f=-\widetilde{\ta}$.

An equivalent classification of the given one in \eqref{Wi} is proposed in \cite{GaGrMi87},
where all classes are defined in terms of $\Phi$ as follows:
\begin{equation}\label{class2}
\begin{split}
\W_0:\quad &\Phi(x,y,z)=0;\\[6pt]
\W_1:\quad &\Phi(x,y,z)=\dfrac{1}{2n}\bigl\{g(x,y)f(z)\\
&\phantom{\Phi(x,y,z)=\dfrac{1}{2n}\bigl\{}
+g(x,Jy)f(Jz)\bigr\};\\[6pt]
\W_2:\quad &\Phi(x,y,z)=-\Phi(Jx,Jy,z),\quad f=0;\\[6pt]
\W_3:\quad &\Phi(x,y,z)=\Phi(Jx,Jy,z)\\[6pt]
\W_1\oplus\W_2:\quad &\Phi(x,y,z)=-\Phi(Jx,Jy,z), \\[6pt]
                  &\Phi(Jx,y,z)=-\Phi(x,y,Jz);\\[6pt]
\W_1\oplus\W_3:\quad &\Phi(x,y,z)=\Phi(Jx,Jy,z)\\
&
+\dfrac{1}{n}\bigl\{{g}(x,y)f(z)
+{g}(x,Jy)f(Jz)\bigr\};\\[6pt]
\W_2\oplus\W_3:\quad &f=0;\\[6pt]
\W_1\oplus\W_2\oplus\W_3:\quad &\textrm{no conditions}.
\end{split}
\end{equation}

\newpage
\vskip 0.2in \addtocounter{subsubsection}{1}

\noindent  {\Large\bf{\emph{\thesubsubsection. Pair of the Nijenhuis tensors}}}

\vskip 0.15in

As it is well known, the Nijenhuis tensor (let us denote it by the brackets $[\cdot,\cdot]$) of the almost complex structure $J$ is
defined by
\begin{equation}\label{NJ}
\begin{split}
[J,J](x,y) 
        &=\left[Jx,Jy\right]-\left[x,y\right]-J\left[Jx,y\right]-J\left[x,Jy\right].
\end{split}
\end{equation}

Besides it, we give the following definition in analogy to \eqref{NJ},
where the symmetric braces $\{x,y\}=\DDD_xy+\DDD_yx$
are used instead of the antisymmetric brackets $[x,y]=\DDD_xy-\DDD_yx$.
More precisely, the symmetric braces $\{x,y \}$ are determined by
\begin{equation}\label{braces}
\begin{split}
g(\{x,y\},z)&=g(\DDD_xy+\DDD_yx,z)\\[6pt]
&=x\left(g(y,z)\right)+y\left(g(x,z)\right)-z\left(g(x,y)\right)\\[6pt]
&\phantom{=x\left(g(y,z)\right)\,\,}-g([y,z],x)+g([z,x],y).
\end{split}
\end{equation}
\begin{dfn}
The symmetric (1,2)-tensor $\{J,J\}$ defined by
\begin{equation}\label{hatNJ}
\{J,J\}(x,y)=\{Jx,Jy\}-\{x,y\}-J\{Jx,y\}-J\{x,Jy\}
\end{equation}
is called the \emph{associated Nijenhuis tensor of the almost complex structure $J$ on $(\MM,J,g)$}.
\end{dfn}

The Nijenhuis tensor and its associated tensor for
$J$ are determined in terms of the covariant derivatives of $J$ as follows:
\begin{equation}\label{NJ-nabli}
    \begin{array}{l}
      [J,J](x,y)=\left(\DDD_x J\right)J y-\left(\DDD_y
J\right)J x
+\left(\DDD_{J x}
J\right)y-\left(\DDD_{J y} J\right)x, \\[6pt]
      \{J,J\}(x,y)=\left(\DDD_x J\right)J y+\left(\DDD_y
J\right)J x
+\left(\DDD_{J x}
J\right)y+\left(\DDD_{J y} J\right)x.
    \end{array}
\end{equation}

The tensor $\{J,J\}$ coincides with the associated tensor $\widetilde{N}$ of $[J,J]$ introduced in \cite{GaBo} by the latter equality above.

The pair of the Nijenhuis tensors $[J,J]$ and $\{J,J\}$
plays a fundamental role in the topic
of natural connections (i.e. such connections that $J$ and $g$ are parallel with respect to them) on an almost Norden manifold.
The torsions and the potentials of these connections
are expressed by these two tensors. Because of that
we characterize the classes of the considered manifolds in terms of $[J,J]$ and $\{J,J\}$.

As it is known from \cite{GaBo}, the class $\W_3$ of
the \emph{quasi-K\"ahler manifolds with Norden
metric} is the only basic class of almost Norden manifolds with
non-integrable almost complex structure $J$, because $[J,J]$ is
non-zero there. Moreover, this class is determined by the condition
$\{J,J\}=0$. The class $\W_1\oplus\W_2$ of the
\emph{(integrable almost) complex manifolds with Norden metric}
is characterized by $[J,J]=0$ and $\{J,J\}\neq 0$.
Additionally, the basic classes $\W_1$ and $\W_2$
are distinguish from each other according to the
Lee form $\ta$: for $\W_1$ the tensor $F$ is expressed explicitly by the metric and the Lee form, i.e.
$\ta\neq 0$; whereas for $\W_2$ the equality $\ta=0$ is valid.

The corresponding $(0,3)$-tensors are determined as follows
\[
[J,J](x,y,z)=g([J,J](x,y),z),\quad \{J,J\}(x,y,z)=g(\{J,J\}(x,y),z).
\]
Then, using \eqref{NJ} and \eqref{hatNJ}, the (0,3)-tensors $[J,J]$ and $\{J,J\}$ can be expressed in
terms of $F$ by: 
\begin{equation}\label{NF'}
\begin{split}
[J,J](x,y,z)&=F(x,Jy,z)-F(y,Jx,z)\\[6pt]
&+F(Jx,y,z)-F(Jy,x,z),\\[6pt]%
\end{split}
\end{equation}
\begin{equation}\label{NhatF'}
\begin{split}
\{J,J\}(x,y,z)&=F(x,Jy,z)+F(y,Jx,z)\\[6pt]
                    &+F(Jx,y,z)+F(Jy,x,z)
\end{split}
\end{equation}
or equivalently
\begin{equation}\label{NhatF'2}
\begin{split}
\{J,J\}(x,y,z)&=F(Jx,y,z)+F(Jy,x,z)\\[6pt]
                    &-F(x,y,Jz)-F(y,x,Jz).
\end{split}
\end{equation}

By virtue of \eqref{2.1}, \eqref{F'-prop}, \eqref{NF'} and \eqref{NhatF'}, we get the following properties
of  $[J,J]$ and $\{J,J\}$:
\begin{equation}\label{N-prop}
\begin{array}{l}
[J,J](x,y,z)=[J,J](x,Jy,Jz)=[J,J](Jx,y,Jz)\\[6pt]
\phantom{[J,J](x,y,z)}
         =-[J,J](Jx,Jy,z),\\[6pt]
[J,J](Jx,y,z)=[J,J](x,Jy,z)=-[J,J](x,y,Jz);
\end{array}
\end{equation}
\begin{equation}\label{hatN-prop}
\begin{array}{l}
\{J,J\}(x,y,z)=\{J,J\}(x,Jy,Jz)=\{J,J\}(Jx,y,Jz)\\[6pt]
\phantom{\{J,J\}(x,y,z)}
                =-\{J,J\}(Jx,Jy,z), \\[6pt]
\{J,J\}(Jx,y,z)=\{J,J\}(x,Jy,z)=-\{J,J\}(x,y,Jz).
\end{array}
\end{equation}

\begin{thm}\label{thm:FN}
The fundamental tensor $F$ of an almost Norden manifold $(\MM,J,g)$
is expressed in terms of the Nijenhuis tensor $[J,J]$ and its associated Nijenhuis tensor $\{J,J\}$ by the formula
\begin{equation}\label{F=NhatN}
\begin{split}
F(x,y,z)&=-\dfrac14\bigl\{[J,J](Jx,y,z)+[J,J](Jx,z,y)\\[6pt]
&\phantom{=-\dfrac14\bigl\{}
+\{J,J\}(Jx,y,z)+\{J,J\}(Jx,z,y)\bigr\}.
\end{split}
\end{equation}
\end{thm}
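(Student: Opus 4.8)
The plan is to invert the two identities \eqref{NF'} and \eqref{NhatF'}, which already express $[J,J]$ and $\{J,J\}$ linearly in the components of $F$. First I would simply add them: the terms carrying a relative minus sign in \eqref{NF'} cancel against the matching terms in \eqref{NhatF'}, leaving the clean relation
\[
[J,J](x,y,z)+\{J,J\}(x,y,z)=2F(x,Jy,z)+2F(Jx,y,z).
\]
Next I would substitute $x\mapsto Jx$ and use $J^2=-\I$ from \eqref{2.1} to rewrite $F(J^2x,y,z)$ as $-F(x,y,z)$, which turns the previous line into
\[
[J,J](Jx,y,z)+\{J,J\}(Jx,y,z)=2F(Jx,Jy,z)-2F(x,y,z).
\]

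To reproduce the symmetrized bracket on the right-hand side of \eqref{F=NhatN}, I would add to this the same identity with $y$ and $z$ interchanged. Invoking the symmetry $F(x,y,z)=F(x,z,y)$ from \eqref{F'-prop} on the terms $F(x,z,y)$ then merges the two copies into a single contribution $-4F(x,y,z)$, while the remaining cross terms combine into $2\bigl\{F(Jx,Jy,z)+F(Jx,Jz,y)\bigr\}$.

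The crux of the argument, and the step I expect to be the only slightly delicate one, is showing that these cross terms vanish. Here the full strength of \eqref{F'-prop} is needed, used in the right order: applying the consequence $F(x,Jy,z)=-F(x,y,Jz)$ with first slot $Jx$ gives $F(Jx,Jy,z)=-F(Jx,y,Jz)$, and the symmetry in the last two arguments identifies $-F(Jx,y,Jz)$ with $-F(Jx,Jz,y)$; hence $F(Jx,Jy,z)$ and $F(Jx,Jz,y)$ are negatives of one another and drop out. Once the cross terms are eliminated, the four-term bracket equals $-4F(x,y,z)$, and dividing by $-4$ yields precisely \eqref{F=NhatN}, completing the proof.
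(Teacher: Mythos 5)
Your proposal is correct and is essentially the paper's own argument: both proofs sum \eqref{NF'} and \eqref{NhatF'} to cancel the antisymmetric terms, then symmetrize over $y\leftrightarrow z$, killing the cross terms via $F(x,Jy,z)=-F(x,y,Jz)$ combined with the symmetry of $F$ in its last two arguments, and finally use $J^2=-\I$. The only difference is cosmetic — you substitute $x\mapsto Jx$ before symmetrizing, while the paper symmetrizes first (obtaining \eqref{FJN}) and substitutes last.
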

\begin{proof}
Taking the sum of \eqref{NF'} and \eqref{NhatF'}, we obtain
\begin{equation}\label{s1}
F(Jx,y,z)+F(x,Jy,z)=\dfrac12\bigl\{[J,J](x,y,z)+\{J,J\}(x,y,z)\bigr\}.
\end{equation}
The  identities  \eqref{2.1} and \eqref{F'-prop} imply
\begin{equation}\label{s2}
F(x,z,Jy)=-F(x,y,Jz).
\end{equation}
A suitable combination of \eqref{s1} and \eqref{s2} yields
\begin{equation}\label{FJN}
\begin{split}
F(Jx,y,z)&=\dfrac14\bigl\{[J,J](x,y,z)+[J,J](x,z,y)\\[6pt]
&\phantom{=\dfrac14\bigl\{}
+\{J,J\}(x,y,z)+\{J,J\}(x,z,y)\bigr\}.
\end{split}
\end{equation}
Applying \eqref{2.1} to \eqref{FJN}, we obtain the stated formula.
\end{proof}

As direct corollaries of \thmref{thm:FN}, for the classes of the considered manifolds with vanishing $[J,J]$ or $\{J,J\}$, we have respectively:
\begin{equation}\label{Wi:N0Nhat0}
\begin{split}
\W_1\oplus\W_2: \; &F(x,y,z)=-\dfrac{1}{4}\bigl(\{J,J\}(Jx,y,z)\\
                   &\phantom{F(x,y,z)=-\dfrac{1}{4}\bigl(}+\{J,J\}(Jx,z,y)\bigr),\\[6pt]
\W_{3}: \; &F(x,y,z)=-\dfrac{1}{4}\bigl([J,J](Jx,y,z)\\
           & \phantom{F(x,y,z)=-\dfrac{1}{4}\bigl(}+[J,J](Jx,z,y)\bigr).
\end{split}
\end{equation}

According to \thmref{thm:FN}, we obtain the following relation between the corresponding traces:
\begin{equation}\label{ta=nu}
\ta=\dfrac14\vartheta\circ J,
\end{equation}
where we denote
\[
{\vartheta}(z)={g}^{ij}\{J,J\}(e_i,e_j,z).
\]
For the traces $\widetilde\ta$ and $\widetilde{\vartheta}$ with respect to the associated metric
$\widetilde g$  of $F$ and $\{J,J\}$, i.e.
\[
\widetilde\ta(z)=\widetilde g^{ij}F(e_i,e_j,z), \qquad
\widetilde{\vartheta}(z)=\widetilde g^{ij}\{J,J\}(e_i,e_j,z),
\]
we obtain the following interrelations:
\[
\widetilde\ta=-\dfrac14\vartheta=\ta\circ J, \qquad
\widetilde{\vartheta}=4\ta=\vartheta\circ J.
\]
Then, bearing in mind \eqref{Wi} and the subsequent comments on the pair
of the Nijenhuis tensors, from \thmref{thm:FN} and \eqref{ta=nu} we obtain
immediately the following
\begin{thm}\label{prop:Wi:N}
The classes of almost Norden manifolds
are character\-ized by the pair of Nijenhuis tensors $[J,J]$ and $\{J,J\}$ as follows:
\begin{equation}\label{Wi:N}
\begin{array}{rll}
\W_{0}: \quad &[J,J]=0, \quad  \{J,J\}=0;\\[6pt]
\W_{1}: \quad &[J,J]=0, \quad
\{J,J\}=\dfrac{1}{2n}\bigl\{g\otimes\vartheta+\widetilde{g}\otimes\widetilde{\vartheta}
\bigr\};\\[6pt]
%
\W_{2}: \quad   &[J,J]=0, \quad \vartheta=0;\\[6pt]
\W_{3}: \quad 
&\{J,J\}=0;\\[6pt]
\W_{1}\oplus\W_2: \quad 
&[J,J]=0; \quad \\[6pt]
%
\W_{1}\oplus\W_3: \quad 
&\s\{J,J\}=\dfrac{1}{2n}\s\bigl\{g\otimes\vartheta+\widetilde{g}\otimes\widetilde{\vartheta}\bigr\};\\[6pt]
\phantom{\W_{1}\oplus}\W_{2}\oplus\W_3: \quad &\vartheta=0;\\[6pt]
\W_{1}\oplus\W_{2}\oplus\W_3: \quad
&
\textrm{no conditions}.
\end{array}
\end{equation}
\end{thm}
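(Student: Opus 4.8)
The plan is to read the entire classification off the bijective linear correspondence between the fundamental tensor $F$ and the pair of Nijenhuis tensors $([J,J],\{J,J\})$. One direction is given by \eqref{NF'} and \eqref{NhatF'}, which recover the two Nijenhuis tensors from $F$, and the other by \thmref{thm:FN} (formula \eqref{F=NhatN}), which recovers $F$ from the pair. Since every defining relation of the classes in \eqref{Wi} is linear in $F$, each such relation transports across this correspondence to an equivalent relation on the pair. Two equivalences, already recorded in the discussion preceding the statement, form the backbone: $[J,J]=0$ characterizes the integrable class $\W_1\oplus\W_2$ (read off from \eqref{NF'}), while $\{J,J\}=0$ characterizes $\W_3$ (read off from \eqref{NhatF'}).

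First I would dispose of the rows governed by vanishing alone. For $\W_0$ the condition $F=0$ forces $[J,J]=\{J,J\}=0$ through \eqref{NF'} and \eqref{NhatF'}, while the converse follows from \eqref{F=NhatN}; hence $\W_0$ is exactly $(\W_1\oplus\W_2)\cap\W_3$. The rows $\W_1\oplus\W_2$ and $\W_3$ are then the two backbone equivalences, and $\W_1\oplus\W_2\oplus\W_3$ carries no condition. For the trace-type classes I would invoke \eqref{ta=nu}, namely $\ta=\frac14\vartheta\circ J$: as $J$ is invertible this yields $\ta=0\Leftrightarrow\vartheta=0$, which is precisely the condition attached to $\W_2\oplus\W_3$ (defined by $\ta=0$ in \eqref{Wi}) and, once conjoined with $[J,J]=0$, the condition attached to $\W_2$.

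The one genuine computation is the explicit form of $\{J,J\}$ on $\W_1$. Here I would use $[J,J]=0$ (valid since $\W_1\subset\W_1\oplus\W_2$) and substitute the explicit $\W_1$-expression for $F$ from \eqref{Wi} into \eqref{NhatF'}. After rewriting $g(x,Jy)$, $\g(x,Jy)$, $\ta(Jy)$ and $\widetilde\ta(Jy)$ by means of \eqref{2.1}, \eqref{F'-prop} and \eqref{wtataJ}, the terms in which $z$ enters a metric factor cancel in pairs, leaving only a multiple of $\g(x,y)\ta(z)-g(x,y)\widetilde\ta(z)$; the trace interrelations $\widetilde\vartheta=4\ta$ and $\vartheta=-4\widetilde\ta$ (consequences of \eqref{ta=nu}) then convert this into $\frac{1}{2n}\bigl\{g\otimes\vartheta+\g\otimes\widetilde\vartheta\bigr\}$, which is the $\W_1$ row. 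The $\W_1\oplus\W_3$ row is reached by the same method, except that its defining relation in \eqref{Wi} constrains only the cyclic sum $\sx F$; computing the cyclic sum $\s\{J,J\}$ in terms of $F$ and applying the same trace relations produces the cyclic identity $\s\{J,J\}=\frac{1}{2n}\s\bigl\{g\otimes\vartheta+\g\otimes\widetilde\vartheta\bigr\}$.

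The main obstacle lies wholly in this last step: keeping track of the four permuted copies of $F$ in \eqref{NhatF'}, applying $J$ and the symmetry \eqref{F'-prop} consistently to each, and then matching the resulting numerical factor against $\frac{1}{2n}$ through the precise trace relations, where a lost sign or factor of two is the obvious danger. Everything else is a direct transport of the list \eqref{Wi} across the correspondence supplied by \thmref{thm:FN}, together with the single scalar identity \eqref{ta=nu}.
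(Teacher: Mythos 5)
Your proposal is correct and follows essentially the same route as the paper: the paper derives the theorem immediately from the correspondence of Theorem~\ref{thm:FN} together with the definitions \eqref{Wi}, the preceding observations that $[J,J]=0$ characterizes $\W_1\oplus\W_2$ and $\{J,J\}=0$ characterizes $\W_3$, and the trace relation \eqref{ta=nu}. The only difference is that you spell out the $\W_1$ substitution into \eqref{NhatF'} explicitly, which the paper leaves implicit.
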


\vskip 0.2in \addtocounter{subsection}{1} \setcounter{subsubsection}{0}

\noindent  {\Large\bf \thesubsection. Second covariant derivatives}

\vskip 0.15in


Let $R^{\DDD}$ be the curvature tensor of $\DDD$ defined as usual by
\begin{equation}\label{K}
    R^{\DDD}(x,y)z=\DDD_x \DDD_y z - \DDD_y \DDD_x z -
    \DDD_{[x,y]}z.
\end{equation}
As it is known, the latter formula can be rewritten as
\[
R^{\DDD}(x,y) z={\DDD}^{\thinspace{}2}_{x,y} z - {\DDD}^{\thinspace{}2}_{y,x} z,
\]
using the second covariant derivative given by
\[
{\DDD}^{\thinspace{}2}_{x,y} z = {\DDD}_x {\DDD}_y z - {\DDD}_{\thinspace{}{\DDD}_x y} z.
\]

The corresponding tensor of type $(0,4)$ with respect to the metric $g$ is determined by
\[
R^{\DDD}(x,y,z,w)=g(R^{\DDD}(x,y)z,w).
\]
It has the following properties:
\begin{equation}\label{curv}
\begin{split}%
&R^{\DDD}(x,y,z,w)=-R^{\DDD}(y,x,z,w)=-R^{\DDD}(x,y,w,z),\\[6pt]
&R^{\DDD}(x,y,z,w)+R^{\DDD}(y,z,x,w)+R^{\DDD}(z,x,y,w)=0.
\end{split}%
\end{equation}

Any tensor of type (0,4) satisfying \eqref{curv}
is called a \emph{curvature-like tensor}. 
Moreover, if the
curvature-like tensor $R^{\DDD}$ has the property
\[
R^{\DDD}(x,y,Jz,Jw)=-R^{\DDD}(x,y,z,w),
\]
it is called a \emph{K\"ahler tensor} \cite{GaGrMi87}.

The Ricci tensor
$\rho^{\DDD}$ and the scalar curvature $\tau^{\DDD}$ for the curvature tensor of $\DDD$ (and similarly for every curvature-like tensor) are defined as usual by
\[
\rho^{\DDD}(y,z)=g^{ij}R^{\DDD}(e_i,y,z,e_j),\qquad \tau^{\DDD}=g^{ij}\rho^{\DDD}(e_i,e_j).
\]

It is well-known that the Weyl tensor $C^{\DDD}$ on a pseudo-Rie\-mann\-ian manifold $(\MM,g)$, $\dim{\MM}=2n\geq 4$,
is given by
\begin{equation}\label{W'}
C^{\DDD}=R^{\DDD}+\dfrac{1}{2(n-1)}g\owedge\rho^{\DDD}-\dfrac{\tau^{\DDD}}{4(n-1)(2n-1)}g\owedge g,
\end{equation}
where $g\owedge\rho^{\DDD}$ is the Kulkarni-Nomizu product of $g$ and $\rho^{\DDD}$, i.e.
\begin{equation}\label{Kulkarni}
\begin{split}
\left(g\owedge\rho^{\DDD}\right)(x,y,z,w)&=g(x,z)\rho^{\DDD}(y,w)-g(y,z)\rho^{\DDD}(x,w)\\[6pt]
&+g(y,w)\rho^{\DDD}(x,z)-g(x,w)\rho^{\DDD}(y,z).
\end{split}
\end{equation}
Moreover, $C^{\DDD}$ vanishes if and only if the manifold $(\MM,g)$
is conformally flat, i.e. it is transformed into a flat manifold by an usual conformal transformation of the metric defined by $\overline{g}=e^{2u}g$ for a differentiable function $u$ on $\MM$.

Let $R^{\tD}$ be the curvature tensor of $\tD$ defined as usually.
Obviously, the corresponding curvature (0,4)-tensor with respect to the metric $\g$ is
\[
R^{\tD}(x,y,z,w)=\g(R^{\tD}(x,y)z,w)
\]
and it has the same properties as in \eqref{curv}.
The Weyl tensor $C^{\tD}$ is generated by $\tD$ and $\g$ by the same way and it has the same geometrical interpretation for the manifold $(\MM,J,\g)$.

\vspace{20pt}

\begin{center}
$\divideontimes\divideontimes\divideontimes$
\end{center}

\newpage

\addtocounter{section}{1}\setcounter{subsection}{0}\setcounter{subsubsection}{0}

\setcounter{thm}{0}\setcounter{dfn}{0}\setcounter{equation}{0}

\label{par:inv}

 \Large{

\
\\[6pt]
\bigskip

\
\\[6pt]
\bigskip

\lhead{\emph{Chapter I $|$ \S\thesection. Invariant tensors under the twin interchange
of Norden metrics \ldots
}}

\noindent
\begin{tabular}{r"l}
\hspace{-6pt}{\Huge\bf \S\thesection.}  & {\Huge\bf Invariant tensors under the twin} \\[12pt]
                             & {\Huge\bf interchange of Norden metrics} \\[12pt]
                             & {\Huge\bf on almost complex manifolds}
\end{tabular}

\vskip 1cm

\begin{quote}
\begin{large}
The object of study in the present section are almost complex manifolds with a pair of Norden metrics, mutually associated by means of the almost complex structure. More precisely, a torsion-free connection and tensors with certain geometric interpretation are found which are invariant under the twin interchange, i.e. the swap of the counterparts of the pair of Norden metrics and the corresponding Levi-Civita connections. A Lie group depending on four real parameters is considered as an example of a 4-dimensional manifold of the studied type. The mentioned invariant objects are found in an explicit form.

The main results of this section are published in \cite{Man49}.
\end{large}
\end{quote}

%
%

\vskip 0.15in



An interesting problem on almost Norden manifolds is the presence of tensors with some geometric interpretation which are invariant under the so-called twin interchange. This is the swap of the counterparts of the pair of Norden metrics and their Levi-Civita connections. Similar results for the considered manifolds in the basic classes $\W_1$ and $\W_3$ are obtained in \cite{MT06th} and \cite{DjDo}, \cite{MekManGri22}, respectively. The aim here 
is to solve the problem in general.

The present section is organised as follows. In Subsection~\thesection.1 we present the main results on the topic about the invariant objects and their vanishing. In Subsection~\thesection.2 we consider an example of the studied manifolds of dimension 4 by means of a construction of an appropriate family of Lie algebras depending on 4 real parameters. Then we compute the basic components of the invariant objects introduced in the previous subsection.


\newpage
\vskip 0.2in \addtocounter{subsection}{1}

\noindent  {\Large\bf \thesubsection. The twin interchange
corresponding to the pair of Norden metrics and their Levi-Civita connections}

\vskip 0.15in



Bearing in mind \eqref{twin}, we give the following
\begin{dfn}
The interchange of the Levi-Civita connections $\DDD$ and $\tD$ (and respectively their corresponding Norden metrics $g$ and $\g$) we call the \emph{twin interchange}.
\end{dfn}

\vskip 0.2in \addtocounter{subsubsection}{1}

\noindent  {\Large\bf{\emph{\thesubsubsection. Invariant classification}}}

\vskip 0.15in

Let us consider the potential $\Phi$ of $\tD$ regarding $\DDD$ on $(\MM,J,g)$ defined by \eqref{Phi}.

\begin{lem}\label{lem:Phi}
  The potential $\Phi(x,y)$ is an anti-invariant tensor under the twin interchange, i.e.
\begin{equation}\label{tPsi=-Psi}
    \tP(x,y)=-\Phi(x,y).
\end{equation}
\end{lem}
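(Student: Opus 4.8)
The plan is simply to unwind the definition of the potential $\Phi$ given in \eqref{Phi} and to track carefully what the twin interchange does to each of its ingredients. First I would recall that $\Phi$ is the difference of the two Levi-Civita connections, $\Phi(x,y)=\tD_x y-\DDD_x y$, where $\DDD$ is the Levi-Civita connection of the reference metric $g$ and $\tD$ is that of the associated metric $\g$.

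The decisive step is to identify correctly what object $\tP$ denotes after the swap. The twin interchange installs $\g$ as the new reference metric; by the second identity in \eqref{twin} the metric associated to $\g$ is $-g$, and since $-g$ and $g$ share the very same Levi-Civita connection $\DDD$, the interchange amounts precisely to transposing the two connections $\DDD\leftrightarrow\tD$. Consequently $\tP$, the potential of the associated connection with respect to the reference connection computed now on $(\MM,J,\g)$, is obtained from the defining formula of $\Phi$ by applying this transposition.

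Carrying out the substitution $\tD\mapsto\DDD$, $\DDD\mapsto\tD$ in $\Phi(x,y)=\tD_x y-\DDD_x y$ then gives $\tP(x,y)=\DDD_x y-\tD_x y=-\Phi(x,y)$, which is exactly the asserted anti-invariance \eqref{tPsi=-Psi}.

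The argument is essentially bookkeeping of definitions, so I expect no genuine technical obstacle. The one point that deserves care — and where a careless reading could go astray — is the identification of the metric associated to $\g$ as $-g$ together with the observation that $g$ and $-g$ determine the same Levi-Civita connection; this is what guarantees that the twin interchange acts on the pair of connections merely as the transposition $\DDD\leftrightarrow\tD$, after which the sign reversal of $\Phi$ is immediate.
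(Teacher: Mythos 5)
Your proof is correct, but it takes a genuinely different route from the paper's. The paper proceeds computationally through the fundamental tensors: it first derives the expression \eqref{tFF'} of $\tPP$ in terms of $F$, writes the analogue of \eqref{PhiJFJ} for $\tP$ in terms of $\tPP$, combines the two to obtain the $(0,3)$-relation $\tP(x,y,z)=-\Phi(x,y,Jz)$ of \eqref{tPPhi}, and only then converts back to the $(1,2)$-statement via $\tP(x,y,z)=\g(\tP(x,y),z)$. You instead argue directly from the definition \eqref{Phi}: the metric associated to $\g$ is $-g$ by \eqref{twin}, the Levi-Civita connection is unchanged under multiplying the metric by a nonzero constant, so the twin interchange acts on the pair of connections as the transposition $\DDD\leftrightarrow\tD$, and the sign flip of $\Phi=\tD-\DDD$ is immediate. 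This is shorter and more conceptual, and you correctly isolate the one point where care is needed (the identification of the associated metric of $\g$ as $-g$ and the constancy of the Levi-Civita connection under rescaling). What the paper's longer computation buys is the intermediate identity \eqref{tPPhi}, $\tP(x,y,z)=-\Phi(x,y,Jz)$, which is reused later in the section (e.g.\ in the proofs of \lemref{lem:psi} and \propref{prop:inv.NhN}); note, however, that this identity also follows from your statement in one line, since $\tP(x,y,z)=\g(\tP(x,y),z)=-g(J\Phi(x,y),z)=-g(\Phi(x,y),Jz)=-\Phi(x,y,Jz)$, so nothing essential is lost.
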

\begin{proof}
The equalities \eqref{F'-prop},
\eqref{tn=nPhi},  \eqref{Phi03} and \eqref{PhiJFJ} imply the following relation between $F$ and its corresponding tensor $\tPP$ for $(\MM,J,\g)$, defined by $\tPP(x,y,z)=\g\bigl(\bigl(\tD_xJ\bigr)y,z\bigr)$,
\begin{equation}\label{tFF'}
\begin{split}
    \tPP(x,y,z)=\dfrac{1}{2}\bigl\{&F(Jy,z,x)-F(y,Jz,x)\\[6pt]&+F(Jz,y,x)-F(z,Jy,x)\bigr\}.
\end{split}
\end{equation}
Bearing in mind \eqref{PhiJFJ}, we write the corresponding formula for $\tP$ and $\tPP$ as
\begin{equation}\label{tPtF}
\begin{split}
  \tP(x,y,z)=\dfrac{1}{2}\bigl\{\tPP(Jz,x,y) &-\tPP(x,y,Jz) \\[6pt]
    &-\tPP(y,x,Jz)\bigr\}.
\end{split}
\end{equation}
Using \eqref{tFF'} and \eqref{tPtF}, we get an expression of $\tP$ in terms of $F$ and then by \eqref{PhiJFJ} we obtain
\begin{equation}\label{tPPhi}
\tP(x,y,z)=-\Phi(x,y,Jz).
\end{equation}
Taking into account that $\tP(x,y,z)$ is defined by \[\tP(x,y,z)=\g(\tP(x,y),z),\] we accomplish the proof.
\end{proof}

In \cite{GaGrMi85}, for an arbitrary almost Norden manifold, it is given the following identity
\begin{equation}\label{Phi-prop}
\begin{split}
  \Phi(x,y,z) &-\Phi(Jx,Jy,z)-\Phi(Jx,y,Jz) \\[6pt]
    &-\Phi(x,Jy,Jz)=0.
\end{split}
\end{equation}
The associated 1-forms $f$ and $f^*$ of $\Phi$ are defined by
\[
f(z)=g^{ij}\Phi(e_i,e_j,z),\qquad f^*(z)=g^{ij}\Phi(e_i,Je_j,z).
\]
Then, from \eqref{Phi-prop} we get the identity
\begin{equation}\label{psi-prop}
f(z)=f^*(Jz).
\end{equation}
The latter identity resembles the equality $\ta(z)=\ta^*(Jz)$, equivalent to \eqref{ta*taJ}.
Indeed, there exists a relation between the associated 1-forms of $\Phi$ and $F$. It follows from \eqref{PhiJFJ} and has the form
\begin{equation}\label{fta}
f(z)=\ta^*(z),\qquad f^*(z)=-\ta(z).
\end{equation}

\begin{lem}\label{lem:psi}
The associated 1-forms $f$ and $f^*$ of $\Phi$ are invariant under the twin interchange, i.e.
\begin{equation*}\label{tff}
\widetilde{f}(z)=f(z), \qquad \widetilde f^*(z)=f^*(z).
\end{equation*}
\end{lem}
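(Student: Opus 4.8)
The plan is to compute the associated 1-forms $\widetilde f$ and $\widetilde f^*$ of the potential $\widetilde\Phi$ for the manifold $(\MM,J,\g)$ and show that they coincide with $f$ and $f^*$ respectively. The natural starting point is \lemref{lem:Phi}, which tells us that $\widetilde\Phi=-\Phi$ as a $(1,2)$-tensor, together with the key computational relation \eqref{tPPhi}, namely $\tP(x,y,z)=-\Phi(x,y,Jz)$, relating the $(0,3)$-tensors taken with respect to the two associated metrics.

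First I would write out the defining traces for $(\MM,J,\g)$ in analogy with the formulas $f(z)=g^{ij}\Phi(e_i,e_j,z)$ and $f^*(z)=g^{ij}\Phi(e_i,Je_j,z)$. The subtlety here, which I expect to be the main obstacle, is that the traces for the twin structure must be taken with respect to the inverse of $\g$, whose components are $\g^{ij}$, rather than $g^{ij}$. So I would set
\[
\widetilde f(z)=\g^{ij}\tP(e_i,e_j,z),\qquad \widetilde f^*(z)=\g^{ij}\tP(e_i,Je_j,z).
\]
The crucial point is that the change of metric in the trace and the appearance of $J$ inside $\tP$ (from \eqref{tPPhi}) should combine to cancel exactly, so that the sign coming from $\widetilde\Phi=-\Phi$ is compensated. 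Concretely, substituting \eqref{tPPhi} gives $\widetilde f(z)=-\g^{ij}\Phi(e_i,e_j,Jz)$, and I would convert $\g^{ij}$ back to $g^{ij}$ using the relation between the two inverse metrics induced by $\g(x,y)=g(Jx,y)$ — raising with $\g$ is the same as raising with $g$ after inserting a factor of $J$, which produces a second sign and a second $J$.

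The computation then reduces to tracking how two factors of $J$ and two sign changes interact with the symmetries of $\Phi$ recorded in \eqref{Phi-prop} and the trace identity \eqref{psi-prop}, namely $f(z)=f^*(Jz)$. I would push the $J$'s through using $J^2=-\I$ from \eqref{2.1}, and expect the expression $-\g^{ij}\Phi(e_i,e_j,Jz)$ to collapse to $g^{ij}\Phi(e_i,Je_j,Jz)$, which by \eqref{psi-prop} equals $f^*(Jz)=f(z)$. An entirely parallel manipulation, inserting the extra $J$ in the second slot, handles $\widetilde f^*$ and yields $\widetilde f^*(z)=f^*(z)$.

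The main obstacle, as noted, is bookkeeping the interplay between raising indices with $\g$ versus $g$ and the $J$ inside $\tP$; every such substitution introduces a sign, and the invariance claimed in the lemma depends on these signs pairing up rather than accumulating. Once the index-raising conversion $\g^{ij}F(e_i,\cdot,\cdot)=g^{ij}F(e_i,Je_j,\cdot)$-type identities (already used implicitly in deriving \eqref{wtataJ}) are applied carefully, the result follows purely formally, with no genuine geometric input beyond \lemref{lem:Phi} and the two trace identities \eqref{psi-prop} and \eqref{fta}.
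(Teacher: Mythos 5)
Your proposal is correct and follows essentially the same route as the paper: trace \eqref{tPPhi} with $\g^{ij}=-J^j_kg^{ik}$ to get $\widetilde f(z)=f^*(Jz)$, then invoke \eqref{psi-prop} to conclude $\widetilde f=f$, with the $f^*$ case handled by the same identity. The sign bookkeeping you flag as the main obstacle does work out exactly as you predict.
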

\begin{proof}
Taking the trace of \eqref{tPPhi} by $\g^{ij}=-J^j_kg^{ik}$ for $x=e_i$ and $y=e_j$, we have $\widetilde{f}(z)=f^*(Jz)$. Then, comparing the latter equality and \eqref{psi-prop}, we obtain the statement for $f$. The relation in the case of $f^*$ is valid because of \eqref{psi-prop}.
\end{proof}

\begin{lem}\label{lem:tata*}
The Lee forms $\ta$ and $\ta^*$ are invariant under the twin interchange, i.e.
\[
\ta(z)=\widetilde{\ta}(z),\qquad \ta^*(z)=\widetilde\ta^*(z).
\]
\end{lem}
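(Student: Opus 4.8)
The plan is to obtain the invariance of the Lee forms as a direct consequence of the already-established invariance of the $1$-forms $f$ and $f^{*}$ of the potential $\Phi$ (\lemref{lem:psi}), using the linear relations \eqref{fta} that tie $f,f^{*}$ to $\ta,\ta^{*}$. The guiding observation is that \eqref{fta}, namely $f=\ta^{*}$ and $f^{*}=-\ta$, is nothing but the trace of the general identity \eqref{PhiJFJ}; since the twin manifold $(\MM,J,\g)$ is itself an almost Norden manifold whose structural identities have exactly the same shape, the analogous relation must hold between its intrinsic objects. Here $\widetilde{\ta}$ and $\widetilde{\ta}^{*}$ are understood as the Lee forms of the twin fundamental tensor $\tPP$ traced with respect to $\g$, and $\widetilde{f},\widetilde{f}^{*}$ as the $1$-forms of the twin potential $\tP$.

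First I would record the twin counterpart of \eqref{fta}. The identity \eqref{tPtF} is precisely \eqref{PhiJFJ} rewritten for $(\MM,J,\g)$, with $\Phi$ replaced by $\tP$ and $F$ by $\tPP$. Tracing it with $\g^{ij}$ exactly as \eqref{fta} was obtained from \eqref{PhiJFJ} by tracing with $g^{ij}$, I expect to arrive at
\[
\widetilde{f}(z)=\widetilde{\ta}^{*}(z),\qquad \widetilde{f}^{*}(z)=-\widetilde{\ta}(z).
\]

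Then I would simply chain the equalities. Combining the twin relation above with \lemref{lem:psi} ($\widetilde{f}=f$, $\widetilde{f}^{*}=f^{*}$) and with \eqref{fta} ($f=\ta^{*}$, $f^{*}=-\ta$) gives
\[
\widetilde{\ta}^{*}=\widetilde{f}=f=\ta^{*},\qquad \widetilde{\ta}=-\widetilde{f}^{*}=-f^{*}=\ta,
\]
which is precisely the asserted invariance.

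The step I expect to be the main obstacle is the sign bookkeeping in the first displayed line: one must verify that tracing \eqref{tPtF} with the inverse twin metric $\g^{ij}=-J^{j}_{k}g^{ik}$ reproduces the signs of \eqref{fta} verbatim, even though the associated metric of $\g$ is $-g$ rather than $g$. This is guaranteed by the fact that \eqref{tPtF} and \eqref{PhiJFJ} are formally identical and that $(\MM,J,\g)$ satisfies the Norden condition $\g(Jx,Jy)=-\g(x,y)$ in the same form as $g$, so the derivation transfers without modification. Should a self-contained route be preferred, the same two identities $\widetilde{f}=\widetilde{\ta}^{*}$ and $\widetilde{f}^{*}=-\widetilde{\ta}$ can instead be produced by inserting the explicit expression \eqref{tFF'} of $\tPP$ in terms of $F$ (together with \eqref{tPPhi} for $\tP$) and tracing, at the cost of a longer computation.
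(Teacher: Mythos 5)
Your argument is correct and takes essentially the same route as the paper, whose proof of this lemma reads simply that it follows from \lemref{lem:psi} and \eqref{fta}. You have merely spelled out the twin counterpart of \eqref{fta} (i.e.\ $\widetilde f=\widetilde\ta^*$, $\widetilde f^*=-\widetilde\ta$) that the paper leaves implicit before chaining the equalities; there is no gap.
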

\begin{proof} It follows directly from \lemref{lem:psi} and \eqref{fta}.
\end{proof}

\begin{thm}\label{thm:inv.cl}
  All classes $\W_i$ of almost Norden manifolds 
  are invariant under the twin interchange.
\end{thm}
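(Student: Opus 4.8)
The plan is to read each class directly off the $\Phi$-classification \eqref{class2} and to verify that its defining condition is preserved when $\Phi$ and $g$ are replaced by their twin counterparts $\tP$ and $\g$. The whole argument rests on three facts already in hand: the pointwise identity $\tP(x,y,z)=-\Phi(x,y,Jz)$ from \eqref{tPPhi}; the invariance of the associated $1$-forms, $\tf=f$ and $\tf^*=f^*$, from \lemref{lem:psi}; and the elementary metric relations $\g(x,y)=g(Jx,y)=g(x,Jy)$ together with their consequence $\g(x,Jy)=-g(x,y)$, which follow from \eqref{twin} and the symmetry of $\g$. Throughout, the class of $(\MM,J,\g)$ is tested by the conditions of \eqref{class2} written for $\tP$, $\g$ and $\tf$, and the task is to reduce each such condition to the corresponding one for $\Phi$, $g$ and $f$.

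First I would dispose of the two structural conditions distinguishing $\W_2$ from $\W_3$, namely the $J$-parity $\Phi(Jx,Jy,z)=\pm\Phi(x,y,z)$. Applying \eqref{tPPhi} gives $\tP(Jx,Jy,z)=-\Phi(Jx,Jy,Jz)$, and substituting $z\mapsto Jz$ in the parity condition for $\Phi$ turns the right-hand side into $\mp\Phi(x,y,Jz)=\pm\tP(x,y,z)$; hence $\tP$ inherits the same parity. The trace conditions $f=0$ (occurring in $\W_2$ and $\W_2\oplus\W_3$) are immediate from $\tf=f$ in \lemref{lem:psi}, and the class $\W_0$ is trivial since $\Phi=0$ forces $\tP(x,y,z)=-\Phi(x,y,Jz)=0$.

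Next I would treat the classes defined by an explicit formula, $\W_1$ and $\W_1\oplus\W_3$. For $\W_1$ one substitutes the defining formula into $\tP(x,y,z)=-\Phi(x,y,Jz)$, uses $J^2=-\I$ to simplify $f(J^2z)=-f(z)$, and then rewrites the resulting $g$-terms as $\g$-terms via $g(x,Jy)=\g(x,y)$ and $g(x,y)=-\g(x,Jy)$; with $\tf=f$ this reproduces precisely the $\W_1$-condition for $\tP$ and $\g$. The class $\W_1\oplus\W_3$ is handled identically, combining the parity computation for the $\Phi(Jx,Jy,z)$-term with the metric rewriting of the $f$-term. The cyclic-sum versions carry over verbatim, because $\sx$ commutes with every substitution used; the generic class $\W_1\oplus\W_2$ needs nothing new, as its second defining relation $\Phi(Jx,y,z)=-\Phi(x,y,Jz)$ again transforms correctly after a single $z\mapsto Jz$ replacement.

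The only real bookkeeping — and the step I expect to absorb most of the effort — is the conversion of the explicit $\W_1$ and $\W_1\oplus\W_3$ formulas, where one must reconcile the extra factor of $J$ introduced by \eqref{tPPhi} with the $J$ already hidden in $\g$. This is a routine application of $\g(x,y)=g(Jx,y)=g(x,Jy)$ and $J^2=-\I$ and presents no genuine obstacle. Since each of the eight entries of \eqref{class2} is thereby shown to be stable under the replacement $\Phi\mapsto\tP$, $g\mapsto\g$, the twin interchange preserves every class $\W_i$, which is the assertion.
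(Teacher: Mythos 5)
Your proposal is correct and follows essentially the same route as the paper: both arguments work from the $\Phi$-classification \eqref{class2}, use the identity $\tP(x,y,z)=-\Phi(x,y,Jz)$ from \eqref{tPPhi} together with the invariance of $f$ and $f^*$ from \lemref{lem:psi}, and check that each defining condition transcribes into the corresponding condition for $\tP$, $\g$, $\widetilde f$. The sign checks you sketch (the $z\mapsto Jz$ substitution in the parity conditions and the $\g(x,y)=g(x,Jy)$, $\g(x,Jy)=-g(x,y)$ rewriting for $\W_1$ and $\W_1\oplus\W_3$) all come out right; the only cosmetic remark is that \eqref{class2} contains no cyclic-sum conditions, so that part of your discussion is vacuous.
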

\begin{proof}
We use the classification by $\Phi$ in \cite{GaGrMi87}, the definitions of all classes are given in \eqref{class2}.

Applying \lemref{lem:Phi}, \lemref{lem:psi} as well as equalities \eqref{tPPhi} and \eqref{Phi03}, we get the following conditions for the considered classes in terms of $\tP$:
\[
\begin{split}
&\W_0:\; \tP=0;\\[6pt]
&\W_1:\; \tP(x,y,z)=\dfrac{1}{2n}\left\{\g(x,y)\widetilde{f}(z)+\g(x,Jy)\widetilde{f}(Jz)\right\};\\[6pt]
&\W_2:\;
\tP(x,y,z)=-\tP(Jx,Jy,z),\quad \widetilde{f}=0;
\end{split}
\]
\[
\begin{split}
&\W_3:\; \tP(x,y,z)=\tP(Jx,Jy,z);\\[6pt]
&\W_1\oplus\W_2:\; \tP(x,y,z)=-\tP(Jx,Jy,z);\\[6pt]
&\W_2\oplus\W_3:\; \widetilde{f}=0;\\[6pt]
&\W_1\oplus\W_3:\;\tP(x,y,z)=\tP(Jx,Jy,z)\\
&\phantom{\W_1\oplus\W_3:\;\tP(x,y,z)=}
+\dfrac{1}{n}\bigl\{\g(x,y)\widetilde{f}(z)+\g(x,Jy)\widetilde{f}(Jz)\bigr\};\\[6pt]
&\W_1\oplus\W_2\oplus\W_3:\;\emph{no condition}.
\end{split}
\]
Taking into account the latter characteristic conditions of the considered classes,
we obtain the truthfulness of the statement.
\end{proof}


Let us note that the invariance of $\W_1$ and $\W_3$ is proved in \cite{GaGrMi85} and \cite{MekManGri22}, respectively.

Actually, by \thmref{thm:inv.cl} we establish that the classification with basic classes $\W_i$ $(i=1,2,3)$ has four equivalent forms: in terms of $F$, $\tPP$, $\Phi$ and $\tP$.

\vskip 0.2in \addtocounter{subsubsection}{1}

\noindent  {\Large\bf{\emph{\thesubsubsection. Invariant connection}}}

\vskip 0.15in

Let us define an affine connection $\DDD^{\diamond}$ by
\begin{equation}\label{hn=nP}
\DDD^{\diamond}_x y=\DDD_x y+\dfrac12\Phi(x,y).
\end{equation}
By virtue of \eqref{tn=nPhi}, \eqref{tPsi=-Psi} and \eqref{hn=nP}, we have the following
\[
\begin{array}{l}
\widetilde{\DDD}^{\diamond}_x y=\tD_x y+\dfrac12\tP(x,y)=\DDD_x y+\Phi(x,y)-\dfrac12\Phi(x,y)\\[6pt]
\end{array}
\]
\[
\begin{array}{l}
\phantom{\widetilde{\DDD}^{\diamond}_x y}
=\DDD_x y+\dfrac12\Phi(x,y)=\DDD^{\diamond}_x y.
\end{array}
\]
Therefore, $\DDD^{\diamond}$ is an invariant connection under the twin interchange.
Bearing in mind \eqref{Phi}, we establish that $\DDD^{\diamond}$ is actually the \emph{average connection} of $\DDD$ and $\tD$, because
\begin{equation}\label{av.con}
\begin{split}
\DDD^{\diamond}_x y&=\DDD_x y+\dfrac12\Phi(x,y)=\DDD_x y+\dfrac12\left\{\tD_x y-\DDD_x y\right\}\\[6pt]
&=\dfrac12\left\{\DDD_x y+\tD_x y\right\}.
\end{split}
\end{equation}
So, we obtain
\begin{prop}\label{prop:inv.conn}
    The average connection $\DDD^{\diamond}$ of $\DDD$ and $\tD$ is an invariant connection under the twin interchange.
\end{prop}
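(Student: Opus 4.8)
The plan is to verify the statement in two short steps, both resting on facts already at hand, and then to note why it is essentially forced by symmetry. First I would identify $\DDD^{\diamond}$ explicitly as the arithmetic mean of the two Levi-Civita connections. By the defining relation \eqref{tn=nPhi} of the potential, $\tD_x y=\DDD_x y+\Phi(x,y)$, so
\[
\tfrac12\bigl\{\DDD_x y+\tD_x y\bigr\}=\DDD_x y+\tfrac12\Phi(x,y)=\DDD^{\diamond}_x y,
\]
which matches the definition \eqref{hn=nP}. This justifies calling $\DDD^{\diamond}$ the average connection of $\DDD$ and $\tD$.

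Second, I would check invariance directly. Under the twin interchange the roles of $\DDD$ and $\tD$ (equivalently of $g$ and $\g$) are exchanged, so the transformed connection is $\widetilde{\DDD}^{\diamond}_x y=\tD_x y+\tfrac12\tP(x,y)$, where $\tP$ is the potential of $\DDD$ with respect to $\tD$. The crucial input is \lemref{lem:Phi}, namely the anti-invariance $\tP(x,y)=-\Phi(x,y)$. Substituting this and using \eqref{tn=nPhi} once more gives
\[
\widetilde{\DDD}^{\diamond}_x y=\tD_x y-\tfrac12\Phi(x,y)=\DDD_x y+\tfrac12\Phi(x,y)=\DDD^{\diamond}_x y,
\]
so $\DDD^{\diamond}$ is fixed by the interchange.

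Conceptually, the whole statement is a manifestation of a simple symmetry: the twin interchange swaps $\DDD$ and $\tD$, hence automatically preserves any symmetric combination of the two, the average being the most natural one. The anti-invariance of the potential $\Phi$ recorded in \lemref{lem:Phi} is precisely the infinitesimal form of this swap, and it is the only nontrivial ingredient; since it has already been established, there is no genuine obstacle. The one point requiring care is the bookkeeping of which connection plays which role after the interchange — one must transform both the Levi-Civita term and the potential term consistently — but once $\tP=-\Phi$ is invoked, the cancellation of the two halves of $\Phi$ is immediate.
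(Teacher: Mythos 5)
Your proof is correct and follows essentially the same route as the paper: both identify $\DDD^{\diamond}$ as the average of $\DDD$ and $\tD$ via \eqref{tn=nPhi}, and both derive invariance from the anti-invariance $\tP=-\Phi$ of \lemref{lem:Phi} combined with \eqref{tn=nPhi} and \eqref{hn=nP}. The only difference is cosmetic (you state the averaging identity first, the paper does the invariance computation first), so there is nothing to add.
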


\begin{cor}\label{cor:inv.conn}
    If the invariant connection $\DDD^{\diamond}$ vanishes then $(\MM,J,g)$ and $(\MM,J,\g)$ are K\"ahler-Norden manifolds and $\DDD=\tD$ also vanishes.
\end{cor}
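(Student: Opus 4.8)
The plan is to read the hypothesis ``$\DDD^{\diamond}$ vanishes'' literally as $\DDD^{\diamond}=0$, i.e.\ all connection coefficients of $\DDD^{\diamond}$ are zero in the natural left-invariant frame---which is exactly the Lie-group setting of the example built in Subsection~\thesection.2, and the only setting in which the conclusion ``$\DDD=\tD$ also vanishes'' is meaningful. The strategy is then to squeeze everything out of the two structural properties shared by $\DDD$ and $\tD$: torsion-freeness and metric compatibility.

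First I would use the average-connection identity \eqref{av.con}, so that $\DDD^{\diamond}=\tfrac12(\DDD+\tD)=0$ is equivalent to $\tD=-\DDD$. This relation alone is not enough, since by \eqref{Phi} it only gives $\Phi=\tD-\DDD=-2\DDD$; hence the whole problem reduces to proving $\DDD=0$. To this end I would invoke torsion-freeness: writing the vanishing torsion of $\tD$ and substituting $\tD=-\DDD$, the torsion-freeness of $\DDD$ forces $[x,y]=-[x,y]$ on the frame fields, so every bracket vanishes and the underlying Lie algebra is abelian. With zero structure constants and constant components of the left-invariant metric, the Koszul formula collapses to $\DDD_{e_i}e_j=0$, whence $\DDD=0$ and therefore $\tD=-\DDD=0$ as well.

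Finally, $\DDD=\tD$ yields $\Phi=0$, so by the classification in terms of $\Phi$ in \eqref{class2} the manifold lies in $\W_0$; thus $(\MM,J,g)$ is K\"ahler--Norden, and since $\tP=-\Phi=0$ by \lemref{lem:Phi} (equivalently, by the class-invariance of \thmref{thm:inv.cl}), so is $(\MM,J,\g)$. The main obstacle is the middle step: the identity $\tD=-\DDD$ does \emph{not} by itself give $\DDD=0$, and one must combine torsion-freeness---to force the abelian structure---with metric compatibility through Koszul---to force the actual vanishing. Realising that \emph{both} ingredients are indispensable, rather than either one alone, is the crux of the argument.
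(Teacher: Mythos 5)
Your proposal is correct and follows essentially the same route as the paper: from \eqref{av.con} one gets $\tD=-\DDD$, torsion-freeness of both connections forces every Lie bracket to vanish, the Koszul formula then kills $\DDD$ (hence $\tD$ and $\Phi=-2\DDD$), and $\Phi=0$ places both manifolds in $\W_0$ by \eqref{class2}. The only divergence is your restriction to a left-invariant frame on a Lie group, which is unnecessary: the paper runs the identical argument for arbitrary vector fields, and once all brackets vanish the derivative terms in \eqref{koszul-g} also drop out (the identity $[fx,y]=f[x,y]-y(f)x=0$ forces every function to be locally constant), so the general statement holds without specialising.
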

\begin{proof}
Let us suppose that $\DDD^{\diamond}$ vanishes. Then we have the following relations $\DDD=-\tD\ $ and $\Phi=-2\DDD$, be\-cause of \eqref{hn=nP} and \eqref{av.con}. Hence we obtain
\[
[x,y]=\DDD_x y-\DDD_y x=-\dfrac12\{\Phi(x,y)-\Phi(y,x)\}=0
\]
and consequently, using the corresponding Koszul formula for $g$ and $\DDD$
\begin{equation}\label{koszul-g}
\begin{split}
2g(\DDD_x y,z)&=x\left(g(y,z)\right)+y\left(g(x,z)\right)-z\left(g(x,y)\right)\\
&+g([x,y],z)+g([z,x],y)+g([z,y],x),
\end{split}
\end{equation}
we get that $\DDD$ vanishes. Thus, $\tD$ and $\Phi$ vanish as well as $(\MM,J,g)$ and $(\MM,J,\g)$ belong to $\W_0$.
\end{proof}

\vskip 0.2in \addtocounter{subsubsection}{1}

\noindent  {\Large\bf{\emph{\thesubsubsection. Invariant tensors}}}

\vskip 0.15in

As it is well-known, the Nijenhuis tensor $[J,J]$ of the almost complex structure $J$ is
defined by \eqref{NJ}.
%
Besides $[J,J]$, in \eqref{hatNJ} it is defined the
(1,2)-tensor $\{J,J\}$, called associated
Nijenhuis tensor of $J$ and $g$.

\begin{prop}\label{prop:inv.NhN}
    The Nijenhuis tensor is invariant and the associated Nijenhuis tensor is anti-invariant under the twin interchange, i.e.
    \[
    [J,J](x,y)=\tN(x,y),\qquad \{J,J\}(x,y)=-\widetilde{\{J,J\}}(x,y).
    \]
\end{prop}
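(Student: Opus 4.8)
The two claims are of quite different character, so I would prove them separately.

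For the Nijenhuis tensor itself the invariance is essentially formal. The tensor $[J,J]$ in \eqref{NJ} is assembled only from Lie brackets of vector fields and from $J$; neither a metric nor a connection appears. Since the Lie bracket does not see the choice of torsion-free connection, recomputing $[J,J]$ with $\tD$ in place of $\DDD$ returns the same tensor, so $\tN=[J,J]$ at once. If one insists on a verification phrased through covariant derivatives as in \eqref{NJ-nabli}, I would use the pointwise identity $(\tD_x J)y=(\DDD_x J)y+\Phi(x,Jy)-J\Phi(x,y)$, which drops out of \eqref{tn=nPhi}, and substitute it into the first line of \eqref{NJ-nabli} written for $\tD$; the $\Phi$-corrections then cancel in pairs using the symmetry $\Phi(x,y)=\Phi(y,x)$ and $J^2=-\I$.

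For the associated Nijenhuis tensor the sign is genuine, and my plan is to reduce the statement to a single formula expressing $\{J,J\}$ through the potential $\Phi$ and $J$ alone, namely
\[
\{J,J\}(x,y)=\Phi(x,y)-\Phi(Jx,Jy)+J\Phi(x,Jy)+J\Phi(y,Jx).
\]
To reach it I would again insert $(\tD_x J)y=(\DDD_x J)y+\Phi(x,Jy)-J\Phi(x,y)$, this time into the second line of \eqref{NJ-nabli} for $\tD$, obtaining $\widetilde{\{J,J\}}=\{J,J\}+C$ where, after using the symmetry of $\Phi$ and $J^2=-\I$, the correction collapses to $C(x,y)=-2\Phi(x,y)+2\Phi(Jx,Jy)-2J\Phi(x,Jy)-2J\Phi(y,Jx)$. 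The substance of the argument, and the step I expect to be the main obstacle, is to show that $C=-2\{J,J\}$, which is exactly the displayed formula above. I would establish this at the $(0,3)$-level, substituting \eqref{FPhi'} into the expression \eqref{NhatF'} for $\{J,J\}$ and simplifying with the symmetry of $\Phi$ and the identity \eqref{Phi-prop}; a convenient consistency check is the class $\W_3$, where both sides vanish.

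Granting the displayed formula, the anti-invariance becomes transparent. Under the twin interchange the structure $J$ is unchanged, while the potential reverses sign, $\tP=-\Phi$, by \lemref{lem:Phi}. Since the formula is linear in $\Phi$ with all $J$-factors untouched, replacing $\Phi$ by $\tP=-\Phi$ flips the overall sign, giving $\widetilde{\{J,J\}}=-\{J,J\}$. In this way the anti-invariance of $\{J,J\}$ is inherited directly from that of $\Phi$, and the whole proposition rests on the one computational identity relating $\{J,J\}$ to $\Phi$ and $J$.
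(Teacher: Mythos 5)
Your proposal is correct, and it takes a somewhat different route from the paper's. For the Nijenhuis tensor you shortcut the computation entirely: since $[J,J]$ as a $(1,2)$-tensor is built from Lie brackets and $J$ alone, and the twin interchange leaves $J$ fixed, invariance is immediate; the paper instead works at the $(0,3)$-level, quoting $[J,J](x,y,z)=2\Phi(z,Jx,Jy)-2\Phi(z,x,y)$ from \cite{GaGrMi85}, applying the relation $\tP(x,y,z)=-\Phi(x,y,Jz)$ of \eqref{tPPhi} and then the symmetry \eqref{N-prop} to land back on $[J,J]$. Your observation is cleaner and makes transparent why no sign appears there. For the associated tensor, both arguments reduce to the anti-invariance of the potential, but you work at the $(1,2)$-level with $\tP(x,y)=-\Phi(x,y)$ from \lemref{lem:Phi}, whereas the paper combines the $(0,3)$-level formula \eqref{wNPhi} with \eqref{tPPhi}. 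The identity you single out as the main obstacle, $\{J,J\}(x,y)=\Phi(x,y)-\Phi(Jx,Jy)+J\Phi(x,Jy)+J\Phi(y,Jx)$, is genuine: lowering the index with $g$, using $g(Ju,z)=g(u,Jz)$ and the symmetry of $\Phi$ in its first two slots turns the last two terms into $\Phi(x,Jy,Jz)+\Phi(Jx,y,Jz)$, so by \eqref{Phi-prop} the right-hand side collapses to $2\Phi(x,y,z)-2\Phi(Jx,Jy,z)$ --- exactly the formula \eqref{wNPhi} that the paper cites from \cite{GaGrMi85}. So you may either carry out your planned computation from \eqref{NhatF'} and \eqref{FPhi'} (which does close, using \eqref{F'-prop} to kill the residual $F$-terms) or simply invoke \eqref{wNPhi}; in either case your correction term satisfies $C=-2\{J,J\}$ and the sign flip follows. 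What your version buys is independence from the external reference and a conceptual explanation of the asymmetry between the two tensors; what the paper's buys is brevity, at the cost of routing the metric-independent statement about $[J,J]$ through the metric.
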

\begin{proof}
The relations of $[J,J]$ and $\{J,J\}$ with $\Phi$ are given in \cite{GaGrMi85} as follows
\begin{alignat}{2}
  &[J,J](x,y,z) = 2\Phi(z,Jx,Jy)-2\Phi(z,x,y),\label{NPhi}
\\[6pt]
  &\{J,J\}(x,y,z) = 2\Phi(x,y,z)-2\Phi(Jx,Jy,z).\label{wNPhi}
\end{alignat}
Using \eqref{tPPhi}, the latter equalities imply the following
\begin{gather}
  \tN(x,y,z)=-[J,J](x,Jy,z),\label{tNN}\\[6pt]
  \widetilde{\{J,J\}}(x,y,z)=-\{J,J\}(x,y,Jz).\label{twNwN}
\end{gather}
In \eqref{N-prop}, it is given the property $[J,J](x, y,z) = [J,J](x, Jy, Jz)$ which is equivalent to $[J,J](x,Jy,z) = -[J,J](x,y, Jz)$. Then \eqref{tNN} gets the form
\begin{gather}
  \tN(x,y,z)=[J,J](x,y,Jz).\label{tNN2}
\end{gather}
The equalities \eqref{tNN2} and \eqref{twNwN} yield the relations in the statement.
\end{proof}

The following relation between the curvature tensors of $\DDD$ and $\tD$, related by \eqref{tn=nPhi}, is
well-known:
\begin{equation}\label{tRRS}
    R^{\tD}(x,y)z=R^{\DDD}(x,y)z+P(x,y)z,
\end{equation}
where
\begin{equation}\label{P'}
\begin{split}
    P(x,y)z=& \left(\DDD_x \Phi\right)(y,z)- \left(\DDD_y \Phi\right)(x,z)\\
    &+\Phi\left(x,\Phi(y,z)\right)-\Phi\left(y,\Phi(x,z)\right).
\end{split}
\end{equation}

Let us consider the following tensor $A$, which is part of $P$:
\begin{equation}\label{A'}
A(x,y)z=\Phi(x,\Phi(y,z))-\Phi(y,\Phi(x,z)).
\end{equation}
\begin{lem}\label{lem:A}
  The tensor $A(x,y)z$ is invariant under the twin interchange, i.e.
\begin{equation}\label{A=tA}
A(x,y)z=\tA(x,y)z.
\end{equation}
\end{lem}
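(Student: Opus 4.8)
The plan is to compute the twin counterpart $\tA$ directly from its definition and collapse it back to $A$ using the anti-invariance of the potential $\Phi$ established in \lemref{lem:Phi}. First I would write down what $\tA$ means: it is the tensor \eqref{A'} built from the potential of $\DDD$ with respect to $\tD$ on $(\MM,J,\g)$ in place of the potential $\Phi$ of $\tD$ with respect to $\DDD$ on $(\MM,J,g)$. That is, $\tA(x,y)z = \tP\bigl(x,\tP(y,z)\bigr) - \tP\bigl(y,\tP(x,z)\bigr)$.

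Next I would invoke \lemref{lem:Phi}, which gives the pointwise identity $\tP(x,y) = -\Phi(x,y)$ of $(1,2)$-tensors. Since $\Phi$ is $\R$-bilinear, substituting this into each nested composition produces two sign changes: one from the outer argument and one from the inner argument. Concretely, $\tP\bigl(x,\tP(y,z)\bigr) = -\Phi\bigl(x,-\Phi(y,z)\bigr) = \Phi\bigl(x,\Phi(y,z)\bigr)$, and likewise $\tP\bigl(y,\tP(x,z)\bigr) = \Phi\bigl(y,\Phi(x,z)\bigr)$. Subtracting the two then yields $\tA(x,y)z = \Phi\bigl(x,\Phi(y,z)\bigr) - \Phi\bigl(y,\Phi(x,z)\bigr) = A(x,y)z$, which is exactly \eqref{A=tA}.

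The conceptual point is that $A$ is \emph{quadratic} in the potential $\Phi$, whereas $\Phi$ itself is anti-invariant under the twin interchange; the two induced sign changes therefore cancel, leaving $A$ invariant. I do not anticipate any genuine obstacle here: the entire argument rests on \lemref{lem:Phi} together with bilinearity of $\Phi$, and reduces to a short sign-chasing computation. The only point requiring a little care is to form $\tA$ from the correct twin potential $\tP$ before substituting, rather than naively replacing $\Phi$ by $-\Phi$ in the final expression.
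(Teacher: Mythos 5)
Your argument is correct and is essentially identical to the paper's own proof: both form $\tA$ from the twin potential $\tP$, apply the anti-invariance $\tP=-\Phi$ from \lemref{lem:Phi}, and observe that the two sign changes in the quadratic expression cancel. No issues.
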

\begin{proof}
Since \eqref{tPsi=-Psi} is valid, we obtain immediately
\begin{equation*}\label{nPtnP}
\begin{split}
  &\Phi(x,\Phi(y,z)) -\Phi(y,\Phi(x,z)) \\[6pt]
    & =\tP(x,\tP(y,z))-\tP(y,\tP(x,z)),
\end{split}
\end{equation*}
which yields relation \eqref{A=tA}.
\end{proof}

\begin{lem}\label{lem:P}
  The tensor $P(x,y)z$ is anti-invariant under the twin interchange, i.e.
\begin{equation}\label{tS=-P}
    \wP(x,y)z=-P(x,y)z.
\end{equation}
\end{lem}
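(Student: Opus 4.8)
The plan is to evaluate $\wP$ straight from its definition — the analogue of \eqref{P'} with $\DDD$ and $\Phi$ replaced by $\tD$ and $\tP$ — and to reduce every term to quantities built from $\DDD$ and $\Phi$ alone. This uses the two structural facts already available: the potential formula \eqref{tn=nPhi}, which gives $\tD_x\,w = \DDD_x w + \Phi(x,w)$ for any vector field $w$, and the anti-invariance $\tP = -\Phi$ from \lemref{lem:Phi} (equation \eqref{tPsi=-Psi}).

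First I would split $P$ as the sum of its first-order part $(\DDD_x\Phi)(y,z) - (\DDD_y\Phi)(x,z)$ and its quadratic part $A(x,y)z$ of \eqref{A'}; correspondingly $\wP$ is the sum of the tilde first-order part and $\tA$. Since \lemref{lem:A} already records $\tA = A$ (equation \eqref{A=tA}), it suffices to show that the tilde first-order part equals the negative of the first-order part of $P$ minus $2A$; combining this with $\tA = A$ then yields $\wP = -P$, i.e. \eqref{tS=-P}.

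The heart of the argument is the expansion of $(\tD_x\tP)(y,z)$. Applying the Leibniz rule $(\tD_x\tP)(y,z) = \tD_x\bigl(\tP(y,z)\bigr) - \tP(\tD_x y, z) - \tP(y, \tD_x z)$, substituting $\tP = -\Phi$ and $\tD_x = \DDD_x + \Phi(x,\,\cdot\,)$ throughout, and rewriting $\DDD_x\bigl(\Phi(y,z)\bigr) = (\DDD_x\Phi)(y,z) + \Phi(\DDD_x y, z) + \Phi(y, \DDD_x z)$, the terms containing $\DDD_x y$ and $\DDD_x z$ cancel in pairs and one is left with
\[
(\tD_x\tP)(y,z) = -(\DDD_x\Phi)(y,z) - \Phi\bigl(x,\Phi(y,z)\bigr) + \Phi\bigl(\Phi(x,y),z\bigr) + \Phi\bigl(y,\Phi(x,z)\bigr).
\]
Antisymmetrising this identity in $x$ and $y$ produces the tilde first-order part: the symmetry of $\Phi$ noted after \eqref{tn=nPhi} kills the difference $\Phi(\Phi(x,y),z) - \Phi(\Phi(y,x),z)$, the surviving quadratic terms reorganise precisely into $-2A(x,y)z$, and the remaining $(\DDD\Phi)$-terms give the negative of the first-order part of $P$. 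This is exactly the required identity.

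The only real obstacle is the sign bookkeeping in this expansion: each of the three Leibniz terms carries a sign from $\tP = -\Phi$, and one must check that the connection-derivative contributions $\Phi(\DDD_x y, z)$ and $\Phi(y, \DDD_x z)$ generated by $\tD_x = \DDD_x + \Phi(x,\,\cdot\,)$ cancel cleanly against those coming from expanding $\DDD_x\bigl(\Phi(y,z)\bigr)$. Everything else is formal, so once the displayed expansion is verified the conclusion \eqref{tS=-P} follows at once.
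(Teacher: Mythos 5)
Your proposal is correct and follows essentially the same route as the paper: both proofs expand the Leibniz-rule covariant derivative of the potential, substitute the potential formula \eqref{tn=nPhi} together with $\tP=-\Phi$ from \lemref{lem:Phi}, antisymmetrise in $x,y$ so that the symmetric terms $\Phi(\Phi(x,y),z)$ cancel and the quadratic remainder organises into $-2A$, and then invoke \lemref{lem:A}. The only (immaterial) difference is the direction of substitution — the paper rewrites $(\DDD_x\Phi)(y,z)$ in terms of tilde quantities, while you rewrite $(\tD_x\tP)(y,z)$ in terms of untilded ones; since the twin interchange is an involution these are the same computation.
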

\begin{proof}
For the covariant derivative of $\Phi$ we have
\[
\left(\DDD_x\Phi\right)(y,z)=\DDD_x\Phi(y,z)-\Phi(\DDD_x y,z)-\Phi(y,\DDD_x z).
\]
Applying \eqref{tn=nPhi} and \eqref{tPsi=-Psi}, we get
\[
\begin{split}
\left(\DDD_x\Phi\right)(y,z)=&-(\tD_x\tP)(y,z)-\tP(x,\tP(y,z))\\
&+\tP(y,\tP(x,z))+\tP(z,\tP(x,y)).
\end{split}
\]
As a sequence of the latter equality and \eqref{A'} we obtain
\begin{equation}\label{nP}
\begin{split}
    &\left(\DDD_x\Phi\right)(y,z)-\left(\DDD_y\Phi\right)(x,z)\\[6pt]
    &=-(\tD_x\tP)(y,z)+(\tD_y\tP)(x,z)-2\widetilde{A}(x,y)z.
\end{split}
\end{equation}
Then, \eqref{P'}, \eqref{A'}, \eqref{A=tA} and \eqref{nP} imply relation \eqref{tS=-P}.
\end{proof}

\begin{prop}\label{prop:inv.tensor2}
    The curvature tensor $R^{\DDD^{\diamond}}$ of the average connection $\DDD^{\diamond}$ for $\DDD$ and $\tD$ is an invariant tensor under the twin interchange, i.e. $\widetilde{R^{\DDD^{\diamond}}}(x,y)z=R^{\DDD^{\diamond}}(x,y)z$.
\end{prop}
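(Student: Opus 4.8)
The plan is to leverage the two structural lemmas just established --- that $P$ is anti-invariant, $\wP=-P$, by \eqref{tS=-P}, and that its quadratic part $A$ is invariant, $\tA=A$, by \eqref{A=tA} --- together with the transformation law \eqref{tRRS} relating $R^{\tD}$ and $R^{\DDD}$. First I would record the general difference formula for the curvature of a connection obtained from $\DDD$ by adding a symmetric $(1,2)$-tensor. Since $\DDD^{\diamond}=\DDD+\tfrac12\Phi$ by \eqref{hn=nP} and $\DDD$ is torsion-free, the standard computation (expand $R^{\DDD^{\diamond}}(x,y)z=\DDD^{\diamond}_x\DDD^{\diamond}_y z-\DDD^{\diamond}_y\DDD^{\diamond}_x z-\DDD^{\diamond}_{[x,y]}z$ and collect) gives
\begin{equation*}
R^{\DDD^{\diamond}}(x,y)z=R^{\DDD}(x,y)z+\tfrac12\bigl\{(\DDD_x\Phi)(y,z)-(\DDD_y\Phi)(x,z)\bigr\}+\tfrac14\bigl\{\Phi(x,\Phi(y,z))-\Phi(y,\Phi(x,z))\bigr\}.
\end{equation*}

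Next I would recognize the two braces as precisely the linear (covariant-derivative) part and the quadratic part of $P$ appearing in \eqref{P'} and \eqref{A'}; that is, $(\DDD_x\Phi)(y,z)-(\DDD_y\Phi)(x,z)=P(x,y)z-A(x,y)z$. Substituting this into the display above and simplifying the coefficients yields the compact expression
\begin{equation*}
R^{\DDD^{\diamond}}(x,y)z=R^{\DDD}(x,y)z+\tfrac12 P(x,y)z-\tfrac14 A(x,y)z.
\end{equation*}
Now I apply the twin interchange term by term, using that $R^{\DDD}$ transforms into the curvature $R^{\tD}$ of $\tD$, that $P\mapsto\wP=-P$, and that $A\mapsto\tA=A$. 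This gives $\widetilde{R^{\DDD^{\diamond}}}=R^{\tD}-\tfrac12 P-\tfrac14 A$; finally inserting $R^{\tD}=R^{\DDD}+P$ from \eqref{tRRS} collapses the right-hand side back to $R^{\DDD}+\tfrac12 P-\tfrac14 A=R^{\DDD^{\diamond}}$, which is exactly the asserted invariance.

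The only genuinely delicate point is the bookkeeping in the first display: one must derive the difference formula for the $(1,3)$-curvature under $\DDD\mapsto\DDD+\tfrac12\Phi$ carefully, and in particular check that torsion-freeness of $\DDD$ annihilates the terms carrying $\DDD_x y-\DDD_y x-[x,y]$, so that only the antisymmetrized covariant-derivative and quadratic combinations survive. Everything afterwards is purely algebraic substitution from the already-proved invariance properties. I would also note that the conclusion is in fact conceptually immediate from \propref{prop:inv.conn}: since the connection $\DDD^{\diamond}$ is itself invariant under the twin interchange, any object determined solely by it --- its curvature tensor in particular, which as a $(1,3)$-tensor involves no metric and hence no sign subtlety from $g\mapsto\g$ --- is automatically invariant; the explicit computation above merely renders this concrete and verifies its consistency with \eqref{tRRS}.
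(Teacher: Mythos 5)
Your proposal is correct and follows essentially the same route as the paper: derive $R^{\DDD^{\diamond}}=R^{\DDD}+\tfrac12P-\tfrac14A$ from the difference formula for $\DDD^{\diamond}=\DDD+\tfrac12\Phi$, then substitute the anti-invariance of $P$, the invariance of $A$, and relation \eqref{tRRS}. Your closing remark that the result is already immediate from \propref{prop:inv.conn} (an invariant connection has invariant curvature) is a valid and cleaner observation that the paper does not make explicit, though the explicit formula \eqref{hRRSA} is still needed later.
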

\begin{proof}
From \eqref{hn=nP}, using the formulae
\eqref{tn=nPhi}, \eqref{tRRS}, \eqref{P'} and \eqref{A'}, we get the following relation
\begin{equation*}\label{hRRS}
\begin{split}
    R^{\DDD^{\diamond}}(x,y)z=R^{\DDD}(x,y)z&+\dfrac12\left(\DDD_x\Phi\right)(y,z)\\[6pt]
                    &-\dfrac12\left(\DDD_y\Phi\right)(x,z)
    +\dfrac14 A(x,y)z,
\end{split}
\end{equation*}
which is actually
\begin{equation}\label{hRRSA}
    R^{\DDD^{\diamond}}(x,y)z=R^{\DDD}(x,y)z+\dfrac12P(x,y)z
    -\dfrac14 A(x,y)z.
\end{equation}
By virtue of \eqref{tRRS}, \eqref{A=tA}, \eqref{tS=-P} and \eqref{nP}, we establish the relation $\widetilde{R^{\DDD^{\diamond}}}=R^{\DDD^{\diamond}}$.
\end{proof}

As a consequence of \eqref{hRRSA}, we obtain the following
\begin{cor}\label{cor:inv.tensor2}
    The invariant tensor $R^{\DDD^{\diamond}}$ vanishes if and only if the following equality is valid
    \[
    R^{\DDD}(x,y)z=-\dfrac12 P(x,y)z+\dfrac14 A(x,y)z.
    \]
\end{cor}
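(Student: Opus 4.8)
The plan is to read off the claim directly from the structural identity~\eqref{hRRSA} established in the proof of \propref{prop:inv.tensor2}, which already expresses the curvature of the average connection as
\[
R^{\DDD^{\diamond}}(x,y)z=R^{\DDD}(x,y)z+\dfrac12 P(x,y)z-\dfrac14 A(x,y)z.
\]
Since this is a pointwise tensor identity holding for all vector fields $x$, $y$, $z$, the vanishing of the left-hand side is an equation with no hidden analytic content, so the entire corollary should reduce to an elementary rearrangement.

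First I would assume $R^{\DDD^{\diamond}}=0$ and substitute into~\eqref{hRRSA}, obtaining $0=R^{\DDD}(x,y)z+\tfrac12 P(x,y)z-\tfrac14 A(x,y)z$, and then isolate $R^{\DDD}(x,y)z$ to recover precisely the displayed equality. For the converse direction I would start from the stated equality $R^{\DDD}(x,y)z=-\tfrac12 P(x,y)z+\tfrac14 A(x,y)z$, plug it back into the right-hand side of~\eqref{hRRSA}, and observe that the $P$- and $A$-terms cancel, forcing $R^{\DDD^{\diamond}}(x,y)z=0$. Because both implications are just the two directions of solving a single linear tensor equation, the ``if and only if'' is immediate once~\eqref{hRRSA} is in hand.

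I do not expect a genuine obstacle here: all the substantive work — the definition of $\DDD^{\diamond}$ in~\eqref{hn=nP}, the curvature transformation~\eqref{tRRS}, the splitting~\eqref{P'} and~\eqref{A'}, and their combination into~\eqref{hRRSA} — was carried out earlier, so this statement is a formal consequence. The only point worth stating explicitly is that~\eqref{hRRSA} is a tensorial identity valid for arbitrary arguments, which is what licenses treating ``$R^{\DDD^{\diamond}}=0$'' as equivalent to the single algebraic relation among $R^{\DDD}$, $P$ and $A$ rather than to some weaker trace or contracted condition.
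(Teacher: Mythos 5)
Your proposal is correct and matches the paper exactly: the corollary is presented there as an immediate consequence of the identity \eqref{hRRSA}, and the paper gives no further argument beyond the rearrangement you describe. Both directions of the equivalence are just the two ways of solving that single pointwise tensor identity, so nothing more is needed.
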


Let us consider the average tensor $B$ of the curvature tensors $R^{\DDD}$ and $R^{\tD}$, respectively, i.e.
$B(x,y)z=\dfrac12\{R^{\DDD}(x,y)z+R^{\tD}(x,y)z\}$.
Then by \eqref{tRRS} we have
\begin{equation}\label{bR=RS}
    B(x,y)z=R^{\DDD}(x,y)z+\dfrac12 P(x,y)z.
\end{equation}

\begin{prop}\label{prop:inv.tensor}
    The average tensor $B$ of $R^{\DDD}$ and $R^{\tD}$ is an invariant tensor under the twin interchange, i.e. $B(x,y)z=\widetilde{B}(x,y)z$.
\end{prop}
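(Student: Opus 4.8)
The plan is to show that the average tensor $B$ coincides with the average of the curvature tensors of $\DDD$ and $\tD$ by construction, and then exploit the anti-invariance of the potential $\Phi$ established in \lemref{lem:Phi}. First I would recall that $B(x,y)z=\tfrac12\{R^{\DDD}(x,y)z+R^{\tD}(x,y)z\}$ is manifestly symmetric in the roles of $\DDD$ and $\tD$. Under the twin interchange, the Levi-Civita connections $\DDD$ and $\tD$ swap (modulo the sign subtleties recorded in \eqref{twin}), so the curvature tensors $R^{\DDD}$ and $R^{\tD}$ are exchanged. Since $B$ is the arithmetic mean of the two, it is left unchanged. The cleanest way to make this rigorous, however, is to work algebraically through the potential.

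The key computation would proceed from \eqref{bR=RS}, namely $B(x,y)z=R^{\DDD}(x,y)z+\tfrac12 P(x,y)z$. Applying the twin interchange to this expression, I would use the anti-invariance of $P$ proved in \lemref{lem:P}, that is $\wP(x,y)z=-P(x,y)z$, together with the standard relation \eqref{tRRS} giving $R^{\tD}=R^{\DDD}+P$. After the interchange, the curvature tensor $R^{\DDD}$ is replaced by $R^{\tD}=R^{\DDD}+P$, while $P$ is replaced by $-P$. Substituting into the interchanged version of \eqref{bR=RS} yields
\begin{equation*}
\widetilde{B}(x,y)z=R^{\tD}(x,y)z+\dfrac12\wP(x,y)z
=R^{\DDD}(x,y)z+P(x,y)z-\dfrac12 P(x,y)z,
\end{equation*}
which simplifies to $R^{\DDD}(x,y)z+\tfrac12 P(x,y)z=B(x,y)z$. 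This establishes the claimed invariance $\widetilde{B}=B$.

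I expect no serious obstacle here, since all the necessary pieces are already in place: the decomposition $R^{\tD}=R^{\DDD}+P$ from \eqref{tRRS}, the anti-invariance $\wP=-P$ from \lemref{lem:P}, and the defining identity \eqref{bR=RS}. The only point requiring mild care is bookkeeping of what "applying the twin interchange" does to each constituent tensor—specifically, ensuring that $R^{\DDD}$ transforms into $R^{\tD}$ and not merely by a relabelling that would double-count the potential. Once one consistently reads the interchange as swapping $\DDD\leftrightarrow\tD$ and hence $R^{\DDD}\leftrightarrow R^{\tD}$, the cancellation of the $P$-terms is immediate and the proof closes in a single line of substitution.
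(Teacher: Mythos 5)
Your proof is correct and follows essentially the same route as the paper: starting from \eqref{bR=RS}, applying the twin interchange, and using $R^{\tD}=R^{\DDD}+P$ together with $\wP=-P$ from \lemref{lem:P} to cancel the potential terms. The computation $\widetilde{B}=R^{\tD}+\tfrac12\wP=R^{\DDD}+P-\tfrac12 P=B$ is exactly the paper's argument.
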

\begin{proof}
Using \eqref{tRRS}, \eqref{bR=RS} and \eqref{tPsi=-Psi},  we have the following
\begin{equation*}\label{tbR=bR}
\begin{split}
    \widetilde{B}(x,y)z&=R^{\tD}(x,y)z+\dfrac12 \wP(x,y)z\\[6pt]
    \phantom{\widetilde{B}(x,y)z}
    &=R^{\DDD}(x,y)z+P(x,y)z-\dfrac12 P(x,y)z\\[6pt]
    \phantom{\widetilde{B}(x,y)z}
    &=R^{\DDD}(x,y)z+\dfrac12 P(x,y)z=B(x,y)z.
\end{split}
\end{equation*}
\vskip-2em
\end{proof}

Immediately from \eqref{bR=RS} we obtain the next
\begin{cor}\label{cor:bR=0}
    The invariant tensor $B$ vanishes if and only if the following equality is valid
    \[
    R^{\DDD}=-\dfrac12P.
    \]
\end{cor}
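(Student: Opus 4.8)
The plan is to read the equivalence directly off the representation \eqref{bR=RS}, since all the substantive work has already been carried out in establishing that formula. Recall that $B$ is defined as the average $B(x,y)z=\frac12\{R^{\DDD}(x,y)z+R^{\tD}(x,y)z\}$, and that combining this definition with the transformation law \eqref{tRRS} relating $R^{\tD}$ and $R^{\DDD}$ yields the clean expression $B(x,y)z=R^{\DDD}(x,y)z+\frac12 P(x,y)z$ recorded in \eqref{bR=RS}.

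From here the argument is immediate. The identity \eqref{bR=RS} is an equality of $(1,3)$-tensors, holding when evaluated on arbitrary vector fields $x$, $y$, $z$. Hence $B$ vanishes identically if and only if $R^{\DDD}(x,y)z+\frac12 P(x,y)z=0$ for all such $x$, $y$, $z$, which upon isolating the curvature term is precisely the stated equation $R^{\DDD}=-\frac12 P$. Conversely, substituting $R^{\DDD}=-\frac12 P$ back into \eqref{bR=RS} recovers $B=0$, so the two conditions are genuinely equivalent.

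There is essentially no obstacle to overcome here: the corollary is a one-line rearrangement of an already-proved formula. The only point worth a moment's attention is that \eqref{bR=RS} is a true tensorial identity valid for every choice of arguments---not merely a trace or a relation among scalar invariants---so that the vanishing of $B$ really does force the pointwise equality of $R^{\DDD}$ and $-\frac12 P$ as $(1,3)$-tensors, rather than merely the vanishing of some contraction. In short, the whole content of the statement is packaged into \eqref{bR=RS}, and the corollary records the obvious consequence.
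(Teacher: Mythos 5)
Your argument is correct and is exactly the paper's: the corollary is stated there as following ``immediately from \eqref{bR=RS}'', which is precisely the rearrangement you perform. Nothing further is needed.
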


By virtue of \eqref{hRRSA} and \eqref{bR=RS}, we have the following relation between the invariant tensors $R^{\DDD^{\diamond}}$, $B$ and $A$
\begin{equation}\label{wRbRPhi}
    R^{\DDD^{\diamond}}(x,y)z=B(x,y)z
    -\dfrac14 A(x,y)z.
\end{equation}

\begin{thm}\label{thm:inv.tensors}
    Any linear combination of the average tensor $B$ of the curvature tensors $R^{\DDD}$ and $R^{\tD}$ and the curvature tensor $R^{\DDD^{\diamond}}$ of the average connection $\DDD^{\diamond}$ for $\DDD$ and $\tD$  is an invariant tensor under the twin interchange.
\end{thm}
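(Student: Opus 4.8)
The plan is to reduce the assertion to the invariance of the two constituent tensors, already established in the two preceding propositions, combined with the elementary observation that the twin interchange acts linearly on tensors. Write an arbitrary such combination as $C=\al B+\bt R^{\DDD^{\diamond}}$ with $\al,\bt\in\R$; the goal is simply to verify $\widetilde{C}=C$.

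First I would record why the interchange commutes with real linear combinations. Forming the twin counterpart of a tensor amounts to replacing the geometric data $(g,\DDD)$ by $(\g,\tD)$ throughout the natural construction of that tensor. By \eqref{bR=RS} and \eqref{hRRSA}, both $B$ and $R^{\DDD^{\diamond}}$ are assembled from the very same ingredients $R^{\DDD}$, $P$ and $A$, so the counterpart of their linear combination is obtained by replacing each ingredient by its tilde version; hence $\widetilde{C}=\al\widetilde{B}+\bt\widetilde{R^{\DDD^{\diamond}}}$.

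It then suffices to substitute the invariance results $\widetilde{B}=B$ of \propref{prop:inv.tensor} and $\widetilde{R^{\DDD^{\diamond}}}=R^{\DDD^{\diamond}}$ of \propref{prop:inv.tensor2}, which immediately gives $\widetilde{C}=\al B+\bt R^{\DDD^{\diamond}}=C$. Equivalently, one could bypass \propref{prop:inv.tensor2} by using \eqref{wRbRPhi} to rewrite $C=(\al+\bt)B-\tfrac{\bt}{4}A$ and then appeal directly to the invariance of $B$ (\propref{prop:inv.tensor}) and of $A$ (\lemref{lem:A}). I do not expect a genuine obstacle here: the single substantive ingredient is the linearity of the twin interchange, and all the remaining content is carried by the invariance statements already proved for $B$, $R^{\DDD^{\diamond}}$ and $A$.
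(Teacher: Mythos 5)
Your proposal is correct and follows exactly the paper's route: the paper's proof is simply the observation that the claim follows from \propref{prop:inv.tensor2} and \propref{prop:inv.tensor}, which is your main argument. The extra remarks on the linearity of the twin interchange and the alternative reduction via \eqref{wRbRPhi} and \lemref{lem:A} are sound but not needed beyond what the paper records.
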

\begin{proof}
It follows from \propref{prop:inv.tensor2} and \propref{prop:inv.tensor}.
\end{proof}

\vskip 0.2in \addtocounter{subsubsection}{1}

\noindent  {\Large\bf{\emph{\thesubsubsection. Invariant connection and invariant tensors on the manifolds in the main class}}}

\vskip 0.15in

Now, we consider an arbitrary manifold $(\MM,J,g)$ belonging to the basic class $\W_1$. This class is known as the main class in the classification in \cite{GaBo}, because it is the only class where the fundamental tensor $F$ and the potential $\Phi$ are expressed explicitly by the metric. In this case, we have the form of $F$ and $\Phi$ in \eqref{Wi} and \eqref{class2}, respectively.
Taking into account \eqref{tFF'}, \eqref{Wi} and \eqref{F'-prop},
we obtain the following form of  $F$ 
under the twin interchange
\begin{equation*}\label{W'1:tF}
\begin{array}{l}
  \tPP(x,y,z)=-\dfrac{1}{2n}\bigl\{
  g(x,y)\ta(Jz)+g(x,z)\ta(Jy)\\[6pt]
  \phantom{\tPP(x,y,z)=-\dfrac{1}{2n}\bigl\{}
  -g(x,Jy)\ta(z)-g(x,Jz)\ta(y)\bigr\}.
\end{array}
\end{equation*}
Therefore, we get the following relation for a $\W_1$-manifold, i.e. an almost Norden manifold belonging to the class $\W_1$,
\begin{equation*}\label{W'1:tFF}
  \tPP(x,y,z)=F(Jx,y,z).
\end{equation*}

%
%

The invariant connection $\DDD^{\diamond}$ has the following form on a $\W_1$-manifold, applying the definition from \eqref{class2} in \eqref{hn=nP},
\begin{equation*}\label{W'1:inv-n}
\DDD^{\diamond}_x y=\DDD_x y+\dfrac{1}{4n}\left\{g(x,y)f^{\sharp}+g(x,Jy)Jf^{\sharp}\right\},
\end{equation*}
where $f^{\sharp}$ is the dual vector of the 1-form $f$ regarding $g$, i.e. \[f(z)=g(f^{\sharp},z).\]

The presence of the first equality in \eqref{class2}, the explicit expression of $\Phi$ in terms of $g$ for the case of a $\W_1$-manifold,
gives us a chance to find a more concrete form of $P$ and $A$ defined by \eqref{P'} and  \eqref{A'}, respectively. This expression gives results in the corresponding relations between $R^{\DDD}$ and $R^{\tD}$, $R^{\DDD^{\diamond}}$, $B$, given in \eqref{tRRS}, \eqref{hRRSA}, \eqref{bR=RS}, respectively. A relation between $R^{\DDD}$ and $R^{\tD}$ for a $\W_1$-manifold is given in \cite{MT06th} but using $\ta$.

\begin{prop}\label{prop:W'1_K}
If $(\MM,J,g)$ is an almost Norden manifold belonging to the class $\W_1$, then the tensors $P$ and $A$ have the following form, respectively:
\[
\begin{array}{l}
P(x,y)z=\dfrac{1}{2n}\bigl\{g(y,z)p(x)+g(y,Jz)Jp(x)\\[6pt]
\phantom{P(x,y)z=\dfrac{1}{2n}}
-g(x,z)p(y)-g(x,Jz)Jp(y)\bigr\}, \\[6pt]
A(x,y)z=\dfrac{1}{4n^2}\bigl\{g(y,z)a(x)+g(y,Jz)a(Jx)\\[6pt]
\phantom{A(x,y)z=\dfrac{1}{4n^2}}
-g(x,z)a(y)-g(x,Jz)a(Jy)\bigr\},
\end{array}
\]
where
\[
\begin{array}{l}
p(x)=\DDD_x f^{\sharp}+\dfrac{1}{2n}\{f(x)f^{\sharp}-f(f^{\sharp})x-f(Jf^{\sharp})Jx\}
\\[6pt]
 a(x)=f(x)f^{\sharp}+f(Jx)Jf^{\sharp}.
\end{array}
\]
\end{prop}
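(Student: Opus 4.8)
The plan is to substitute the explicit $\W_1$-form of the potential into the defining formulae \eqref{A'} and \eqref{P'} and reduce everything algebraically. First I would rewrite the $\W_1$-condition of \eqref{class2} as an identity for the $(1,2)$-tensor $\Phi$ itself. Since $\g(x,y)=g(Jx,y)$ is symmetric, $J$ is $g$-self-adjoint, whence $f(Jz)=g(f^{\sharp},Jz)=g(Jf^{\sharp},z)$; using \eqref{Phi03} this turns the class condition into
\[
\Phi(x,y)=\dfrac{1}{2n}\bigl\{g(x,y)f^{\sharp}+g(x,Jy)Jf^{\sharp}\bigr\},
\]
which is exactly the expression already used for $\DDD^{\diamond}$ on a $\W_1$-manifold.

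For the tensor $A$ this is all that is needed. Substituting the displayed form of $\Phi$ into $A(x,y)z=\Phi(x,\Phi(y,z))-\Phi(y,\Phi(x,z))$ and repeatedly using $J^2=-\I$ and $g(Jx,Jy)=-g(x,y)$ from \eqref{2.1}, together with $g(x,f^{\sharp})=f(x)$ and $g(x,Jf^{\sharp})=f(Jx)$, collapses the four terms of each composition into $g(y,z)a(x)+g(y,Jz)a(Jx)$ with $a(x)=f(x)f^{\sharp}+f(Jx)Jf^{\sharp}$ (note $a(Jx)=-Ja(x)$); antisymmetrising in $x,y$ gives the stated formula for $A$. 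This step is purely algebraic and routine.

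The computation for $P$ is the real work. I would expand the derivative part $(\DDD_x\Phi)(y,z)-(\DDD_y\Phi)(x,z)$ by the Leibniz rule, using $\DDD g=0$, the identity $(\DDD_x\g)(y,z)=F(x,y,z)$ (immediate from \eqref{F'} since $\g(y,z)=g(Jy,z)$), and $\DDD_x(Jf^{\sharp})=(\DDD_xJ)f^{\sharp}+J\DDD_xf^{\sharp}$. This produces the leading term $\tfrac{1}{2n}g(y,z)\DDD_xf^{\sharp}$ together with its $g(y,Jz)$-companion, plus lower-order pieces carrying $F(x,y,z)$ and $(\DDD_xJ)f^{\sharp}$. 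Into these I would insert the $\W_1$-form of $F$ from \eqref{Wi} and the relations $f=-\widetilde\ta$, $\widetilde\ta=\ta\circ J$ of \eqref{wtataJ}, so that every $F$-contribution is re-expressed through $f$, $f^{\sharp}$, $Jf^{\sharp}$ and the scalars $f(f^{\sharp})$, $f(Jf^{\sharp})$.

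Finally I would add the already-computed $A$ and antisymmetrise. The main obstacle is the bookkeeping at this stage: the $F$-generated terms (which split into several $g$- and $\g$-coefficients after the $\W_1$-substitution), the $(\DDD_xJ)f^{\sharp}$ terms, and the $A$-contribution must be collected by tensor type, and one must track both the $g(y,z)$- and the $g(y,Jz)$-components separately, since here the companion vector $Jp(x)$ does not arise by the same sign rule as $a(Jx)$ did for $A$. Verifying that the extraneous pieces cancel and that the survivors condense into
\[
p(x)=\DDD_xf^{\sharp}+\dfrac{1}{2n}\bigl\{f(x)f^{\sharp}-f(f^{\sharp})x-f(Jf^{\sharp})Jx\bigr\}
\]
yields $P(x,y)z=\tfrac{1}{2n}\{g(y,z)p(x)+g(y,Jz)Jp(x)\}$ minus its $x\leftrightarrow y$ swap. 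The scalars $f(f^{\sharp})$ and $f(Jf^{\sharp})$ are precisely the traces produced when the $\W_1$-form of $F$ is contracted against $f^{\sharp}$ inside the $(\DDD_xJ)f^{\sharp}$ term, so I expect that contraction to be the delicate point of the verification.
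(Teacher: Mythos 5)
Your strategy is the same one the paper invokes (its proof is literally ``direct computations, using \eqref{Wi}, \eqref{class2}, \eqref{P'} and \eqref{A'}''), and your treatment of $A$ is correct: substituting $\Phi(x,y)=\tfrac{1}{2n}\{g(x,y)f^{\sharp}+g(x,Jy)Jf^{\sharp}\}$ into \eqref{A'} does give $\Phi(x,\Phi(y,z))=\tfrac{1}{4n^2}\{g(y,z)a(x)+g(y,Jz)a(Jx)\}$ with $a(Jx)=-Ja(x)$. You also correctly isolate the delicate point: the scalars $f(f^{\sharp})$ and $f(Jf^{\sharp})$ enter only through the contraction $(\DDD_xJ)f^{\sharp}=\tfrac{1}{2n}\{f(x)Jf^{\sharp}-f(Jx)f^{\sharp}+f(Jf^{\sharp})x-f(f^{\sharp})Jx\}$.

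However, the final condensation you assert for $P$ does not close in the way you describe, and this is precisely at the spot you yourself flagged. Expanding as you propose gives
\[
(\DDD_x\Phi)(y,z)=\tfrac{1}{2n}\bigl\{g(y,z)\DDD_xf^{\sharp}+F(x,y,z)Jf^{\sharp}
+g(y,Jz)\bigl[(\DDD_xJ)f^{\sharp}+J\DDD_xf^{\sharp}\bigr]\bigr\},
\]
and after antisymmetrising and adding $A$, the $g(y,Jz)$-coefficient is indeed $\tfrac{1}{2n}Jp(x)$ with the stated $p$. But the term $(\DDD_xJ)f^{\sharp}$ --- the \emph{only} source of $f(f^{\sharp})$ and $f(Jf^{\sharp})$ in the entire computation --- is multiplied exclusively by $g(y,Jz)$. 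In the $g(y,z)$-slot the $F$-difference contributes only $-\tfrac{1}{2n}f(Jx)Jf^{\sharp}$, which combines with $\tfrac{1}{2n}a(x)$ from $A$ to leave $\DDD_xf^{\sharp}+\tfrac{1}{2n}f(x)f^{\sharp}$, not $p(x)$: the terms $-\tfrac{1}{2n}\{f(f^{\sharp})x+f(Jf^{\sharp})Jx\}$ have no source there, and the resulting discrepancy $\tfrac{1}{2n}f(f^{\sharp})\{g(y,z)x-g(x,z)y\}+\tfrac{1}{2n}f(Jf^{\sharp})\{g(y,z)Jx-g(x,z)Jy\}$ is not identically zero and cannot be absorbed into the $g(\cdot,Jz)$-terms. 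So before declaring the verification complete you must either locate a contribution I do not see, or accept that the displayed $p$ is correct only as the $-J$-image of the $g(y,Jz)$-coefficient while the $g(y,z)$-coefficient is the shorter vector $\DDD_xf^{\sharp}+\tfrac{1}{2n}f(x)f^{\sharp}$; in the latter case the proposition as stated needs a correction, and your proof should record the two coefficients separately rather than forcing both into a single $p$.
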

\begin{proof}
The formulae follow by direct computations, using \eqref{Wi}, \eqref{class2}, \eqref{P'} and \eqref{A'}.
\end{proof}


\vskip 0.2in \addtocounter{subsection}{1}  \setcounter{subsubsection}{0}

\noindent  {\Large\bf \thesubsection. Lie group as a manifold from the main class and the invariant connection and the invariant tensors on it}

\vskip 0.15in
\label{sec_3}

In this subsection we consider an example of a 4-dimensional Lie
group as a $\W_1$-manifold given in \cite{MT06ex}.

Let $\LL$ be a 4-dimensional real connected Lie group, and let
$\mathfrak{l}$ be its Lie algebra with a basis
$\{x_{1},x_{2},x_{3},x_{4}\}$.

We introduce an almost complex structure
$J$ and a Norden metric by
\begin{equation}\label{Jdim4}
\begin{array}{llll}
Jx_{1}=x_{3}, \quad & Jx_{2}=x_{4}, \quad & Jx_{3}=-x_{1},
\quad &
Jx_{4}=-x_{2},
\end{array}
\end{equation}
\begin{equation}\label{gL4}
\begin{array}{c}
  g(x_1,x_1)=g(x_2,x_2)=-g(x_3,x_3)=-g(x_4,x_4)=1, \\[6pt]
  g(x_i,x_j)=0,\; i\neq j \quad (i,j=1,2,3,4).
\end{array}
\end{equation}
Then, the associated Norden metric $\g$ is determined by its non-zero components
\begin{equation}\label{tg}
\begin{array}{c}
  \g(x_1,x_3)=\g(x_2,x_4)=-1.
\end{array}
\end{equation}

Let us consider $(\LL,J,g)$ with the Lie algebra $\mathfrak{l}$
determined by the following nonzero commutators:
\begin{equation}\label{lie-w1-2}
\begin{array}{l}
\left
[x_{1},x_{4}\right]=[x_{2},x_{3}]=\lm_{1}x_{1}+\lm_{2}x_{2}+\lm_{3}x_{3}+\lm_{4}x_{4},\\[6pt]
\left[x_{1},x_{3}\right]=[x_{4},x_{2}]=\lm_{2}x_{1}-\lm_{1}x_{2}+\lm_{4}x_{3}-\lm_{3}x_{4},
\end{array}
\end{equation}
where $\lm_i\in\R$ ($i=1,2,3,4$).

In \cite{MT06ex}, it is proved that $(\LL,J,g)$ is a $\W_1$-manifold.
Since the class $\W_1$ is invariant under the twin interchange, according to \thmref{thm:inv.cl}, it follows that $(\LL,J,\g)$ belongs to  $\W_1$, too.

\begin{thm}\label{thm:W'10-W'}
Let $(\LL,J,g)$ and $(\LL,J,\g)$ be the pair of $\W_1$-manifolds, determined by
\eqref{Jdim4}--\eqref{lie-w1-2}. Then both the manifolds:
\begin{enumerate}\renewcommand{\labelenumi}{\emph{(\roman{enumi})}}
    \item belong to the class of the locally conformal
        K\"ahler-Norden manifolds if and only if
        \begin{equation}\label{lm}
        \lm_1^2-\lm_2^2+\lm_3^2-\lm_4^2=\lm_1\lm_2+\lm_3\lm_4=0;
        \end{equation}
    \item   are 
            locally conformally flat by usual conformal transformations
            and the curvature tensors $R^{\DDD}$ and $R^{\tD}$ have the following form, respectively:
        \begin{equation}\label{Rform3}
            \begin{array}{l}
                R^{\DDD}=-\dfrac{1}{2}g\owedge\rho^{\DDD}+\dfrac{1}{12}\tau^{\DDD} g\owedge g,\\[9pt]
                R^{\tD}=  -\dfrac{1}{2}\g\owedge\rho^{\tD}
                                +\dfrac{1}{12}\tau^{\tD}\g\owedge\g.
            \end{array}
        \end{equation}
    \item   are scalar flat and isotropic K\"ahlerian
            if and only if the following conditions are satisfied, respectively:
        \begin{equation*}\label{llll2}
        \lm_{1}^{2}+\lm_{2}^{2}-\lm_{3}^{2}-\lm_{4}^{2}=0,\qquad
        \lm_{1}\lm_{3}+\lm_{2}\lm_{4}=0.
        \end{equation*}
\end{enumerate}
\end{thm}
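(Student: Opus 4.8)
The plan is to reduce the whole statement to a finite computation on the Lie algebra $\mathfrak{l}$, since $J$, $g$, $\g$ and the structure constants in \eqref{lie-w1-2} are all left-invariant. First I would determine the Levi-Civita connection $\DDD$ of $g$ on the basis $\{x_1,x_2,x_3,x_4\}$. Because the coefficients $g(x_i,x_j)$ in \eqref{gL4} are constant on left-invariant fields, the derivative terms in the Koszul formula \eqref{koszul-g} drop out and it collapses to $2g(\DDD_{x_i}x_j,x_k)=g([x_i,x_j],x_k)-g([x_j,x_k],x_i)+g([x_k,x_i],x_j)$, so every $\DDD_{x_i}x_j$ is read off directly from \eqref{lie-w1-2} as an explicit combination of the $x_k$ with coefficients linear in $\lm_1,\dots,\lm_4$. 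This connection table is the common input for all three parts.

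With $\DDD$ available I would compute the curvature $R^{\DDD}$ from \eqref{K}, taking care that the term $\DDD_{[x_i,x_j]}$ uses \eqref{lie-w1-2} once more, and then extract the Ricci tensor $\rho^{\DDD}$ and the scalar curvature $\tau^{\DDD}$ by contracting with $g^{ij}$ from \eqref{gL4}. In parallel I would compute the fundamental tensor $F$ through \eqref{F'} and read off its Lee form $\ta$; since $(\LL,J,g)\in\W_1$, $F$ has the explicit shape in \eqref{Wi} and $\ta$ is a left-invariant $1$-form with components linear in the $\lm_i$.

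For (i) I would use that a $\W_1$-manifold is locally conformally equivalent to a K\"ahler-Norden ($\W_0$) manifold precisely when its Lee form is closed; for left-invariant data $d\ta(x_i,x_j)=-\ta([x_i,x_j])$, so $d\ta=0$ becomes a system of quadratic equations in the $\lm_i$ that I expect to reduce exactly to \eqref{lm}. For (ii) I would substitute $R^{\DDD}$, $\rho^{\DDD}$, $\tau^{\DDD}$ into the Weyl tensor \eqref{W'} with $n=2$ (so that the coefficients become $\tfrac12$ and $\tfrac{1}{12}$) and verify $C^{\DDD}\equiv 0$ for every value of the parameters; this is exactly local conformal flatness, and solving $C^{\DDD}=0$ for $R^{\DDD}$ yields the first line of \eqref{Rform3}. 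For (iii) scalar-flatness $\tau^{\DDD}=0$ is expected to reduce to the first displayed condition, while the isotropic-K\"ahler condition $\DDJ=0$, computed from \eqref{snorm} via $F$, reduces to the second.

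Finally, every assertion about $(\LL,J,\g)$ I would obtain without a second long computation by invoking the invariance results already proved: the classes are twin-invariant by \thmref{thm:inv.cl}, so $(\LL,J,\g)\in\W_1$ as well, and the average curvature tensor $B$ is invariant by \propref{prop:inv.tensor}, which together with the explicit relation \eqref{tg} between $\g$ and $g$ transports the $g$-side conclusions to $\tD$ and $\g$ (giving the second line of \eqref{Rform3}); alternatively one simply repeats the computation with $\tD$ and $\g$. The main obstacle is the curvature bookkeeping: with four free parameters in neutral signature the curvature, Ricci and Weyl components form a sizeable symbolic system, and the delicate point is to organize the algebra so that the identical vanishing of $C^{\DDD}$ and the factorizations producing \eqref{lm} and the conditions in (iii) emerge cleanly rather than as an unstructured mass of terms.
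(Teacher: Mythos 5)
Your proposal follows essentially the same route as the paper's proof: compute $\DDD$ (and $\tD$) from the Koszul formula on the left-invariant basis, then the curvature, Ricci and scalar curvature, check that the Weyl tensor vanishes identically to get (ii), read off (iii) from $\tau^{\DDD}$, $\tau^{\tD}$ and the square norms $\DDJ$, $\tnJ$, and characterize (i) through closedness of the Lee forms. Two caveats. First, the criterion you invoke for (i) is incomplete as stated: a $\W_1$-manifold is locally conformal K\"ahler-Norden if and only if \emph{both} Lee forms $\ta$ and $\ta^*$ are closed, not just $\ta$. In this example $\D\ta^*$ turns out to vanish identically, so the final condition \eqref{lm} is unaffected, but your plan should include the check of $\D\ta^*$. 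Second, for the $\g$-side the paper does not transport the conclusions via the invariant tensor $B$; it computes $R^{\tD}$, $\rho^{\tD}$, $\tau^{\tD}$ and the Weyl tensor $\widetilde{C}$ directly (your stated fallback). The $B$-route is not obviously viable: $B$ alone does not determine $R^{\tD}$ from $R^{\DDD}$ without also computing the tensor $P$ (hence $\Phi$ and its covariant derivatives), and the vanishing of the Weyl tensor of $(\g,\tD)$ is not a formal consequence of that of $(g,\DDD)$ under the twin interchange. Where the paper does exploit invariance is in part (i), using \lemref{lem:tata*} to identify the Lee forms of the two manifolds and so avoid recomputing them for $\g$, and in \thmref{thm:inv.cl} to conclude $(\LL,J,\g)\in\W_1$, exactly as you suggest.
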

\begin{proof}
According to \eqref{twin}, \eqref{Jdim4}, \eqref{gL4}, \eqref{lie-w1-2} and the Koszul formula for $g$, $\DDD$ and $\g$, $\tD$, we get the following nonzero components of $\DDD$ and $\tD\thinspace$:
\begin{subequations}\label{nabla3}
\begin{equation}
\begin{array}{ll}
\DDD_{x_{1}}x_{1} = \DDD_{x_{2}}x_{2} = \tD_{x_{3}}x_{3} = \tD_{x_{4}}x_{4} =\lm_{2}x_{3} +\lm_{1}x_{4},
\\[6pt]
\DDD_{x_{1}}x_{3} = \DDD_{x_{4}}x_{2} =-\tD_{x_{2}}x_{4} = -\tD_{x_{3}}x_{1} = \lm_{2}x_{1} -\lm_{3}x_{4},
\\[6pt]
\DDD_{x_{1}}x_{4} = -\DDD_{x_{3}}x_{2} =\tD_{x_{1}}x_{4} = -\tD_{x_{3}}x_{2} = \lm_{1}x_{1} +\lm_{3}x_{3},
\\[6pt]
\DDD_{x_{2}}x_{3} =- \DDD_{x_{4}}x_{1} =\tD_{x_{2}}x_{3} =- \tD_{x_{4}}x_{1} =\lm_{2}x_{2} +\lm_{4}x_{4},
\end{array}
\end{equation}
\begin{equation}
\begin{array}{ll}
\DDD_{x_{2}}x_{4} = \DDD_{x_{3}}x_{1} =-\tD_{x_{1}}x_{3} = -\tD_{x_{4}}x_{2} = \lm_{1}x_{2} -\lm_{4}x_{3},
\\[6pt]
\DDD_{x_{3}}x_{3} = \DDD_{x_{4}}x_{4} =\tD_{x_{1}}x_{1} = \tD_{x_{2}}x_{2} = -\lm_{4}x_{1} -\lm_{3}x_{2}.
\end{array}
\end{equation}
\end{subequations}

The components of $\DDD J$ and $\tD J$ follow from \eqref{nabla3} and \eqref{Jdim4}. Then, using \eqref{gL4}, \eqref{tg} and \eqref{F'}, we
get the following nonzero components
$(F)_{ijk}=F(x_{i},x_{j},x_{k})$ and \[(\tPP)_{ijk}=\tPP(x_{i},x_{j},x_{k})=\g((\tD_{x_i}J)x_j,x_k)\]
of $F$ and $\tPP$, respectively:
\begin{equation}\label{lambdi}
\begin{array}{l}
\lm_{1}=(F)_{112}=(F)_{121}=(F)_{134}=(F)_{143}\\[6pt]
\phantom{\lm_{1}}
=\dfrac{1}{2}(F)_{222}=\dfrac{1}{2}(F)_{244}\\[6pt]
\phantom{\lm_{1}}
=(F)_{314}=-(F)_{323}=-(F)_{332}=(F)_{341},\\[6pt]
\lm_{2}=\dfrac{1}{2}(F)_{111}=\dfrac{1}{2}(F)_{133}\\[6pt]
\phantom{\lm_2}
=(F)_{212}=(F)_{221}=(F)_{234}=(F)_{243}\\[6pt]
\phantom{\lm_2}
=-(F)_{414}=(F)_{423}=(F)_{432}
=-(F)_{441},\\[6pt]
\lm_{3}=(F)_{114}=-(F)_{123}=-(F)_{132}=(F)_{141}\\[6pt]
\phantom{\lm_{3}}=-(F)_{312}=-(F)_{321}=-(F)_{334}=-(F)_{343}\\[6pt]
\phantom{\lm_{3}}=-\dfrac{1}{2}(F)_{422}=-\dfrac{1}{2}(F)_{444},\\[6pt]
\lm_{4}=-(F)_{214}=(F)_{223}=(F)_{232}=-(F)_{241}\\[6pt]
\phantom{\lm_{4}}
=-\dfrac{1}{2}(F)_{333}=-\dfrac{1}{2}(F)_{311}\\[6pt]
\phantom{\lm_{4}}=-(F)_{412}=-(F)_{421}=-(F)_{434}
=-(F)_{443};
\end{array}
\end{equation}
\begin{subequations}\label{lambdi-tF}
\begin{equation}
\begin{array}{l}
\lm_{1}=(\tPP)_{114}=-(\tPP)_{123}=-(\tPP)_{132}=(\tPP)_{141}\\[6pt]
\phantom{\lm_{1}}
=-(\tPP)_{312}=-(\tPP)_{321}=-(\tPP)_{334}=-(\tPP)_{343}
\\[6pt]
\phantom{\lm_{1}}
=-\dfrac12(\tPP)_{422}=-\dfrac12(\tPP)_{444},
\\[6pt]
\lm_{2}=-(\tPP)_{214}=(\tPP)_{223}=(\tPP)_{232}=-(\tPP)_{241}
\\[6pt]
\phantom{\lm_2}
=-\dfrac12(\tPP)_{311}=-\dfrac12(\tPP)_{333}
\\[6pt]
\phantom{\lm_2}
=-(\tPP)_{412}=-(\tPP)_{421}=-(\tPP)_{434}
=-(\tPP)_{443},
\end{array}
\end{equation}
\begin{equation}
\begin{array}{l}
\lm_{3}=-(\tPP)_{112}=-(\tPP)_{121}
=-(\tPP)_{134}=-(\tPP)_{143}\\[6pt]
\phantom{\lm_{3}}
=-\dfrac12(\tPP)_{222}=-\dfrac12(\tPP)_{244}
\\[6pt]
\phantom{\lm_{3}}
=-(\tPP)_{314}=(\tPP)_{323}=(\tPP)_{332}
=-(\tPP)_{341},\\[6pt]
\lm_{4}=-\dfrac12(\tPP)_{111}=-\dfrac12(\tPP)_{133}\\[6pt]
\phantom{\lm_{4}}
=-(\tPP)_{212}=-(\tPP)_{221}=-(\tPP)_{234}=-(\tPP)_{243}\\[6pt]
\phantom{\lm_{4}}
=(\tPP)_{414}=-(\tPP)_{423}=-(\tPP)_{432}
=(\tPP)_{441}.
\end{array}
\end{equation}
\end{subequations}

Applying \eqref{snorm} for the components in \eqref{lambdi} and \eqref{lambdi-tF}, we obtain the square norms of $\DDD\hspace{-2pt} J$ and $\tD J$:
\begin{equation}\label{nJ-exa}
\begin{split}
\nJJ&=16\left(\lm_1^2+\lm_2^2-\lm_3^2-\lm_4^2\right),\\
\tnJ&=\lm_1\lm_3+\lm_2\lm_4.
\end{split}
\end{equation}

Let us consider the conformal transformations of the metric $g$ defined by
\begin{equation}\label{transf}
    \overline g= e^{2u}\left\{\cos{2v}\ g+\sin{2v}\ \widetilde g\right\},
\end{equation}
where $u$ and $v$ are differentiable functions on
$\MM$~ \cite{GaGrMi87}.
Then, the associated metric $\g$ has the following image
\[
\overline{\tg}=e^{2u}(\cos{2v}\ \tg-\sin{2v}\ g)
\]
The manifold $(\MM,J,\overline{g})$ is
again an almost Norden  manifold. If $v=0$, we obtain the usual conformal transformation.
 Let us remark that the conformal transformation for $u=0$ and $v=\pi/2$ maps the pair $(g,\g)$ into $(\g,-g)$.

According to \cite{GaGrMi85}, a $\W_1$-manifold is locally conformal equivalent to a K\"ahler-Norden manifold if and only if its Lee forms $\ta$ and $\ta^*$ are closed. Moreover, the used conformal transformations are such that the 1-forms $\D u\circ J$ and $\D v\circ J$ are closed.

Taking into account \lemref{lem:tata*}, we have
\[
(\ta)_k=(\widetilde{\ta})_k,\qquad
(\ta^*)_k=(\widetilde{\ta^*})_k
\]
for the corresponding components with respect to $x_k$, $(k=1,2,3,4)$.
Furthermore, the same situation is for  $\D{\ta}=\D\widetilde{\ta}$ and $\D{\ta}^*=\D\widetilde{\ta^*}$.
By \eqref{ta'}, \eqref{ta*taJ} and \eqref{lambdi}, we obtain
$(\ta)_k$ and $(\ta^*)_k$ and thus we get the following
\\
\begin{equation}\label{theta123}
\begin{split}
(\ta)_{2}=(\ta^*)_{4}=(\widetilde{\ta})_{2}=(\widetilde{\ta}^*)_{4}&=4\lm_{1}, \quad \\[6pt]
(\ta)_{1}=(\ta^*)_{3}=(\widetilde{\ta})_{1}=(\widetilde{\ta}^*)_{3}&=4\lm_{2}, \quad \\[6pt]
(\ta)_{4}=-(\ta^*)_{2}=(\widetilde{\ta})_{4}=-(\widetilde{\ta}^*)_{2}&=4\lm_{3},
\quad \\[6pt]
(\ta)_{3}=-(\ta^*)_{1}=(\widetilde{\ta})_{3}=-(\widetilde{\ta}^*)_{1}&=4\lm_{4}.
\end{split}
\end{equation}
Using \eqref{lie-w1-2} and \eqref{theta123}, we compute the components of
$\D\ta$ and $\D\ta^*$ with respect to the basis
$\{x_{1},x_{2},x_{3},x_{4}\}$.
We obtain that $\D\ta^*=\D\widetilde{\ta^*}=0$ and the nonzero components of $\D\ta=\D\widetilde{\ta}$ are
\begin{equation*}\label{dta}
\begin{array}{l}
  (\D\ta)_{13}=(\D\ta)_{42}=(\D\widetilde{\ta})_{13}=(\D\widetilde{\ta})_{42}=4(\lm_1^2-\lm_2^2+\lm_3^2-\lm_4^2),\\[6pt]
  (\D\ta)_{14}=(\D\ta)_{23}=(\D\widetilde{\ta})_{14}=(\D\widetilde{\ta})_{23}=-8(\lm_1\lm_2+\lm_3\lm_4).
\end{array}
\end{equation*}

Therefore $(\LL,J,g)$ and $(\LL,J,\g)$ are
locally conformal K\"ahler-Norden manifolds if and only if conditions
\eqref{lm} are valid. Then, the statement (i) holds.

By virtue of \eqref{gL4}, \eqref{lie-w1-2} and \eqref{nabla3}, we get the basic components $R^{\DDD}_{ijkl}=R^{\DDD}(x_{i},x_{j},x_{k},x_{l})$ and $R^{\tD}_{ijkl}=R^{\tD}(x_{i},\allowbreak{}x_{j},\allowbreak{}x_{k},x_{l})$ of the curva\-tu\-re tensors
for $\DDD$ and $\tD$, respectively. The nonzero ones of them are determined by \eqref{curv} and the following:
\begin{equation}\label{K3}
\begin{array}{l}
\begin{array}{ll}
R^{\DDD}_{1221} = \lm_{1}^{2} + \lm_{2}^{2}, \qquad & %
R^{\DDD}_{1331} = \lm_{4}^{2} - \lm_{2}^{2}, \\[6pt]
R^{\DDD}_{1441} = \lm_{4}^{2} - \lm_{1}^{2}, \qquad & %
R^{\DDD}_{2332} = \lm_{3}^{2} - \lm_{2}^{2}, \\[6pt]
R^{\DDD}_{2442} = \lm_{3}^{2} - \lm_{1}^{2}, \qquad & %
R^{\DDD}_{3443} = -\lm_{3}^{2} - \lm_{4}^{2},\\[6pt]
R^{\DDD}_{1341}=R^{\DDD}_{2342} = -\lm_{1}\lm_{2}, \qquad & %
R^{\DDD}_{2132}=-R^{\DDD}_{4134} = -\lm_{1}\lm_{3},
\\[6pt]
R^{\DDD}_{1231}=-R^{\DDD}_{4234} = \lm_{1}\lm_{4}, \qquad & %
R^{\DDD}_{2142}=-R^{\DDD}_{3143} = \lm_{2}\lm_{3},
\\[6pt]
R^{\DDD}_{1241}=-R^{\DDD}_{3243} = -\lm_{2}\lm_{4}, \qquad & %
R^{\DDD}_{3123}=R^{\DDD}_{4124} = \lm_{3}\lm_{4};
\end{array}
\end{array}
\end{equation}
%
\begin{equation}\label{tR3}
\begin{array}{l}
\begin{array}{ll}
R^{\tD}_{1241} = -\lm_{3}^{2}, \qquad & %
R^{\tD}_{2132} = -\lm_{4}^{2}, \\[6pt]
R^{\tD}_{3243} = -\lm_{1}^{2}, \qquad & %
R^{\tD}_{4134} = -\lm_{2}^{2}, \\[6pt]
R^{\tD}_{1331} = 2\lm_{2}\lm_{4},\qquad & %
R^{\tD}_{2442} = 2\lm_{1}\lm_{3},\\[6pt]
R^{\tD}_{1341}=R^{\tD}_{4124} = \lm_{2}\lm_{3},\qquad&
R^{\tD}_{1231}=R^{\tD}_{2142} = -\lm_{3}\lm_{4},\\[6pt]
R^{\tD}_{2342}=R^{\tD}_{3123} = \lm_{1}\lm_{4},\qquad&
R^{\tD}_{3143}=R^{\tD}_{4234} = -\lm_{1}\lm_{2},\\[6pt]
\end{array}
\\
\begin{array}{l}
R^{\tD}_{1234} = R^{\tD}_{2341} = \lm_{1}\lm_{3} + \lm_{2}\lm_{4},
\end{array}
\end{array}
\end{equation}
Therefore, the components of the Ricci tensors
and the values of the scalar curvatures
for $\DDD$ and $\tD$ are:
\begin{equation*}
\begin{array}{ll}
\rho^{\DDD}_{11}=2\big( \lm_{1}^{2} + \lm_{2}^{2} - \lm_{4}^{2} \big), \qquad &
\rho^{\DDD}_{22}=2\big( \lm_{1}^{2} + \lm_{2}^{2} - \lm_{3}^{2} \big), \\[6pt]
\rho^{\DDD}_{33}=2\big( \lm_{4}^{2} + \lm_{3}^{2} - \lm_{2}^{2} \big), \qquad &
\rho^{\DDD}_{44}=2\big( \lm_{4}^{2} + \lm_{3}^{2} - \lm_{1}^{2} \big),
\\[6pt]
\rho^{\tD}_{11}=2\lm_{3}^{2}, \qquad &
\rho^{\tD}_{22}=2\lm_{4}^{2}, \qquad \\[6pt]
\rho^{\tD}_{33}=2\lm_{1}^{2},  \qquad &
\rho^{\tD}_{44}=2\lm_{2}^{2},
\end{array}
\end{equation*}
\begin{equation*}
\begin{array}{ll}
\rho^{\tD}_{13}=2\big(\lm_{1}\lm_{3}+2\lm_{2}\lm_{4}\big),\qquad &
\rho^{\tD}_{24}=2\big(2\lm_{1}\lm_{3}+\lm_{2}\lm_{4}\big),
\\[6pt]
\rho^{\DDD}_{12}=\rho^{\tD}_{12}=-2\lm_{3}\lm_{4}, \quad & %
\rho^{\DDD}_{23}=\rho^{\tD}_{23}=2\lm_{1}\lm_{4}, \\[6pt]
\rho^{\DDD}_{13}=-2\lm_{1}\lm_{3}, \quad &
\rho^{\DDD}_{34}=\rho^{\tD}_{34}=-2\lm_{1}\lm_{2}, \\[6pt] %
\rho^{\DDD}_{14}=\rho^{\tD}_{14}=2\lm_{2}\lm_{3}, \quad &
\rho^{\DDD}_{24}=-2\lm_{2}\lm_{4};
\end{array}
\end{equation*}
\begin{equation}\label{tau3}
\begin{array}{ll}
\tau^{\DDD}=6\big(\lm_{1}^{2}+\lm_{2}^{2}-\lm_{3}^{2}-\lm_{4}^{2}\big),\qquad &
\tau^{\tD}=-12\big(\lm_{1}\lm_{3}+\lm_{2}\lm_{4}\big).
\end{array}
\end{equation}

Applying \eqref{W'} for the corresponding quantities of $\DDD$ and $\tD$, we compute that the respective Weyl tensors $C$ and $\widetilde{C}$ vanish. Then, we obtain the identities in \eqref{Rform3}.
Furthermore, when the Weyl tensor vanishes then the corresponding manifold is conformal equivalent to a flat manifold by a usual conformal transformation. This completes the proof of (ii).

The truthfulness of (iii) follows immediately from \eqref{tau3} and the values of the square norms in \eqref{nJ-exa}. %
\end{proof}

Let us remark that the results in the latter theorem with respect to $\DDD$ 
are given in \cite{MT06ex} besides (i), where it is shown a particular case of conditions \eqref{lm}.

\vskip 0.2in \addtocounter{subsubsection}{1}

\noindent  {\Large\bf{\emph{\thesubsubsection. The invariant connection and invariant tensors under the twin interchange}}}

\vskip 0.15in

We compute the basic components $B_{ijk}=B(x_i,x_j)x_k$ of the invariant tensor $B$, using that this tensor is the average tensor of $R^{\DDD}$ and $R^{\tD}$ and equalities \eqref{K3}, \eqref{tR3}.
Thus, we get the components $B_{ijkl}=g\left(B(x_i,x_j)x_k,x_l\right)$.
The nonzero of them are the following and the rest are determined by the properties \eqref{curv} for $B$:
\begin{subequations}\label{bR3}
\begin{equation}
\begin{array}{ll}
\dfrac12\lm_{1}^{2} = B_{3421} = -B_{2341}, \; &
\dfrac12\lm_{2}^{2} = -B_{3412} = -B_{1432},\\[6pt]
\dfrac12\lm_{3}^{2} = -B_{1243} = -B_{1423},\; &
\dfrac12\lm_{4}^{2} = B_{1234} = -B_{2314}, 
\end{array}
\end{equation}
\begin{equation}
\begin{array}{l}
\begin{array}{ll}
\dfrac12\bigl(\lm_{1}^{2}+\lm_{2}^{2}+\lm_{3}^{2}\bigr) = -B_{1212}, \ &
\dfrac12\bigl(\lm_{1}^{2}+\lm_{2}^{2}+\lm_{4}^{2}\bigr) = B_{1221},\\[6pt]
\dfrac12\bigl(\lm_{1}^{2}+\lm_{3}^{2}-\lm_{4}^{2}\bigr) = B_{1414}, \ &
\dfrac12\bigl(\lm_{1}^{2}-\lm_{2}^{2}-\lm_{4}^{2}\bigr) = -B_{1441},\\[6pt]
\dfrac12\bigl(\lm_{2}^{2}-\lm_{3}^{2}+\lm_{4}^{2}\bigr) = B_{2323}, \ &
\dfrac12\bigl(\lm_{1}^{2}-\lm_{2}^{2}+\lm_{3}^{2}\bigr) = B_{2332},\\[6pt]
\dfrac12\bigl(\lm_{1}^{2}+\lm_{3}^{2}+\lm_{4}^{2}\bigr) = B_{3434}, \ &
\dfrac12\bigl(\lm_{2}^{2}+\lm_{3}^{2}+\lm_{4}^{2}\bigr) = -B_{3443},
\end{array}
\\[6pt]
\begin{array}{l}
\dfrac12\bigl(\lm_{2}^{2}-\lm_{4}^{2}\bigr) = B_{1313} = -B_{1331}, \\[6pt]
\dfrac12\bigl(\lm_{1}^{2}-\lm_{3}^{2}\bigr) = B_{2424} = -B_{2442}, \\[6pt]
\dfrac12\bigl(\lm_{1}\lm_{2}+\lm_{3}\lm_{4}\bigr) = B_{1234} = B_{1332} = B_{2423} = B_{2441}, \\[6pt]
\dfrac12\bigl(\lm_{2}\lm_{3}-\lm_{1}\lm_{4}\bigr) = B_{1312} = -B_{1334} = -B_{2421} = B_{2443},\\[6pt]
\dfrac12\lm_{1}\lm_{2} = -\dfrac12B_{1341} = B_{1413} =-B_{1431} =B_{2324} =-B_{2342}\\[6pt]
\phantom{\dfrac12\lm_{1}\lm_{2} }
=-\dfrac12B_{2432} =B_{3411} =-B_{3422} =B_{3433} =-B_{3444},
\\[6pt]
\dfrac12\lm_{3}\lm_{4} = -B_{1211} = B_{1222} =-B_{1233} =B_{1244} =-\dfrac12B_{1323}\\[6pt]
\phantom{\dfrac12\lm_{3}\lm_{4} }
=-B_{1424} =B_{1442} =-B_{2313} =B_{2331} =-\dfrac12B_{2414},\\[6pt]
\dfrac12\lm_{1}\lm_{3} = -B_{1223} = B_{1241} =B_{1421} =B_{1443} =B_{2321}
\\[6pt]
\phantom{\dfrac12\lm_{1}\lm_{3} }
=B_{2343} =\dfrac12B_{2422} =\dfrac12B_{2444} =-B_{3423} = -B_{3441},\\[6pt]
\dfrac12\lm_{2}\lm_{4} = B_{1214} = -B_{1232} =\dfrac12B_{1311} =\dfrac12B_{1333} =B_{1412}\\[6pt]
\phantom{\dfrac12\lm_{2}\lm_{4} }
=B_{1434} =B_{2312} =B_{2334} =B_{3414} = -B_{3432},
\\[6pt]
\dfrac12\lm_{1}\lm_{4} = -B_{1213} = B_{1231} = \dfrac12B_{1321} =B_{2311} =B_{2322} \\[6pt]
\phantom{\dfrac12\lm_{1}\lm_{4} }
=B_{2333}=B_{2344} =\dfrac12B_{2434} =B_{3424} =-B_{3442},\\[6pt]
\dfrac12\lm_{2}\lm_{3} = B_{1224} = -B_{1242} = \dfrac12B_{1343} =B_{1411} =B_{1422} \\[6pt]
\phantom{\dfrac12\lm_{2}\lm_{3} }
=B_{1433}=B_{1444} =\dfrac12B_{2412} =-B_{3413} =B_{3431}.
\end{array}
\end{array}
\end{equation}
\end{subequations}
The rest components are determined by the property $B_{ijk}=-B_{jik}$. Let us remark that $B$ is not a curvature-like tensor.

Obviously, $B=0$ if and only if the corresponding Lie algebra is Abelian and $(\LL,J,g)$ is a K\"ahler-Norden manifold.

Using \eqref{Phi}, \eqref{Jdim4}, \eqref{gL4}, \eqref{nabla3},    we get the components $(\Phi)_{ijk}=\Phi(x_i,\allowbreak{}x_j,x_k)$ of $\Phi$ as  well as the components $(f)_{k}=f(x_k)$ and $(f^*)_{k}=f^*(x_k)$ of its associated 1-forms. The nonzero of them are the following and the rest are obtained by the property $(\Phi)_{ijk}=(\Phi)_{jik}$:
\begin{equation}\label{Phi-ex}
\begin{split}
-\lm_1&=-(\Phi)_{114} = -(\Phi)_{224} =(\Phi)_{334}\\[6pt]
&=(\Phi)_{444}=(\Phi)_{132} =(\Phi)_{242} =\dfrac14 (f)_4=-\dfrac14 (f^*)_2,
\\[6pt]
-\lm_2&=-(\Phi)_{113} = -(\Phi)_{223} =(\Phi)_{333}\\[6pt]
&=(\Phi)_{443}=(\Phi)_{131} =(\Phi)_{241} =-\dfrac14 (f)_3=\dfrac14 (f^*)_1,
\\[6pt]
-\lm_3&=(\Phi)_{112} = (\Phi)_{222} =-(\Phi)_{332}\\[6pt]
&=-(\Phi)_{442}=(\Phi)_{134} =(\Phi)_{244} =\dfrac14 (f)_2=\dfrac14 (f^*)_4,
\\[6pt]
-\lm_4&=(\Phi)_{111} = (\Phi)_{221} =-(\Phi)_{331}\\[6pt]
&=-(\Phi)_{441}=(\Phi)_{133} =(\Phi)_{243} =\dfrac14 (f)_1=\dfrac14 (f^*)_3.
\end{split}
\end{equation}

The Nijenhuis tensor vanishes on $(\LL,J,g)$ and $(\LL,J,\g)$ as on any $\W_1$-manifold.
According to \cite{GaGrMi85}, $[J,J]=0$ is equivalent to
\[
\Phi(x_i,x_j)=-\Phi(Jx_i,Jx_j).
\]
Then, by means of \eqref{wNPhi} we obtain for the components of the associated Nijenhuis tensor  $\{J,J\}_{ijk}=4(\Phi)_{ijk}$, where the components of $\Phi$ are given in \eqref{Phi-ex}.

Bearing in mind \eqref{hn=nP}, \eqref{nabla3} and \eqref{Phi-ex}, we get the components of the invariant connection $\DDD^{\diamond}$ as follows
\begin{equation}\label{hn-ex}
\begin{array}{l}
\DDD^{\diamond}_{x_{1}}x_{1} = \DDD^{\diamond}_{x_{2}}x_{2} =\DDD^{\diamond}_{x_{3}}x_{3} = \DDD^{\diamond}_{x_{4}}x_{4} \\[6pt]
\phantom{\DDD^{\diamond}_{x_{1}}x_{1} }=
 -\dfrac12(\lm_{4}x_{1}+\lm_{3}x_{2}-\lm_{2}x_{3}-\lm_{1}x_{4}),\\[6pt]
\DDD^{\diamond}_{x_{1}}x_{3}=-\DDD^{\diamond}_{x_{2}}x_{4}=-\DDD^{\diamond}_{x_{3}}x_{1}= \DDD^{\diamond}_{x_{4}}x_{2} \\[6pt]
\phantom{\DDD^{\diamond}_{x_{1}}x_{3} }=
 \dfrac12(\lm_{2}x_{1}-\lm_{1}x_{2}+\lm_{4}x_{3}-\lm_{3}x_{4}),
\\[6pt]
\DDD^{\diamond}_{x_{1}}x_{4} = -\DDD^{\diamond}_{x_{3}}x_{2} =
\lm_{1}x_{1} +\lm_{3}x_{3},
\\[6pt]
\DDD^{\diamond}_{x_{2}}x_{3} =-\DDD^{\diamond}_{x_{4}}x_{1} =
\lm_{2}x_{2} +\lm_{4}x_{4}.
\end{array}
\end{equation}

After that we compute the basic components $R^{\DDD^{\diamond}}_{ijk}=R^{\DDD^{\diamond}}\hspace{-3pt}(x_i,x_j)x_k$ of the invariant tensor $R^{\DDD^{\diamond}}$ under the twin in\-ter\-change, using \eqref{wRbRPhi}, \eqref{bR3} and \eqref{Phi-ex}.
In other way, $R^{\DDD^{\diamond}}_{ijk}$ can be computed directly from \eqref{hn-ex} as the curvature tensor of $\DDD^{\diamond}$.

Then, we obtain for the basic components $K_{ijkl}$, determined by the equality $K_{ijkl}=g(R^{\DDD^{\diamond}}\hspace{-3pt}(x_i,x_j)x_k,x_l)$, the following quantities:
\begin{equation*}
\begin{array}{l}
\begin{array}{l}
\lm_{1}^{2} = K_{2424},\quad
\lm_{2}^{2} = K_{1313},\quad
\lm_{3}^{2} = K_{2442}, \quad
\lm_{4}^{2} = K_{1331},
\\[6pt]
\dfrac12\lm_{1}\lm_{3} = -K_{1223} = K_{1241} =K_{1421} =K_{1443} =K_{2321}\\[6pt]
\phantom{\dfrac12\lm_{1}\lm_{3} }
=K_{2343} =\dfrac12K_{2422} =\dfrac12K_{2444} =-K_{3423} =K_{3441}, \\[9pt]
\dfrac12\lm_{2}\lm_{4} = K_{1214} = -K_{1232} =\dfrac12K_{1311} =\dfrac12K_{1333} =K_{1412}\\[6pt]
\phantom{\dfrac12\lm_{2}\lm_{4} }
=K_{1434} =K_{2312} =K_{2334} =K_{3414} =-K_{3432},\\[6pt]
\lm_{1}\lm_{2}+\lm_{3}\lm_{4} = K_{1314} = K_{1332} =K_{2423} =K_{2441},\\[6pt]
\dfrac14(\lm_{1}\lm_{2}-\lm_{3}\lm_{4}) = K_{1211} = -K_{1222} =K_{1424} =-K_{1431}\\[6pt]
\phantom{\dfrac14(\lm_{1}\lm_{2}-\lm_{3}\lm_{4})}
=K_{2313} =-K_{2342} =K_{3433} =-K_{3444},\\[6pt]
\dfrac14(\lm_{1}\lm_{4}+\lm_{2}\lm_{3}) = -K_{1213} = K_{1224} =K_{1422} =K_{1433}\\[6pt]
\phantom{\dfrac14(\lm_{1}\lm_{4}+\lm_{2}\lm_{3})}
=K_{2311} =K_{2344} =K_{3431} =-K_{3442},\\[6pt]
\dfrac14(\lm_{1}\lm_{2}+3\lm_{3}\lm_{4}) = -K_{1233} = K_{1244} =K_{1442} =K_{2331},\\[9pt]
\dfrac14(3\lm_{1}\lm_{2}+\lm_{3}\lm_{4}) = K_{1413} = K_{2324} =K_{3411} =-K_{3422},\\[9pt]
\dfrac14(\lm_{1}\lm_{4}-3\lm_{2}\lm_{3}) = K_{1242} = -K_{1411} = -K_{1444} =K_{3413},\\[9pt]
\end{array}
\\[6pt]
\begin{array}{l}
\dfrac14(3\lm_{1}\lm_{4}-\lm_{2}\lm_{3}) = K_{1231} = K_{2322} =K_{2333} =K_{3424},\\[9pt]
\end{array}
\\[9pt]
\begin{array}{ll}
\dfrac14(\lm_{1}^2-2\lm_{2}^2+\lm_{3}^2) = K_{1432} = K_{3412}, &
\lm_{1}\lm_{2} = -K_{1341} = -K_{2432},\\[9pt]
\dfrac14(\lm_{2}^2-2\lm_{3}^2+\lm_{4}^2) = K_{1243} = K_{1423}, &
\lm_{3}\lm_{4} = -K_{1323} = -K_{2414}, \\[9pt]
\dfrac14(2\lm_{1}^2-\lm_{2}^2-\lm_{4}^2) = -K_{2341} = K_{3421}, &
\lm_{1}\lm_{4} = K_{1321} = K_{2434},\\[9pt]
\dfrac14(\lm_{1}^2+\lm_{3}^2-2\lm_{4}^2) = -K_{1234} = K_{2314}, &
\lm_{2}\lm_{3} = K_{1343} = K_{2412},\\[9pt]
\dfrac14(\lm_{1}^2+2\lm_{2}^2+\lm_{3}^2) = -K_{1212},\quad &
\dfrac14(2\lm_{1}^2+\lm_{2}^2+\lm_{4}^2) = K_{1221},
\end{array}
\end{array}
\end{equation*}
\begin{equation*}
\begin{array}{l}
\begin{array}{ll}
\dfrac14(\lm_{1}^2+\lm_{3}^2+2\lm_{4}^2) = K_{3434},\quad &
\dfrac14(\lm_{2}^2+2\lm_{3}^2+\lm_{4}^2) = -K_{3443},\\[9pt]
\dfrac14(3\lm_{1}^2+3\lm_{3}^2-2\lm_{4}^2) = K_{1414},\quad &
\dfrac14(2\lm_{1}^2-3\lm_{2}^2-3\lm_{4}^2) = -K_{1441},\\[9pt]
\dfrac14(3\lm_{2}^2-2\lm_{3}^2+3\lm_{4}^2) = K_{2323},\quad &
\dfrac14(3\lm_{1}^2-2\lm_{2}^2+3\lm_{3}^2) = K_{2332}.
\end{array}
\end{array}
\end{equation*}
The rest components are determined by property $K_{ijk}=-K_{jik}$. Let us remark that $K$ is not a curvature-like tensor.

Obviously, $K=0$ if and only if the corresponding Lie algebra is Abelian and $(\LL,J,g)$ is a K\"ahler-Norden manifold.


\vspace{20pt}

\begin{center}
$\divideontimes\divideontimes\divideontimes$
\end{center} 

\newpage

\addtocounter{section}{1}\setcounter{subsection}{0}\setcounter{subsubsection}{0}

\setcounter{thm}{0}\setcounter{equation}{0}

\label{par:conn}

 \Large{

\
\\[6pt]
\bigskip

\
\\[6pt]
\bigskip

\lhead{\emph{Chapter I $|$ \S\thesection. Canonical-type connections
on almost complex manifolds with Norden metric
}}


\noindent
\begin{tabular}{r"l}
\hspace{-6pt}{\Huge\bf \S\thesection.}  & {\Huge\bf Canonical-type connections} \\[12pt]
                             & {\Huge\bf on almost complex manifolds} \\[12pt]
                             & {\Huge\bf with Norden metric}
\end{tabular}

\vskip 1cm

\begin{quote}
\begin{large}
In the present section we give a survey with additions of
results on differential geometry of canonical-type connections
(i.e. metric connections with torsion satisfying a certain
algebraic identity) on the considered manifolds.

The main results of this section are published in \cite{Man48}.
\end{large}
\end{quote}

%
%

\vskip 0.15in


The differential geometry of affine connections with special requirements for their torsion on almost Hermitian manifolds $(\MM^{2n},J,g)$ is well
developed. As it is known, P.~Gauduchon  gives in \cite{Gau97} a
unified presentation of a so-called \emph{canonical} class of
(almost) Hermitian connections, considered by
P.~Libermann in \cite{Lib54}.

Let us recall, an affine connection $\DDD^*$
is called \emph{Hermitian} if it preserves the Hermitian metric
$g$ and the almost complex structure $J$, i.e. the following identities are valid $\DDD^* g=\DDD^* J=0$.

The
\emph{potential} of $\DDD^*$ (with respect to the Levi-Civita
connection $\DDD$), denoted by $Q$, is defined by the difference
$\DDD^*-\DDD$. The connection $\DDD^*$ preserves the metric and therefore is
completely determined by its \emph{torsion} $T$.
According to  \cite{Car25,
TV83,Sal89}, the two spaces of all torsions and of all potentials
are isomorphic as $\mathcal{O}(n)$ representations and an equivariant
bijection is the following
\begin{gather}
T (x,y,z) = Q(x,y,z) - Q(y,x,z) ,\label{TQ}\\[6pt]
2Q(x,y,z) = T (x,y,z) - T (y,z,x) + T (z,x,y).\label{QT}
\end{gather}

Following E. Cartan \cite{Car25}, there are studied the algebraic types of the torsion
tensor for a metric connection, i.e. an affine connection preserving the metric.

On an almost Hermitian manifold, a Hermitian connection
is called \emph{canonical} 
if its torsion $T$ satisfies the following conditions: \cite{Gau97}

1) the component of $T$ satisfying the Bianchi identity and having
the property $T (J \cdot, J \cdot) = T (\cdot, \cdot)$ vanishes;

2) for some real number $t$, it is valid
$(\s T)^+=(1-2t)(\D\Omega)^+(J\cdot,J\cdot,J\cdot)$, where
$(\D\Omega)^+$ is the part of type
$(2,1)+(1,2)$ of the diffe\-ren\-tial  $\D\Omega$ for the K\"ahler
form $\Omega=g(J\cdot,\cdot)$.

This connection is known also as the \emph{Chern connection}
\cite{Chern,Yano,YaKon}. 

According to  \cite{Gau97}, there exists an one-parameter family
$\{\n^t\}_{t\in\R}$ of canonical Hermitian connections $\n^t =
t\n^1 + (1-t)\n^0$, where $\n^0$ and $\n^1$ are the
\emph{Lichnerowicz first and second canonical
connections} \cite{Li1,Li2,Lih62}, respectively.



The
connection $\n^t$ obtained for $t=-1$ is called the \emph{Bismut
connection} or the \emph{KT-connection}, which is characterized with a
totally skew-symmetric torsion \cite{Bis}. The latter connection
has applications in heterotic  string theory and in 2-dimensional
supersymmetric $\sigma$-models as well as in type II string theory
when the torsion 3-form is closed \cite{GHR,Stro,IvPapa,IvIv}.

In \cite{Fri-Iv2} and \cite{Fri-Iv}, all
almost contact metric, almost Hermitian and $G_2$-structures
admitting a connection with totally skew-symmetric torsion tensor
are described.

Similar problems are studied on almost hypercomplex manifold \cite{Gaud,HoPa}
and Riemannian almost product manifolds in
\cite{StaGri,Dobr11-1,MekDobr,Dobr11-2}.

An object of our interest in this section are almost Norden manifolds.
The goal of the present section is to survey the research on
canonical-type connections in the case of Norden-type metrics
as well as some additions and generalizations are made.

\vskip 0.2in \addtocounter{subsection}{1} \setcounter{subsubsection}{0}

\noindent  {\Large\bf \thesubsection. Natural connections on an almost Norden manifold}

\vskip 0.15in


Let $\DDD^*$ be an affine connection with a torsion $T^*$ and a
potential $Q^*$ with respect to the Levi-Civita connection $\DDD$,
i.e.
\[
T^*(x,y)=\DDD^*_x y-\DDD^*_y x-[x,y],\quad Q^*(x,y)=\DDD^*_x y-\DDD_x y.
\]
The corresponding (0,3)-tensors are defined by
\[
T^*(x,y,z)=g(T^*(x,y),z),\quad Q^*(x,y,z)=g(Q^*(x,y),z).
\]
These tensors have the same mutual relations as in \eqref{TQ} and
\eqref{QT}.

In  \cite{GaMi87}, it is considered the space
$\T$ of all torsion (0,3)-tensors $T$ (i.e. satisfying
$T(x,y,z)=-T(y,x,z)$) on an almost Norden manifold
$(\MM,J,g)$. There, it is given a partial decomposition of $\T$  in the following form
\[
\T=\T_1\oplus\T_2
\oplus\T_3\oplus\T_4.
\]
The components $\T_i$
$(i=1,2,3,4)$ are invariant orthogonal subspaces with respect to
the structure group $\GG$ given in \eqref{GG} and they are determined as follows
\begin{equation*}
  \begin{array}{l}
    \T_1:\quad T(x,y,z)=-T(Jx,Jy,z)=-T(Jx,y,Jz);\\[6pt]
    \T_2:\quad T(x,y,z)=-T(Jx,Jy,z)=T(Jx,y,Jz);\\[6pt]
    \T_3:\quad T(x,y,z)=T(Jx,Jy,z),\quad \sx T(x,y,z)=0,\\[6pt]
    \T_4:\quad T(x,y,z)=T(Jx,Jy,z),\quad \sx T(Jx,y,z)=0.
  \end{array}
\end{equation*}
Moreover, in  \cite{GaMi87} there are explicitly given the
components $T_i$ of $T\in\T$ in $\T_i$
$(i=1,2,3,4)$ as follows
\begin{equation}\label{Ti}
\begin{split}
  T_1(x,y,z)&=\dfrac14\bigl\{T(x,y,z)-T(Jx,Jy,z)\\
            &\phantom{\dfrac14\bigl\{\quad}-T(Jx,y,Jz)-T(x,Jy,Jz)\bigr\},\\[6pt]
  T_2(x,y,z)&=\dfrac14\bigl\{T(x,y,z)-T(Jx,Jy,z)\\
            &\phantom{\dfrac14\bigl\{\quad}+T(Jx,y,Jz)+T(x,Jy,Jz)\bigr\},\\[6pt]
  T_3(x,y,z)&=\dfrac18\bigl\{2T(x,y,z)-T(y,z,x)-T(z,x,y)\\
            &\phantom{\dfrac14\bigl\{\quad}+2T(Jx,Jy,z)-T(Jy,z,Jx)\\
            &\phantom{\dfrac14\bigl\{\quad}-T(z,Jx,Jy)-T(Jy,Jz,x)\\
            &\phantom{\dfrac14\bigl\{\quad}-T(Jz,Jx,y)+T(y,Jz,Jx)\\
            &\phantom{\dfrac14\bigl\{\quad}+T(Jz,x,Jy)\bigr\},\\[6pt]
  T_4(x,y,z)&=\dfrac18\bigl\{2T(x,y,z)+T(y,z,x)+T(z,x,y)\\
            &\phantom{\dfrac14\bigl\{\quad}+2T(Jx,Jy,z)+T(Jy,z,Jx)\\
            &\phantom{\dfrac14\bigl\{\quad}+T(z,Jx,Jy)+T(Jy,Jz,x)\\
            &\phantom{\dfrac14\bigl\{\quad}+T(Jz,Jx,y)-T(y,Jz,Jx)\\
            &\phantom{\dfrac14\bigl\{\quad}-T(Jz,x,Jy)\bigr\}.
\end{split}
\end{equation}

An affine connection $\DDD^*$ on an almost Norden manifold $(\MM,J,g)$ is called a \emph{natural connection} if the structure tensors $J$ and $g$ are parallel with respect to this connection, i.e.
$\DDD^* J=\DDD^* g=0$. These conditions are equivalent to $\DDD^* g=\DDD^*
\widetilde{g}=0$. The connection $\DDD^*$ is natural if and only if
the following conditions for its potential $Q^*$ are valid:
\begin{equation}\label{F_J=Q*}
\begin{split}
    &F(x,y,z) = Q^*(x,y,Jz)-Q^*(x,Jy,z),\quad\\[6pt]
    &Q^*(x,y,z) = -Q^*(x,z,y).
\end{split}
\end{equation}

In terms of the components $T_i$, an affine connection with
torsion $T$ on $(\MM,J,g)$ is
natural if and only if
\[
\begin{split}
    &T_2(x,y,z) = \dfrac{1}{4}[J,J](x,y,z),\quad\\[6pt]
    &T_3(x,y,z) = \dfrac{1}{8}\bigl(\{J,J\}(z,y,x)-\{J,J\}(z,x,y)\bigr).
\end{split}
\]
The former condition
is given in  \cite{GaMi87} whereas the latter one
follows immediately by \eqref{TQ}, \eqref{QT},
\eqref{NhatF'}, \eqref{Ti} and \eqref{F_J=Q*}.

\vskip 0.2in \addtocounter{subsection}{1} \setcounter{subsubsection}{0}

\noindent  {\Large\bf \thesubsection. The B-connection and the canonical connection}

\vskip 0.15in

In \cite{GaGrMi87}, it is introduced the \emph{B-connection} $\DDD'$ only for the manifolds from the class $\W_1$ by relation
\begin{equation}\label{Tb}
\DDD'_xy=\DDD_xy-\dfrac{1}{2}J\left(\DDD_xJ\right)y.
\end{equation}
Obviously, the B-connection is a natural connection on $(\MM,J,g)$
and it exists in any class of the considered manifolds.
The B-connection coincides with the Levi-Civita connection only on a $\W_0$-manifold (i.e. a K\"ahler-Norden manifold
).

By virtue of \eqref{TQ}, \eqref{N-prop}, \eqref{hatN-prop}, \eqref{F=NhatN},
from \eqref{Tb} we express the torsion $T'$ of the B-connection $\DDD'$ in the following way
\begin{equation}\label{TbNhatN}
\begin{split}
T'(x,y,z)=\dfrac18 \bigl(&[J,J](x,y,z)+\sx [J,J](x,y,z)\\[6pt]
&+\{J,J\}(z,y,x)
-\{J,J\}(z,x,y)\bigr).
\end{split}
\end{equation}

A natural connection $\DDD''$ with a torsion $T''$ on an almost Norden
manifold $(\MM,J,g)$ is called a
\emph{canonical connection} if $T''$ satisfies the following
condition \cite{GaMi87}
\begin{equation}\label{4.1}
    T''(x,y,z)+T''(y,z,x)-T''(Jx,y,Jz)-T''(y,Jz,Jx)=0.
\end{equation}

In  \cite{GaMi87}, it is shown that \eqref{4.1} is equivalent to the
condition
$T''_1=T''_4=0$,
i.e. $T''\in\T_2\oplus\T_3$. Moreover,
there it is proved that on every almost Norden manifold
 exists a unique canonical
connection $\DDD''$. We express its torsion in terms of $[J,J]$
and $\{J,J\}$ as follows
\begin{equation}\label{4.3}
    T''(x,y,z)=\dfrac{1}{4}[J,J](x,y,z)
    +\dfrac{1}{8}\bigl(\{J,J\}(z,y,x)-\{J,J\}(z,x,y)\bigr).
\end{equation}

Taking into account \eqref{4.3} and \eqref{TbNhatN}, it is easy to
conclude that $\DDD''$ coincides with $\DDD'$
if and only if the condition $[J,J]=\s [J,J]$ holds. The latter equality is equivalent
to the vanishing of $[J,J]$. In other words, on a complex Norden manifold, i.e.
$(\MM,J,g)\in\W_1\oplus\W_2$, the
canonical con\-nect\-ion and the B-connection coincide.

Now, let $(\MM,J,g)$ be in the class $\W_1$ containing the conformally equivalent manifolds of the K\"ahler-Norden manifolds. The conformal equivalence is made with respect to the general conformal transformations of the metric $g$
defined by \eqref{transf}. These transformations form the general group $C$.
An important its subgroup is the group $C_0$ of the \emph{holomorphic
conformal transformations}, defined by the condition: $u + iv$ is
a holomorphic function, i.e. the equality $\D u = \D v \circ J$ is valid.

Then the torsion of
the canonical connection is an invariant of $C_0$, i.e. the relation
$\overline{T''}(x,y)=T''(x,y)$ holds with respect to any
transformation of $C_0$.
It is
proved that the curvature tensor of the canonical connection is a
K\"ahler tensor if and only if $(\MM,J,g)\in\W_1^0$, i.e. a $\W_1$-manifold
with closed Lee forms $\ta$ and $\widetilde \ta$.
Moreover, there are studied conformal invariants of the canonical connection in $\W_1^0$.

%
%




Bearing in mind the conformal invariance of both the basic classes and
the torsion $T''$ of the canonical
connection,
the conditions for $T''$ are used in  \cite{GaMi87} for other characteristics of all classes
of the almost Norden manifolds as follows:
\begin{subequations}\label{Wi:Tc}
\begin{equation}
\begin{split}
\W_0:\quad &T''(x,y)=0;\\[6pt]
\W_1:\quad &T''(x,y)=\dfrac{1}{2n}\left\{t''(x)y-t''(y)x\right.\\
&\phantom{T''(x,y)=\dfrac{1}{2n}\left\{\right.}
\left.+t''(Jx)Jy-t''(Jy)Jx\right\};\\[6pt]
\W_2:\quad &T''(x,y)=T''(Jx,Jy),\quad t''=0;\\[6pt]
\W_3:\quad &T''(Jx,y)=-JT''(x,y);\\[6pt]
\W_1\oplus\W_2:\quad &T''(x,y)=T''(Jx,Jy),\\[6pt]
                      &\sx T''(x,y,z)=0;
\end{split}
\end{equation}
\begin{equation}
\begin{split}
\W_1\oplus\W_3:\quad &T''(Jx,y)+JT''(x,y)\\[6pt]
&\phantom{}
=\dfrac{1}{n}\bigl\{
t''(Jy)x-t''(y)Jx\bigr\};
\\[6pt]
\W_2\oplus\W_3:\quad  &t''=0;\\[6pt]
\W_1\oplus\W_2\oplus\W_3:\;  &\text{no conditions},
\end{split}
\end{equation}
\end{subequations}
where the torsion form $t''$ of $T''$ is determined by $t''(x)=g^{ij}T''(x,e_i,e_j)$.
The special class $\W_0$ is characterized by the condition $T''(x,y)=0$ and then $\DDD''\equiv \DDD$ holds.

The torsion $T''$ is known as a \emph{vectorial torsion}, because of its form on a $\W_1$-manifold.
Let the subclass of $\T_3$ with vectorial torsions be denoted by
$\T_3^1$ whereas  $\T_3^0$ be the subclass of $\T_3$ with vanishing torsion forms $t''$.

The classes of the almost Norden manifolds are determined  with respect to the Nijenhuis tensors in
\eqref{Wi:N}. The same classes
are characterized by conditions for
the torsion of the canonical
connection in \eqref{Wi:Tc}. By virtue of these results we obtain the following
\begin{thm}\label{thm:Wi:NhatN}
The classes of the almost Norden manifolds $(\MM,J,g)$ are characterized
by an expression of the torsion $T''$ of the canonical
connection in terms of the Nijenhuis tensors $[J,J]$ and $\{J,J\}$ as follows:
\begin{subequations}\label{Wi:Tc:NhatN}
\begin{equation}
\begin{split}
\W_1:\quad &T''(x,y,z)=\dfrac{1}{16n}\bigl(
    \vartheta(x)g(y,z)-\vartheta(y)g(x,z)\\[6pt]
    &\phantom{T''(x,y,z)=\dfrac{1}{16n}\bigl(}
    +\vartheta(Jx)g(y,Jz)\\[6pt]
    &\phantom{T''(x,y,z)=\dfrac{1}{16n}\bigl(}
    -\vartheta(Jy)g(x,Jz)
    \bigr);
\\[6pt]
\W_2:\quad &T''(x,y,z)=\dfrac{1}{8}\bigl(
    \{J,J\}(z,y,x)-\{J,J\}(z,x,y)\bigr),\\[6pt]
    &
    t''=\vartheta=0;\\[6pt]
\W_3:\quad &T''(x,y,z)=\dfrac{1}{4}[J,J](x,y,z);
\\[6pt]
\W_1\oplus\W_2:\quad    &T''(x,y,z)=\dfrac{1}{8}\bigl(\{J,J\}(z,y,x)\\[6pt]
                        &\phantom{T''(x,y,z)=\dfrac{1}{8}\bigl(}
                            -\{J,J\}(z,x,y)\bigr);
\end{split}
\end{equation}
\begin{equation}
\begin{split}
\W_1\oplus\W_3:\quad    &T''(x,y,z)=\dfrac14[J,J](x,y,z)\\[6pt]
                        &\phantom{T''(x,y,z)=}
                        +\dfrac{1}{16n}\bigl(\vartheta(x)g(y,z)\\[6pt]
                        &\phantom{T''(x,y,z)=+\dfrac{1}{16n}\bigl(}
                                    +\vartheta(Jx)g(y,Jz)\\[6pt]
                        &\phantom{T''(x,y,z)=+\dfrac{1}{16n}\bigl(}
                                    -\vartheta(y)g(x,z)\\[6pt]
                        &\phantom{T''(x,y,z)=+\dfrac{1}{16n}\bigl(}
                        -\vartheta(Jy)g(x,Jz)
    \bigr);
\\[6pt]
\W_2\oplus\W_3:\quad  &T''(x,y,z)=\dfrac{1}{4}[J,J](x,y,z)\\[6pt]
    &\phantom{T''(x,y,z)=}
    +\dfrac{1}{8}\bigl(\{J,J\}(z,y,x)\\[6pt]
    &\phantom{T''(x,y,z)=+\dfrac{1}{8}\bigl(}
    -\{J,J\}(z,x,y)\bigr),\\[6pt]
&    t''=\vartheta=0.
\end{split}
\end{equation}
\end{subequations}
The special class $\W_0$ is characterized by $T''=0$ and
the whole class $\W_1\oplus\W_2\oplus\W_3$ --- by \eqref{4.3} only.

Moreover, bearing in mind the classifications for almost Norden manifolds $(\MM,J,g)$   with respect to the
tensor $F$ and the torsion $T''$ in \cite{GaBo} and \cite{GaMi87},
respectively, we have:
\[
\begin{array}{ll}
(\MM,J,g)\in\W_1 \quad & \Leftrightarrow \quad T''\in\T_3^1;\\[6pt]
(\MM,J,g)\in\W_2 \quad & \Leftrightarrow \quad   T''\in\T_3^0;\\[6pt]
(\MM,J,g)\in\W_3 \quad & \Leftrightarrow \quad T''\in\T_2;\\[6pt]
(\MM,J,g)\in\W_1\oplus\W_2 \quad & \Leftrightarrow \quad T''\in\T_3;\\[6pt]
(\MM,J,g)\in\W_1\oplus\W_3 \quad & \Leftrightarrow \quad T''\in\T_2\oplus\T_3^1;\\[6pt]
(\MM,J,g)\in\W_2\oplus\W_3 \quad & \Leftrightarrow \quad T''\in\T_2\oplus\T_3^0;\\[6pt]
(\MM,J,g)\in\W_1\oplus\W_2\oplus\W_3 \quad & \Leftrightarrow \quad T''\in\T_2\oplus\T_3.
\end{array}
\]



\end{thm}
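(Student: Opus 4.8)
The plan is to read off each row of \eqref{Wi:Tc:NhatN} by substituting the class conditions of \thmref{prop:Wi:N}, recorded in \eqref{Wi:N}, into the universal expression \eqref{4.3} for the torsion $T''$ of the canonical connection. It is convenient to view the two summands of \eqref{4.3} as the components $T''_2=\tfrac14[J,J]$ and $T''_3=\tfrac18\bigl(\{J,J\}(z,y,x)-\{J,J\}(z,x,y)\bigr)$ in $\T_2$ and $\T_3$, as noted before the statement (recall that canonical means $T''_1=T''_4=0$). Several classes then need no computation at all: in $\W_0$ both Nijenhuis tensors vanish, so $T''=0$; in $\W_3$ the condition $\{J,J\}=0$ annihilates $T''_3$, leaving $T''=\tfrac14[J,J]$; in $\W_1\oplus\W_2$ the condition $[J,J]=0$ annihilates $T''_2$, leaving $T''=T''_3$; $\W_2$ is this last case sharpened by $\vartheta=0$; $\W_2\oplus\W_3$ is \eqref{4.3} itself constrained by $\vartheta=0$; and $\W_1\oplus\W_2\oplus\W_3$ is \eqref{4.3} with no constraint.

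The genuine computation is confined to the two classes whose $\{J,J\}$ is prescribed explicitly through its trace $\vartheta$, namely $\W_1$ and $\W_1\oplus\W_3$. Inserting $\{J,J\}=\tfrac{1}{2n}\bigl(g\otimes\vartheta+\g\otimes\widetilde{\vartheta}\bigr)$ into $T''_3$ and expanding the antisymmetric combination, the four surviving terms are $\tfrac{1}{16n}\bigl(\vartheta(x)g(y,z)-\vartheta(y)g(x,z)+\widetilde{\vartheta}(x)\g(z,y)-\widetilde{\vartheta}(y)\g(z,x)\bigr)$; applying the identities $\g(x,y)=g(Jx,y)=g(x,Jy)$ and $\widetilde{\vartheta}=\vartheta\circ J$ turns $\widetilde{\vartheta}(x)\g(z,y)$ into $\vartheta(Jx)g(y,Jz)$, and similarly for the last term, producing exactly the $\tfrac{1}{16n}(\cdots)$ expression of \eqref{Wi:Tc:NhatN}. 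For $\W_1$ the part $T''_2$ is absent since $[J,J]=0$, whereas for $\W_1\oplus\W_3$ it persists as $\tfrac14[J,J]$ and is simply added. The subtlety is that \eqref{Wi:N} records for $\W_1\oplus\W_3$ only the cyclic-sum identity $\s\{J,J\}=\tfrac{1}{2n}\s\{g\otimes\vartheta+\g\otimes\widetilde{\vartheta}\}$, not the full explicit value; I would close this gap by the linearity of $\{J,J\}$ in $F$ in \eqref{NhatF'} together with the orthogonal splitting $F=F_1+F_3$, $F_1\in\W_1$, $F_3\in\W_3$: since $\W_3$ is characterised by $\{J,J\}=0$, the summand $F_3$ contributes nothing, so the $\{J,J\}$ of a $\W_1\oplus\W_3$-manifold coincides with that of its $\W_1$-component and hence carries the full explicit form used above.

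I expect this upgrade of the cyclic-sum characterization to the full tensor to be the main obstacle; once the splitting $F=F_1+F_3$ is invoked, the remaining work in $\W_1\oplus\W_3$ is the same reindexing as in $\W_1$. It remains to establish the closing block of equivalences with $\T_2$, $\T_3$, $\T_3^0$ and $\T_3^1$. Because $T''=T''_2+T''_3$ with $T''_2=\tfrac14[J,J]$ and $T''_3=\tfrac18\bigl(\{J,J\}(z,y,x)-\{J,J\}(z,x,y)\bigr)$, the vanishing of $[J,J]$ is equivalent to $T''\in\T_3$ and the vanishing of $\{J,J\}$ to $T''\in\T_2$. To separate $\T_3^1$ from $\T_3^0$ one compares the torsion form $t''$: matching the $\W_1$-rows of \eqref{Wi:Tc} and \eqref{Wi:Tc:NhatN} gives $t''=\tfrac18\vartheta$, so $t''=0\Leftrightarrow\vartheta=0$. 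Checking each class's conditions on $[J,J]$, $\{J,J\}$ and $\vartheta$ against these criteria then yields the seven displayed equivalences.
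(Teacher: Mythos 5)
Your proposal is correct and follows essentially the same route as the paper's proof: substitute the Nijenhuis-tensor characterizations \eqref{Wi:N} into the universal formula \eqref{4.3}, treat the $\T_2$- and $\T_3$-components $\tfrac14[J,J]$ and $\tfrac18\bigl(\{J,J\}(z,y,x)-\{J,J\}(z,x,y)\bigr)$ separately, and use the trace relation $\vartheta=8t''$ to split $\T_3$ into $\T_3^1$ and $\T_3^0$. The only place you go beyond the paper is the $\W_1\oplus\W_3$ case, where the paper writes ``in a similar way'' while you correctly supply the missing step via the orthogonal splitting $F=F_1+F_3$ and the linearity of $\{J,J\}$ in $F$.
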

\begin{proof}
Let $(\MM,J,g)$ be a complex Norden manifold, i.e. $(\MM,J,g)\in\W_1\oplus\W_2$.
According to \eqref{4.3} and $[J,J]=0$ in this case, we have
$T''=T''_3$, i.e.
    $T''\in\T_3$ and the expression
    \[
    T''(x,y,z)=\dfrac{1}{8}\bigl(
    \{J,J\}(z,y,x)-\{J,J\}(z,x,y)\bigr)
    \] is obtained.
Applying \eqref{Wi:N} to the latter equality, we determine the basic classes
$\W_1$ and $\W_2$ as it is given in \eqref{Wi:Tc:NhatN} and the corresponding
subclasses $\T_3^1$ and $\T_3^0$, respectively.
Taking into account the relation between the corresponding traces
$\vartheta=8t''$, which is a consequence of the equality for
$\W_1\oplus\W_2$, we obtain the  characterization for these two basic classes in \eqref{Wi:Tc}.

Let $(\MM,J,g)$ be a quasi-K\"ahler manifold with Norden metric, i.e. $(\MM,J,g)\in\W_3$.
By virtue of \eqref{4.3} and $\{J,J\}=0$ for such a manifold,
we have $T''=T''_2$, i.e.
    $T''\in\T_2$ and therefore we give
$T''=\dfrac{1}{4}[J,J]$.
Obviously, the form of $T''$ in the latter equality satisfies
the condition for $\W_3$ in \eqref{Wi:Tc}.

In a similar way we get for the remaining classes $\W_1\oplus\W_3$ and $\W_2\oplus\W_3$.
The conditions of these two classes, given in \eqref{Wi:Tc},
are consequences of the corresponding equalities in \eqref{Wi:Tc:NhatN}.
The case of the whole class $\W_1\oplus\W_2\oplus\W_3$ was discussed above.
\end{proof}

The canonical connections on quasi-K\"ahler manifolds with Norden
metric are considered in more details in  \cite{Mek09}. There are given the following formulae for
the potential $Q''$ and the torsion $T''$ on a
$\W_3$-manifold:
\[
\begin{split}
    &Q''(x,y) = \dfrac{1}{4}\left\{\left(\DDD_y J\right)Jx-\left(\DDD_{Jy} J\right)x
    +2\left(\DDD_{x}
    J\right)Jy\right\},\\[6pt]
    &T''(x,y) = \dfrac{1}{2}\left\{\left(\DDD_x J\right)Jy+\left(\DDD_{Jx} J\right)y\right\}.
\end{split}
\]
%
Moreover, some properties for the curvature and the torsion of the  canonical connection are obtained.



\vskip 0.2in \addtocounter{subsection}{1} \setcounter{subsubsection}{0}

\noindent  {\Large\bf \thesubsection. The KT-connection}

\vskip 0.15in


In  \cite{Mek-10}, it is proved that a natural connection $\DDD'''$
with totally skew-symmetric torsion, called a
\emph{KT-connection}, exists on an almost Norden manifold
$(\MM,J,g)$ if and only if $(\MM,J,g)$ belongs to $\W_3$, i.e. the
manifold is quasi-K\"ahlerian with Norden metric. Moreover,
the KT-connection is unique and it is determined by its potential
\begin{equation}\label{QQQ3}
    Q'''(x,y,z)=-\dfrac{1}{4}\sx F(x,y,Jz).
\end{equation}

As mentioned above, the
canonical con\-nect\-ion and the B-connection coincide on $(\MM,J,g)\in\W_1\oplus\W_2$
whereas the
KT-connection does not exist there.

The following natural connections on $(\MM,J,g)$ are studied on a quasi-K\"ahler
manifold with Norden metric: the B-connection $\DDD'$
\cite{Mek-1}, the
canonical connection $\DDD''$ \cite{Mek09} and the KT-connection $\DDD'''$  \cite{Mek-08,Mek-10}.

Relations \eqref{4.3} and \eqref{TbNhatN} of $T'$ and $T''$ in terms of the pair of Nijenhuis tensors
 are specialized for a $\W_3$-manifold in the following way
\begin{equation}\label{TcTbNW3}
\begin{split}
&T'(x,y,z)=\dfrac{1}{8}\bigl([J,J](x,y,z)+\sx [J,J](x,y,z)\bigr),\quad\\[6pt]
&T''(x,y,z)=\dfrac{1}{4}[J,J](x,y,z).
\end{split}
\end{equation}
The equalities \eqref{TQ} and \eqref{QQQ3} yield
\begin{equation}\label{TKT=F}
T'''(x,y,z)=
-\dfrac{1}{2}\sx F(x,y,Jz),
\end{equation}
which by
\eqref{Wi:N0Nhat0} for $\W_3$ and \eqref{N-prop} implies
\begin{equation}\label{TkNW3}
T'''(x,y,z)=\dfrac{1}{4}\sx [J,J](x,y,z).
\end{equation}

Then from \eqref{TcTbNW3} and \eqref{TkNW3} we have  the relation
\[
T'=\dfrac{1}{2}\left(T''+ T'''\right),
\]
which by
\eqref{QT} is equivalent to
\[
Q'=\dfrac{1}{2}\left(Q''+ Q'''\right).
\]
Therefore, as it is shown in  \cite{Mek09}, the B-connection
is the \emph{average} connection for the canonical connection and the
KT-connection on a quasi-K\"ahler manifold with Norden metric,
i.e.
\[
\DDD'=\dfrac{1}{2}\left(\DDD''+ \DDD'''\right).
\]

\vspace{20pt}

\begin{center}
$\divideontimes\divideontimes\divideontimes$
\end{center}

\newpage

\addtocounter{section}{1}\setcounter{subsection}{0}\setcounter{subsubsection}{0}

\setcounter{thm}{0}\setcounter{dfn}{0}\setcounter{equation}{0}

\label{par:2n+1}

 \Large{

\
\\[6pt]
\bigskip

\
\\[6pt]
\bigskip

\lhead{\emph{Chapter I $|$ \S\thesection. Almost contact manifolds with
B-metric
}}

\noindent
\begin{tabular}{r"l}
\hspace{-6pt}{\Huge\bf \S\thesection.}  & {\Huge\bf Almost contact manifolds} \\[12pt]
                             & {\Huge\bf with B-metric}
\end{tabular}

\vskip 1cm

\begin{quote}
\begin{large}
In the present section we recall some notions and knowledge for the almost contact manifolds with
B-metric which
are 
studied
in \cite{GaMiGr,Man4,Man31,ManGri1,ManGri2,ManIv38,ManIv36,NakGri2}.
\end{large}
\end{quote}

%
%

\vskip 0.15in

\vskip 0.2in \addtocounter{subsection}{1} \setcounter{subsubsection}{0}

\noindent  {\Large\bf \thesubsection. Almost contact structures with B-metric}

\vskip 0.15in


Let $(\MM,\f,\xi,\eta)$ be an almost contact manifold,  i.e.  $\MM$ is
a differentiable manifold of dimension $(2n+1)$, provided  with an almost
contact structure $(\f,\xi,\eta)$ consisting of an endomorphism
$\f$ of the tangent bundle, a vector field $\xi$ and its dual
1-form $\eta$ such that the following algebraic relations are
satisfied: \cite{Blair}
\begin{equation}\label{str1}
\f\xi = 0,\quad \f^2 = -I + \eta \otimes \xi,\quad
\eta\circ\f=0,\quad \eta(\xi)=1,
\end{equation}
where $I$ denotes the identity.

Further, let us equip the almost contact manifold
$(\MM,\f,\xi,\eta)$ with a pseudo-Riemannian metric $g$ of signature
$(n+1,n)$ determined by
\begin{equation}\label{str2}
g(\f x, \f y ) = - g(x, y ) + \eta(x)\eta(y).
\end{equation}
Then $(\MM,\f,\xi,\eta,g)$ is called an almost contact manifold with
B-metric or an \emph{almost contact B-metric manifold}
\cite{GaMiGr}.

%

The associated metric $\widetilde{g}$ of $g$ on $\M$ is defined by the equality
\[\widetilde{g}(x,y)=g(x,\f y)\allowbreak+\eta(x)\eta(y).\] Both
metrics $g$ and $\widetilde{g}$ are necessarily of signature
$(n+1,n)$. The manifold $(\MM,\f,\xi,\eta,\widetilde{g})$ is also an
almost contact B-metric manifold.

Let us remark that the $2n$-dimensional contact distribution
$\HC=\ker(\eta)$, generated by the contact 1-form $\eta$, can be
considered as the horizontal distribution of the sub-Riemannian
manifold $\MM$. Then $\HC$ is endowed with an almost complex structure
determined as $\f|_\HC$ -- the restriction of $\f$ on $\HC$, as well
as a Norden metric $g|_\HC$, i.e.
\[g|_\HC(\f|_\HC\cdot,\f|_\HC\cdot)=-g|_\HC(\cdot,\cdot).\] Moreover, $\HC$
can be considered as an $n$-dimensional complex Riemannian
manifold with a complex Riemannian metric
$g^{\C}=g|_\HC+i\widetilde{g}|_\HC$~ \cite{GaIv92}.
By this reason we can refer to these manifolds as \emph{almost contact complex Riemannian manifolds}.

Using the Reeb vector field $\xi$ and its dual contact 1-form $\eta$ on an arbitrary almost contact B-metric manifold $\M$, we consider two distributions
in the tangent bundle $T\MM$ of $\MM$ as follows
\begin{equation}\label{HV}
  \HH=\ker(\eta),\qquad \VV=\Span(\xi).
\end{equation}
Then the horizontal distribution $\HH$ and the vertical distribution $\VV$ form a pair of mutually complementary distributions in $T\MM$ which are orthogonal with respect to both of the metrics $g$ and $\tg$, i.e.
\begin{equation}\label{HHVV}
\HH\oplus\VV =T\MM,\qquad \HH\;\bot\;\VV,\qquad \HH\cap\VV=\{o\},
\end{equation}
where $o$ is the zero vector field on $\MM$.
%
Thus, there are determined the corresponding horizontal and vertical projectors
\begin{equation}\label{TMhv}
\mathrm{h}:T\MM\mapsto\HH,\qquad \mathrm{v}:T\MM\mapsto\VV
\end{equation}
having the properties $\mathrm{h}\circ \mathrm{h} =\mathrm{h}$, $\mathrm{v}\circ
\mathrm{v}=\mathrm{v}$, $\mathrm{h}\circ \mathrm{v}=\allowbreak{}\mathrm{v}\circ \mathrm{h}=0$.
An arbitrary vector field $x$ in $T\MM$ has
respective projections $x^{\mathrm{h}}$ and $x^{\mathrm{v}}$
so that
\begin{equation}\label{hv}
x=x^{\mathrm{h}}+x^{\mathrm{v}},
\end{equation}
where
\begin{equation}\label{Xhv}
x^{\mathrm{h}}=-\f^2x=x-\eta(x)\xi, \qquad x^{\mathrm{v}}=\eta(x)\xi
\end{equation}
are the so-called horizontal and vertical components, respectively.

The structure group of $\M$ is $\GG\times\II$, where $\GG$ is the group determined in \eqref{GG} and $\II$ is the
identity on $\VV$. Consequently, the structure group consists of the real square matrices of
order $2n+1$ of the following type
\begin{equation}\label{GxI}
\left(%
\begin{array}{r|c|c}
  A & B & \mathrm{o}^\top\\ \hline
  -B & A & \mathrm{o}^\top\\ \hline
 \mathrm{o} & \mathrm{o} & 1 \\
\end{array}%
\right),
\end{equation}
where $\mathrm{o}$ and its transpose $\mathrm{o}^\top$ are the zero row
$n$-vector and the zero column $n$-vector; $A$ and $B$ are real invertible matrices of size $n$ satisfying the conditions in \eqref{AB}.

\vskip 0.2in \addtocounter{subsection}{1} \setcounter{subsubsection}{0}

\noindent  {\Large\bf \thesubsection. Fundamental tensors $F$ and $\tF$}

\vskip 0.15in

The covariant derivatives of $\f$, $\xi$, $\eta$ with respect to
the Levi-Civita connection $\n$ play a fundamental role in
differential geometry on the almost contact manifolds.  The
fundamental tensor $F$ of type (0,3) on $\M$ is defined by
\begin{equation}\label{F=nfi}
F(x,y,z)=g\bigl( \left( \nabla_x \f \right)y,z\bigr).
\end{equation}
It has the following basic properties:
\begin{equation}\label{F-prop}
\begin{split}
F(x,y,z)&=F(x,z,y)\\[6pt]
&=F(x,\f y,\f z)+\eta(y)F(x,\xi,z)
+\eta(z)F(x,y,\xi).
\end{split}
\end{equation}
The relations of $\n\xi$ and $\n\eta$ with $F$ are:
\begin{equation}\label{Fxieta}
    \left(\n_x\eta\right)(y)=g\left(\n_x\xi,y\right)=F(x,\f y,\xi).
\end{equation}

The 1-forms $\ta$, $\ta^*$ and $\om$, called \emph{Lee forms} on $\M$, are associated with $F$ by the following way:
\begin{equation}\label{titi}
\begin{array}{l}
\ta(z)=g^{ij}F(e_i,e_j,z),\quad \\[6pt]
\ta^*(z)=g^{ij}F(e_i,\f e_j,z),\\[6pt]
\om(z)=F(\xi,\xi,z),
\end{array}
\end{equation}
where $g^{ij}$ are the components of the inverse matrix of $g$
with respect to a basis $\left\{e_i;\xi\right\}$
$(i=1,2,\dots,2n)$ of the tangent space $T_p\MM$ of $\MM$ at an
arbitrary point $p\in \MM$.

Obviously, the equality $\om(\xi)=0$ and
the following relation are always valid:
\begin{equation}\label{tata*}
\ta^*\circ\f=-\ta\circ\f^2.
\end{equation}
For the corresponding traces $\widetilde\ta$
and $\widetilde\ta^*$ with respect to $\widetilde g$ we have
\[
\widetilde\ta=-\ta^*,\qquad \widetilde\ta^*=\ta.
\]

A classification of the almost contact B-metric manifolds with
respect to the properties of $F$ is given by G. Ganchev, V. Mihova and K. Gribachev in \cite{GaMiGr}. This classification
includes the basic classes $\F_1$, $\F_2$, $\dots$, $\F_{11}$.
Their intersection is the special class $\F_0$ determined by the
condition $F(x,y,z)=0$. Hence $\F_0$ is the class of almost
contact B-metric manifolds with $\n$-parallel structures, i.e.
\[
\n\f=\n\xi=\n\eta=\n g=\n \widetilde{g}=0.
\]
The $\F_0$-manifolds are also known as \emph{cosymplectic B-metric manifolds}.
Further, we use the following characteristic conditions of the
basic classes:  \cite{GaMiGr,Man8}
\begin{equation}\label{Fi}
\begin{split}
\F_{1}: \quad &F(x,y,z)=\dfrac{1}{2n}\bigl\{g(x,\f y)\ta(\f z)+g(\f x,\f
y)\ta(\f^2 z)\\
&\phantom{F(x,y,z)=\dfrac{1}{2n}\bigl\{}
+g(x,\f z)\ta(\f y)+g(\f x,\f
z)\ta(\f^2 y)
\bigr\};\\[6pt]
\F_{2}: \quad &F(\xi,y,z)=F(x,\xi,z)=0,\quad
              \sx F(x,y,\f z)=\ta=0;\\[6pt]
\F_{3}: \quad &F(\xi,y,z)=F(x,\xi,z)=0,\quad
              \sx F(x,y,z)=0;\\[6pt]
\F_{4}: \quad &F(x,y,z)=-\dfrac{\ta(\xi)}{2n}\bigl\{g(\f x,\f y)\eta(z)+g(\f x,\f z)\eta(y)\bigr\};\\[6pt]
\F_{5}: \quad &F(x,y,z)=-\dfrac{\ta^*(\xi)}{2n}\bigl\{g( x,\f y)\eta(z)+g(x,\f z)\eta(y)\bigr\};\\[6pt]
\F_{6}: \quad &F(x,y,z)=F(x,y,\xi)\eta(z)+F(x,z,\xi)\eta(y),\quad \\[6pt]
                \quad &F(x,y,\xi)= F(y,x,\xi)=-F(\f x,\f y,\xi),\quad \\[6pt]
                &\ta=\ta^*=0; \\[6pt]
%
\F_{7}: \quad &F(x,y,z)=F(x,y,\xi)\eta(z)+F(x,z,\xi)\eta(y),\quad \\[6pt]
                \quad &F(x,y,\xi)=- F(y,x,\xi)=-F(\f x,\f y,\xi); \\[6pt]
\F_{8}: \quad &F(x,y,z)=F(x,y,\xi)\eta(z)+F(x,z,\xi)\eta(y),\\[6pt]
                \quad &F(x,y,\xi)=F(y,x,\xi)=F(\f x,\f y,\xi); \\[6pt]
\F_{9}: \quad &F(x,y,z)=F(x,y,\xi)\eta(z)+F(x,z,\xi)\eta(y),\\[6pt]
                \quad &F(x,y,\xi)=- F(y,x,\xi)=F(\f x,\f y,\xi); \\[6pt]
\F_{10}: \quad &F(x,y,z)=F(\xi,\f y,\f z)\eta(x); \\[6pt]
\F_{11}:
\quad &F(x,y,z)=\eta(x)\left\{\eta(y)\om(z)+\eta(z)\om(y)\right\}.
\end{split}
\end{equation}

In \cite{HMan}, it is proved that
$\M$ belongs to $\F_i$ $(i=1,\dots,11)$ if and only if
$F$ satisfies the condition $F=F^i$, where the components $F^i$ of
$F$ are the following
\begin{subequations}\label{Fi-Ico}
\begin{equation}
\begin{split}
&F^{1}(x,y,z)=\frac{1}{2n}\bigl\{g(\f x,\f y)\ta(\f^2 z) +g(x,\f
y)\ta(\f z)\\
&\phantom{F^{1}(x,y,z)=\frac{1}{2n}\bigl\{}
+g(\f x,\f z)\ta(\f^2 y)+g(x,\f z)\ta(\f y)\bigr\}, \\[6pt]
&F^{2}(x,y,z)=-\frac{1}{4}\bigl\{ %
F(\f^2 x,\f^2 y,\f^2 z)+F(\f^2 y,\f^2 z,\f^2 x)\\
&\phantom{F^{2}(x,y,z)=-\frac{1}{4}\bigl\{}
-F(\f y,\f^2 z,\f x)+F(\f^2 x,\f^2 z,\f^2 y)  \\
&\phantom{F^{2}(x,y,z)=-\frac{1}{4}\bigl\{}%
+F(\f^2 z,\f^2 y,\f^2 x)-F(\f z,\f^2 y,\f
x)\bigr\}\\
&\phantom{F^{2}(x,y,z)=}%
-\frac{1}{2n}\bigl\{g(\f x,\f y)\ta(\f^2 z) +g(x,\f
y)\ta(\f z) \\
&\phantom{F^{2}(x,y,z)=-\frac{1}{2n}\bigl\{}%
+g(\f x,\f z)\ta(\f^2 y)+g(x,\f z)\ta(\f
y)\bigr\},  \\[6pt]
&F^{3}(x,y,z)=-\frac{1}{4}\bigl\{%
F(\f^2 x,\f^2 y,\f^2 z)-F(\f^2 y,\f^2 z,\f^2 x) \\
&\phantom{F^{3}(x,y,z)=-\frac{1}{4}\bigl\{}%
+F(\f y,\f^2 z,\f
x)+F(\f^2 x,\f^2 z,\f^2 y)\\
&\phantom{F^{3}(x,y,z)=-\frac{1}{4}\bigl\{}%
-F(\f^2 z,\f^2 y,\f^2 x) +F(\f z,\f^2 y,\f x)\bigr\}, \\[6pt]
&F^{4}(x,y,z)=-\frac{\ta(\xi)}{2n}\bigl\{g(\f x,\f y)\eta(z)+g(\f x,\f z)\eta(y)\bigr\},  \\[6pt]
&F^{5}(x,y,z)=-\frac{\ta^*(\xi)}{2n}\bigl\{g(x,\f y)\eta(z)+g(x,\f z)\eta(y)\bigr\},\\[6pt]
&F^{6}(x,y,z)=\frac{\ta(\xi)}{2n}\bigl\{g(\f x,\f y)\eta(z)+g(\f x,\f z)\eta(y)\bigr\}\\
&\phantom{F^{6}(x,y,z)=}%
+\frac{\ta^*(\xi)}{2n}\bigl\{g(x,\f y)\eta(z)+g(x,\f z)\eta(y)\bigr\}\\
&\phantom{F^{6}(x,y,z)=}%
+\frac{1}{4}%
   \bigl\{F(\f^2 x,\f^2 y,\xi)+F(\f^2 y,\f^2 x,\xi)\\
&\phantom{F^{6}(x,y,z)=+\frac{1}{4}\bigl\{}%
   -F(\f x,\f y,\xi)-F(\f y,\f x,\xi)\bigr\}\eta(z)\\
&\phantom{F^{6}(x,y,z)=}%
+\frac{1}{4}\bigl\{F(\f^2 x,\f^2 z,\xi)+F(\f^2 z,\f^2 x,\xi)\\
&\phantom{F^{6}(x,y,z)=+\frac{1}{4}\bigl\{}%
-F(\f x,\f z,\xi)-F(\f z,\f x,\xi)\bigr\}\eta(y),\\[6pt]
&F^{7}(x,y,z)=\frac{1}{4}
    \bigl\{F(\f^2 x,\f^2 y,\xi)-F(\f^2 y,\f^2 x,\xi)\\
&\phantom{F^{7}(x,y,z)=\frac{1}{4}\bigl\{}%
    -F(\f x,\f y,\xi)+F(\f y,\f x,\xi)\bigr\}\eta(z)
\end{split}
\end{equation}
\begin{equation}
\begin{split}
&\phantom{F^{7}(x,y,z)=} %
+\frac{1}{4}
    \bigl\{F(\f^2 x,\f^2 z,\xi)-F(\f^2 z,\f^2 x,\xi)\\
&\phantom{F^{7}(x,y,z)=+\frac{1}{4}\bigl\{}%
    -F(\f x,\f z,\xi)+F(\f z,\f x,\xi)\bigr\}\eta(y),\\[6pt]
&F^{8}(x,y,z)=\frac{1}{4}
    \bigl\{F(\f^2 x,\f^2 y,\xi)+ F(\f^2 y,\f^2 x,\xi)\\
&\phantom{F^{8}(x,y,z)=\frac{1}{4}\bigl\{}%
    +F(\f x,\f y,\xi)+ F(\f y,\f x,\xi)\bigr\}\eta(z)\\
&\phantom{F^{8}(x,y,z)=}
    +\frac{1}{4}
    \bigl\{F(\f^2 x,\f^2 z,\xi)+ F(\f^2 z,\f^2 x,\xi)\\
&\phantom{F^{8}(x,y,z)=+\frac{1}{4}\bigl\{}%
    +F(\f x,\f z,\xi)+ F(\f z,\f x,\xi)\bigr\}\eta(y),\\[6pt]
&F^{9}(x,y,z)=\frac{1}{4}
    \bigl\{F(\f^2 x,\f^2 y,\xi)-F(\f^2 y,\f^2 x,\xi)\\
&\phantom{F^{9}(x,y,z)=\frac{1}{4}\bigl\{}%
    +F(\f x,\f y,\xi)-F(\f y,\f x,\xi)\bigr\}\eta(z)\\
&\phantom{F^{9}(x,y,z)=} +\frac{1}{4}
    \bigl\{F(\f^2 x,\f^2 z,\xi)-F(\f^2 z,\f^2 x,\xi)\\
&\phantom{F^{9}(x,y,z)=+\frac{1}{4}\bigl\{}%
    +F(\f x,\f z,\xi)-F(\f z,\f x,\xi)\bigr\}\eta(y), \\[6pt]
&F^{10}(x,y,z)=\eta (x) F(\xi, \f^2 y, \f^2 z),\\[6pt]
&F^{11}(x,y,z)= -\eta (x) \left\{\eta(y) F(\xi, \xi,\f^2 z) + \eta(z)
F(\xi, \f^2 y, \xi)\right\}.
\end{split}
\end{equation}
\end{subequations}

It is said that an almost contact B-metric manifold
belongs to a direct sum of two or more basic classes, i.e.
$\M\in\F_i\oplus\F_j\oplus\cdots$, if and only if the fundamental
tensor $F$ on $\M$ is the sum of the corresponding components
$F^i$, $F^j$, $\ldots$ of $F$, i.e. the following condition is
satisfied $F=F^i+F^j+\cdots$ for  $i,j\in\{1,2,\dots,11\}$, $i\neq j$.

For the minimal dimension 3 of an almost contact B-metric manifold, it is known that the basic classes are only seven, because $\F_2$, $\F_3$, $\F_6$ and $\F_7$ are restricted to $\F_0$ \cite{GaMiGr,HMan}.
The geometry of the considered manifolds in dimension 3 is recently studied in \cite{HMan,HMan2,HMan4,HMan5,HMan6,HMan7,HManNak}.

Using \eqref{F-prop} and taking the traces with respect to $g$ denoted by $\tr$ and the traces with respect to $\tg$ denoted by $\tr^*$, we obtain the following relations
\begin{equation}\label{divtr}
\ta(\xi)=\Div^*(\eta),\qquad \ta^*(\xi)=\Div(\eta),
\end{equation}
where $\Div$ and $\Div^*$ denote the divergence using a trace by $g$ and by $\tg$, respectively.

Since $g(\xi,\xi)=1$ implies $g(\n_x\xi,\xi)=0$, then we obtain $\n_x\xi\in\HH$.
The shape operator $S:\HH\mapsto\HH$ for the metric $g$ is defined by
\begin{equation}\label{Sx}
S(x)=-\n_x\xi.
\end{equation}

As a corollary, the covariant derivative of $\xi$ with respect to $\n$ and the dual covariant derivative of $\eta$ because of \eqref{Fxieta} are determined in each class as follows:
\begin{equation}\label{Fi:nxi}
\begin{split}
&\F_{1}:\; \n\xi=0;\qquad
\F_{2}:\; \n\xi=0;\qquad
\F_{3}:\; \n\xi=0;\qquad\\[6pt]
&\F_{4}:\; \n\xi=\dfrac{1}{2n}\Div^*(\eta)\,\f;\qquad
\F_{5}:\; \n\xi=-\dfrac{1}{2n}\Div(\eta)\,\f^2;\\[6pt]
%
&\F_{6}:\;  g(\n_x\xi,y)=g(\n_y\xi,x)=-g(\n_{\f x}\xi,\f y),\quad \\
&\phantom{\F_{6}:\; \ }
\Div(\eta)=\Div^*(\eta)=0; \\[6pt]
&\F_{7}:\; g(\n_x\xi,y)=-g(\n_y\xi,x)=-g(\n_{\f x}\xi,\f y); \qquad\\[6pt]
&\F_{8}:\; g(\n_x\xi,y)=-g(\n_y\xi,x)=g(\n_{\f x}\xi,\f y); \\[6pt]
&\F_{9}:\; g(\n_x\xi,y)=g(\n_y\xi,x)=g(\n_{\f x}\xi,\f y); \qquad\\[6pt]
&\F_{10}:\; \n\xi=0; \qquad
\F_{11}:\; \n\xi=\eta\otimes\f\om^{\sharp},
\end{split}
\end{equation}
where $\sharp$ denotes the musical isomorphism of $T^*\MM$ in $T\MM$ given by $g$.

The latter characteristics of the basic classes imply the following
\begin{prop}\label{prop-nxi=0}
The class of almost contact B-metric manifolds with vanishing $\n \xi$ is $\F_1\oplus\F_2\oplus\F_3\oplus\F_{10}$.
\end{prop}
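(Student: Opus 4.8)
The plan is to reduce $\n\xi=0$ to a single condition on the $\xi$-slot of $F$ and then read off the surviving basic classes from the component formulas \eqref{Fi-Ico}. First I would use \eqref{Fxieta}, namely $g(\n_x\xi,y)=F(x,\f y,\xi)$, so that $\n\xi=0$ is equivalent to $F(x,\f y,\xi)=0$ for all $x,y$. To recast this, I would observe that putting $y=z=\xi$ in \eqref{F-prop} and using $\f\xi=0$, $\eta(\xi)=1$ from \eqref{str1} gives $F(x,\xi,\xi)=2F(x,\xi,\xi)$, hence $F(x,\xi,\xi)=0$; then replacing $y$ by $\f y$ in $F(x,\f y,\xi)=0$ and applying $\f^2=-I+\eta\otimes\xi$ yields $-F(x,y,\xi)+\eta(y)F(x,\xi,\xi)=0$, i.e. $F(x,y,\xi)=0$. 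Therefore $\n\xi=0$ is equivalent to $F(x,y,\xi)=0$ for all $x,y$.

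For the inclusion $\F_1\oplus\F_2\oplus\F_3\oplus\F_{10}\subseteq\{\n\xi=0\}$, I would check directly from \eqref{Fi-Ico} that these four components vanish on the $\xi$-slot: setting $z=\xi$, in $F^{1},F^{2},F^{3}$ every term acquires a factor $\f\xi=0$, $\ta(\f\xi)=0$ or $\ta(\f^2\xi)=0$, while in $F^{10}$ the factor $F(\xi,\f y,\f\xi)=0$ appears. Hence for $\M\in\F_1\oplus\F_2\oplus\F_3\oplus\F_{10}$ one has $F=F^1+F^2+F^3+F^{10}$ with $F^i(x,y,\xi)=0$ for each such $i$, so $F(x,y,\xi)=0$ and $\n\xi=0$. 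This also matches \eqref{Fi:nxi}, where $\n\xi=0$ is recorded for each of these four basic classes.

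The substantive direction is the converse. Assuming $\n\xi=0$, equivalently $F(x,y,\xi)=0$, I would show $F^i=0$ for every $i\in\{4,5,6,7,8,9,11\}$ by inspecting \eqref{Fi-Ico}. By \eqref{titi} the scalars $\ta(\xi)=g^{ij}F(e_i,e_j,\xi)$ and $\ta^*(\xi)=g^{ij}F(e_i,\f e_j,\xi)$ are traces of $F(\cdot,\cdot,\xi)$ and so vanish, forcing $F^4=F^5=0$. Each of $F^6,F^7,F^8,F^9$ is assembled solely from values $F(\f^2\cdot,\f^2\cdot,\xi)$ and $F(\f\cdot,\f\cdot,\xi)$ together with $\ta(\xi),\ta^*(\xi)$, all of which are instances of the vanishing $F(\cdot,\cdot,\xi)$; hence $F^6=F^7=F^8=F^9=0$. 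Finally $F^{11}$ is built from $F(\xi,\xi,\f^2 z)$ and $F(\xi,\f^2 y,\xi)$, and the symmetry $F(x,y,z)=F(x,z,y)$ from \eqref{F-prop} turns $F(\xi,\xi,\f^2 z)$ into $F(\xi,\f^2 z,\xi)$, again an instance of $F(\cdot,\cdot,\xi)=0$, so $F^{11}=0$. Thus $F=F^1+F^2+F^3+F^{10}$, and by the decomposition criterion of \cite{HMan} this is exactly the assertion $\M\in\F_1\oplus\F_2\oplus\F_3\oplus\F_{10}$.

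The main obstacle is the bookkeeping in the converse: one must confirm term by term that every building block of $F^4,\dots,F^9,F^{11}$ in \eqref{Fi-Ico} is either a value $F(\cdot,\cdot,\xi)$ or one of its traces $\ta(\xi),\ta^*(\xi)$, so that the single hypothesis $F(\cdot,\cdot,\xi)=0$ annihilates all seven components simultaneously. This is routine but demands care with the relation $\f^2=-I+\eta\otimes\xi$ and with the symmetry in the last two arguments of $F$.
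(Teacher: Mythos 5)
Your proof is correct and follows essentially the same route as the paper's: translate $\n\xi=0$ via \eqref{Fxieta} into the vanishing of the $\xi$-slot of $F$, then read off from the component formulas \eqref{Fi-Ico} that $F^4,\dots,F^9,F^{11}$ vanish while $F^1,F^2,F^3,F^{10}$ place no constraint, so $F=F^1+F^2+F^3+F^{10}$. Your explicit reduction from $F(\cdot,\f\cdot,\xi)=0$ to $F(\cdot,\cdot,\xi)=0$ and the term-by-term bookkeeping simply fill in details the paper leaves implicit.
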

\begin{proof}
Let $\M$ be in the general class $\F_1\oplus\F_2\oplus\cdots\oplus\F_{11}$.
Then the equality $F=F^1+F^2+\ldots+F^{11}$ is valid, where $F^i$ are given in \eqref{Fi-Ico}.
Suppose $\n\xi$ vanishes, we obtain $F^4=F^5=\dots=F^9=F^{11}=0$ by \eqref{Fxieta}. Therefore, we have $F=F^1+F^2+F^3+F^{10}$, i.e. the manifold belongs to $\F_1\oplus\F_2\oplus\F_3\oplus\F_{10}$.

Vice versa,
let $\M$ be in the class $\F_1\oplus\F_2\oplus\F_3\oplus\F_{10}$, i.e. the fundamental tensor has the form $F=F^1+F^2+F^3+F^{10}$ for arbitrary $(x,y,z)$.
Bearing in mind $\f\xi=0$ and \eqref{Fi-Ico}, we deduce that $F^i(x,\f y,\xi)$ vanishes for $i\in\{1,2,3,10\}$ and then $F(x,\f y,\xi)=0$ is valid. The latter equality is equivalent to the condition $\n\xi=0$, according to \eqref{Fxieta}.
\end{proof}

Let us consider the tensor $\Phi$ of type (1,2) defined in \cite{GaMiGr} as the difference of the Levi-Civita connections $\tn$ and $\n$ of the corresponding B-metrics $\tg$ and $g$ as follows
\begin{equation}\label{Phi0}
    \Phi(x,y)=\tn_x y-\n_x y.
\end{equation}
This tensor is known also as the \emph{potential} of $\tn$ regarding $\n$ because of the formula
\begin{equation}\label{pot}
    \tn_x y=\n_x y+\Phi(x,y).
\end{equation}
Since both the connections are torsion-free, then $\Phi$ is symmetric, i.e. $\Phi(x,y)=\Phi(y,x)$ holds.
Let the corresponding tensor of type $(0,3)$ with respect to $g$ be defined by
\begin{equation}\label{Phi03=}
    \Phi(x,y,z)=g(\Phi(x,y),z).
\end{equation}

In \cite{NakGri93}, it is given a characterization of all basic classes in terms of $\Phi$ by means of
the following relations between $F$ and $\Phi$ known from \cite{GaMiGr}
\begin{equation}\label{FPhi}
\begin{array}{l}
F(x,y,z)=\Phi(x,y,\f z)+\Phi(x,z,\f y)\\[6pt]
\phantom{F(x,y,z)=}
+\dfrac12\eta(z)\{\Phi(x,y,\xi)-\Phi(x,\f y,\xi)\\[6pt]
\phantom{F(x,y,z)=+\dfrac12\eta(z)\{}
+\Phi(\xi,x,y)-\Phi(\xi,x,\f y)\}\\[6pt]
\phantom{F(x,y,z)=}
+\dfrac12\eta(y)\{\Phi(x,z,\xi)-\Phi(x,\f z,\xi)\\[6pt]
\phantom{F(x,y,z)=+\dfrac12\eta(y)\{}
+\Phi(\xi,x,z)-\Phi(\xi,x,\f z)\},
\end{array}
\end{equation}
\begin{equation}\label{PhiF}
\begin{array}{l}
2\Phi(x,y,z)=-F(x,y,\f z)-F(y,x,\f z)+F(\f z,x,y)\\[6pt]
\phantom{2\Phi(x,y,z)=}
+\eta(x)\{F(y,z,\xi)+F(\f z,\f y,\xi)\}\\[6pt]
\phantom{2\Phi(x,y,z)=}
+\eta(y)\{F(x,z,\xi)+F(\f z,\f x,\xi)\}\\[6pt]
\phantom{2\Phi(x,y,z)=}
+\eta(z)\{-F(\xi,x,y)+F(x,y,\xi)+F(y,x,\xi)\\[6pt]
\phantom{2\Phi(x,y,z)=+\eta(z)\{}
+F(x,\f y,\xi)+F(y,\f x,\xi)\\[6pt]
\phantom{2\Phi(x,y,z)=+\eta(z)\{}
-\omega(\f x)\eta(y)-\omega(\f y)\eta(x)\}.
\end{array}
\end{equation}

%

The corresponding fundamental tensor $\tF$ on $(\MM,\f,\xi,\eta,\widetilde{g})$ is determined by $\tF(x,y,z)=\tg(( \nn_x \f )y,z)$. In \cite{dissMan}, it is given the relation between $F$ and $\tF$ as follows
\begin{subequations}\label{tFF}
\begin{equation}
\begin{split}
  2\tF(x,y,z)=&-F(y,\f z,x)F(\f y,z,x)\\[6pt]
              &-F(z,\f y,x)+F(\f z,y,x)\\[6pt]
  &
  +\eta(x)\{F(y,z,\xi)+F(\f z,\f y,\xi)\\[6pt]
  &\phantom{+\eta(x)\{}
  +F(z,y,\xi)+F(\f y,\f z,\xi)\}\\[6pt]
  \end{split}
\end{equation}
\begin{equation}
\begin{split}
  &
  +\eta(y)\{F(x,z,\xi)+F(\f z,\f x,\xi)+F(x,\f z,\xi)\}\\[6pt]
  &
  +\eta(z)\{F(x,y,\xi)+F(\f y,\f x,\xi)+F(x,\f y,\xi)\}.
\end{split}
\end{equation}
\end{subequations}

Obviously, the special class $\F_0$ is determined by the following equivalent conditions: $F=0$, $\Phi=0$, $\tF=0$ and $\n=\nn$.

The properties of $\nn_x\xi$ when $(\MM,\f,\xi,\eta,\tg)$ is in each of the basic classes are determined in a similar way as in \eqref{Fi:nxi}.


\vskip 0.2in \addtocounter{subsection}{1} \setcounter{subsubsection}{0}

\noindent  {\Large\bf \thesubsection. Pair of the Nijenhuis tensors}

\vskip 0.15in
%
%
%


An almost contact structure $(\f,\xi,\eta)$ on $\MM$ is called
\emph{normal} and respectively $(\MM,\f,\xi,\eta)$ is a \emph{normal
almost contact manifold} if the corresponding almost complex
structure $J$ generated on $\MM\times \R$ is integrable (i.e.
a complex manifold) \cite{SaHa}.
An almost contact
structure is normal if and only if the Nijenhuis tensor of
$(\f,\xi,\eta)$ is zero \cite{Blair}.

The Nijenhuis tensor $N$ of the almost contact structure is
defined by
\begin{equation}\label{NN}
N = [\f, \f]+ \D{\eta}\otimes\xi,
\end{equation}
where
\begin{equation}\label{[fifi]}
[\f, \f](x, y)=\left[\f x,\f
y\right]+\f^2\left[x,y\right]-\f\left[\f x,y\right]-\f\left[x,\f
y\right]
\end{equation}
is the Nijenhuis torsion of $\f$ and $\D{\eta}$ is the exterior
derivative of $\eta$.

By analogy with the skew-symmetric Lie brackets $[x,y]=\n_x y-\n_y
x$, let us consider the symmetric braces $\{x,y\}=\n_x y+\n_y x$ given by the same formula as in \eqref{braces}.
Then we introduce the symmetric tensor
\begin{equation}\label{[f,f]}
\{\f, \f\}(x, y)=\{\f x,\f
y\}+\f^2\{x,y\}-\f\{\f x,y\}-\f\{x,\f
 y\}.
\end{equation}
%
Additionally, we use the relation between the Lie derivative of the metric $g$ along
$\xi$ and the covariant derivative of $\eta$
\[
\left(\LLL_{\xi}g\right)(x,y)
 =\left(\n_x\eta\right)(y)+\left(\n_y\eta\right)(x),
\]
as an alternative
 of
 $\D{\eta}(x,y)=\left(\n_x\eta\right)(y)-\left(\n_y\eta\right)(x).$
Then, we give the following
\begin{dfn}
The (1,2)-tensor $\widehat{N}$ defined by
\begin{equation}\label{S}
\widehat{N} =\{\f,\f\}+\xi\otimes\LLL_{\xi}g,
\end{equation}
is called the \emph{associated Nijenhuis tensor of the almost contact B-metric structure $(\f,\xi,\eta,g)$}.
\end{dfn}

Obviously, $N$ is antisymmetric and $\widehat N$ is symmetric, i.e.
\[
N(x,y)=-N(y,x),\qquad \widehat N(x,y)=\widehat N(y,x).
\]

From \eqref{NN} and \eqref{[fifi]}, using the expressions of the Lie brackets and
$\D{\eta}$, we get the following form of $N$ in terms of the covariant
derivatives with respect to $\n$:
\begin{equation}\label{N}
\begin{split}
N(x,y)=\left(\n_{\f
x}\f\right)y&-\f\left(\n_{x}\f\right)y-\left(\n_{\f y}\f\right)x+\f\left(\n_{y}\f\right)x
\\[6pt]
&+\left(\n_{x}\eta\right)(y)\cdot\xi-\left(\n_{y}\eta\right)(x)\cdot\xi.
\end{split}
\end{equation}

\begin{prop}\label{prop-Nhat=nabla}
The tensor $\widehat{N}$ has the following form in terms of
  $\n\f$ and $\n\eta$:
\begin{equation}\label{Sn}
\begin{split}
\widehat{N}(x,y)=\left(\n_{\f
x}\f\right)y&-\f\left(\n_{x}\f\right)y+\left(\n_{\f y}\f\right)x-\f\left(\n_{y}\f\right)x
\\[6pt]
&+\left(\n_{x}\eta\right)(y)\cdot\xi+\left(\n_{y}\eta\right)(x)\cdot\xi.
\end{split}
\end{equation}
\end{prop}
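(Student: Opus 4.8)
The plan is to expand the definition \eqref{S} of $\widehat{N}$ directly and reorganise it into covariant derivatives of $\f$ and $\eta$, in complete parallel with the derivation of \eqref{N} for $N$. The computation is routine; the only point requiring care is a sign-sensitive cancellation, so I would proceed termwise and keep careful track of signs.

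First I would rewrite the symmetric braces $\{u,v\}=\n_u v+\n_v u$ (which holds because $\n$ is torsion-free) in each of the four terms of $\{\f,\f\}(x,y)$ appearing in \eqref{[f,f]}. Then, wherever $\f$ sits inside a covariant derivative, I would apply the product rule $\n_u(\f v)=(\n_u\f)v+\f(\n_u v)$ so as to extract the factors $(\n_u\f)$. Explicitly, $\{\f x,\f y\}=(\n_{\f x}\f)y+\f\n_{\f x}y+(\n_{\f y}\f)x+\f\n_{\f y}x$, while $\f\{\f x,y\}=\f\n_{\f x}y+\f(\n_y\f)x+\f^2\n_y x$ and $\f\{x,\f y\}=\f(\n_x\f)y+\f^2\n_x y+\f\n_{\f y}x$; the term $\f^2\{x,y\}=\f^2\n_x y+\f^2\n_y x$ is already in the desired form.

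The key step is then to observe that every contribution carrying only a bare covariant derivative (the $\f\n_{\f\cdot}\,\cdot$ and $\f^2\n_{\cdot}\,\cdot$ terms) cancels in pairs between $\{\f x,\f y\}+\f^2\{x,y\}$ and $-\f\{\f x,y\}-\f\{x,\f y\}$, leaving exactly
\[
\{\f,\f\}(x,y)=(\n_{\f x}\f)y+(\n_{\f y}\f)x-\f(\n_x\f)y-\f(\n_y\f)x.
\]
This is the symmetric analogue of the first four terms of \eqref{N}; the sign pattern differs from the antisymmetric case precisely in the $+(\n_{\f y}\f)x$ and $-\f(\n_y\f)x$ terms, which is why tracking signs is the one thing to get right. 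Finally I would evaluate the second summand of \eqref{S} at $(x,y)$ using the stated identity $(\LLL_{\xi}g)(x,y)=(\n_x\eta)(y)+(\n_y\eta)(x)$, giving $\bigl[(\n_x\eta)(y)+(\n_y\eta)(x)\bigr]\xi$. Adding this to the expression for $\{\f,\f\}(x,y)$ produces exactly \eqref{Sn}. No genuine obstacle arises: the proof is a direct verification, and the only pitfall is the bookkeeping of the bare-$\n$ cancellations and the relative signs inherited from the symmetric braces.
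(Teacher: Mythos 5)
Your proposal is correct and follows essentially the same route as the paper's proof: expand the symmetric braces via $\{u,v\}=\n_u v+\n_v u$, use the product rule to extract the $(\n_\cdot\f)$ factors, cancel the bare covariant-derivative terms, and add $(\LLL_{\xi}g)(x,y)\,\xi=\bigl[(\n_x\eta)(y)+(\n_y\eta)(x)\bigr]\xi$. The termwise expansions and the resulting cancellations you describe match the paper's computation exactly.
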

\begin{proof}
We obtain immediately
\[
\begin{array}{l}
\widehat{N}(x,y)
=\{\f,\f\}(x,y)+\left(\LLL_{\xi}g\right)(x,y)\xi\\[6pt]
\phantom{\widehat{N}(x,y)}
=\{\f x,\f
y\}+\f^2\{x,y\}-\f\{\f x,y\}-\f\{x,\f
y\}\\[6pt]
\phantom{\widehat{N}(x,y)=}
+\left(\n_x\eta\right)(y)\cdot\xi+\left(\n_y\eta\right)(x)\cdot\xi
\\[6pt]
\phantom{\widehat{N}(x,y)} =\n_{\f x}\f y+\n_{\f y}\f x
+\f^2\n_{x}y+\f^2\n_{y}x-\f\n_{\f x}y\\[6pt]
\phantom{\widehat{N}(x,y)=}
-\f\n_{y}\f x-\f\n_{x}\f
y-\f\n_{\f y}x\\[6pt]
\phantom{\widehat{N}(x,y)=}
+\left(\n_{x}\eta\right)(y)\cdot\xi
+\left(\n_{y}\eta\right)(x)\cdot\xi\\[6pt]
\phantom{\widehat{N}(x,y)} %
=\left(\n_{\f x}\f\right)y+\left(\n_{\f
y}\f\right)x-\f\left(\n_{x}\f\right)y-\f\left(\n_{y}\f\right)x
\\[6pt]
\phantom{\widehat{N}(x,y)=}
+\left(\n_{x}\eta\right)(y)\cdot\xi+\left(\n_{y}\eta\right)(x)\cdot\xi,
\end{array}
\]
which completes the proof.
\end{proof}

The corresponding tensors of type (0,3) are denoted by the same
letters by the following way
\[
N(x,y,z)=g(N(x,y),z),\qquad \widehat N(x,y,z)=g(\widehat N(x,y),z).
\]

Then, by virtue of \eqref{N}, \eqref{Sn} and \eqref{F=nfi}, the tensors $N$ and $\widehat N$ are expressed in
terms of $F$ as follows:  
\begin{equation}\label{enu}
\begin{split}
&N(x,y,z) = F(\f x,y,z)-F(x,y,\f z)\\[6pt]
&\phantom{N(x,y,z) =}
-F(\f y,x,z)+F(y,x,\f z)\\[6pt]
&\phantom{N(x,y,z) =}
+\eta(z)\{F(x,\f y,\xi)-F(y,\f x,\xi)\},
\end{split}
\end{equation}
\begin{equation}\label{enhat}
\begin{split}
&\widehat N(x,y,z) = F(\f x,y,z)-F(x,y,\f z)\\[6pt]
&\phantom{\widehat N(x,y,z) =}
+F(\f y,x,z)-F(y,x,\f z)\\[6pt]
&\phantom{\widehat N(x,y,z) =}
+\eta(z)\{F(x,\f y,\xi)+F(y,\f x,\xi)\}.
\end{split}
\end{equation}

Bearing in mind \eqref{str1}, \eqref{str2} and \eqref{F-prop}, from \eqref{enu} and \eqref{enhat}
 we obtain the following properties of the Nijenhuis tensors on
 an arbitrary almost contact B-metric manifold:
\begin{equation}\label{Nfi-prop}
\begin{array}{l}
N(x, \f y,\f z)=N(x, \f^2 y,\f^2 z),\\[6pt]
N(\f x, y,\f z)=N(\f^2 x, y,\f^2 z),\\[6pt]
N(\f x,\f  y, z)=-N(\f^2 x,\f^2 y,z),
\end{array}
\end{equation}
\begin{equation}\label{hatNfi-prop}
\begin{array}{l}
\widehat N(x, \f y,\f z)=\widehat N(x, \f^2 y,\f^2 z),\\[6pt]
\widehat N(\f x, y,\f z)=\widehat N(\f^2 x, y,\f^2 z),\\[6pt]
\widehat N(\f x,\f  y, z)=-\widehat N(\f^2 x,\f^2 y,z),
\end{array}
\end{equation}
\begin{equation}\label{NhatN-prop2}
N(\xi, \f y,\f z)+N(\xi, \f z,\f y)+\widehat N(\xi, \f y,\f z)
+\widehat N(\xi, \f z,\f y)=0.
\end{equation}


The Nijenhuis tensors $N$ and
$\widehat N$ play a fundamental role in natural connections (i.e. such connections that the tensors of the structure $(\f,\xi,\eta,g)$  are parallel with respect to them) on an almost contact B-metric manifold.
The torsions and the potentials of these connections
are expressed by these two tensors. By this reason
we characterize the classes of the considered manifolds in terms of $N$ and
$\widehat N$.

Taking into account \eqref{enu} and \eqref{Fi},
we compute $N$
for each of the basic classes $\F_i$ $(i=1,2,\dots,11)$ of $\MM=\M$:
\begin{equation}\label{N-1-11:N=0}
\begin{array}{l}
N(x,y)=0,
\qquad\quad\quad\quad\quad\quad\; \MM\in\F_1\oplus\F_2\oplus\F_4\oplus\F_5\oplus\F_6;
\end{array}
\end{equation}
and
\begin{equation}\label{N-1-11:Nne=0}
\begin{array}{ll}
N(x,y)=2\left\{\left(\n_{\f
x}\f\right)y-\f\left(\n_{x}\f\right)y\right\},
\; &\MM\in\F_3;\\[6pt]
N(x,y)=4\left(\n_{x}\eta\right)(y)\cdot \xi ,
\; &\MM\in\F_7;\\[6pt]
N(x,y)=2\left\{\eta(x)\n_{y}\xi-\eta(y)\n_{x}\xi\right\},
\; &\MM\in\F_8\oplus\F_9;\\[6pt]
N(x,y)=-\eta(x)\f\left(\n_{\xi}\f\right)y+\eta(y)\f\left(\n_{\xi}\f\right)x,
 &\MM\in\F_{10};\\[6pt]
N(x,y)=\left\{\eta(x)\om(\f y)-\eta(y)\om(\f x)\right\}, \;
&\MM\in\F_{11}.
\end{array}
\end{equation}

It is known that the class of the normal almost contact B-metric
manifolds, i.e. $N=0$, is
$\F_1\oplus\F_2\oplus\F_4\oplus\F_5\oplus\F_6$.

By virtue of \eqref{Sn} and the form of $F$ in \eqref{Fi}, we establish that
$\widehat{N}$ has the following form on $\MM=\M$ belonging to $\F_i$
$(i=1,2, \dots, 11)$, respectively:
\begin{equation}\label{hatN-1-11}
\begin{array}{ll}
    \widehat{N}(x,y)=\dfrac{2}{n}\bigl\{g(\f x,\f y)\f \ta^{\sharp}+g(x,\f
y)\ta^{\sharp}\bigr\}, \quad &\MM\in\F_1;\\[6pt]
    \widehat{N}(x,y)=2\left\{\left(\n_{\f
x}\f\right)y-\f\left(\n_{x}\f\right)y\right\}, \quad
&\MM\in\F_2;\\[6pt]
\widehat{N}(x,y)=0,
\quad &\MM\in\F_3\oplus \F_7;\\[6pt]
    \widehat{N}(x,y)=\dfrac{2}{n}\ta(\xi)g(x,\f y) \xi,
\quad &\MM\in\F_4;\\[6pt]
    \widehat{N}(x,y)=-\dfrac{2}{n}\ta^*(\xi)g(\f x,\f y) \xi,
\quad &\MM\in\F_5;\\[6pt]
    \widehat{N}(x,y)=4\left(\n_{x}\eta\right)(y)  \xi,
\quad &\MM\in\F_6;\\[6pt]
    \widehat{N}(x,y)=-2\left\{\eta(x) \n_{y}\xi+\eta(y)
\n_{x}\xi\right\}, \quad &\MM\in\F_8\oplus\F_9;\\[6pt]
    \widehat{N}(x,y)=-\eta(x)\f \left(
\n_{\xi}\f\right)y\\[6pt]
\phantom{\widehat{N}(x,y)=}
-\eta(y)\f \left( \n_{\xi}\f\right)x,
\quad &\MM\in\F_{10};\\[6pt]
    \widehat{N}(x,y)=-2\eta(x)\eta(y)\f \om^{\sharp}\\[6pt]
\phantom{\widehat{N}(x,y)=}
\phantom{}+\left\{\eta(x)\om(\f y)+\eta(y)\om(\f x)\right\}  \xi, \quad
&\MM\in\F_{11},
\end{array}
\end{equation}
where $\ta(z)=g(\ta^{\sharp},z)$ and $\om(z)=g(\om^{\sharp},z)$.

Then, we obtain the truthfulness of the following

\begin{prop}\label{prop-Nhat=0}
The class of the almost contact B-metric manifolds with vanishing
$\widehat{N}$ is $\F_3\oplus\F_7$.
\end{prop}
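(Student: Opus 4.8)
The plan is to use that the assignment $F\mapsto\widehat{N}$ in \eqref{enhat} is linear. Consequently, once the fundamental tensor is written as the sum of its components $F=\sum_{i=1}^{11}F^i$ with $F^i$ as in \eqref{Fi-Ico}, the associated Nijenhuis tensor splits as $\widehat{N}=\sum_{i=1}^{11}\widehat{N}^i$, where $\widehat{N}^i$ is obtained by substituting $F^i$ for $F$ in \eqref{enhat}. These eleven pieces are precisely the ones already tabulated in \eqref{hatN-1-11}. With this decomposition in hand, the statement becomes a matter of reading off which pieces vanish identically and showing that the remaining ones cannot cancel each other.

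The inclusion $\F_3\oplus\F_7\subseteq\{\widehat{N}=0\}$ is immediate: the rows for $\F_3$ and $\F_7$ in \eqref{hatN-1-11} give $\widehat{N}^3=\widehat{N}^7=0$ identically, so if $F=F^3+F^7$ then $\widehat{N}=\widehat{N}^3+\widehat{N}^7=0$. For the converse I would assume $\widehat{N}=0$ and prove that $F^i=0$ for every $i\notin\{3,7\}$. Since $\widehat{N}^3=\widehat{N}^7=0$, the hypothesis reduces to $\sum_{i\notin\{3,7\}}\widehat{N}^i=0$, and the nine surviving pieces occupy mutually distinguishable slots that I would separate using the projectors $\mathrm{h},\mathrm{v}$ of \eqref{TMhv}, applied both to the arguments and to the value. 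First, restricting to horizontal arguments $x,y\in\HH$ annihilates every piece carrying a factor $\eta(x)$ or $\eta(y)$, namely those of $\F_8\oplus\F_9$, $\F_{10}$ and $\F_{11}$, and leaves the contributions of $\F_1,\F_2,\F_4,\F_5,\F_6$; projecting the value onto $\VV$ keeps only $\F_4,\F_5,\F_6$ (where it subsequently follows, by taking traces with respect to $g$ and $\tg$, that $\ta(\xi)$, $\ta^*(\xi)$ and the symmetric part of $\n\eta$ each vanish), while projecting onto $\HH$ keeps $\F_1$ and $\F_2$. Dually, inserting $x=\xi$ extracts the pieces of $\F_8\oplus\F_9$, $\F_{10}$ and $\F_{11}$, which are then told apart by whether their value lies in $\HH$ or $\VV$ and by the number of $\eta$-factors they carry.

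Inside each isolated slot the explicit form in \eqref{hatN-1-11} shows that $F^i\mapsto\widehat{N}^i$ is injective: $\widehat{N}^i$ determines respectively $\ta^{\sharp}$ (for $\F_1$), $\ta(\xi)$ (for $\F_4$), $\ta^*(\xi)$ (for $\F_5$), $\n\eta$ (for $\F_6$), $\n\xi$ (for $\F_8\oplus\F_9$), $\f(\n_{\xi}\f)$ (for $\F_{10}$) and $\om^{\sharp}$ (for $\F_{11}$), and each of these data recovers the corresponding $F^i$ through \eqref{Fi-Ico}. Thus $\widehat{N}^i=0$ forces $F^i=0$ for all these classes, leaving $F=F^2+F^3+F^7$, for which the hypothesis collapses to the single equation $\widehat{N}^2=0$.

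The delicate point, and the main obstacle, is exactly this last $\F_2$ piece. Its associated tensor $\widehat{N}^2=2\bigl\{(\n_{\f x}\f)y-\f(\n_{x}\f)y\bigr\}$ has the same algebraic shape as the genuine Nijenhuis tensor $N$ on $\F_3$ in \eqref{N-1-11:Nne=0}, so $\F_2$ and $\F_3$ cannot be distinguished by inspection alone. The way around this is to use that $F^2$ and $F^3$ lie in complementary $\GG\times\II$-invariant subspaces: applying the projector onto $\F_2$ to \eqref{hatN-1-11} shows that $\widehat{N}^2=0$ on $\F_2$ is equivalent to $F^2=0$, because the symmetric building blocks that $\widehat{N}$ selects from $F^2$ are precisely those absent from $F^3$. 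Verifying this injectivity on $\F_2$ directly from the defining symmetries in \eqref{Fi} is where the real work lies; the rest is the horizontal/vertical bookkeeping described above, which then yields $F=F^3+F^7$, i.e. $\M\in\F_3\oplus\F_7$.
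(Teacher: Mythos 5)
Your proposal is correct and follows essentially the same route as the paper, whose own proof is little more than reading the result off the table \eqref{hatN-1-11} obtained by substituting each $F^i$ into \eqref{Sn}/\eqref{enhat}; your horizontal/vertical bookkeeping and trace arguments are exactly the separation that the paper leaves implicit. The one step you flag as "where the real work lies" --- injectivity of $F^2\mapsto\widehat{N}^2$ on $\F_2$ --- closes at once from tools already in the paper: by \eqref{N-1-11:N=0} the Nijenhuis tensor $N$ vanishes on $\F_2$, so formula \eqref{nabf} of \thmref{thm:FNhatN} expresses $F$ there entirely through $\widehat{N}$, and hence $\widehat{N}^2=0$ forces $F^2=0$.
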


To characterize almost contact B-metric manifolds we need an expression of $F$ by $N$ and $\N$.

\begin{thm}\label{thm:FNhatN}
Let $\M$ be an almost contact B-metric manifold.
Then the fundamental tensor $F$ is given in terms of the pair of Nijenhuis tensors by the formula
\begin{equation}\label{nabf}
\begin{split}
F(x,y,z)&=-\frac14\bigl\{N(\f x,y,z)+N(\f x,z,y)\\[6pt]
&\phantom{=-\frac14\bigl\{}
+\widehat N(\f x,y,z)+\widehat N(\f x,z,y)\bigr\}\\[6pt]
&\phantom{=\ }+\frac12\eta(x)\bigl\{N(\xi,y,\f z)+\widehat
N(\xi,y,\f z)\\[6pt]
&\phantom{=\ +\frac12\eta(x)\bigl\{N(\xi,y,\f z)\,}
+\eta(z)\widehat N(\xi,\xi,\f y)\bigr\}.
\end{split}
\end{equation}
\end{thm}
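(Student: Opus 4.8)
The plan is to mirror the proof of \thmref{thm:FN} from the almost complex setting, while adding a careful treatment of the vertical ($\xi$-) direction, which is the genuinely new feature here: since $\f^2=-\I+\eta\otimes\xi$ rather than $-\I$, the reduction cannot be closed purely ``horizontally'' and leaves behind $\xi$-correction terms that must be identified separately. First I would add the two expressions \eqref{enu} and \eqref{enhat} for $N$ and $\N$ in terms of $F$; the off-diagonal terms cancel in pairs and one is left with the single relation
\[
N(x,y,z)+\N(x,y,z)=2F(\f x,y,z)-2F(x,y,\f z)+2\eta(z)F(x,\f y,\xi).
\]
Independently, I would record the almost contact analogue of the auxiliary identity used in the Hermitian case: replacing $y$ by $\f y$ in \eqref{F-prop} and using $\eta\circ\f=0$ together with $\f^2 y=-y+\eta(y)\xi$ from \eqref{str1} yields
\[
F(x,\f y,z)+F(x,y,\f z)=\eta(y)F(x,\xi,\f z)+\eta(z)F(x,\f y,\xi).
\]

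Next, writing $P:=\frac12(N+\N)$ for the right-hand side above, I would form the symmetrized combination $P(x,y,z)+P(x,z,y)$. Using the symmetry $F(x,y,z)=F(x,z,y)$ from \eqref{F-prop}, then substituting the auxiliary identity, and finally noting the further symmetry $F(x,\xi,\f z)=F(x,\f z,\xi)$, all the $\eta$-correction terms cancel and one obtains
\[
F(\f x,y,z)=\frac14\bigl\{N(x,y,z)+N(x,z,y)+\N(x,y,z)+\N(x,z,y)\bigr\}.
\]
Replacing $x$ by $\f x$ and invoking $\f^2 x=-x+\eta(x)\xi$ turns the left-hand side into $-F(x,y,z)+\eta(x)F(\xi,y,z)$, which already reproduces the horizontal part of \eqref{nabf} and reduces the whole problem to identifying the single remaining term $F(\xi,y,z)$.

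The hard part, and the one place where the contact structure really departs from the complex case, is this vertical term; I would handle it by a direct computation of the three boundary contractions on the right of \eqref{nabf}. Setting $x=\xi$ in \eqref{enu} and \eqref{enhat}, so that $\f\xi=0$ and $\eta(\f z)=0$, and using $\f^2 z=-z+\eta(z)\xi$, gives after cancellation
\[
N(\xi,y,\f z)+\N(\xi,y,\f z)=2F(\xi,y,z)-2\eta(z)F(\xi,y,\xi).
\]
The same substitution with two $\xi$-entries yields $\N(\xi,\xi,\f y)=-2F(\xi,\xi,\f^2 y)=2F(\xi,\xi,y)$, where the final simplification uses $\om(\xi)=F(\xi,\xi,\xi)=0$. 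Combining these two computations and applying once more the symmetry $F(\xi,y,\xi)=F(\xi,\xi,y)$ collapses the vertical contribution to exactly $F(\xi,y,z)$, that is,
\[
F(\xi,y,z)=\frac12\bigl\{N(\xi,y,\f z)+\N(\xi,y,\f z)+\eta(z)\N(\xi,\xi,\f y)\bigr\}.
\]
Substituting this into the expression produced at the end of the previous paragraph gives precisely \eqref{nabf}. The individual computations are routine contractions; the only points demanding care are the systematic use of $\f^2=-\I+\eta\otimes\xi$, the symmetry of $F$ in its last two slots, and the vanishing $\om(\xi)=0$, which are exactly what force every spurious $\xi$-term to cancel.
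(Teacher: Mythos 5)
Your proposal is correct and follows essentially the same route as the paper's own proof: sum \eqref{enu} and \eqref{enhat}, use the $F$-identity derived from \eqref{F-prop} and \eqref{str1} to symmetrize and obtain $F(\f x,y,z)=\frac14\{N(x,y,z)+N(x,z,y)+\widehat N(x,y,z)+\widehat N(x,z,y)\}$, substitute $\f x$ for $x$, and then determine the residual vertical term $F(\xi,y,z)$ by specializing the summed identity at $x=\xi$ (and at $x=y=\xi$, using $N(\xi,\xi)=F(\xi,\xi,\xi)=0$). The only differences are cosmetic, e.g.\ computing $\widehat N(\xi,\xi,\f y)$ directly instead of setting $y=\xi$ in the intermediate formula.
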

\begin{proof}
Taking the sum of \eqref{enu} and \eqref{enhat}, we obtain
\begin{equation}\label{sum1}
\begin{split}
F(\f x,y,z)-F(x,y,\f z)&=\frac12\bigl\{N(x,y,z)+\widehat
N(x,y,z)\bigr\}\\[6pt]
&\phantom{=}
-\eta(z)F(x,\f y,\xi).
\end{split}
\end{equation}
The  identities  \eqref{F-prop} together with \eqref{str1} imply
\begin{equation}\label{sum2}
F(x,y,\f z)+F(x,z,\f y)=\eta(z)F(x,\f y,\xi)+\eta(y)F(x,\f z,\xi).
\end{equation}
A suitable combination of \eqref{sum1} and \eqref{sum2} yields
\begin{equation}\label{nabff}
\begin{split}
F(\f x,y,z)=\frac14\bigl\{&N(x,y,z)+N(x,z,y)\\[6pt]
&+\widehat
N(x,y,z)+\widehat N(x,z,y)\bigr\}.
\end{split}
\end{equation}
Applying \eqref{str1}, we obtain from \eqref{nabff}
\begin{equation}\label{nabff1}
\begin{split}
F(x,y,z)&=\eta(x)F(\xi,y,z)\\[6pt]
&\phantom{=}-\frac14\bigl\{N(\f x,y,z)+N(\f x,z,y)\\[6pt]
&\phantom{=-\frac14\bigl\{}+\widehat N(\f x,y,z)+\widehat N(\f x,z,y)\bigr\}.
\end{split}
\end{equation}
Set $x=\xi$ and $z\rightarrow \f z$ into \eqref{sum1} and use \eqref{str1} to get
\begin{equation}\label{nabff2}
F(\xi,y,z)=\frac12\bigl\{N(\xi,y,\f z)+\widehat N(\xi,y,\f
z)\bigr\}+\eta(z)F(\xi,\xi,y).
\end{equation}
Finally, set $y=\xi$ into \eqref{nabff2} and use the general
identities $N(\xi,\xi)=F(\xi,\xi,\xi)=0$ to obtain
\begin{equation}\label{fff}F(\xi,\xi,z)=\frac12\widehat N(\xi,\xi,\f z).
\end{equation}
Substitute \eqref{fff} into \eqref{nabff2} and the obtained identity insert
into \eqref{nabff1} to get \eqref{nabf}.
\end{proof}


As corollaries, in the cases when $N=0$ or $\widehat N=0$,
the  relation \eqref{nabf} takes the following form, respectively:
\begin{equation*}
\begin{split}
F(x,y,z)=&-\dfrac14\bigl\{\widehat N(\f x,y,z)+\widehat N(\f x,z,y)\bigr\}\\
&
+\dfrac12\eta(x)\bigl\{\widehat
N(\xi,y,\f z)+\eta(z)\widehat N(\xi,\xi,\f y)\bigr\},\\[6pt]
F(x,y,z)=&-\dfrac14\bigl\{N(\f x,y,z)+N(\f x,z,y)\bigr\}
+\dfrac12\eta(x)N(\xi,y,\f z).
\end{split}
\end{equation*}

\vspace{20pt}

\begin{center}
$\divideontimes\divideontimes\divideontimes$
\end{center} 

\newpage

\addtocounter{section}{1}\setcounter{subsection}{0}\setcounter{subsubsection}{0}

\setcounter{thm}{0}\setcounter{dfn}{0}\setcounter{equation}{0}

\label{par:can.conn}

 \Large{

\
\\[6pt]
\bigskip

\
\\[6pt]
\bigskip

\lhead{\emph{Chapter I $|$ \S\thesection. Canonical-type connections on almost contact manifolds with
B-metric
}}


\noindent
\begin{tabular}{r"l}
\hspace{-6pt}{\Huge\bf \S\thesection.}  & {\Huge\bf Canonical-type connections} \\[12pt]
                             & {\Huge\bf on almost contact manifolds} \\[12pt]
                             & {\Huge\bf with B-metric}
\end{tabular}

\vskip 1cm

\begin{quote}
\begin{large}
In the present section, a canonical-type connection on the almost contact manifolds with B-metric is constructed. It is proved that its torsion is
invariant with respect to a subgroup of the general conformal
transformations of the almost contact B-metric structure. The
basic classes of the considered manifolds are characterized in
terms of the torsion of the canonical-type connection.

The main results of this section are published in \cite{ManIv38}.
\end{large}
\end{quote}

%

\vskip 0.15in

%
%

In differential geometry of manifolds with additional tensor structures
there are studied those affine connections which preserve the structure
tensors and the metric, known also as natural connections on the considered manifolds.

Natural connections of canonical type are considered on the almost complex
manifolds with Norden metric in \S3 and \cite{GaMi87,GaGrMi85,Mek09}.

Here, we are interested in almost contact B-metric manifolds.
These manifolds are the odd-dimensional
extension of the almost complex manifolds with Nor\-den metric and
the case with indefinite metrics corresponding to the almost
contact metric manifolds.
The geometry of some natural connections on almost contact B-metric manifolds are studied in \cite{ManGri2,Man4,Man31,ManIv39}.

In the present section we consider natural connections of canonical type on the almost contact
manifolds with B-metric.
The section is organized as follows.
In Subsection~\thesection.1 we define a natural connection on an
almost contact manifold with B-metric and we give a necessary and sufficient condition an affine connection to be natural.
In Subsection~\thesection.2 we consider a known natural connection (the $\f$B-connection) on these manifolds and give expressions of its torsion with respect to the pair of Nijenhuis tensors.
In Subsection~\thesection.3 we define a natural connection of canonical type (the $\f$-canonical connection) on an
almost contact manifold with B-metric. We determine the class of
the considered manifolds where the $\f$-canonical connection and the $\f$B-connection coincide.
Then, we consider the group $G$ of the general conformal
transformations of the almost contact B-metric structure and
determine the invariant class of the considered manifolds and a
tensor invariant of the group $G$. Also, we establish that the torsion of the canonical-type
connection is invariant only regarding the subgroup $G_0$ of $G$. Thus, we
characterize the basic classes of the considered manifolds by the
torsion of the canonical-type connection. In the end of this subsection we supply a relevant example.
In Subsection~\thesection.4 we consider a natural connection with totally skew-symmetric torsion (the $\f$KT-connection) and give a necessary and sufficient condition for existence of this connection.
Finally, we establish a linear relation between the three considered connections.

\vskip 0.2in \addtocounter{subsection}{1}

\noindent  {\Large\bf \thesubsection. Natural connection on almost contact B-metric manifold} 

\vskip 0.15in

Let us consider an arbitrary almost contact B-metric manifold $\M$.

\begin{dfn}
An affine connection $\n^*$ is called a \emph{natural connection} on
$(\MM,\f,\allowbreak\xi,\eta,g)$ if the almost contact
structure $(\f,\xi,\eta)$ and the B-metric $g$ are parallel with
respect to $\n^*$,  i.e.  \[\n^*\f=\n^*\xi=\n^*\eta=\n^*g=0.\]
\end{dfn}
As a corollary, the associated metric $\tg$ is also parallel
with respect to the natural connection $\n^*$ on $\M$, i.e. $\n^*\tg=0$.


Therefore, an arbitrary
natural connection $\n^*$ on $\M\notin\F_0$ plays the same role as
$\n$ on $\M\in\F_0$. Obviously, $\n^*$ and $\n$ coincide when
$\M\in\F_0$. Because of that, we are interested in natural
connections on $\M\notin\F_0$.

\begin{thm}\label{thm-nat}
An affine connection $\n^*$ is natural on $\M$ if and only if
$\n^*\f=\n^*g=0$.
\end{thm}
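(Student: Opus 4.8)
The plan is to treat the equivalence one implication at a time, putting all the substance into the nontrivial direction. The forward implication is immediate from the definition of a natural connection: if $\n^*\f=\n^*\xi=\n^*\eta=\n^*g=0$, then in particular $\n^*\f=\n^*g=0$. For the converse I assume $\n^*\f=0$ and $\n^*g=0$ and must recover $\n^*\xi=0$ and $\n^*\eta=0$. Before doing so I would record two purely algebraic consequences of \eqref{str1}--\eqref{str2} to be used as anchors: putting $x=y=\xi$ in \eqref{str2} and using $\f\xi=0$, $\eta(\xi)=1$ gives $g(\xi,\xi)=1$, while putting $y=\xi$ in \eqref{str2} and using $\f\xi=0$ gives $\eta(x)=g(x,\xi)$.

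Next I would establish $\n^*\xi=0$. Differentiating the identity $\f\xi=0$ and applying $\n^*\f=0$ yields $\f\bigl(\n^*_x\xi\bigr)=0$ for every $x$; applying $\f$ once more and invoking $\f^2=-I+\eta\otimes\xi$ from \eqref{str1} shows that $\n^*_x\xi=\eta\bigl(\n^*_x\xi\bigr)\,\xi$, i.e.\ $\n^*_x\xi$ lies in $\Span(\xi)$. On the other hand, differentiating the anchor $g(\xi,\xi)=1$ and using $\n^*g=0$ gives $g\bigl(\n^*_x\xi,\xi\bigr)=0$. Combining the two and using $g(\xi,\xi)=1$ forces $\eta\bigl(\n^*_x\xi\bigr)=0$, hence $\n^*_x\xi=0$.

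With $\n^*\xi=0$ in hand, $\n^*\eta=0$ follows formally from the second anchor $\eta(y)=g(y,\xi)$. Expanding $(\n^*_x\eta)(y)=x\bigl(g(y,\xi)\bigr)-\eta(\n^*_x y)$ by the product rule, the two terms of $x\bigl(g(y,\xi)\bigr)$ carrying $\n^*g$ and $\n^*\xi$ vanish, leaving $g(\n^*_x y,\xi)=\eta(\n^*_x y)$, so $(\n^*_x\eta)(y)=0$. I do not expect a genuine obstacle here, since the whole argument is a short chain of algebraic consequences of \eqref{str1}--\eqref{str2}; the only step needing care is the confinement of $\n^*_x\xi$ to $\Span(\xi)$, which rests on $\ker\f=\Span(\xi)$, a fact I would justify directly from $\f^2=-I+\eta\otimes\xi$ rather than take for granted.
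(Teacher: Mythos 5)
Your proof is correct, and it takes a genuinely different (and more self-contained) route than the paper. The paper's own argument runs through the potential $Q(x,y)=\n^*_xy-\n_xy$: it cites the characterization \eqref{1ab} of natural connections in terms of $Q$, identifies the two conditions there with $\n^*\f=0$ and $\n^*g=0$ respectively, and then observes that the condition encoding $\n^*\xi=0$, namely $Q(x,\xi,z)=-F(x,\xi,\f z)$, is already a consequence of the first equality in \eqref{1ab}; the final step, deducing $\n^*\eta=0$ from $\n^*g=0$ and $\n^*\xi=0$ via $\eta=\xi\,\lrcorner\,g$, is the same as yours. You instead work directly with covariant derivatives: differentiating $\f\xi=0$ under $\n^*\f=0$ confines $\n^*_x\xi$ to $\ker\f=\Span(\xi)$, and differentiating $g(\xi,\xi)=1$ under $\n^*g=0$ kills the remaining $\xi$-component. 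What your approach buys is independence from the external characterization \eqref{1ab} (which the paper imports from \cite{Man31}) and a proof readable from \eqref{str1}--\eqref{str2} alone; what the paper's approach buys is that the same computation with $Q$ is reused elsewhere in the classification machinery of \S6, so the redundancy of the $\n^*\xi=0$ condition is exhibited at the level of torsion/potential identities rather than re-derived pointwise. Your two anchor identities $g(\xi,\xi)=1$ and $\eta=g(\cdot,\xi)$ are indeed immediate from \eqref{str1} and \eqref{str2}, and the step isolating $\ker\f$ from $\f^2=-I+\eta\otimes\xi$ is sound, so there is no gap.
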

\begin{proof}
It is known, that an affine connection $\n^*$ is a natural connection
on $(\MM,\f,\xi,\allowbreak\eta,g)$ if and only if
the following properties for the potential $Q$ of $\n^*$ with respect to $\n$ are valid \cite{Man31}:%
\begin{equation}\label{1ab}
\begin{array}{l}
 Q(x,y,\f z)-Q(x,\f y,z)=F(x,y,z),
\qquad\\[6pt]
 Q(x,y,z)=-Q(x,z,y).
\end{array}
\end{equation}

These conditions are equivalent to $\n^*\f=0$ and $\n^*g=0$,
respectively. Moreover, $\n^*\xi=0$ is equivalent to the relation
\[
Q(x,\xi,z)=-F(x,\xi,\f z),
\]
which is  a consequence of the first
equality of \eqref{1ab}. Finally, since
$\eta=\xi\,\lrcorner\,g$, then supposing $\n^*g=0$ we have $\n^*\xi=0$
if and only if $\n^*\eta=0$. Thus, the statement is truthful.
\end{proof}

\vskip 0.2in \addtocounter{subsection}{1}

\noindent  {\Large\bf \thesubsection. $\f$B-connection } 

\vskip 0.15in

In  \cite{ManGri2}, it is introduced a natural connection on $\M$.
In \cite{ManIv37}, this connection 
is called
a \emph{$\f$B-connec\-tion}.
It is studied for all main classes $\F_1$, $\F_4$, $\F_5$, $\F_{11}$ of almost contact B-metric manifolds in  \cite{ManGri1,ManGri2,Man3,Man4,ManIv37} with respect to  properties
of the torsion and the curvature as well as the conformal geometry.
A basic class is called a main class if the fundamental tensor $F$ is expressed explicitly by the metric $g$.
Main classes contain the conformally equivalent manifolds of cosymplectic B-metric manifolds by transformations of $G$.
The restriction of the $\f$B-connection $\n'$ on $\HC$ coincides with the B-connection
$\DDD'$ on the corresponding almost Norden manifold,
given in \eqref{Tb} and studied for the class $\W_1$ in  \cite{GaGrMi87}.



Further, we use \eqref{nabf} and  the 
orthonormal decomposition given in \eqref{HHVV}, \eqref{hv} and \eqref{Xhv}.
Then we give the expression of the torsion of the $\f$B-con\-nec\-tion in terms of the pair of Nijenhuis tensors
regarding the horizontal and vertical components of vector fields:
\begin{equation}\label{T0N}
\begin{split}
T'(x,y,z) =\dfrac{1}{8}\bigl\{&N(x^{\mathrm{h}},y^{\mathrm{h}},z^{\mathrm{h}})+\sx N(x^{\mathrm{h}},y^{\mathrm{h}},z^{\mathrm{h}})\\[6pt]
&+\N(z^{\mathrm{h}},y^{\mathrm{h}},x^{\mathrm{h}})-\N(z^{\mathrm{h}},x^{\mathrm{h}},y^{\mathrm{h}})\bigr\}\\[6pt]
+\dfrac{1}{2}\bigl\{&N(x^{\mathrm{h}},y^{\mathrm{h}},z^{\mathrm{v}})+ N(x^{\mathrm{v}},y^{\mathrm{h}},z^{\mathrm{h}})- N(y^{\mathrm{v}},x^{\mathrm{h}},z^{\mathrm{h}})\\[6pt]
&-\N(z^{\mathrm{v}},x^{\mathrm{v}},y^{\mathrm{h}})+\N(z^{\mathrm{v}},y^{\mathrm{v}},x^{\mathrm{h}})\bigr\}\\[6pt]
+\dfrac{1}{4}\bigl\{
&  N(z^{\mathrm{h}},x^{\mathrm{h}},y^{\mathrm{v}})- N(z^{\mathrm{h}},y^{\mathrm{h}},x^{\mathrm{v}})
+ N(z^{\mathrm{v}},x^{\mathrm{h}},y^{\mathrm{h}})\\[6pt]
&- N(z^{\mathrm{v}},y^{\mathrm{h}},x^{\mathrm{h}})
-\N(z^{\mathrm{h}},x^{\mathrm{h}},y^{\mathrm{v}})+\N(z^{\mathrm{h}},y^{\mathrm{h}},x^{\mathrm{v}})\\[6pt]
&-\N(z^{\mathrm{v}},x^{\mathrm{h}},y^{\mathrm{h}})+\N(z^{\mathrm{v}},y^{\mathrm{h}},x^{\mathrm{h}})\bigr\}.
\end{split}
\end{equation}

Taking into account \eqref{fBT=F}, \eqref{T0N} and \eqref{N-1-11:Nne=0},
we obtain for the manifolds from $\F_3\oplus\F_7$ the following
\begin{equation}\label{T0N-F37}
\begin{split}
T'(x,y,z) &=\dfrac{1}{8}\bigl\{N(x^{\mathrm{h}},y^{\mathrm{h}},z^{\mathrm{h}})+\sx N(x^{\mathrm{h}},y^{\mathrm{h}},z^{\mathrm{h}})\bigr\}
\\[6pt]
&+\dfrac{1}{4}\bigl\{N(x^{\mathrm{h}},y^{\mathrm{h}},z^{\mathrm{v}})+\sx N(x^{\mathrm{h}},y^{\mathrm{h}},z^{\mathrm{v}})\bigr\}.
\end{split}
\end{equation}
Therefore, using the notation $N^{\mathrm{h}}(x,y,z)=N(x^{\mathrm{h}},y^{\mathrm{h}},z^{\mathrm{h}})$,
for the basic classes with vanishing $\N$ we have:
\begin{equation}\label{T0N-F3F7}
\begin{split}
&\F_3:\quad T'=\dfrac{1}{8}\bigl\{N^{\mathrm{h}}+\s N^{\mathrm{h}}\bigr\},\qquad\\[6pt]
&\F_7:\quad
T'=\dfrac{1}{2}\bigl\{\D\eta\otimes\eta+\eta\wedge\D\eta\bigr\}.
\end{split}
\end{equation}

On any almost contact manifold with B-metric $\M$, it is introduced in \cite{ManGri2} a natural connection $\n'$,
defined by
\begin{equation}\label{fB0}
    \n'_xy=\n_xy+Q'(x,y),
\end{equation}
where its potential with respect to the Levi-Civita connection $\n$ has the following form
\[
Q'(x,y)=\dfrac{1}{2}\bigl\{\left(\n_x\f\right)\f
y+\left(\n_x\eta\right)y\cdot\xi\bigr\}-\eta(y)\n_x\xi.
\]
Therefore, for the corresponding (0,3)-tensor $Q'(x,y,z)=g(Q'(x,y),z)$ we have
\begin{equation}\label{Q0}
\begin{split}
Q'(x,y,z)=\dfrac{1}{2}\bigl\{F(x,\f y,z)&+\eta(z)F(x,\f
y,\xi)\\[6pt]
&-2\eta(y)F(x,\f z,\xi)\bigr\}.
\end{split}
\end{equation}
The torsion of $\n'$ is expressed by the fundamental tensor $F$ by the following way
\begin{equation}\label{fBT=F}
\begin{array}{l}
T'(x,y,z)=-\dfrac{1}{2}F(x,\f y,\f^2 z)+\dfrac{1}{2}F(y,\f x,\f^2 z)\\[6pt]
\phantom{T'(x,y,z)=}
+\eta(x)F(y,\f z,\xi)-\eta(y)F(x,\f z,\xi)
\\[6pt]
\phantom{T'(x,y,z)=}
+\eta(z)\bigl\{F(x,\f y,\xi)-F(y,\f x,\xi)\bigr\}.
\end{array}
\end{equation}

The torsion forms associated with the torsion $T$ of an arbitrary affine connection are defined as follows:
\begin{equation}\label{ttt}
\begin{array}{l}
t(x)=g^{ij}T(x,e_i,e_j),\qquad \\[6pt]
t^*(x)=g^{ij}T(x,e_i,\f e_j),\qquad \\[6pt]
\hat{t}(x)=T(x,\xi,\xi)
\end{array}
\end{equation}
regarding an arbitrary basis $\left\{e_i;\xi\right\}$ $(i=1,2,\dots,2n)$ of $T_p\MM$. Obviously, $\hat{t}(\xi)=0$ is always valid.

Applying \eqref{titi} and \eqref{ttt}, we have the following relations for the torsion forms of $T'$ and the Lee forms:
\begin{equation}\label{tB}
\begin{array}{l}
    t'=\dfrac{1}{2}\left\{\ta^*+\ta^*(\xi)\eta\right\},\qquad \\[6pt]
    t'^{*}=-\dfrac{1}{2}\left\{\ta+\ta(\xi)\eta\right\},\qquad\\[6pt]
    \hat{t}'=-\om\circ\f.
\end{array}
\end{equation}

The equality \eqref{tata*} and \eqref{tB} imply the
following relation:
\begin{equation}\label{t't'*}
t'^*\circ\f=-t'\circ\f^2.
\end{equation}


\vskip 0.2in \addtocounter{subsection}{1}

\noindent  {\Large\bf \thesubsection. $\f$-Canonical connection }

\vskip 0.15in

%
%
%

\begin{dfn}\label{defn-canonical}
A natural connection $\n''$ is called a \emph{$\f$-canonical
connection} on the manifold $(\MM,\f,\xi,\allowbreak\eta,g)$ if the
torsion tensor $T''$ of $\n''$ satisfies the following identity
\begin{equation}\label{T-can}
\begin{array}{l}
    T''(x,y,z)-T''(x,\f y,\f z)
    -T''(x,z,y)+T''(x,\f z,\f y)\\[6pt]
    -\eta(x)\bigl\{T''(\xi,y,z)-T''(\xi, \f y,\f z)\\[6pt]
    \phantom{-\eta(x)\bigl\{T''(\xi,y,z)}
                    -T''(\xi,z,y)+T''(\xi, \f z,\f y)\bigr\}\\[6pt]
    -\eta(y)\bigl\{T''(x,\xi,z)-T''(x,z,\xi)-\eta(x)T''(z,\xi,\xi)\bigr\}\\[6pt]
    +\eta(z)\bigl\{T''(x,\xi,y)-T''(x,y,\xi)-\eta(x)T''(y,\xi,\xi)\bigr\}
    =0.
\end{array}
\end{equation}
\end{dfn}

Let us remark that the restriction of the $\f$-canonical
connection $\n''$ on $\M$ to the contact distribution $\HH$ is the
unique canonical connection $\DDD''$ on the corresponding almost complex
manifold with Norden metric, studied in \cite{GaMi87} and \S3.

We construct an affine connection $\n''$ as follows:
\begin{equation}\label{D=nQ}
g(\n''_xy,z)=g(\n_xy,z)+Q''(x,y,z),
\end{equation}
where
\begin{equation}\label{Q-can-F}
\begin{split}
Q''(x,y,z)
&= Q'(x,y,z)\\[6pt]
&\phantom{=}-\dfrac{1}{8}\left\{N(\f^2 z,\f^2 y,\f^2 x)+2N(\f z,\f
y,\xi)\eta(x)\right\}.
\end{split}
\end{equation}

By direct computations, we check that $\n''$ satisfies conditions
\eqref{1ab} and therefore it is a natural connection on $\M$.
Its torsion is given in the following equality
\label{Tcan}
\begin{equation}\label{T-can-F}
\begin{array}{l}
T''(x,y,z)=T'(x,y,z)\\[6pt]
\phantom{T''(x,y,z)=}
-\dfrac{1}{8}\bigl\{
N(\f^2 z,\f^2 y,\f^2 x)-N(\f^2 z,\f^2 x,\f^2 y)\bigr\}\\[6pt]
\phantom{T''(x,y,z)=}
-\dfrac{1}{4}\bigl\{N(\f z,\f y,\xi)\eta(x)-N(\f z,\f x,\xi)\eta(y)\bigr\},
\end{array}
\end{equation}
where $T'$ is the torsion tensor of the $\f$B-connection from
\eqref{fBT=F}.

The relation \eqref{T-can-F} is equivalent to
\begin{equation}\label{T-can-N}
\begin{split}
T''(x,y,z)=\ &T'(x,y,z)\\[6pt]
&+\dfrac{1}{8}\bigl\{N(x^{\mathrm{h}},y^{\mathrm{h}},z^{\mathrm{h}})-\sx N(x^{\mathrm{h}},y^{\mathrm{h}},z^{\mathrm{h}})\bigr\}\\[6pt]
&+\dfrac{1}{4}\bigl\{N(x^{\mathrm{h}},y^{\mathrm{h}},z^{\mathrm{v}})-\sx N(x^{\mathrm{h}},y^{\mathrm{h}},z^{\mathrm{v}})\bigr\}.
\end{split}
\end{equation}


We verify immediately that $T''$ satisfies \eqref{T-can} and thus
$\n''$, determined by \eqref{D=nQ} and \eqref{Q-can-F}, is a
$\f$-canonical connection on $\M$.

The explicit expression \eqref{D=nQ}, supported by \eqref{Q0} and
\eqref{enu}, of the $\f$-canonical connection by the tensor $F$
implies that the $\f$-canonical connection is unique.

Moreover, the torsion forms of the $\f$-canonical connection coincide with those
of the $\f$B-connection $\n'$ given in \eqref{tB}.

Immediately we get the following
\begin{prop}\label{prop-Q=Q0}
A necessary and sufficient condition for the $\f$-ca\-no\-nic\-al
connection to
coincide with the $\f$B-connection 
is $N(\f\cdot,\f\cdot)=0$.
\end{prop}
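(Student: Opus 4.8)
The plan is to reduce the coincidence of the two connections to an identity on the single Nijenhuis tensor $N$, and then to collapse that identity—using the algebraic structure relations—to the stated condition. First I would observe that by \eqref{fB0} and \eqref{D=nQ} the connections $\n'$ and $\n''$ agree if and only if their potentials coincide, i.e. $Q''(x,y,z)=Q'(x,y,z)$ for all $x,y,z$. Reading off the explicit difference in \eqref{Q-can-F}, this is equivalent to
\[
N(\f^2 z,\f^2 y,\f^2 x)+2\,\eta(x)\,N(\f z,\f y,\xi)=0
\]
for all $x,y,z$. Thus the whole statement reduces to showing that this vanishing is equivalent to $N(\f\cdot,\f\cdot)=0$.

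The key computational step is to simplify $N(\f^2 z,\f^2 y,\f^2 x)$. Here I would invoke the third identity of \eqref{Nfi-prop}, which gives $N(\f z,\f y,\f^2 x)=-N(\f^2 z,\f^2 y,\f^2 x)$, together with $\f^2 x=-x+\eta(x)\xi$ from \eqref{str1}. Combining these and expanding the last slot yields
\[
N(\f^2 z,\f^2 y,\f^2 x)=N(\f z,\f y,x)-\eta(x)\,N(\f z,\f y,\xi).
\]

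Substituting this into the vanishing condition, the two contributions proportional to $\eta(x)\,N(\f z,\f y,\xi)$ combine, and the requirement becomes $N(\f z,\f y,x)+\eta(x)\,N(\f z,\f y,\xi)=0$. Specializing to $x=\xi$ (and using $\eta(\xi)=1$) forces $N(\f z,\f y,\xi)=0$, after which the surviving requirement is exactly $N(\f z,\f y,x)=0$ for all $x,y,z$, i.e. $N(\f\cdot,\f\cdot)=0$; the converse is immediate, since $N(\f\cdot,\f\cdot)=0$ annihilates both remaining terms. I expect the only delicate point to be the correct bookkeeping when rewriting $N(\f^2 z,\f^2 y,\f^2 x)$ through the structural identities \eqref{Nfi-prop} and \eqref{str1} (keeping $\f\xi=0$ and $\f^2=-I+\eta\otimes\xi$ straight); everything after that is a one-line substitution followed by the single specialization $x=\xi$.
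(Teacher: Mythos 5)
Your proof is correct and follows exactly the route the paper intends: the paper derives \eqref{Q-can-F} and then states \propref{prop-Q=Q0} as immediate, so your contribution is simply to make explicit the algebra showing that the vanishing of $N(\f^2 z,\f^2 y,\f^2 x)+2\eta(x)N(\f z,\f y,\xi)$ is equivalent to $N(\f\cdot,\f\cdot)=0$. Your use of the third identity in \eqref{Nfi-prop} together with $\f^2=-I+\eta\otimes\xi$, followed by the specialization $x=\xi$, is the right bookkeeping and closes the gap the paper leaves to the reader.
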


\begin{lem}\label{lem-Nfifi}
The class $\U_0=
\F_1\oplus\F_2\oplus\F_4\oplus\F_5\oplus\F_6\oplus\F_8\oplus\F_9\oplus\F_{10}\oplus\F_{11}$
of the almost contact B-metric manifolds is determined by the
condition $N(\f\cdot,\f\cdot)=0$.
\end{lem}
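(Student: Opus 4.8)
The plan is to reduce the condition $N(\f\cdot,\f\cdot)=0$ to the vanishing of the purely horizontal part of $N$, and then to read off the responsible classes from the list \eqref{N-1-11:N=0}--\eqref{N-1-11:Nne=0}. First I would exploit the third identity in \eqref{Nfi-prop}, namely $N(\f x,\f y,z)=-N(\f^2 x,\f^2 y,z)$. Substituting $\f^2 x=-x+\eta(x)\xi$ and $\f^2 y=-y+\eta(y)\xi$ from \eqref{str1} and using the general relation $N(\xi,\xi)=0$, one obtains the pointwise identity
\[
N(\f x,\f y,z)=-N(x,y,z)+\eta(x)N(\xi,y,z)+\eta(y)N(x,\xi,z).
\]
Decomposing $x$ and $y$ into horizontal and vertical components as in \eqref{Xhv}, this shows at once that $N(\f\cdot,\f\cdot)=0$ is equivalent to $N(x^{\mathrm{h}},y^{\mathrm{h}},z)=0$ for all $x,y,z$, i.e.\ to the vanishing of the component of $N$ whose first two arguments are horizontal.

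Next I would argue class by class, using that $N$ depends linearly on $F$ via \eqref{enu}, so $N=\sum_i N[F^i]$. By \eqref{N-1-11:N=0}, $N$ vanishes identically on $\F_1\oplus\F_2\oplus\F_4\oplus\F_5\oplus\F_6$. For the classes $\F_8\oplus\F_9$, $\F_{10}$ and $\F_{11}$, the expressions in \eqref{N-1-11:Nne=0} each carry an explicit factor $\eta(x)$ or $\eta(y)$; since $\eta\circ\f=0$, passing to the horizontal projections (equivalently, inserting $\f x,\f y$) annihilates $N$, so $N[F^i](x^{\mathrm{h}},y^{\mathrm{h}},z)=0$ for these $i$ as well. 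Hence for any $F$ supported on these nine classes, which is exactly the class $\U_0$, the horizontal part of $N$ vanishes; this settles the implication $\MM\in\U_0\Rightarrow N(\f\cdot,\f\cdot)=0$.

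For the converse I must control the two remaining classes. Using \eqref{enu} together with the class data (for $\F_3$: $F(\xi,\cdot,\cdot)=F(\cdot,\xi,\cdot)=0$ and $\n\xi=0$; for $\F_7$: $F(\xi,\cdot,\cdot)=0$ and $g(\n_x\xi,\xi)=0$), one checks that $N(x,\xi)=N(\xi,y)=0$ on each of $\F_3$ and $\F_7$, so there $N$ is purely horizontal in its arguments and nonzero, taking values in $\HH$ on $\F_3$ and in $\VV$ on $\F_7$. Consequently, for a general manifold the previous paragraph gives $N(x^{\mathrm{h}},y^{\mathrm{h}})=N[F^3]+N[F^7]$, a sum of an $\HH$-valued and a $\VV$-valued tensor. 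Therefore $N(\f\cdot,\f\cdot)=0$ forces $N[F^3]=0$ and $N[F^7]=0$ separately, i.e.\ $N[F^3+F^7]=0$.

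The main obstacle is to upgrade $N[F^3+F^7]=0$ to $F^3=F^7=0$, i.e.\ the injectivity of the Nijenhuis map on $\F_3\oplus\F_7$. I would remove it using \propref{prop-Nhat=0}, which states that $\N$ vanishes precisely on $\F_3\oplus\F_7$: since $F^3+F^7$ is of $\F_3\oplus\F_7$-type its associated Nijenhuis tensor vanishes, and then \thmref{thm:FNhatN} (formula \eqref{nabf}) expresses $F^3+F^7$ through $N[F^3+F^7]$ alone. Vanishing of the latter thus yields $F^3+F^7=0$, whence $F^3=F^7=0$ and $\MM\in\U_0$, completing the proof.
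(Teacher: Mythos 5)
Your proof is correct, and its first half is precisely the paper's argument: the paper's entire proof is the one sentence that the lemma ``follows directly from \eqref{N-1-11:N=0} and \eqref{N-1-11:Nne=0}'', i.e.\ one reads off that $N$ vanishes identically on $\F_1\oplus\F_2\oplus\F_4\oplus\F_5\oplus\F_6$, that the expressions for $\F_8\oplus\F_9$, $\F_{10}$ and $\F_{11}$ carry explicit $\eta$-factors annihilated by $\eta\circ\f=0$, and that the $\F_3$- and $\F_7$-expressions survive composition with $\f$. Where you genuinely go beyond the paper is the necessity direction: the paper leaves implicit why, on a manifold lying in a direct sum that contains $\F_3$ or $\F_7$, the two surviving contributions cannot cancel each other, and why their vanishing forces the components $F^3$ and $F^7$ themselves to vanish. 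Your observation that $N[F^3]$ is $\HH$-valued while $N[F^7]$ is $\VV$-valued, so that the two pieces must vanish separately, combined with the inversion of $F\mapsto N[F]$ on $\F_3\oplus\F_7$ via \propref{prop-Nhat=0} and formula \eqref{nabf} of \thmref{thm:FNhatN}, closes this gap cleanly; this buys a genuinely complete ``only if'' where the paper offers an assertion. The one point worth making explicit is that you apply \thmref{thm:FNhatN} and \propref{prop-Nhat=0} to the tensor $F^3+F^7$ rather than to the manifold's actual fundamental tensor $F$; this is legitimate because \eqref{enu}, \eqref{enhat} and the derivation of \eqref{nabf} are pointwise linear in $F$ and use only the symmetries \eqref{F-prop}, which every component $F^i$ satisfies, but a sentence saying so would make that step airtight.
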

\begin{proof}
It follows directly from \eqref{N-1-11:N=0} and \eqref{N-1-11:Nne=0}.
\end{proof}

Thus, \propref{prop-Q=Q0} and \lemref{lem-Nfifi} imply
\begin{cor}\label{cor-Q=Q0-class}
The $\f$-canonical connection and the $\f$B-connection coincide on
an almost con\-tact B-metric manifold $\M$ if and only if $\M$ is
in the class $\U_0$.
\end{cor}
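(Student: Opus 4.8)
The plan is to read the statement off as the transitive composition of the two results that immediately precede it, \propref{prop-Q=Q0} and \lemref{lem-Nfifi}. Each of these supplies one of the two equivalences whose chaining is exactly the assertion, so no fresh computation is required beyond quoting them.

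First I would invoke \propref{prop-Q=Q0}, which reduces the coincidence of the $\f$-canonical connection $\n''$ and the $\f$B-connection $\n'$ to the single tensorial condition $N(\f\cdot,\f\cdot)=0$. This equivalence rests on the explicit potential relation \eqref{Q-can-F} (equivalently the torsion relation \eqref{T-can-F}): the potentials obey $Q''=Q'-\frac{1}{8}\bigl\{N(\f^2 z,\f^2 y,\f^2 x)+2N(\f z,\f y,\xi)\eta(x)\bigr\}$, so $\n''=\n'$ holds precisely when this correction term vanishes for all arguments. Using the $\f$-symmetries of $N$ collected in \eqref{Nfi-prop} together with the horizontal--vertical splitting \eqref{hv}, \eqref{Xhv}, the vanishing of that term is equivalent to $N(\f\cdot,\f\cdot)=0$.

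Next I would apply \lemref{lem-Nfifi}, which pins down the subclass satisfying this very condition: $N(\f\cdot,\f\cdot)=0$ holds on $\M$ if and only if $\M\in\U_0$, where $\U_0=\F_1\oplus\F_2\oplus\F_4\oplus\F_5\oplus\F_6\oplus\F_8\oplus\F_9\oplus\F_{10}\oplus\F_{11}$. This is read directly from the class-by-class expressions \eqref{N-1-11:N=0} and \eqref{N-1-11:Nne=0}: the classes $\F_1\oplus\F_2\oplus\F_4\oplus\F_5\oplus\F_6$ already have $N=0$, while the factor $\eta\circ\f=0$ kills the contributions of $\F_8\oplus\F_9$, $\F_{10}$ and $\F_{11}$ after precomposing with $\f$; only $\F_3$ and $\F_7$ retain a nonzero $N(\f\cdot,\f\cdot)$, so $\U_0$ is exactly the complement of $\F_3\oplus\F_7$ in the general class.

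Chaining the two biconditionals yields $\n''=\n'$ if and only if $\M\in\U_0$, which is the claim. There is no genuine obstacle here: the entire substance resides in \propref{prop-Q=Q0} and \lemref{lem-Nfifi}, and the corollary is merely their composition. The one point worth confirming is that the sharp class description in \lemref{lem-Nfifi} leaves no room for the correction term controlling $Q''-Q'$ to vanish on a class strictly larger than $\U_0$, which is precisely what the exhaustive list \eqref{N-1-11:N=0}--\eqref{N-1-11:Nne=0} guarantees.
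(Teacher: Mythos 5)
Your proposal is correct and follows exactly the paper's own route: the corollary is stated there as an immediate consequence of chaining \propref{prop-Q=Q0} (coincidence of $\n''$ and $\n'$ iff $N(\f\cdot,\f\cdot)=0$) with \lemref{lem-Nfifi} (that condition characterizes $\U_0$). The extra detail you supply on how those two ingredients are themselves established is consistent with the paper but not needed for the corollary.
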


Then, bearing in mind \eqref{T0N}, we obtain that
the torsions of the $\f$-canonical connection and the $\f$B-connection on a
manifold from $\U_0$ have the form
\begin{equation*}\label{T0N=}
\begin{split}
T''(x,y,z) &=T'(x,y,z)\\[6pt]
 &=\dfrac{1}{8}\bigl\{\N(z^{\mathrm{h}},y^{\mathrm{h}},x^{\mathrm{h}})-\N(z^{\mathrm{h}},x^{\mathrm{h}},y^{\mathrm{h}})\bigr\}\\[6pt]
&\phantom{+}
+\dfrac{1}{2}\bigl\{N(x^{\mathrm{v}},y^{\mathrm{h}},z^{\mathrm{h}})-N(y^{\mathrm{v}},x^{\mathrm{h}},z^{\mathrm{h}})\\[6pt]
&\phantom{++\dfrac{1}{2}\bigl\{}
+\N(x^{\mathrm{v}},y^{\mathrm{h}},z^{\mathrm{h}})-\N(y^{\mathrm{v}},x^{\mathrm{h}},z^{\mathrm{h}})\\[6pt]
&\phantom{++\dfrac{1}{2}\bigl\{}
-\N(z^{\mathrm{v}},x^{\mathrm{v}},y^{\mathrm{h}})+\N(z^{\mathrm{v}},y^{\mathrm{v}},x^{\mathrm{h}})\bigr\}\\[6pt]
&\phantom{+}
+\dfrac{1}{4}\bigl\{N(z^{\mathrm{v}},x^{\mathrm{h}},y^{\mathrm{h}})-N(z^{\mathrm{v}},y^{\mathrm{h}},x^{\mathrm{h}})\\[6pt]
&\phantom{++\dfrac{1}{4}\bigl\{}
-\N(z^{\mathrm{v}},x^{\mathrm{h}},y^{\mathrm{h}})+\N(z^{\mathrm{v}},y^{\mathrm{h}},x^{\mathrm{h}})\bigr\}.
\end{split}
\end{equation*}

The torsions $T'$ and $T''$ are different to each other on a
manifold that belongs to the basic classes $\F_3$ and $\F_7$
as well as to their direct sums with other classes.
For $\F_3\oplus\F_7$, using \eqref{T0N-F37} and \eqref{T-can-N},
we obtain the form of the torsion of the $\f$-canonical connection as follows
\begin{equation*}
T''(x,y,z)=\dfrac{1}{4} N(x^{\mathrm{h}},y^{\mathrm{h}},z^{\mathrm{h}})+\dfrac{1}{2} N(x^{\mathrm{h}},y^{\mathrm{h}},z^{\mathrm{v}}).
\end{equation*}
Therefore,
using \eqref{N-1-11:Nne=0}, the torsion of the $\f$-canonical connection for $\F_3$ and $\F_7$ is expressed by
\begin{equation}\label{Tcan-F3F7}
\F_3:\quad T''=\dfrac{1}{4} N^{\mathrm{h}},\qquad \F_7:\quad T''=\D\eta\otimes\eta.
\end{equation}

\vskip 0.2in \addtocounter{subsubsection}{1}

\noindent  {\Large\bf{\emph{\thesubsubsection. $\f$-Canonical connection and general contact conformal group $G$}}}

\vskip 0.15in


Now we consider the group of transformations of the
$\f$-canonical connection  generated by the general contact conformal transformations of the almost contact B-metric
structure.

According to  \cite{Man4}, the general
contact conformal transformations of the almost contact B-metric
structure are defined by
\begin{equation}\label{Transf}
\begin{array}{c}
    \overline{\xi}=e^{-w}\xi,\quad \overline{\eta}=e^{w}\eta,\quad \\[6pt]
    \overline{g}(x,y)=\al g(x,y)+\bt g(x,\f y)+(\gm-\al)\eta(x)\eta(y),
\end{array}
\end{equation}
where $\al=e^{2u}\cos{2v}$, $\bt=e^{2u}\sin{2v}$, $\gm=e^{2w}$ for
differentiable functions $u$, $v$, $w$ on $\MM$. These
transformations form a group denoted by $G$.

If $w=0$, we obtain the contact conformal transformations of the
B-metric, introduced in \cite{ManGri1}. By $v=w=0$, the
transformations \eqref{Transf} are reduced to the usual conformal
transformations of $g$.

Let us remark that $G$ can be considered as a contact complex
conformal gauge group, i.e. the composition of an almost contact
group preserving $\HH$ and a complex conformal transformation of the
complex Riemannian metric $\overline{g^{\C}}=e^{2(u+iv)}g^{\C}$ on
$\HH$.

Note that the normality condition $N=0$ is not preserved under the
action of $G$. We have

\begin{prop}\label{prop:U0invar}
The tensor $N(\f\cdot,\f\cdot)$ is an invariant of the group $G$
on any almost contact B-metric manifold.
\end{prop}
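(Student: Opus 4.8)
The plan is to exploit the fact that every transformation of the group $G$ in \eqref{Transf} leaves $\f$ unchanged, so that the entire effect of $G$ on the Nijenhuis tensor $N$ is concentrated along the $\xi$-direction, where it is annihilated once $\f$-images are inserted. First I would verify that $(\overline{\f},\overline{\xi},\overline{\eta})=(\f,e^{-w}\xi,e^{w}\eta)$ again satisfies the almost contact relations \eqref{str1}; the only point requiring attention is $\overline{\f}^2=-I+\overline{\eta}\otimes\overline{\xi}$, which holds because $\overline{\eta}\otimes\overline{\xi}=e^{w}\eta\otimes e^{-w}\xi=\eta\otimes\xi$. Hence $\overline{\f}=\f$, and the Nijenhuis torsion $[\f,\f]$ from \eqref{[fifi]}, being built solely from Lie brackets of vector fields, is unaffected: $[\overline{\f},\overline{\f}]=[\f,\f]$.

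Next I would transform the second summand of $N$ in \eqref{NN}. From $\overline{\eta}=e^{w}\eta$ one gets $\D\overline{\eta}=e^{w}\bigl(\D\eta+\D w\wedge\eta\bigr)$, and multiplying by $\overline{\xi}=e^{-w}\xi$ cancels the conformal factors, giving
\[
\D\overline{\eta}\otimes\overline{\xi}=\bigl(\D\eta+\D w\wedge\eta\bigr)\otimes\xi.
\]
Combined with $[\overline{\f},\overline{\f}]=[\f,\f]$ this yields the transformation law $\overline{N}=N+(\D w\wedge\eta)\otimes\xi$, that is,
\[
\overline{N}(x,y)=N(x,y)+\bigl\{\D w(x)\,\eta(y)-\D w(y)\,\eta(x)\bigr\}\xi.
\]

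Finally I would substitute $\f x,\f y$ into the last identity. Since $\eta\circ\f=0$ by \eqref{str1}, both $\eta(\f x)$ and $\eta(\f y)$ vanish, the correction term drops out, and $\overline{N}(\f x,\f y)=N(\f x,\f y)$; because $\overline{\f}=\f$, this is precisely the asserted invariance $\overline{N}(\overline{\f}\cdot,\overline{\f}\cdot)=N(\f\cdot,\f\cdot)$. The reasoning is entirely direct; the only step needing a little care is the computation of $\D\overline{\eta}$ together with the cancellation of the factors $e^{\pm w}$. I do not expect a genuine obstacle, since the essential mechanism is simply that $G$ perturbs $N$ only in the $\xi$-direction, exactly where the insertion of $\f$ removes it.
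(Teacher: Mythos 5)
Your proposal is correct and follows exactly the paper's own argument: the paper's proof likewise derives $\overline{N}=N+(\D w\wedge\eta)\otimes\xi$ from \eqref{NN} and \eqref{Transf} and then kills the correction term via $\eta\circ\f=0$. You have merely written out the intermediate computations ($\overline{\f}=\f$, the exterior derivative of $e^{w}\eta$, and the cancellation of the conformal factors) in more detail.
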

\begin{proof}
Taking into account \eqref{NN} and \eqref{Transf}, we obtain
\[
\overline{N}=N+ (\D w\wedge\eta)\otimes\xi
\]
and clearly we have
$\overline{N}(\f x,\f y)=N(\f x,\f y)$.
\end{proof}

According to \lemref{lem-Nfifi}, we establish  the following
\begin{cor}
The class $\U_0$ is closed by the action of the group $G$.
\end{cor}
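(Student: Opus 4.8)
The plan is to obtain the statement immediately from \propref{prop:U0invar} by re-expressing membership in $\U_0$ through its metric-free characterisation. First I would invoke \lemref{lem-Nfifi}, which asserts that an almost contact B-metric manifold lies in $\U_0$ precisely when $N(\f\cdot,\f\cdot)=0$. The decisive feature of this condition is that it is phrased solely through the Nijenhuis tensor $N$ of the almost contact structure $(\f,\xi,\eta)$; indeed $N=[\f,\f]+\D\eta\otimes\xi$ involves only Lie brackets and the exterior derivative, hence it does not refer to $g$ at all.

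Next I would apply an arbitrary transformation of $G$ as in \eqref{Transf}. Since this transformation leaves $\f$ unchanged, the transformed structure satisfies $\overline{\f}=\f$, so the condition defining $\U_0$ for the new manifold is again $\overline{N}(\f\cdot,\f\cdot)=0$. By \propref{prop:U0invar} the tensor $N(\f\cdot,\f\cdot)$ is a $G$-invariant, i.e. $\overline{N}(\f x,\f y)=N(\f x,\f y)$ for all $x,y$. Consequently, if $\M\in\U_0$, then $N(\f\cdot,\f\cdot)=0$, whence $\overline{N}(\f\cdot,\f\cdot)=0$, and the transformed manifold lies in $\U_0$ as well. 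This shows that $\U_0$ is closed under the action of $G$.

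The only point that requires care --- and what I regard as the substance hidden behind the word \emph{corollary} --- is that the basic classes $\F_i$, and therefore $\U_0$ itself, are originally defined by conditions on the fundamental tensor $F$, which genuinely changes under the action of $G$. Thus the invariance is not visible at the level of $F$; it becomes transparent only after passing to the equivalent description via $N$ in \lemref{lem-Nfifi}. Once that translation is in place, no further computation is needed, the result following at once from the explicit transformation law $\overline{N}=N+(\D w\wedge\eta)\otimes\xi$ established in the proof of \propref{prop:U0invar} together with the identity $\eta\circ\f=0$ from \eqref{str1}, which annihilates the extra term on vectors of the form $\f x,\f y$.
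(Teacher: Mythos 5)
Your proof is correct and follows essentially the same route as the paper: the corollary is obtained by combining \lemref{lem-Nfifi} (the metric-free characterisation of $\U_0$ via $N(\f\cdot,\f\cdot)=0$) with \propref{prop:U0invar} (the $G$-invariance of $N(\f\cdot,\f\cdot)$), noting that $\f$ itself is unchanged by the transformations \eqref{Transf}. Your additional remark about why the translation from $F$ to $N$ is the essential step is accurate but not a departure from the paper's argument.
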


Let $\M$ and $(\MM,\f,\overline{\xi},\overline{\eta},\overline{g})$ be contactly
conformally equivalent with respect to a transformation from $G$.
The Levi-Civita connection of $\overline{g}$ is denoted by
 $\overline{\n}$.
Using the general Koszul formula for the metric $g$ and the corresponding Levi-Civita connection $\n$
\begin{equation}\label{koszul}
\begin{split}
2g(\nabla_xy,z)&=x\left(g(y,z)\right)+y\left(g(z,x)\right)-z\left(g(x,y)\right)\\
&+g([x,y],z)-g([y,z],x)+g([z,x],y),
\end{split}
\end{equation}
by straightforward computations we get the following relation
between  $\n$ and $\overline{\n}$
:
\begin{subequations}\label{bar-n-n}
\begin{equation}
\begin{split}
    &2\left(\al^2+\bt^2\right)g\left(\overline{\n}_xy-\n_xy,z\right)=\\[6pt]
    &=\dfrac{1}{2}\Bigl\{-\al\bt \left[2F(x,y,\f^2z)-F(\f^2z,x,y)\right]\\[6pt]
    &\phantom{=\dfrac{1}{2}\Bigl\{}-\bt^2 \left[2F(x,y,\f z)-F(\f z,x,y)\right]\\[6pt]
    &\phantom{=\dfrac{1}{2}\Bigl\{}+\dfrac{\bt}{\gm}\left(\al^2+\bt^2\right)
    \left[2F(x,y,\xi)-F(\xi,x,y)\right]\eta(z)\\[6pt]
   &\phantom{=\dfrac{1}{2}\Bigl\{}+2\left(\dfrac{\al}{\gm}-1\right)\left(\al^2+\bt^2\right)
    F(\f^2x,\f y,\xi)\eta(z)
\end{split}
\end{equation}
\begin{equation}
\begin{split}
    &\phantom{=\dfrac{1}{2}\Bigl\{}+2\al(\gm-\al)
    \left[F(x,\f z,\xi)+F(\f^2z,\f x,\xi)\right]\eta(y)\\[6pt]
    &\phantom{=\dfrac{1}{2}\Bigl\{}-2\bt(\gm-\al)
    \left[F(x,\f^2z,\xi)-F(\f z,\f x,\xi)\right]\eta(y)\\[6pt]
     &\phantom{=\dfrac{1}{2}\Bigl\{}
    -2\left[\al\,\D\al(x)+\bt\,\D\bt(x)\right]g(\f y,\f z)\\[6pt]
    &\phantom{=\dfrac{1}{2}\Bigl\{}
    +2\left[\al\,\D\bt(x)-\bt\,\D\al(x)\right]g(y,\f
    z)\\[6pt]
    &\phantom{=\dfrac{1}{2}\Bigl\{}
    -\left[\al\,\D\al(\f^2z)+\bt\,\D\al(\f z)\right]g(\f x,\f y)\\[6pt]
    &\phantom{=\dfrac{1}{2}\Bigl\{}
    +\left[\al\,\D\bt(\f^2z)+\bt\,\D\bt(\f z)\right]g(x,\f y)\\[6pt]
    &\phantom{=\dfrac{1}{2}\Bigl\{}+\left[\al\,\D\gm(\f^2z)+\bt\,\D\gm(\f z)\right]\eta(x)\eta(y)\\[6pt]
    &\phantom{=\dfrac{1}{2}\Bigl\{}+\dfrac{1}{\gm}(\al^2+\bt^2)\bigl\{\D\al(\xi)g(\f x,\f y)-\D\bt(\xi)g(x,\f y)\bigr\}\eta(z)\\[6pt]
    &\phantom{=\dfrac{1}{2}\Bigl\{}+\dfrac{1}{\gm}(\al^2+\bt^2)\bigl\{2\D\gm(x)\eta(y)-\D\gm(\xi)\eta(x)\eta(y)\bigr\}\eta(z)\Bigr\}_{(x\leftrightarrow
    y)},
\end{split}
\end{equation}
\end{subequations}
where (for the sake of brevity) we use the notation
$\{A(x,y,z)\}_{(x\leftrightarrow y)}$ instead of the sum
$\{A(x,y,z)+A(y,x,z)\}$ for any tensor $A(x,y,z)$.

Using \eqref{F=nfi} and \eqref{bar-n-n}, we obtain the following
formula for the  transformation by $G$ of the tensor $F$:
\begin{equation}\label{barF-F}
\begin{split}
    2\overline{F}(x,y,z)&=2\al F(x,y,z)\\[6pt]
    &
    +\Bigl\{\bt \left\{F(\f y,z,x)-F(y,\f z,x)+F(x,\f y,\xi)\eta(z)\right\}\\[6pt]
    &\phantom{+\Bigl\{}+(\gm-\al)\bigl\{\left[F(x,y,\xi)+F(\f y,\f x,\xi)\right]\eta(z)\\[6pt]
    &\phantom{+\Bigl\{+(\gm-\al)\bigl\{}
    +\left[F(y,z,\xi)+F(\f z,\f y,\xi)\right]\eta(x)\bigr\}\\[6pt]
    &\phantom{+\Bigl\{}
    -\left[\D\al(\f y)+\D\bt(y)\right]g(\f x,\f z)\\[6pt]
    &\phantom{+\Bigl\{}
    -\left[\D\al(y)-\D\bt(\f y)\right]g(x,\f z)\\[6pt]
    &\phantom{+\Bigl\{}+\eta(x)\eta(y)\D\gm(\f z)\Bigr\}_{(y\leftrightarrow
    z)}.
\end{split}
\end{equation}

\begin{prop}\label{prop:bar-n'-n'}
Let  the almost contact B-metric manifolds $\M$ and
$(\MM,\f,\overline{\xi},\overline{\eta},\overline{g})$ be contactly conformally
equivalent with respect to a transformation from $G$. Then the
corresponding $\f$-canonical connections $\overline{\n}''$ and $\n''$ as
well as their torsions $\overline{T}''$ and  $T''$ are related as
follows:
\begin{equation}\label{bar-n'-n'}
\begin{split}
    \overline{\n}''_xy&=\n''_xy
    -\D u(x)\f^2 y+\D v(x)\f y+\D w(x)\eta(y)\xi\\[6pt]
    &\phantom{=}+\dfrac{1}{2}\bigl\{\left[\D u(\f^2y)-\D v(\f y)\right]\f^2 x-\left[\D u(\f y)+\D v(\f^2 y)\right]\f
    x\\[6pt]
    &\phantom{=+\dfrac{1}{2}\bigl\{}-g(\f x,\f y)\left[\f^2p-\f q\right]+g(x,\f y)\left[\f p+\f^2
    q\right]\bigr\},
\end{split}
\end{equation}
where $p=\grad{u}$, $q=\grad{v}$;
\begin{equation}\label{T''TP}
\begin{split}
    \overline{T}''(x,y)&=T''(x,y)+\{\D w(x)\eta(y)-\D w(y)\eta(x)\}\xi\\[6pt]
    &\phantom{=}
    -\dfrac{1}{2}\Bigl\{
    \left[\D u(\f^2y)+\D v(\f y)-2\D u(\xi)\eta(y)\right]\f^2 x\\[6pt]
    &\phantom{=+\dfrac{1}{2}\Bigl\{}
    -\left[\D u(\f^2x)+\D v(\f x)-2\D u(\xi)\eta(x)\right]\f^2 y\\[6pt]
    &\phantom{=+\dfrac{1}{2}\Bigl\{}
    +\left[\D u(\f y)-\D v(\f^2 y)+2\D v(\xi)\eta(y)\right]\f x\\[6pt]
    &\phantom{=+\dfrac{1}{2}\Bigl\{}
    -\left[\D u(\f x)-\D v(\f^2 x)+2\D v(\xi)\eta(x)\right]\f y\Bigr\}.
\end{split}
\end{equation}
\end{prop}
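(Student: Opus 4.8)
The plan is to obtain \eqref{bar-n'-n'} by pushing the explicit description of the $\f$-canonical connection through the two transformation laws that precede the statement, and then to read off \eqref{T''TP} by antisymmetrisation. Recall that by \eqref{D=nQ}, \eqref{Q-can-F}, \eqref{Q0} and \eqref{enu} the $\f$-canonical connection is written as $\n''_x y = \n_x y + (Q'')^{\sharp}(x,y)$, where $Q''$ is a universal expression in the fundamental tensor $F$ (through $Q'$ and $N$) and the index is raised by $g$. The barred connection $\overline{\n}''$ admits the identical description on $(\MM,\f,\overline{\xi},\overline{\eta},\overline{g})$, i.e. in terms of $\overline{\n}$, $\overline{F}$ and $\overline{N}$, with the index raised by $\overline{g}$. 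Since $G$ leaves $\f$ fixed, the two descriptions have the same algebraic shape, so the whole computation reduces to substitution.

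First I would substitute \eqref{barF-F} into $\overline{Q}''$ to express it as $\al\,Q''$ plus correction terms carrying one derivative of $\al$, $\bt$ or $\gm$; here one uses $\D\al = 2\al\,\D u - 2\bt\,\D v$, $\D\bt = 2\bt\,\D u + 2\al\,\D v$ and $\D\gm = 2\gm\,\D w$, together with $\al^2+\bt^2 = e^{4u}$, to turn every coefficient into a clean combination of $\D u$, $\D v$, $\D w$. Next I would feed \eqref{bar-n-n} into $\overline{\n}_x y$; the factor $2(\al^2+\bt^2)$ and the $1/\gm$ weights appearing there are exactly what is needed to cancel the $\overline{g}$-raising against the $g$-raising, so that after dividing out $\al^2+\bt^2$ the surviving terms reorganise into the right-hand side of \eqref{bar-n'-n'}. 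The symmetric ``shape-operator'' terms $g(\f x,\f y)[\f^2 p - \f q]$ and $g(x,\f y)[\f p + \f^2 q]$, with $p=\grad u$, $q=\grad v$, arise precisely from the $\D\al$, $\D\bt$ pieces of \eqref{bar-n-n}, while $\D w(x)\eta(y)\xi$ comes from the $\overline{\xi}=e^{-w}\xi$ scaling.

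An equivalent and safer route, which I would use to cross-check the bookkeeping, is uniqueness: set $\n^{\flat}$ equal to $\n''$ plus the correction tensor on the right of \eqref{bar-n'-n'}, and verify directly that $\n^{\flat}$ preserves $(\f,\overline{\xi},\overline{\eta},\overline{g})$ and that its torsion satisfies the canonical identity \eqref{T-can} for that structure. Since $\n''$ already annihilates $\f$, $\xi$, $\eta$ and $g$, checking $\n^{\flat}\f=0$ amounts to the correction commuting with $\f$ in its second argument (true by inspection), $\n^{\flat}\overline{g}=0$ reduces to matching $(\n''_x\overline{g})(y,z)$ — a pure derivative-of-coefficients term — against the symmetric part of the correction, and $\n^{\flat}\overline{\xi}=0$ against the $\D w$ term; the torsion condition fixes the remaining antisymmetric freedom. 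By the uniqueness noted after \eqref{Q-can-F} this forces $\n^{\flat}=\overline{\n}''$.

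Finally, \eqref{T''TP} is immediate once \eqref{bar-n'-n'} is in hand. Writing the correction as $S(x,y)=\overline{\n}''_x y-\n''_x y$ and using $T''(x,y)=\n''_x y-\n''_y x-[x,y]$, one gets $\overline{T}''(x,y)-T''(x,y)=S(x,y)-S(y,x)$. The terms $g(\f x,\f y)[\f^2 p-\f q]$ and $g(x,\f y)[\f p+\f^2 q]$ are symmetric in $x,y$ (since $g(x,\f y)=g(\f x,y)$), hence drop out, and the identity $\f^2 y=-y+\eta(y)\xi$ recombines the remaining first-slot and bracket terms into the stated form, the $\D u(\xi)\eta(y)$ and $\D v(\xi)\eta(y)$ contributions being exactly the $\eta(y)\xi$-components produced by $\f^2$. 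The main obstacle is not conceptual but the volume of bracketed terms in \eqref{bar-n-n} and \eqref{barF-F}: tracking the $(\gm-\al)$ and $\eta$-weighted pieces and confirming that they all collapse to the compact horizontal and vertical corrections displayed in \eqref{bar-n'-n'} and \eqref{T''TP}.
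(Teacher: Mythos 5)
Your proposal is correct and follows essentially the same route as the paper: the paper likewise obtains \eqref{bar-n'-n'} by pushing the transformation laws \eqref{barF-F} and \eqref{bar-n-n} through the explicit description of $\n''$ (via the intermediate formulas \eqref{bar-n0-n0} for the $\f$B-connection and \eqref{barN-N} for the Nijenhuis tensor, combined through \eqref{Q-can-F}), and then reads off \eqref{T''TP} as a consequence, exactly as in your antisymmetrisation step. Your uniqueness cross-check is a sensible additional safeguard but is not needed for, and does not alter, the argument.
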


\begin{proof}
Taking into account \eqref{fB0}, we have the following equality on
$\M$:
\begin{equation}\label{5}
\begin{split}
g\left(\n'_xy-{\n}_xy,z\right)
=\dfrac{1}{2}\bigl\{{F}(x,\f y,z)&+{F}(x,\f y,\xi)\eta(z)\\[6pt]
&-2{F}(x,\f
z,\xi)\eta(y)\bigr\}.
\end{split}
\end{equation}
Then we can rewrite the corresponding equality on the manifold
$(\MM,\f,\allowbreak{}\overline{\xi},\overline{\eta},\overline{g})$, which is the image of $\M$
by a transformation belonging to $G$:
\begin{equation}\label{6}
\begin{split}
\overline{g}\left(\overline{\n}'_xy-\overline{\n}_xy,z\right)
=\dfrac{1}{2}\bigl\{\overline{F}(x,\f y,z)&+\overline{F}(x,\f
y,\overline{\xi})\overline{\eta}(z)\\[6pt]
&-2\overline{F}(x,\f
z,\overline{\xi})\overline{\eta}(y)\bigr\}.
\end{split}
\end{equation}

By virtue of \eqref{5}, \eqref{6}, \eqref{barF-F} and
\eqref{bar-n-n}, we get the following formula of the
transformation by $G$ of the $\f$B-connection:
\begin{subequations}\label{bar-n0-n0}
\begin{equation}
\begin{split}
    g\left(\overline{\n}'_xy-\n'_xy,z\right)&=
    \dfrac{1}{8}\sin{4v}\, N(\f z,\f  y,\f x)\\[6pt]
    &-\dfrac{1}{4}\sin^2{2v}\, N(\f^2z,\f^2 y,\f^2 x)
\end{split}
\end{equation}
\begin{equation}
\begin{split}
\phantom{g\left(\overline{\n}'_xy-\n'_xy,z\right)}
     &-\dfrac{1}{4}e^{2(w-u)}\sin{2v}\,N(\f^2 z,\f y,\xi)\eta(x)\\[6pt]
    &-\dfrac{1}{4}\left(1-e^{2(w-u)}\cos{2v}\right)N(\f z,\f y,\xi)\eta(x)\\[6pt]
&-\D u(x)g(\f y,\f z)+\D v(x)g(y,\f z)\\[6pt]
    &+\D w(x)\eta(y)\eta(z)\\[6pt]
    &+\dfrac{1}{2}\left[\D u(\f^2y)-\D v(\f y)\right]g(\f x,\f z)\\[6pt]
    &-\dfrac{1}{2}\left[\D u(\f y)+\D v(\f^2 y)\right]g( x,\f z)\\[6pt]
    &-\dfrac{1}{2}\left[\D u(\f^2z)-\D v(\f z)\right]g(\f x,\f y)\\[6pt]
    &+\dfrac{1}{2}\left[\D u(\f z)+\D v(\f^2 z)\right]g( x,\f y).
\end{split}
\end{equation}
\end{subequations}

From \eqref{enu}, \eqref{barF-F}, \eqref{F-prop} and
\eqref{Transf}, it follows the formula for the transformation by
$G$ of the Nijenhuis tensor:
\begin{equation}\label{barN-N}
\begin{split}
    \overline{N}(\f x,\f y,z)=\al\, N(\f x,\f y,z)&+\bt\, N(\f x,\f y,\f z)\\[6pt]
    &+(\gm-\al) N(\f x,\f
    y,\xi)\eta(z).
\end{split}
\end{equation}

Taking into account \eqref{Q-can-F}, \eqref{barN-N},
\eqref{Transf} and \eqref{bar-n0-n0}, we get \eqref{bar-n'-n'}.
As a consequence of \eqref{bar-n'-n'}, the torsions $T''$ and
$\overline{T}''$ of  $\n''$ and $\overline{\n}''$, respectively, are related as
in \eqref{T''TP}.
\end{proof}

The torsion forms associated with $T''$ of the $\f$-canonical
connection are defined by the same
way as in  \eqref{ttt}.

Using \eqref{ttt}, \eqref{T-can-F}, \eqref{fBT=F}, \eqref{F-prop} and
\eqref{Nfi-prop}, we obtain that the torsion forms of the
$\f$-canonical connection are expressed with respect to the Lee forms
by the same way as in \eqref{tB} for the torsion forms of the $\f$B-connection, namely:
\begin{equation}\label{TD-sledi}
\begin{array}{l}
t''=\dfrac{1}{2}\bigl\{\ta^*+\ta^*(\xi)\eta\bigr\},\\[6pt]
t''^*=-\dfrac{1}{2}\bigl\{\ta+\ta(\xi)\eta\bigr\},\\[6pt]
\hat{t}''=-\om\circ\f.
\end{array}
\end{equation}

\newpage
\vskip 0.15in \addtocounter{subsubsection}{1}

\noindent  {\Large\bf{\emph{\thesubsubsection. $\f$-Canonical connection and general contact conformal subgroup $G_0$}}}

\vskip 0.1in


Let us consider the subgroup $G_0$ of $G$ defined by the
conditions
\begin{equation}\label{G0}
\begin{split}
    \D u\circ\f^2+\D v\circ\f&=\D u\circ\f -\D v\circ\f^2 \\[6pt]
    &=\D u(\xi)=\D v(\xi)=\D w\circ\f=0.
\end{split}
\end{equation}
By direct computations, from  \eqref{Fi}, \eqref{Transf},
\eqref{barF-F} and \eqref{G0}, we prove the truthfulness of the following
\begin{thm}\label{thm:FiG0invar}
Each of the basic classes $\F_i$ $(i=1,2,\dots,11)$ of the almost contact B-metric
manifolds is closed by the action of the group $G_0$. Moreover, $G_0$ is
the largest subgroup of $G$ preserving the Lee forms $\ta$, $\ta^*$,
$\om$ and the special class $\F_0$.
\end{thm}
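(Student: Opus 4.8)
The plan is to derive both assertions directly from the explicit transformation law \eqref{barF-F} of the fundamental tensor $F$ under $G$, specialised by the defining relations \eqref{G0} of the subgroup $G_0$. First I would rewrite \eqref{G0}: since $\f^2=-I+\eta\otimes\xi$ by \eqref{str1} and $\D u(\xi)=\D v(\xi)=0$, the first two equations read $\D u=\D v\circ\f$ and $\D u\circ\f=-\D v$ on the contact distribution $\HH$ (so that $u+iv$ is ``holomorphic'' with respect to $\f|_{\HH}$), while $\D w\circ\f=0$ forces $\D w=\D w(\xi)\,\eta$. Differentiating $\al=e^{2u}\cos 2v$, $\bt=e^{2u}\sin 2v$, $\gm=e^{2w}$ gives $\D\al=2(\al\,\D u-\bt\,\D v)$, $\D\bt=2(\bt\,\D u+\al\,\D v)$ and $\D\gm=2\gm\,\D w$; substituting the rewritten \eqref{G0}, one checks that every gradient coefficient occurring in \eqref{barF-F}, namely $\D\al\circ\f+\D\bt$, $\D\al-\D\bt\circ\f$ and $\D\gm\circ\f$, vanishes identically. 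Hence, restricted to $G_0$, the law \eqref{barF-F} collapses to a purely algebraic expression in which $\overline F$ is a combination, with function coefficients $\al$, $\bt$ and $\gm-\al$, of terms obtained from $F$ by permuting arguments, inserting $\f$, and projecting along $\eta$.

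With the gradient terms gone, closure of each class becomes a finite, structure-group-equivariant verification. I would check, class by class via \eqref{Fi} (equivalently via the projectors $F^i$ of \eqref{Fi-Ico}), that $F=F^i$ implies $\overline F=\overline F{}^i$. The bookkeeping is lightened by grouping the classes by the type of their defining tensor: the metric-and-Lee-form classes $\F_1,\F_4,\F_5$; the classes $\F_2,\F_3$ with $F(\xi,\cdot,\cdot)=F(\cdot,\xi,\cdot)=0$; the $F(\cdot,\cdot,\xi)$-classes $\F_6,\dots,\F_9$; and $\F_{10},\F_{11}$. The simplified \eqref{barF-F} commutes with the algebraic operations defining these conditions because its coefficients are scalars and each summand respects the $\GG\times\II$ action from \eqref{GxI}. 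Taking the traces \eqref{titi} of the simplified formula then yields $\overline\ta=\ta$, $\overline\ta^*=\ta^*$ and $\overline\om=\om$, and putting $F=0$ gives $\overline F=0$; this establishes that $G_0$ preserves the three Lee forms and the special class $\F_0$.

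For maximality I would argue the converse: let a transformation from $G$ preserve $\ta,\ta^*,\om$ and $\F_0$, and show it must satisfy \eqref{G0}. Preservation of $\F_0$ means that $F=0$ forces $\overline F=0$; in \eqref{barF-F} this leaves precisely the gradient part, symmetrised in $(y\leftrightarrow z)$, which must vanish for all $x,y,z$. Isolating the coefficients of $g(\f x,\f z)$, of $g(x,\f z)$ and of $\eta(x)\eta(y)$, and using preservation of the Lee forms to remove the ambiguity created by the $(y\leftrightarrow z)$-symmetrisation, I obtain a linear system for the pair $(\D u\circ\f+\D v,\ \D u-\D v\circ\f)$ and for $\D\gm\circ\f$, whose matrix has determinant $\al^2+\bt^2=e^{4u}\neq0$; solving it forces $\D u\circ\f+\D v=\D u-\D v\circ\f=0$, together with $\D u(\xi)=\D v(\xi)=0$ (tested on $\xi$) and $\D w\circ\f=0$, which is exactly \eqref{G0}.

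The forward computation through all eleven classes is the bulk of the labour, but it is routine once the gradient terms have been eliminated. The genuine obstacle is the maximality step: I must ensure that the vanishing of the $(y\leftrightarrow z)$-symmetrised gradient expression --- a priori weaker than the vanishing of each individual coefficient --- together with the three Lee-form identities is \emph{exactly} sufficient to recover all five relations in \eqref{G0} and nothing stronger, so that $G_0$ is shown to be neither too large nor too small.
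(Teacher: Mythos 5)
Your proposal is correct and follows essentially the same route as the paper, which proves this theorem precisely ``by direct computations from \eqref{Fi}, \eqref{Transf}, \eqref{barF-F} and \eqref{G0}'': under \eqref{G0} the three gradient coefficients $\D\al\circ\f+\D\bt$, $\D\al-\D\bt\circ\f$ and $\D\gm\circ\f$ in \eqref{barF-F} vanish, and the remaining algebraic law is checked against each class (remember only that $\overline\ta$, $\overline\ta^*$ must be traced with $\overline g^{ij}$ and the defining identities tested with $\overline g$, $\overline\xi=e^{-w}\xi$, $\overline\eta=e^{w}\eta$). The maximality step you worry about does close, in fact from preservation of $\F_0$ alone: setting $x=y=\xi$ in the surviving symmetrised gradient expression gives $\D\gm\circ\f=0$; setting $y=\xi$ with $x,z$ horizontal gives $\D\al(\xi)=\D\bt(\xi)=0$ by linear independence of $g\vert_{\HH}$ and $\tg\vert_{\HH}$; and setting $z=y$ horizontal reduces the identity to $P(y)\,g(y,\cdot)=Q(y)\,g(\f y,\cdot)$ with $P=\D\al\circ\f+\D\bt$, $Q=\D\al-\D\bt\circ\f$, which forces $P=Q=0$ since $y$ and $\f y$ are independent, and inverting the matrix $\bigl(\begin{smallmatrix}\al&\bt\\-\bt&\al\end{smallmatrix}\bigr)$ of determinant $e^{4u}\neq0$ recovers exactly the relations \eqref{G0}.
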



\begin{thm}\label{thm:TcanG0invar}
The torsion of the $\f$-canonical connection is invariant with
respect to the general contact conformal transformations if and
only if these transformations belong to the group $G_0$.
\end{thm}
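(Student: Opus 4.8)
The plan is to read the statement directly off the transformation law \eqref{T''TP} obtained in \propref{prop:bar-n'-n'}. Writing $E(x,y)=\overline{T}''(x,y)-T''(x,y)$ for the correction term on the right-hand side of \eqref{T''TP}, note that $E$ is a tensor, antisymmetric in $x,y$ since both $\overline{T}''$ and $T''$ are torsions. Invariance of the torsion under a transformation of $G$ is then exactly the condition $E\equiv 0$, so the whole theorem reduces to showing that $E$ vanishes identically if and only if the five defining relations \eqref{G0} of $G_0$ hold.

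The ``if'' direction is a direct substitution into \eqref{T''TP}. Assuming \eqref{G0}, the equalities $\D u(\xi)=\D v(\xi)=0$ delete the two $\eta$-weighted summands inside the brace, after which the coefficient of $\f^2 x$ becomes $\bigl(\D u\circ\f^2+\D v\circ\f\bigr)(y)$ and the coefficient of $\f x$ becomes $\bigl(\D u\circ\f-\D v\circ\f^2\bigr)(y)$; both vanish by the first two relations of \eqref{G0}. Finally $\D w\circ\f=0$ forces $\D w$ to be supported on $\VV$, i.e. $\D w(x)=\eta(x)\D w(\xi)$, so the remaining term $\{\D w(x)\eta(y)-\D w(y)\eta(x)\}\xi$ vanishes as well, giving $E\equiv 0$.

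For the converse I would use antisymmetry together with the decomposition $x=x^{\mathrm{h}}+\eta(x)\xi$ from \eqref{Xhv} to reduce $E\equiv 0$ to two specializations. First, put $x=\xi$ and $y\in\HH$; using $\f\xi=\f^2\xi=0$ from \eqref{str1} the expression collapses to $E(\xi,y)=-\D w(y)\,\xi+\D u(\xi)\,y+\D v(\xi)\,\f y$. Separating its $\VV$- and $\HH$-components and using that $y$ and $\f y$ are linearly independent in $\HH$ yields $\D w|_{\HH}=0$ (that is $\D w\circ\f=0$) together with $\D u(\xi)=\D v(\xi)=0$. Second, take $x,y\in\HH$, where $\f^2 y=-y$; then $E(x,y)=-\frac{1}{2}\bigl(\alpha(y)x-\alpha(x)y+\beta(y)\f x-\beta(x)\f y\bigr)$ with $\alpha(y)=\D u(y)-\D v(\f y)$ and $\beta(y)=\D u(\f y)+\D v(y)$. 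Testing against $y=\f x$ and again invoking the independence of $x$ and $\f x$ forces $\alpha|_{\HH}=\beta|_{\HH}=0$; rewriting $\f^2 z=-z+\eta(z)\xi$ shows these are precisely $\D u\circ\f^2+\D v\circ\f=0$ and $\D u\circ\f-\D v\circ\f^2=0$. The two specializations together recover all of \eqref{G0}.

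The main obstacle is the horizontal specialization: the vanishing of a single bilinear expression of the shape $\alpha(y)x-\alpha(x)y+\beta(y)\f x-\beta(x)\f y$ does not immediately give $\alpha=\beta=0$ termwise, and one must choose the test vectors carefully (here $y=\f x$, using $\dim\HH=2n\geq 2$ and the independence of $x,\f x$) to disentangle the four contributions into two clean one-form identities. Once this step is handled, the bookkeeping that translates the horizontal identities back into the globally stated conditions of \eqref{G0} via $\f^2 z=-z+\eta(z)\xi$ is routine.
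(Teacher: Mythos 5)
Your proof is correct and follows the same route the paper indicates: the paper's own proof simply states that the theorem ``follows from \eqref{T''TP} and \eqref{G0}'', which is precisely the substitution you carry out. The only difference is that you actually supply the verification of the converse direction (specializing the correction term to $x=\xi$, $y\in\HH$ and to $x,y\in\HH$ with the test $y=\f x$ to recover all five conditions of \eqref{G0}), a step the paper leaves entirely implicit; your computations there check out.
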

\begin{proof}
\propref{prop:bar-n'-n'} and \eqref{G0} imply immediately
\begin{equation}\label{bar-n'-n'-G0}
\begin{split}
    \overline{\n}''_xy=\n''_xy
    &-\D u(x)\f^2 y+\D v(x)\f y+\D w(\xi)\eta(x)\eta(y)\xi\\[6pt]
    &-\D u(y)\f^2 x+\D v(y)\f
    x+g(\f x,\f y)p-g(x,\f y)q.
\end{split}
\end{equation}
The statement follows from \eqref{bar-n'-n'-G0} or alternatively
from \eqref{T''TP} and \eqref{G0}.
\end{proof}

Bearing in mind the invariance of $\F_i$ $(i=1,2,\dots,11)$ and
$T''$ with respect to the transformations of $G_0$, we establish
that each of the eleven basic classes of the manifolds $\M$ is
characterized by the torsion of the $\f$-canonical connection.
Then we give this characterization in the following
\begin{prop}\label{prop:FiT}
The basic classes of the almost contact B-metric manifolds are
characterized by conditions for the torsion of the $\f$-canonical
connection as follows:
\[
\begin{array}{rl}
\F_1:\; &T''(x,y)=\dfrac{1}{2n}\left\{t''(\f^2 x)\f^2 y-t''(\f^2 y)\f^2 x\right.\\[6pt]
            &\phantom{T''(x,y)=\dfrac{1}{2n}\ }\left.
            +t''(\f x)\f y-t''(\f y)\f x\right\}; \\[6pt]
\F_2:\; &T''(\xi,y)=0,\quad \eta\left(T''(x,y)\right)=0,\quad \\[6pt]
        &T''(x,y)=T''(\f x,\f y),\quad t''=0;
\end{array}
\]
\[
\begin{array}{rl}
\F_3:\; &T''(\xi,y)=0,\quad \eta\left(T''(x,y)\right)=0,\; \\[6pt]
        &T''(x,y)=\f T''(x,\f y);\\[6pt]
\F_4:\; &T''(x,y)=\dfrac{1}{2n}t''^*(\xi)\left\{\eta(y)\f x-\eta(x)\f y\right\};\\[6pt]
\F_5:\; &T''(x,y)=\dfrac{1}{2n}t''(\xi)\left\{\eta(y)\f^2 x-\eta(x)\f^2 y\right\};\\[6pt]
\F_6:\; &T''(x,y)=\eta(x)T''(\xi,y)-\eta(y)T''(\xi,x),\;\\[6pt]
        &T''(\xi,y,z)=T''(\xi,z,y)=-T''(\xi,\f y,\f z);\\[6pt]
\F_{7}:\; &T''(x,y)=\eta(x)T''(\xi,y)-\eta(y)T''(\xi,x)+\eta(T''(x,y))\xi,\\[6pt]
            &T''(\xi,y,z)=-T''(\xi,z,y)=- T''(\xi,\f y,\f z)\\[6pt]
            &\phantom{T''(\xi,y,z)}
            =\dfrac{1}{2}T''(y,z,\xi)=- \dfrac{1}{2}T''(\f y,\f z,\xi);\\[6pt]
\F_{8}:\; &T''(x,y)=\eta(x)T''(\xi,y)-\eta(y)T''(\xi,x)+\eta(T''(x,y))\xi,\\[6pt]
            &T''(\xi,y,z)=-T''(\xi,z,y)= T''(\xi,\f y,\f z)\\[6pt]
            &\phantom{T''(\xi,y,z)}
            =\dfrac{1}{2}T''(y,z,\xi)= \dfrac{1}{2}T''(\f y,\f z,\xi);\\[6pt]
\F_{9}:\; &T''(x,y)=\eta(x)T''(\xi,y)-\eta(y)T''(\xi,x),\; \\[6pt]
            &T''(\xi,y,z)= T''(\xi,z,y)=T''(\xi,\f y,\f z);\\[6pt]
\F_{10}:\; &T''(x,y)=\eta(x)T''(\xi,y)-\eta(y)T''(\xi,x),\; \\[6pt]
            &T''(\xi,y,z)=- T''(\xi,z,y)=T''(\xi,\f y,\f z);\\[6pt]
\F_{11}:\; &T''(x,y)=\left\{\hat{t''}(x)\eta(y)-\hat{t''}(y)\eta(x)\right\}\xi.\\[6pt]
\end{array}
\]
\end{prop}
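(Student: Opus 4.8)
The plan is to exploit the two invariance results established immediately above, namely \thmref{thm:FiG0invar} and \thmref{thm:TcanG0invar}, together with the explicit dependence of $T''$ on the fundamental tensor $F$. Since each basic class $\F_i$ is closed under the action of $G_0$ and the torsion $T''$ of the $\f$-canonical connection is $G_0$-invariant, the whole problem reduces to a purely pointwise, algebraic translation: for each $i$ I would substitute the characteristic form of $F$ from \eqref{Fi} into the expression for $T''$ and read off the resulting constraints. Because the $\f$-canonical connection is uniquely determined by $F$ (as noted after \eqref{T-can-N}), the correspondence $F\leftrightarrow T''$ is bijective on the relevant tensor space, which is precisely what guarantees that the translated conditions characterise the classes in both directions, i.e. as genuine \emph{if and only if} statements.

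Concretely, I would start from the relation \eqref{T-can-F} expressing $T''$ through the torsion $T'$ of the $\f$B-connection and the Nijenhuis tensor $N$, combined with \eqref{fBT=F} which gives $T'$ directly in terms of $F$; equivalently one may use the horizontal--vertical form \eqref{T-can-N}. For those conditions that involve the torsion forms $t''$, $t''^*$ and $\hat{t}''$ (for instance the requirement $t''=0$ in class $\F_2$, or the explicit trace terms appearing in $\F_1$, $\F_4$ and $\F_5$), I would invoke \eqref{TD-sledi}, which identifies these forms with the Lee forms $\ta$, $\ta^*$ and $\om$ up to the standard $\xi$-corrections; this lets one convert the Lee-form conditions in \eqref{Fi} into the stated torsion-form conditions.

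The core of the argument is then a case-by-case computation over $i=1,\dots,11$. In each case I would insert the class-defining expression for $F$ into the formula for $T''$ and simplify using the algebraic identities \eqref{str1} and \eqref{F-prop} for $F$ together with the $\f$-symmetries \eqref{Nfi-prop} and \eqref{hatNfi-prop} of the two Nijenhuis tensors. Several cases are already prepared: for $\F_3$ and $\F_7$ the torsion is computed explicitly in \eqref{Tcan-F3F7}, and for the classes lying in $\U_0$ the simplification is governed by \lemref{lem-Nfifi}. The skew-symmetry $T''(x,y,z)=-T''(y,x,z)$ (since $\n''$ is metric) and the defining canonical identity \eqref{T-can} are used throughout to reduce the number of independent components one must check against the stated forms.

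I expect the main obstacle to be purely computational: the bookkeeping in expanding $T''$ for the mixed horizontal--vertical classes $\F_6,\dots,\F_{10}$, where the $\eta$-terms and the interplay between $N$, $\N$ and the projectors $\mathrm{h}$, $\mathrm{v}$ generate many terms that must be collapsed using \eqref{Xhv} and \eqref{NhatN-prop2}. There is no conceptual difficulty, because invariance under $G_0$ reduces everything to a finite algebraic check and the bijectivity of $F\leftrightarrow T''$ supplies the converse implications automatically; the work lies in carrying out the simplifications cleanly enough to match exactly the forms listed in the statement.
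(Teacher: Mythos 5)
Your proposal is correct and takes essentially the same route as the paper: reduce to $T''=T'$ on the classes in $\U_0$, treat $\F_3$ and $\F_7$ separately via \eqref{T-can-F} and the Nijenhuis tensor, and then substitute the class-defining form of $F$ from \eqref{Fi}, converting Lee forms into torsion forms by \eqref{TD-sledi} and \eqref{t't'*}. The only cosmetic difference is that you present the $G_0$-invariance as a reduction step, whereas in the paper it serves merely as motivation for seeking such a characterization; the two-way equivalence instead rests, as you correctly note, on the recoverability of $F$ from the torsion of a natural connection.
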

\begin{proof}
According to \propref{prop-Q=Q0}, \corref{cor-Q=Q0-class}, equalities
\eqref{fBT=F} and \eqref{T-can-F}, we have the following form of the
torsion of the $\f$-canonical connection when $\M$ belongs to $\F_i$ for $i\in\{1,2,\dots,11\}$; $i\neq 3,7$:
\begin{equation*}\label{TD}
\begin{split}
T''(x,y)=T'(x,y)=&\dfrac{1}{2}\bigl\{
 \left(\n_x\f\right)\f y+\left(\n_x\eta\right)y\cdot\xi+2\eta(x)\n_y\xi\\[6pt]
 &\phantom{\dfrac{1}{2}\bigl\{}
-\left(\n_y\f\right)\f x+\left(\n_y\eta\right)x\cdot\xi+2\eta(y)\n_x\xi\bigr\}.
\end{split}
\end{equation*}
For the classes $\F_3$ and $\F_7$, we use \eqref{T-can-F} and
equalities \eqref{N-1-11:N=0} and \eqref{N-1-11:Nne=0}.

Then, using \eqref{F-prop}, \eqref{TD-sledi}, \eqref{t't'*} and
\eqref{Fi}, we obtain the characteristics in the statement.
\end{proof}

%

\newpage
\vskip 0.15in \addtocounter{subsubsection}{1}

\noindent  {\Large\bf\emph{\thesubsubsection. An example of an almost contact B-metric
manifold with coinciding $\f$B-connection and $\f$-canonical connection}}\label{exa:sphera}

\vskip 0.1in


In \cite{GaMiGr}, it is given an example of the considered
manifolds as follows.
Let the vector space
\[
\R^{2n+2}=\left\{\left(u^1,\dots,u^{n+1};v^1,\dots,v^{n+1}\right)\
|\ u^i,v^i\in\R\right\}
\]
 be considered as a complex Riemannian
manifold with the canonical complex structure $J$ and the metric
$g$ defined by
\[
g(x,x)=-\delta_{ij}\lm^i\lm^j+\delta_{ij}\mu^i\mu^j
\]
 for
$x=\lm^i\dfrac{\partial}{\partial
u^i}+\mu^i\dfrac{\partial}{\partial v^i}$. Identifying the point
$p\in\R^{2n+2}$ with its position vector, it is considered the
time-like sphere
\[
\SSS: g(U,U)=-1
\]
of $g$ in $\R^{2n+2}$, where $U$
is the unit normal to the tangent space $T_p\SSS$ at $p\in \SSS$. It is
set
\[
g(U,JU)=\tan\psi,\quad
\psi\in\left(-\dfrac{\pi}{2},\dfrac{\pi}{2}\right).
\]
Then the
almost contact structure is introduced by
\[
\xi=\sin\psi\
U+\cos\psi\ JU,\quad \eta=g(\cdot,\xi),\quad \f=J-\eta\otimes J\xi.
\]
 It
is shown that $(\SSS,\f,\xi,\eta,g)$ is an almost contact B-metric
manifold in the class $\F_4\oplus\F_5$.

Since the $\f$-canonical connection coincides with the
$\f$B-connection on any manifold in $\F_4\oplus\F_5$, according to
\corref{cor-Q=Q0-class}, then by virtue of \eqref{fBT=F} we get the
torsion tensor and the torsion forms of the $\f$-canonical
connection as follows:
\[
\begin{array}{l}
T''(x,y,z)=\cos\psi\ \{\eta(x)g(y,\f z)-\eta(y)g(x,\f z)\}\\[6pt]
\phantom{T''(x,y,z)\,}
-\sin\psi\ \{\eta(x) g(\f y,\f z)-\eta(y)g(\f x,\f z)\},\\[6pt]
t''=2n\sin\psi\ \eta,\qquad t''^*=-2n\cos\psi\ \eta,\qquad
\hat{t}''=0.
\end{array}
\]
These equalities are in accordance with \propref{prop:FiT}.

\vskip 0.2in \addtocounter{subsection}{1} \setcounter{subsubsection}{0}

\noindent  {\Large\bf \thesubsection. $\f$KT-connection}

\vskip 0.15in


In  \cite{Man31}, on an almost contact B-metric manifold $\M$, it is introduced a natural connection $\n'''$
called a \emph{$\f$KT-connec\-tion}, which torsion $T'''$ is
totally skew-symmetric, i.e. a 3-form. There, it is proved that the
$\f$KT-connection exists only on
$\M$ belonging to $\F_3\oplus\F_7$, i.e. the considered manifold has a Killing vector field $\xi$ and a vanishing cyclic
sum $\s$ of $F$.

\begin{cor}\label{cor:fKTassN=0}
The $\f$KT-connection exists on an almost contact B-metric
manifold if and only if the tensor $\widehat{N}$ vanishes on this manifold.
\end{cor}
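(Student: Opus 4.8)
The plan is to recognise that this corollary is a direct concatenation of two equivalences that are already available in the text, so no new computation is needed. First I would invoke the existence result of \cite{Man31}, recalled immediately before the statement: a $\f$KT-connection (a natural connection with totally skew-symmetric torsion) exists on an almost contact B-metric manifold $\M$ if and only if $\M$ belongs to the class $\F_3\oplus\F_7$. Second I would invoke \propref{prop-Nhat=0}, which identifies precisely this same class as the one cut out by the vanishing of the associated Nijenhuis tensor, i.e. $\widehat{N}=0$. Chaining the two biconditionals, one obtains that the $\f$KT-connection exists exactly when $\widehat{N}$ vanishes, which is the assertion.

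Concretely, I would phrase the argument as a two-line equivalence: the existence of $\n'''$ is equivalent to $\M\in\F_3\oplus\F_7$ by \cite{Man31}, and $\M\in\F_3\oplus\F_7$ is equivalent to $\widehat{N}=0$ by \propref{prop-Nhat=0}; transitivity of $\Leftrightarrow$ closes the proof. The only point that genuinely requires attention is that \propref{prop-Nhat=0} must be read as an exact class identity, i.e. both inclusions $\{\widehat{N}=0\}\subseteq\F_3\oplus\F_7$ and $\F_3\oplus\F_7\subseteq\{\widehat{N}=0\}$, since otherwise one direction of the corollary could fail to transfer. This is indeed what \propref{prop-Nhat=0} provides, its proof resting on the class-by-class formulae for $\widehat{N}$ collected in \eqref{hatN-1-11}, from which $\widehat{N}$ is seen to vanish exactly on $\F_3$ and $\F_7$ (and their direct sum) and to be nonzero on every other basic class.

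Thus I expect essentially no obstacle: the substance of the corollary lies entirely in \propref{prop-Nhat=0} together with the quoted existence theorem, and the corollary merely records their combination. If a slightly more self-contained proof were desired, one could instead argue directly: totally skew-symmetric torsion forces the torsion of any natural connection to coincide with $\dfrac14\s[J,J]$-type cyclic expressions, and the obstruction to such a connection is carried precisely by the symmetric tensor $\widehat{N}$; but invoking the two stated results is the cleanest route and is the one I would present.
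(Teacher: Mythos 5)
Your argument is exactly the paper's: the proof given in the text consists precisely of citing the existence criterion from \cite{Man31} (existence of the $\f$KT-connection $\Leftrightarrow$ $\M\in\F_3\oplus\F_7$) and then invoking \propref{prop-Nhat=0} to identify that class with the vanishing of $\widehat{N}$. Your additional remark that \propref{prop-Nhat=0} must be read as a full class identity (both inclusions), resting on the class-by-class formulae in \eqref{hatN-1-11}, is a correct and worthwhile precision, but the route is the same.
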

\begin{proof}
According to
\propref{prop-Nhat=0}, the class $\F_3\oplus\F_7$ is characterized
by the condition $\widehat{N}=0$. Bearing in mind the statement above, the proof is completed.
\end{proof}

The $\f$KT-connection is the
odd-dimensional analogue of the KT-con\-nec\-tion $\DDD'''$ discussed in Subsection 3.3 on the corresponding class of quasi-K\"ahler manifolds with Norden metric.

According to \cite{Man31}, the unique $\f$KT-connection $\n'''$ is determined by
    \[
            g(\n'''_xy,z)=g(\n_xy,z)+\dfrac{1}{2}T'''(x,y,z),
    \]
    where the torsion tensor is defined by
\begin{equation}\label{fKT-T37} %
\begin{array}{l}
T'''(x,y,z)=-\dfrac{1}{2} \sx\bigl\{F(x,y,\f z)-3\eta(x)F(y,\f
z,\xi)\bigr\}\\[6pt]
\phantom{T'''(x,y,z)}
=\dfrac{1}{2}\left(\eta\wedge
\D\eta\right)(x,y,z)+\dfrac{1}{4}\sx
N(x,y,z). %
\end{array}
\end{equation} %
Obviously, the torsion forms of the $\f$KT-connection are zero.



From \eqref{fKT-T37} and \eqref{N-1-11:Nne=0}, for the classes  $\F_3$ and $\F_7$ we obtain
\begin{equation}\label{TKT-F3F7} %
\F_3:\quad T'''=\dfrac{1}{4} \s
N^{\mathrm{h}},\qquad
\F_7:\quad T'''=\eta\wedge
\D\eta. %
\end{equation} %

As it is stated in \corref{cor-Q=Q0-class}, the $\f$B-connection and the
$\f$-canonical con\-nec\-tion  coincide if and
only if $\M$ belongs to $\F_i$,
$i\in\{1,2,\allowbreak{}\dots,11\}\setminus \{3,7\}$, i.e. we have $\n'\equiv\n''$ if and only if the
$\f$KT-connection $\n'''$ does not exist.

For the rest basic classes $\F_3$ and $\F_7$ (where the $\f$KT-connection exists), we obtain
\begin{prop}\label{prop-3D}
Let $\M$ be an arbitrary manifold belonging to $\F_i$, $i\in\{3,7\}$. The
$\f$B-con\-nec\-tion $\n'$ is the average connection of the $\f$-canonical con\-nec\-tion
$\n''$ and the
$\f$KT-connection $\n'''$, i.e. the following relation is valid
\[
\n'=\dfrac12\left\{\n''+\n'''\right\}.
\]
\end{prop}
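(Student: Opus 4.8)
The plan is to reduce the asserted connection identity to the corresponding identity for torsion tensors, exactly in the spirit of the Norden-metric computation carried out for the Levi-Civita, canonical and KT connections at the end of Subsection~3.3. First I would record that $\n'$, $\n''$ and $\n'''$ are all natural connections on the same manifold $\M$, so that each is a metric connection differing from the common Levi-Civita connection $\n$ only by its potential, via $\n^*_x y = \n_x y + Q^*(x,y)$. Because each of them is metric (whence its potential is antisymmetric in the last two arguments) and is completely determined by its torsion, the potential and the torsion are tied by the linear bijection \eqref{TQ}--\eqref{QT}. Consequently the desired relation $\n' = \frac12\{\n'' + \n'''\}$ is equivalent to $Q' = \frac12\{Q'' + Q'''\}$, which in turn, by the linearity of \eqref{QT}, is equivalent to the single torsion identity $T' = \frac12\{T'' + T'''\}$.

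Second, I would verify this torsion identity separately on the two admissible classes $\F_3$ and $\F_7$ (the only classes carrying a $\f$KT-connection, by \corref{cor:fKTassN=0}), using the explicit expressions already at our disposal. On $\F_3$, combining \eqref{Tcan-F3F7} and \eqref{TKT-F3F7} gives $\frac12\{T'' + T'''\} = \frac12\bigl\{\frac14 N^{\mathrm{h}} + \frac14 \s N^{\mathrm{h}}\bigr\} = \frac18\{N^{\mathrm{h}} + \s N^{\mathrm{h}}\}$, which is precisely $T'$ as given in \eqref{T0N-F3F7}. On $\F_7$, the same three equations yield $\frac12\{T'' + T'''\} = \frac12\{\D\eta\otimes\eta + \eta\wedge\D\eta\}$, again matching $T'$ from \eqref{T0N-F3F7}. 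Thus $T' = \frac12\{T'' + T'''\}$ holds in both classes.

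Finally I would run the first step in reverse: applying \eqref{QT} to $T' = \frac12\{T'' + T'''\}$ gives $Q' = \frac12\{Q'' + Q'''\}$, and adding the common base $\n$ produces $\n'_x y = \n_x y + Q'(x,y) = \frac12\bigl\{(\n_x y + Q''(x,y)) + (\n_x y + Q'''(x,y))\bigr\} = \frac12\{\n''_x y + \n'''_x y\}$, which is the assertion. I do not expect a genuine obstacle here; the only point requiring care is the passage from the averaged torsion to the averaged connection, and this is justified solely because all three connections are natural (hence metric) and share the Levi-Civita base, so that the torsion-to-potential map \eqref{QT} is a bona fide linear bijection. The computation itself is immediate once the class-by-class torsion formulas \eqref{T0N-F3F7}, \eqref{Tcan-F3F7} and \eqref{TKT-F3F7} are in hand.
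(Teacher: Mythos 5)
Your proof is correct, and its skeleton coincides with the paper's: both reduce the connection identity to the single torsion identity $2T'=T''+T'''$ on $\F_3$ and $\F_7$, and then lift back to potentials and connections via the linear bijection \eqref{TQ}--\eqref{QT}, which is legitimate exactly for the reason you give --- all three connections are metric and share the Levi-Civita base. Where you diverge is in how the torsion identity is verified. The paper checks it component by component in the decomposition of the torsion space from \S6: using \eqref{pijT-B}, \eqref{pijT-KT} and \eqref{T-can-F} it shows, for $\F_3$, that $p_{1,2}(T')=p_{1,2}(T'')=p_{1,2}(T''')$ while $p_{1,4}(T')=\frac12 p_{1,4}(T''')$ and $p_{1,4}(T'')=0$, and the analogous relations with $p_{2,1}$ and $p_{3,2}$ for $\F_7$. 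You instead substitute the closed-form expressions \eqref{T0N-F3F7}, \eqref{Tcan-F3F7} and \eqref{TKT-F3F7} of the three torsions in terms of $N^{\mathrm{h}}$, $\s N^{\mathrm{h}}$, $\D\eta\otimes\eta$ and $\eta\wedge\D\eta$, after which $2T'=T''+T'''$ drops out by inspection in each class. Your route is shorter and stays entirely within the material of \S5; the paper's route is heavier but records the finer information of which invariant subspaces $\T_s$ each torsion occupies, which is then reused in \propref{prop-fB} and \propref{prop-fKT}. Both verifications rest on formulas already established earlier in the text, so neither has a gap.
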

\begin{proof}
 By virtue of \eqref{pijT-B}, \eqref{pijT-KT} and
\eqref{T-can-F}
we obtain: \\[6pt]
1) for $\F_3$
\begin{align*}
&p_{1,2}(T')(x,y,z)=p_{1,2}(T'')(x,y,z)
=p_{1,2}(T''')(x,y,z)\\[6pt]
&
\phantom{p_{1,2}(T')(x,y,z)}
=-\dfrac{1}{2}\bigl\{F(\f^2x,\f^2y,\f z)+F(\f^2y,\f^2z,\f
x)
\\[6pt]
&
\phantom{p_{1,2}(T')(x,y,z)=-\dfrac{1}{2}\bigl\{}
-F(\f^2z,\f^2x,\f y)\bigr\},\\[6pt]%
&p_{1,4}(T')(x,y,z)=
\dfrac{1}{2}p_{1,4}(T''')(x,y,z)=-\dfrac{1}{2}F(\f^2z,\f^2x,\f y),\\[6pt]%
&p_{1,4}(T'')(x,y,z)=0;
\end{align*}
2) for $\F_7$
\begin{align*}
&p_{2,1}(T')(x,y,z)=p_{2,1}(T'')(x,y,z)=p_{2,1}(T''')(x,y,z)\\[6pt]%
&
\phantom{p_{2,1}(T')(x,y,z)}
= 2\eta(z)F(x,\f y,\xi),\\[6pt]%
&p_{3,2}(T')(x,y,z)=\dfrac{1}{2}p_{3,2}(T''')(x,y,z)\\[6pt]%
&
\phantom{p_{3,2}(T')(x,y,z)}
=\eta(x)F(y,\f z,\xi)-\eta(y)F(x,\f z,\xi),\\[6pt]
&p_{3,2}(T'')(x,y,z)=0.
\end{align*}
Thus, we establish the equality  $2T'=T''+T'''$ for
$\F_3$ and $\F_7$. Then, using \eqref{QT}, we obtain the expression
$2Q'=Q''+Q'''$ for the corresponding potentials with respect to $\n$, defined by
\[
\begin{split}
Q'(x,y,z)&=g(\n'_xy-\n_xy,z), \quad\\[6pt]
Q''(x,y,z)&=g(\n''_xy-\n_xy,z), \quad \\[6pt]
Q'''(x,y,z)&=g(\n'''_xy-\n_xy,z).
\end{split}
\]
Therefore, we have the
statement.
\end{proof}

\vspace{20pt}

\begin{center}
$\divideontimes\divideontimes\divideontimes$
\end{center} 

\newpage

\addtocounter{section}{1}\setcounter{subsection}{0}\setcounter{subsubsection}{0}

\setcounter{thm}{0}\setcounter{dfn}{0}\setcounter{equation}{0}

\label{par:classT}

 \Large{

\
\\[6pt]
\bigskip

\
\\[6pt]
\bigskip

\lhead{\emph{Chapter I $|$ \S\thesection. Classification of affine connections on almost contact
manifolds with B-metric  
}}


\noindent
\begin{tabular}{r"l}
\hspace{-6pt}{\Huge\bf \S\thesection.}  & {\Huge\bf Classification of affine connections } \\[12pt]
                             & {\Huge\bf on almost contact manifolds} \\[12pt]
                             & {\Huge\bf with B-metric}
\end{tabular}

\vskip 1cm

\begin{quote}
\begin{large}
In the present section the space of the torsion (0,3)-tensors of the affine connections on almost contact manifolds with B-metric is
decomposed in 15 orthogonal and invariant subspaces with respect
to the action of the structure group. This decomposition gives a rise to a classification of the corresponding affine connections. Three known connections,
preserving the structure, are characterized regarding this
classification.

The main results of this section are published in \cite{ManIv36}.
\end{large}
\end{quote}

%
%

\vskip 0.15in

The investigations of affine connections on
manifolds take a central place in the study of the
differential geometry of these manifolds. The affine connections
preserving the metric are completely characterized by their
torsion tensors. In ac\-cord\-ance with our goals, it is important
to describe affine connections regarding the properties of their
torsion tensors with respect to the structures on the manifold.
Such a classification of the space of the torsion tensors is made
in \cite{GaMi87} by G.~Ganchev and V.~Mihova in the case of almost complex manifolds with
Norden metric.

The idea of decomposition of the space of the basic (0,3)-tensors,
generated by the covariant derivative of the fundamental tensor of
type $(1,1)$, is used by different authors in order to  obtain
classifications of manifolds with additional tensor structures.
For example, let us mention the classification of almost Hermitian
manifolds given in \cite{GrHe}, of almost complex manifolds with
Norden metric -- in \cite{GaBo}, of almost contact metric
manifolds -- in \cite{AlGa}, of almost contact manifolds with
B-metric -- in \cite{GaMiGr}, of Riemannian almost product
manifolds -- in \cite{Nav}, of Riemannian manifolds with traceless
almost product structure -- in \cite{StaGri}, of almost
paracontact metric manifolds -- in \cite{NakZam}, of almost
paracontact Riemannian manifolds of type $(n,n)$ -- in
\cite{ManSta01}.

The affine connections preserving the structure (also known as
natural connections) are particularly interesting in differential
geometry.
Canonical Hermitian connections on almost Hermitian manifolds are discussed in the beginning of \S3. 

%
Natural connections of canonical type are considered on the
Riemannian almost product manifolds in
\cite{Dobr11-1,Dobr11-2,MekDobr} and on the almost complex
manifolds with Norden metric in \cite{GaMi87,GaGrMi85,Mek09}.
The Tanaka-Webster connection on a contact metric manifold is
introduced (\cite{Tanno,Tan,Web}) in the context of CR-geometry. A
natural connection with minimal torsion on the quaternionic
contact structures, introduced in \cite{Biq}, is known as the
Biquard connection.
%


The goal of the present section is to describe the torsion space with
respect to the almost contact B-metric structure, which can be
used to study some natural connections on these manifolds.

This section is organized as follows.
Subsection~\thesection.1 is devoted to the decomposition of the space of
torsion tensors on almost contact manifolds with B-metric. On this basis, in Subsection~\thesection.2,
we classify all affine connections on the considered manifolds.
In
Subsection~\thesection.3, we find the position of three known natural
connections from \S5 in the obtained classification.


\vskip 0.2in \addtocounter{subsection}{1} \setcounter{subsubsection}{0}

\noindent  {\Large\bf \thesubsection. A decomposition of the space of torsion tensors}

\vskip 0.15in


The object of our considerations are the affine connections with
torsion. Thus, we have to study the properties of the torsion
tensors with respect to the almost contact structure and the B-metric.

If $T$ is the torsion tensor of an affine connection $\n^*$, i.e.
\[
T(x,y)=\n^*_x y-\n^*_y x-[x,
y],
\]
then the corre\-sponding tensor of type (0,3) is determined as usually
by $T(x,y,z)=g(T(x,y),z)$.

Let us consider $T_p\MM$ at arbitrary $p\in \MM$ as a
$(2n+1)$-dimension\-al vector space with almost contact B-metric
structure $(V,\f,\xi,\eta,g)$. Moreover, let $\T$ be the vector
space of all tensors $T$ of type (0,3) over $V$ having
skew-symmetry by the first two arguments, i.e. \[
\T=\left\{T(x,y,z)\in\R,\; x,y,z\in V \; \vert\;\;
T(x,y,z)=-T(y,x,z)\right\}. \]

The metric $g$ induces an inner product
$\langle\cdot,\cdot\rangle$ on $\T$ defined by
\[
\langle
T_1,T_2\rangle=g^{iq}g^{jr}g^{ks}\allowbreak{}
T_1(e_i,e_j,e_k)\allowbreak{}T_2(e_q,e_r,e_s)
\]
 for any
$T_1, T_2\in\T$ and a basis $\left\{e_i\right\}$
$(i=1,2,\dots,2n+1)$ of $V$.

The structure group $\GG\times\II$ consisting of matrices of the form \eqref{GxI}
has a standard representation in $V$ which induces a natural representation $\lm$ of $\GG\times\II$ in
 $\T$ as follows
\[
 \left((\lm
a)T\right)(x,y,z)=T\left(a^{-1}x,a^{-1}y,a^{-1}z\right)
\]
 for any
$a\in \GG\times\II$ and $T\in\T$, so that
\[
\langle(\lm a)T_1,(\lm
a)T_2\rangle=\langle T_1,T_2\rangle,\qquad T_1,T_2\in \T.
\]

Using the projectors $\mathrm{h}$ and $\mathrm{v}$ on $V$, which are introduced as in \eqref{TMhv} and \eqref{Xhv},
we have an
orthogonal decomposition of $V$ in the form
\[
V=\mathrm{h}(V)\oplus \mathrm{v}(V).
\]
Then we construct a partial decomposition of $\T$ as follows.

At first, we define the operator $p_1:\ \T\rightarrow\T$ by
\[
p_1(T)(x,y,z)=-T(\f^2x,\f^2y,\f^2z),\quad T\in\T.
\]
It is easy to check the following
\begin{lem}\label{lem-p1}
The operator $p_1$ has the following properties:
 \begin{enumerate}
    \item $p_1\circ p_1 = p_1$;
    \item  $\langle p_1(T_1),T_2\rangle=\langle T_1,p_1(T_2)\rangle,\quad T_1, T_2 \in\T$;
    \item  $p_1\circ (\lm a)=(\lm a)\circ p_1$.
\end{enumerate}
\end{lem}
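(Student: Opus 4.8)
The plan is to verify the three listed properties in turn, each resting on one structural fact about $\f$ drawn from \eqref{str1}, \eqref{str2} and the block form of the structure group in \eqref{GxI}. First I would record the consequences of $\f^2=-I+\eta\otimes\xi$: since $\eta(\xi)=1$ gives $\eta(\f^2 x)=-\eta(x)+\eta(x)\eta(\xi)=0$, one obtains $\f^2\circ\f^2=-\f^2$, i.e. $\f^4 x=x-\eta(x)\xi$. Applying the definition of $p_1$ twice then yields
\[
p_1\bigl(p_1(T)\bigr)(x,y,z)=T(\f^4 x,\f^4 y,\f^4 z)=T(-\f^2 x,-\f^2 y,-\f^2 z),
\]
and trilinearity of $T$ collapses the three sign changes into a single factor $-1$, so the right-hand side equals $-T(\f^2 x,\f^2 y,\f^2 z)=p_1(T)(x,y,z)$. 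This gives property (i).

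The key observation for property (ii) is that $\f^2$ is self-adjoint with respect to $g$: by \eqref{str2} together with $\eta=\xi\,\lrcorner\,g$ one has $g(\f^2 x,y)=-g(x,y)+\eta(x)\eta(y)$, which is symmetric in $x$ and $y$. I would then transfer the three factors $\f^2$ from the arguments of $T_1$ to those of $T_2$, one contracted slot at a time, inside $\langle\cdot,\cdot\rangle$; self-adjointness of $\f^2$ guarantees that each such transfer leaves the $g$-contracted value unchanged. Because the overall sign $-1$ occurs identically in $p_1(T_1)$ and in $p_1(T_2)$, the three transfers together give $\langle p_1(T_1),T_2\rangle=\langle T_1,p_1(T_2)\rangle$.

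For property (iii) I would use that every $a\in\GG\times\II$, having the form \eqref{GxI}, preserves the almost contact structure $(\f,\xi,\eta)$ and hence commutes with $\f$, and therefore with $\f^2$; the same commutation then holds for $a^{-1}$. Unwinding the representation $\lm$,
\[
\bigl(p_1\circ(\lm a)\bigr)(T)(x,y,z)=-T(a^{-1}\f^2 x,a^{-1}\f^2 y,a^{-1}\f^2 z),
\]
and replacing each $a^{-1}\f^2$ by $\f^2 a^{-1}$ rewrites this as $p_1(T)(a^{-1}x,a^{-1}y,a^{-1}z)=\bigl((\lm a)\circ p_1\bigr)(T)(x,y,z)$, which is the claim.

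All the computations here are routine multilinear manipulations; the only genuine content, and the step I would treat most carefully, is isolating the two structural facts on which (ii) and (iii) hinge, namely the $g$-self-adjointness of $\f^2$ and the commutation $a\f=\f a$ for $a\in\GG\times\II$. Once these are in hand, the three stated properties follow mechanically, and one also checks in passing that $p_1$ indeed maps $\T$ into $\T$ (the skew-symmetry of $p_1(T)$ in its first two arguments is inherited from that of $T$).
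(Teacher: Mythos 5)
Your proof is correct, and it supplies exactly the routine verification the paper omits (the lemma is stated with ``It is easy to check''): property (i) from $\f^2\circ\f^2=-\f^2$ together with trilinearity, property (ii) from the $g$-self-adjointness of $\f^2$ (so that $\f^2$ can be moved across each $g$-contracted slot), and property (iii) from the fact that elements of $\GG\times\II$ commute with $\f$ and hence with $\f^2$. No gaps; the closing remark that $p_1$ preserves the skew-symmetry defining $\T$ is a worthwhile sanity check.
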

According to \lemref{lem-p1}, we have the following orthogonal
decomposition of $\T$ by the image and the kernel of $p_1$:
\[
\begin{array}{l}
\PP_1=\iim(p_1)=\left\{T\in\T\ \vert\ p_1(T)=T\right\},\\[6pt]
\PP_1^\bot=\ker(p_1)=\left\{T\in\T\ \vert\ p_1(T)=0\right\}.
\end{array}
\]

Further, we consider the operator $p_2:\
\PP_1^\bot\rightarrow\PP_1^\bot$, defined by
\[
p_2(T)(x,y,z)=\eta(z)T(\f^2 x, \f^2 y, \xi),\quad T\in\PP_1^\bot.
\]
We obtain immediately the truthfulness of the following
\begin{lem}\label{lem-p2}
The operator $p_2$ has the following properties:
 \begin{enumerate}
   \item $p_2\circ p_2 = p_2$;
   \item $\langle p_2(T_1),T_2\rangle=\langle
    T_1,p_2(T_2)\rangle,\quad T_1, T_2 \in\PP_1^\bot$;
   \item $p_2\circ (\lm a)=(\lm a)\circ p_2$.
 \end{enumerate}
\end{lem}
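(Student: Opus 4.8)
The plan is to imitate the proof of \lemref{lem-p1}, verifying the three properties in turn after recording a handful of structural identities that render $p_2$ transparent. From \eqref{str1} together with $\f^2=-\I+\eta\otimes\xi$ I would first note $\f^2\xi=0$ and the crucial relation $(\f^2)^2=-\f^2$; also, putting $y=\xi$ in \eqref{str2} gives $g(x,\xi)=\eta(x)$, whence $\eta(\f^2 z)=0$, so that $p_2(T)$ indeed lies in $\PP_1^\bot$ and $p_2$ is well defined on its stated domain. Recalling from \eqref{Xhv} that $x^{\mathrm{h}}=-\f^2 x$ and $x^{\mathrm{v}}=\eta(x)\xi$, linearity of $T$ in each slot lets me rewrite the definition in the clean form
\[
p_2(T)(x,y,z)=T(x^{\mathrm{h}},y^{\mathrm{h}},z^{\mathrm{v}}),
\]
the two sign changes from $\f^2 x=-x^{\mathrm{h}}$, $\f^2 y=-y^{\mathrm{h}}$ cancelling and $\eta(z)\xi=z^{\mathrm{v}}$ being absorbed into the third argument. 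Thus $p_2$ is exactly the operator extracting the two-horizontal, one-vertical component of a torsion tensor, which is the right picture for all three checks.

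For property (i) I would apply $p_2$ twice directly: $p_2(p_2(T))(x,y,z)=\eta(z)\,p_2(T)(\f^2x,\f^2y,\xi)=\eta(z)\eta(\xi)\,T((\f^2)^2x,(\f^2)^2y,\xi)$, and substituting $(\f^2)^2=-\f^2$ and $\eta(\xi)=1$ the two minus signs cancel, returning $\eta(z)T(\f^2x,\f^2y,\xi)=p_2(T)(x,y,z)$. This uses none of the restriction to $\PP_1^\bot$ and is immediate.

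The genuine work is property (ii), the self-adjointness, and it is where I expect the only real obstacle. Using the rewriting, I would first record that $\mathrm{v}\colon x\mapsto\eta(x)\xi$ is $g$-self-adjoint, since $g(x^{\mathrm{v}},y)=\eta(x)\eta(y)=g(x,y^{\mathrm{v}})$ by $g(\cdot,\xi)=\eta$, and hence $\mathrm{h}=\I-\mathrm{v}$ is $g$-self-adjoint as well; both are idempotent with mutually orthogonal images, the orthogonality being exactly \eqref{HHVV}. The key lemma to justify is that precomposing one slot of a $(0,3)$-tensor with a $g$-self-adjoint endomorphism $L$ is a self-adjoint operation for $\langle\cdot,\cdot\rangle$: writing the inner product as the full $g$-contraction and raising indices, the identity $g(Lx,y)=g(x,Ly)$ is precisely what allows $L$ to be transferred from the first tensor to the second across the contracted index. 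Carrying this out in the first and second slots with $L=\mathrm{h}$ and in the third slot with $L=\mathrm{v}$ moves all three projectors off $T_1$ and onto $T_2$, giving $\langle p_2(T_1),T_2\rangle=\langle T_1,p_2(T_2)\rangle$. The index bookkeeping transferring a self-adjoint operator across the contraction is the one step requiring care; everything else is formal.

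Finally, for property (iii) I would use that every $a\in\GG\times\II$, by the block form \eqref{GxI}, commutes with $\f$ (hence with $\f^2$), fixes $\xi$, and preserves $\eta$, and that the same holds for $a^{-1}$. Substituting the definition of $\lm a$ and pushing $a^{-1}$ through $\f^2$ and past $\xi$ and $\eta$ on both $p_2((\lm a)T)$ and $(\lm a)(p_2 T)$ yields the common expression $\eta(z)\,T(\f^2 a^{-1}x,\f^2 a^{-1}y,\xi)$, establishing $p_2\circ(\lm a)=(\lm a)\circ p_2$. This mirrors the corresponding step for $p_1$ verbatim.
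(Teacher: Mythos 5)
Your proof is correct, and it supplies exactly the routine verification that the paper omits (the lemma is stated there as immediate, in parallel with \lemref{lem-p1}). The rewriting $p_2(T)(x,y,z)=T(x^{\mathrm{h}},y^{\mathrm{h}},z^{\mathrm{v}})$, the identity $(\f^2)^2=-\f^2$, the $g$-self-adjointness of $\mathrm{h}$ and $\mathrm{v}$ transferred across the contractions, and the compatibility of $\GG\times\II$ with $\f$, $\xi$, $\eta$ are precisely the facts needed, and you use them correctly.
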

Then, bearing in mind  \lemref{lem-p2}, we obtain
\[
\begin{split}
&\PP_2=\iim(p_2)=\left\{T\in\PP_1^\bot\ \vert\ p_2(T)=T\right\},\qquad\\[6pt]
&\PP_2^\bot=\ker(p_2)=\left\{T\in\PP_1^\bot\ \vert\ p_2(T)=0\right\}.
\end{split}
\]

Finally, we consider the operator $p_3:\
\PP_2^\bot\rightarrow\PP_2^\bot$ defined by
\[
p_3(T)(x,y,z)=\eta(x)T(\xi,\f^2 y, \f^2 z)+\eta(y)T(\f^2
x,\xi,\f^2 z),\quad T\in\PP_2^\bot
\]
and we get the following
\begin{lem}\label{lem-p3}
The operator $p_3$ has the following properties:
\begin{enumerate}
  \item $p_3\circ p_3 = p_3$;
  \item $\langle p_3(T_1),T_2\rangle=\langle
    T_1,p_3(T_2)\rangle,\quad T_1, T_2 \in\PP_2^\bot$;
  \item $p_3\circ (\lm a)=(\lm a)\circ p_3$.
\end{enumerate}
\end{lem}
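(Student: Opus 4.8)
The plan is to verify the three properties directly, exactly as was done for $p_1$ and $p_2$ in \lemref{lem-p1} and \lemref{lem-p2}. The only facts I would need are the algebraic consequences of \eqref{str1}, namely $\f^2\xi=0$, $\eta\circ\f^2=0$ and $\f^2\circ\f^2=-\f^2$; the $g$-self-adjointness of $\f^2$, i.e. $g(\f^2 x,y)=g(x,\f^2 y)$, which follows at once from \eqref{str2}; and the identity $g^{iq}\eta(e_i)=\xi^q$ coming from $\eta=g(\xi,\cdot)$. No use of the defining membership $T\in\PP_2^\bot$ should even be required, since the three properties are formal consequences of the shape of $p_3$.

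First I would establish idempotency. Writing $S=p_3(T)$, I compute
\[
p_3(S)(x,y,z)=\eta(x)S(\xi,\f^2 y,\f^2 z)+\eta(y)S(\f^2 x,\xi,\f^2 z),
\]
and substitute the definition of $S$ into each term. Here $\eta(\xi)=1$, $\eta(\f^2 y)=0$ and $\f^2\xi=0$ annihilate the cross terms, while the surviving terms carry a factor $\f^2\circ\f^2=-\f^2$ in two slots, whose sign changes cancel; thus $p_3(S)=S$, giving property (i).

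Next, for self-adjointness I would expand $\langle p_3(T_1),T_2\rangle$ with the inner product on $\T$. In each of the two terms the factor $\eta$ contracts against a raised index to produce $\xi$, and the two factors $\f^2$ appearing in the arguments of $T_1$ can be transferred to the matching arguments of $T_2$ because $\f^2$ is $g$-self-adjoint. This converts $\langle p_3(T_1),T_2\rangle$ term by term into $\langle T_1,p_3(T_2)\rangle$, which is property (ii). Finally, for equivariance the decisive remark is that every $a\in\GG\times\II$ with block form \eqref{GxI} fixes $\xi$, preserves $\eta$ (so $\eta\circ a^{-1}=\eta$), and commutes with $\f$ and hence with $\f^2$. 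Inserting $a^{-1}\xi=\xi$, $a^{-1}\f^2=\f^2 a^{-1}$ and $\eta\circ a^{-1}=\eta$ into both $p_3((\lm a)T)$ and $(\lm a)(p_3(T))$ shows that they coincide.

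The computations are routine substitutions; the only place demanding care is the bookkeeping in the idempotency step, where one must track the repeated applications of $\f^2$ and confirm that the sign produced by $\f^2\circ\f^2=-\f^2$ in two arguments cancels.
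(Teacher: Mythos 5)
Your verification is correct and is exactly the routine check the paper intends (the paper states \lemref{lem-p3}, like \lemref{lem-p1} and \lemref{lem-p2}, without writing out the computation). The key points — $\eta(\xi)=1$, $\eta\circ\f^2=0$, $\f^2\xi=0$ and $\f^2\circ\f^2=-\f^2$ for idempotency, the $g$-self-adjointness of $\f^2$ together with $\eta=g(\cdot,\xi)$ for (ii), and the fact that elements of $\GG\times\II$ fix $\xi$, preserve $\eta$ and commute with $\f^2$ for (iii) — are all accurately identified and correctly deployed.
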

By virtue of \lemref{lem-p3}, we have
\[
\begin{split}
&\PP_3=\iim(p_3)=\left\{T\in\PP_2^\bot\ \vert\ p_3(T)=T\right\},\qquad\\[6pt]
&\PP_4=\ker(p_3)=\left\{T\in\PP_2^\bot\ \vert\ p_3(T)=0\right\}.
\end{split}
\]

From \lemref{lem-p1}, \lemref{lem-p2} and \lemref{lem-p3} we have
immediately
\begin{thm}\label{thm-W1234}
The decomposition $ \T=\PP_1\oplus\PP_2\oplus\PP_3\oplus\PP_4 $ is
orthogonal and invariant under the action of $\GG\times\II$. The
subspaces $\PP_i$ $(i=1,2,3,4)$ are determined by
\begin{equation}\label{W1234}
\begin{split}
\PP_1:\quad &T(x,y,z)=-T(\f^2 x, \f^2 y, \f^2 z),\\[6pt]
\PP_2:\quad &T(x,y,z)=\eta(z)T(\f^2 x, \f^2 y, \xi),\\[6pt]
\PP_3:\quad &T(x,y,z)=\eta(x)T(\xi,\f^2 y, \f^2 z)+\eta(y)T(\f^2 x,\xi,\f^2 z),\\[6pt]
\PP_4:\quad &T(x,y,z)=-\eta(z)\left\{\eta(y)T(\f^2 x, \xi,\xi)+\eta(x)T(\xi,\f^2 y,\xi)\right\}\\[6pt]
\end{split}
\end{equation}
for arbitrary vectors $x, y, z \in V$.
\end{thm}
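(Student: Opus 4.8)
The plan is to assemble the three lemmas \lemref{lem-p1}, \lemref{lem-p2} and \lemref{lem-p3} into the full decomposition by iterating a single elementary principle: on an inner-product space, a \emph{self-adjoint idempotent} operator $p$ induces an orthogonal direct sum $\iim(p)\oplus\ker(p)$, in which the image is the fixed-point set $\{T:p(T)=T\}$; moreover, if $p$ commutes with a linear group action, then both summands are invariant. First I would record this principle once: orthogonality follows since for $T\in\iim(p)$ and $T'\in\ker(p)$ one has $\langle T,T'\rangle=\langle p(T),T'\rangle=\langle T,p(T')\rangle=0$ by property (ii); the fixed-point description follows from $p\circ p=p$ (property (i)); and invariance follows from $p\circ(\lm a)=(\lm a)\circ p$ (property (iii)), since $p(T)=T$ gives $p\bigl((\lm a)T\bigr)=(\lm a)T$.

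Applying this to $p_1$ on $\T$ yields the orthogonal, $\GG\times\II$-invariant splitting $\T=\PP_1\oplus\PP_1^\bot$ with $\PP_1=\iim(p_1)$ and $\PP_1^\bot=\ker(p_1)$. Since $p_2$ acts on $\PP_1^\bot$ and again satisfies (i)--(iii) by \lemref{lem-p2}, the same principle gives $\PP_1^\bot=\PP_2\oplus\PP_2^\bot$; and $p_3$ on $\PP_2^\bot$ by \lemref{lem-p3} gives $\PP_2^\bot=\PP_3\oplus\PP_4$. Concatenating the three steps produces $\T=\PP_1\oplus\PP_2\oplus\PP_3\oplus\PP_4$, orthogonal because each splitting is orthogonal inside the preceding complement, and invariant because each summand is invariant. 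This settles the first assertion of the theorem.

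It remains to verify the explicit descriptions \eqref{W1234}. The subspaces $\PP_1,\PP_2,\PP_3$ are images, hence fixed-point sets: $T\in\PP_i$ iff $p_i(T)=T$, which reproduces verbatim the three displayed conditions. I would also confirm consistency, namely that each stated condition already forces membership in the relevant complement. For instance, a $T$ of the $\PP_2$-form $T(x,y,z)=\eta(z)T(\f^2x,\f^2y,\xi)$ lies in $\PP_1^\bot$ because $\eta(\f^2z)=0$ by \eqref{str1}; and a $T$ of the $\PP_3$-form lies in $\PP_2^\bot$ because $\eta(\f^2x)=\eta(\f^2y)=0$ and $\f^2\xi=0$, so that $p_1(T)=p_2(T)=0$.

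The only genuinely computational step, which I expect to be the main obstacle, is the characterization of $\PP_4=\ker(p_3)$, since it is a kernel rather than an image. I would argue componentwise using the horizontal--vertical splitting $x=-\f^2x+\eta(x)\xi$. On $\PP_2^\bot$ the conditions $p_1(T)=0$ and $p_2(T)=0$ annihilate the $(\mathrm h,\mathrm h,\mathrm h)$ and $(\mathrm h,\mathrm h,\mathrm v)$ components, while evaluating $p_3(T)=0$ with first argument $\xi$, then with second argument $\xi$ (using $\f^2\xi=0$ and $\eta(\xi)=1$), forces $T(\xi,\f^2y,\f^2z)=T(\f^2x,\xi,\f^2z)=0$, killing the $(\mathrm v,\mathrm h,\mathrm h)$ and $(\mathrm h,\mathrm v,\mathrm h)$ components; skew-symmetry in the first two slots removes every type with two $\xi$'s there. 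Expanding $T(x,y,z)$ by trilinearity, only the $(\mathrm h,\mathrm v,\mathrm v)$ and $(\mathrm v,\mathrm h,\mathrm v)$ terms survive, and they sum to exactly $-\eta(z)\bigl\{\eta(y)T(\f^2x,\xi,\xi)+\eta(x)T(\xi,\f^2y,\xi)\bigr\}$, the stated form of $\PP_4$. The work here is purely the bookkeeping of the eight slot-types and checking no other survives; everything else reduces to the formal projector argument above.
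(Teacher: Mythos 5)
Your proposal is correct and follows essentially the same route as the paper, which simply asserts that the theorem follows immediately from \lemref{lem-p1}, \lemref{lem-p2} and \lemref{lem-p3} via the standard image/kernel splitting induced by each self-adjoint idempotent commuting with the group action. Your additional bookkeeping for $\PP_4=\ker(p_3)$ via the horizontal--vertical slot-type analysis is sound and merely makes explicit what the paper leaves implicit.
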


\begin{cor}\label{cor-W1234}
The subspaces $\PP_i$ $(i=1,2,3,4)$ are characterized as follows:
\begin{align*}
\PP_1&=\left\{T\in\T\ \vert\ T(x^{\mathrm{v}},y,z)=T(x,y,z^{\mathrm{v}})=0\right\},\\[6pt]
\PP_2&=\left\{T\in\T\ \vert\ T(x^{\mathrm{v}},y,z)=T(x,y,z^{\mathrm{h}})=0\right\},
\end{align*}
\begin{align*}
\PP_3&=\left\{T\in\T\ \vert\ T(x,y,z^{\mathrm{v}})=T(x^{\mathrm{h}},y^{\mathrm{h}},z)=0\right\},\\[6pt]
\PP_4&=\left\{T\in\T\ \vert\ T(x,y,z^{\mathrm{h}})=T(x^{\mathrm{h}},y^{\mathrm{h}},z)=0\right\},
\end{align*}
where $x, y, z \in V$.
\end{cor}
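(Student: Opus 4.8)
The plan is to convert each defining relation of $\PP_i$ listed in \eqref{W1234} into the pair of component-vanishing conditions claimed in the corollary, using essentially one algebraic identity. From \eqref{Xhv} and \eqref{str1} we have $\f^2 x=-x^{\mathrm{h}}$, so every occurrence of $\f^2$ in \eqref{W1234} simply replaces an argument by minus its horizontal part; combined with $x^{\mathrm{v}}=\eta(x)\xi$, $(x^{\mathrm{v}})^{\mathrm{h}}=0$, $\eta(x^{\mathrm{h}})=0$ and $\eta(x^{\mathrm{v}})=\eta(x)$, this is all the structural input required. First I would rewrite the four conditions of \eqref{W1234} in terms of horizontal and vertical parts: the $\PP_1$ relation becomes $T(x,y,z)=T(x^{\mathrm{h}},y^{\mathrm{h}},z^{\mathrm{h}})$, the $\PP_2$ relation becomes $T(x,y,z)=\eta(z)\,T(x^{\mathrm{h}},y^{\mathrm{h}},\xi)$, the $\PP_3$ relation becomes $T(x,y,z)=\eta(x)\,T(\xi,y^{\mathrm{h}},z^{\mathrm{h}})+\eta(y)\,T(x^{\mathrm{h}},\xi,z^{\mathrm{h}})$, and the $\PP_4$ relation becomes $T(x,y,z)=\eta(z)\{\eta(y)\,T(x^{\mathrm{h}},\xi,\xi)+\eta(x)\,T(\xi,y^{\mathrm{h}},\xi)\}$, the signs being absorbed by $\f^2=-\mathrm{h}$.

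For the forward inclusion (each defining relation implies the two vanishing conditions) I would substitute a purely vertical or purely horizontal vector into the rewritten identity. For $\PP_1$, feeding $x^{\mathrm{v}}$ into the first slot gives $T(x^{\mathrm{v}},y,z)=T(0,y^{\mathrm{h}},z^{\mathrm{h}})=0$ since $(x^{\mathrm{v}})^{\mathrm{h}}=0$, while feeding $z^{\mathrm{v}}$ into the last slot gives $T(x,y,z^{\mathrm{v}})=0$. For $\PP_2$ the vanishing of $T(x^{\mathrm{v}},y,z)$ again follows from $(x^{\mathrm{v}})^{\mathrm{h}}=0$, and the vanishing of $T(x,y,z^{\mathrm{h}})$ follows from $\eta(z^{\mathrm{h}})=0$; the $\PP_3$ and $\PP_4$ cases are handled identically, using $\eta(x^{\mathrm{h}})=\eta(y^{\mathrm{h}})=0$ and $\eta(z^{\mathrm{h}})=0$ to kill the respective right-hand sides.

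For the reverse inclusion I would decompose every argument via \eqref{hv} as $x=x^{\mathrm{h}}+x^{\mathrm{v}}$, expand $T$ by multilinearity, and discard each term annihilated by the two hypotheses (recalling that the hypothesis $T(x^{\mathrm{v}},y,z)=0$ also gives $T(x,y^{\mathrm{v}},z)=0$ by skew-symmetry in the first two slots). The one extra remark needed is that any term carrying two vertical arguments in the first two positions vanishes, because $x^{\mathrm{v}},y^{\mathrm{v}}$ are both proportional to $\xi$ and $T(\xi,\xi,\cdot)=0$ by skew-symmetry. After this pruning the surviving terms reassemble precisely into the rewritten defining relation: for $\PP_2$ only $T(x^{\mathrm{h}},y^{\mathrm{h}},z^{\mathrm{v}})=\eta(z)T(x^{\mathrm{h}},y^{\mathrm{h}},\xi)$ remains; for $\PP_3$ only $\eta(x)T(\xi,y^{\mathrm{h}},z^{\mathrm{h}})+\eta(y)T(x^{\mathrm{h}},\xi,z^{\mathrm{h}})$ remains; and for $\PP_4$ only the $\eta(z)\{\eta(y)T(x^{\mathrm{h}},\xi,\xi)+\eta(x)T(\xi,y^{\mathrm{h}},\xi)\}$ part remains.

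The entire argument is routine multilinear algebra once the reduction $\f^2=-\mathrm{h}$ is in place. The only steps requiring genuine care are the $\PP_3$ and $\PP_4$ cases, where one must keep track of which of the several mixed horizontal--vertical expansion terms survive and manage the sign bookkeeping inherited from $\f^2=-\mathrm{h}$; I expect no conceptual obstacle, since skew-symmetry of $T$ and the projector identities dispose of every unwanted term.
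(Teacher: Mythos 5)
Your proof is correct. The paper states this corollary without proof (as an immediate consequence of \thmref{thm-W1234}), and your argument — rewriting the conditions \eqref{W1234} via $\f^2=-\mathrm{h}$, then checking both inclusions by substitution and by the $x=x^{\mathrm{h}}+x^{\mathrm{v}}$ expansion with skew-symmetry killing the $T(\xi,\xi,\cdot)$ terms — is exactly the routine verification the paper leaves to the reader.
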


According to \corref{cor-W1234}, \eqref{W1234} and \eqref{ttt}, we
obtain the following
\begin{cor}\label{cor-t-W1}
The torsion forms of $T$ have the following properties in each of
the
sub\-spaces $\PP_{i}$ $(i=1,2,3,4)$:
\begin{enumerate}
  \item If $T\in\PP_{1}$, then $t\circ \mathrm{v}=t^*\circ
    \mathrm{v}=\hat{t}=0$;
  \item If $T\in\PP_{2}$, then $t=t^*=\hat{t}=0$;
  \item If $T\in\PP_{3}$, then $t\circ \mathrm{h}=t^*\circ
    \mathrm{h}=\hat{t}=0$;
  \item If $T\in\PP_{4}$, then $t=t^*=0$.
\end{enumerate}
\end{cor}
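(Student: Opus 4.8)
The plan is to read off every assertion from the intrinsic characterisation of the four subspaces in \corref{cor-W1234}, combined with the description of the torsion forms in \eqref{ttt}. The crucial observation is that $t$ and $t^*$ are \emph{horizontal} traces: in $t(x)=g^{ij}T(x,e_i,e_j)$ and $t^*(x)=g^{ij}T(x,e_i,\f e_j)$ the basis vectors $e_i$ $(i=1,\dots,2n)$ span $\HH=\ker(\eta)$, so $e_i^{\mathrm h}=e_i$ and $e_i^{\mathrm v}=0$, and moreover $\f e_j\in\HH$ because $\eta\circ\f=0$ by \eqref{str1}. By contrast $\hat t(x)=T(x,\xi,\xi)$ feeds the purely \emph{vertical} vector $\xi$ into both the second and the third slot. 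Thus each torsion form restricts $T$ to arguments of prescribed horizontal/vertical type, and \corref{cor-W1234} says precisely when such a restriction forces $T$ to vanish.

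First I would dispose of $t$ and $t^*$. In $\PP_2$ and $\PP_4$ the defining condition $T(x,y,z^{\mathrm h})=0$ kills $T(x,e_i,e_j)$ and $T(x,e_i,\f e_j)$ outright, since the third slot is horizontal; hence $t=t^*=0$ in both cases. In $\PP_1$ I would precompose with $\mathrm v$: as $x^{\mathrm v}$ is vertical, the condition $T(x^{\mathrm v},y,z)=0$ gives $t(x^{\mathrm v})=t^*(x^{\mathrm v})=0$, i.e. $t\circ\mathrm v=t^*\circ\mathrm v=0$. In $\PP_3$ I would precompose with $\mathrm h$: now both the first slot $x^{\mathrm h}$ and the second slot $e_i$ are horizontal, so $T(x^{\mathrm h},y^{\mathrm h},z)=0$ yields $t\circ\mathrm h=t^*\circ\mathrm h=0$.

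Next I would handle $\hat t(x)=T(x,\xi,\xi)$. In $\PP_1$ and $\PP_3$ the third slot is the vertical vector $\xi$, so $T(x,y,z^{\mathrm v})=0$ gives $\hat t=0$ immediately. For $\PP_2$ the vanishing is not visible from the two listed conditions, so here I would pass to the explicit form \eqref{W1234}, namely $T(x,y,z)=\eta(z)\,T(\f^2x,\f^2y,\xi)$: setting $y=z=\xi$ and using $\f^2\xi=\f(\f\xi)=0$ from \eqref{str1} gives $\hat t(x)=T(\f^2x,0,\xi)=0$. Finally, in $\PP_4$ the corollary deliberately does \emph{not} claim $\hat t=0$; indeed the explicit form of $\PP_4$ in \eqref{W1234} evaluated at $y=z=\xi$ leaves $T(x,\xi,\xi)=T(x^{\mathrm h},\xi,\xi)$, which is generically nonzero, so the statement is sharp.

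Since everything is formal, there is no real obstacle; the single point demanding care is $\hat t=0$ on $\PP_2$, where one must leave the two defining conditions of \corref{cor-W1234} and invoke the explicit expression \eqref{W1234} together with $\f^2\xi=0$. Every other assertion follows by matching the horizontal or vertical type of each traced argument directly against the vanishing conditions of the respective subspace.
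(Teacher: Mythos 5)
Your argument is correct and is exactly the route the paper takes: the corollary is stated there as an immediate consequence of \corref{cor-W1234}, \eqref{W1234} and \eqref{ttt}, and you have filled in the same case-by-case matching of horizontal and vertical slots (including the correct reading that the traces in $t$ and $t^*$ run only over the $2n$ horizontal basis vectors, which is what makes claim (iv) consistent). The one step you single out, $\hat{t}=0$ on $\PP_2$, also follows without invoking the explicit form \eqref{W1234}, since the antisymmetry $T(x,\xi,\xi)=-T(\xi,x,\xi)$ reduces it to the defining condition $T(x^{\mathrm{v}},y,z)=0$; but your argument via $\f^2\xi=0$ is equally valid.
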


Further we continue the decomposition of the subspaces $\PP_i$
$(i=1,2,\allowbreak{}3,4)$ of $\T$.

\vskip 0.2in \addtocounter{subsubsection}{1}

\noindent  {\Large\bf \emph{\thesubsubsection. The subspace $\PP_1$}}

\vskip 0.15in


Since the endomorphism $\f$ induces an almost complex structure on
$\HC$ (which is the orthogonal complement $\{\xi\}^\bot$
of the subspace $\VV$) and the restriction of $g$ on $\HC$ is
a Norden metric (because the almost complex structure causes an
anti-isometry on $\HC$), then the decomposition of $\PP_1$ is made as
the decomposition of the space of the torsion tensors on an almost
complex manifold with Norden metric known from \cite{GaMi87}.

Let us consider the linear operator $L_{1,0}:\ \PP_1\rightarrow
\PP_1$ defined by
\[
L_{1,0}(T)(x,y,z)=-T(\f x,\f y,\f^2 z).
\]
Then, it follows immediately
\begin{lem}\label{lem-L10}
The operator $L_{1,0}$ is an involutive isometry on $\PP_1$ and it
is invariant with respect to the group $\GG\times\II$, i.e.
\begin{enumerate}
  \item $L_{1,0}\circ L_{1,0}=\Id_{\PP_1}$;
  \item $\langle L_{1,0}(T_1),L_{1,0}(T_2)\rangle=\langle T_1,T_2 \rangle$;
  \item $L_{1,0}((\lm a)T)=(\lm a)(L_{1,0}(T))$,
\end{enumerate}
where $T_1,T_2\in\PP_1$, $a\in \GG\times\II$.
\end{lem}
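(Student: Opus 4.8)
The plan is to exploit the characterisation of $\PP_1$ in \corref{cor-W1234}, namely that every $T\in\PP_1$ vanishes as soon as one of its arguments is vertical, so that the whole behaviour of $L_{1,0}$ is governed by the action of $\f$ on the contact distribution $\HC$. First I would record the structural identities to be used repeatedly: from \eqref{str1} one has $\f\xi=0$ and $\f^2=-I+\eta\otimes\xi$, whence a short computation using $(\eta\otimes\xi)^2=\eta\otimes\xi$ gives $\f^4=I-\eta\otimes\xi$; moreover the matrices \eqref{GxI} of the structure group $\GG\times\II$ commute with $\f$ and fix $\xi$ and $\eta$. Before proving the three properties I would check that $L_{1,0}$ really maps $\PP_1$ into itself: if $x$ is vertical then $\f x=0$, and if $z$ is vertical then $\f^2 z=0$, so $L_{1,0}(T)$ again kills vertical arguments, while skew-symmetry in the first two slots is inherited immediately from that of $T$.

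For property (i) I would simply iterate the definition. Substituting twice and cancelling the two overall minus signs yields $L_{1,0}\bigl(L_{1,0}(T)\bigr)(x,y,z)=T(\f^2 x,\f^2 y,\f^4 z)$. Writing $\f^2 x=-x+\eta(x)\xi$, $\f^2 y=-y+\eta(y)\xi$ and $\f^4 z=z-\eta(z)\xi$, and discarding the vertical contributions because $T\in\PP_1$, the two sign changes coming from the first two slots multiply to $+1$ and the third slot returns $z$, so the result is $T(x,y,z)$; hence $L_{1,0}\circ L_{1,0}=\Id_{\PP_1}$. Property (iii) is even more direct: using $((\lm a)T)(x,y,z)=T(a^{-1}x,a^{-1}y,a^{-1}z)$ together with $a^{-1}\f=\f a^{-1}$ and $a^{-1}\f^2=\f^2 a^{-1}$, I would pull $a^{-1}$ through the occurrences of $\f$ and $\f^2$ in the definition of $L_{1,0}$, which immediately gives $L_{1,0}((\lm a)T)=(\lm a)(L_{1,0}(T))$.

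The isometry property (ii) is the step I expect to require the most care. On $\HC$ the metric $g$ is a Norden metric, so $\f$ acts as a $g$-anti-isometry, $g(\f x,\f y)=-g(x,y)$, and $\f^2=-I$; consequently $L_{1,0}(T)(x,y,z)=T(\f x,\f y,z)$ for horizontal arguments. The crux is how a slot precomposed with $\f$ behaves under the contraction defining $\langle\cdot,\cdot\rangle$: since $T_1,T_2\in\PP_1$ annihilate vertical arguments and $\HC\bot\VV$, only horizontal basis vectors contribute, and the anti-isometry relation is equivalent to $\f g^{-1}\f^{\top}=-g^{-1}$ on $\HC$. Each of the two $\f$-transformed slots therefore replaces $g^{ij}$ by $-g^{ij}$, the two signs multiply to $+1$, and the untouched third slot leaves the remaining contraction intact, so $\langle L_{1,0}(T_1),L_{1,0}(T_2)\rangle=\langle T_1,T_2\rangle$. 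Since $\PP_1$ is precisely the torsion space of the almost complex manifold with Norden metric $(\HC,\f|_{\HC},g|_{\HC})$, these facts could also be quoted from \cite{GaMi87}; I would nevertheless carry out the contraction bookkeeping explicitly, as the index manipulation with the indefinite metric is the only genuinely delicate point.
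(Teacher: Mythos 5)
Your verification is correct and is exactly the direct computation the paper treats as immediate (the paper states the lemma with no written proof): properties (i) and (iii) follow from $\f^4=I-\eta\otimes\xi$, the vanishing of $T\in\PP_1$ on vertical arguments, and the commutation of $\GG\times\II$ with $\f$, while (ii) reduces to the anti-isometry $g(\f\cdot,\f\cdot)=-g(\cdot,\cdot)$ on $\HC$ applied to the two $\f$-twisted slots. Your preliminary check that $L_{1,0}$ preserves $\PP_1$ and your careful handling of the contraction over an adapted basis are the only points requiring any care, and you handle both correctly.
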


Therefore, $L_{1,0}$ has two eigenvalues $+1$ and $-1$, and the
corresponding eigen\-spaces
\[
\begin{array}{l}
\PP_1^+=\left\{T\in \PP_1\ \vert\ L_{1,0}(T)= T\right\},\\[6pt]
\PP_1^-=\left\{T\in \PP_1\ \vert\ L_{1,0}(T)=- T\right\}
\end{array}
\]
are invariant orthogonal subspaces of $\PP_1$.

In order to decompose $\PP_1^-$, we consider the linear operator
$L_{1,1}:\ \PP_1^-\rightarrow \PP_1^-$ defin\-ed by
\[
L_{1,1}(T)(x,y,z)=-T(\f x,\f^2 y,\f z).
\]
Let us denote the eigenspaces
\[
\begin{array}{l}
\PP_{1,1}=\left\{T\in \PP_1^-\
\vert\ L_{1,1}(T)=- T\right\},\\[6pt]
\PP_{1,2}=\left\{T\in \PP_1^-\
\vert\ L_{1,1}(T)= T\right\}.
\end{array}
\]
We have
\begin{lem}\label{lem-L11}
The operator $L_{1,1}$ is an involutive isometry on $\PP_1^-$ and it
is invariant with respect to $\GG\times\II$.
\end{lem}

According to the latter lemma, the eigenspaces $\PP_{1,1}$ and
$\PP_{1,2}$ are invariant and orthogonal.

To decompose $\PP_1^+$, we define the linear operator $L_{1,2}:\
\PP_1^+\rightarrow \PP_1^+$ as follows:
\[
\begin{split}
L_{1,2}(T)(x,y,z)=-\dfrac{1}{2}&\left\{T(\f^2 z,\f^2 x,\f^2 y)
+T(\f^2 z,\f x,\f y)\right.\\[6pt]
&\left.\ -T(\f^2 z,\f^2 y,\f^2 x)
-T(\f^2 z,\f y,\f x)\right\}.
\end{split}
\]

\begin{lem}\label{lem-L12}
The operator $L_{1,2}$ is an involutive isometry on $\PP_1^+$ and
it is invariant with respect to $\GG\times\II$.
\end{lem}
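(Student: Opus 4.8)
The plan is to establish, exactly as in \lemref{lem-L10} and \lemref{lem-L11}, the three properties that make $L_{1,2}$ an involutive isometry invariant under $\GG\times\II$: namely $L_{1,2}\circ L_{1,2}=\Id_{\PP_1^+}$, the preservation of $\langle\cdot,\cdot\rangle$, and the commutation $L_{1,2}\circ(\lm a)=(\lm a)\circ L_{1,2}$. The organizing principle throughout is to reduce every computation to the horizontal distribution $\HC$: since every member of $\PP_1$ vanishes whenever one argument is vertical (by \corref{cor-W1234}) and since $\f\xi=\f^2\xi=0$, only the horizontal components of $x,y,z$ contribute, and on $\HC$ the endomorphism $\f$ acts as an almost complex structure with $\f^2=-\Id$ while $g$ restricts to a Norden metric with $g(\f x,\f y)=-g(x,y)$.

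First I would verify well-definedness, i.e. that $L_{1,2}(T)\in\PP_1^+$ whenever $T\in\PP_1^+$. Skew-symmetry in the first two slots is immediate, because the defining formula is already the antisymmetrization in $x$ and $y$ of $T(\f^2 z,\f^2 x,\f^2 y)+T(\f^2 z,\f x,\f y)$. Membership in $\PP_1$ is automatic from $\f\xi=\f^2\xi=0$, which forces $L_{1,2}(T)$ to vanish on vertical arguments, so $L_{1,2}(T)\in\PP_1$ by the characterization in \corref{cor-W1234}. The remaining requirement $L_{1,0}(L_{1,2}(T))=L_{1,2}(T)$ is the first genuine computation, carried out by substituting the definitions, replacing $\f^2$ by $-\Id$ on $\HC$, and invoking the defining relation $T(\f x,\f y,z)=T(x,y,z)$ of $\PP_1^+$.

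Next comes the involutivity $L_{1,2}\circ L_{1,2}=\Id_{\PP_1^+}$, which I expect to be the main obstacle. Expanding $L_{1,2}(L_{1,2}(T))$ produces a sum of roughly sixteen terms, and the bookkeeping is handled by systematically applying $\f^2=-\Id$ on $\HC$, the skew-symmetry $T(x,y,z)=-T(y,x,z)$, and the $\PP_1^+$ identity $T(\f x,\f y,z)=T(x,y,z)$ to collapse the $\f$-twisted terms. The delicate point is matching the cyclic rearrangements produced by the slot-permutations in the definition, so this verification must be done with care rather than waved through. For the isometry property I would exploit that an involution is an isometry precisely when it is self-adjoint for $\langle\cdot,\cdot\rangle$: each elementary operation in $L_{1,2}$ (a permutation of slots or an application of $\f$) is compatible with the pairing via the $\GG\times\II$-invariance of $\langle\cdot,\cdot\rangle$ and the anti-isometry $g(\f\cdot,\f\cdot)=-g(\cdot,\cdot)$ on $\HC$, and then $\langle L_{1,2}T_1,L_{1,2}T_2\rangle=\langle T_1,L_{1,2}^2T_2\rangle=\langle T_1,T_2\rangle$.

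Finally, the invariance $L_{1,2}\circ(\lm a)=(\lm a)\circ L_{1,2}$ is the most conceptual and least computational step: $L_{1,2}$ is built solely from $\f$ together with permutations of arguments, and every $a\in\GG\times\II$ commutes with $\f$ and preserves $\eta$ and $\xi$ by the very block shape of the matrices in \eqref{GxI}; hence the reindexing by $a^{-1}$ passes through the $\f$-operations unchanged, mirroring property (iii) of \lemref{lem-L10} and \lemref{lem-L11}. Combining the four verifications yields the assertion that $L_{1,2}$ is an involutive isometry of $\PP_1^+$ invariant under $\GG\times\II$, so that its eigenspaces for $\pm 1$ provide the further orthogonal, invariant splitting of $\PP_1^+$.
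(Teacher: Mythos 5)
The paper itself offers no proof of this lemma: as with \lemref{lem-L10} and \lemref{lem-L11}, it is stated bare, the implicit justification being the remark that the decomposition of $\PP_1$ ``is made as the decomposition of the space of the torsion tensors on an almost complex manifold with Norden metric known from \cite{GaMi87}''. Your plan to verify the three properties directly, after reducing everything to the horizontal distribution where $\f^2=-\Id$ and $g(\f\cdot,\f\cdot)=-g(\cdot,\cdot)$, is therefore a legitimate alternative route, and three of your four steps hold up: well-definedness (including $L_{1,0}\circ L_{1,2}=L_{1,2}$ on $\PP_1^+$), the involutivity $L_{1,2}^2=\Id_{\PP_1^+}$ (the sixteen-term expansion does collapse to $4T(x,y,z)$ once one uses $T(\f x,\f y,z)=T(x,y,z)$, its consequence $T(\f x,y,z)=-T(x,\f y,z)$, and the skew-symmetry in the first two slots), and the $\GG\times\II$-equivariance (every $a$ in the structure group commutes with $\f$ by the block form \eqref{GxI}, so it commutes with any operator built from $\f$ and slot permutations).

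The genuine gap is in the isometry step. You reduce the isometry to self-adjointness of $L_{1,2}$ and justify self-adjointness by saying that ``each elementary operation in $L_{1,2}$ (a permutation of slots or an application of $\f$) is compatible with the pairing''. The applications of $\f$ are indeed self-adjoint, since $g(\f x,\f y)=-g(x,y)$ on $\HC$ gives $g(\f x,y)=g(x,\f y)$; and the transposition terms $T(\f^2 z,\f^2 y,\f^2 x)$, $T(\f^2 z,\f y,\f x)$ are self-adjoint because a transposition of slots is its own inverse. But the first two terms of $L_{1,2}$ involve the $3$-cycle $(x,y,z)\mapsto(z,x,y)$, whose adjoint with respect to $\langle\cdot,\cdot\rangle$ is the \emph{inverse} cycle $(x,y,z)\mapsto(y,z,x)$, not itself. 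So $L_{1,2}$ is not termwise self-adjoint, and the identity $\langle L_{1,2}T_1,L_{1,2}T_2\rangle=\langle T_1,L_{1,2}^2T_2\rangle$ that you write down is precisely the statement $L_{1,2}^*=L_{1,2}$ that still has to be proved. Computing the formal adjoint one finds, for $T_2\in\PP_1^+$, an expression built from $T_2(y,z,x)$ and $T_2(z,y,x)$, whereas $L_{1,2}T_2$ is built from $T_2(z,x,y)$ and $T_2(z,\f x,\f y)$; reconciling the two requires the cyclic-sum identity $T(x,y,z)+T(y,z,x)+T(z,x,y)=\sx T(x,y,z)$ together with the defining relation of $\PP_1^+$, i.e. a computation of the same order as your involutivity check (equivalently, one may verify directly that the two projections $p_{1,3}$, $p_{1,4}$ of \propref{prop-T1k} have mutually orthogonal images). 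Note also that the $\GG\times\II$-invariance of $\langle\cdot,\cdot\rangle$, which you invoke here, is relevant only to property (iii) and contributes nothing to self-adjointness. The lemma is true, but your argument as written does not establish the isometry.
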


Thus, the eigenspaces
\[
\begin{array}{l}
\PP_{1,3}=\left\{T\in \PP_1^+\ \vert\
L_{1,2}(T)= T\right\},\\[6pt]
\PP_{1,4}=\left\{T\in \PP_1^+\ \vert\
L_{1,2}(T)=- T\right\}.
\end{array}
\]
are invariant and orthogonal.

Using \lemref{lem-L10}, \lemref{lem-L11} and \lemref{lem-L12}, we
get the following
\begin{thm}\label{thm-T1k}
The decomposition $
\PP_1=\PP_{1,1}\oplus\PP_{1,2}\oplus\PP_{1,3}\oplus\PP_{1,4} $ is
orthogonal and invariant with respect to the structure group.
\end{thm}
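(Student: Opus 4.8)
The plan is to assemble the stated decomposition directly from the three preceding lemmas, each of which equips one stage of the splitting with an involutive isometry that commutes with the structure group. The key observation I would record first is a general principle: if $L$ is a linear endomorphism of a subspace $\PP\subseteq\T$ satisfying $L\circ L=\Id$ and $\langle L T_1,L T_2\rangle=\langle T_1,T_2\rangle$, then $L$ is self-adjoint with respect to $\langle\cdot,\cdot\rangle$, since $\langle L T_1,T_2\rangle=\langle L T_1,L(L T_2)\rangle=\langle T_1,L T_2\rangle$. Consequently $L$ diagonalises with eigenvalues $\pm1$, its $(+1)$- and $(-1)$-eigenspaces are mutually orthogonal and span $\PP$, and if moreover $L\circ(\lm a)=(\lm a)\circ L$ for all $a\in\GG\times\II$, then each eigenspace is invariant under the representation $\lm$. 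This single lemma-type fact is what each of \lemref{lem-L10}, \lemref{lem-L11} and \lemref{lem-L12} supplies, and it is the engine of the whole argument.

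First I would apply this to $L_{1,0}$ on $\PP_1$: by \lemref{lem-L10} it is an involutive isometry commuting with $\GG\times\II$, so the $(+1)$- and $(-1)$-eigenspaces $\PP_1^+$ and $\PP_1^-$ give an orthogonal, $\GG\times\II$-invariant splitting $\PP_1=\PP_1^+\oplus\PP_1^-$. Next, restricting to $\PP_1^-$, \lemref{lem-L11} says $L_{1,1}$ is again an involutive isometry invariant under $\GG\times\II$; its $(-1)$- and $(+1)$-eigenspaces are exactly $\PP_{1,1}$ and $\PP_{1,2}$, yielding the orthogonal invariant splitting $\PP_1^-=\PP_{1,1}\oplus\PP_{1,2}$. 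Symmetrically, on $\PP_1^+$ the operator $L_{1,2}$ of \lemref{lem-L12} produces $\PP_1^+=\PP_{1,3}\oplus\PP_{1,4}$, again orthogonal and invariant. Substituting the two inner splittings into the outer one gives $\PP_1=\PP_{1,1}\oplus\PP_{1,2}\oplus\PP_{1,3}\oplus\PP_{1,4}$.

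It remains to argue that the four summands are pairwise orthogonal as a single family, not merely two-by-two within each branch. Here I would note that $\PP_{1,1},\PP_{1,2}\subseteq\PP_1^-$ while $\PP_{1,3},\PP_{1,4}\subseteq\PP_1^+$, and $\PP_1^+\perp\PP_1^-$ from the first stage, so any summand from the $\PP_1^-$-branch is automatically orthogonal to any summand from the $\PP_1^+$-branch; the orthogonality of $\PP_{1,1}$ from $\PP_{1,2}$ and of $\PP_{1,3}$ from $\PP_{1,4}$ is internal to each branch and already established. Invariance of every summand under $\GG\times\II$ is inherited at each stage from the commutation relation in the respective lemma. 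The substantive content, and hence the only real obstacle, lies entirely inside \lemref{lem-L10}--\lemref{lem-L12}: verifying, via the defining formulas for $L_{1,0}$, $L_{1,1}$, $L_{1,2}$ together with the almost-contact relations \eqref{str1} and the B-metric identity \eqref{str2}, that each operator is truly involutive, isometric, and $\lm$-equivariant. For the theorem statement itself, given those lemmas, the proof is the purely formal bookkeeping just outlined, and I expect no difficulty beyond keeping the nesting of eigenspaces straight.
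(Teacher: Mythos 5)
Your proposal is correct and follows exactly the route the paper takes: the paper derives \thmref{thm-T1k} directly from \lemref{lem-L10}, \lemref{lem-L11} and \lemref{lem-L12}, and your argument merely makes explicit the standard fact that an involutive isometry commuting with the group action yields an orthogonal, invariant $(\pm1)$-eigenspace splitting at each stage. The extra care you take about cross-branch orthogonality (via $\PP_1^+\perp\PP_1^-$) is a correct and welcome elaboration of what the paper leaves implicit.
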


Bearing in mind the definition of the subspaces $\PP_{1,i}$
$(i=1,2,3,4)$, we obtain
\begin{prop}\label{prop-T1k}
The subspaces $\PP_{1,i}$ $(i=1,2,3,4)$ of $\PP_1$ are determined
by:
\begin{align*}
\PP_{1,1}:\quad
    &T(\xi,y,z)=T(x,y,\xi)=0,\quad \\[6pt]
    &T(x,y,z)=-T(\f x,\f y,z)=-T(x,\f y,\f z);\\[6pt]
%
\PP_{1,2}:\quad &T(\xi,y,z)=T(x,y,\xi)=0,\quad \\[6pt]
            &T(x,y,z)=-T(\f x,\f y,z)=T(\f x,y,\f z);\\[6pt]
%
\PP_{1,3}:\quad &T(\xi,y,z)=T(x,y,\xi)=0,\quad \\[6pt]
            &T(x,y,z)-T(\f x,\f y,z)=\sx  T(x,y,z)=0;\\[6pt]
\PP_{1,4}:\quad &T(\xi,y,z)=T(x,y,\xi)=0,\quad \\[6pt]
            &T(x,y,z)-T(\f x,\f y,z)=\sx T(\f x,y,z)=0.
\end{align*}
\end{prop}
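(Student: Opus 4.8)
The plan is to show that each eigenvalue condition coming from the operators $L_{1,0}$, $L_{1,1}$, $L_{1,2}$ translates, on the horizontal distribution, into the explicit conditions listed. First I would record the consequences of $T\in\PP_1$. By \eqref{W1234}, $T(x,y,z)=-T(\f^2 x,\f^2 y,\f^2 z)$; since $\f\xi=0$ gives $\f^2\xi=0$, substituting $x=\xi$ and $z=\xi$ yields $T(\xi,y,z)=0$ and $T(x,y,\xi)=0$ (and, by skew-symmetry in the first two slots, $T(x,\xi,z)=0$). This is the first line common to all four cases, and it says precisely that $T$ is supported on $\HC=\ker\eta$. On $\HC$ the endomorphism $\f$ restricts to an almost complex structure with $\f^2=-\I$ and $g|_{\HC}$ is a Norden metric, so $\PP_1$ is canonically the torsion space of the almost Norden vector space $(\HC,\f,g)$; the target conditions are exactly those defining the Ganchev--Mihova classes $\T_1,\dots,\T_4$ from \cite{GaMi87} with $J$ replaced by $\f$, and the plan is simply to check that $L_{1,0},L_{1,1},L_{1,2}$ cut out these classes.

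Next I would evaluate the operators on horizontal arguments, where $\f^2 w=-w$. Then $L_{1,0}(T)(x,y,z)=-T(\f x,\f y,\f^2 z)=T(\f x,\f y,z)$, so $\PP_1^{+}$ and $\PP_1^{-}$ are characterised by $T(\f x,\f y,z)=\pm T(x,y,z)$; this gives the second-line condition $T(x,y,z)=-T(\f x,\f y,z)$ for $\PP_{1,1},\PP_{1,2}$ and $T(x,y,z)-T(\f x,\f y,z)=0$ for $\PP_{1,3},\PP_{1,4}$. Similarly $L_{1,1}(T)(x,y,z)=-T(\f x,\f^2 y,\f z)=T(\f x,y,\f z)$ on $\PP_1^{-}$. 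Replacing $x$ by $\f x$ in the defining relation of $\PP_1^{-}$ and using $\f^2=-\I$ produces the auxiliary identity $T(\f x,y,z)=T(x,\f y,z)$, whence $T(\f x,y,\f z)=T(x,\f y,\f z)$. Therefore the eigenvalue $-1$ of $L_{1,1}$ gives $T(x,y,z)=-T(x,\f y,\f z)$ (this is $\PP_{1,1}$) and the eigenvalue $+1$ gives $T(x,y,z)=T(\f x,y,\f z)$ (this is $\PP_{1,2}$), matching \propref{prop-T1k}.

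Finally, for $\PP_1^{+}$, the analogous substitution yields $T(\f x,y,z)=-T(x,\f y,z)$, and I would expand $L_{1,2}(T)$ on horizontal arguments to obtain $2L_{1,2}(T)(x,y,z)=T(z,x,y)+T(z,\f x,\f y)-T(z,y,x)-T(z,\f y,\f x)$. The goal is to show that $L_{1,2}(T)=T$ is equivalent to $\sx T(x,y,z)=0$ and $L_{1,2}(T)=-T$ to $\sx T(\f x,y,z)=0$; concretely I would compute the projectors $\tfrac12\bigl(T\pm L_{1,2}(T)\bigr)$ and identify them with the explicit components $T_3$ and $T_4$ of \eqref{Ti}, which differ exactly by the sign of the cyclic combination $T(y,z,x)+T(z,x,y)$. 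I expect this last step to be the main obstacle: unlike $L_{1,0}$ and $L_{1,1}$, the operator $L_{1,2}$ entangles all three slots, so the cyclic-sum conditions surface only after repeated use of the skew-symmetry $T(x,y,z)=-T(y,x,z)$ together with the two $\PP_1^{+}$ identities $T(\f x,\f y,z)=T(x,y,z)$ and $T(\f x,y,z)=-T(x,\f y,z)$. Once this equivalence is verified, together with the orthogonality and invariance guaranteed by \lemref{lem-L10}, \lemref{lem-L11}, \lemref{lem-L12} and \thmref{thm-T1k}, the characterisations of all four subspaces in \propref{prop-T1k} follow.
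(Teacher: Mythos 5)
Your overall strategy is the right one, and it supplies the verification that the paper leaves implicit: \propref{prop-T1k} is stated there with no written proof, as an immediate consequence of the eigenspace definitions, so restricting to $\HC$ (where $\f^2=-\I$) and evaluating $L_{1,0}$, $L_{1,1}$, $L_{1,2}$ is exactly what has to be done. Your derivation of the common conditions $T(\xi,y,z)=T(x,y,\xi)=0$ from the $\PP_1$-relation, and your treatment of $L_{1,0}$ and $L_{1,1}$ --- including the auxiliary identities $T(\f x,y,z)=T(x,\f y,z)$ on $\PP_1^-$ and $T(\f x,y,z)=-T(x,\f y,z)$ on $\PP_1^+$ --- are correct and complete for $\PP_{1,1}$ and $\PP_{1,2}$.

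The step you flag as the main obstacle, however, does not come out with the signs you announce. Your expansion $2L_{1,2}(T)(x,y,z)=T(z,x,y)+T(z,\f x,\f y)-T(z,y,x)-T(z,\f y,\f x)$ on horizontal arguments is correct; now push it one line further. For any tensor skew-symmetric in its first two slots, subtracting the cyclic identities for the triples $(x,y,z)$ and $(y,x,z)$ shows that $\sx T(x,y,z)=0$ implies $T(z,x,y)-T(z,y,x)=-T(x,y,z)$; applying the same identity to the pair $(\f x,\f y)$ and using the $\PP_1^+$ relation $T(\f x,\f y,z)=T(x,y,z)$ gives $2L_{1,2}(T)=-2T$. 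So on $\PP_1^+$ the condition $\sx T(x,y,z)=0$ is equivalent to $L_{1,2}(T)=-T$, and dually $\sx T(\f x,y,z)=0$ is equivalent to $L_{1,2}(T)=T$ --- the opposite of the equivalence you set out to prove. This is not a defect of your method but a sign inconsistency in the source: with $L_{1,2}$ as displayed (leading coefficient $-\tfrac12$), the eigenvalue labels in the definitions of $\PP_{1,3}$ and $\PP_{1,4}$ disagree with \propref{prop-T1k}, with the projectors of \propref{prop-p_1i} (which on $\PP_1^+$ reduce to $p_{1,3}=\tfrac12\bigl(T-L_{1,2}(T)\bigr)$ and $p_{1,4}=\tfrac12\bigl(T+L_{1,2}(T)\bigr)$, i.e.\ onto the $\mp1$ eigenspaces respectively), and with the subspaces $\T_3$, $\T_4$ of \S3 that they are meant to reproduce. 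You must therefore either reverse the sign in front of $L_{1,2}$ or interchange the two eigenvalue labels before your final step can close; with that single correction the rest of your argument goes through.
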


Using \corref{cor-t-W1} (i), \propref{prop-T1k} and \eqref{ttt}, we
obtain
\begin{cor}\label{cor-t-T1i}
The torsion forms $t$ and $t^*$ of $T$ have the following
properties in the subspaces $\PP_{1,i}$ $(i=1,2,3,4)$:
\begin{enumerate}
  \item If $T\in\PP_{1,1}$, then $t=-t^*\circ\f$, $t\circ\f=t^*$;
  \item If $T\in\PP_{1,2}$, then $t=t^*=0$;
  \item If $T\in\PP_{1,3}$, then $t=t^*\circ\f$, $t\circ\f=-t^*$;
  \item If $T\in\PP_{1,4}$, then $t=t^*=0$.
\end{enumerate}
\end{cor}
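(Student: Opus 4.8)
The plan is to reduce everything to traces over the horizontal distribution $\HH$ and then to push the endomorphism $\f$ through these traces. Since each $\PP_{1,i}$ is a subspace of $\PP_1$, \corref{cor-t-W1}~(i) gives $t\circ\mathrm{v}=t^*\circ\mathrm{v}=\hat t=0$, so $t$ and $t^*$ vanish on $\xi$ and are determined by their restrictions to $\HH$; moreover, by \corref{cor-W1234} together with the skew-symmetry in the first two arguments, any $T\in\PP_1$ vanishes as soon as one of its arguments is vertical. Hence in the defining traces \eqref{ttt} the basis vector $\xi$ contributes nothing, the sums may be taken over a purely horizontal basis, and there $\f^2=-\Id$ by \eqref{str2}.

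First I would record two elementary trace identities on $\HH$, valid for every $T\in\PP_1$. Because the associated metric $\tg$ is symmetric, the bilinear form $g(\cdot,\f\cdot)$ is symmetric on $\HH$, so $\f$ may be transferred between the two contracted slots of any bilinear form: this yields $t^*(a)=g^{ij}T(a,e_i,\f e_j)=g^{ij}T(a,\f e_i,e_j)$ and, more generally, $g^{ij}C(\f e_i,e_j)=g^{ij}C(e_i,\f e_j)$. Combined with $g(\f x,\f y)=-g(x,y)$ on $\HH$, contracting two $\f$-transformed slots reproduces minus the plain trace, and contracting a skew form against the symmetric $g^{ij}$ gives zero. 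These are the only structural facts needed; the remainder is substitution of the defining relations from \propref{prop-T1k}.

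For $\PP_{1,1}$ I would use $T(x,y,z)=-T(\f x,\f y,z)$ to compute $T(\f a,e_i,e_j)=T(a,\f e_i,e_j)$ on $\HH$, whence $t(\f a)=t^*(a)$; since $\f^2=-\Id$ on $\HH$ and $t^*(\xi)=0$, this is equivalent to $t=-t^*\circ\f$. For $\PP_{1,3}$ the same computation with the invariance $T(x,y,z)=T(\f x,\f y,z)$ gives instead $t(\f a)=-t^*(a)$, i.e. $t=t^*\circ\f$. For $\PP_{1,2}$ I would combine $T(x,y,z)=-T(\f x,\f y,z)$ (which again yields $t(\f a)=t^*(a)$) with the second relation $T(x,y,z)=T(\f x,y,\f z)$ (which yields $t(a)=t^*(\f a)$); substituting one into the other and using $\f^2=-\Id$ forces $t=t^*=0$.

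The case $\PP_{1,4}$ is the one I expect to require the most care, and it is the main obstacle. Here the invariance $T(x,y,z)=T(\f x,\f y,z)$ again gives $t(\f a)=-t^*(a)$, but an independent relation is needed to close the argument. I would obtain it by tracing the cyclic condition $\sx T(\f x,y,z)=0$ over its second and third slots: the first cyclic term produces $t(\f a)$, the second vanishes after moving $\f$ across the contracted slots and invoking skew-symmetry, and the third reduces, again by the transfer identity, to $-t^*(a)$. This gives $t(\f a)=t^*(a)$, which together with $t(\f a)=-t^*(a)$ forces $t=t^*=0$. The delicate point throughout is the bookkeeping of which slots are contracted after each application of skew-symmetry and of the $\f$-transfer identity, and this is sharpest in the last cyclic trace, where all three arguments get permuted.
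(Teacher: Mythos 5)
Your argument is correct and is precisely the computation the paper leaves implicit when it says the corollary follows from \corref{cor-t-W1}~(i), \propref{prop-T1k} and \eqref{ttt}: every step checks out, including the key transfer identity $g^{ij}C(\f e_i,e_j)=g^{ij}C(e_i,\f e_j)$ coming from the symmetry of $\tg$, and the tracing of the cyclic condition $\sx T(\f x,y,z)=0$ in the $\PP_{1,4}$ case, which correctly yields $t\circ\f=t^*$ against $t\circ\f=-t^*$ and hence $t=t^*=0$. There is nothing to add; this is the intended proof written out in full.
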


Let us remark that each of the subspaces $\PP_{1,1}$ and $\PP_{1,3}$
can be additionally decomposed to a couple of subspaces --- one of
zero traces $(t,\ t^*)$ and one of non-zero traces $(t,\ t^*)$,
i.e.
\begin{align}\label{W11W13}
\PP_{1,1}=\PP_{1,1,1}\oplus \PP_{1,1,2}, \qquad
\PP_{1,3}=\PP_{1,3,1}\oplus \PP_{1,3,2},
\end{align}
where
\begin{align*}\label{W111W112}
&\PP_{1,1,1}=\left\{T\in \PP_{1,1}\ \vert\ t\neq 0\right\}, \qquad
\PP_{1,3,1}=\left\{T\in \PP_{1,3}\ \vert\ t\neq 0\right\},\\[6pt]
&\PP_{1,1,2}=\left\{T\in \PP_{1,1}\ \vert\ t=0\right\},\qquad
\PP_{1,3,2}=\left\{T\in \PP_{1,3}\ \vert\ t=0\right\}.
\end{align*}

\begin{prop}\label{prop-p_1i}
Let $T\in\T$ and $p_{1,i}$ $(i=1,2,3,4)$ be the projection
operators of $\T$ in $\PP_{1,i}$, generated by the decomposition
above. Then we have
\begin{align*}
p_{1,1}(T)(x,y,z)&=-\dfrac{1}{4}\bigl\{T(\f^2 x,\f^2 y,\f^2 z)-T(\f x,\f y,\f^2 z) \\[6pt]
&\phantom{=-\dfrac{1}{4}\ }
- T(\f x,\f^2 y,\f z) - T(\f^2 x,\f y,\f z)\bigr\};       \\[6pt]
p_{1,2}(T)(x,y,z)&=-\dfrac{1}{4}\bigl\{T(\f^2 x,\f^2 y,\f^2 z)-T(\f x,\f y,\f^2 z) \\[6pt]
&\phantom{=-\dfrac{1}{4}\ }
+ T(\f x,\f^2 y,\f z) + T(\f^2 x,\f y,\f z)\bigr\};       \\[6pt]
p_{1,3}(T)(x,y,z)&=-\dfrac{1}{4} \left\{T(\f^2 x,\f^2 y,\f^2
z)+T(\f x,\f y,\f^2 z) \right\}\\[6pt]
&\phantom{=\,}
+\dfrac{1}{8} \bigl\{T(\f^2 z,\f^2 x,\f^2 y) + T(\f^2 z,\f x,\f y)
 \\[6pt]
&\phantom{=-\dfrac{1}{4}\ }
+T(\f z,\f x,\f^2 y)- T(\f z,\f^2 x,\f y)
 \\[6pt]
&\phantom{=-\dfrac{1}{4}\ }
-T(\f^2 z,\f^2 y,\f^2 x) - T(\f^2 z,\f y,\f x)
 \\[6pt]
&\phantom{=-\dfrac{1}{4}\ }
-T(\f z,\f y,\f^2 x)+ T(\f z,\f^2 y,\f x)\bigr\},
\\[6pt]
%
p_{1,4}(T)(x,y,z)&=-\dfrac{1}{4} \left\{T(\f^2 x,\f^2 y,\f^2
z)+T(\f x,\f y,\f^2 z) \right\}\\[6pt]
&\phantom{=\,}
-\dfrac{1}{8} \bigl\{T(\f^2 z,\f^2 x,\f^2 y) + T(\f^2 z,\f x,\f y)
 \\[6pt]
&\phantom{=-\dfrac{1}{4}\ }
+T(\f z,\f x,\f^2 y)- T(\f z,\f^2 x,\f y)
 \\[6pt]
&\phantom{=-\dfrac{1}{4}\ }
-T(\f^2 z,\f^2 y,\f^2 x) - T(\f^2 z,\f y,\f x)
 \\[6pt]
&\phantom{=-\dfrac{1}{4}\ }
-T(\f z,\f y,\f^2 x)+ T(\f z,\f^2 y,\f x)\bigr\}.
\end{align*}
\end{prop}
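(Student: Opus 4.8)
The plan is to exploit the fact that the subspaces $\PP_{1,i}$ were built up by a nested chain of eigenspace splittings of the involutive isometries $L_{1,0}$, $L_{1,1}$, $L_{1,2}$. Consequently each projector $p_{1,i}\colon\T\to\PP_{1,i}$ is nothing but the composite of the corresponding elementary eigenprojectors, preceded by the projector $p_1$ onto $\PP_1$. Since each of $L_{1,0}$, $L_{1,1}$, $L_{1,2}$ is an involutive isometry commuting with the representation of $\GG\times\II$ (\lemref{lem-L10}, \lemref{lem-L11}, \lemref{lem-L12}), the orthogonal projectors onto its $(\pm1)$-eigenspaces are exactly $\frac12(\Id\pm L)$; this is the only structural input needed.

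Reading off the eigenvalue conventions that fix $\PP_{1,1},\PP_{1,2}\subset\PP_1^-$ and $\PP_{1,3},\PP_{1,4}\subset\PP_1^+$, I would therefore write
\begin{equation*}
\begin{split}
p_{1,1}&=\tfrac12(\Id-L_{1,1})\circ\tfrac12(\Id-L_{1,0})\circ p_1,\\[4pt]
p_{1,2}&=\tfrac12(\Id+L_{1,1})\circ\tfrac12(\Id-L_{1,0})\circ p_1,\\[4pt]
p_{1,3}&=\tfrac12(\Id+L_{1,2})\circ\tfrac12(\Id+L_{1,0})\circ p_1,\\[4pt]
p_{1,4}&=\tfrac12(\Id-L_{1,2})\circ\tfrac12(\Id+L_{1,0})\circ p_1.
\end{split}
\end{equation*}

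To pass from these composites to the closed expressions in the statement, I would substitute the defining formulas $p_1(T)(x,y,z)=-T(\f^2 x,\f^2 y,\f^2 z)$, $L_{1,0}(T)(x,y,z)=-T(\f x,\f y,\f^2 z)$, $L_{1,1}(T)(x,y,z)=-T(\f x,\f^2 y,\f z)$, and the six-term formula for $L_{1,2}$, and then collapse the accumulated powers of $\f$ by means of the universal identities $\f^3=-\f$ and $\f^4=-\f^2$. These hold on all of $V$ because $\f\xi=0$ together with $\f^2=-\Id+\eta\otimes\xi$ yields $\f^3x=-\f x$ and $\f^4x=-\f^2 x$. Combined with the trilinearity of $T$, which turns each sign extracted from an argument into a global factor, every composite reduces to a fixed combination of the evaluations $T(\f^{a}x,\f^{b}y,\f^{c}z)$ with $a,b,c\in\{1,2\}$. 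Carrying this out for $p_{1,1}$ gives precisely $-\frac14\{T(\f^2 x,\f^2 y,\f^2 z)-T(\f x,\f y,\f^2 z)-T(\f x,\f^2 y,\f z)-T(\f^2 x,\f y,\f z)\}$, as stated, and $p_{1,2}$ is obtained by the single sign flip in the $L_{1,1}$-factor.

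The main obstacle is purely computational and confined to the last two cases: because $L_{1,2}$ is the six-term alternating operator, expanding $\frac12(\Id\pm L_{1,2})$ against the four-term output of $\frac12(\Id+L_{1,0})\circ p_1$ produces a long sum whose cyclic dependence on $x,y,z$ and whose $-\frac14$ and $\pm\frac18$ coefficients must be tracked with care. No conceptual issue arises; one only needs to keep the antisymmetry $T(x,y,z)=-T(y,x,z)$ and the $\f$-power reductions consistent to recover the displayed blocks for $p_{1,3}$ and $p_{1,4}$ verbatim.
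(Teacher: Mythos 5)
Your proposal is correct and follows essentially the same route as the paper: the paper likewise writes $p_{1,1}(T)=\frac14\{T-L_{1,0}(T)-L_{1,1}(T)+L_{1,1}\circ L_{1,0}(T)\}$ on $\PP_1$ (i.e.\ the composite of the $\frac12(\Id\pm L)$ eigenprojectors applied after $p_1$) and then expands using the defining formulas of $p_1$, $L_{1,0}$, $L_{1,1}$, $L_{1,2}$ together with the reductions $\f^3=-\f$, $\f^4=-\f^2$. Your eigenvalue conventions for the four subspaces agree with the paper's, so the remaining work is exactly the bookkeeping you describe.
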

\begin{proof}
Let us show the calculations about $p_{1,1}$ for example, using
\cite{GaMi87}. \lemref{lem-L10} implies that the tensor
$\dfrac{1}{2}\left\{T-L_{1,0}(T)\right\}$ is the projection of
$T\in\PP_1$ in $\PP_1^-=\PP_{1,1}\oplus\PP_{1,2}$. Using
\lemref{lem-L11}, we find the expression of $p_{1,1}$ in terms of
the operators $L_{1,0}$ and $L_{1,1}$ for $T\in\PP_1$, namely
\[
p_{1,1}(T)=\dfrac{1}{4}\left\{T-L_{1,0}(T)-L_{1,1}(T)+L_{1,1}\circ
L_{1,0}(T)\right\},
\]
which  implies the stated expression of $p_{1,1}$, taking into
account that $T\in \PP_1$ is the image of $T\in\T$ by $p_1$. In a
similar way we prove the expressions for the other projectors
under consideration.

We verify the following equalities for $i=1,2,3,4$
\[
p_{1,i}\circ p_{1,i}=p_{1,i},\qquad
\sum_i p_{1,i}=\Id_{\PP_{1}}.
\]\vskip-2em
\end{proof}

\vskip 0.2in \addtocounter{subsubsection}{1}

\noindent  {\Large\bf \emph{\thesubsubsection. The subspace $\PP_2$}}

\vskip 0.15in

Following the demonstrated
procedure for $\PP_1$, we continue the decomposition of the other
main subspaces of $\T$ with respect to the almost contact B-metric
structure.

\begin{lem}
The operator $L_{2,0}$, defined by \[
L_{2,0}(T)(x,y,z)=\eta(z)T(\f x,\f y,\xi), \] is an involutive
isometry on $\PP_2$ and invariant with respect to $\GG\times\II$.
\end{lem}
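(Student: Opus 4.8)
The plan is to verify the three defining features of an involutive isometry invariant under $\GG\times\II$ in the standard order used for $L_{1,0}$ in \lemref{lem-L10}, after first checking that $L_{2,0}$ does map $\PP_2$ into itself. The three things to establish are $L_{2,0}\circ L_{2,0}=\Id_{\PP_2}$, the identity $\langle L_{2,0}(T_1),L_{2,0}(T_2)\rangle=\langle T_1,T_2\rangle$, and $L_{2,0}\circ(\lm a)=(\lm a)\circ L_{2,0}$. The single algebraic fact driving the first two is $\f^3=-\f$, which follows from $\f^2=-I+\eta\otimes\xi$ and $\f\xi=0$ in \eqref{str1}.

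First I would check that $L_{2,0}(T)\in\PP_2$ whenever $T\in\PP_2$. Evaluating $L_{2,0}(T)(\f^2 x,\f^2 y,\xi)=T(\f^3 x,\f^3 y,\xi)=T(\f x,\f y,\xi)$ by $\f^3=-\f$ and bilinearity, so $\eta(z)L_{2,0}(T)(\f^2 x,\f^2 y,\xi)=\eta(z)T(\f x,\f y,\xi)=L_{2,0}(T)(x,y,z)$, which is exactly the characterisation of $\PP_2$ in \eqref{W1234}. For involutivity I would compute $L_{2,0}\bigl(L_{2,0}(T)\bigr)(x,y,z)=\eta(z)\,L_{2,0}(T)(\f x,\f y,\xi)=\eta(z)\,T(\f^2 x,\f^2 y,\xi)$, and since $T\in\PP_2$ the right-hand side is precisely $T(x,y,z)$; hence $L_{2,0}\circ L_{2,0}=\Id_{\PP_2}$.

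The isometry is the step I expect to be the main obstacle, because it is where the signs have to conspire correctly. I would work in an adapted basis $\{e_\alpha;\xi\}$ with $e_\alpha\in\HH$ ($\alpha=1,\dots,2n$), so that the metric is block diagonal by $\HH\oplus\VV$ from \eqref{HHVV}. For $T\in\PP_2$ the only nonzero components are $T(e_\alpha,e_\beta,\xi)$, and because $g^{ks}$ forces the third slots of both factors to be $\xi$ with $g(\xi,\xi)=1$, the inner product collapses to $\langle T_1,T_2\rangle=g^{\alpha\gamma}g^{\beta\delta}T_1(e_\alpha,e_\beta,\xi)T_2(e_\gamma,e_\delta,\xi)$ summed over horizontal indices. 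Applying $L_{2,0}$ replaces each entry by $T(\f e_\alpha,\f e_\beta,\xi)$, so the key contraction is $g^{\alpha\gamma}\f^\mu_\alpha\f^\rho_\gamma$. The decisive point is that $\f$ restricted to $\HH$ is an anti-isometry, $g(\f x,\f y)=-g(x,y)$ from \eqref{str2}, which in matrix form gives $\f^\top g\f=-g$ and hence $\f g^{-1}\f^\top=-g^{-1}$ on $\HH$; thus $g^{\alpha\gamma}\f^\mu_\alpha\f^\rho_\gamma=-g^{\mu\rho}$ and likewise for the $\beta\delta$ contraction. The two minus signs multiply to $+1$, so $\langle L_{2,0}(T_1),L_{2,0}(T_2)\rangle=\langle T_1,T_2\rangle$.

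Finally, invariance under $\GG\times\II$ follows because every $a\in\GG\times\II$, having the block form \eqref{GxI}, preserves the almost contact structure: $a\f=\f a$, $a\xi=\xi$, and $\eta\circ a=\eta$. Then $L_{2,0}\bigl((\lm a)T\bigr)(x,y,z)=\eta(z)\,T(a^{-1}\f x,a^{-1}\f y,a^{-1}\xi)=\eta(z)\,T(\f a^{-1}x,\f a^{-1}y,\xi)$, while $(\lm a)\bigl(L_{2,0}(T)\bigr)(x,y,z)=\eta(a^{-1}z)\,T(\f a^{-1}x,\f a^{-1}y,\xi)$; these agree since $\eta\circ a^{-1}=\eta$. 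Collecting the three verifications completes the proof, and the structure of the argument mirrors exactly the treatment of $L_{1,0}$, $L_{1,1}$ and $L_{1,2}$ in the $\PP_1$ decomposition.
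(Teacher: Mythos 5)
Your proof is correct, and it supplies in full the verification that the paper leaves implicit (the lemma is stated there without proof, in the same "follows immediately" spirit as the analogous statements for $L_{1,0}$, $L_{1,1}$ and $L_{1,2}$). All four steps check out: the preservation of $\PP_2$ and the involutivity both reduce to $\f^3=-\f$ together with the defining identity $T(x,y,z)=\eta(z)T(\f^2x,\f^2y,\xi)$ of $\PP_2$; the isometry step correctly isolates the only surviving components $T(e_\alpha,e_\beta,\xi)$ and uses the anti-isometry $\f^\top g\f=-g$ on $\HH$ twice, so the two minus signs cancel; and the equivariance uses exactly the right properties $a\f=\f a$, $a\xi=\xi$, $\eta\circ a^{-1}=\eta$ of the block form \eqref{GxI}.
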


Hence, the corresponding eigenspaces
\[
\begin{array}{l}
\PP_{2,1}=\left\{T\in
\PP_2\ \vert\ L_{2,0}(T)=- T\right\},\\[6pt]
\PP_{2,2}=\left\{T\in
\PP_2\ \vert\ L_{2,0}(T)= T\right\}.
\end{array}
\]
 are
invariant and orthogonal. Therefore, we have
\begin{thm}\label{thm-T2k}
The decomposition $ \PP_2=\PP_{2,1}\oplus\PP_{2,2} $ is orthogonal
and invariant with respect to the structure group.
\end{thm}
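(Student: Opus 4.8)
The plan is to obtain the theorem as a formal consequence of the properties of the involutive isometry $L_{2,0}$ recorded in the preceding lemma, mirroring the splitting of $\PP_1$ into the $L_{1,0}$-eigenspaces $\PP_1^{\pm}$ (cf.\ \lemref{lem-L10}). Since $L_{2,0}\circ L_{2,0}=\Id_{\PP_2}$, the two operators $\tfrac12(\Id\pm L_{2,0})$ are complementary projectors on $\PP_2$; for any $T\in\PP_2$ the decomposition $T=\tfrac12\bigl(T+L_{2,0}(T)\bigr)+\tfrac12\bigl(T-L_{2,0}(T)\bigr)$ writes $T$ as the sum of a tensor fixed by $L_{2,0}$, hence lying in $\PP_{2,2}$, and a tensor negated by $L_{2,0}$, hence lying in $\PP_{2,1}$. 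As no nonzero tensor can satisfy both $L_{2,0}(T)=T$ and $L_{2,0}(T)=-T$, the intersection $\PP_{2,1}\cap\PP_{2,2}$ is trivial and the direct sum $\PP_2=\PP_{2,1}\oplus\PP_{2,2}$ follows.

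Next I would read off orthogonality from the isometry property $\langle L_{2,0}(T_1),L_{2,0}(T_2)\rangle=\langle T_1,T_2\rangle$: for $T_1\in\PP_{2,1}$ and $T_2\in\PP_{2,2}$ this yields $\langle T_1,T_2\rangle=\langle L_{2,0}(T_1),L_{2,0}(T_2)\rangle=\langle -T_1,T_2\rangle=-\langle T_1,T_2\rangle$, so $\langle T_1,T_2\rangle=0$. Invariance under $\GG\times\II$ is then immediate from the equivariance $L_{2,0}\circ(\lm a)=(\lm a)\circ L_{2,0}$: if $L_{2,0}(T)=\ep T$ with $\ep\in\{+1,-1\}$, then $L_{2,0}\bigl((\lm a)T\bigr)=(\lm a)\bigl(L_{2,0}(T)\bigr)=\ep\,(\lm a)T$, so $(\lm a)T$ sits in the same eigenspace as $T$ and each of $\PP_{2,1}$, $\PP_{2,2}$ is carried into itself by the whole structure group.

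I do not expect any genuine obstacle in the theorem itself, since it is purely formal once the preceding lemma is granted; the real computational content is concentrated in that lemma. There one must verify, using the characterizing relation $T(x,y,z)=\eta(z)T(\f^2 x,\f^2 y,\xi)$ of $\PP_2$ together with the structural identities \eqref{str1}, that $L_{2,0}$ maps $\PP_2$ into itself (which uses $\f^3=-\f$, i.e.\ $\f\xi=0$), that $L_{2,0}\circ L_{2,0}=\Id_{\PP_2}$ (which uses $\eta(\xi)=1$ to collapse the doubled $\f$ back to $\f^2$ and then the defining relation of $\PP_2$), that it preserves $\langle\cdot,\cdot\rangle$, and that it commutes with $\lm$. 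These are the only places where a short direct calculation is required, and they are exactly the hypotheses I would invoke, rather than reprove, in the theorem.
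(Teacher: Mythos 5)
Your argument is correct and is essentially the paper's own: the theorem is deduced formally from the preceding lemma asserting that $L_{2,0}$ is an involutive isometry on $\PP_2$ commuting with the structure group action, with the eigenspace splitting, orthogonality from the isometry property, and invariance from the equivariance, exactly as you spell out. The paper states this even more tersely (simply declaring the eigenspaces invariant and orthogonal), so your version just makes the same standard reasoning explicit.
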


\begin{prop}\label{prop-T2k}
The subspaces of $\PP_2$ are determined by:
\[
\begin{split}
\PP_{2,1}:\quad &T(x,y,z)=\eta(z)T(\f^2 x,\f^2 y,\xi),\quad\\[6pt]
                &T(x,y,\xi)=- T(\f x,\f y,\xi),\\[6pt]
\PP_{2,2}:\quad &T(x,y,z)=\eta(z)T(\f^2 x,\f^2 y,\xi),\quad\\[6pt]
                &T(x,y,\xi)= T(\f x,\f y,\xi).
\end{split}
\]
\end{prop}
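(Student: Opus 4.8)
The plan is to read off the two stated component identities directly from the definitions of $\PP_{2,1}$ and $\PP_{2,2}$ as the $(\mp1)$-eigenspaces of $L_{2,0}$, exploiting that a tensor in $\PP_2$ is completely determined by its values $T(\cdot,\cdot,\xi)$ on the horizontal part of its first two arguments.

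First I would record that, since $\PP_{2,1},\PP_{2,2}\subset\PP_2$, every $T$ in either subspace satisfies the defining relation of $\PP_2$ from \eqref{W1234}, namely $T(x,y,z)=\eta(z)T(\f^2x,\f^2y,\xi)$; this is exactly the first of the two conditions listed for each subspace, so nothing further is needed there. Putting $z=\xi$ in this relation and using $\eta(\xi)=1$ from \eqref{str1} yields the auxiliary identity $T(x,y,\xi)=T(\f^2x,\f^2y,\xi)$, which I will invoke in the converse.

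Next I would unwind the eigenvalue condition. Since $L_{2,0}(T)(x,y,z)=\eta(z)T(\f x,\f y,\xi)$, evaluating $L_{2,0}(T)=\mp T$ (with the sign $-$ for $\PP_{2,1}$ and $+$ for $\PP_{2,2}$) at $z=\xi$ gives, via $\eta(\xi)=1$, the identity $T(\f x,\f y,\xi)=\mp T(x,y,\xi)$, equivalently $T(x,y,\xi)=\mp T(\f x,\f y,\xi)$; this is precisely the second listed condition. For the opposite implication I would substitute this relation into $L_{2,0}(T)(x,y,z)=\eta(z)T(\f x,\f y,\xi)$ and then use the auxiliary identity together with the $\PP_2$ relation to rewrite $\eta(z)T(x,y,\xi)=\eta(z)T(\f^2x,\f^2y,\xi)=T(x,y,z)$, recovering $L_{2,0}(T)=\mp T$ and hence membership in the correct eigenspace.

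The step deserving the most attention --- though it is not a genuine obstacle --- is confirming that restricting the eigenvalue equation to $z=\xi$ loses no information, so that the characterization is truly an equivalence and not merely a consequence. This reversibility is supplied by the $\PP_2$ structure itself (the common factor $\eta(z)$ and the fact, established in the preceding lemma, that $L_{2,0}$ maps $\PP_2$ into $\PP_2$), and no manipulation of the structure tensors beyond $\eta(\xi)=1$ is required.
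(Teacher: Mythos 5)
Your proposal is correct and follows the same (and essentially the only natural) route as the paper, which states this proposition without a separate proof as an immediate consequence of the definitions of $\PP_{2,1}$ and $\PP_{2,2}$ as eigenspaces of $L_{2,0}$. Your care in checking that the $z=\xi$ restriction is reversible via the $\PP_2$ defining relation is exactly the point that makes the characterization an equivalence.
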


Then the tensors $\dfrac{1}{2}\left\{T-L_{2,0}(T)\right\}$ and
$\dfrac{1}{2}\left\{T+L_{2,0}(T)\right\}$ are the projections of
$\PP_2$ in $\PP_{2,1}$ and $\PP_{2,2}$, respectively. Moreover, we
have the truthfulness of the properties
\[
p_{2,1}\circ p_{2,1}=p_{2,1},\qquad
p_{2,2}\circ p_{2,2}=p_{2,2},\qquad
p_{2,1}+p_{2,2}=\Id_{\PP_{2}}.
\]
Therefore, taking into account
$p_2$, we obtain
\begin{prop}\label{prop-p_2j}
Let $p_{2,1}$ and $p_{2,2}$  be the projection operators
of $\T$ in $\PP_{2,1}$ and $\PP_{2,2}$, respectively, generated by the decomposition above. Then
we have for $T\in\T$ the following
\[
\begin{split}
p_{2,1}(T)(x,y,z)=\dfrac{1}{2}\eta(z)\left\{T(\f^2 x,\f^2
y,\xi)- T(\f x,\f y,\xi)\right\},\\[6pt]
p_{2,2}(T)(x,y,z)=\dfrac{1}{2}\eta(z)\left\{T(\f^2 x,\f^2
y,\xi)+ T(\f x,\f y,\xi)\right\}.
\end{split}
\]
\end{prop}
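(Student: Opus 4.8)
The plan is to realise $p_{2,1}$ and $p_{2,2}$ as compositions of two projections that are already at hand: the projection of $\T$ onto the summand $\PP_2$, followed by the splitting of $\PP_2$ into the $(\mp1)$-eigenspaces $\PP_{2,1}$, $\PP_{2,2}$ of the involution $L_{2,0}$.

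First I would check that the projection of the whole space $\T$ onto $\PP_2$ is computed by the defining formula of $p_2$ itself, even though $p_2$ was introduced only on $\PP_1^\bot$. The composite projector $\T\to\PP_1^\bot\to\PP_2$ is $p_2\circ(\Id-p_1)$, and $p_2$ kills $\iim(p_1)=\PP_1$: for any $T\in\T$,
\[
p_2\bigl(p_1(T)\bigr)(x,y,z)=-\eta(z)\,T\bigl(\f^2\f^2 x,\f^2\f^2 y,\f^2\xi\bigr)=0,
\]
because $\f^2\xi=\f(\f\xi)=0$ by \eqref{str1}. Hence $p_2\circ(\Id-p_1)=p_2$, so that $p_2(T)(x,y,z)=\eta(z)\,T(\f^2 x,\f^2 y,\xi)$ is the $\PP_2$-component of an arbitrary $T\in\T$.

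Next I would use that $L_{2,0}$ is an involutive isometry of $\PP_2$ with eigenspaces $\PP_{2,1}$ (eigenvalue $-1$) and $\PP_{2,2}$ (eigenvalue $+1$); the corresponding projections $\frac12(\Id-L_{2,0})$ and $\frac12(\Id+L_{2,0})$ are exactly the ones recorded just before the statement. Thus $p_{2,1}=\frac12(\Id-L_{2,0})\circ p_2$ and $p_{2,2}=\frac12(\Id+L_{2,0})\circ p_2$, and the one remaining computation is the evaluation of $L_{2,0}\bigl(p_2(T)\bigr)$. Using $\eta(\xi)=1$ together with $\f^3 x=\f^2(\f x)=-\f x+\eta(\f x)\xi=-\f x$, both of which follow from \eqref{str1}, I get
\[
L_{2,0}\bigl(p_2(T)\bigr)(x,y,z)=\eta(z)\,\bigl[p_2(T)\bigr](\f x,\f y,\xi)=\eta(z)\,T(\f^3 x,\f^3 y,\xi)=\eta(z)\,T(\f x,\f y,\xi),
\]
the last step because the two sign changes in the first two arguments cancel.

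Substituting this together with $p_2(T)(x,y,z)=\eta(z)\,T(\f^2 x,\f^2 y,\xi)$ into $\frac12\bigl(p_2(T)\mp L_{2,0}(p_2(T))\bigr)$ produces precisely the two displayed formulas for $p_{2,1}$ and $p_{2,2}$. I would close by noting that $p_{2,j}\circ p_{2,j}=p_{2,j}$ and $p_{2,1}+p_{2,2}=p_2$ then follow immediately from the involutivity of $L_{2,0}$. There is no real obstacle here: the argument is purely computational, and the only point needing a little care is the harmless extension of $p_2$ from $\PP_1^\bot$ to all of $\T$, which rests on the identity $\f^2\xi=0$, while the companion identity $\f^3=-\f$ is what generates the second term $T(\f x,\f y,\xi)$.
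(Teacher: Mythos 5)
Your proof is correct and follows essentially the same route as the paper, which likewise obtains $p_{2,j}$ by composing $\tfrac12(\Id\mp L_{2,0})$ with $p_2$ and evaluating; your extra care about extending $p_2$ from $\PP_1^\bot$ to all of $\T$ via $\f^2\xi=0$, and the computation $\f^3=-\f$, are exactly the details the paper leaves implicit.
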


According to \corref{cor-t-W1} (ii), \propref{prop-T2k} and
\eqref{ttt}, we obtain
\begin{cor}\label{cor-t-W2}
The torsion forms of $\hspace{3pt}T$ are zero in each of the subspaces
$\PP_{2,1}$ and  $\PP_{2,2}$, i.e.
    if $T\in\PP_{2,1}\oplus\PP_{2,2}$, then $t=t^*=\hat{t}=0$.
\end{cor}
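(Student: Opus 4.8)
The plan is to exploit the fact that $\PP_{2,1}$ and $\PP_{2,2}$ are merely the two invariant pieces into which $\PP_2$ splits, so that the required vanishing is inherited from the behaviour of the torsion forms on all of $\PP_2$. By \thmref{thm-T2k} we have $\PP_{2,1}\oplus\PP_{2,2}=\PP_2$, and the defining conditions in \propref{prop-T2k} already exhibit every $T\in\PP_{2,1}$ and every $T\in\PP_{2,2}$ in the form $T(x,y,z)=\eta(z)T(\f^2 x,\f^2 y,\xi)$, which is exactly the characterization of $\PP_2$ in \eqref{W1234}. Hence the first move is simply to observe $\PP_{2,1}\oplus\PP_{2,2}=\PP_2$ and then apply \corref{cor-t-W1}(ii), which states that $t=t^*=\hat{t}=0$ on $\PP_2$; restricting to the subspaces gives the claim at once.

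If a self-contained verification is preferred, I would instead substitute the common $\PP_2$-form $T(x,y,z)=\eta(z)T(\f^2 x,\f^2 y,\xi)$ into the definitions \eqref{ttt} and carry out each contraction using the structure identities \eqref{str1}. For $\hat{t}$ one computes $\hat{t}(x)=T(x,\xi,\xi)=\eta(\xi)T(\f^2 x,\f^2\xi,\xi)$, and since $\f\xi=0$ forces $\f^2\xi=0$, this vanishes. For $t^*$ one writes $t^*(x)=g^{ij}T(x,e_i,\f e_j)=g^{ij}\eta(\f e_j)T(\f^2 x,\f^2 e_i,\xi)$, which is zero by $\eta\circ\f=0$. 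For $t=g^{ij}T(x,e_i,e_j)$ the only factor is $\eta(e_j)$: the horizontal basis vectors lie in $\ker\eta$, while the $\xi$-direction again contributes $\f^2\xi=0$, so $t=0$ as well.

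The only point needing care — and it is a very minor one — is the trace convention in \eqref{ttt}: one must confirm that any contribution from the $\xi$-direction in the contractions defining $t$ and $t^*$ drops out, which it does because $\f^2\xi=0$, together with the correct use of $\eta\circ\f=0$ to annihilate $t^*$. Since these are immediate consequences of \eqref{str1}, there is no genuine obstacle here; the corollary is essentially a bookkeeping consequence of \corref{cor-t-W1}(ii) combined with the inclusion $\PP_{2,1}\oplus\PP_{2,2}=\PP_2$.
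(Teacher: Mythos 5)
Your proposal is correct and follows essentially the same route as the paper, which likewise deduces the corollary from \corref{cor-t-W1}~(ii) together with \propref{prop-T2k} and the definitions in \eqref{ttt}. The additional direct verification via $\f^2\xi=0$ and $\eta\circ\f=0$ is sound and merely makes explicit what the paper leaves implicit.
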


\vskip 0.2in \addtocounter{subsubsection}{1}

\noindent  {\Large\bf \emph{\thesubsubsection. The subspace $\PP_3$}}

\vskip 0.15in

\begin{lem}
The following operators $L_{3,k}$ $(k=0,1)$ are involutive
isometries on $\PP_3$ and invariant with respect to $\GG\times\II$:
\[
\begin{array}{l}
L_{3,0}(T)(x,y,z)=\eta(x)T(\xi,\f y,\f z)-\eta(y)T(\xi,\f x,\f z),\qquad\\[6pt]
L_{3,1}(T)(x,y,z)=\eta(x)T(\xi,\f^2 z,\f^2 y)-\eta(y)T(\xi,\f^2 z,\f^2 x).
\end{array}
\]
\end{lem}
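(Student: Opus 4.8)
The plan is to establish the three asserted properties for $L_{3,0}$ and $L_{3,1}$ in parallel, since the two arguments differ only in bookkeeping, proceeding in the order: well-definedness on $\PP_3$, involutivity, isometry, and $\GG\times\II$-invariance. Throughout I would rely only on the structure identities \eqref{str1} (so $\f\xi=0$, $\f^2\xi=0$, $\eta\circ\f=0$, $\eta(\xi)=1$, and $\f^2 y=-y^{\mathrm{h}}$), on the characterization of $\PP_3$ from \eqref{W1234} and \corref{cor-W1234}, and on the fact that \eqref{str2} forces $g(x,\xi)=\eta(x)$ (set $y=\xi$) and $g(\f x,y)=g(x,\f y)$, i.e. $\f$ is $g$-self-adjoint.

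First I would check that each $L_{3,k}$ maps $\PP_3$ into itself. Antisymmetry in the first two slots is immediate from the explicit formulae. Since $\f\xi=\f^2\xi=0$, replacing the third argument by its vertical part $z^{\mathrm{v}}=\eta(z)\xi$ annihilates both terms, so $L_{3,k}(T)(x,y,z^{\mathrm{v}})=0$; and since $\eta(x^{\mathrm{h}})=\eta(y^{\mathrm{h}})=0$ we get $L_{3,k}(T)(x^{\mathrm{h}},y^{\mathrm{h}},z)=0$, which by \corref{cor-W1234} places $L_{3,k}(T)$ in $\PP_3$. Along the way I would record the key auxiliary identity that for $T\in\PP_3$ one has $T(\xi,\f^2 y,\f^2 z)=T(\xi,y,z)$; this follows from $\f^2 y=-y^{\mathrm{h}}$, $\f^2 z=-z^{\mathrm{h}}$ together with $T(\xi,\xi,\cdot)=0$ (antisymmetry) and $T(\cdot,\cdot,z^{\mathrm{v}})=0$. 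This identity is the computational heart of the argument.

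For involutivity I would apply each formula twice. Using $\eta(\xi)=1$, $\eta\circ\f=\eta\circ\f^2=0$, $\f\xi=0$ and $\f^4 y=y^{\mathrm{h}}$, every inner evaluation collapses to $T(\xi,\f^2 y,\f^2 z)$, which equals $T(\xi,y,z)$ by the auxiliary identity. One is then left with $L_{3,k}\circ L_{3,k}(T)(x,y,z)=\eta(x)T(\xi,y,z)-\eta(y)T(\xi,x,z)$. Invoking the defining relation of $\PP_3$ in \eqref{W1234}, together with $T(\f^2 x,\xi,\f^2 z)=-T(\xi,\f^2 x,\f^2 z)=-T(\xi,x,z)$, shows this right-hand side is exactly $T(x,y,z)$, so $L_{3,k}\circ L_{3,k}=\Id_{\PP_3}$.

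To obtain the isometry I would show that each $L_{3,k}$ is self-adjoint for $\langle\cdot,\cdot\rangle$, that is $\langle L_{3,k}(T_1),T_2\rangle=\langle T_1,L_{3,k}(T_2)\rangle$; combined with involutivity this gives $\langle L_{3,k}T_1,L_{3,k}T_2\rangle=\langle T_1,L_{3,k}^2 T_2\rangle=\langle T_1,T_2\rangle$. The self-adjointness reduces to two moves inside the inner product: contracting the $\eta(\cdot)$ factor against $T_2$ produces a $\xi$ in the corresponding slot because $g(x,\xi)=\eta(x)$, and then the insertions of $\f$ (resp. $\f^2$) may be transferred from $T_1$ to $T_2$ precisely because $\f$, and hence $\f^2$, is $g$-self-adjoint; the slot-swap in $L_{3,1}$ survives this transfer after relabelling the dummy indices. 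Finally, $\GG\times\II$-invariance is formal: any $a\in\GG\times\II$ has the block form \eqref{GxI}, hence $a^{-1}\f=\f a^{-1}$, $a^{-1}\xi=\xi$ and $\eta\circ a^{-1}=\eta$; substituting these into $((\lm a)L_{3,k}(T))(x,y,z)=L_{3,k}(T)(a^{-1}x,a^{-1}y,a^{-1}z)$ and pulling $a^{-1}$ back out yields $L_{3,k}\circ(\lm a)=(\lm a)\circ L_{3,k}$. The main obstacle I anticipate is the self-adjointness step, as it is the only part requiring careful handling of the metric contractions rather than the purely algebraic structure identities.
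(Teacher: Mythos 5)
Your proof is correct, and it supplies exactly the routine verification that the paper omits (the lemma is stated there without proof, in the same spirit as the earlier "it follows immediately" for $L_{1,0}$): well-definedness and involutivity via $\f\xi=0$, $\eta\circ\f=0$, the identity $T(\xi,\f^2y,\f^2z)=T(\xi,y,z)$ on $\PP_3$, and the defining relation of $\PP_3$; the isometry via self-adjointness (using $g(x,\xi)=\eta(x)$ and the $g$-self-adjointness of $\f$) combined with involutivity; and the group invariance from the block form \eqref{GxI}. No gaps.
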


By virtue of their action, we obtain consecutively the
corresponding invariant and orthogonal eigenspaces:
\begin{align*}
\PP_3^-&=\left\{T\in \PP_3\ \vert\ L_{3,0}(T)=-T\right\},\qquad \\[6pt]
\PP_3^+&=\left\{T\in \PP_3\ \vert\ L_{3,0}(T)=T\right\},\\[6pt]
\PP_{3,1}&=\left\{T\in \PP_3^-\ \vert\ L_{3,1}(T)=
T\right\},\qquad \\[6pt]
\PP_{3,2}&=\left\{T\in \PP_3^-\ \vert\ L_{3,1}(T)=-
T\right\},\qquad \\[6pt]
\PP_{3,3}&=\left\{T\in \PP_3^+\ \vert\
L_{3,1}(T)= T\right\},\\[6pt]
\PP_{3,4}&=\left\{T\in \PP_3^+\ \vert\
L_{3,1}(T)=- T\right\}.
\end{align*}

In such a way, we get
\begin{thm}\label{thm-T3k}
The decomposition $
\PP_3=\PP_{3,1}\oplus\PP_{3,2}\oplus\PP_{3,3}\oplus\PP_{3,4} $ is
orthogonal and in\-va\-riant with respect to the structure group.
\end{thm}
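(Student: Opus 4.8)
The plan is to mimic the two preceding decomposition theorems (\thmref{thm-T1k} and \thmref{thm-T2k}) and obtain $\PP_3 = \PP_{3,1}\oplus\PP_{3,2}\oplus\PP_{3,3}\oplus\PP_{3,4}$ as the simultaneous eigenspace decomposition of the two involutive isometries $L_{3,0}$ and $L_{3,1}$. The preceding lemma already supplies that each $L_{3,k}$ $(k=0,1)$ satisfies $L_{3,k}\circ L_{3,k}=\Id_{\PP_3}$, preserves the inner product $\langle\cdot,\cdot\rangle$, and commutes with the representation $\lm$ of $\GG\times\II$, so the only genuinely new work is to diagonalise them jointly.

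First I would use $L_{3,0}$. Being an involution, it has eigenvalues $\pm 1$, whence $\PP_3=\PP_3^+\oplus\PP_3^-$. This sum is orthogonal, since for $T_1\in\PP_3^+$ and $T_2\in\PP_3^-$ the isometry property gives $\langle T_1,T_2\rangle=\langle L_{3,0}T_1,L_{3,0}T_2\rangle=-\langle T_1,T_2\rangle$, hence $\langle T_1,T_2\rangle=0$; and it is $\GG\times\II$-invariant because $L_{3,0}\circ(\lm a)=(\lm a)\circ L_{3,0}$. The crucial step is then to show that $L_{3,1}$ maps each of $\PP_3^+$ and $\PP_3^-$ into itself, which follows once $L_{3,0}\circ L_{3,1}=L_{3,1}\circ L_{3,0}$ on $\PP_3$. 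I would verify this commutativity directly from the definitions, reducing both compositions to the same expression $\eta(x)T(\xi,\f z,\f y)-\eta(y)T(\xi,\f z,\f x)$ by repeatedly applying the structure identities \eqref{str1}: namely $\f^3=-\f$ (from $\f^2=-\I+\eta\otimes\xi$ together with $\f\xi=0$), $\eta\circ\f=0$, $\f^2\xi=0$, and $\eta\circ\f^2=0$.

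With commutativity in hand, $L_{3,1}$ restricts to an involutive isometry on each summand, and splitting each by its $\pm 1$ eigenspaces yields $\PP_3^-=\PP_{3,1}\oplus\PP_{3,2}$ and $\PP_3^+=\PP_{3,3}\oplus\PP_{3,4}$, exactly as the four subspaces are defined; orthogonality within each summand is the same isometry argument as above, and invariance again follows from $L_{3,1}\circ(\lm a)=(\lm a)\circ L_{3,1}$. Assembling the two levels gives the claimed four-term orthogonal, $\GG\times\II$-invariant decomposition, the four pieces being the eigenspaces for distinct pairs of eigenvalues of the commuting operators. The main obstacle is the commutativity computation: $L_{3,1}$ carries $\f^2$ and a transposition of the last two arguments while $L_{3,0}$ carries a single $\f$, so the verification hinges on correctly collapsing the nested $\f$- and $\f^2$-insertions via $\f^3=-\f$ and on the vanishing of $\eta$ on the images of $\f$ and of $\f^2$; everything else is the routine eigenspace bookkeeping already rehearsed for $\PP_1$ and $\PP_2$.
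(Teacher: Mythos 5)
Your proof is correct and follows essentially the same route as the paper: the simultaneous eigenspace decomposition of the two commuting involutive isometries $L_{3,0}$ and $L_{3,1}$, with orthogonality and $\GG\times\II$-invariance inherited from the involution, isometry and equivariance properties supplied by the preceding lemma. The only difference is that you explicitly verify the commutativity $L_{3,0}\circ L_{3,1}=L_{3,1}\circ L_{3,0}$ (both compositions indeed reduce to $\eta(x)T(\xi,\f z,\f y)-\eta(y)T(\xi,\f z,\f x)$ via $\f^{3}=-\f$ and $\eta\circ\f=\eta\circ\f^{2}=0$), a step the paper leaves implicit when it restricts $L_{3,1}$ to the eigenspaces $\PP_3^{\pm}$ of $L_{3,0}$.
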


\begin{prop}\label{prop-T3k}
The subspaces of $\PP_3$ are determined by:
\begin{align*}
\PP_{3,1}:\quad &T(x,y,z)=\eta(x)T(\xi,\f^2 y,\f^2 z)-\eta(y)T(\xi,\f^2 x,\f^2 z),\quad\\[6pt]
                &T(\xi,y,z)= T(\xi,z,y)=-T(\xi,\f y,\f z); 
\end{align*}%
\begin{align*}
\PP_{3,2}:\quad &T(x,y,z)=\eta(x)T(\xi,\f^2 y,\f^2 z)-\eta(y)T(\xi,\f^2 x,\f^2 z),\quad\\[6pt]
                &T(\xi,y,z)=- T(\xi,z,y)=-T(\xi,\f y,\f z); \\[6pt]
\PP_{3,3}:\quad &T(x,y,z)=\eta(x)T(\xi,\f^2 y,\f^2 z)-\eta(y)T(\xi,\f^2 x,\f^2 z),\quad\\[6pt]
                &T(\xi,y,z)= T(\xi,z,y)=T(\xi,\f y,\f z); \\[6pt]
\PP_{3,4}:\quad &T(x,y,z)=\eta(x)T(\xi,\f^2 y,\f^2 z)-\eta(y)T(\xi,\f^2 x,\f^2 z),\quad\\[6pt]
                &T(\xi,y,z)=- T(\xi,z,y)=T(\xi,\f y,\f z).
\end{align*}
\end{prop}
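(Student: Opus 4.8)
The plan is to read each of the four subspaces straight off its eigenvalue definition, after the single structural observation that on $\PP_3$ a tensor is completely encoded by the bilinear form $S(y,z):=T(\xi,y,z)$. First I would rewrite the defining condition of $\PP_3$ from \eqref{W1234}: since $T$ is skew-symmetric in its first two arguments, $T(\f^2 x,\xi,\f^2 z)=-T(\xi,\f^2 x,\f^2 z)$, so every $T\in\PP_3$ satisfies
\[
T(x,y,z)=\eta(x)T(\xi,\f^2 y,\f^2 z)-\eta(y)T(\xi,\f^2 x,\f^2 z).
\]
This is precisely the first line listed in all four characterizations, so it requires no further argument; the content of the statement lies entirely in the second line, which constrains $S$.

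Next I would record the two identities for $S$ coming from the contact relations \eqref{str1}. Putting $x=\xi$ in the display above and using $\eta(\xi)=1$ together with $\f^2\xi=0$ yields $T(\xi,y,z)=T(\xi,\f^2 y,\f^2 z)$; hence $S$ annihilates $\xi$ in either slot and factors through the horizontal projection, and the display shows that $T$ is recovered from $S$ by $T(x,y,z)=\eta(x)S(y,z)-\eta(y)S(x,z)$. In particular the correspondence $T\mapsto S$ is injective on $\PP_3$. Because the preceding lemma guarantees that $L_{3,0}$ and $L_{3,1}$ are endomorphisms of $\PP_3$, both $L_{3,0}(T)$ and $L_{3,1}(T)$ again lie in $\PP_3$, so they too are determined by their restriction to $(\xi,\cdot,\cdot)$. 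This is the crux of the proof: it reduces the tensorial eigenvalue equations $L_{3,0}(T)=\pm T$ and $L_{3,1}(T)=\pm T$ to identities that need only be verified when the first argument is $\xi$.

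I would then perform those evaluations. Using $\f\xi=0$ and $\f^2\xi=0$, setting the first slot to $\xi$ gives $L_{3,0}(T)(\xi,y,z)=T(\xi,\f y,\f z)$ and, by the factorization of $S$, $L_{3,1}(T)(\xi,y,z)=T(\xi,\f^2 z,\f^2 y)=T(\xi,z,y)$. Comparing with $\pm T(\xi,y,z)$, the equation $L_{3,0}(T)=-T$ becomes $T(\xi,y,z)=-T(\xi,\f y,\f z)$ and $L_{3,0}(T)=T$ becomes $T(\xi,y,z)=T(\xi,\f y,\f z)$, which is exactly what separates $\PP_3^-$ from $\PP_3^+$; likewise $L_{3,1}(T)=T$ becomes the symmetry $T(\xi,y,z)=T(\xi,z,y)$ and $L_{3,1}(T)=-T$ the skew-symmetry $T(\xi,y,z)=-T(\xi,z,y)$.

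Finally I would assemble the defining sign pairs of $\PP_{3,1},\dots,\PP_{3,4}$ stated before \thmref{thm-T3k}: combining the $L_{3,0}$-condition with the $L_{3,1}$-condition in each case produces exactly the chains $T(\xi,y,z)=\pm T(\xi,z,y)=\pm T(\xi,\f y,\f z)$ written in the proposition, with the four sign combinations matching the four eigenspaces. The only delicate point is the reduction in the second paragraph — justifying that an identity between two elements of $\PP_3$ may be checked solely at $(\xi,\cdot,\cdot)$ — after which everything is a direct substitution with the identities \eqref{str1}.
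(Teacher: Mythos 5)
Your proof is correct and follows exactly the route the paper implicitly relies on (the proposition is stated there without proof as an immediate consequence of the eigenspace definitions of $L_{3,0}$ and $L_{3,1}$): you correctly observe that every $T\in\PP_3$ is recovered from $S=T(\xi,\cdot,\cdot)$ via the first displayed line, so the eigenvalue equations need only be evaluated at $(\xi,\cdot,\cdot)$, where they reduce to the stated symmetry and $\f$-compatibility conditions on $S$. The four sign combinations you obtain match the definitions of $\PP_{3,1},\dots,\PP_{3,4}$ given before \thmref{thm-T3k}, so nothing is missing.
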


By virtue of \corref{cor-t-W1} (iii), \propref{prop-T3k} and equalities
\eqref{ttt}, we obtain
\begin{cor}\label{cor-t-T3k}
The torsion forms $t$ and $t^*$ of $T$ 
are zero in $\PP_{3,k}\subset\PP_{3}$ $(k=2,3,4)$.
\end{cor}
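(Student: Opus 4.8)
The plan is to deduce everything from \corref{cor-t-W1}\,(iii), which already records that $t\circ\mathrm{h}=t^*\circ\mathrm{h}=\hat t=0$ for every $T\in\PP_3$. Since the horizontal projector satisfies $x^{\mathrm{h}}=x-\eta(x)\xi$ by \eqref{Xhv}, the vanishing of $t\circ\mathrm{h}$ and $t^*\circ\mathrm{h}$ forces $t=t(\xi)\,\eta$ and $t^*=t^*(\xi)\,\eta$. Hence the whole statement reduces to proving the two scalar identities $t(\xi)=0$ and $t^*(\xi)=0$ in each of the three subspaces $\PP_{3,2}$, $\PP_{3,3}$, $\PP_{3,4}$.

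For this reduction I would introduce the auxiliary bilinear form $S(y,z)=T(\xi,y,z)$, which lives effectively on $\HH$ because $T(\xi,\xi,\cdot)=0$ by the skew-symmetry of $T$ in its first two arguments. From \eqref{ttt} one then reads $t(\xi)=g^{ij}S(e_i,e_j)$ and $t^*(\xi)=g^{ij}S(e_i,\f e_j)=g^{ij}A(e_i,e_j)$, where $A(y,z)=S(y,\f z)$. At this point two elementary mechanisms make such a $g$-trace vanish: first, the $g$-trace of an \emph{anti}-symmetric bilinear form is zero, because $g^{ij}$ is symmetric; second, the $g$-trace of a form $B$ satisfying $B(\f\cdot,\f\cdot)=B(\cdot,\cdot)$ is zero, because the Norden identity $g(\f x,\f y)=-g(x,y)$ on $\HH$ (the restriction of \eqref{str2}) yields $g^{ij}\f^k_i\f^l_j=-g^{kl}$, so such a trace equals its own negative. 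The crucial preliminary computations are therefore to determine, for each subspace, the symmetry type of $S$ and of $A$ and whether $S$ or $A$ is $\f$-invariant, using only $\f^2=-\I$ on $\HH$ (from \eqref{str1}) together with the two defining relations of the subspace in \propref{prop-T3k}.

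Carrying this out: in $\PP_{3,2}$ the relation $T(\xi,y,z)=-T(\xi,z,y)$ makes $S$ antisymmetric, killing $t(\xi)$ at once, while $T(\xi,y,z)=-T(\xi,\f y,\f z)$ forces $A$ to be antisymmetric as well, killing $t^*(\xi)$. In $\PP_{3,3}$ the form $S$ is symmetric but satisfies $S(\f\cdot,\f\cdot)=S(\cdot,\cdot)$, so the second mechanism gives $t(\xi)=0$, and one checks that $A$ then inherits $A(\f\cdot,\f\cdot)=A(\cdot,\cdot)$, giving $t^*(\xi)=0$. In $\PP_{3,4}$ the antisymmetry of $S$ again gives $t(\xi)=0$ immediately. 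I expect the single genuine obstacle to be the last trace, $t^*(\xi)$ for $\PP_{3,4}$: here $S$ is antisymmetric \emph{and} $\f$-invariant, so the naive symmetry argument for $A$ degenerates into a tautology, and one must instead combine the two relations to establish $A(\f\cdot,\f\cdot)=A(\cdot,\cdot)$ before invoking the Norden-trace mechanism. Finally, comparing with $\PP_{3,1}$---where $S$ is symmetric with $S(\f\cdot,\f\cdot)=-S(\cdot,\cdot)$ and both vanishing mechanisms collapse into tautologies---explains why $k=1$ is legitimately excluded from the statement.
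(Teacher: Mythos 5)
Your proposal is correct and follows essentially the same route as the paper, which simply cites \corref{cor-t-W1}\,(iii), \propref{prop-T3k} and \eqref{ttt} and leaves the trace computations implicit; your two vanishing mechanisms (antisymmetry against the symmetric $g^{ij}$, and $\f$-invariance against the Norden identity $g^{ij}\f_i^k\f_j^l=-g^{kl}$) are exactly the right way to fill in those details, and your remark on why $\PP_{3,1}$ escapes both mechanisms correctly explains its exclusion.
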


Let us remark that $\PP_{3,1}$ can be additionally decomposed to
three subspaces determined by conditions $t=0$, $t^*=0$ and
$t=t^*=0$, respectively, i.e.
\begin{equation*}\label{W31}
\PP_{3,1}=\PP_{3,1,1}\oplus \PP_{3,1,2}\oplus \PP_{3,1,3},
\end{equation*}
where
\[
\begin{array}{l}\label{W311W312W313}
\PP_{3,1,1}=\left\{T\in \PP_{3,1}\ \vert\ t\neq 0,\
t^*=0\right\},\quad\\[6pt]
\PP_{3,1,2}=\left\{T\in \PP_{3,1}\ \vert\ t=0,\ t^*\neq 0\right\}, \\[6pt]
\PP_{3,1,3}=\left\{T\in \PP_{3,1}\ \vert\ t=0,\ t^*=0\right\}.
\end{array}
\]

\begin{prop}\label{prop-p_3k}
Let $T\in\T$ and $p_{3,k}$ $(k=1,2,3,4)$ be the projection
operators of $\T$ in $\PP_{3,k}$, generated by the decomposition
above. Then we have
\[
p_{3,k}(T)(x,y,z)=
                \dfrac{1}{4}\left\{\eta(x)A_{3,k}(y,z)
-\eta(y)A_{3,k}(x,z)\right\},
\]
where
\begin{align*}
A_{3,1}(y,z)=T(\xi,\f^2 y,\f^2 z)
                + T(\xi,\f^2 z,\f^2 y)\\[6pt]
                -T(\xi,\f y,\f z)
                - T(\xi,\f z,\f y),
\\[6pt]
A_{3,2}(y,z)=T(\xi,\f^2 y,\f^2 z)
                - T(\xi,\f^2 z,\f^2 y)\\[6pt]
                -T(\xi,\f y,\f z)
                + T(\xi,\f z,\f y),
\\[6pt]
A_{3,3}(y,z)=T(\xi,\f^2 y,\f^2 z)
                + T(\xi,\f^2 z,\f^2 y)\\[6pt]
                +T(\xi,\f y,\f z)
                + T(\xi,\f z,\f y),
\end{align*}
\begin{align*}
A_{3,4}(y,z)=T(\xi,\f^2 y,\f^2 z)
                - T(\xi,\f^2 z,\f^2 y)\\[6pt]
                +T(\xi,\f y,\f z)
                - T(\xi,\f z,\f y).
\end{align*}
\end{prop}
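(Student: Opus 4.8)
The plan is to obtain each $p_{3,k}$ in exactly the same fashion as $p_{1,1}$ was obtained in the proof of \propref{prop-p_1i}, namely by composing the eigenprojections of the two involutive isometries $L_{3,0}$ and $L_{3,1}$ with the main projector $p_3:\T\to\PP_3$. Since $L_{3,0}$ splits $\PP_3$ into $\PP_3^{\pm}$ and $L_{3,1}$ is then applied on each summand to produce the four spaces $\PP_{3,k}$ of \thmref{thm-T3k}, the first thing to record is that $L_{3,0}$ and $L_{3,1}$ commute on $\PP_3$; this is exactly what guarantees that $L_{3,1}$ preserves $\PP_3^{+}$ and $\PP_3^{-}$, so that the four subspaces are simultaneous eigenspaces of the commuting pair and the product of the two halving projections is again a projection. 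Granting this, the projection onto $\PP_3^{-}$ is $\tfrac12(\Id-L_{3,0})$ and onto the $(+1)$-eigenspace of $L_{3,1}$ is $\tfrac12(\Id+L_{3,1})$, and composing them accounts for the coefficient $\tfrac14$ appearing in the statement.

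Concretely, following the template formula for $p_{1,1}$, I would write
\[
p_{3,1}=\tfrac14\bigl(\Id-L_{3,0}+L_{3,1}-L_{3,1}\circ L_{3,0}\bigr)\circ p_3,
\]
and likewise $p_{3,2}=\tfrac14(\Id-L_{3,0}-L_{3,1}+L_{3,1}\circ L_{3,0})\circ p_3$, $p_{3,3}=\tfrac14(\Id+L_{3,0}+L_{3,1}+L_{3,1}\circ L_{3,0})\circ p_3$ and $p_{3,4}=\tfrac14(\Id+L_{3,0}-L_{3,1}-L_{3,1}\circ L_{3,0})\circ p_3$, where the signs follow the $(L_{3,0},L_{3,1})$-eigenvalue pattern defining $\PP_{3,k}$ in \propref{prop-T3k}. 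Then I would substitute the explicit definition of $p_3$ together with the two formulae for $L_{3,0}$ and $L_{3,1}$, and collect terms. Because every summand already carries a factor $\eta(x)$ or $\eta(y)$ and its last pair of arguments is built from $\f^2$ or $\f$, the result organizes itself in the form $\tfrac14\{\eta(x)A_{3,k}(y,z)-\eta(y)A_{3,k}(x,z)\}$, and $A_{3,k}$ is read off as the corresponding $\pm$-symmetrization of $T(\xi,\f^2y,\f^2z)$ under the swap $y\leftrightarrow z$ (contributed by $L_{3,1}$) and under the replacement of one $\f^2$-slot by $\f$ (contributed by $L_{3,0}$), matching the four displayed expressions.

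The main obstacle is purely computational: the composition $L_{3,1}\circ L_{3,0}$ produces arguments in which $\f$ is applied two or three times, so the reduction to the stated form requires systematic use of the structure identities \eqref{str1}, in particular $\f^2=-I+\eta\otimes\xi$, $\f\xi=0$ and $\eta\circ\f=0$ (whence $\f^3=-\f$ on the contact distribution and $\f^2\xi=0$), together with the skew-symmetry $T(x,y,z)=-T(y,x,z)$ and the fact that $T$ already lies in $\iim(p_3)=\PP_3$. Keeping track of which slot is contracted with $\eta$ throughout these reductions is where care is needed; in particular the terms $\eta(x)\eta(y)(\cdots)$ arising when $\f^2$ acts on a $\xi$-carrying slot must be checked to cancel. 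Once the four expressions are in hand, I would close the proof exactly as in \propref{prop-p_1i} by verifying $p_{3,k}\circ p_{3,k}=p_{3,k}$ and $\sum_{k}p_{3,k}=\Id_{\PP_3}$, both of which follow from the involutivity of $L_{3,0},L_{3,1}$ established in the preceding lemma and from $p_3\circ p_3=p_3$.
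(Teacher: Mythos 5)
Your proposal is correct and follows exactly the route the paper intends: the paper proves only $p_{1,1}$ explicitly and states that the remaining projectors, including the $p_{3,k}$, are obtained "in a similar way", which is precisely your composition $\tfrac14(\Id\pm L_{3,0}\pm L_{3,1}\pm L_{3,1}\circ L_{3,0})\circ p_3$ with signs matching the eigenvalue pattern of \propref{prop-T3k}. The reductions you flag ($\f^3=-\f$, $\f^4=-\f^2$, $\f^2\xi=0$, skew-symmetry in the first two slots) are indeed the only ingredients needed to land on the stated $A_{3,k}$.
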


\vskip 0.2in \addtocounter{subsubsection}{1}

\noindent  {\Large\bf \emph{\thesubsubsection. The subspace $\PP_4$}}

\vskip 0.15in

Finally, we only denote $\PP_4$ as $\PP_{4,1}$ and it is determined
as follows
\begin{equation*}
\PP_{4,1}:\quad
T(x,y,z)=\eta(z)\left\{\eta(y)\hat{t}(x)-\eta(x)\hat{t}(y)\right\}.
\end{equation*}
Obviously, the projection operator $p_{4,1}: \T
\rightarrow\PP_{4,1}$ has the form
\begin{equation}\label{p_41}
p_{4,1}(T)(x,y,z)=\eta(z)\left\{\eta(y)\hat{t}(x)-\eta(x)\hat{t}(y)\right\}.
\end{equation}

\vskip 0.2in \addtocounter{subsubsection}{1}

\noindent  {\Large\bf \emph{\thesubsubsection. The fifteen subspaces of $\T$}}

\vskip 0.15in

In conclusion of the decomposition explained above, we combine
Theorems \ref{thm-W1234}, \ref{thm-T1k}, \ref{thm-T2k} and
\ref{thm-T3k}. We denote the subspaces $\PP_{i,j}$ and $\PP_{i,j,k}$
by $\T_{s}, s\in\{1,2,\dots,15\}$, as follows:
\begin{equation}\label{Ti15}
\begin{array}{lll}
\T_{1}=\PP_{1,1,1},\qquad\quad &\T_{2}=\PP_{1,1,2},\qquad\quad
&\T_{3}=\PP_{1,2},\qquad\quad \\[6pt]
\T_{4}=\PP_{1,3,1},\qquad\quad
&\T_{5}=\PP_{1,3,2},\qquad\quad
&\T_{6}=\PP_{1,4},\qquad\quad \\[6pt]
\T_{7}=\PP_{2,1},\qquad\quad
&\T_{8}=\PP_{2,2},\qquad\quad &\T_{9}=\PP_{3,1,1},\qquad\quad \\[6pt]
\T_{10}=\PP_{3,1,2},\qquad\quad &\T_{11}=\PP_{3,1,3},\qquad\quad &\T_{12}=\PP_{3,2},\qquad\quad
\\[6pt]
\T_{13}=\PP_{3,3},\qquad\quad &\T_{14}=\PP_{3,4},\qquad\quad &\T_{15}=\PP_{4,1}.
\end{array}
\end{equation}

Then we obtain the following main statement in the present section
\begin{thm}\label{thm-15podpr}
Let $\T$ be the vector space of the torsion tensors of type $(0,3)$
over the vector space $V$ with almost contact $B$-metric structure
$(\f,\xi,\eta,g)$. The decomposition
\begin{equation}\label{TT-15podpr}
\T=\T_{1}\oplus\T_{2}\oplus\cdots\oplus\T_{15}
\end{equation}
is orthogonal and invariant with respect to the structure group
$\GG\times\II$.
\end{thm}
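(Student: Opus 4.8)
The plan is to obtain the fifteen-fold decomposition by successively refining the coarse four-fold splitting and then checking that the finer trace-based refinements are themselves orthogonal and invariant. First I would invoke \thmref{thm-W1234}, which already gives that $\T=\PP_1\oplus\PP_2\oplus\PP_3\oplus\PP_4$ is an orthogonal decomposition invariant under $\GG\times\II$. Next I would refine each summand separately: \thmref{thm-T1k} decomposes $\PP_1=\PP_{1,1}\oplus\PP_{1,2}\oplus\PP_{1,3}\oplus\PP_{1,4}$, \thmref{thm-T2k} decomposes $\PP_2=\PP_{2,1}\oplus\PP_{2,2}$, \thmref{thm-T3k} decomposes $\PP_3=\PP_{3,1}\oplus\PP_{3,2}\oplus\PP_{3,3}\oplus\PP_{3,4}$, and finally $\PP_4=\PP_{4,1}$ by definition. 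The key structural observation is that orthogonality and $\GG\times\II$-invariance are transitive through nested orthogonal direct sums: since each $\PP_i$ is orthogonal to the others and invariant, and each refinement takes place \emph{inside} a single $\PP_i$ by an orthogonal invariant splitting, the composite pieces remain mutually orthogonal (cross terms from distinct $\PP_i$ vanish already at the coarse level) and each is invariant.

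Second, I would treat the trace-based refinements, namely $\PP_{1,1}=\PP_{1,1,1}\oplus\PP_{1,1,2}$ and $\PP_{1,3}=\PP_{1,3,1}\oplus\PP_{1,3,2}$ from \eqref{W11W13}, together with the analogous splitting $\PP_{3,1}=\PP_{3,1,1}\oplus\PP_{3,1,2}\oplus\PP_{3,1,3}$. Here the essential point is that the torsion forms $t$, $t^*$, $\hat{t}$ defined in \eqref{ttt} are obtained by contracting $T$ with $g^{ij}$ and the structure tensors, hence they are $\GG\times\II$-equivariant linear maps, because every element of $\GG\times\II$ preserves $g$, $\f$, $\xi$ and $\eta$. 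Consequently the conditions $t=0$ and $t^*=0$ cut out invariant subspaces (kernels of equivariant maps), so that $\PP_{1,1,2}$, $\PP_{1,3,2}$ and $\PP_{3,1,3}$ are invariant; the complementary trace-carrying spaces are then their orthogonal complements taken inside the corresponding $\PP_{i,j}$. Since the inner product $\langle\cdot,\cdot\rangle$ on $\T$ is $\GG\times\II$-invariant, the orthogonal complement of an invariant subspace is again invariant, which yields invariance and orthogonality of all the finer pieces at once.

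Finally, I would assemble the outcome by relabelling the subspaces according to \eqref{Ti15}, identifying $\T_1,\dots,\T_{15}$ with the fifteen spaces produced above, so that \eqref{TT-15podpr} holds as an orthogonal $\GG\times\II$-invariant decomposition. The main obstacle is precisely the trace-splitting step: one must verify carefully that $t$, $t^*$, $\hat{t}$ are genuinely equivariant and that the trace-free kernels are complemented by invariant orthogonal subspaces rather than by arbitrary algebraic complements. Everything else reduces to the bookkeeping of nested orthogonal sums, which is routine once the coarse Theorems \ref{thm-W1234}, \ref{thm-T1k}, \ref{thm-T2k} and \ref{thm-T3k} are in place.
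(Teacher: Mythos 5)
Your proposal is correct and follows essentially the same route as the paper, which obtains the theorem by combining Theorems \ref{thm-W1234}, \ref{thm-T1k}, \ref{thm-T2k} and \ref{thm-T3k} with the trace-based refinements of $\PP_{1,1}$, $\PP_{1,3}$ and $\PP_{3,1}$ and then relabelling via \eqref{Ti15}. Your added remarks on the equivariance of the torsion forms and on taking invariant orthogonal complements inside each $\PP_{i,j}$ only make explicit what the paper leaves implicit.
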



\vskip 0.2in \addtocounter{subsection}{1} \setcounter{subsubsection}{0}

\noindent  {\Large\bf \thesubsection. The fifteen classes of affine connections }

\vskip 0.15in

It is well known that any metric connection $\n^*$ (i.e. $\n^*g=0$) with a potential $Q$
regarding $\n$, defined by $Q(x,y,z)=g\left(\n^*_xy-\n_xy,z\right)$,
is
completely determined by its torsion tensor $T$ by means of \eqref{QT},
according to the Hayden theorem \cite{Hay}.

For an almost contact B-metric manifold $\M$, there exist  infinitely many affine connections on the tangent space $T_p\MM$, $p\in \MM$. Then the subspace $\T_{s}$ $(s\in\{1,2,\dots,15\})$, where $T$ belongs,
is an important characteristic of $\n^*$. In such a way the
conditions for $T$ described as the subspace $\T_{s}$ give rise to
the corresponding class of the connection with respect to its
torsion tensor.
Therefore, the conditions of torsion tensors determine corresponding classes of the connections on the tangent bundle derived by the almost contact B-metric structure.

\begin{dfn}\label{dfn-Cs}
It is said that an affine connection $\n^*$ on an almost contact B-metric manifold belongs to a class $\CC_s$, $s\in\{1,2,\allowbreak{}\dots,15\}$, if the torsion tensor
$T^*$ of $\n^*$ belongs to the subspace $\TT_{s}$ in the decomposition
\eqref{TT-15podpr} of $\TT$.
\end{dfn}

Bearing in mind \thmref{thm-15podpr}, we obtain the following classifying
\begin{thm}\label{thm:CC}
The set of affine connections $\CC$ on an almost contact B-metric manifold
is divided into 15 basic classes $\CC_s$, $s\in\{1,2,\allowbreak{}\dots,15\}$, by the decomposition
\begin{equation}\label{CC}
    \CC=\CC_{1}\oplus\CC_{2}\oplus\cdots\oplus\CC_{15}.
\end{equation}
\end{thm}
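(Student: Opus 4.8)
The plan is to read \thmref{thm:CC} as a direct transcription of \thmref{thm-15podpr} from the vector space $\T$ of torsion $(0,3)$-tensors to the set $\CC$ of affine connections, using \dfnref{dfn-Cs} as the dictionary between the two. First I would recall that, by \dfnref{dfn-Cs}, the class membership of an affine connection $\n^*$ is decided solely by the subspace of $\T$ to which its torsion tensor $T^*$ belongs; and that for the geometrically relevant metric connections (those with $\n^* g=0$) the correspondence $\n^*\mapsto T^*$ is in fact bijective, since by the Hayden theorem \cite{Hay} together with the equivariant bijection \eqref{QT} such a connection is completely and reversibly recovered from $T^*$ and the fixed reference connection $\n$. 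Hence the entire classification of connections is faithfully governed by the position of $T^*$ inside $\T$.

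Next I would invoke \thmref{thm-15podpr}: the decomposition $\T=\T_{1}\oplus\T_{2}\oplus\cdots\oplus\T_{15}$ is orthogonal and invariant under $\GG\times\II$. Consequently every torsion tensor $T^*$ admits the unique expression $T^*=\sum_{s=1}^{15}T^*_s$ with $T^*_s\in\T_s$, where the components are produced by the explicit projectors $p_{1,i}$, $p_{2,j}$, $p_{3,k}$, $p_{4,1}$ relabelled in \eqref{Ti15}. Passing this through \dfnref{dfn-Cs}, which places $\n^*$ in $\CC_s$ exactly when $T^*\in\T_s$, the uniqueness of the splitting of $T^*$ transfers verbatim to the splitting of the corresponding class, producing \eqref{CC}; orthogonality and $\GG\times\II$-invariance are then inherited directly from the properties of the decomposition \eqref{TT-15podpr}, so nothing new has to be verified here.

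The step needing the most care is conceptual rather than computational, namely the correct reading of the symbol $\oplus$ on $\CC$. The totality of affine connections is an affine, not a linear, space, so the direct sum cannot mean literal addition of connections; it must be understood as the direct-sum structure carried over from the torsion tensors through the Hayden correspondence, with the Levi-Civita connection $\n$ (for which $T^*=0$, and which therefore lies in every $\CC_s$) serving as the common origin of all the classes. I would thus state explicitly that $\CC_s$ consists of those $\n^*$ whose torsion lies in $\T_s$, and that \eqref{CC} means each metric connection is uniquely reconstructed from its fifteen torsion components $T^*_s$ together with $\n$ via \eqref{QT}. Once this convention is recorded, the theorem follows formally from \thmref{thm-15podpr} and requires no estimates.
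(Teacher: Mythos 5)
Your proposal is correct and follows essentially the same route as the paper, which derives \thmref{thm:CC} directly from the decomposition of the torsion space in \thmref{thm-15podpr} together with \dfnref{dfn-Cs}; the paper offers no further argument beyond "bearing in mind \thmref{thm-15podpr}". Your additional remarks on the Hayden correspondence for metric connections and on the reading of $\oplus$ over the affine space of connections are sensible clarifications but do not change the substance of the argument.
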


The special class $\CC_{0}$ contains all symmetric connections and it corresponds to
the zero vector subspace $\TT_{0}$ of
$\TT$ determined by the condition $T = 0$. This class belongs to any other
class $\CC_{s}$. The Levi-Civita connections
$\n$ and $\widetilde{\n}$ are symmetric and therefore they belong to the class $\CC_{0}$.

The classes $\CC_{i}\oplus\CC_{j}\oplus\dots$, which are direct sums
of basic classes, are defined in a natural way by
the corresponding subspaces $\TT_{i}\oplus\TT_{j}\oplus\dots$,
following \dfnref{dfn-Cs}. According to \eqref{CC}, the number of all
classes of affine connections on $\M$ is $2^{15}$ and their
definition conditions are readily obtained from those of the basic 15
classes.

\vskip 0.2in \addtocounter{subsection}{1}

\noindent  {\Large\bf\thesubsection. Some natural connections in the introduced classification}

\vskip 0.15in


Further in the present section we discuss the three mentioned natural
connections with torsion on $\M$. Natural connections are a
generalization of the Levi-Civita connection.

\begin{prop}\label{prop-nat}
Let $\n^*$ be a natural connection with torsion $T$ on an almost
contact B-met\-ric manifold $\M$. Then the following implications
hold true:
\[
\begin{array}{rllll}
T\in\T_{1}\oplus\T_{2}\oplus\T_{6}\oplus\T_{12}  \;&\Rightarrow\;  \MM\in\F_0;\; & %
T\in\T_{3} \;&\Rightarrow\;   \MM\in\F_3; \\[6pt] %
T\in\T_{4} \;&\Rightarrow\;   \MM\in\F_1; \; &
T\in\T_{5} \;&\Rightarrow\;   \MM\in\F_2; \\[6pt] %
T\in\T_{7} \;&\Rightarrow\;   \MM\in\F_7; \; &%
T\in\T_{8}  \;&\Rightarrow\;  \MM\in\F_8\oplus\F_{10};\\[6pt]%
T\in\T_{9} \;&\Rightarrow\;   \MM\in\F_5; \; &%
T\in\T_{10} \;&\Rightarrow\;   \MM\in\F_4; \\[6pt]%
T\in\T_{11} \;&\Rightarrow\;   \MM\in\F_6; \; &%
T\in\T_{13} \;&\Rightarrow\;   \MM\in\F_9; \\[6pt]%
T\in\T_{14} \;&\Rightarrow\;   \MM\in\F_{10}; \; & %
T\in\T_{15} \;&\Rightarrow\;   \MM\in\F_{11}.
\end{array}
\]
\end{prop}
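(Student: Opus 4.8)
The plan is to exploit that a natural connection is rigidly tied to the fundamental tensor $F$. By \thmref{thm-nat} and the characterisation \eqref{1ab}, the potential $Q$ of a natural connection $\n^*$ satisfies $Q(x,y,z)=-Q(x,z,y)$ together with $F(x,y,z)=Q(x,y,\f z)-Q(x,\f y,z)$, while the Hayden correspondence \eqref{TQ}--\eqref{QT} recovers the potential from the torsion via $2Q(x,y,z)=T(x,y,z)-T(y,z,x)+T(z,x,y)$. Composing these, I obtain an explicit linear map $\Psi$ from the torsion space $\T$ to the space of fundamental tensors, $F=\Psi(T)$; because $\n^*$ is natural, $\Psi(T)$ coincides with the genuine fundamental tensor and in particular obeys \eqref{F-prop}. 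The proposition is then exactly the assertion that $\Psi(\T_s)$ lands in the subspace of $F$-tensors belonging to the class $\F_i$ indicated, and I would prove it by tracking each subspace of the decomposition \eqref{Ti15} through $\Psi$.

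First I would compute the kernel of $\Psi$. The second relation of \eqref{1ab} merely records $\n^*g=0$, which holds automatically for the metric connection attached to $T$ (one checks from \eqref{QT} that the induced $Q$ is already skew in its last two slots), so the whole content of $\Psi$ sits in the first relation, vanishing precisely when $Q(x,y,\f z)=Q(x,\f y,z)$. Reading this off the defining relations of the subspaces, I expect $\ker\Psi=\T_1\oplus\T_2\oplus\T_6\oplus\T_{12}$: these are the ``free'' directions a natural connection may carry while leaving $F$ untouched. Hence $T\in\T_1\oplus\T_2\oplus\T_6\oplus\T_{12}$ forces $F=0$, i.e. $\M\in\F_0$, which is the first line of the statement.

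On the complement the argument is organised by equivariance. Both the decomposition $\T=\T_1\oplus\cdots\oplus\T_{15}$ of \thmref{thm-15podpr} and the classification \eqref{Fi} are orthogonal and invariant under the structure group $\GG\times\II$, and $\Psi$, being assembled from $\f$, $\xi$, $\eta$ and $g$, is $\GG\times\II$-equivariant; therefore $\Psi$ carries each invariant $\T_s$ into a $\GG\times\II$-invariant subspace of the $F$-space, i.e. a sum of basic classes $\F_i$. To pin down which, I would substitute the explicit symmetry relations of $\T_s$ (from \propref{prop-T1k}, \propref{prop-T2k}, \propref{prop-T3k} and \eqref{W1234}, \eqref{p_41}) into $\Psi(T)$, reduce with \eqref{str1}, \eqref{str2} and \eqref{F-prop}, and recognise the outcome among the components $F^i$ of \eqref{Fi-Ico}. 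For the six horizontal subspaces $\T_1,\dots,\T_6$ inside $\PP_1$, where $T(\xi,\cdot,\cdot)=T(\cdot,\cdot,\xi)=0$, the computation degenerates to the purely almost-complex situation of \S3: under the Norden canonical correspondence the horizontal torsion classes match $\W_3$, $\W_1$, $\W_2$, giving $\T_3\Rightarrow\F_3$, $\T_4\Rightarrow\F_1$, $\T_5\Rightarrow\F_2$, with the trace distinguishing $\F_1$ from $\F_2$ exactly as $\ta\neq0$ separates $\W_1$ from $\W_2$. The remaining vertical and mixed cases $\T_7,\dots,\T_{15}$ (coming from $\PP_2$, $\PP_3$, $\PP_4$) are treated one by one in the same manner, and I would cross-check each outcome against the torsion forms of \corref{cor-t-T1i}, \corref{cor-t-W2}, \corref{cor-t-T3k} and against \propref{prop:FiT} for the $\f$-canonical connection.

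The main obstacle is the bookkeeping of the mixed cases rather than any single hard idea. The subtle point is that $\T_8=\PP_{2,2}$ maps to the reducible target $\F_8\oplus\F_{10}$: the defining relation $T(x,y,\xi)=T(\f x,\f y,\xi)$ does not, after passing through $\Psi$, separate the $\F_8$-type part from the $\F_{10}$-type part, so one must verify only the conditions common to $\F_8\oplus\F_{10}$ and not expect a single basic class. Keeping the $\eta$-contractions and the signs straight through the cyclic permutation in \eqref{QT} for the closely related subspaces $\T_{13}\Rightarrow\F_9$, $\T_{14}\Rightarrow\F_{10}$ and $\T_{15}\Rightarrow\F_{11}$ is where errors would most easily creep in; once these contractions are handled uniformly, each implication follows by matching the reduced expression against \eqref{Fi-Ico}.
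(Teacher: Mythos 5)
Your proposal follows essentially the same route as the paper: the linear map $\Psi$ you describe is exactly the identity $2F(x,y,z)=T(x,y,\f z)-T(y,\f z,x)+T(\f z,x,y)-T(x,\f y,z)+T(\f y,z,x)-T(z,x,\f y)$ obtained by combining \eqref{QT} with \eqref{1ab}, and the paper likewise substitutes the projection formulas $T=p_{i,j}(T)$ for each subspace and matches the result against \eqref{Fi}. The equivariance remark and the explicit identification of the kernel are nice framing, but the computational core coincides with the paper's proof.
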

\begin{proof}
The implications follow from \eqref{QT}, \eqref{1ab}, \eqref{Fi},
\eqref{Ti15} and the corresponding characteristic conditions of
$\PP_{i,j}$ and $\PP_{i,j,k}$ as well as the projection operators
$p_{i,j}$. We show the proof in detail for some classes and the
rest follow in a similar way.

By virtue of \eqref{QT} and \eqref{1ab} we have
\begin{equation}\label{FT}
\begin{array}{l}
2F(x,y,z)=T(x,y,\f z)-T(y,\f z,x)+T(\f z,x,y)\\[6pt]
\phantom{2F(x,y,z)=}
-T(x,\f y,z)+T(\f y,z,x)-T(z,x,\f y).
\end{array}
\end{equation}

Let us consider $T\in\PP_{1,1}=\T_1\oplus\T_2$, which is equivalent
to $T=p_{1,1}(T)$. Then, ac\-cording to \propref{prop-p_1i}, we
have
\[
\begin{array}{l}
T(x,y,z)=-\dfrac{1}{4}\bigl\{T(\f^2 x,\f^2 y,\f^2 z)-T(\f x,\f
y,\f^2 z)\\[6pt]
\phantom{T(x,y,z)=-\dfrac{1}{4}\bigl\{}
-T(\f x,\f^2 y,\f z)-T(\f^2
x,\f y,\f z)\bigr\},
\end{array}
\]
which together with \eqref{FT} imply $F(x,y,z)=0$. Therefore, we
obtain $\MM\in\F_0$.

Now, let us suppose $T\in\PP_{1,2}=\T_3$ and hence $T=p_{1,2}(T)$,
which has the following form, taking into account
\propref{prop-p_1i}:
\[
\begin{array}{l}
T(x,y,z)=-\dfrac{1}{4}\bigl\{T(\f^2 x,\f^2 y,\f^2 z)-T(\f x,\f
y,\f^2 z)\\[6pt]
\phantom{T(x,y,z)=-\dfrac{1}{4}\bigl\{}
+T(\f x,\f^2 y,\f z)+T(\f^2
x,\f y,\f z)\bigr\}.
\end{array}
\]
Then, according to the latter equality and \eqref{FT}, we obtain
\begin{subequations}\label{FTT}
\begin{equation}
\begin{split}
F(x,y,z)&=-\dfrac{1}{4}\bigl\{-T(\f^2 x,\f^2 y,\f z)+T(\f x,\f
y,\f z)\\[6pt]
&\phantom{=-\dfrac{1}{4}\bigl\{}
+T(\f^2 x,\f y,\f^2 z)+T(\f x,\f^2 y,\f^2 z)\\[6pt]
&\phantom{=-\dfrac{1}{4}\bigl\{}
-T(\f z,\f^2 x,\f^2 y)-T(\f^2 z,\f x,\f^2 y)
\end{split}
\end{equation}
\begin{equation}
\begin{split}
&\phantom{=-\dfrac{1}{4}\bigl\{}
 +T(\f^2 z,\f^2 x,\f y)-T(\f z,\f x,\f y)\bigr\}
\end{split}
\end{equation}
\end{subequations}
and consequently $F(\xi,y,z)=F(x,y,\xi)=0$. Next, we take the
cyclic sum of \eqref{FTT} by the arguments $x,y,z$ and the result
is $\sx F(x,y,z)=0$. Therefore, $\M$ belongs to $\F_3$.
\end{proof}

%
%
%

Bearing in mind the class of the almost contact B-metric manifolds
with $N=0$ and \propref{prop-nat}, we obtain immediately
\begin{cor}
An almost contact B-metric manifold $\M$ 
is normal, i.e. it has vanishing $N$, if any natural
connection on $\M$ belongs to the class
$\CC_{4}\oplus\CC_{5}\oplus\CC_{9}\oplus\CC_{10}\oplus\CC_{11}$.
\end{cor}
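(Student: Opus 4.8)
The plan is to reduce the statement to \propref{prop-nat} together with the known description of the normal class. First I would translate the hypothesis: by \dfnref{dfn-Cs}, a natural connection $\n^*$ with torsion $T$ lies in $\CC_4\oplus\CC_5\oplus\CC_9\oplus\CC_{10}\oplus\CC_{11}$ precisely when $T$ belongs to the invariant subspace $\T_4\oplus\T_5\oplus\T_9\oplus\T_{10}\oplus\T_{11}$ of the decomposition \eqref{TT-15podpr}. Using the orthogonality of that decomposition, I would write $T=T_4+T_5+T_9+T_{10}+T_{11}$ with each $T_s\in\T_s$.

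Next I would exploit that, for a natural connection, the fundamental tensor $F$ is recovered from $T$ through the relation \eqref{FT} (which encodes naturalness via \eqref{1ab} and \eqref{QT}), and that this relation is \emph{linear} in $T$. Hence $F$ splits accordingly as the sum of the images of the five components $T_s$. Because the assignment $T\mapsto F$ given by \eqref{FT} is equivariant with respect to the structure group $\GG\times\II$, and both the torsion decomposition and the classification \eqref{Fi} are invariant under this group, the image of each single subspace $\T_s$ lands in a single basic class of $F$-tensors. This is exactly what the relevant implications of \propref{prop-nat} assert: the contributions coming from $\T_4$, $\T_5$, $\T_9$, $\T_{10}$, $\T_{11}$ produce $F$-parts of classes $\F_1$, $\F_2$, $\F_5$, $\F_4$, $\F_6$, respectively.

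Collecting these, I would conclude $F\in\F_1\oplus\F_2\oplus\F_4\oplus\F_5\oplus\F_6$. It then remains only to invoke the fact recalled in \S4, namely that this direct sum is precisely the class of normal almost contact B-metric manifolds, i.e. those with vanishing Nijenhuis tensor $N$. Therefore $\M$ is normal, which is the claim.

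I expect the only genuinely delicate point to be the passage from the single-subspace implications of \propref{prop-nat} to their direct sum: one must verify that the linearity of \eqref{FT}, together with the invariance and orthogonality of both decompositions, rules out any cross term mixing the classes, so that each $T_s$ feeds only its designated $\F_{i}$ summand. Once this equivariance argument is made explicit, the conclusion is immediate and no further computation is required.
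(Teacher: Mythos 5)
Your proposal is correct and follows essentially the same route as the paper, which derives the corollary immediately from \propref{prop-nat} together with the fact (recalled in \S4) that the normal class is $\F_1\oplus\F_2\oplus\F_4\oplus\F_5\oplus\F_6$. The only difference is that you make explicit the linearity/equivariance argument needed to pass from the single-subspace implications of \propref{prop-nat} to their direct sum — a step the paper treats as immediate — and that added care is sound.
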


Similarly, \propref{prop-Nhat=0} and \propref{prop-nat} imply
\begin{cor}
An almost contact B-metric manifold $\M$ 
has vanishing $\widehat{N}$, if any natural connection
on $\M$ belongs to the class $\CC_{3}\oplus\CC_{7}$.
\end{cor}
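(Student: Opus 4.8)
The plan is to obtain the statement as an immediate consequence of \propref{prop-nat} and \propref{prop-Nhat=0}, in exact parallel with the preceding corollary on the vanishing of $N$. First I would unwind \dfnref{dfn-Cs}: by definition, a natural connection $\n^*$ on $\M$ lies in the class $\CC_3\oplus\CC_7$ precisely when its torsion tensor $T$ lies in the subspace $\T_3\oplus\T_7$ of the decomposition \eqref{TT-15podpr}. The decisive input is then the list of implications in \propref{prop-nat}, which for a natural connection yields $T\in\T_3\Rightarrow\MM\in\F_3$ and $T\in\T_7\Rightarrow\MM\in\F_7$.

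Next I would promote these two single-component implications to their direct sum. Since the fundamental tensor $F$ is recovered from the torsion $T$ of a natural connection through the linear formula \eqref{FT}, the orthogonal and $\GG\times\II$-invariant splitting $T=T^{(3)}+T^{(7)}$ with $T^{(3)}\in\T_3$ and $T^{(7)}\in\T_7$ induces a matching decomposition $F=F^{3}+F^{7}$ into the components displayed in \eqref{Fi-Ico}. Hence $T\in\T_3\oplus\T_7$ forces $F=F^3+F^7$, that is $\MM\in\F_3\oplus\F_7$.

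Finally I would invoke \propref{prop-Nhat=0}, which identifies $\F_3\oplus\F_7$ as exactly the class of almost contact B-metric manifolds on which the associated Nijenhuis tensor $\widehat{N}$ vanishes. Combining the two steps gives $\widehat{N}=0$ on $\MM$, as claimed.

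The only point requiring attention---and the closest thing to an obstacle in an otherwise formal argument---is the passage from the per-subspace implications of \propref{prop-nat} to the direct sum $\T_3\oplus\T_7$. This rests solely on the linearity of the correspondence $T\mapsto F$ in \eqref{FT} together with the invariance of the decomposition \eqref{TT-15podpr} under the structure group $\GG\times\II$, both of which are already established above; consequently no further computation is needed and the corollary follows at once.
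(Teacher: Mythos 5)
Your proof is correct and takes essentially the same route as the paper, which states the corollary as an immediate consequence of \propref{prop-Nhat=0} and \propref{prop-nat}. Your explicit justification of the passage from the single-component implications to the direct sum $\T_3\oplus\T_7$, via the linearity of the correspondence $T\mapsto F$ in \eqref{FT}, is a sound filling-in of the step the paper leaves implicit.
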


\vskip 0.2in \addtocounter{subsubsection}{1}

\noindent  {\Large\bf \emph{\thesubsubsection. The $\f$B-connection in the classification}}

\vskip 0.15in



The $\f$B-connection $\n'$ is discussed in \S5 and it has a torsion tensor $T'$ and corresponding torsion 1-forms, given in \eqref{fBT=F} and \eqref{tB}, respectively.

Applying Propositions \ref{prop-p_1i}, \ref{prop-p_2j},
\ref{prop-p_3k} and equation  \eqref{p_41} for the torsion tensor
$T'$ from \eqref{fBT=F}, we obtain the components of $T'$
in each of the subspaces $\PP_{i,j}$:
\begin{subequations}\label{pijT-B}
\begin{equation}
\begin{split}
&p_{1,1}(T')(x,y,z)=0,\quad\\[6pt]
&p_{1,2}(T')(x,y,z)=\dfrac{1}{4}\left\{F(\f x,\f y,\f z)-F(\f^2 x,\f^2 y,\f z)\right.\\[6pt]
&\phantom{p_{1,2}(T')(x,y,z)=-\dfrac{1}{4}}\left.
-F(\f y,\f x,\f z)+F(\f^2 y,\f^2 x,\f z)
\right\}
,\\[6pt]
&p_{1,3}(T')(x,y,z)=-\dfrac{1}{8}\bigl\{
F(\f^2 z,\f^2 y,\f x) + F(\f^2 x,\f^2 y,\f z)\\[6pt]
&
\phantom{p_{1,3}(T')(x,y,z)=-\dfrac{1}{8}\bigl\{}
+ F(\f x,\f y,\f z) - F(\f^2 z,\f^2 x,\f y)\\[6pt]
&
\phantom{p_{1,3}(T')(x,y,z)=-\dfrac{1}{8}\bigl\{}
- F(\f^2 y,\f^2 x,\f z) - F(\f y,\f x,\f z)
\bigr\},\\[6pt]%
\end{split}
\end{equation}
\begin{equation}
\begin{split}
&p_{1,4}(T')(x,y,z)=-\dfrac{1}{8}\bigl\{
F(\f^2 z,\f^2 y,\f x) - F(\f^2 x,\f^2 y,\f z)\\[6pt]
&
\phantom{p_{1,4}(T')(x,y,z)=-\dfrac{1}{8}\bigl\{}
- F(\f x,\f y,\f z) - F(\f^2 z,\f^2 x,\f y)\\[6pt]
&
\phantom{p_{1,4}(T')(x,y,z)=-\dfrac{1}{8}\bigl\{}
+ F(\f^2 y,\f^2 x,\f z) + F(\f y,\f x,\f z)
\bigr\},\\[6pt]%
&p_{2,1}(T')(x,y,z)=-\dfrac{1}{2}\eta(z)\bigl\{
F(\f^2 x,\f y,\xi)- F(\f x, y,\xi)\\[6pt]
&
\phantom{p_{2,1}(T')(x,y,z)=-\dfrac{1}{2}\eta(z)\bigl\{}
-F(\f^2 y,\f x,\xi)+ F(\f y, x,\xi)
\bigr\},\\[6pt]
&p_{2,2}(T')(x,y,z)=-\dfrac{1}{2}\eta(z)\bigl\{
F(\f^2 x,\f y,\xi)+ F(\f x, y,\xi)\\[6pt]
&
\phantom{p_{2,2}(T')(x,y,z)=-\dfrac{1}{2}\eta(z)\bigl\{}
-F(\f^2 y,\f x,\xi)- F(\f y, x,\xi)
\bigr\},\\[6pt]
&p_{3,1}(T')(x,y,z)=
                \dfrac{1}{4}\bigl\{
                \eta(y)\bigl[F(\f^2 x,\f z,\xi)+ F(\f^2 z,\f x,\xi)\\[6pt]
 &\phantom{p_{3,1}(T')(x,y,z)=\dfrac{1}{4}\bigl\{\eta(y)\bigl[}
                -F(\f x, z,\xi)- F(\f z, x,\xi)\bigr]\\[6pt]
 &\phantom{p_{3,1}(T')(x,y,z)=\dfrac{1}{4}\bigl\{}
                -\eta(x)\bigl[F(\f^2 y,\f z,\xi)+ F(\f^2 z,\f y,\xi)\\[6pt]
 &\phantom{p_{3,1}(T')(x,y,z)=\dfrac{1}{4}\bigl\{-\eta(x)\bigl[}
                -F(\f y, z,\xi)- F(\f z, y,\xi)\bigr]\bigr\},\\[6pt]
&p_{3,2}(T')(x,y,z)=
                \dfrac{1}{4}\bigl\{\eta(y)\bigl[F(\f^2 x,\f z,\xi)- F(\f^2 z,\f x,\xi)\\[6pt]
 &\phantom{p_{3,2}(T')(x,y,z)=\dfrac{1}{4}\bigl\{\eta(y)\bigl[}
                -F(\f x, z,\xi)+ F(\f z, x,\xi)\bigr]\\[6pt]
 &\phantom{p_{3,2}(T')(x,y,z)=\dfrac{1}{4}\bigl\{}
                -\eta(x)\bigl[F(\f^2 y,\f z,\xi)- F(\f^2 z,\f y,\xi)\\[6pt]
 &\phantom{p_{3,2}(T')(x,y,z)=\dfrac{1}{4}\bigl\{-\eta(x)\bigl[}
                -F(\f y, z,\xi)+ F(\f z, y,\xi)\bigr]
                \bigr\},\\[6pt]
&p_{3,3}(T')(x,y,z)=
                \dfrac{1}{4}\bigl\{\eta(y)\bigl[F(\f^2 x,\f z,\xi)+F(\f^2 z,\f x,\xi)\\[6pt]
 &\phantom{p_{3,3}(T')(x,y,z)=\dfrac{1}{4}\bigl\{\eta(y)\bigl[}
                +F(\f x, z,\xi)+F(\f z, x,\xi)\bigr]\\[6pt]
 \end{split}
\end{equation}
\begin{equation}
\begin{split}
 &\phantom{p_{3,3}(T')(x,y,z)=\dfrac{1}{4}\bigl\{}
                -\eta(x)\bigl[F(\f^2 y,\f z,\xi)+F(\f^2 z,\f y,\xi)\\[6pt]
 &\phantom{p_{3,3}(T')(x,y,z)=\dfrac{1}{4}\bigl\{-\eta(x)\bigl[}
                +F(\f y, z,\xi)+F(\f z, y,\xi)\bigr]
                \bigr\},\\[6pt]
&p_{3,4}(T')(x,y,z)=\dfrac{1}{4}\bigl\{\eta(y)\bigl[F(\f^2 x,\f z,\xi)-F(\f^2 z,\f x,\xi)\\[6pt]
 &\phantom{p_{3,4}(T')(x,y,z)=\dfrac{1}{4}\bigl\{\eta(y)\bigl[}
                +F(\f x, z,\xi)-F(\f z, x,\xi)\\[6pt]
 &\phantom{p_{3,4}(T')(x,y,z)=\dfrac{1}{4}\bigl\{\eta(y)\bigl[}
                +2F(\xi,\f x,\f^2 z)\bigr]\\[6pt]
&\phantom{p_{3,4}(T')(x,y,z)=\dfrac{1}{4}\bigl\{}
                -\eta(x)\bigl[F(\f^2 y,\f z,\xi)-F(\f^2 z,\f y,\xi)\\[6pt]
&\phantom{p_{3,4}(T')(x,y,z)=\dfrac{1}{4}\bigl\{-\eta(x)\bigl[}
                +F(\f y, z,\xi)-F(\f z, y,\xi)\\[6pt]
&\phantom{p_{3,4}(T')(x,y,z)=\dfrac{1}{4}\bigl\{-\eta(x)\bigl[}
                +2F(\xi,\f y,\f^2 z)\bigr]
                \bigr\},\\[6pt]
&p_{4,1}(T')(x,y,z)=\,\eta(z)\left\{\eta(x)\omega(\f
y)-\eta(y)\omega(\f x)\right\}.
\end{split}
\end{equation}
\end{subequations}

Such a way we establish that the torsion $T'$ of the $\f$B-connection $\n'$ on $\M$ belongs to
\(
\T_{3}\oplus\T_{4}\oplus\cdots\oplus\T_{15}\). Thus, the position of $\n'$
in the classification \eqref{CC} is determined as follows 
%
\begin{prop}\label{prop-fB}
The $\f$B-connection on $\M$ belongs to the class
\(
\CC_{3}\oplus\CC_{4}\oplus\cdots\oplus\CC_{15}\).
\end{prop}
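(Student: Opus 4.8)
The plan is to read off the class membership directly from the projection computations that have just been carried out in \eqref{pijT-B}. First I would recall that, by \dfnref{dfn-Cs} extended in the natural way to direct sums, the $\f$B-connection $\n'$ belongs to $\CC_{3}\oplus\CC_{4}\oplus\cdots\oplus\CC_{15}$ precisely when its torsion tensor $T'$ lies in the subspace $\T_{3}\oplus\T_{4}\oplus\cdots\oplus\T_{15}$ of the decomposition \eqref{TT-15podpr}. Using the identification \eqref{Ti15}, the complementary subspace is $\T_{1}\oplus\T_{2}=\PP_{1,1,1}\oplus\PP_{1,1,2}=\PP_{1,1}$, whose associated projection operator is $p_{1,1}$. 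Hence it suffices to verify that $T'$ has vanishing component in $\PP_{1,1}$, i.e. $p_{1,1}(T')=0$, while checking that its projections onto the remaining subspaces are not forced to vanish.

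Second, I would invoke the explicit expressions \eqref{pijT-B}, obtained by applying \propref{prop-p_1i}, \propref{prop-p_2j}, \propref{prop-p_3k} and \eqref{p_41} to the torsion \eqref{fBT=F}. The very first of these formulae reads $p_{1,1}(T')(x,y,z)=0$, which is exactly the required vanishing of the $\T_{1}\oplus\T_{2}$-component. The subsequent projections $p_{1,2}(T'),\,p_{1,3}(T'),\,\dots,\,p_{4,1}(T')$ collected there are expressed through components of $F$ that do not vanish identically on a general almost contact B-metric manifold, so $T'$ genuinely spreads across $\T_{3},\dots,\T_{15}$ and the class cannot be replaced by a smaller one. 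Thus, since $p_{1,1}(T')=0$, we obtain $T'\in\T_{3}\oplus\T_{4}\oplus\cdots\oplus\T_{15}$, and by \dfnref{dfn-Cs} the connection $\n'$ lies in $\CC_{3}\oplus\CC_{4}\oplus\cdots\oplus\CC_{15}$.

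The deduction of the final statement is therefore immediate once \eqref{pijT-B} is available: the entire content is packed into the single identity $p_{1,1}(T')=0$. The genuinely laborious part, which the paper has already dispatched, is the verification of the projection formulae \eqref{pijT-B} themselves, requiring substitution of \eqref{fBT=F} into the projectors and simplification via the symmetries \eqref{F-prop} of $F$ together with the structural relations \eqref{str1} and \eqref{str2}. I expect the main conceptual check to be $p_{1,1}(T')=0$: it reflects the fact that, since by \eqref{Tb} the restriction of $\n'$ to $\HC$ coincides with the B-connection $\DDD'$ on the corresponding almost Norden manifold, the horizontal behaviour of $T'$ mirrors that of the B-connection, whose torsion is known to avoid the analogous $\T_{1}$-type subspace.
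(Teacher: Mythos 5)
Your proposal is correct and follows essentially the same route as the paper: the paper also establishes the result by computing the projections \eqref{pijT-B} of $T'$ onto the subspaces $\PP_{i,j}$ and reading off that the only vanishing one is $p_{1,1}(T')=0$, which (since $\T_1\oplus\T_2=\PP_{1,1,1}\oplus\PP_{1,1,2}=\PP_{1,1}$) is exactly the condition $T'\in\T_{3}\oplus\cdots\oplus\T_{15}$, hence $\n'\in\CC_{3}\oplus\cdots\oplus\CC_{15}$ by \dfnref{dfn-Cs}. Your additional observation that the remaining projections are generically nonzero is a sharpness remark beyond what the proposition asserts, but it is consistent with the paper's presentation.
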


\vskip 0.2in \addtocounter{subsubsection}{1}

\noindent  {\Large\bf \emph{\thesubsubsection. The $\f$-canonical connection in the classification}}

\vskip 0.15in

According to the classification of the torsion tensors given above in the present section
and \propref{prop:FiT}, we get
the following
\begin{prop}\label{prop:FiT''jk}
The correspondence between the classes $\F_i$ $(i\in\{1,2,\allowbreak{}\dots,11\})$ of the manifolds $\M$ and the classes $\CC_{s}$ $(s\in\{1,2,\allowbreak{}\dots,15\})$ of the $\f$-canonical connection  $\n''$ on $\M$ is given as follows:
\[
\begin{array}{ll}
\M\in\F_0\quad & \Leftrightarrow \quad
\n''\in\CC_{1}\oplus\CC_{2}\oplus\CC_{6}\oplus\CC_{12};
\\[6pt]
\M\in\F_1\quad & \Leftrightarrow \quad \n''\in\CC_{4};
\\[6pt]
\M\in\F_2\quad & \Leftrightarrow \quad \n''\in\CC_{5};
\\[6pt]
\M\in\F_3\quad & \Leftrightarrow \quad \n''\in\CC_{3};
\end{array}
\]
\[
\begin{array}{ll}
\M\in\F_4\quad & \Leftrightarrow \quad \n''\in\CC_{10};
\\[6pt]
\M\in\F_5\quad & \Leftrightarrow \quad \n''\in\CC_{9};
\\[6pt]
\M\in\F_6\quad & \Leftrightarrow \quad \n''\in\CC_{11};
\\[6pt]
\M\in\F_7\quad & \Leftrightarrow \quad \n''\in\CC_{7}\oplus\CC_{12};
\\[6pt]
\M\in\F_8\quad & \Leftrightarrow \quad \n''\in\CC_{8}\oplus\CC_{14};
\\[6pt]
\M\in\F_9\quad & \Leftrightarrow \quad \n''\in\CC_{13};
\\[6pt]
\M\in\F_{10}\quad & \Leftrightarrow \quad \n''\in\CC_{14};
\\[6pt]
\M\in\F_{11}\quad & \Leftrightarrow \quad \n''\in\CC_{15}.
\end{array}
\]
\end{prop}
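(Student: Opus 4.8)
The plan is to reduce the asserted equivalences to two ingredients already in hand: the explicit form of the torsion $T''$ of the $\f$-canonical connection on each basic class, recorded in \propref{prop:FiT}, and the intrinsic descriptions of the fifteen subspaces $\T_{s}$ of the decomposition \eqref{TT-15podpr}, collected in \eqref{W1234}, \propref{prop-T1k}, \propref{prop-T2k} and \propref{prop-T3k} together with the refinement \eqref{W11W13} and the splitting of $\PP_{3,1}$. Since $\n''$ is a natural connection, $F$ and $T''$ determine each other linearly --- $T''$ is built from $F$ through \eqref{fBT=F} and \eqref{T-can-F}, while $F$ is recovered from $T''$ through the natural-connection identity \eqref{FT} --- so the correspondence to be proved is exactly the image, under this linear isomorphism, of the splitting of $F$-space into the eleven basic classes. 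This observation is what will make the converse implications automatic.

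First I would treat the forward implications class by class. For a fixed $i$ I take the expression of $T''$ from \propref{prop:FiT} and verify that it satisfies the defining relations of the claimed subspace: for $\F_1$ the vectorial form of $T''$ is purely horizontal, hence lies in $\PP_1=\iim(p_1)$ and more precisely in $\PP_{1,3}$; for $\F_3$ the purely horizontal, cyclically vanishing form places $T''$ in $\PP_{1,2}=\T_{3}$; and for $\F_9$, $\F_{10}$ the symmetry and $\f$-behaviour of $T''(\xi,\cdot,\cdot)$ match verbatim the conditions defining $\PP_{3,3}$ and $\PP_{3,4}$ in \propref{prop-T3k}. The horizontal block $\F_1,\F_2,\F_3$ simply reproduces the canonical-connection classification on almost Norden manifolds of \S3, restricted to $\HC$.

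The trace bookkeeping is where subspaces of equal geometric type are separated, and this is the step I would carry out most carefully. The pairs $\T_{4}/\T_{5}$ inside $\PP_{1,3}$ and $\T_{1}/\T_{2}$ inside $\PP_{1,1}$, as well as the triple $\T_{9},\T_{10},\T_{11}$ inside $\PP_{3,1}$, differ only by the vanishing of the torsion forms. Using \eqref{TD-sledi}, which expresses $t''$, $t''^{*}$, $\hat{t}''$ through the Lee forms $\ta$, $\ta^{*}$, $\om$, together with \corref{cor-t-T1i} and \corref{cor-t-T3k}, the Lee-form conditions distinguishing $\F_1$ from $\F_2$ (namely $\ta\neq0$ versus $\ta=0$) and $\F_4$ from $\F_5$ (the respective roles of $\ta(\xi)$ and $\ta^{*}(\xi)$) translate precisely into the trace conditions selecting $\T_{4}$ against $\T_{5}$ and $\T_{10}$ against $\T_{9}$.

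The main obstacle is the converse for the classes whose canonical torsion straddles two subspaces, $\F_7\leftrightarrow\CC_7\oplus\CC_{12}$ and $\F_8\leftrightarrow\CC_8\oplus\CC_{14}$, and in particular disentangling $\F_8$ from $\F_{10}$, since both feed the component $\T_{14}=\PP_{3,4}$. Here \propref{prop-nat} alone is too weak, as it only yields $\T_{8}\Rightarrow\F_8\oplus\F_{10}$, so instead I would apply the projectors $p_{2,j}$ and $p_{3,k}$ of \propref{prop-p_2j} and \propref{prop-p_3k} to $T''$ and read off that the $\PP_{2,2}$-component of the $\f$-canonical torsion is nonzero exactly for the $\F_8$-part of $F$ and vanishes for the $\F_{10}$-part. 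Because $F\mapsto T''$ is a bijection, the presence or absence of the $\T_{8}$ (respectively $\T_{7}$) component uniquely recovers the class, which closes the two mixed converse implications; the remaining single-subspace cases follow at once from \propref{prop-nat}, and the $\F_0$ case is exactly its first implication applied to $\n''$, where $T''=0$.
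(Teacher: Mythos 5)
Your proposal is correct and follows essentially the same route as the paper, which derives this proposition precisely by combining the torsion expressions of \propref{prop:FiT} with the fifteen-fold decomposition of $\T$ (via \propref{prop-T1k}--\propref{prop-T3k}, the trace refinements, and the projectors $p_{i,j}$), the converses resting on the linear bijection between $F$ and $T''$ for natural connections. One cosmetic slip: the subspace $\PP_{1,2}=\T_{3}$ is cut out by the $\f$-invariance conditions $T=-T(\f\cdot,\f\cdot,\cdot)=T(\f\cdot,\cdot,\f\cdot)$ rather than by a vanishing cyclic sum (that condition defines $\PP_{1,3}$), but your placement of $T''=\frac14 N^{\mathrm{h}}$ for $\F_3$ in $\T_{3}$ is nevertheless correct.
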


For the example in Subsection 5.3.3 on page~\pageref{exa:sphera}, it follows that the statement $T''\in\PP_{3,1,1}\oplus\PP_{3,1,2}$ is valid,
which supports  \propref{prop:FiT''jk}, bearing in mind \eqref{Ti15}.

\vskip 0.2in \addtocounter{subsubsection}{1}

\noindent  {\Large\bf \emph{\thesubsubsection. The $\f$KT-connection in the classification}}

\vskip 0.15in

In a similar way as for \eqref{pijT-B}, from \eqref{fKT-T37} we get the
following non-zero components of $T'''$:
\begin{equation}\label{pijT-KT}
\begin{split}
%
p_{1,2}(T''')(x,y,z)&=-\dfrac{1}{2}\bigl\{F(x,y,\f z)+F(y,z,\f
x)-F(z,x,\f y)\\[6pt]
&\phantom{=-\dfrac{1}{2}\bigl\{}
-\eta(x)F(y,\f z,\xi)+\eta(y)F(z,\f x,\xi)\\[6pt]
&\phantom{=-\dfrac{1}{2}\bigl\{}
+\eta(z)F(x,\f y,\xi)\bigr\},\\[6pt]
p_{1,4}(T''')(x,y,z)&=-F(z,x,\f y)-\eta(x)F(y,\f z,\xi),\\[6pt]
p_{2,1}(T''')(x,y,z)&=2\eta(z)F(x,\f y,\xi),\\[6pt]
p_{3,2}(T''')(x,y,z)&=2\eta(x)F(y,\f z,\xi)+2\eta(y)F(z,\f x,\xi).
%
\end{split}
\end{equation}

Therefore, we have that $T'''$ belongs to $\T_{3}\oplus\T_{6}\oplus\T_{7}\oplus\T_{12}$ and the following
\begin{prop}\label{prop-fKT}
The $\f$KT-connection $\n'''$ on $\M$ in the class
$\F_3\oplus\F_7$ belongs to $\CC_{3}\oplus\CC_{6}\oplus\CC_{7}\oplus\CC_{12}$.
Moreover, if
$\M\in\F_3$ (resp. $\F_7$) then $\n'''\in\CC_3\oplus\CC_6$ (resp.
$\CC_7\oplus\CC_{12}$).
\end{prop}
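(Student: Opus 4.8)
The plan is to locate $\n'''$ in the classification \eqref{CC} by reading off the decomposition of its torsion $T'''$ that has already been computed in \eqref{pijT-KT}, and then to sharpen the result for each of the two admissible classes by invoking the explicit forms of $T'''$ recorded in \eqref{TKT-F3F7}. First I would observe that, according to \eqref{pijT-KT}, the only components of $T'''$ that can be nonzero under the projectors of \propref{prop-p_1i}, \propref{prop-p_2j} and \propref{prop-p_3k} are $p_{1,2}(T''')$, $p_{1,4}(T''')$, $p_{2,1}(T''')$ and $p_{3,2}(T''')$, all other projections annihilating $T'''$. Using the labelling \eqref{Ti15}, namely $\PP_{1,2}=\T_3$, $\PP_{1,4}=\T_6$, $\PP_{2,1}=\T_7$ and $\PP_{3,2}=\T_{12}$, this says precisely $T'''\in\T_3\oplus\T_6\oplus\T_7\oplus\T_{12}$, and by \dfnref{dfn-Cs} the first assertion $\n'''\in\CC_3\oplus\CC_6\oplus\CC_7\oplus\CC_{12}$ follows.

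For the refinement I would first note the characterisation of $\PP_1$ in terms of the horizontal projector. Since $\f^2x=-x^{\mathrm{h}}$ by \eqref{Xhv}, the operator $p_1$ acts by $p_1(T)(x,y,z)=-T(\f^2x,\f^2y,\f^2z)=T(x^{\mathrm{h}},y^{\mathrm{h}},z^{\mathrm{h}})$, so $\PP_1$ is exactly the space of torsion tensors supported on purely horizontal arguments; moreover \eqref{Ti15} gives $\PP_1=\T_1\oplus\cdots\oplus\T_6$, whereas $\T_7\subset\PP_2$ and $\T_{12}\subset\PP_3$ both lie in $\PP_1^{\bot}$. Now if $\M\in\F_3$, then by \eqref{TKT-F3F7} we have $T'''=\frac{1}{4}\s N^{\mathrm{h}}$, a tensor depending only on the horizontal parts of its arguments; hence $p_1(T''')=T'''$, i.e.\ $T'''\in\PP_1$, and intersecting with the first part yields $T'''\in\T_3\oplus\T_6$, that is $\n'''\in\CC_3\oplus\CC_6$. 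If instead $\M\in\F_7$, then $T'''=\eta\wedge\D\eta$ by \eqref{TKT-F3F7}; evaluating this $3$-form on horizontal vectors $x^{\mathrm{h}},y^{\mathrm{h}},z^{\mathrm{h}}\in\ker\eta$ makes every summand carry a factor $\eta(\cdot)$ of a horizontal vector and so vanish, whence $p_1(T''')=0$ and $T'''\in\PP_1^{\bot}$. Intersecting with the first part now removes the summands $\T_3$ and $\T_6$ and leaves $T'''\in\T_7\oplus\T_{12}$, i.e.\ $\n'''\in\CC_7\oplus\CC_{12}$.

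The genuine content of the proposition is the component computation \eqref{pijT-KT}; once it is available, everything else is bookkeeping against \eqref{Ti15} and the orthogonality of the summands in \thmref{thm-15podpr}. The step I would treat most carefully is the internal consistency of the two refinements with \eqref{pijT-KT}: one should confirm that for $\F_3$ the apparent summands $p_{2,1}(T''')$ and $p_{3,2}(T''')$ do vanish, which follows because each of them is built from terms $F(\cdot,\f\cdot,\xi)$, and the defining condition $F(x,\xi,z)=0$ of $\F_3$ together with the symmetry $F(x,y,z)=F(x,z,y)$ in \eqref{F-prop} forces $F(x,\f y,\xi)=F(x,\xi,\f y)=0$; and dually that for $\F_7$ the summands $p_{1,2}(T''')$ and $p_{1,4}(T''')$ vanish because in $\F_7$ every nonzero value of $F$ carries an $\eta$-factor in one of its last two slots, so the horizontal components feeding $p_1$ are trivial. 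These checks are routine, but they are the only place where the individual structure of the classes $\F_3$ and $\F_7$ actually enters the argument.
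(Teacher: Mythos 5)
Your argument is correct and follows essentially the same route as the paper: the first assertion is read off from the nonzero projections in \eqref{pijT-KT} together with the labelling \eqref{Ti15}, which is exactly what the text preceding \propref{prop-fKT} does. Your refinement via the projector $p_1$ (using that $T'''=\frac{1}{4}\s N^{\mathrm{h}}$ is fixed by $p_1$ in $\F_3$, while $T'''=\eta\wedge\D\eta$ is annihilated by $p_1$ in $\F_7$, and that $\T_3\oplus\T_6\subset\PP_1$ whereas $\T_7\oplus\T_{12}\subset\PP_1^{\bot}$) is a clean and valid way to justify the second assertion, which the paper states without spelling out.
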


\vspace{20pt}

\begin{center}
$\divideontimes\divideontimes\divideontimes$
\end{center} 
\newpage

\addtocounter{section}{1}\setcounter{subsection}{0}\setcounter{subsubsection}{0}

\setcounter{thm}{0}\setcounter{equation}{0}

\label{par:pair}

 \Large{

\
\\[6pt]
\bigskip

\
\\[6pt]
\bigskip

\lhead{\emph{Chapter I $|$ \S\thesection. Pair of associated Schouten-van Kampen connections
adapted to \ldots 
}}


\noindent
\begin{tabular}{r"l}
\hspace{-6pt}{\Huge\bf \S\thesection.}  & {\Huge\bf Pair of associated Schouten-} \\[12pt]
                             & {\Huge\bf van Kampen connections} \\[12pt]
                             & {\Huge\bf adapted to an almost contact}\\[12pt]
                             & {\Huge\bf B-metric structure}
\end{tabular}

\vskip 1cm

\begin{quote}
\begin{large}
In the present section there are introduced and studied a pair of associated Schouten-van Kampen affine connections
adapted to the contact distribution and an almost contact B-metric structure generated by
the pair of associated B-metrics and their Levi-Civita connections.
By means of the constructed non-symmetric connections,
the basic classes of almost contact B-metric manifolds are characterized.
Curvature properties of the considered connections are obtained.

The main results of this section are published in \cite{Man50}.
\end{large}
\end{quote}

%
%

\vskip 0.15in

The Schouten-van Kampen connection has been introduced for a studying of non-holonomic manifolds.
It preserves by parallelism a pair of complementary dis\-tri\-bu\-tions on a differentiable manifold
endowed with an affine connection \cite{SvK,Ia,BejFarr}. This connection is also used for investigations of
hyper\-distributions in Riemannian manifolds (e.g., \cite{Sol}).

On the other hand, any almost contact manifold admits a hyper\-dis\-tribution, the known contact distribution.
In \cite{Olsz}, it is studied the Schouten-van
Kampen connection adapted to an almost (para)contact metric structure. On these manifolds, the studied connection is not natural in general because it preserves the structure tensors except the structure endomorphism.

We consider almost contact B-metric structures. As it is mentioned above, an important characteristic of the almost contact B-metric structure, which differs from the metric one, is that the associated (0,2)-tensor of the B-metric is also a B-metric. Consequently, this pair of B-metrics generates a pair of Levi-Civita connections.


The present section is organized as follows.
In Subsection~\thesection.1 we introduce a pair of Schou\-ten-van Kampen connections
which is associated to the pair of Levi-Civita connections and adapted to the contact distribution of an almost contact B-metric manifold. Then we determine conditions these connections to coincide and to be natural for the corresponding structures.
In Subsection~\thesection.2 we study basic properties of the potentials and the torsions of the pair of the constructed connections.
Finally, in Subsection~\thesection.3 we give some curvature properties of the studied connections.

%

%

\vskip 0.2in \addtocounter{subsection}{1} \setcounter{subsubsection}{0}

\noindent  {\Large\bf \thesubsection. Remarkable metric connections regarding the contact distribution on the considered manifolds}

\vskip 0.15in

Let us consider the horizontal and the vertical distributions $\HH$ and $\VV$
in the tangent bundle $T\MM$ on an arbitrary almost contact B-metric manifold $\M$ given in \eqref{HV} and \eqref{HHVV}.
Further, we use the corresponding projections $x^{\mathrm{h}}$ and $x^{\mathrm{v}}$ of an arbitrary vector field $x$ in $T\MM$ bearing in mind \eqref{hv} and \eqref{Xhv}.

\vskip 0.2in \addtocounter{subsubsection}{1}

\noindent  {\Large\bf{\emph{\thesubsubsection. Schouten-van Kampen connection $\nSvK$ associated to $\n$}}}

\vskip 0.15in

Let us consider the Schouten-van Kampen connection $\nSvK$ associated to the Levi-Civita connection $\n$
and adapt\-ed to the pair $(\HH, \VV)$. This connection is defined (locally in \cite{SvK}, see also \cite{Ia}) by
\begin{equation}\label{SvK}
  \nSvK_x y = (\n_x y^{\mathrm{h}})^{\mathrm{h}} + (\n_x y^{\mathrm{v}})^{\mathrm{v}}.
\end{equation}
The latter equality implies the parallelism of $\HH$ and $\VV$ with
respect to $\nSvK$.
From \eqref{Xhv} we obtain
\[
\begin{split}
&(\n_x y^{\mathrm{h}})^{\mathrm{h}}=\n_x y -\eta(y)\n_x \xi-\eta(\n_x y)\xi, \qquad\\[6pt]
&(\n_x y^{\mathrm{v}})^{\mathrm{v}}=\eta(\n_x y)\xi+(\n_x \eta)(y)\xi.
\end{split}
\]
Then we get the expression of the Schouten-van Kampen connection in terms of $\n$ as follows (cf. \cite{Sol})
\begin{equation}\label{SvK=n}
  \nSvK_x y = \n_x y -\eta(y)\n_x \xi+(\n_x \eta)(y)\xi.
\end{equation}

According to \eqref{SvK=n}, the potential $Q^{\circ}$ of $\nSvK$ with respect to $\n$ and the torsion $T^{\circ}$ of $\nSvK$,  defined by $Q^{\circ}(x,y)=\nSvK_xy-\n_xy$ and $T^{\circ}(x,y)= \nSvK_xy-\nSvK_yx-[x,y]$, respectively, have the following form
\begin{equation}\label{Q}
Q^{\circ}(x,y)=-\eta(y)\n_x \xi+(\n_x \eta)(y)\xi,
\end{equation}
\begin{equation}\label{T}
T^{\circ}(x,y)=\eta(x)\n_y \xi-\eta(y)\n_x \xi+\D\eta(x,y)\xi.
\end{equation}

\begin{thm}\label{thm:D-T}
The Schouten-van Kampen connection $\nSvK$ 
 is the unique affine connection having a torsion of the form \eqref{T} and preserving the structures $\xi$, $\eta$ and the metric $g$.
\end{thm}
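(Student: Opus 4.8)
The plan is to prove the statement in two stages: first that the connection $\nSvK$ defined in \eqref{SvK=n} actually enjoys all the listed properties, and then that these properties pin it down uniquely.

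For the existence part, the shape \eqref{T} of the torsion $T^{\circ}$ is already recorded, so what remains is to check the three parallelism conditions $\nSvK\xi=0$, $\nSvK\eta=0$ and $\nSvK g=0$. First I would isolate the two elementary identities that drive every computation: $\eta(\n_x\xi)=0$ (equivalently $\n_x\xi\in\HH$, which holds because $g(\xi,\xi)=1$ gives $g(\n_x\xi,\xi)=0$) and $(\n_x\eta)(y)=g(\n_x\xi,y)$ from \eqref{Fxieta}, together with $\eta=g(\xi,\cdot)$ and $\eta(\xi)=1$. Substituting $y=\xi$ in \eqref{SvK=n} and using $(\n_x\eta)(\xi)=x(\eta(\xi))-\eta(\n_x\xi)=0$ collapses $\nSvK_x\xi$ to $\n_x\xi-\n_x\xi=0$. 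For $\nSvK\eta$ I would compute $\eta(\nSvK_xy)$ directly from \eqref{SvK=n}; the correction terms combine to $(\n_x\eta)(y)+\eta(\n_xy)=x(\eta(y))$, whence $(\nSvK_x\eta)(y)=x(\eta(y))-\eta(\nSvK_xy)=0$. Finally, for $\nSvK g$ I would pass to the $(0,3)$ potential $Q^{\circ}(x,y,z)=-\eta(y)(\n_x\eta)(z)+(\n_x\eta)(y)\eta(z)$ obtained from \eqref{Q}, observe that it is skew-symmetric in its last two arguments, and conclude $g(\nSvK_xy,z)+g(y,\nSvK_xz)=x(g(y,z))$ since the correction contributions cancel pairwise, using $\n g=0$.

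For uniqueness, let $\n'$ be any affine connection whose torsion equals the tensor $T^{\circ}$ of \eqref{T} and which satisfies $\n'\xi=\n'\eta=\n'g=0$. The key observation is that $\n'$ and $\nSvK$ are both metric connections (they preserve $g$) sharing one and the same torsion. By the Hayden theorem \cite{Hay} — equivalently, by the inversion formula \eqref{QT}, which expresses the potential of a metric connection through its torsion alone — a $g$-metric connection is completely determined by its torsion. Applying \eqref{QT} to both connections yields the same potential relative to the Levi-Civita connection $\n$, namely $2Q'(x,y,z)=T^{\circ}(x,y,z)-T^{\circ}(y,z,x)+T^{\circ}(z,x,y)=2Q^{\circ}(x,y,z)$, and therefore $\n'=\nSvK$.

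The computations are routine once the two structural identities above are in hand, so I do not expect a genuine obstacle; the only point demanding care is the bookkeeping in the proof of $\nSvK g=0$, where one must verify that the terms $-\eta(y)(\n_x\eta)(z)$ and $(\n_x\eta)(z)\eta(y)$, and their $y\leftrightarrow z$ partners, cancel so that the metricity of $\n$ is inherited by $\nSvK$. It is worth noting that the hypothesis of preserving $\xi$ and $\eta$ is in fact redundant for the uniqueness direction: metricity together with the prescribed torsion already forces $\n'=\nSvK$, and since $\eta=\xi\,\lrcorner\,g$, preservation of $\eta$ is anyway equivalent to preservation of $\xi$ once $g$ is parallel.
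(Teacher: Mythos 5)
Your proposal is correct and follows essentially the same route as the paper: a direct verification that $\xi$, $\eta$ and $g$ are $\nSvK$-parallel, followed by the observation that a metric connection is completely determined by its torsion via the bijection \eqref{TQ}--\eqref{QT}, which is exactly the Cartan/Hayden argument the paper invokes. Your closing remark that preservation of $\xi$ and $\eta$ is redundant for uniqueness (and that $\eta=\xi\,\lrcorner\,g$ makes the two preservation conditions equivalent once $g$ is parallel) is a correct refinement consistent with the paper's own proof of \thmref{thm-nat}.
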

\begin{proof}
Taking into account \eqref{SvK=n}, we compute directly that the structures $\xi$, $\eta$ and $g$ are parallel with respect to $\nSvK$, i.e. $\nSvK\xi=\nSvK\eta=\nSvK g=0$.
The connection $\nSvK$ preserves the metric and therefore is
completely determined by its torsion $T^{\circ}$.
According to  \cite{Car25}, the two spaces of all torsions and of all potentials
are isomorphic and the bijection is given by \eqref{TQ} and \eqref{QT}.

Then, 
the connection $\nSvK$ determined by \eqref{SvK=n} and its potential $Q^{\circ}$ given in \eqref{Q} are replaced in \eqref{TQ} to determine its torsion $T^{\circ}$ and the result is \eqref{T}. Vice versa, the form of $T^{\circ}$ in \eqref{T} yields by \eqref{QT} the equality for $\nSvK$ in \eqref{SvK=n}.
\end{proof}

Obviously, the connection $\nSvK$ exists on $(\MM,\f,\xi,\eta,g)$ in each class, but $\nSvK$ coincides with $\n$ if and only if the condition \[\eta(y)\n_x \xi-(\n_x \eta)(y)\xi=0\] holds. The latter equality is equivalent to vanishing of $\n_x \xi$ for each $x$. This condition is satisfied only for the manifolds belonging to the class $\F_1\oplus\F_2\oplus\F_3\oplus\F_{10}$, according to \propref{prop-nxi=0}. Let us denote this class briefly by $\U_1$, i.e.
\[
\U_1=\F_1\oplus\F_2\oplus\F_3\oplus\F_{10}.
\]
Thus, we prove the following
\begin{thm}\label{thm:D=n}
The Schouten-van Kampen connection $\nSvK$ 
coincides with $\n$ if and only if $(\MM,\f,\xi,\allowbreak{}\eta,g)$ belongs to the class $\U_1$.
\end{thm}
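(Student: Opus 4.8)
The plan is to reduce the coincidence $\nSvK=\n$ to the pointwise vanishing of the potential $Q^{\circ}$, and then to recognise the resulting condition as $\n\xi=0$, after which \propref{prop-nxi=0} closes the argument. First I would note that $\nSvK=\n$ holds if and only if $\nSvK_x y-\n_x y=0$ for all vector fields $x,y$, which by the definition $Q^{\circ}(x,y)=\nSvK_xy-\n_xy$ together with the explicit form \eqref{Q} means precisely
\[
-\eta(y)\n_x \xi+(\n_x \eta)(y)\xi=0.
\]

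The core step is to prove that this identity is equivalent to $\n_x\xi=0$ for every $x$. For the easy direction, assuming $\n_x\xi=0$ and using \eqref{Fxieta} in the form $(\n_x\eta)(y)=g(\n_x\xi,y)$, both summands of $Q^{\circ}$ vanish identically. For the converse I would exploit the orthogonal horizontal--vertical splitting \eqref{HHVV}: since $g(\xi,\xi)=1$ yields $g(\n_x\xi,\xi)=0$, the vector $\n_x\xi$ lies in $\HH=\ker\eta$, whereas $(\n_x\eta)(y)\xi$ lies in $\VV$. Hence the two terms of $Q^{\circ}(x,y)$ are the horizontal and vertical components of the zero vector and must vanish separately. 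Evaluating the horizontal part at $y=\xi$ and using $\eta(\xi)=1$ gives $\n_x\xi=0$; equivalently, substituting $y=\xi$ directly into the displayed identity and noting $(\n_x\eta)(\xi)=-\eta(\n_x\xi)=0$ produces the same conclusion.

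Finally, I would invoke \propref{prop-nxi=0}, which characterises the class of almost contact B-metric manifolds with vanishing $\n\xi$ as $\F_1\oplus\F_2\oplus\F_3\oplus\F_{10}$. Combining this with the equivalence established above shows that $\nSvK=\n$ holds exactly on the manifolds of $\U_1$, as claimed.

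I do not expect a serious obstacle here, since most of the reasoning is already prepared in the discussion preceding the statement. The only point requiring a little care is the converse implication, where one must justify that the vanishing of $Q^{\circ}$ forces $\n\xi=0$ rather than merely some combination of $\n\xi$ and $\n\eta$; the cleanest way to secure this is the horizontal--vertical decomposition above, or equivalently the specialisation $y=\xi$ together with $\n_x\xi\in\HH$.
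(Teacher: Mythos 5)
Your proposal is correct and follows essentially the same route as the paper: the paper likewise reduces $\nSvK=\n$ to the vanishing of $\eta(y)\n_x\xi-(\n_x\eta)(y)\xi$, asserts this is equivalent to $\n\xi=0$, and then cites \propref{prop-nxi=0} to identify the class $\U_1$. The only difference is that you spell out the converse step (via the horizontal--vertical splitting or the substitution $y=\xi$), which the paper leaves implicit.
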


\vskip 0.2in \addtocounter{subsubsection}{1}

\noindent  {\Large\bf{\emph{\thesubsubsection. The conditions $\nSvK$ to be natural for $(\f,\xi,\eta,g)$}}}

\vskip 0.15in

Using \eqref{SvK=n}, we express the covariant derivative of $\f$ as follows
\begin{equation}\label{Df}
  (\nSvK_x\f)y=(\n_x\f)y+\eta(y)\f\n_x\xi-\eta(\n_x\f y)\xi.
\end{equation}
Therefore, $\nSvK\f=0$ if and only if \[(\n_x\f)y=-\eta(y)\f\n_x\xi+\eta(\n_x\f y)\xi,\] which yields the following equality by \eqref{F-prop}
\begin{equation}\label{F_D=0}
  F(x,y,z)=F(x,y,\xi)\eta(z)+F(x,z,\xi)\eta(y).
\end{equation}
According to \eqref{Fi} and \cite{Man8}, the latter condition determines the direct sum $\F_4\oplus\cdots\oplus\F_9\oplus\F_{11}$,  which we denote by $\U_2$ for the sake of brevity, i.e.
\[
\U_2=\F_4\oplus\cdots\oplus\F_9\oplus\F_{11}.
\]
Thus, we find the kind of the considered manifolds where $\nSvK$ is a natural connection, i.e. the tensors of the structure $(\f,\xi,\eta,g)$ are covariantly constant regarding $\nSvK$.
In this case it follows that $(\n_x\f)\f y=\left(\n_x\eta\right)(y)\cdot\xi$ holds. Then the Schouten-van Kampen connection $\nSvK$ coincides with the $\f$B-con\-nect\-ion defined by \eqref{fB0}.
Such a way, we establish the truthfulness of the following
\begin{thm}\label{thm:D-nat}
The Schouten-van Kampen connection $\nSvK$ 
is a natural connection for the structure $(\f,\xi,\eta,g)$ if and only if $(\MM,\f,\xi,\eta,g)$ belongs to the class $\U_2$. In this case $\nSvK$ coincides with the $\f$B-connection.
\end{thm}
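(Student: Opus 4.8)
The plan is to reduce naturalness of $\nSvK$ to the single condition $\nSvK\f=0$ and then to read off, class by class, when this holds. By \thmref{thm:D-T} the connection $\nSvK$ already satisfies $\nSvK\xi=\nSvK\eta=\nSvK g=0$; hence, invoking \thmref{thm-nat} (which asserts that a connection is natural precisely when it kills both $\f$ and $g$), it remains only to analyse $\nSvK\f$.

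First I would compute $(\nSvK_x\f)y$ from \eqref{SvK=n}; this is exactly the expression \eqref{Df}. Imposing $\nSvK\f=0$ and converting to the fundamental $(0,3)$-tensor via \eqref{F=nfi}, I would rewrite the two correction terms $\eta(y)\f\n_x\xi$ and $\eta(\n_x\f y)\xi$ in terms of $F(\cdot,\cdot,\xi)$. Using the algebraic relations \eqref{str1}, the symmetry \eqref{F-prop}, and the identity \eqref{Fxieta} linking $\n\xi$, $\n\eta$ and $F$, I expect these terms to collapse so that $\nSvK\f=0$ becomes equivalent to the tensorial condition \eqref{F_D=0},
\[
F(x,y,z)=F(x,y,\xi)\eta(z)+F(x,z,\xi)\eta(y).
\]
This reduction is the step I expect to be the main obstacle, since it hinges on the $g$-self-adjointness of $\f$ and on $F(x,\xi,\xi)=0$, both of which must be extracted from \eqref{str1} and \eqref{F-prop}.

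Next I would identify the manifolds obeying \eqref{F_D=0} by consulting the defining conditions of the basic classes in \eqref{Fi} together with \cite{Man8}. One checks directly that $\F_4,\dots,\F_9$ satisfy \eqref{F_D=0} by their very definitions, that $\F_{11}$ does as well once $\om(\xi)=0$ is used, whereas for $\F_1$, $\F_2$, $\F_3$, $\F_{10}$ one has $F(\cdot,\cdot,\xi)=0$ on the relevant arguments while $F\not\equiv0$, so \eqref{F_D=0} fails. Consequently \eqref{F_D=0} characterises precisely $\M\in\U_2=\F_4\oplus\cdots\oplus\F_9\oplus\F_{11}$, which establishes the asserted equivalence.

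Finally, to obtain the coincidence with the $\f$B-connection, I would assume $\M\in\U_2$ and replace $y$ by $\f y$ in \eqref{F_D=0}; since $\eta\circ\f=0$ by \eqref{str1}, this yields $F(x,\f y,z)=F(x,\f y,\xi)\eta(z)$, and then \eqref{Fxieta} rewrites it as $(\n_x\f)\f y=(\n_x\eta)(y)\,\xi$. Substituting this into the potential $Q'$ of the $\f$B-connection $\n'$ from \eqref{fB0} and comparing with the potential $Q^{\circ}$ of $\nSvK$ given in \eqref{Q}, both reduce to $(\n_x\eta)(y)\,\xi-\eta(y)\n_x\xi$, so $\nSvK=\n'$, which finishes the plan.
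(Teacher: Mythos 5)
Your proposal is correct and follows essentially the same route as the paper: compute $(\nSvK_x\f)y$ as in \eqref{Df}, translate $\nSvK\f=0$ into the condition \eqref{F_D=0} via \eqref{F-prop} and \eqref{Fxieta}, identify this condition with membership in $\U_2$ using \eqref{Fi}, and then derive $(\n_x\f)\f y=(\n_x\eta)(y)\,\xi$ to match the potentials of $\nSvK$ and the $\f$B-connection. The only difference is that you spell out the class-by-class verification and the potential comparison in more detail than the paper, which simply cites \eqref{Fi} and \cite{Man8}.
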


Let us remark that in the case when $(\MM,\f,\xi,\eta,g)$ belongs to a class which has a nonzero component in  both of the direct sums
$\U_1$ and $\U_2$, then
the connection $\nSvK$ is not a natural connection and it does not coincide with $\n$.
Then the class of all almost contact B-metric manifolds can be decomposed orthogonally to $\U_1\oplus\U_2$.

\vskip 0.2in \addtocounter{subsubsection}{1}

\noindent  {\Large\bf{\emph{\thesubsubsection. Schouten-van Kampen connection $\tnSvK$ associated to $\nn$}}}

\vskip 0.15in


In a similar way as for $\nSvK$, let us consider the Schouten-van Kampen connection $\tnSvK$ associated to the Levi-Civita connection $\nn$ for $\tg$
and adapt\-ed to the pair $(\HH, \VV)$. We define this connection as follows
\[
  \tnSvK_x y = (\nn_x y^{\mathrm{h}})^{\mathrm{h}} + (\nn_x y^{\mathrm{v}})^{\mathrm{v}}.
\]
Then the hyperdistribution $(\HH, \VV)$ is parallel with
respect to $\tnSvK$, too.
Analogously, we express the Schouten-van Kampen connection $\tnSvK$ in terms of $\nn$ by
\begin{equation}\label{tSvK=n}
  \tnSvK_x y = \nn_x y -\eta(y)\nn_x \xi+(\nn_x \eta)(y)\xi.
\end{equation}

By virtue of \eqref{tSvK=n}, the potential $\widetilde{Q}^{\circ}$ of $\tnSvK$ with respect to $\nn$ and the torsion $\widetilde{T}^{\circ}$ of $\tnSvK$ have the following form
\begin{gather}\label{tQ}
\widetilde{Q}^{\circ}(x,y)=-\eta(y)\nn_x \xi+(\nn_x \eta)(y)\xi,
\\[6pt]
\label{tT}
\widetilde{T}^{\circ}(x,y)=\eta(x)\nn_y \xi-\eta(y)\nn_x \xi+\D\eta(x,y)\xi.
\end{gather}

Similarly to \thmref{thm:D-T} we have the following
\begin{thm}\label{thm:tD-tT}
The Schouten-van Kampen connection $\tnSvK$ 
is the unique affine connection having a torsion of the form \eqref{tT} and preserving the structures $\xi$, $\eta$ and the associated metric $\tg$.
\end{thm}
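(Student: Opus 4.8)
The plan is to transcribe the proof of \thmref{thm:D-T} verbatim, with the associated metric $\tg$ playing the role of $g$ and $\nn$ playing the role of $\n$. The point that makes this transcription legitimate is that the triple $(\xi,\eta,\tg)$ enjoys exactly the same compatibility as $(\xi,\eta,g)$. Indeed, from $\eta\circ\f=0$ we get $g(\xi,\f y)=\eta(\f y)=0$, so the associated metric obeys $\tg(\xi,y)=g(\xi,\f y)+\eta(\xi)\eta(y)=\eta(y)$; thus $\eta$ is the $\tg$-dual of $\xi$ just as it is the $g$-dual. Moreover $\nn$ is by definition the Levi-Civita connection of $\tg$, hence $\nn\tg=0$. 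Consequently formula \eqref{tSvK=n} for $\tnSvK$ is the exact analogue of \eqref{SvK=n}, and each structural input of \thmref{thm:D-T} has its tilded counterpart.

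First I would verify directly from \eqref{tSvK=n} that $\tnSvK$ preserves all three objects, i.e. $\tnSvK\xi=\tnSvK\eta=\tnSvK\tg=0$. This is the same short computation as in \thmref{thm:D-T}: the two correction terms $-\eta(y)\nn_x\xi$ and $(\nn_x\eta)(y)\xi$ in \eqref{tSvK=n} are arranged precisely so that the vertical part $\eta(y)\xi$ of $y$ is differentiated consistently with the splitting into $\HH$ and $\VV$. Using $\nn\tg=0$ together with $\tg(\xi,\cdot)=\eta$, one checks each of the three parallelisms in turn, exactly as in the associated-metric-free case.

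Next, since $\tnSvK$ is a metric connection for $\tg$, it is completely determined by its torsion, and the Cartan correspondence \eqref{TQ}--\eqref{QT} between torsion $(0,3)$-tensors and potential $(0,3)$-tensors (now formed with $\tg$) applies \cite{Car25}. Substituting the potential $\widetilde{Q}^{\circ}$ of \eqref{tQ} into \eqref{TQ} yields the torsion $\widetilde{T}^{\circ}$ of \eqref{tT}, showing that $\tnSvK$ has a torsion of the prescribed form. Conversely, inserting \eqref{tT} into \eqref{QT} reproduces the potential \eqref{tQ}, hence the connection \eqref{tSvK=n}; this delivers uniqueness, since any $\tg$-metric connection with torsion \eqref{tT} must have potential \eqref{tQ} and therefore equal $\tnSvK$.

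The only step requiring care---and the sole place where the argument is not a literal copy of \thmref{thm:D-T}---is to ensure that the Cartan bijection is invoked with respect to $\tg$, so that the $(0,3)$-tensors $\widetilde{T}^{\circ}(x,y,z)=\tg\bigl(\widetilde{T}^{\circ}(x,y),z\bigr)$ and $\widetilde{Q}^{\circ}(x,y,z)=\tg\bigl(\widetilde{Q}^{\circ}(x,y),z\bigr)$ are lowered by the correct metric; substituting the $g$-lowered versions would break the isomorphism. Once this bookkeeping is fixed, no genuine obstacle remains, and the statement follows.
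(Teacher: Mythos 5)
Your proposal is correct and follows essentially the same route as the paper, which itself only states that the result holds ``similarly to'' \thmref{thm:D-T}: one verifies $\tnSvK\xi=\tnSvK\eta=\tnSvK\tg=0$ directly from \eqref{tSvK=n}, and then uses the Cartan bijection \eqref{TQ}--\eqref{QT} (with all $(0,3)$-tensors lowered by $\tg$) between the potential \eqref{tQ} and the torsion \eqref{tT} to get existence and uniqueness. Your explicit check that $\tg(\xi,\cdot)=\eta$, which legitimises the verbatim transcription, is a worthwhile detail the paper leaves implicit.
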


It is clear that the connection $\tnSvK$ exists on $(\MM,\f,\xi,\eta,\tg)$ in each class, but $\tnSvK$ coincides with $\nn$ if and only if the condition
\[
\eta(y)\nn_x \xi-(\nn_x \eta)(y)\xi=0
\]
is valid or equivalently $\nn \xi$ vanishes.
Similarly to \propref{prop-nxi=0}, we prove that the condition $\nn \xi=0$ holds if and only if $\widetilde{F}$ satisfies the condition \eqref{Fi} of $F$ for %
$\F_1\oplus\F_2\oplus\F_3\oplus\F_{9}$, which we denote by $\widetilde{\U}_1$. Thus, we prove the following
\begin{thm}\label{thm:tD=nn}
The Schouten-van Kampen connection $\tnSvK$ 
coincides with $\nn$ if and only if $(\MM,\f,\xi,\allowbreak{}\eta,\tg)$ belongs to the class $\widetilde{\U}_1$.
\end{thm}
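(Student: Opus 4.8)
The plan is to run the argument of \thmref{thm:D=n} verbatim, with the associated data $(\nn,\tg,\widetilde F)$ in place of $(\n,g,F)$. First I would start from the explicit form \eqref{tSvK=n} of the Schouten--van Kampen connection $\tnSvK$, so that $\tnSvK=\nn$ holds if and only if its potential $\widetilde Q^{\circ}$ from \eqref{tQ} vanishes identically, i.e.
\[
-\eta(y)\nn_x\xi+(\nn_x\eta)(y)\xi=0\qquad\text{for all }x,y.
\]

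The next step is to show that this single vector identity is equivalent to $\nn\xi=0$. Here I would use the orthogonal splitting \eqref{HHVV}: since $\tg(\xi,\xi)=1$ is constant, $\tg(\nn_x\xi,\xi)=0$ and hence $\nn_x\xi\in\HH$, whereas $(\nn_x\eta)(y)\xi\in\VV$. As $\HH\cap\VV=\{o\}$, the vanishing of the difference of these two components forces each of them to vanish; evaluating $\eta(y)\nn_x\xi=0$ at $y=\xi$ and using $\eta(\xi)=1$ gives $\nn_x\xi=0$ for every $x$. The converse is immediate, because $\nn\xi=0$ also yields $(\nn_x\eta)(y)=\tg(\nn_x\xi,y)=0$.

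It then remains to identify the class of $(\MM,\f,\xi,\eta,\tg)$ cut out by $\nn\xi=0$. Exactly as \eqref{Fxieta} expresses $\n\xi$ through $F$, the same computation performed for the associated structure --- it uses only $\f\xi=0$, $\f^2=-\I+\eta\otimes\xi$, $\eta=\tg(\cdot,\xi)$ and $\nn\tg=0$, all of which hold for $\tg$ --- gives $\left(\nn_x\eta\right)(y)=\tg(\nn_x\xi,y)=\widetilde F(x,\f y,\xi)$. Thus $\nn\xi=0$ is equivalent to $\widetilde F(x,\f y,\xi)=0$ for all $x,y$, and I would finish by repeating the reasoning of \propref{prop-nxi=0} for $\widetilde F$: expanding $\widetilde F$ into its eleven basic components and checking which of them have vanishing $(x,\f y,\xi)$-part singles out precisely the summands of $\widetilde{\U}_1$.

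The main obstacle is this last identification. One needs the component decomposition of $\widetilde F$ and a term-by-term verification of the vanishing pattern, which is the associated-metric analogue of the calculation underlying \propref{prop-nxi=0}. The efficient route is to feed the twin-interchange relation \eqref{tFF} between $F$ and $\widetilde F$ into the defining conditions \eqref{Fi}; this both supplies the required components and explains why the distinguished summand here is $\F_9$ rather than the $\F_{10}$ that occurs for $\nSvK$ in \thmref{thm:D=n}. Once the pattern is confirmed, the chain $\tnSvK=\nn\Leftrightarrow\nn\xi=0\Leftrightarrow(\MM,\f,\xi,\eta,\tg)\in\widetilde{\U}_1$ closes, in full parallel with \thmref{thm:D=n}.
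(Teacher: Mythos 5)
Your argument follows the paper's own route: the paper likewise reduces $\tnSvK=\nn$ to the vanishing of the potential $\widetilde{Q}^{\circ}$ from \eqref{tQ}, notes that this is equivalent to $\nn\xi=0$ (your $\HH\oplus\VV$ splitting supplies the detail the paper leaves implicit), and then identifies the resulting class as $\widetilde{\U}_1$ ``similarly to \propref{prop-nxi=0}'', i.e.\ via the analogue of \eqref{Fxieta} for $\widetilde{F}$ combined with the twin relation \eqref{tFF}. Your closing observation that \eqref{tFF} is precisely what replaces the summand $\F_{10}$ of \thmref{thm:D=n} by $\F_9$ here is the (unwritten) computation the paper relies on, so the proposal is correct and essentially identical in approach.
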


Taking into account \eqref{tFF}, we establish immediately 
\begin{lem}\label{lem:U1}
The manifold $(\MM,\f,\xi,\eta,g)$ belongs to $\U_1$ if and only if the manifold $(\MM,\f,\xi,\eta,\tg)$ belongs to $\widetilde{\U}_1$.
\end{lem}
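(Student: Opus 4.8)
The plan is to make the two class-memberships explicit in terms of the respective fundamental tensors $F$ and $\tF$, and then to pass from one to the other by the explicit relation \eqref{tFF}.

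By \propref{prop-nxi=0} together with \eqref{Fxieta}, the manifold $(\MM,\f,\xi,\eta,g)$ belongs to $\U_1=\F_1\oplus\F_2\oplus\F_3\oplus\F_{10}$ exactly when $\n\xi=0$, equivalently when $F(x,\f y,\xi)=0$ for all $x,y$; in that case $F$ reduces to a purely horizontal Norden part supported on $\HC=\ker\eta$ together with an $\F_{10}$-part of the shape $F(x,y,z)=\eta(x)F(\xi,\f y,\f z)$. Since $(\MM,\f,\xi,\eta,\tg)$ is again an almost contact B-metric manifold with the same $\xi$ and $\eta$ (because $\eta=\xi\,\lrcorner\,\tg$ as well), the class $\widetilde{\U}_1=\F_1\oplus\F_2\oplus\F_3\oplus\F_9$ is the one cut out by imposing the corresponding conditions \eqref{Fi} on its fundamental tensor $\tF$. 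Thus the lemma reduces to showing that $F$ has $\U_1$-form if and only if the tensor $\tF$ produced from it through \eqref{tFF} has $\widetilde{\U}_1$-form.

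To carry this out I would substitute the $\U_1$-form of $F$ into \eqref{tFF}, setting $z=\xi$ where the Reeb direction is tested and simplifying with \eqref{str1} (so $\f\xi=0$, $\eta(\xi)=1$), the symmetry \eqref{F-prop}, and the identity $F(\cdot,\xi,\xi)=0$ that \eqref{F-prop} forces. On the horizontal distribution this is literally the Norden twin interchange, whose invariance of the basic classes $\W_1,\W_2,\W_3$ is already available from \thmref{thm:inv.cl}; hence the $\F_1\oplus\F_2\oplus\F_3$ summand is reproduced verbatim in $\tF$. The remaining terms of \eqref{tFF} then convert the $\F_{10}$-part of $F$ into the vertical part of $\tF$.

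The main obstacle is precisely this vertical bookkeeping. In \eqref{tFF} the components of $\tF$ along $\xi$ are a combination of several $F$-terms that mix horizontal and vertical indices, and one must verify that, under the $\U_1$-hypothesis, all of them cancel except the single term reproducing the $\F_9$ condition on $\tF$, leaving no residual $\F_4,\dots,\F_8,\F_{11}$ pieces. This is also the step that explains why the fourth summand is $\F_{10}$ for $g$ but $\F_9$ for $\tg$: passing from $F$ to $\tF$ moves the vertical index out of the first slot and into the last two. Finally, since $g$ and $\tg$, hence $F$ and $\tF$, enter \eqref{tFF} symmetrically, the converse implication follows by interchanging the roles of the two B-metrics, so only one direction actually has to be computed.
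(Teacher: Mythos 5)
Your strategy coincides with the paper's: the paper's entire proof of this lemma is the one\-/line remark that it follows from the relation \eqref{tFF} between $F$ and $\tF$, and your plan --- characterise $\U_1$ by $\n\xi=0$, i.e. $F(\cdot,\f\cdot,\xi)=0$, via \propref{prop-nxi=0} and \eqref{Fxieta}, then push the resulting form of $F$ through \eqref{tFF} and match it against the defining conditions of $\widetilde\U_1$ --- is exactly the computation the paper leaves implicit, including the correct observation that the vertical bookkeeping is where the $\F_{10}\!\to\!\F_9$ exchange happens. Two points in your sketch need tightening. First, the appeal to \thmref{thm:inv.cl} for the horizontal summand is only heuristic: the restriction of $F$ to $\HC$ is not literally the fundamental tensor of an almost Norden manifold (that identification would require $\HC$ to be integrable and totally geodesic), so the preservation of the $\F_1\oplus\F_2\oplus\F_3$ part still has to be read off from \eqref{tFF} directly rather than imported from \S2. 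Second, the converse does not follow by ``interchanging the roles of the two B-metrics'' verbatim, because the twin interchange is not an involution in the almost contact setting: the metric associated to $\tg$ is $\widetilde{\tg}=-g+2\eta\otimes\eta$, not $g$. To close this you must either invert \eqref{tFF} explicitly (expressing $F$ through $\tF$) or check that every class $\F_i$ is invariant under the substitution $g\mapsto -g+2\eta\otimes\eta$, so that the forward implication applied to the structure $(\f,\xi,\eta,\tg)$ and \emph{its} associated metric yields the backward one.
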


Then, \thmref{thm:D=n}, \thmref{thm:tD=nn} and \lemref{lem:U1} imply the following
\begin{thm}\label{thm:D=ntD=nn}
    Let $\nSvK$ and $\tnSvK$ be the Schouten-van Kampen connections associated to $\n$ and $\nn$ and adapted to the pair $(\HH,\VV)$ on $(\MM,\f,\xi,\eta,g)$ and $(\MM,\f,\xi,\eta,\tg)$, respectively. Then the following assertions are equivalent:
\begin{enumerate}
  \item $\nSvK$ coincides with $\n$;
  \item $\tnSvK$ coincides with $\nn$;
  \item $(\MM,\f,\xi,\eta,g)$ belongs to $\U_1$;
  \item $(\MM,\f,\xi,\eta,\tg)$ belongs to $\widetilde{\U}_1$.
\end{enumerate}
\end{thm}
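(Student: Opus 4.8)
The plan is to recognise that this theorem is a purely formal assembly of three biconditionals that have already been established in the subsection, so no new geometric computation is required; the task is to arrange them into a single cycle of equivalences. Concretely, I would observe that (i)$\Leftrightarrow$(iii) is exactly the content of \thmref{thm:D=n}, that (ii)$\Leftrightarrow$(iv) is exactly \thmref{thm:tD=nn}, and that (iii)$\Leftrightarrow$(iv) is \lemref{lem:U1}. Chaining these three yields
\[
\text{(i)}\ \Leftrightarrow\ \text{(iii)}\ \Leftrightarrow\ \text{(iv)}\ \Leftrightarrow\ \text{(ii)},
\]
and since logical equivalence is transitive, all four assertions are mutually equivalent, which is the claim.

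First I would state that $\nSvK$ coincides with $\n$ precisely on the class $\U_1=\F_1\oplus\F_2\oplus\F_3\oplus\F_{10}$, invoking \thmref{thm:D=n}; this settles (i)$\Leftrightarrow$(iii). Next I would apply the associated-metric counterpart \thmref{thm:tD=nn}, which characterises the coincidence of $\tnSvK$ with $\nn$ on $(\MM,\f,\xi,\eta,\tg)$ by membership in $\widetilde{\U}_1=\F_1\oplus\F_2\oplus\F_3\oplus\F_{9}$, giving (ii)$\Leftrightarrow$(iv). The remaining link, (iii)$\Leftrightarrow$(iv), is supplied verbatim by \lemref{lem:U1}, which transfers the condition between the manifold equipped with $g$ and the one equipped with $\tg$. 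Assembling the three biconditionals into the displayed cycle completes the argument; the structure is a two-line deduction once the cited results are granted.

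The only place where genuine content resides is \lemref{lem:U1}, which is where the two metrics $g$ and $\tg$ are actually tied together, and whose proof rests on the relation \eqref{tFF} between the fundamental tensors $F$ and $\tF$ together with the characterisation of $\n\xi=0$ from \propref{prop-nxi=0}. Thus, if anything in this theorem were to be the main obstacle, it would be that transfer lemma rather than the final statement itself; but since \lemref{lem:U1} is already proved earlier and may be assumed, the present proof reduces to the transitive chaining above. For clarity I would also remark explicitly that $\nSvK=\n$ is equivalent to the vanishing of $\n\xi$ (and dually $\tnSvK=\nn$ to the vanishing of $\nn\xi$), since this is the geometric mechanism underlying \thmref{thm:D=n} and \thmref{thm:tD=nn} and makes the equivalence transparent to the reader.
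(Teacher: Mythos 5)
Your proposal is correct and matches the paper exactly: the theorem is stated there as an immediate consequence of \thmref{thm:D=n}, \thmref{thm:tD=nn} and \lemref{lem:U1}, which is precisely the chain (i)$\Leftrightarrow$(iii)$\Leftrightarrow$(iv)$\Leftrightarrow$(ii) you assemble. Your added remark that the underlying mechanism is the vanishing of $\n\xi$ (resp. $\nn\xi$) is consistent with how those cited results are proved.
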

\begin{cor}\label{cor:D=ntD=nn}
Let $\nSvK$ and $\tnSvK$ be the Schouten-van Kampen connections associated to $\n$ and $\nn$ and adapted to the pair $(\HH,\VV)$ on $(\MM,\f,\xi,\eta,g)$ and $(\MM,\f,\xi,\eta,\tg)$, respectively. If $\tnSvK\equiv\n$ or $\nSvK\equiv\nn$ then the four connections $\nSvK$, $\tnSvK$, $\n$ and $\nn$ coincide. The coinciding of $\nSvK$, $\tnSvK$, $\n$ and $\nn$ is equivalent to the condition $(\MM,\f,\xi,\eta,g)$ and $(\MM,\f,\xi,\eta,\tg)$ to be cosymplectic B-metric manifolds.
\end{cor}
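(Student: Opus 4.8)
The plan is to read everything off \thmref{thm:D=ntD=nn}, which already gives the four-fold equivalence among $\nSvK\equiv\n$, $\tnSvK\equiv\nn$, $(\MM,\f,\xi,\eta,g)\in\U_1$ and $(\MM,\f,\xi,\eta,\tg)\in\widetilde{\U}_1$. The point is that either of the two hypotheses $\tnSvK\equiv\n$ and $\nSvK\equiv\nn$ secretly forces one of these equivalent conditions, so the entire chain switches on simultaneously and then collapses the four connections onto one another.

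First I would treat the case $\tnSvK\equiv\n$. By \thmref{thm:tD-tT} the connection $\tnSvK$ preserves $\xi$, that is $\tnSvK\xi=0$; under the hypothesis $\tnSvK\equiv\n$ this becomes $\n\xi=0$. Then \propref{prop-nxi=0} places $(\MM,\f,\xi,\eta,g)$ in $\F_1\oplus\F_2\oplus\F_3\oplus\F_{10}=\U_1$, which is precisely assertion (iii) of \thmref{thm:D=ntD=nn}. Hence all four assertions of that theorem hold, so $\nSvK\equiv\n$ and $\tnSvK\equiv\nn$; together with the hypothesis $\tnSvK\equiv\n$ this yields $\nSvK\equiv\tnSvK\equiv\n\equiv\nn$, i.e. the four connections coincide. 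The case $\nSvK\equiv\nn$ is the mirror argument: by \thmref{thm:D-T} we have $\nSvK\xi=0$, hence $\nn\xi=0$; and, exactly as in the proof of \thmref{thm:tD=nn} (the analogue of \propref{prop-nxi=0} for the associated metric), $\nn\xi=0$ forces $(\MM,\f,\xi,\eta,\tg)\in\widetilde{\U}_1$, which is assertion (iv) of \thmref{thm:D=ntD=nn}. The whole equivalence again activates and, combined with $\nSvK\equiv\nn$, makes the four connections coincide.

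Finally I would settle the stated equivalence directly. If the four connections coincide, then in particular $\n\equiv\nn$, and by the list of equivalent descriptions of $\F_0$ recorded just after \eqref{tFF} (namely $F=0$, $\Phi=0$, $\tF=0$, $\n=\nn$) this says exactly that $(\MM,\f,\xi,\eta,g)$ and $(\MM,\f,\xi,\eta,\tg)$ are cosymplectic B-metric manifolds. Conversely, a cosymplectic B-metric manifold has $F=0$, hence $\n\equiv\nn$; moreover $\F_0$, being contained in every basic class, lies in both $\U_1$ and $\widetilde{\U}_1$, so \thmref{thm:D=n} and \thmref{thm:tD=nn} give $\nSvK\equiv\n$ and $\tnSvK\equiv\nn$, whence all four coincide.

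The only delicate step --- and the one to get right --- is the opening observation that preservation of $\xi$ by the Schouten-van Kampen connection converts either hypothesis into the vanishing of $\n\xi$ (respectively $\nn\xi$); after that the argument is pure bookkeeping with the equivalences already in hand.
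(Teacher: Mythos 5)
Your proof is correct and follows the route the paper clearly intends (it states the corollary without proof, immediately after \thmref{thm:D=ntD=nn}): the key observation that $\nSvK\xi=\tnSvK\xi=0$ always holds (\thmref{thm:D-T}, \thmref{thm:tD-tT}), so either hypothesis forces $\n\xi=0$ or $\nn\xi=0$ and hence condition (iii) or (iv) of the theorem, is exactly right, and the cosymplectic equivalence via $\n=\nn\Leftrightarrow\Phi=0\Leftrightarrow F=0$ together with $\F_0\subset\U_1\cap\widetilde{\U}_1$ is also sound.
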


We obtain from \eqref{tSvK=n} the following relation between $\nSvK$ and $\tnSvK$
\begin{equation}\label{tD=D}
  \tnSvK_x y = \nSvK_x y + \Phi(x,y) -\eta(\Phi(x,y))\xi -\eta(y)\Phi(x,\xi).
\end{equation}

It is clear that $\tnSvK= \nSvK$ if and only if \[\Phi(x,y)=\eta(\Phi(x,y))\xi +\eta(y)\Phi(x,\xi)\] which is equivalent to \[\Phi(x,y)=\eta(\Phi(x,y))\xi +\eta(x)\eta(y)\Phi(\xi,\xi)\] because $\Phi$ is symmetric.
Using relation \eqref{FPhi},
we obtain  condition \eqref{F_D=0} which determines the class $\U_2$.
Then, the following assertion is valid.
\begin{thm}\label{thm:tD=D}
The Schouten-van Kampen connections $\tnSvK$ and $\nSvK$ associated to $\nn$ and $\n$, respectively, and adapt\-ed to the pair  $(\HH,\VV)$
coincide with each other if and only if the manifold belongs to the class $\U_2$.
\end{thm}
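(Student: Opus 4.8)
The plan is to read off the coincidence condition from the explicit relation \eqref{tD=D} between the two connections and then to convert it, through the formulae linking $\Phi$ and $F$, into the defining condition \eqref{F_D=0} of the class $\U_2$. First I would subtract $\nSvK_x y$ from both sides of \eqref{tD=D}: the connections $\tnSvK$ and $\nSvK$ coincide exactly when the tensor
\[
\Phi(x,y)-\eta(\Phi(x,y))\xi-\eta(y)\Phi(x,\xi)
\]
vanishes for all $x,y$, i.e. when $\Phi(x,y)=\eta(\Phi(x,y))\xi+\eta(y)\Phi(x,\xi)$ holds identically.

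Next I would use that $\Phi$ is symmetric, being the difference of the two torsion-free connections $\n$ and $\nn$. Writing the previous condition with $x$ and $y$ interchanged and subtracting gives $\eta(y)\Phi(x,\xi)=\eta(x)\Phi(y,\xi)$; setting $y=\xi$ and using $\eta(\xi)=1$ then yields $\Phi(x,\xi)=\eta(x)\Phi(\xi,\xi)$. Substituting this back reduces the coincidence condition to the purely algebraic form
\[
\Phi(x,y)=\eta(\Phi(x,y))\xi+\eta(x)\eta(y)\Phi(\xi,\xi),
\]
or, for the associated $(0,3)$-tensor, to $\Phi(x,y,z)=\Phi(x,y,\xi)\eta(z)+\eta(x)\eta(y)\Phi(\xi,\xi,z)$.

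The main work is the last step: translating this condition on $\Phi$ into \eqref{F_D=0}. For the forward direction I would insert the reduced form of $\Phi$ into the expression \eqref{FPhi} of $F$ in terms of $\Phi$; using $\eta\circ\f=0$ and $\f\xi=0$ from \eqref{str1}, the horizontal contributions drop out and only the terms weighted by $\eta(y)$ or $\eta(z)$ survive, reassembling into $F(x,y,z)=F(x,y,\xi)\eta(z)+F(x,z,\xi)\eta(y)$. For the converse I would assume \eqref{F_D=0} and feed it into the inverse formula \eqref{PhiF} to recover the displayed form of $\Phi$. The hard part will be the bookkeeping here: both \eqref{FPhi} and \eqref{PhiF} carry several $\eta$-weighted correction terms, and one must track carefully which of them vanish under $\Phi(x,\xi)=\eta(x)\Phi(\xi,\xi)$ to see that precisely \eqref{F_D=0} emerges and nothing more. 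Once this equivalence is in hand, the fact that \eqref{F_D=0} holds if and only if $(\MM,\f,\xi,\eta,g)\in\U_2$, already established in the proof of \thmref{thm:D-nat} via \eqref{Fi}, finishes the argument.
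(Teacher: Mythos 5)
Your proposal is correct and follows essentially the same route as the paper: the paper likewise reads the coincidence condition off relation \eqref{tD=D}, uses the symmetry of $\Phi$ to reduce it to $\Phi(x,y)=\eta(\Phi(x,y))\xi+\eta(x)\eta(y)\Phi(\xi,\xi)$, and then invokes the $F$--$\Phi$ relation \eqref{FPhi} to arrive at condition \eqref{F_D=0} characterizing $\U_2$. Your version merely spells out the symmetry argument and the two directions of the $F$--$\Phi$ translation in more detail than the paper does.
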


\vskip 0.2in \addtocounter{subsubsection}{1}

\noindent  {\Large\bf{\emph{\thesubsubsection. The conditions $\tnSvK$ to be natural for $(\f,\xi,\eta,\tg)$}}}

\vskip 0.15in

Using \eqref{tD=D}, we have the following relation between the covariant derivatives of $\f$ regarding $\tnSvK$ and $\nSvK$
\begin{equation}\label{tDfiDfi}
\begin{split}
(\tnSvK_x\f)y=(\nSvK_x\f)y &+\Phi(x,\f y)-\f\Phi(x,y)\\[6pt]
                    &+\eta(y)\f\Phi(x,\xi)-\eta(\Phi(x,\f y))\xi.
\end{split}
\end{equation}
By virtue of the latter equality, we establish that $\tnSvK\f$ and $\nSvK{\f}$ coincide if and only if the condition
\[\Phi(x,\f^2 y,\f^2 z)=-\Phi(x,\f y,\f z)\] holds.
Then, using the classification of almost contact B-metric manifolds regarding $\Phi$, given in \cite{NakGri93}, the latter condition is satisfied only when $(\MM,\f,\xi,\eta,g)$ is in the class $\F_3\oplus\U_3$, where $\U_3$ denotes the direct sum $\F_4\oplus\F_5\oplus\F_6\oplus\F_7\oplus\F_{11}$. By direct computations we establish that $(\MM,\f,\xi,\eta,\tg)$ belongs to the same class. Therefore, we obtain
\begin{thm}\label{thm:tDfi=Dfi}
The covariant derivatives of $\f$ with respect to the Schou\-ten-van Kampen connections $\nSvK$ and $\tnSvK$ 
coincide if and only if both the manifolds $(\MM,\f,\xi,\allowbreak{}\eta,\allowbreak{}g)$ and $(\MM,\f,\xi,\eta,\tg)$  belong to the class $\F_3\oplus\U_3$. 
\end{thm}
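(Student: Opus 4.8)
The starting point is the relation \eqref{tDfiDfi}, which already expresses the difference $(\tnSvK_x\f)y-(\nSvK_x\f)y$ entirely through the common potential $\Phi$. Hence $\tnSvK\f=\nSvK\f$ holds if and only if the $(1,2)$-tensor
\[
\Phi(x,\f y)-\f\Phi(x,y)+\eta(y)\f\Phi(x,\xi)-\eta(\Phi(x,\f y))\xi
\]
vanishes identically. First I would pass to the associated $(0,3)$-tensor by pairing with $z$ through $g$. Since $\f$ is self-adjoint with respect to $g$ (which follows from \eqref{str2} together with $\eta=g(\cdot,\xi)$, as $g(\xi,\f y)=\eta(\f y)=0$) and since $\eta(\Phi(x,\f y))=\Phi(x,\f y,\xi)$, this turns the vanishing condition into
\[
\Phi(x,\f y,z)-\Phi(x,y,\f z)+\eta(y)\Phi(x,\xi,\f z)-\eta(z)\Phi(x,\f y,\xi)=0,
\]
which I shall call $(\ast)$.

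The next step is the algebraic reduction of $(\ast)$ to the single identity $\Phi(x,\f^2 y,\f^2 z)=-\Phi(x,\f y,\f z)$ announced in the text. The plan is to substitute $z\mapsto\f z$ in $(\ast)$ and to use $\eta\circ\f=0$ from \eqref{str1} together with $\f^2 z=-z+\eta(z)\xi$; after expanding $\Phi$ by linearity in its last two slots, the resulting equation coincides exactly with $\Phi(x,\f^2 y,\f^2 z)+\Phi(x,\f y,\f z)=0$. The converse follows by applying the same substitution $z\mapsto\f z$ to the reduced identity and invoking \eqref{str1} once more, so $(\ast)$ and the reduced identity are equivalent. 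This disposes of the differential part of the statement and reduces everything to a purely algebraic condition on $\Phi$.

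It remains to identify the class singled out by $\Phi(x,\f^2 y,\f^2 z)=-\Phi(x,\f y,\f z)$. Here I would invoke the classification of almost contact B-metric manifolds in terms of $\Phi$ from \cite{NakGri93}: substituting the characteristic form of $\Phi$ for each basic class $\F_i$ into the reduced identity and testing it slot by slot, one checks that it is satisfied precisely on $\F_3$ and on each of $\F_4,\F_5,\F_6,\F_7,\F_{11}$, i.e. on $\F_3\oplus\U_3$, and that it fails on $\F_1$, $\F_2$, $\F_8$, $\F_9$ and $\F_{10}$; this gives the class of $(\MM,\f,\xi,\eta,g)$. To transfer the conclusion to the associated structure, I would recompute the corresponding condition for $(\MM,\f,\xi,\eta,\tg)$ using the relation \eqref{tFF} between $F$ and $\tF$ (equivalently, observing that the reduced identity is invariant under $\Phi\mapsto-\Phi$, which is the effect of interchanging the roles of $\n$ and $\nn$), and verify that the same direct sum $\F_3\oplus\U_3$ appears.

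The hard part will be the class-by-class verification against the $\Phi$-classification of \cite{NakGri93}, since the characteristic conditions of the eleven basic classes mix horizontal and vertical contributions while the reduced identity $\Phi(x,\f^2 y,\f^2 z)=-\Phi(x,\f y,\f z)$ couples the two; in particular one must check carefully that the $\xi$-directions do not produce extra obstructions in $\F_8$, $\F_9$ and $\F_{10}$. The accompanying computation that $\tF$ lands in the same class through \eqref{tFF} is of the same nature and is the only place where the symmetric role of the twin B-metrics is genuinely used.
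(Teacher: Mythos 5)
Your proposal is correct and follows the paper's own route: the same starting relation \eqref{tDfiDfi}, the same reduction (via the substitution $z\mapsto\f z$ together with $\eta\circ\f=0$ and $\f^2=-I+\eta\otimes\xi$) to the identity $\Phi(x,\f^2 y,\f^2 z)=-\Phi(x,\f y,\f z)$, the same appeal to the $\Phi$-classification of \cite{NakGri93} to single out $\F_3\oplus\U_3$, and the same direct verification that the associated manifold lands in the same class. The only caveat is your parenthetical shortcut that ``the reduced identity is invariant under $\Phi\mapsto-\Phi$'': the class of $(\MM,\f,\xi,\eta,\tg)$ is read off from $\widetilde{\Phi}(x,y,z)=\tg(-\Phi(x,y),z)$, where the lowering is done with $\tg$ rather than $g$, so this is not a mere sign change and one really must recompute via \eqref{tFF}, as your primary route (and the paper) does.
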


Using \eqref{PhiF}, \eqref{Df} and \eqref{tDfiDfi}, we obtain that $\tnSvK\f=0$ is equivalent to the condition
\[
F(\f y,\f z,x)+F(\f^2 y,\f^2 z,x)-F(\f z,\f y,x)-F(\f^2 z,\f^2 y,x)=0.
\]
Then, by virtue of \eqref{Fi} we get the following
\begin{thm}\label{thm:tD-nat}
The Schouten-van Kampen connection $\tnSvK$ 
is a natural connection for the structure $(\f,\xi,\eta,\tg)$ if and only if $(\MM,\f,\xi,\eta,\tg)$ belongs to the class $\F_1\oplus\F_2\oplus\U_3$. 
\end{thm}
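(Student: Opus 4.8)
The plan is to reduce the statement to a single tensorial condition and then to read the classes off from the known characterizations. By \thmref{thm:tD-tT} the Schouten--van Kampen connection $\tnSvK$ always preserves $\xi$, $\eta$ and $\tg$, i.e. $\tnSvK\xi=\tnSvK\eta=\tnSvK\tg=0$ hold in every class. Hence $\tnSvK$ is natural for $(\f,\xi,\eta,\tg)$ precisely when the remaining structure tensor is parallel, that is, when $\tnSvK\f=0$. So the entire theorem is equivalent to characterizing the vanishing of $\tnSvK\f$.

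First I would translate $\tnSvK\f=0$ into a condition on the fundamental tensor $F$ of $(\f,\xi,\eta,g)$. The relation \eqref{tDfiDfi} expresses $(\tnSvK_x\f)y$ through $(\nSvK_x\f)y$ and the potential $\Phi$; substituting \eqref{Df} for $(\nSvK_x\f)y$ and then \eqref{PhiF} to eliminate $\Phi$ in favour of $F$ rewrites $\tnSvK\f$ entirely in terms of $F$. Using $\f^2=-I+\eta\otimes\xi$ from \eqref{str1}, $\eta\circ\f=0$, and the symmetries \eqref{F-prop}, the $\eta$-weighted terms should collapse and the horizontal part of $\tnSvK\f=0$ reduces to
\begin{equation*}
F(\f y,\f z,x)+F(\f^2 y,\f^2 z,x)-F(\f z,\f y,x)-F(\f^2 z,\f^2 y,x)=0,
\end{equation*}
i.e. the combination $F(\f\cdot,\f\cdot,x)+F(\f^2\cdot,\f^2\cdot,x)$ must be symmetric in its first two slots, together with the corresponding identities governing the Reeb-directional components.

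The final step is a class-by-class comparison of this condition with the defining relations \eqref{Fi} of the eleven basic classes $\F_1,\dots,\F_{11}$. For each $\F_i$ I would separate the purely horizontal part from the part proportional to $\eta$ and evaluate the skew-in-$(y,z)$ combination using the prescribed symmetry of $F(\cdot,\cdot,\xi)$ and its behaviour under $F(\f\cdot,\f\cdot,\xi)=\pm F(\cdot,\cdot,\xi)$; the classes that survive assemble into $\F_1\oplus\F_2\oplus\U_3$, which is the asserted answer.

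The hard part will be the two computational passages. The reduction to the displayed identity demands meticulous tracking of the $\eta$-terms and of the $\f$, $\f^2$ compositions produced by \eqref{PhiF} and \eqref{tDfiDfi}, since a dropped Reeb-directional contribution changes the membership of the borderline classes among $\F_6,\dots,\F_{11}$. Correspondingly, the matching against \eqref{Fi} is most delicate exactly for those classes whose fundamental tensor is concentrated along $\xi$, where the sign relation $F(\f\cdot,\f\cdot,\xi)=\pm F(\cdot,\cdot,\xi)$ decides whether the $\f$- and $\f^2$-contributions reinforce or cancel. As an independent check one can compare with the $g$-side result \thmref{thm:D-nat} for $\nSvK$, transported to $(\MM,\f,\xi,\eta,\tg)$ through the $F$--$\tF$ relation \eqref{tFF}.
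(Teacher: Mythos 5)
Your proposal matches the paper's own argument: the paper likewise reduces naturality of $\tnSvK$ for $(\f,\xi,\eta,\tg)$ to the single condition $\tnSvK\f=0$, uses \eqref{PhiF}, \eqref{Df} and \eqref{tDfiDfi} to show this is equivalent to exactly the identity
$F(\f y,\f z,x)+F(\f^2 y,\f^2 z,x)-F(\f z,\f y,x)-F(\f^2 z,\f^2 y,x)=0$,
and then reads off the class $\F_1\oplus\F_2\oplus\U_3$ from \eqref{Fi}. This is correct and essentially identical to the paper's route.
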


Consequently, bearing in mind \thmref{thm:D-nat}, \thmref{thm:tDfi=Dfi}, \thmref{thm:tD-nat}, we have the validity of the following
\begin{thm}\label{thm:DtD-nat}
The Schouten-van Kampen connections $\nSvK$ and $\tnSvK$ 
are natural connections on $(\MM,\f,\xi,\eta,g,\tg)$ if and only if
$(\MM,\f,\xi,\eta,g)$ and $(\MM,\f,\xi,\eta,\tg)$ belong to the class $\U_3$.
\end{thm}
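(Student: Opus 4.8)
The plan is to reduce the naturality of each Schouten-van Kampen connection to a single condition on $\f$ and then to intersect the resulting classes. First I would recall that by \thmref{thm:D-T} and \thmref{thm:tD-tT} the connections $\nSvK$ and $\tnSvK$ always satisfy $\nSvK\xi=\nSvK\eta=\nSvK g=0$ and $\tnSvK\xi=\tnSvK\eta=\tnSvK\tg=0$. Consequently $\nSvK$ is a natural connection for $(\f,\xi,\eta,g)$ exactly when $\nSvK\f=0$, and $\tnSvK$ is a natural connection for $(\f,\xi,\eta,\tg)$ exactly when $\tnSvK\f=0$. Thus the whole statement is equivalent to the simultaneous vanishing of $\nSvK\f$ and $\tnSvK\f$, and I would build the proof on the three preceding theorems \thmref{thm:D-nat}, \thmref{thm:tD-nat} and \thmref{thm:tDfi=Dfi}.

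For the forward implication I would assume both connections natural, i.e. $\nSvK\f=0$ and $\tnSvK\f=0$. By \thmref{thm:D-nat} the first equality forces $(\MM,\f,\xi,\eta,g)\in\U_2=\F_4\oplus\cdots\oplus\F_9\oplus\F_{11}$, while by \thmref{thm:tD-nat} the second forces $(\MM,\f,\xi,\eta,\tg)\in\F_1\oplus\F_2\oplus\U_3$. The bridging observation is that $\nSvK\f$ and $\tnSvK\f$ now coincide, both being zero, so \thmref{thm:tDfi=Dfi} applies and places both $(\MM,\f,\xi,\eta,g)$ and $(\MM,\f,\xi,\eta,\tg)$ in $\F_3\oplus\U_3$. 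Since the decomposition into basic classes is an orthogonal direct sum, membership in two such sums is governed by the common basic components, whence $(\MM,\f,\xi,\eta,g)\in\U_2\cap(\F_3\oplus\U_3)=\U_3$ and $(\MM,\f,\xi,\eta,\tg)\in(\F_1\oplus\F_2\oplus\U_3)\cap(\F_3\oplus\U_3)=\U_3$, as claimed.

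The converse is immediate from the inclusions $\U_3\subset\U_2$ and $\U_3\subset\F_1\oplus\F_2\oplus\U_3$: if both $(\MM,\f,\xi,\eta,g)$ and $(\MM,\f,\xi,\eta,\tg)$ lie in $\U_3$, then \thmref{thm:D-nat} and \thmref{thm:tD-nat} give $\nSvK\f=0$ and $\tnSvK\f=0$, so both connections are natural. The main point to handle with care is the bridging step: the naturality of the two connections taken separately only constrains the $g$-class and the $\tg$-class individually, and it is the coincidence $\nSvK\f=\tnSvK\f=0$ that, through \thmref{thm:tDfi=Dfi}, links the two conditions and cuts both classes down to exactly $\U_3$. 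I would also verify the two intersection computations componentwise, checking that the index sets $\{4,5,6,7,8,9,11\}\cap\{3,4,5,6,7,11\}$ and $\{1,2,4,5,6,7,11\}\cap\{3,4,5,6,7,11\}$ both equal $\{4,5,6,7,11\}$, which is precisely $\U_3$.
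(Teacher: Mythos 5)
Your proof is correct and takes essentially the same route as the paper, whose own argument is precisely the combination of \thmref{thm:D-nat}, \thmref{thm:tDfi=Dfi} and \thmref{thm:tD-nat}; you have simply made explicit the bridging observation that $\nSvK\f=\tnSvK\f=0$ licenses the use of \thmref{thm:tDfi=Dfi}, together with the componentwise intersections $\{4,5,6,7,8,9,11\}\cap\{3,4,5,6,7,11\}=\{1,2,4,5,6,7,11\}\cap\{3,4,5,6,7,11\}=\{4,5,6,7,11\}$ that the paper leaves implicit. No gaps.
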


\vskip 0.2in \addtocounter{subsection}{1} \setcounter{subsubsection}{0}

\noindent  {\Large\bf \thesubsection. Properties of the potentials and the torsions of the pair of connections $\nSvK$ and $\tnSvK$}

\vskip 0.15in

Bearing in mind 
\eqref{Q}, \eqref{T}, \eqref{TQ}, \eqref{QT} and \eqref{Sx}, we establish that the properties of the torsion, the potential and the shape operator for $\nSvK$ are related.
Analogously, similar linear relations between the respective torsion, potential and shape operator for $\tnSvK$ are valid.

According to the expressions \eqref{Q} and \eqref{T} of $Q^{\circ}$ and $T^{\circ}$, respectively, their horizontal and vertical components have the following form
\begin{equation}\label{QThv}
\begin{array}{ll}
Q^{\circ\mathrm{h}}=-(\n\xi)\otimes\eta,\qquad & Q^{\circ\mathrm{v}}=(\n\eta)\otimes\xi,
\qquad\\[6pt]
T^{\circ\mathrm{h}}=\eta\wedge(\n\xi),\qquad & T^{\circ\mathrm{v}}=\D\eta\otimes\xi.
\end{array}
\end{equation}
Then, the corresponding (0,3)-tensors
\[
Q^{\circ}(x,y,z)=g(Q^{\circ}(x,y),z),\qquad T^{\circ}(x,y,z)=g(T^{\circ}(x,y),z)
\]
are expressed in terms of $S$ as follows
\[
\begin{split}
&Q^{\circ}(x,y,z)=
-\pi_1(\xi,S(x),y,z),
\qquad\\[6pt]
&T^{\circ}(x,y,z)=
-\pi_1(\xi,S(x),y,z)+\pi_1(\xi,S(y),x,z),
\end{split}
\]
where it is denoted
\begin{equation}\label{pi1}
\pi_1(x,y,z,w)=g(y,z)g(x,w)-g(x,z)g(y,w).
\end{equation}
Moreover, their horizontal and vertical components have the form
\begin{equation}\label{QTShv}
\begin{array}{ll}
Q^{\circ\mathrm{h}}=S\otimes\eta,\qquad\quad & Q^{\circ\mathrm{v}}=-S^{\flat}\otimes\xi,
\qquad\\[6pt]
T^{\circ\mathrm{h}}=-\eta\wedge S,\qquad\quad & T^{\circ\mathrm{v}}=-2\Alt(S^{\flat})\otimes\xi,
\end{array}
\end{equation}
where we use $S^{\flat}(x,y)=g(S(x),y)$ and $\Alt$ for the alternation.

By virtue of the equalities for the vertical components of $Q^{\circ}$ and $T^{\circ}$ in \eqref{QThv} and \eqref{QTShv}, we obtain immediately
\begin{thm}\label{thm:equiv}
The following equivalences are valid:
\begin{enumerate}
  \item
        $\n\eta$ is symmetric                       $\Leftrightarrow$
        $\eta$ is closed 
                                    $\Leftrightarrow$
        $Q^{\circ\mathrm{v}}$ is symmetric               $\Leftrightarrow$
        $T^{\circ\mathrm{v}}$ vanishes                              $\Leftrightarrow$
        $S$ is self-adjoint regarding $g$       $\Leftrightarrow$
        $S^{\flat}$ is symmetric                 $\Leftrightarrow$
        $\MM\in\U_1\oplus\F_4\oplus\F_5\oplus\F_6\oplus\F_9$;
  \item
        $\n\eta$ is skew-symmetric                  $\Leftrightarrow$
        $\xi$ is Killing with respect to $g$ 
               $\Leftrightarrow$
        $Q^{\circ\mathrm{v}}$ is skew-symmetric                     $\Leftrightarrow$
        $S$ is anti-self-adjoint regarding $g$     $\Leftrightarrow$
        $S^{\flat}$ is skew-symmetric           $\Leftrightarrow$
        $\MM\in\U_1\oplus\F_7\oplus\F_8$;
  \item
        $\n\eta=0$                                  $\Leftrightarrow$
        $\D\eta=\LL_{\xi}g=0$                      $\Leftrightarrow$
        $\n\xi=0$                                   $\Leftrightarrow$
        $S=0$                                       $\Leftrightarrow$
        $S^{\flat}=0$                       \par                $\Leftrightarrow$
        $\nSvK=\n$                                      $\Leftrightarrow$
        $\MM\in\U_1$.
\end{enumerate}
\end{thm}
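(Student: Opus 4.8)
The plan is to reduce everything to one master identity and then read the three chains of equivalences off it, leaving only the class-membership statements to the Ganchev--Mihova--Gribachev classification. First I would record that, by \eqref{Sx} and \eqref{Fxieta}, the covariant differential of $\eta$ coincides up to sign with the lowered shape operator: $(\n_x\eta)(y)=g(\n_x\xi,y)=-S^{\flat}(x,y)$, so that as $(0,2)$-tensors $\n\eta=-S^{\flat}$. Combined with the standard formula $\D\eta(x,y)=(\n_x\eta)(y)-(\n_y\eta)(x)$ and the Lie-derivative formula $(\LL_{\xi}g)(x,y)=(\n_x\eta)(y)+(\n_y\eta)(x)$ recalled in the text, this identifies the symmetric part of $\n\eta$ with $-\tfrac12\LL_{\xi}g$ and its skew part with $\tfrac12\D\eta$. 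Hence within item (i): $\n\eta$ symmetric $\Leftrightarrow$ $\D\eta=0$ (i.e.\ $\eta$ closed) $\Leftrightarrow$ $S^{\flat}$ symmetric $\Leftrightarrow$ $S$ self-adjoint for $g$; and within item (ii): $\n\eta$ skew $\Leftrightarrow$ $\LL_{\xi}g=0$ (i.e.\ $\xi$ Killing) $\Leftrightarrow$ $S^{\flat}$ skew $\Leftrightarrow$ $S$ anti-self-adjoint.

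Next I would bring in the Schouten--van Kampen data. From the vertical components in \eqref{QThv} and \eqref{QTShv} one has $Q^{\circ\mathrm{v}}=-S^{\flat}\otimes\xi$, and, since $\n_y\xi\in\HH$, the vertical part of \eqref{T} is $T^{\circ\mathrm{v}}=\D\eta\otimes\xi=-2\Alt(S^{\flat})\otimes\xi$. Therefore $Q^{\circ\mathrm{v}}$ is symmetric (resp.\ skew) exactly when $S^{\flat}$ is, and $T^{\circ\mathrm{v}}$ vanishes exactly when $S^{\flat}$ is symmetric; this inserts the two connection conditions into the chains (i) and (ii). For (iii) the extra links are immediate: $\n\eta=0\Leftrightarrow S^{\flat}=0\Leftrightarrow S=0\Leftrightarrow\n\xi=0$ by nondegeneracy of $g$, while $\nSvK=\n$ is equivalent to the vanishing of $Q^{\circ}$, whose horizontal and vertical parts $-\eta(y)\n_x\xi$ and $(\n_x\eta)(y)\xi$ both vanish for all $x,y$ iff $\n\xi=0$ (this is the content of \thmref{thm:D=n}).

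It remains to pin down the three class lists, and this is where I would invoke \eqref{Fi:nxi}. Using the B-metric symmetry $g(\f x,y)=g(x,\f y)$ (so that $g(\f\cdot,\cdot)$ and $g(\f^{2}\cdot,\cdot)$ are symmetric), I would read the symmetry type of $S^{\flat}(x,y)=-g(\n_x\xi,y)$ class by class: it vanishes on $\F_1,\F_2,\F_3,\F_{10}$ (that is, on $\U_1$); it is symmetric on $\F_4,\F_5,\F_6,\F_9$; it is skew on $\F_7,\F_8$; and on $\F_{11}$, where $\n_x\xi=\eta(x)\f\om^{\sharp}$, one computes $S^{\flat}(x,y)=-\eta(x)\om(\f y)$, which is neither symmetric nor skew unless $\om=0$ (putting $x=\xi$ forces $\om\circ\f=0$, hence $\om=0$ since $\om(\xi)=0$). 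Because $F\mapsto S^{\flat}$ is linear and $\GG\times\II$-equivariant and the decomposition $F=\sum F^{i}$ is orthogonal and invariant, the conditions ``$S^{\flat}$ symmetric'' and ``$S^{\flat}$ skew'' cut out invariant subspaces, which are therefore exactly the direct sums of the basic classes on which the respective symmetry holds identically. This yields $\U_1\oplus\F_4\oplus\F_5\oplus\F_6\oplus\F_9$ in (i) and $\U_1\oplus\F_7\oplus\F_8$ in (ii), while (iii) is precisely \propref{prop-nxi=0}.

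The genuinely delicate point is this last class identification, specifically the role of $\F_{11}$: since its contribution to $S^{\flat}$ is neither symmetric nor antisymmetric, I must argue that it cannot combine with the $\F_7,\F_8$ (resp.\ $\F_4,\F_5,\F_6,\F_9$) contributions to produce a symmetric (resp.\ skew) total unless the $\F_{11}$-component itself vanishes. The equivariance and orthogonality of the Ganchev--Mihova--Gribachev decomposition are exactly what make this component-wise reasoning rigorous rather than merely generic, so the main effort of the proof is organising these symmetry bookkeeping steps cleanly rather than any single hard computation.
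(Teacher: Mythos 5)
Your proof is correct and follows essentially the same route as the paper: the paper's own argument is the single sentence ``by virtue of the equalities for the vertical components of $Q^{\circ}$ and $T^{\circ}$ in \eqref{QThv} and \eqref{QTShv}, we obtain immediately \ldots'', with the class lists read off from \eqref{Fi:nxi}; you have simply written out the identifications $\n\eta=-S^{\flat}$, $Q^{\circ\mathrm{v}}=-S^{\flat}\otimes\xi$, $T^{\circ\mathrm{v}}=-2\Alt(S^{\flat})\otimes\xi$ and the splitting of $\n\eta$ into $\D\eta$ and $\LL_{\xi}g$ in full. (One small slip: with the paper's convention $(\LL_{\xi}g)(x,y)=(\n_x\eta)(y)+(\n_y\eta)(x)$, the symmetric part of $\n\eta$ is $+\tfrac12\LL_{\xi}g$, not $-\tfrac12\LL_{\xi}g$; this does not affect any of the equivalences you draw from it.)

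The one place where your stated justification is weaker than what you actually need is the final class identification. Orthogonality and $\GG\times\II$-invariance of the decomposition $\F=\bigoplus\F_i$ do \emph{not} by themselves imply that every invariant subspace is a direct sum of basic classes; for that you would also need the $\F_i$ to be irreducible and pairwise inequivalent as representations, which neither this paper nor the cited classification explicitly records. Fortunately the representation-theoretic shortcut is unnecessary here, and your own computations already contain the direct argument: by \eqref{Fi-Ico} the components $F^{7}$, $F^{8}$ and $F^{11}$ are explicit linear expressions in $F(\cdot,\cdot,\xi)$ and $\om=F(\xi,\xi,\cdot)$, and their contributions to $S^{\flat}(x,y)=-F(x,\f y,\xi)$ are supported, respectively, on the skew part over $\HH\times\HH$ ($F^{7}$, $F^{8}$, distinguished by the sign under $\f\otimes\f$) and on $\VV\otimes\HH$ ($F^{11}$). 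Hence the vanishing of the skew part of $S^{\flat}$ forces $F^{7}=F^{8}=0$ and $\om\circ\f=0$, the latter giving $\om=0$ and so $F^{11}=0$ exactly as you computed; the symmetric case is analogous. This is the same style of argument the paper itself uses in \propref{prop-nxi=0}, so phrasing the last step this way makes the proof self-contained without any appeal to irreducibility.
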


The horizontal and vertical components of $\widetilde{Q}^{\circ}$ and $\widetilde{T}^{\circ}$ of $\tnSvK$ are respectively
\begin{equation}\label{tQThv}
\begin{array}{ll}
\widetilde{Q}^{\circ\mathrm{h}}=-(\nn\xi)\otimes\eta,\qquad\quad & \widetilde{Q}^{\circ\mathrm{v}}=(\nn\eta)\otimes\xi,
\qquad\\[6pt]
\widetilde{T}^{\circ\mathrm{h}}=\eta\wedge(\nn\xi),\qquad\quad & \widetilde{T}^{\circ\mathrm{v}}=\D\eta\otimes\xi.
\end{array}
\end{equation}

From $\tg(\xi,\xi)=1$ we have $\tg(\nn_x\xi,\xi)=0$ and therefore $\nn\xi\in\HH$.
The shape operator $\tS:\HH\mapsto\HH$ for the metric $\tg$ is defined by $\tS(x)=-\nn_x\xi$.

Since we have
\[
(\nn_x \eta)(y)=(\n_x \eta)(y)-\eta(\Phi(x,y)),\qquad \nn_x \xi=\n_x \xi +\Phi(x,\xi),
\]
then we obtain
\begin{equation}\label{tSSPhi}
\tS(x)=S(x)-\Phi(x,\xi),\qquad \tS^{\flat}(x,y)=S^{\flat}(x,\f y)-\Phi(\xi,x,\f y),
\end{equation}
where $\tS^{\flat}(x,y)=\tg(\tS(x),y)$. 

Moreover,
\eqref{Q}, \eqref{T}, \eqref{QThv}, \eqref{tQ}, \eqref{tT} and \eqref{tQThv} imply the following relations
\begin{gather*}
\begin{split}
&\widetilde{Q}^{\circ}(x,y)=Q^{\circ}(x,y)-\eta(y)\Phi(x,\xi)-\eta(\Phi(x,y))\xi,
\qquad\\[6pt]
&\widetilde{T}^{\circ}(x,y)=T^{\circ}(x,y)+\eta(x)\Phi(y,\xi)-\eta(y)\Phi(x,\xi);
\\[6pt]
&\widetilde{Q}^{\circ\mathrm{h}}=Q^{\circ\mathrm{h}}-(\xi\,\lrcorner\,\Phi)\otimes\eta,\qquad  \widetilde{Q}^{\circ\mathrm{v}}=Q^{\circ\mathrm{v}}-(\eta\circ\Phi)\otimes\xi,
\qquad\\[6pt]
&\widetilde{T}^{\circ\mathrm{h}}=T^{\circ\mathrm{h}}+\eta\wedge(\xi\,\lrcorner\,\Phi),\qquad  \widetilde{T}^{\circ\mathrm{v}}=T^{\circ\mathrm{v}}.
\end{split}
\end{gather*}

Using the latter equalities and \eqref{tSSPhi}, we obtain the following for\-mu\-lae
\begin{gather*}
\begin{split}
&\widetilde{Q}^{\circ}=Q^{\circ}+(\tS-S)\otimes\eta-(\tS^{\flat}-S^{\flat})\otimes\xi,
\qquad\\[6pt]
&\widetilde{T}^{\circ}=T^{\circ}+(\tS-S)\wedge\eta;
\end{split}
\end{gather*}
\begin{gather*}
\begin{split}
&\widetilde{Q}^{\circ\mathrm{h}}=Q^{\circ\mathrm{h}}+(\tS-S)\otimes\eta,\qquad \widetilde{Q}^{\circ\mathrm{v}}=Q^{\circ\mathrm{v}}-(\tS^{\flat}-S^{\flat})\otimes\xi,
\qquad\\[6pt]
&\widetilde{T}^{\circ\mathrm{h}}=T^{\circ\mathrm{h}}+(\tS-S)\wedge\eta,\qquad \widetilde{T}^{\circ\mathrm{v}}=T^{\circ\mathrm{v}}.
\end{split}
\end{gather*}

\begin{thm}\label{thm:equiv-t}
The following equivalences are valid:
\begin{enumerate}
  \item
        $\nn\eta$ is symmetric                       $\Leftrightarrow$
        $\eta$ is closed                              $\Leftrightarrow$
        $\widetilde{Q}^{\circ\mathrm{v}}$ is symmetric                          $\Leftrightarrow$
        $\widetilde{T}^{\circ\mathrm{v}}$ vanishes
        \par $
        \Leftrightarrow$
        $\tS$ is self-adjoint regarding $\tg$           $\Leftrightarrow$
        $\tS^{\flat}$ is symmetric             \par
        $\Leftrightarrow$
        $(\MM,\f,\xi,\eta,\tg)\in\widetilde\U_1\oplus\F_4\oplus\F_5\oplus\F_6\oplus\F_{10}$;
  \item
        $\nn\eta$ is skew-symmetric                  $\Leftrightarrow$
        $\xi$ is Killing with respect to $\tg$ 
        \par
        $\Leftrightarrow$
        $\widetilde{Q}^{\circ\mathrm{v}}$ is skew-symmetric      $\Leftrightarrow$
        $\tS$ is anti-self-adjoint regarding $\tg$   \par
        $\Leftrightarrow$
        $\tS^{\flat}$ is skew-symmetric           $\Leftrightarrow$
        $(\MM,\f,\xi,\eta,\tg)\in\widetilde\U_1\oplus\F_7$;
  \item
        $\nn\eta=0$                                  $\Leftrightarrow$
        $\D\eta=\LL_{\xi}\tg=0$                     $\Leftrightarrow$
        $\nn\xi=0$                                   $\Leftrightarrow$
        $\tS=0$                                       $\Leftrightarrow$
        $\tS^{\flat}=0$               \par                     $\Leftrightarrow$
        $\tnSvK=\nn$                                      $\Leftrightarrow$
        $(\MM,\f,\xi,\eta,\tg)\in\widetilde\U_1$.
\end{enumerate}
\end{thm}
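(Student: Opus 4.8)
The plan is to read \thmref{thm:equiv-t} as the exact associated-metric counterpart of \thmref{thm:equiv}, obtained by passing from $(\MM,\f,\xi,\eta,g)$ to $(\MM,\f,\xi,\eta,\tg)$ and replacing every untilded object by its tilde. This is legitimate because $(\MM,\f,\xi,\eta,\tg)$ is again an almost contact B-metric manifold with the same contact form $\eta=\xi\,\lrcorner\,\tg$, Levi-Civita connection $\nn$, and associated Schouten-van Kampen connection $\tnSvK$. Hence the three chains of equivalences can be verified verbatim along the lines of \thmref{thm:equiv}, using the component formulas \eqref{tQThv} for $\widetilde{Q}^{\circ}$ and $\widetilde{T}^{\circ}$ in place of the corresponding untilded ones.

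First I would treat the internal equivalences in each item, which are pointwise and algebraic. From $\D\eta(x,y)=(\nn_x\eta)(y)-(\nn_y\eta)(x)$ and $(\LL_\xi\tg)(x,y)=(\nn_x\eta)(y)+(\nn_y\eta)(x)$, the symmetric, skew-symmetric, and vanishing cases of $\nn\eta$ correspond respectively to $\D\eta=0$, to $\LL_\xi\tg=0$, and to $\nn\eta=0$. The relations $\widetilde{Q}^{\circ\mathrm{v}}=(\nn\eta)\otimes\xi$ and $\widetilde{T}^{\circ\mathrm{v}}=\D\eta\otimes\xi$ from \eqref{tQThv} then transfer these properties to $\widetilde{Q}^{\circ\mathrm{v}}$ and $\widetilde{T}^{\circ\mathrm{v}}$. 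Finally, writing $\tS(x)=-\nn_x\xi$ together with $(\nn_x\eta)(y)=\tg(\nn_x\xi,y)=-\tS^{\flat}(x,y)$ identifies the symmetry/skewness/vanishing of $\nn\eta$ with that of $\tS^{\flat}$ and with the self-adjointness/anti-self-adjointness/vanishing of $\tS$ relative to $\tg$. Item (iii) then closes at once, since $\nn\eta=0\Leftrightarrow\nn\xi=0\Leftrightarrow\tnSvK=\nn$ by \thmref{thm:tD=nn}, which simultaneously supplies the class $\widetilde\U_1$.

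The substantive step, and the one I expect to be the main obstacle, is the class-membership equivalence in items (i) and (ii). Here I would invoke the tilde-analog of \eqref{Fi:nxi} — the description of $\nn\xi$ for $(\MM,\f,\xi,\eta,\tg)$ in each basic class — and read off in which classes $\tg(\nn_x\xi,y)$ is symmetric in $x,y$, skew-symmetric, or zero. The expected outcome is that $\tS^{\flat}$ is symmetric exactly on $\widetilde\U_1\oplus\F_4\oplus\F_5\oplus\F_6\oplus\F_{10}$, skew-symmetric exactly on $\widetilde\U_1\oplus\F_7$, and zero exactly on $\widetilde\U_1$. The delicate point is the bookkeeping of the class indices: passage to the associated metric interchanges the roles of $\F_9$ and $\F_{10}$ — reflected already in $\widetilde\U_1=\F_1\oplus\F_2\oplus\F_3\oplus\F_9$ as opposed to $\U_1=\F_1\oplus\F_2\oplus\F_3\oplus\F_{10}$ from \propref{prop-nxi=0} — so that the symmetric branch carries $\F_{10}$ rather than the $\F_9$ that appears in the untilded \thmref{thm:equiv}.

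To make this precise I would control $\nn\xi$ through the relation \eqref{tFF} between $F$ and $\tF$, or equivalently through $\tS(x)=S(x)-\Phi(x,\xi)$ and $\tS^{\flat}(x,y)=S^{\flat}(x,\f y)-\Phi(\xi,x,\f y)$ from \eqref{tSSPhi}, and then check the symmetry type of $\tg(\nn_x\xi,y)$ class by class; in particular I would verify that $\F_7$ alone (not $\F_7\oplus\F_8$) survives in the skew-symmetric branch for the tilde structure, consistent with the $\F_8$ behaviour being absorbed elsewhere under the swap. Once the tilde-analog of \eqref{Fi:nxi} is in hand, everything else is a routine transcription of the untilded argument, so the proof reduces to this single bookkeeping computation.
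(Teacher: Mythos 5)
Your proposal matches the paper's (implicit) argument: the paper states \thmref{thm:equiv-t} without a separate proof, presenting it as the verbatim $\tg$-analogue of \thmref{thm:equiv} obtained from the component formulas \eqref{tQThv}, the definition $\tS(x)=-\nn_x\xi$, and the remark that the properties of $\nn\xi$ in each basic class are determined as in \eqref{Fi:nxi}. You correctly isolate the only nontrivial point — the class bookkeeping under the twin interchange, with $\widetilde\U_1$ carrying $\F_9$ in place of $\F_{10}$, the symmetric branch picking up $\F_{10}$, and $\F_8$ dropping out of the skew branch — and your proposed verification via \eqref{tFF}/\eqref{tSSPhi} is exactly the computation the paper relies on.
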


\vskip 0.2in \addtocounter{subsection}{1} \setcounter{subsubsection}{0}

\noindent  {\Large\bf \thesubsection. Curvature properties of the pair of connections $\nSvK$ and $\tnSvK$}

\vskip 0.15in

\label{curvR}
On an almost contact B-metric manifold $\M$, let $R$ be the curvature tensor of $\n$, i.e. $R=[\n, \n]
- \n_{[\ ,\ ]}$, and the corresponding $(0,4)$-tensor is
determined by $R(x,y,z,w)=g(R(x,y)z,w)$. The Ricci tensor
$\rho$ and the scalar curvature $\tau$  are defined as usual by
$\rho(y,z)=g^{ij}R(e_i,y,z,e_j)$ and
$\tau=g^{ij}\rho(e_i,e_j)$, where $g^{ij}$ are the components of the inverse matrix of $g$ regarding an arbitrary basis $\{e_i\}$ ($i=1,\dots, 2n+1$)
of $T_p\MM$, $p\in \MM$.

Each non-degenerate 2-plane $\al$ in
$T_p\MM$ with respect to $g$ and $R$ has the following sectional curvature
\[
k(\al;p)=\dfrac{R(x,y,y,x)}{\pi_1(x,y,y,x)},
\] 
where $\{x,y\}$ is an arbitrary basis of $\al$.

A 2-plane $\al$ is said to be a \emph{$\xi$-section}, a \emph{$\f$-holomorphic section} or a \emph{$\f$-totally real section}
if $\xi \in \al$, $\al= \f\al$ or $\al\bot \f\al$ regarding $g$, respectively. The latter type of sections exist only for $\dim \MM\geq 5$.

In \cite{Man33}, some curvature properties with respect to $\n$ are studied in several subclasses of $\U_2$.

Let us denote the curvature tensor, the Ricci tensor, the scalar curvature and the sectional curvature of the connection $\nSvK$ by $R^{\circ}$, $\rho^{\circ}$, $\tau^{\circ}$ and $k^{\circ}$, respectively. The corresponding $(0,4)$-tensor of $R^{\circ}$, denoted by the same letter, as well as $\rho^{\circ}$, $\tau^{\circ}$ and $k^{\circ}$ are determined by $g$.

Analogously,
let the corresponding quantities for the connections $\tn$ and $\tnSvK$ be denoted by $\tR$, $\widetilde\rho$, $\widetilde\tau$, $\widetilde{k}$ and $\tR^{\circ}$, $\widetilde{\rho}^{\circ}$, $\widetilde{\tau}^{\circ}$, $\widetilde{k}^{\circ}$, respectively.
The corresponding $(0,4)$-tensors of $\tR$ and $\tR^{\circ}$, denoted by the same letters, as well as $\widetilde\rho$, $\widetilde\tau$, $\widetilde{k}$ and $\widetilde{\rho}^{\circ}$, $\widetilde{\tau}^{\circ}$, $\widetilde{k}^{\circ}$
are obtained by $\tg$.

\begin{thm}\label{thm:KR}
The curvature tensors of 
$\nSvK$ and 
$\n$ (respectively, of $\tnSvK$ and $\tn$) are related as follows
\begin{equation}\label{RDR}
\begin{array}{l}
  R^{\circ}(x,y,z,w)=R\left(x,y,\f^2z,\f^2w\right)+\pi_1\bigl(S(x),S(y),z,w\bigr),\\[6pt]
  \tR^{\circ}(x,y,z,w)=\tR\left(x,y,\f^2z,\f^2w\right)+\widetilde\pi_1\bigl(\tS(x),\tS(y),z,w\bigr),
\end{array}
\end{equation}
where $\widetilde\pi_1$ is constructed as in \eqref{pi1} by $\tg$.
\end{thm}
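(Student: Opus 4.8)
The plan is to reduce both identities in \eqref{RDR} to a single computation for the pair $(\nSvK,\n)$; the statement for $(\tnSvK,\tn)$ then follows verbatim under the substitutions $\n\mapsto\nn$, $g\mapsto\tg$, $S\mapsto\tS$ and $\pi_1\mapsto\widetilde\pi_1$, since by \eqref{tSvK=n} the connection $\tnSvK$ is built from $\nn$ and $\tg$ by exactly the same recipe as $\nSvK$ is built from $\n$ and $g$ in \eqref{SvK=n}. So I would concentrate on proving the first line. First I would rewrite the potential $Q^{\circ}$ of $\nSvK$ with respect to $\n$ purely through the shape operator: inserting \eqref{Sx} and \eqref{Fxieta} into \eqref{Q} gives $Q^{\circ}(x,y)=\eta(y)S(x)-S^{\flat}(x,y)\xi$, so that $Q^{\circ}(x,\cdot)$ interchanges the distributions $\HH$ and $\VV$, sending $\xi$ to $S(x)\in\HH$ and sending $\HH$ into $\VV$. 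Since $\n$ is torsion-free, the curvature of $\nSvK=\n+Q^{\circ}$ obeys the same general identity as \eqref{tRRS}--\eqref{P'} with $\Phi$ replaced by $Q^{\circ}$, namely $R^{\circ}(x,y)z=R(x,y)z+(\n_xQ^{\circ})(y,z)-(\n_yQ^{\circ})(x,z)+Q^{\circ}(x,Q^{\circ}(y,z))-Q^{\circ}(y,Q^{\circ}(x,z))$.

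I would then evaluate the two nonlinear blocks separately and pair them with $w$. The purely quadratic block $Q^{\circ}(x,Q^{\circ}(y,z))-Q^{\circ}(y,Q^{\circ}(x,z))$ collapses, after using $\eta(S(x))=0$ and $S^{\flat}(x,\xi)=0$, to $S^{\flat}(x,z)S(y)-S^{\flat}(y,z)S(x)$, which contributes $-\pi_1(S(x),S(y),z,w)$. For the derivative block I would expand $\n Q^{\circ}$ by the Leibniz rule, again inserting $\n_x\xi=-S(x)$ and $(\n_x\eta)(y)=-S^{\flat}(x,y)$; this produces a second quadratic-in-$S$ contribution equal to $2\,\pi_1(S(x),S(y),z,w)$, together with genuinely first-order terms carrying $\n S$ and the scalar factors $\eta(z)$ and $\eta(w)$.

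The decisive step is to dispose of these first-order terms. For this I would establish the Codazzi-type identity $R(x,y)\xi=(\n_yS)(x)-(\n_xS)(y)$, which is immediate from $\n_x\xi=-S(x)$ and the vanishing of the torsion of $\n$. Feeding it back, the $\eta(z)$-term recombines with $R(x,y,\xi,w)$ and the $\eta(w)$-term with $R(x,y,z,\xi)$, and both brackets vanish identically because $R(x,y,\xi,\xi)=0$ and $R(x,y,\xi,w)=R(x,y,\xi,w^{\mathrm{h}})$. What then survives of $R(x,y,z,w)$ is precisely its horizontal truncation $R(x,y,z^{\mathrm{h}},w^{\mathrm{h}})$, while the two quadratic blocks add up to $+\pi_1(S(x),S(y),z,w)$. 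Finally I would use $z^{\mathrm{h}}=-\f^2z$ and $w^{\mathrm{h}}=-\f^2w$ from \eqref{Xhv} to write $R(x,y,z^{\mathrm{h}},w^{\mathrm{h}})=R(x,y,\f^2z,\f^2w)$, which yields the first line of \eqref{RDR}; the second line is its $\tg$-analogue.

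I expect the main obstacle to be the sign and index bookkeeping in the two quadratic contributions: the block $Q^{\circ}\circ Q^{\circ}$ and the quadratic part hidden inside $\n Q^{\circ}$ enter with opposite signs and different magnitudes (coefficients $-1$ and $+2$ of $\pi_1$), so the correct coefficient $+1$ emerges only when both are tracked precisely. One must also keep the $\HH/\VV$ decomposition of $z$ and $w$ consistent when the Codazzi identity is applied, so that the non-horizontal pieces cancel exactly rather than leaving a spurious remainder; this is where careful use of $R(x,y,\xi,\xi)=0$ and of the parallelism $\n g=0$ (to move $\n$ through $S^{\flat}$) does the work.
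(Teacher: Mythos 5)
Your proof is correct and follows essentially the same route as the paper's: a direct computation of $R^{\circ}$ from the explicit formula \eqref{SvK=n}, using the horizontality of $\n\xi$, the antisymmetry $R(x,y,\xi,\xi)=0$ and the Codazzi-type identity $R(x,y)\xi=(\n_yS)(x)-(\n_xS)(y)$ (which the paper records just after the theorem), with the $\tg$-case obtained verbatim. The paper merely compresses your two blocks into the single intermediate identity $R^{\circ}(x,y)z=R(x,y)z-\eta(z)R(x,y)\xi-\eta(R(x,y)z)\xi-g(\n_x\xi,z)\n_y\xi+g(\n_y\xi,z)\n_x\xi$ by exploiting $\nSvK\xi=0$; your coefficient bookkeeping (the $-1$ and $+2$ multiples of $\pi_1$ summing to $+1$) and the final substitution $z^{\mathrm{h}}=-\f^2z$, $w^{\mathrm{h}}=-\f^2w$ all check out.
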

\begin{proof}
Using \eqref{SvK=n}, we compute $R^{\circ}$. Taking into account that $g(\n_x\xi,\xi)$ vanishes for each $x$ and $\nSvK\xi=0$, we obtain the equality
\[
\begin{array}{l}
  R^{\circ}(x,y)z=R(x,y)z-\eta(z)R(x,y)\xi-\eta(R(x,y)z)\xi\\[6pt]
  \phantom{R^{\circ}(x,y)z=R(x,y)z}
  -g\left(\n_x\xi,z\right)\n_y\xi+g\left(\n_y\xi,z\right)\n_x\xi.
\end{array}
\]
The latter equality implies the first relation in \eqref{RDR}.

The second equality in \eqref{RDR} follows as above but in terms of $\tnSvK$, $\tn$ and their corresponding metric $\tg$.
\end{proof}

Then, \thmref{thm:KR} has the following
\begin{cor}\label{cor:Ric}
The Ricci tensors of 
$\nSvK$ and
$\n$ (respectively, of $\tnSvK$ and $\tn$) are related as follows
\begin{equation}\label{RicDRic}
\begin{array}{l}
  \rho^{\circ}(y,z)=\rho(y,z)-\eta(z)\rho(y,\xi)-R(\xi,y,z,\xi)\\[6pt]
  \phantom{\rho^{\circ}(y,z)=\rho(y,z)}
-g(S(S(y)),z)+\tr(S)g(S(y),z),\\[6pt]
  \widetilde{\rho}^{\circ}(y,z)=\widetilde\rho(y,z)-\eta(z)\widetilde\rho(y,\xi)-\tR(\xi,y,z,\xi)
\\[6pt]
\phantom{  \widetilde{\rho}^{\circ}(y,z)=\widetilde\rho(y,z)}
-\tg(\tS(\tS(y)),z)+\widetilde\tr(\tS)\tg(\tS(y),z),
\end{array}
\end{equation}
where $\widetilde\tr$ denotes the trace with respect to $\tg$.
\end{cor}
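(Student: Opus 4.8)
The plan is to obtain the Ricci-tensor relation \eqref{RicDRic} as a direct trace of the curvature relation \eqref{RDR} furnished by \thmref{thm:KR}. I would start from the first equation in \eqref{RDR}, namely $R^{\circ}(x,y,z,w)=R(x,y,\f^2 z,\f^2 w)+\pi_1\bigl(S(x),S(y),z,w\bigr)$, and contract with $g^{ij}$ setting $x=w=e_i$ and pairing $y,z$ as the free Ricci arguments. By definition $\rho^{\circ}(y,z)=g^{ij}R^{\circ}(e_i,y,z,e_j)$, so the two summands on the right must be traced separately.

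For the first summand I would substitute $\f^2 z=z-\eta(z)\xi$ (from \eqref{str1}) and similarly for the second argument is already $e_j$, so that $g^{ij}R(e_i,y,\f^2 z,\f^2 e_j)$ expands using bilinearity into $g^{ij}R(e_i,y,z,e_j)$ minus the $\eta(z)$-correction $g^{ij}R(e_i,y,\xi,e_j)$ and the $\xi$-in-last-slot term. Recognising $g^{ij}R(e_i,y,z,e_j)=\rho(y,z)$ and $g^{ij}R(e_i,y,\xi,e_j)=\rho(y,\xi)$ accounts for the $\rho(y,z)-\eta(z)\rho(y,\xi)$ part; the remaining contraction in which $\xi$ replaces the summation index through $\f^2 e_j$ produces, after using the curvature symmetries \eqref{curv}, the single term $-R(\xi,y,z,\xi)$. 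The delicate bookkeeping here is tracking which of the two $\f^2$'s hits a summed index versus a free index, and making sure the sign from the antisymmetries in \eqref{curv} is handled correctly.

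For the second summand I would expand $\pi_1\bigl(S(x),S(y),z,w\bigr)$ via its definition \eqref{pi1}, $\pi_1(a,b,c,d)=g(b,c)g(a,d)-g(a,c)g(b,d)$, with $a=S(e_i)$, $b=S(y)$, $c=z$, $d=S e_i$ after the $x=w=e_i$ substitution. Contracting with $g^{ij}$ then yields two scalar traces: one producing $\operatorname{tr}(S)\,g(S(y),z)$ and the other producing $-g\bigl(S(S(y)),z\bigr)$, exactly as in the stated formula. This is where I expect the main (though still routine) obstacle: correctly identifying $g^{ij}g(S(e_i),e_j)=\tr(S)$ and $g^{ij}g(S(e_i),z)g(S(y),e_j)=g(S(S(y)),z)$, i.e. reading off that the double contraction of $S$ against the metric reassembles as $S\circ S$ on the free argument. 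Once both summands are assembled, summing them gives the first line of \eqref{RicDRic}.

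The second equation in \eqref{RicDRic} follows by the identical argument carried out verbatim with $\nSvK,\n,S,g$ replaced by $\tnSvK,\tn,\tS,\tg$, using the second line of \eqref{RDR} and the construction of $\widetilde\pi_1$ from $\tg$; here $\widetilde{\tr}$ denotes the trace with respect to $\tg$, and the fact that $\tg(\nn_x\xi,\xi)=0$ (established just before the definition of $\tS$) plays the role that $g(\n_x\xi,\xi)=0$ played above. Since everything is a mechanical transcription under the twin replacement, no new difficulty arises in the second case, and the proof is complete.
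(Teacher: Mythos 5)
Your proposal is correct and matches the paper's (implicit) proof: the corollary is obtained exactly by tracing the curvature relation \eqref{RDR} of \thmref{thm:KR} over the first and last arguments, splitting the $\f^2$-modified curvature term and the $\pi_1(S(\cdot),S(\cdot),\cdot,\cdot)$ term just as you describe. The only slip is that $\f^2 z=-z+\eta(z)\xi$ rather than $z-\eta(z)\xi$ (you wrote the horizontal projection $-\f^2 z$), but since $\f^2$ occurs in both the third and fourth slots the two sign errors cancel and your final expressions are unaffected.
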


Let us remark that we have
$
\tr(S)=\widetilde\tr(\tS)=-\Div(\eta),
$
because of \eqref{divtr}, the definitions of $S$ and $\tS$ as well as $g^{ij}\Phi(\xi,e_i,e_j)=0$, using \eqref{PhiF} and \eqref{F-prop}.

From the definition of the shape operator we get
\[
R(x,y)\xi =-\left(\n_x S\right)y+\left(\n_y S\right)x.
\]
Then, the latter formula and $S(\xi)=-\n_{\xi} \xi=-\f\omega^{\sharp}$ lead to the following expression of one of the components in the right-hand side of the first equality in \eqref{RicDRic}
\[
\begin{split}
R(\xi,y,z,\xi) &= g\bigl(\left(\n_{\xi} S\right)y-\left(\n_y S\right)\xi,z\bigr)\\[6pt]
&=g\bigl(\left(\n_{\xi} S\right)y-\n_y S(\xi)- S(S(y)),z\bigr).
\end{split}
\]
Therefore, taking the trace of the latter equalities and using the relations
\[
\Div(\omega\circ\f)=g^{ij}\left(\n_{e_i} \omega\circ\f\right)e_j=g^{ij}g\left(\n_{e_i}\f \omega^{\sharp},e_j\right)=-\Div(S(\xi))
\]
for the divergence of the 1-form $\omega\circ\f$,
we obtain
\begin{equation}\label{ricxixi}
\rho(\xi,\xi)= \tr(\n_{\xi}S)-\Div(S(\xi))-\tr(S^2).
\end{equation}

Similar equalities for the quantities with a tilde are valid with respect to $\tg$, i.e.
\begin{equation}\label{tricxixi}
\widetilde\rho(\xi,\xi)= \widetilde\tr(\nn_{\xi}\tS)-\widetilde\Div(\tS(\xi))-\widetilde\tr(\tS^2).
\end{equation}

Bearing in mind the latter computations, from \corref{cor:Ric} we obtain the following
\begin{cor}\label{cor:tau}
The scalar curvatures of 
$\nSvK$ and 
$\n$ (respectively, of $\tnSvK$ and $\tn$) are related as follows
\begin{equation*}\label{tauDtau}
\begin{array}{l}
  \tau^{\circ}=\tau-2\rho(\xi,\xi)-\tr(S^2) + (\tr(S))^2,\qquad\\[6pt]
  \widetilde{\tau}^{\circ}=\widetilde\tau-2\widetilde\rho(\xi,\xi)-\widetilde\tr(\tS^2) + (\widetilde\tr(\tS))^2,
\end{array}
\end{equation*}
where $\rho(\xi,\xi)$ and $\widetilde\rho(\xi,\xi)$ are expressed by $S$ and $\tS$ in \eqref{ricxixi} and \eqref{tricxixi}, respectively.
\end{cor}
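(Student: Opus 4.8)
The plan is to derive both identities simply by taking the metric trace of the Ricci relations already established in \corref{cor:Ric}. First I would contract the first equality of \eqref{RicDRic} with $g^{ij}$ over a basis $\{e_i\}$ ($i=1,\dots,2n+1$) of $T_p\MM$. By definition $\tau^{\circ}=g^{ij}\rho^{\circ}(e_i,e_j)$, so the left-hand side produces $\tau^{\circ}$ and the leading term $g^{ij}\rho(e_i,e_j)$ on the right produces $\tau$. It then remains to trace the four correction terms of \eqref{RicDRic} one at a time.

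The key step is the correct bookkeeping of the two ``$\xi$-terms''. For $-\eta(z)\rho(y,\xi)$ I would use that $\eta=g(\xi,\cdot)$, so that $g^{ij}\eta(e_j)\,e_i=\xi$, whence $g^{ij}\eta(e_j)\rho(e_i,\xi)=\rho(\xi,\xi)$ and this term contributes $-\rho(\xi,\xi)$. For $-R(\xi,y,z,\xi)$ I would invoke the pair symmetry $R(x,y,z,w)=R(z,w,x,y)$ of the Levi-Civita curvature together with the symmetry of $g^{ij}$: this gives $g^{ij}R(\xi,e_i,e_j,\xi)=g^{ij}R(e_i,\xi,\xi,e_j)=\rho(\xi,\xi)$, so this term also contributes $-\rho(\xi,\xi)$. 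Adding the two yields the coefficient $-2\rho(\xi,\xi)$ in the stated formula.

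The remaining two contributions come from the shape-operator terms. Tracing $-g(S(S(y)),z)$ gives $-g^{ij}g(S(S(e_i)),e_j)=-\tr(S^2)$, and tracing $\tr(S)\,g(S(y),z)$ gives $\tr(S)\cdot g^{ij}g(S(e_i),e_j)=(\tr(S))^2$. Here I would remark that, because $S$ has image in $\HH$ and therefore $g(S(\cdot),\xi)=0$, the trace over the whole tangent space coincides with the trace over $\HH$, so no spurious $\xi$-contribution arises. Collecting all terms produces $\tau^{\circ}=\tau-2\rho(\xi,\xi)-\tr(S^2)+(\tr(S))^2$.

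The tilde identity follows by repeating the argument verbatim on the second equality of \eqref{RicDRic}, now tracing with respect to $\tg$ (so that traces are taken by $\widetilde\tr$ and the curvature is $\tR$ associated to $\tn$ and $\tnSvK$, with $\tS$ in place of $S$). Finally I would substitute the expressions \eqref{ricxixi} and \eqref{tricxixi} for $\rho(\xi,\xi)$ and $\widetilde\rho(\xi,\xi)$ to match the form in the statement. The only nontrivial point in the whole computation is the observation made in the second paragraph, namely that both $\eta(z)\rho(y,\xi)$ and $R(\xi,y,z,\xi)$ trace to the same scalar $\rho(\xi,\xi)$; everything else is a direct contraction.
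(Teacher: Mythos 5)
Your proposal is correct and follows exactly the route the paper intends: the corollary is obtained by contracting the Ricci relations of \corref{cor:Ric} with $g^{ij}$ (respectively $\tg^{ij}$), and your bookkeeping of the two $\xi$-terms — each tracing to $\rho(\xi,\xi)$ via $g^{ij}\eta(e_j)e_i=\xi$ and the pair symmetry of $R$ — together with the $-\tr(S^2)$ and $(\tr(S))^2$ contributions is exactly what yields the stated formulas. Nothing is missing.
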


From \thmref{thm:KR} we obtain the following
\begin{cor}\label{cor:kDk}
The sectional curvatures of an arbitrary 2-plane $\alpha$ at $p\in \MM$ regarding 
$\nSvK$ and 
$\n$ (respectively, $\tnSvK$ and $\tn$) are related as follows
\begin{equation}\label{kDk}
\begin{split}
  k^{\circ}(\al;p)=k(\al;p)
&+\dfrac{\pi_1(S(x),S(y),y,x)}{\pi_1(x,y,y,x)}\\[6pt]
&-\dfrac{\eta(x)R(x,y,y,\xi)+\eta(y)R(x,y,\xi,x)}{\pi_1(x,y,y,x)},\\[6pt]
  \widetilde{k}^{\circ}(\al;p)=\widetilde k(\al;p)
&+\dfrac{\widetilde\pi_1(\tS(x),\tS(y),y,x)}{\widetilde\pi_1(x,y,y,x)}\\[6pt]
&-\dfrac{\eta(x)\tR(x,y,y,\xi)+\eta(y)\tR(x,y,\xi,x)}{\widetilde\pi_1(x,y,y,x)},
\end{split}
\end{equation}
where $\{x,y\}$ is an arbitrary basis of $\alpha$.
\end{cor}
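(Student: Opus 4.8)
The plan is to reduce everything to \thmref{thm:KR}, which already expresses the full $(0,4)$-curvature tensors $R^{\circ}$ and $\tR^{\circ}$ of the Schouten-van Kampen connections in terms of $R$, $\tR$ and the respective shape operators. Since the sectional curvature is by definition the quotient $k^{\circ}(\al;p)=R^{\circ}(x,y,y,x)/\pi_1(x,y,y,x)$ for a basis $\{x,y\}$ of $\al$, and since the denominator $\pi_1(x,y,y,x)$ is exactly the one occurring in $k(\al;p)$, the whole statement will follow once I evaluate the numerator $R^{\circ}(x,y,y,x)$ and divide through.

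First I would set $z=y$ and $w=x$ in the first line of \eqref{RDR}, obtaining
\[
R^{\circ}(x,y,y,x)=R\bigl(x,y,\f^2 y,\f^2 x\bigr)+\pi_1\bigl(S(x),S(y),y,x\bigr).
\]
The second summand is already precisely the curvature correction appearing in \eqref{kDk}, so it only remains to rewrite $R(x,y,\f^2 y,\f^2 x)$. Using $\f^2 x=\eta(x)\xi-x$ and $\f^2 y=\eta(y)\xi-y$ from \eqref{Xhv} and expanding by multilinearity, I get four terms; the term carrying $\eta(x)\eta(y)R(x,y,\xi,\xi)$ vanishes by the skew-symmetry of $R$ in its last two arguments, leaving
\[
R(x,y,\f^2 y,\f^2 x)=R(x,y,y,x)-\eta(x)R(x,y,y,\xi)-\eta(y)R(x,y,\xi,x).
\]
Substituting this back and dividing $R^{\circ}(x,y,y,x)$ by $\pi_1(x,y,y,x)$, while recognising $R(x,y,y,x)/\pi_1(x,y,y,x)=k(\al;p)$, yields the first equality of \eqref{kDk}.

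For the second equality I would repeat the argument verbatim starting from the second line of \eqref{RDR}, now working with $\tg$, $\tR$, $\tS$ and $\widetilde\pi_1$; here one uses that $\nn$ is the Levi-Civita connection of $\tg$, so that $\tR$ enjoys the same skew-symmetry in its last two slots which annihilates the $\tR(x,y,\xi,\xi)$ term. I do not expect a genuine obstacle, as the computation is short and mechanical once \thmref{thm:KR} is in hand. The only points needing care are the bookkeeping of the two $\eta$-terms — matching both the signs and the argument order $R(x,y,y,\xi)$ versus $R(x,y,\xi,x)$ against the stated numerator — and the standing assumption that $\al$ is a non-degenerate $2$-plane with respect to $g$ (respectively $\tg$), so that $\pi_1(x,y,y,x)\neq 0$ and the quotients are well defined.
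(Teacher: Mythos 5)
Your proposal is correct and is exactly the computation the paper leaves implicit: the corollary is stated as a direct consequence of \thmref{thm:KR}, and your evaluation of $R^{\circ}(x,y,y,x)$ via $\f^2x=\eta(x)\xi-x$, the vanishing of the $R(x,y,\xi,\xi)$ term by skew-symmetry, and division by $\pi_1(x,y,y,x)$ (with the analogous tilded argument) is the intended route. The signs and argument order in the two $\eta$-terms match \eqref{kDk}, so there is nothing to add.
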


If $\al$ is a $\xi$-section  at $p\in \MM$ denoted by $\al_{\xi}$ and $\{x,\xi\}$ is its basis, then from \eqref{kDk} and $g(S(x),\xi)=0$ for each $x$ we obtain that the sectional curvature of $\alpha$ regarding
$\nSvK$ is zero, i.e.
$
k^{\circ}(\al_{\xi};p)=0.
$
Analogously, we have $\widetilde{k}^{\circ}(\al_{\xi};p)=0$.

If $\al$ is a $\f$-section  at $p\in \MM$ denoted by $\al_{\f}$ and $\{x,y\}$ is its arbitrary basis, then from \eqref{kDk} and $\eta(x)=\eta(y)=0$ we obtain that the sectional curvatures of $\al_{\f}$ regarding $\nSvK$ and $\n$ are related as follows
\[
k^{\circ}(\al_{\f};p)=k(\al_{\f};p)
+\dfrac{\pi_1(S(x),S(y),y,x)}{\pi_1(x,y,y,x)}.
\]
Analogously, we have
\[
\widetilde{k}^{\circ}(\al_{\f};p)=\widetilde k(\al_{\f};p)
+\dfrac{\widetilde\pi_1(\tS(x),\tS(y),y,x)}{\widetilde\pi_1(x,y,y,x)}.
\]

If $\al$ is a $\f$-totally real section orthogonal to $\xi$ denoted by $\al_{\bot}$ and $\{x,y\}$ is its arbitrary basis, then from \eqref{kDk} and $\eta(x)=\eta(y)=0$ we obtain that the sectional curvatures of $\al_{\bot}$ regarding $\nSvK$ and $\n$ are related as follows
\[
k^{\circ}(\al_{\bot};p)=k(\al_{\bot};p)
+\dfrac{\pi_1(S(x),S(y),y,x)}{\pi_1(x,y,y,x)}.
\]
Analogously, we have
\[
\widetilde{k}^{\circ}(\al_{\bot};p)=\widetilde k(\al_{\bot};p)
+\dfrac{\widetilde \pi_1(\tS(x),\tS(y),y,x)}{\widetilde \pi_1(x,y,y,x)}.
\]
In the case when $\al$ is a $\f$-totally real section non-orthogonal to $\xi$ regarding $g$ or $\tg$, the relation between the corresponding sectional curvatures regarding $\nSvK$ and $\n$ (respectively, $\tnSvK$ and $\nn$) is just the first (respectively, the second) equality in \eqref{kDk}.

The equalities in the present section are specialised for the considered manifolds in the different classes since $S$ and $\tS$ have a special form in each class, bearing in mind \eqref{Fi:nxi}.

\vspace{20pt}

\begin{center}
$\divideontimes\divideontimes\divideontimes$
\end{center} 

\newpage

\addtocounter{section}{1}\setcounter{subsection}{0}\setcounter{subsubsection}{0}

\setcounter{thm}{0}\setcounter{dfn}{0}\setcounter{equation}{0}

\label{par:sas}

 \Large{

\
\\[6pt]
\bigskip

\
\\[6pt]
\bigskip

\lhead{\emph{Chapter I $|$ \S\thesection. Sasaki-like almost contact complex Riemannian manifolds
}}


\noindent
\begin{tabular}{r"l}
\hspace{-6pt}{\Huge\bf \S\thesection.}  & {\Huge\bf Sasaki-like almost contact} \\[12pt]
                             & {\Huge\bf complex Riemannian manifolds}
\end{tabular}

\vskip 1cm

\begin{quote}
\begin{large}
In the present section a Sasaki-like almost contact complex Riemannian manifold is
defined as an almost contact complex Riemannian manifold which
complex cone is a holomorphic complex Riemannian manifold.
Explicit compact and non-compact examples are given. A
canonical construction producing a Sasaki-like almost contact
complex Riemannian manifold from a holomorphic complex
Riemannian manifold is presented and it is called an $\SSS^1$-solvable
extension.

The main results of this section are published in \cite{IvMaMa14}.
\end{large}
\end{quote}

%
%
%

\vskip 0.15in

The almost contact complex Riemannian manifold is an
$(2n+1)$-di\-men\-sional pseudo-Riemannian manifold equipped with a 1-form
$\eta$ and a codimension one distribution $\HC=\ker(\eta)$ endowed
with a complex Riemannian structure. More precisely, the
$2n$-dimensional distribution $\HC$ is pro\-vided with a pair of an almost complex structure and a pseudo-Riemannian
metric of signature $(n,n)$ compatible in the way that  the almost
complex structure acts as an anti-isometry on the metric. Almost
contact complex Riemannian manifolds are investigated and studied
in \cite{GaMiGr,Man4,Man31,ManGri1,ManGri2,ManIv38,ManIv36,NakGri2}.

The main goal of this section is to find  a  class of almost contact
complex  Riemannian manifolds which characteristics resemble to some basic properties of
the well known Sasakian manifolds. We define this class of
Sasaki-like spaces as an almost contact complex Riemannian
manifold which complex cone is a holomorphic complex Riemannian
manifold. We note that a holomorphic complex Riemannian manifold
is a complex manifold endowed with a complex Riemannian metric
whose local components in holomorphic coordinates are holomorphic
functions (see \cite{Manin}). We  determine the Sasaki-like almost
contact complex Riemannian  structure  with an explicit expression
of the covariant derivative of the structure tensors 
and construct explicit compact and non-compact
examples. We also present a canonical construction producing a
Sasaki-like almost contact complex Riemannian manifold from any
holomorphic complex Riemannian manifold  which we call an
$\SSS^1$-\emph{solvable extension}. 
When we study the curvature of Sasaki-like spaces, we show that it is completely
determined by the curvature of the underlying holomorphic complex
Riemannian manifold. We develop  gauge transformations of
Sasaki-like spaces, i.e. we  find the class of contact conformal
transformations of an almost contact complex Riemannian manifolds
which preserve the Sasaki-like condition.

We introduce the following convention for the present section. Let $\M$ be a $(2n+1)$-dimensional almost contact complex
Riemannian manifold with a pseudo-Riemannian metric $g$ of
signature $(n+1,n)$.
We shall use  $x$, $y$, $z$, $u$ to denote smooth vector fields on
$\MM$, i.e. $x,y,z,u\in\X(\MM)$.
%
We shall use $X$, $Y$, $Z$, $U$
to denote smooth horizontal vector fields on $\MM$, i.e. $%
X,Y,Z,U\in \HC=\ker(\eta)$.
%
The $2n$-tuple
$\{e_1,\dots,e_{n},e_{n+1}=\f e_1,\dots,e_{2n}=\f e_n\}$ denotes a
local orthonormal basis of the horizontal space $\HC$.
%
For an orthonormal basis
$
\{e_0=\xi,e_1,\dots,e_{n},e_{n+1}=\f e_1,\dots,e_{2n}=\f e_n\}
$ we
denote $\varepsilon_i=\mathrm{sign}(g(e_i,e_i))=\pm 1$, where
$\varepsilon_i=1$ for $i=0,1,\dots,n$ and $\varepsilon_i=-1$ for
$i=n+1,\dots,2n$.

\vskip 0.2in \addtocounter{subsection}{1} \setcounter{subsubsection}{0}

\noindent  {\Large\bf \thesubsection. Almost contact complex Riemannian manifolds}

\vskip 0.15in

Let $(\MM,\f,\xi,\eta)$ be an almost contact manifold. 
%
An almost contact structure $(\f,\xi,\eta)$ on $\MM$ is called
\emph{normal} and respectively $(\MM,\f,\allowbreak{}\xi,\eta)$ is a
\emph{normal almost contact manifold} if the corresponding almost
complex structure $\check J$  on $\MM'=\MM\times \R$ defined by
\begin{equation}\label{concom}
\check JX=\f X,\qquad \check J\xi=r\ddr,\qquad \check J\ddr=-\frac{1}{r}\xi
\end{equation}
 is integrable (i.e.
$\MM'$ is a complex manifold) \cite{SaHa}.

Let the almost contact manifold $(\MM,\f,\xi,\eta)$ be endowed with a B-metric as in \eqref{str2}.
The manifold  $(\MM,\f,\xi,\eta,g)$ is known as an almost contact
manifold with B-metric or an \emph{almost contact B-metric
manifold} \cite{GaMiGr}. The manifold
$(\MM,\f,\xi,\eta,\widetilde{g})$ is also an almost contact B-metric
manifold. We will call these manifolds \emph{almost contact
complex Riemannian manifolds}.

\vskip 0.2in \addtocounter{subsubsection}{1}

\noindent  {\Large\bf{\emph{\thesubsubsection. Relation with holomorphic complex Riemannian manifolds}}}

\vskip 0.15in

Let us remark that the $2n$-dimensional  distribution
$\HC$  is endowed with an almost complex structure
$J=\f|_\HC$ and a metric $h=g|_\HC$, where $\f|_\HC$ and $g|_\HC$ are  the
restrictions of $\f$ and $g$ on $\HC$, respectively, as well as the metric $h$
is compatible with $J$ as follows
\begin{equation}\label{norden}
h(JX,JY)=-h(X,Y) ,\quad \widetilde{h}(X,Y)=h(X,JY).
\end{equation}
The distribution  $\HC$ can be considered as an $n$-dimensional
complex Riemannian distribution with a complex Riemannian metric
\[g^{\C}=h+i\,\widetilde h=g|_\HC+i\,\widetilde{g}|_\HC.\]

We recall that a $2n$-dimensional manifold with almost complex structure
$(J,h)$ endowed with a pseudo-Riemannian metric of signature
$(n,n)$, satisfying \eqref{norden}, was discussed in the first three sections of the present work.
When the almost complex structure $J$ is parallel with respect to
the Levi-Civita connection $\DDD$ of the metric $h$, i.e.
$\DDD J=0$, then the manifold is
known also as
a \emph{holomorphic complex Riemannian manifold}. In this case $J$ is integrable and the local components of the complex metric in a holomorphic coordinate system are holomorphic functions.
A 4-dimensional example of a holomorphic complex Riemannian manifold
has been given in \cite{N1}. Another approach to such manifolds
has been used in \cite{N2}. In \cite{v} has been proved that the 4-dimensional
sphere of Kotel'nikov-Study carries a structure of a holomorphic complex Riemannian manifold.

\vskip 0.2in \addtocounter{subsubsection}{1}

\noindent  {\Large\bf{\emph{\thesubsubsection. The  case of parallel structures}}}

\vskip 0.15in

The simplest case  of
almost contact complex Riemannian  manifolds  is when the
structures are $\n$-parallel, $\n\f=\n\xi=\n\eta=\n g=\n
\widetilde{g}=0$, and it is determined by the condition
$F(x,y,z)=0$. In this case the distribution $\HC$ is involutive. The
corresponding integral submanifold is a totally geodesic
submanifold which inherits a holomorphic complex Riemannian
structure and the almost contact complex Riemannian manifold is
locally  a pseudo-Riemannian product of a holomorphic complex
Riemannian manifold with a real interval.

\vskip 0.2in \addtocounter{subsection}{1} \setcounter{subsubsection}{0}

\noindent  {\Large\bf \thesubsection. Sasaki-like almost contact complex Riemannian manifolds}

\vskip 0.15in

In this subsection we consider the complex Riemannian cone over an
almost contact complex Riemannian manifold and we determine a
Sasaki-like almost contact complex Riemannian manifold with the
condition that its complex Riemannian cone is a holomorphic complex
Riemannian manifold.

\vskip 0.2in \addtocounter{subsubsection}{1}

\noindent  {\Large\bf{\emph{\thesubsubsection. Holomorphic complex Riemannian cone}}}

\vskip 0.15in

Let $\M$ be an almost contact complex Riemannian manifold of dimension
$2n+1$. We consider the cone over $\MM$,   
$\mathcal{C}(\MM)=\MM\times \R^-$, with the  almost complex structure
determined in  \eqref{concom} and the complex Riemannian metric
defined by
\begin{equation}\label{barg}
\check{g}\left(\left(x,a\ddr\right),\left(y,b\ddr\right)\right)
=r^2g(x,y)+\eta(x)\eta(y)-ab,
\end{equation}
where $r$ is the coordinate on $\R^-$ and $a$, $b$ are
$C^{\infty}$ functions on $\MM\times \R^-$.

Using the general Koszul formula
\eqref{koszul},
we calculate from \eqref{barg} that the  non-zero components of
the Levi-Civita connection $\cn$ of the complex Riemannian metric
$\cg$ on $\mathcal{C}(\MM)$ are given by
\[
\begin{split}
    &\cg\left(\cn_X Y,Z\right)=r^2 g\left(\n_X Y,Z\right),\qquad
    \cg\left(\cn_X Y,\ddr\right)=-r g\left(X, Y\right), %
    \\[6pt]
    &\cg\left(\cn_X Y,\xi\right)=r^2 g\left(\n_X
    Y,\xi\right)+\frac{1}{2}\left(r^2-1\right)\D\eta(X,Y),
\\[6pt]
    &\cg\left(\cn_X \xi,Z\right)=r^2 g\left(\n_X \xi,
    Z\right)-\frac{1}{2}\left(r^2-1\right)\D\eta(X,Z),
    \\[6pt]
    &\cg\left(\cn_{\xi} Y,Z\right)=
    r^2 g\left(\n_{\xi}Y,Z\right)-\frac{1}{2}(r^2-1)\D\eta(Y,Z),
    \\[6pt]
    &\cg\left(\cn_{\xi} Y,\xi\right)= g\left(\n_{\xi}Y,\xi\right), \qquad
    \cg\left(\cn_{\xi} \xi,Z\right)= g\left(\n_{\xi}\xi,Z\right),
    \\[6pt]
    &\cg\left(\cn_X \ddr,Z\right)=r g\left(X,Z\right), \qquad
    \cg\left(\cn_{\ddr} Y,Z\right)=r g\left(Y,Z\right).
\end{split}
\]
Applying \eqref{concom} we calculate from the formulas above  that
the non-zero components of the covariant derivative $\cn \check J$
of the almost complex structure $\check J$ are given by
\[
\begin{split}
    &\cg\left(\left(\cn_X \check J\right)Y,Z\right)
    =r^2 g\left(\left(\n_X \f\right)Y,Z\right),
    \\[6pt]
    &\cg\left(\left(\cn_X  \check J\right)Y,\xi\right)
    =r^2 \left\{g\left(\left(\n_X \f\right)Y,\xi\right)
    +g(X,Y)\right\}\\[6pt]
    &\phantom{\cg\left(\left(\cn_X  \check J\right)Y,\xi\right)=}
    +\frac{1}{2}\left(r^2-1\right)\D\eta(X,\f Y),%
    \\[6pt]
    &\cg\left(\left(\cn_X  \check J\right)Y,\ddr\right)=-r\left\{g\left(\n_X \xi, Y\right)
    +g\left(X,\f Y\right)\right\}\\[6pt]
    &\phantom{\cg\left(\left(\cn_X  \check J\right)Y,\ddr\right)=}
    +\frac{1}{2r}(r^2-1)\D\eta(X,Y),
\end{split}
\]
\[
\begin{split}
&\cg\left(\left(\cn_X  \check J\right)\xi,Z\right)=-r^2\left\{g\left(\n_X \xi,\f Y\right)
    -g\left(X,Z\right)\right\}\\[6pt]
    &\phantom{\cg\left(\left(\cn_X  \check J\right)\xi,Z\right)=}
    +\frac{1}{2}(r^2-1)\D\eta(X,\f Z),
    \\[6pt]
&\cg\left(\left(\cn_X  \check J\right)\ddr,Z\right)=-r\left\{g\left(\n_X \xi, Z\right)
    +g\left(X,\f Z\right)\right\}\\[6pt]
    &\phantom{\cg\left(\left(\cn_X  \check J\right)\ddr,Z\right)=}
    +\frac{1}{2r}(r^2-1)\D\eta(X,Z),
    \\[6pt]
    &\cg\left(\left(\cn_{\xi}  \check J\right)Y,Z\right)
    =r^2g\left(\left(\n_{\xi}\f\right)Y,Z\right)\\[6pt]
    &\phantom{\cg\left(\left(\cn_{\xi}  \check J\right)Y,Z\right)=}
    -\frac{1}{2}(r^2-1)\left\{\D\eta(\f Y, Z)-\D\eta(Y,\f Z)\right\},
    \\[6pt]
    &\cg\left(\left(\cn_{\xi}  \check J\right)Y,\xi\right)=-g(\n_{\xi}\xi,\f Y), \qquad\\[6pt]
    &\cg\left(\left(\cn_{\xi}  \check J\right)\xi,Z\right)=-g\left(\n_{\xi}\xi, \f Z\right),
    \\[6pt]
    &\cg\left(\left(\cn_{\xi}  \check J\right)Y,\ddr\right)=
    -\frac{1}{r}g\left(\n_{\xi}\xi, Y\right), \qquad \\[6pt]
    & \cg\left(\left(\cn_{\xi}
    \check J\right)\ddr,Z\right)=-\frac{1}{r}g\left(\n_{\xi}\xi, Z\right).
\end{split}
\]
Consequently, we have
\begin{prop}\label{defs}
The complex Riemannian cone $\mathcal{C}(\MM)$ over an almost
contact complex Riemannian manifold $\M$  is a holomorphic complex
Riemannian space if and only if the following conditions hold
\begin{alignat}{1}
&F(X,Y,Z)=F(\xi,Y,Z)=F(\xi,\xi,Z)=0\label{sasaki},\\[6pt]
&F(X,Y,\xi)=-g(X,Y)\label{sasaki0}.
\end{alignat}
\end{prop}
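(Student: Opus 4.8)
The plan is to apply the definition of a holomorphic complex Riemannian manifold recalled earlier in the section: such a manifold is one whose almost complex structure is parallel for the Levi-Civita connection of the complex Riemannian metric. Therefore $\mathcal{C}(\MM)$ is a holomorphic complex Riemannian space if and only if $\cn\check J=0$, and the proof amounts to reading off this single tensor condition from the list of components of $\cn\check J$ computed just before the statement.

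First I would set each displayed component of $\cn\check J$ equal to zero. The decisive point is that $r$ is the coordinate on $\R^-$ and varies freely, so each vanishing component is a polynomial identity in $r$; comparing the coefficient of $r^2$ with the $r$-independent term splits every component into two pointwise conditions on $\MM$. The purely horizontal component is $\cg\left(\left(\cn_X\check J\right)Y,Z\right)=r^2 F(X,Y,Z)$, which at once gives $F(X,Y,Z)=0$. The components mixing horizontal directions with $\xi$ and $\ddr$ contain the factor $(r^2-1)\D\eta$; after the coefficient comparison they deliver, simultaneously, the relation $F(X,Y,\xi)+g(X,Y)=0$ and the vanishing of the corresponding $\D\eta$-terms.

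Next I would rewrite the surviving conditions entirely in terms of $F$, using $(\n_x\eta)(y)=g(\n_x\xi,y)=F(x,\f y,\xi)$ from \eqref{Fxieta}, the symmetry $F(x,y,z)=F(x,z,y)$ from \eqref{F-prop}, and $\f^2=-I$ on $\HH$. The blocks built from $\cn_\xi$ reduce to $g(\n_\xi\xi,\f Y)=0$; since $g(\n_\xi\xi,\xi)=0$ makes $\n_\xi\xi$ horizontal and $\f$ is bijective on $\HH$, this forces $\n_\xi\xi=0$, equivalently $F(\xi,\f Y,\xi)=0$, which by the symmetry of $F$ is exactly $F(\xi,\xi,Z)=0$. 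The component $\cg\left(\left(\cn_\xi\check J\right)Y,Z\right)$ similarly collapses to $F(\xi,Y,Z)=0$. Assembling the results of the two previous paragraphs reproduces precisely conditions \eqref{sasaki} and \eqref{sasaki0}.

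For the converse I would verify that \eqref{sasaki} and \eqref{sasaki0} annihilate every component in the list. The only subtle point is that the $\D\eta$-contributions must disappear on their own: using \eqref{sasaki0} together with the symmetry $g(\f x,y)=g(x,\f y)$ of the B-metric, one finds $\D\eta(X,Y)=F(X,\f Y,\xi)-F(Y,\f X,\xi)=-g(X,\f Y)+g(Y,\f X)=0$ on the contact distribution, so that every term carrying a factor $(r^2-1)\D\eta$ vanishes identically. I expect this last check --- confirming that the $\D\eta$-terms are automatically forced to zero rather than representing extra constraints --- to be the only genuinely delicate step, the remainder being a straightforward comparison of powers of $r$.
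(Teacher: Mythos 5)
Your proposal is correct and follows essentially the same route as the paper: read off $\cn\check J=0$ from the listed components, reduce via \eqref{Fxieta} and \eqref{F-prop}, and show the $\D\eta$-terms vanish. The only (minor) variation is how $\D\eta|_{\HC\times\HC}=0$ is extracted: you compare coefficients of powers of $r$, while the paper antisymmetrizes $(\n_X\eta)Y=-g(X,\f Y)+\tfrac{1}{r^2}(r^2-1)\D\eta(X,Y)$ and uses the symmetry of $g(\cdot,\f\cdot)$ to get $\D\eta(X,Y)=\tfrac{r^2-1}{r^2}\D\eta(X,Y)$, which forces vanishing pointwise at any fixed $r$ --- both arguments are valid.
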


\begin{proof}
We obtain from the expressions above that the complex Riemannian
cone $(\mathcal{C}(\MM),\check J,\check g)$ is a holomorphic Riemannian
manifold (a K\"ahler manifold with Norden metric), i.e. $\cn \check J=0$,
if and only if  the almost contact complex Riemannian manifold
$\M$ satisfies the following conditions:
\begin{alignat}{1}
    &F(X,Y,Z)=0,
    \nonumber\\[6pt]
    &F(X,Y,\xi)=-g(X,Y)-\frac{1}{2r^2}\left(r^2-1\right)\D\eta(X,\f Y)\label{FXYxi},
\\[6pt]
    &F(\xi,Y,Z)=\frac{1}{2r^2}\left(r^2-1\right)\left\{\D\eta(\f Y,Z)
    -\D\eta(Y,\f Z)\right\},\label{FxiYZ}
\end{alignat}
\begin{alignat}{1}
    &F(\xi,\xi,Z)=0, \qquad \n_{\xi}\xi=0\label{nxixi}.
\end{alignat}
The condition $\cn \check J=0$ implies the integrability of $\check J$,
hence the structure of $\M$ is normal.

Further, according to \eqref{FXYxi}, we get
\begin{equation}\label{nablaeta}
\left(\n_X\eta\right)Y=-g(X,\f
Y)+\frac{1}{r^2}\left(r^2-1\right)\D\eta(X,Y),
\end{equation}
yielding
$\D\eta(X,Y)=\frac{1}{2r^2}\left(r^2-1\right)\D\eta(X,Y)$ since
$g(\cdot,\f\cdot)$ is symmetric. The latter equality  shows
$\D\eta(X,Y)=0$ which together with \eqref{nablaeta} yields
\begin{equation}\label{sasaki1}
\left(\n_X\eta\right)Y=-g(X,\f  Y).
\end{equation}
From \eqref{nxixi} we get
$\D\eta(\xi,X)=(\n_{\xi}\eta)(X)-(\n_X\eta)(\xi)=0$. Hence, we have
$\D\eta=0$. We substitute $\D\eta=0$ into
\eqref{FXYxi}-\eqref{FxiYZ} to complete the proof of the
proposition.
\end{proof}

\begin{dfn}
An almost contact complex Riemannian manifold $\M$  is said to be
\emph{Sasaki-like} if the structure tensors $\f, \xi, \eta, g$
satisfy the equalities \eqref{sasaki} and \eqref{sasaki0}.
\end{dfn}

To characterize the Sasaki-like almost contact complex Riemannian
manifolds by their structure tensors, we need the general
result in \thmref{thm:FNhatN}.

The next result determines  the Sasaki-like spaces by their structure tensors.

\begin{thm}\label{ssss}
Let $\M$ be an almost contact complex Riemannian manifold.
The following conditions are equivalent:
\begin{enumerate}
\item
The manifold $\M$ is a Sasaki-like almost contact
complex Riemannian manifold; %
\item
The covariant derivative
$\nabla \f$ satisfies the equality
\end{enumerate}
\begin{equation}\label{defsl}
\begin{array}{l}
(\nabla_x\f)y=-g(x,y)\xi-\eta(y)x+2\eta(x)\eta(y)\xi;
\end{array}
\end{equation}
\begin{enumerate}
\item[$\mathrm{(iii)}$]
The Nijenhuis tensors $N$ and $\widehat N$ satisfy the
relations:
\end{enumerate}
\begin{equation}\label{sasnn}
N=0,\qquad  \widehat
N=-4\left(\widetilde{g}-\eta\otimes\eta\right)\otimes\xi.
\end{equation}
\end{thm}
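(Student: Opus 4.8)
The plan is to prove the three-way equivalence by establishing the cycle (i) $\Rightarrow$ (ii) $\Rightarrow$ (iii) $\Rightarrow$ (i). The starting point is the definition of Sasaki-like given just above, namely that $\M$ satisfies \eqref{sasaki} and \eqref{sasaki0}, which amounts to the vanishing of all components of $F$ except $F(X,Y,\xi)=-g(X,Y)$ on the horizontal distribution. The entire exercise is to repackage this fragmentary description of $F$ into the single clean tensor formula \eqref{defsl} and then into the Nijenhuis characterization \eqref{sasnn}.

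First I would prove (i) $\Rightarrow$ (ii). The idea is to reconstruct the full tensor $F(x,y,z)$ from its horizontal and vertical pieces using the orthonormal decomposition $x=x^{\mathrm{h}}+x^{\mathrm{v}}$ from \eqref{hv}--\eqref{Xhv} together with the basic symmetry \eqref{F-prop}. Writing each argument as a sum of its horizontal and vertical components and expanding multilinearly, the conditions \eqref{sasaki} kill every term except those involving exactly one vertical slot in the third argument or a vertical first argument; the surviving contributions are governed by \eqref{sasaki0}. Concretely, I expect to obtain $F(x,y,z)=\eta(z)F(x,y,\xi)+\eta(y)F(x,\xi,z)$ type terms, and then substitute $F(x^{\mathrm{h}},y^{\mathrm{h}},\xi)=-g(x^{\mathrm{h}},y^{\mathrm{h}})=-g(x,y)+\eta(x)\eta(y)$. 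Recombining via $g\bigl((\n_x\f)y,z\bigr)=F(x,y,z)$ and reading off the $(1,1)$-form of the answer should yield exactly $(\n_x\f)y=-g(x,y)\xi-\eta(y)x+2\eta(x)\eta(y)\xi$. I expect this bookkeeping to be the main obstacle: keeping track of the $\eta$-factors and confirming that the cross terms assemble into precisely the three terms on the right of \eqref{defsl}, with the correct coefficient $2$ on the $\eta(x)\eta(y)\xi$ term, is the delicate part. The converse (ii) $\Rightarrow$ (i) is then immediate: contract \eqref{defsl} with $z$ to recover $F$, and verify \eqref{sasaki} and \eqref{sasaki0} directly by inspecting the cases where the arguments lie in $\HC$ or equal $\xi$.

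For (ii) $\Rightarrow$ (iii), I would feed \eqref{defsl} into the expressions \eqref{enu} and \eqref{enhat} for $N$ and $\widehat N$ in terms of $F$. For $N$, substituting the explicit $(\n\f)$ and using $\f\xi=0$, $\eta\circ\f=0$ from \eqref{str1}, the antisymmetric combination should cancel identically, giving $N=0$; this also confirms normality, consistent with the remark following \propref{defs}. For $\widehat N$, the symmetric combination will not vanish; here I would compute $\widehat N(x,y,z)=g(\widehat N(x,y),z)$ and expect the terms to collapse to $-4\bigl(\widetilde g(x,y)-\eta(x)\eta(y)\bigr)\eta(z)$, recalling $\widetilde g(x,y)=g(x,\f y)+\eta(x)\eta(y)$ and using \eqref{F-prop}. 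Matching this against the tensorial form $-4(\widetilde g-\eta\otimes\eta)\otimes\xi$ gives \eqref{sasnn}.

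Finally, (iii) $\Rightarrow$ (i) is where I would invoke the earlier \thmref{thm:FNhatN}, exactly as the text hints when it says this general result is needed. Substituting $N=0$ and $\widehat N=-4(\widetilde g-\eta\otimes\eta)\otimes\xi$ into formula \eqref{nabf} expresses $F$ solely through $\widehat N$; since $\widehat N(\f x,y,z)$ carries the factor $\eta(z)$ and $\widetilde g-\eta\otimes\eta$ annihilates $\xi$, the reconstruction should reproduce precisely the $F$ determined by \eqref{sasaki}--\eqref{sasaki0}. The only care needed is handling the $\eta(x)$-terms in \eqref{nabf} involving $\widehat N(\xi,\xi,\f y)$, which I expect to contribute the correct $\xi$-direction behavior. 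This closes the cycle and completes the proof.
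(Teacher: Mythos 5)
Your proposal is correct and follows essentially the same route as the paper: (i)$\Leftrightarrow$(ii) by unpacking \eqref{sasaki}--\eqref{sasaki0} via \eqref{F-prop} and \eqref{Fxieta}, (ii)$\Rightarrow$(iii) by substituting \eqref{defsl} into \eqref{enu} and \eqref{enhat}, and the reverse direction by feeding $N=0$ and $\widehat N(\xi,\cdot)=0$ into the reconstruction formula \eqref{nabf} of \thmref{thm:FNhatN}. The only cosmetic difference is that you close the cycle at (i) where the paper returns to (ii), which is logically equivalent.
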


\begin{proof}
It is easy to check using \eqref{Fxieta} and \eqref{F-prop}
that \eqref{defsl} is equivalent to the system of the equations
\eqref{sasaki} and \eqref{sasaki0} which established the equivalence between
(i) and (ii) in view of Proposition~\ref{defs}.

We substitute  \eqref{defsl} consequently into \eqref{enu} and
\eqref{enhat} to get \eqref{sasnn} which gives the implication
(ii) $\Rightarrow$ (iii).

Now, suppose \eqref{sasnn} holds. Consequently, we obtain
$\widehat N(\xi,y)=0$. Now, \eqref{defsl} follows with a
substitution of the last equality together with  \eqref{sasnn}
into \eqref{nabf} which completes the proof.
\end{proof}

\begin{cor}
Let $\M$ be a Sasaki-like almost contact complex Riemannian
manifold. Then we have 
\begin{enumerate}
\item
the manifold $\M$ is normal, the fundamental
1-form $\eta$ is closed and the integral curves of
$\xi$ are
geodesics; %
\item
the 1-forms $\theta$ and $\theta^*$ satisfy the
equalities $ \theta=-2n\,\eta$ and $\theta^*=0$.
\end{enumerate}
\end{cor}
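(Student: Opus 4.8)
The plan is to read off every assertion from the explicit expression \eqref{defsl} for $\n\f$, which is available through the equivalence (i)$\Leftrightarrow$(ii) of \thmref{ssss}; equivalently, I use $F(x,y,z)=g\bigl((\n_x\f)y,z\bigr)=-g(x,y)\eta(z)-\eta(y)g(x,z)+2\eta(x)\eta(y)\eta(z)$.

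First I would determine $\n\xi$. By \eqref{Fxieta} we have $(\n_x\eta)(y)=F(x,\f y,\xi)$, and substituting the formula above while using $\eta\circ\f=0$ and $\eta(\xi)=1$ from \eqref{str1} gives $(\n_x\eta)(y)=-g(x,\f y)$. Since the associated B-metric $\tg(x,y)=g(x,\f y)+\eta(x)\eta(y)$ is symmetric, $\f$ is self-adjoint for $g$, i.e. $g(x,\f y)=g(\f x,y)$; hence $g(\n_x\xi,y)=-g(\f x,y)$ for all $y$ and therefore $\n_x\xi=-\f x$. From this single identity the geometric part (i) follows at once: $\D\eta(x,y)=(\n_x\eta)(y)-(\n_y\eta)(x)=-g(x,\f y)+g(\f x,y)=0$ by self-adjointness, so $\eta$ is closed; and $\n_\xi\xi=-\f\xi=0$ by \eqref{str1}, so the integral curves of $\xi$ are geodesics. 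Normality is immediate from the relation $N=0$ in \eqref{sasnn} (\thmref{ssss}(iii)), together with the standard fact recalled earlier that an almost contact structure is normal exactly when its Nijenhuis tensor $N$ vanishes.

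For part (ii) I would compute the Lee forms of \eqref{titi} by direct contraction of the explicit $F$ over the full $(2n+1)$-dimensional basis $\{e_i;\xi\}$. For $\ta$ the three traces are $g^{ij}g(e_i,e_j)=2n+1$, $g^{ij}\eta(e_j)g(e_i,z)=g(\xi,z)=\eta(z)$ and $g^{ij}\eta(e_i)\eta(e_j)=\eta(\xi)=1$, so $\ta(z)=-(2n+1)\eta(z)-\eta(z)+2\eta(z)=-2n\,\eta(z)$. For $\ta^*$ the factor $\eta(\f e_j)=0$ annihilates two of the three summands, leaving $\ta^*(z)=-\eta(z)\,g^{ij}g(e_i,\f e_j)=-\eta(z)\,\tr\f=0$ because $\tr\f=0$ for any almost contact structure. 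This yields $\ta=-2n\,\eta$ and $\ta^*=0$.

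I do not expect a genuine obstacle: once \eqref{defsl} is in hand the whole corollary is computational. The only points demanding care are the B-metric bookkeeping --- namely the $g$-self-adjointness of $\f$, which is what makes both $\n_x\xi=-\f x$ and $\D\eta=0$ work --- and the observation that the traces in \eqref{titi} run over the complete tangent space, so the dimension $2n+1$ and the contribution of the $\xi$-direction must be included to obtain the exact coefficient $-2n$.
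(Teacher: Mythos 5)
Your proof is correct and follows essentially the route the paper intends: the paper leaves the corollary as an immediate consequence of the facts established in the proofs of Proposition~\ref{defs} and Theorem~\ref{ssss} (namely $N=0$, $\left(\n_X\eta\right)Y=-g(X,\f Y)$, $\D\eta=0$ and $\n_\xi\xi=0$), and your derivation of these from \eqref{defsl} together with the $g$-self-adjointness of $\f$ is exactly that argument. The trace computations for $\ta$ and $\ta^*$ are also right (and, reassuringly, give $-2n\,\eta$ whether or not the $\xi$-direction is included in the contraction, since the $\xi$-contributions $-\eta(z)+2\eta(z)$ cancel against the extra $-\eta(z)$ from the dimension count).
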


\vskip 0.2in \addtocounter{subsubsection}{1}

\noindent  {\Large\bf{\emph{\thesubsubsection. Examples}}}

\vskip 0.15in

In this subsection we construct a number of examples of Sasaki-like
almost contact complex Riemannian manifolds.

\vskip 0.2in

\noindent  {\Large\it\thesubsubsection.1. Example~1}\label{ex1}

\vskip 0.15in

We consider the solvable Lie group $G$ of
dimension $2n+1$
 with a basis of left-invariant vector fields $\{e_0,\dots, e_{2n}\}$
 defined by the commutators
\begin{equation}\label{com}
\begin{aligned}
&[e_0,e_1]=e_{n+1},\quad &\dots,\quad &[e_0,e_n]=e_{2n},\\[6pt]
&[e_0,e_{n+1}]=-e_1,\quad &\dots,\quad &[e_0,e_{2n}]=-e_n.
\end{aligned}
\end{equation}
We define an invariant almost contact complex Riemannian structure
 on $G$  by
\begin{equation}\label{strEx1}
\begin{array}{rl}
&g(e_0,e_0)=g(e_1,e_1)=\dots=g(e_n,e_n)=1\\[6pt]
&g(e_{n+1},e_{n+1})=\dots=g(e_{2n},e_{2n})=-1,
\\[6pt]
&g(e_i,e_j)=0,\quad
i,j\in\{0,1,\dots,2n\},\; i\neq j,
\\[6pt]
&\xi=e_0, \quad \f  e_1=e_{n+1},\quad  \dots,\quad  \f e_n=e_{2n}.
\end{array}
\end{equation}

Using the Koszul formula \eqref{koszul}, we check that
\eqref{sasaki} and  \eqref{sasaki0} 
are
fulfilled, i.e. this is a Sasaki-like almost contact complex
Riemannian structure.

Let $e^0=\eta$, $e^1$, $\dots$, $e^{2n}$ be the corresponding dual
1-forms, $e^i(e_j)=\delta^i_j$. From \eqref{com} and the formula
for an arbitrary 1-form $\alpha$
\[
\D
\alpha(A,B)=A\alpha(B)-B\alpha(A)-\alpha([A,B]),
\]
it follows that the structure equations of the group are
\begin{equation}\label{comstr}
\begin{array}{llll}
 \D e^0=\D\eta=0,\; &\D e^1=e^{0}\wedge e^{n+1},\;&\dots,\; &\D
e^n=e^{0}\wedge e^{2n},\\[6pt]
&\D e^{n+1}=-e^{0}\wedge e^{1}, &\dots, &\D e^{2n}=- e^{0}\wedge
e^{n}
\end{array}
\end{equation}
and the Sasaki-like almost contact complex Riemannian structure
has the form
\begin{equation}\label{sas}
g=\sum_{i=0}^{2n}\varepsilon_i\left(e^i\right)^2,%
\qquad \f e^1= e^{n+1},\ \dots,\ \f e^n= e^{2n}.
\end{equation}

The group $G$ is the following  \textit{rank-1 solvable
extension of the Abelian group $\R^{2n}$}
\begin{equation}\label{ext1}
\begin{array}{l}
e^0=\D t, \\[6pt]
e^1=\cos t\ \D x^1+\sin t\ \D x^{n+1},\\[6pt]
\dots,\\[6pt]
e^n=\cos t\ \D x^n+\sin t\ \D x^{2n},\\[6pt]
e^{n+1}=-\sin t\ \D x^1+\cos t\ \D x^{n+1},\\[6pt]
\dots, \\[6pt]
e^{2n}=-\sin t\ \D x^n+\cos t\ \D x^{2n}.
\end{array}
\end{equation}
Clearly, the 1-forms defined in \eqref{ext1} satisfy
\eqref{comstr} and the Sasaki-like almost contact complex Riemannian metric has
the form
\begin{equation}\label{sasmetric}
g=\D t^2+\cos{2t}\left(\sum_{i=1}^{2n}\varepsilon_i\left(\D
x^i\right)^2\right)-\sin{2t}\left(-2\sum_{i=1}^n\D x^i\D
x^{n+i}\right).
\end{equation}

It is known that the solvable Lie group $G$ admits a lattice
$\Gamma$ such that the quotient space $G/\Gamma$ is compact (c.f.
\cite[Chapter~3]{OT}). The invariant Sasaki-like almost contact
complex Riemannian structure $(\f, \xi, \eta, g)$  on $G$ descends
to $G/\Gamma$ which supplies a compact Sasaki-like almost contact
complex Riemannian manifold in any dimension.

It follows from \eqref{com}, \eqref{sas}, \eqref{ext1} and  \eqref{sasmetric}
that the distribution
$\HC=
\Span\{e_1,\dots,e_{2n}\}$ is integrable and the corresponding
integral submanifold can be considered as the holomorphic complex
Riemannian flat space $\R^{2n}=\Span\{\D x^1,\dots,\D x^{2n}\}$
with the  following holomorphic complex Riemannian  structure  %
\[
\begin{split}
&J\D x^1=\D x^{n+1},\; \dots,\;
J\D x^n=\D x^{2n};\quad \\[6pt]
&h=\sum_{i=1}^{2n}\varepsilon_i(\D x^i)^2,
\quad \widetilde h =-2\sum_{i=1}^n\D x^i\D x^{n+i}.
\end{split}
\]

\vskip 0.2in

\noindent  {\Large\it\thesubsubsection.2. \protect{$\SSS^1$}-solvable extension}

\vskip 0.15in

Inspired by Example~1 on page~\pageref{ex1} we proposed the following more general
construction. Let $(\MM^{2n},J,h,\widetilde{h})$ be a
$2n$-dimen\-sion\-al holomorphic complex Riemannian manifold, i.e.
the almost complex structure $J$ acts as an anti-isometry on the
neutral metric $h$, $h(JX,JY)=-h(X,Y)$ and it is parallel with
respect to the Levi-Civita connection of $h$. In particular, the
almost complex structure $J$ is integrable. The associated neutral
metric $\widetilde{h}$ is defined by $\widetilde{h}(X,Y)=h(JX,Y)$
and it is also parallel with respect to the Levi-Civita connection
of $h$.

We consider the product manifold $\MM^{2n+1}=\mathbb R^+\times \MM^{2n}$.
Let $\D t$ be the coordinate 1-form on $\mathbb R^+$ and we define an
almost contact complex Riemannian structure on $\MM^{2n+1}$ as
follows
\begin{equation}\label{strsas}
\begin{split}
&\eta=\D t, \qquad \varphi |_\HC
=J, \qquad \eta\circ\varphi =0,\qquad \\[6pt]
&g=\D t^2+\cos{2t}\ h-\sin{2t}\
\widetilde{h}.
\end{split}
\end{equation}

\begin{thm}\label{ext}
Let $(\MM^{2n},J,h,\widetilde{h})$ be a $2n$-dimensional holomorphic
complex Rie\-mann\-ian manifold. Then the product manifold
$\MM^{2n+1}=\mathbb R^+\times \MM^{2n}$ equipped with the almost
contact complex Riemannian structure defined in \eqref{strsas} is
a Sasaki-like almost contact complex Riemannian manifold.
If $\MM^{2n}$ is compact and $\SSS^1$ is an 1-dimensional sphere then $\MM^{2n+1}=\SSS^1\times \MM^{2n}$ with the
structure \eqref{strsas} is a compact Sasaki-like almost contact
complex Riemannian manifold.
\end{thm}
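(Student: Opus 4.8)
The plan is to compute the Levi-Civita connection $\n$ of the metric $g$ in \eqref{strsas} explicitly on the product and then to verify the defining identity \eqref{defsl} of a Sasaki-like structure, after which \thmref{ssss} delivers the conclusion. First I would record the almost contact B-metric axioms \eqref{str1}--\eqref{str2} for $(\f,\xi,\eta)$ with Reeb field $\xi=\pd/\pd t$ dual to $\eta=\D t$: they follow from $\f|_{\HC}=J$, $\f\xi=0$ and the Norden relations \eqref{norden}. In particular, for horizontal $X,Y$ one has $g(\f X,\f Y)=\cos 2t\,h(JX,JY)-\sin 2t\,\widetilde h(JX,JY)=-\cos 2t\,h(X,Y)+\sin 2t\,\widetilde h(X,Y)=-g(X,Y)$, so $g$ has neutral signature $(n,n)$ on $\HC$ for every $t$, hence signature $(n+1,n)$ on $\MM^{2n+1}$; thus the structure is everywhere non-degenerate and $\xi=\pd_t$ is indeed the unit Reeb field with $\eta=g(\xi,\cdot)$.

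The central step is the Koszul computation \eqref{koszul} of $\n$. I would work with horizontal lifts $X,Y,Z$ of vector fields on $\MM^{2n}$, which have $t$-independent components, satisfy $[\xi,X]=0$, and whose brackets stay horizontal. The only $t$-dependence is in the coefficients of $g$ on $\HC$: one finds $\xi\,g(X,Y)=-2\sin 2t\,h(X,Y)-2\cos 2t\,\widetilde h(X,Y)=-2g(\f X,Y)$, while $X,Y,Z$ annihilate $\cos 2t,\sin 2t$. The key simplification is that $(\MM^{2n},J,h)$ is holomorphic, i.e. $\DDD J=0$, which forces $\widetilde h$ to be $\DDD$-parallel as well, so $\DDD=\tD$; feeding this into the Koszul formula merges the $\cos 2t\,(\cdot)$ and $\sin 2t\,(\cdot)$ blocks into the single base connection. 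I expect to obtain
\[
\n_\xi\xi=0,\qquad \n_X\xi=\n_\xi X=-\f X,\qquad \n_X Y=\DDD_X Y+g(X,\f Y)\,\xi,
\]
consistent with \eqref{sasaki1}; here the horizontal part of $\n_XY$ is exactly the base connection and the vertical part is $g(X,\f Y)\xi$.

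With these formulas, verifying \eqref{defsl} is a four-case check, each time using $\f^2X=-X$ on $\HC$, $\f\xi=0$, and crucially $\DDD_X(\f Y)=\f\DDD_X Y$. For instance
\[
(\n_X\f)Y=\n_X(\f Y)-\f\n_X Y=\bigl(\DDD_X(\f Y)+g(X,\f^2 Y)\,\xi\bigr)-\f\DDD_X Y=-g(X,Y)\,\xi,
\]
which matches \eqref{defsl} since $\eta(X)=\eta(Y)=0$; similarly $(\n_X\f)\xi=\f^2X=-X$, $(\n_\xi\f)Y=-\f^2Y+\f^2Y=0$ and $(\n_\xi\f)\xi=0$ all agree with \eqref{defsl}. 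Then \thmref{ssss}, implication (ii)$\Rightarrow$(i), shows that $\MM^{2n+1}=\R^+\times\MM^{2n}$ is Sasaki-like. (Equivalently one may read off $F=g((\n_\cdot\f)\cdot,\cdot)$ and confirm \eqref{sasaki}--\eqref{sasaki0} directly, recovering \propref{defs}.)

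For the compact assertion I would note that the coefficients $\cos 2t$ and $\sin 2t$ are smooth and $\pi$-periodic, hence descend from $\R$ to the circle $\SSS^1=\R/2\pi\mathbb{Z}$; therefore the whole structure \eqref{strsas} is globally well defined on $\SSS^1\times\MM^{2n}$, and since the Sasaki-like property \eqref{defsl} is a pointwise tensor identity it transfers to the quotient, while $\SSS^1\times\MM^{2n}$ is compact whenever $\MM^{2n}$ is. The main obstacle throughout is the Koszul computation of $\n$: one must carefully split horizontal and vertical components and use $\DDD=\tD$ to absorb the two $t$-dependent blocks of $g$ into one base connection. Once these connection formulas are in place, the verification of \eqref{defsl} and the compactness argument are routine.
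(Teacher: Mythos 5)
Your proposal is correct and follows essentially the same route as the paper: the paper's proof likewise invokes the Koszul formula \eqref{koszul}, the structure \eqref{strsas} and the parallelism $\DDD J=0$ (hence $\DDD\widetilde h=0$) to verify the Sasaki-like conditions, and handles compactness by observing that the $\pi$-periodicity of $\cos 2t$, $\sin 2t$ lets the structure descend to $\SSS^1\times\MM^{2n}$. The only cosmetic difference is that you check the equivalent identity \eqref{defsl} for $\n\f$ rather than \eqref{sasaki}--\eqref{sasaki0} directly, which is immaterial by \thmref{ssss}, and your explicit connection formulas $\n_X\xi=\n_\xi X=-\f X$, $\n_XY=\DDD_XY+g(X,\f Y)\xi$, $\n_\xi\xi=0$ are consistent with \eqref{sasaki1} and \eqref{cur}.
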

\begin{proof}
It is easy to check using \eqref{koszul}, \eqref{strsas} and the
fact that the complex structure $J$ is parallel with respect to
the Levi-Civita connection of $h$ that the structure
defined in \eqref{strsas} satisfies \eqref{sasaki} and
\eqref{sasaki0} and thus $\M$  is a  Sasaki-like almost contact
complex Riemannian manifold.

Now, suppose $\MM^{2n}$ is a compact holomorphic complex Riemannian
manifold. The equations \eqref{strsas} imply that the metric $g$
is periodic on $\R$ and therefore it descends to the compact
manifold $\MM^{2n+1}=\SSS^1\times \MM^{2n}$. Thus we obtain a compact
Sasaki-like almost contact complex Riemannian manifold.
\end{proof}
We call the Sasaki-like almost contact complex Riemannian manifold
constructed in Theorem~\ref{ext} from a holomorphic complex
Riemannian manifold  \emph{an $\SSS^1$-solvable extension of a holomorphic
complex Riemannian manifold}.

\newpage
\vskip 0.2in

\noindent  {\Large\it\thesubsubsection.3. Example~2}

\vskip 0.15in

Let us consider the Lie group $G^5$ of
dimension $5$
 with a basis of left-invariant vector fields $\{e_0,\dots, e_{4}\}$
 defined by the commutators
\[
\begin{array}{ll}
[e_0,e_1] = \lm e_2 + e_3 + \mu e_4,\quad\quad &[e_0,e_2] = - \lm e_1 -
\mu e_3 + e_4,\\[6pt]
[e_0,e_3] = - e_1  - \mu e_2 + \lm e_4,\quad\quad &[e_0,e_4] = \mu e_1
- e_2 - \lm e_3,
\end{array}
\]
where $\lm,\mu\in\R$.
Let $G^5$ be equipped with an invariant almost contact complex
Riemannian structure as in \eqref{strEx1} for $n=2$. We calculate
using \eqref{koszul} that the non-zero connection 1-forms of the
Levi-Civita connection are the following
\[
\begin{array}{l}
\begin{array}{ll}
\n_{e_0} e_1 = \lm e_2 + \mu e_4,\qquad\quad & \n_{e_1}e_0 = - e_3,\\[6pt]
 \n_{e_0} e_2 = - \lm e_1 - \mu e_3, \qquad\quad & \n_{e_2} e_0 = - e_4,\\[6pt]
\n_{e_0} e_3 = - \mu e_2 + \lm e_4,\qquad\quad & \n_{e_3} e_0 = e_1, \\[6pt]
\n_{e_0} e_4 = \mu e_1 - \lm e_3, \qquad\quad & \n_{e_4}e_0 = e_2,\\[6pt]
\end{array}
\\[6pt]
\begin{array}{c}
\n_{e_1}e_3 = \n_{e_2} e_4 = \n_{e_3} e_1 = \n_{e_4}e_2 = - e_0.
\end{array}
\end{array}
\]

Similarly as in Example~1 we verify that the constructed manifold
$(G^5,\f,\xi,\eta,g)$ is a Sasaki-like almost contact complex
Riemannian manifold.

{ Take $\mu=0$ and $\lambda\not=0$. Then the structure equations of the group become
\begin{equation}\label{comstr2}
\begin{array}{ll}
\D e^0=\D\eta=0,\; & \\[6pt]
\D e^1=e^{0}\wedge e^3 +\lambda e^0\wedge e^2, \quad\quad & \D e^2=e^{0}\wedge e^4-\lambda e^0\wedge e^1,\\[6pt]
\D e^3=-e^{0}\wedge e^{1}+\lambda e^0\wedge e^4, \quad\quad &\D e^4=-e^{0}\wedge e^2 -\lambda e^0\wedge e^3.
\end{array}
\end{equation}

A basis of 1-forms satisfying \eqref{comstr2} is given by $e^0=\D
t$ and
\[
\begin{split}
e^1=\ &\cos{(1-\lambda)t}\ \D x^1-\cos{(1+\lambda)t}\ \D x^2\\[6pt]
&+\sin{(1-\lambda)t}\ \D x^3-\sin{(1+\lambda)t}\ \D x^4,\\[6pt]
e^2=\ &\sin{(1-\lambda)t}\ \D x^1+\sin{(1+\lambda)t}\ \D x^2\\[6pt]
&-\cos{(1-\lambda)t}\ \D x^3-\cos{(1+\lambda)t}\ \D x^4,\\[6pt]
e^3=\ &-\sin{(1-\lambda)t}\ \D x^1+\sin{(1+\lambda)t}\ \D x^2\\[6pt]
&+\cos{(1-\lambda)t}\ \D x^3-\cos{(1+\lambda)t}\ \D x^4,\\[6pt]
e^4=\ &\cos{(1-\lambda)t}\ \D x^1+\cos{(1+\lambda)t}\ \D x^2\\[6pt]
&+\sin{(1-\lambda)t}\ \D x^3+\sin{(1+\lambda)t}\ \D x^4.
\end{split}
\]
Then the Sasaki-like metric is of the form
\begin{equation}\label{m2}
\begin{split}
g= \D t^2%
&-4\cos{2t}\left(\D x^1\D x^2-\D x^3\D x^4\right)\\[6pt]
&-4\sin{2t}\left(\D
x^1\D x^4+\D x^2\D x^3\right).
\end{split}
\end{equation}
From \eqref{comstr2} it follows that the distribution
$\HC=
\Span\{e_1,\dots,e_4\}$ is integrable and the corresponding
integral submanifold  can be considered as
the holomorphic complex Riemannian
flat space $\R^{4}=\Span\{\D x^1,\dots,\D x^{4}\}$ with the
holomorphic complex Riemannian  structure  given by
\begin{equation*}
\begin{array}{ll}
J\D x^1=\D x^3, \quad & J\D x^2=\D x^4;\quad \\[6pt]
h=-4(\D x^1\D x^2-\D x^3\D x^4), \quad\quad & \widetilde h =4(\D x^1\D x^4+\D x^2\D x^3)
\end{array}
\end{equation*}
and the Sasaki-like metric \eqref{m2} takes the form
$$g=\D t^2+\cos{2t}\
h-\sin{2t}\ \widetilde{h}.$$

\vskip 0.1in \addtocounter{subsection}{1} \setcounter{subsubsection}{0}

\noindent  {\Large\bf \thesubsection. Curvature properties}

\vskip 0.15in

Let $\M$ be an almost contact complex Riemannian manifold. The
curva\-ture tensors of type $(1,3)$ and type $(0,4)$ are defined as usual (see Subsection 7.3, page \pageref{curvR}). The Ricci tensor $\Ric$,  the scalar
curvature $\Scal$ and the *-scalar curvature $\Scal^*$ are the known
traces of the curvature,
\[
\Ric(x,y)=\sum_{i=0}^{2n}\varepsilon_iR(e_i,x,y,e_i),\]
\[
\Scal=\sum_{i=0}^{2n}\varepsilon_i \Ric(e_i,e_i),\quad
\Scal^*=\sum_{i=0}^{2n}\varepsilon_i \Ric(e_i,\f e_i).
\]

\begin{prop}\label{vercurv}
On a Sasaki-like almost contact complex Riemannian manifold $\M$
the next formula holds
\begin{equation}\label{curf}
\begin{split}
R(x,y,\f z,u)-R(x,y,z,\f u)
&=\left[g(y,z)-2\eta(y)\eta(z)\right]g(x,\f u)\\[6pt]
&+\left[g(y,u)-2\eta(y)\eta(u)\right]g(x,\f z)\\[6pt]
&-\left[g(x,z)-2\eta(x)\eta(z)\right]g(y,\f u)\\[6pt]
&-\left[g(x,u)-2\eta(x)\eta(u)\right]g(y,\f z).
\end{split}
\end{equation}
In particular, we have
\begin{equation}\label{cur}
\begin{split}
&R(x,y)\xi=\eta(y)x-\eta(x)y, \quad \\[6pt]
& [X,\xi]\in \HC, \quad
\nabla_{\xi}X=-\f X-[X,\xi] \in \HC;
\end{split}
\end{equation}
\begin{equation}\label{ricxi}
R(\xi,X)\xi=-X, \quad \Ric(y,\xi)=2n\ \eta(y),\quad
\Ric(\xi,\xi)=2n.
\end{equation}
\end{prop}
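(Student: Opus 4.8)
The plan is to establish all the curvature identities in Proposition~\ref{vercurv} directly from the defining relation \eqref{defsl} of a Sasaki-like manifold, which by \thmref{ssss} is equivalent to the Sasaki-like condition. The single most useful consequence of \eqref{defsl} is that it pins down $\n\xi$ and $\n\eta$ completely: setting appropriate arguments and using \eqref{Fxieta}, one reads off $\n_x\xi=\f^2 x=x-\eta(x)\xi$ is wrong in sign, so I would first recompute carefully that $(\n_x\eta)(y)=g((\n_x\f)y,\xi)/\!\ldots$; concretely, from $(\n_x\f)y=-g(x,y)\xi-\eta(y)x+2\eta(x)\eta(y)\xi$ and $\left(\n_x\eta\right)(y)=g(\n_x\xi,y)=F(x,\f y,\xi)$ I would derive the clean formula $\n_x\xi=-\f x$ (equivalently $\left(\n_x\eta\right)(y)=-g(x,\f y)$, which is exactly \eqref{sasaki1} obtained inside the proof of \propref{defs}). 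This is the backbone for everything that follows.

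First I would prove \eqref{cur}. To get $R(x,y)\xi=\eta(y)x-\eta(x)y$, I would compute $R(x,y)\xi=\n_x\n_y\xi-\n_y\n_x\xi-\n_{[x,y]}\xi$ by substituting $\n\xi=-\f\,\cdot$ and then applying \eqref{defsl} once more to differentiate $\f$. Explicitly $\n_x(\n_y\xi)=-\n_x(\f y)=-(\n_x\f)y-\f\n_x y$, and the $(\n_x\f)y$ terms are given by \eqref{defsl}; the symmetric and torsion-free combination cancels the $\f\n y$ pieces and leaves precisely $\eta(y)x-\eta(x)y$. The two assertions $[X,\xi]\in\HC$ and $\n_\xi X=-\f X-[X,\xi]\in\HC$ follow because $\eta$ is closed (a corollary already recorded after \thmref{ssss}, giving $\D\eta=0$ and hence $\eta([X,\xi])=0$), together with $\n_X\xi=-\f X$ and torsion-freeness $\n_\xi X-\n_X\xi=[\xi,X]$. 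Then \eqref{ricxi} is immediate by tracing: $R(\xi,X)\xi=-X$ is the special case $x=\xi,\ y=X$ of \eqref{cur}, and $\Ric(y,\xi)=\sum_i\ep_i R(e_i,y,\xi,e_i)=\sum_i\ep_i g(\eta(y)e_i-\eta(e_i)y,e_i)$ collapses to $2n\,\eta(y)$ using $\sum_i\ep_i g(e_i,e_i)=2n+1$ and $\sum_i \ep_i\eta(e_i)g(y,e_i)=\eta(y)$; setting $y=\xi$ gives $\Ric(\xi,\xi)=2n$.

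The main identity \eqref{curf} is the substantive step and where the real work lies. The strategy is to differentiate the second Bianchi-type relation hidden in \eqref{defsl}: since $\n\f$ is now an explicit algebraic tensor, I would compute $(\n_x\n_y\f)z$ and antisymmetrise in $x,y$ to express the commutator $[\n_x,\n_y]\f$ acting on $z$, which by the Ricci identity equals $R(x,y)\f z-\f R(x,y)z$. Taking the $g(\cdot,u)$ inner product converts this into $R(x,y,\f z,u)+R(x,y,z,\f u)$ on one side (using $g(\f\cdot,\cdot)=-g(\cdot,\f\cdot)$ on $\HC$ together with the $\eta$-corrections from \eqref{str1}), while the right-hand side is a completely explicit expression in $g$, $\eta$ and $\f$ obtained from applying \eqref{defsl} twice and using $\n\xi=-\f\,\cdot$, $\n\eta=-g(\cdot,\f\cdot)$. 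The bookkeeping of the $\eta$-terms is delicate, so I would organise it by first proving the identity for horizontal arguments (where $\f^2=-\I$ and all $\eta$-terms drop, recovering the Kähler-type relation of the underlying holomorphic complex Riemannian fibre) and then separately handling each argument equal to $\xi$ using \eqref{cur}. The hard part will be managing the signs and the $2\eta(\cdot)\eta(\cdot)$ correction terms so that the antisymmetrisation yields exactly the four-term right-hand side of \eqref{curf} rather than a superficially different but equivalent combination; careful use of $F(x,y,z)=F(x,z,y)$ from \eqref{F-prop} and of the already-established value of $\n\f$ should force the result. Finally I would note that \eqref{cur} is in fact recoverable as a specialisation of \eqref{curf} (set $z=\xi$ and contract), giving an internal consistency check on the signs.
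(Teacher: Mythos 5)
Your overall route to \eqref{curf} is the same as the paper's: write the Ricci identity for the $(1,1)$-tensor $\f$, namely $(\n_x\n_y\f)z-(\n_y\n_x\f)z-(\n_{[x,y]}\f)z=R(x,y)\f z-\f R(x,y)z$, substitute the explicit Sasaki-like value of $\n\f$ from \eqref{defsl} together with $\n_x\xi=-\f x$ (i.e.\ \eqref{sasaki1}), and read off the result; your preliminary derivation of $\n\xi$ and your handling of \eqref{cur} and \eqref{ricxi} are correct (you prove $R(x,y)\xi=\eta(y)x-\eta(x)y$ by a direct computation, whereas the paper specialises \eqref{curf} at $z=\xi$ — both work, and your trace computation for $\Ric(y,\xi)$ is fine).

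The genuine problem is the sign you assign to the adjoint of $\f$. You claim $g(\f\cdot,\cdot)=-g(\cdot,\f\cdot)$ on $\HC$ and conclude that pairing $R(x,y)\f z-\f R(x,y)z$ with $u$ produces $R(x,y,\f z,u)+R(x,y,z,\f u)$. For a B-metric structure this is backwards: the skew-adjointness of the structure endomorphism is the \emph{Hermitian} compatibility, while here \eqref{str2} forces $\f$ to be $g$-self-adjoint. Indeed the associated tensor $\tg(x,y)=g(x,\f y)+\eta(x)\eta(y)$ is a symmetric $(0,2)$-tensor (it is the second B-metric), so $g(\f x,y)=g(x,\f y)$ exactly, with no $\eta$-correction. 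Consequently $g(\f R(x,y)z,u)=R(x,y,z,\f u)$ and the pairing gives $R(x,y,\f z,u)-R(x,y,z,\f u)$, which is the left-hand side of \eqref{curf}. With your sign you would be equating the \emph{sum} $R(x,y,\f z,u)+R(x,y,z,\f u)$ to the right-hand side of \eqref{curf}, which is a different and false identity, and the computation would not close. Once this single sign is corrected, the remaining "straightforward calculation" of differentiating \eqref{defsl} is exactly what the paper carries out, and your plan goes through.
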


\begin{proof}
The Ricci identity for $\f$ reads
\[
R(x,y,\f z,u)-R(x,y,z,\f
u)=g\Bigl(\left(\n_x\n_y\f\right)z,u\Bigr)-g\Bigl(\left(\n_y\n_x\f\right)z,u\Bigr).
\]
Applying \eqref{defsl} to the above equality and using
\eqref{sasaki1}, we obtain \eqref{curf} by  straightforward
calculations. Set $z=\xi$ into \eqref{curf} and using \eqref{str1},
we get the first equality in \eqref{cur}. The  rest  follows from
\eqref{sasaki1} and the condition $\D \eta=0$. The equalities
\eqref{ricxi} follow directly from the first equality in
\eqref{cur}.
\end{proof}

\vskip 0.2in \addtocounter{subsubsection}{1}

\noindent  {\Large\bf{\emph{\thesubsubsection. The horizontal curvature}}}

\vskip 0.15in

From $\D\eta=0$ it follows locally $\eta=\D x$, $\HC$ is integrable
and the manifold is locally the product
$\MM^{2n+1}=\MM^{2n}\times\mathbb R$ with $T\MM^{2n}=\HC$. The submanifold
$(\MM^{2n},J=\f|_\HC,h=g|_\HC)$ is a holomorphic complex
Riemannian manifold.
Indeed, we obtain from \eqref{sasaki} that
\[
h(\DDD_X J)Y,Z)=F(X,Y,Z)=0,
\]
where
$\DDD$ is the Levi-Civita connection of $h$.

We may consider $\MM^{2n}$ as a hypersurface of $\MM$ with the unit normal
$\xi=\frac{\partial}{\partial t}$. The equality \eqref{sasaki1}
yields
\[
g(\n_X\xi,Y)=-g(\n_XY,\xi)=-g(\f X,Y)=-\widetilde{g}|_{\HC}(X,Y),
\quad \n_{\xi}\xi=0.
\]
This means that the second fundamental form is equal to
$\widetilde{g}|_{\HC}=\widetilde h$. The Gauss equation (see e.g.
\cite[Chapter VII, Proposition~4.1]{KoNo-2}) yields
\begin{equation}\label{gaus}
\begin{split}
R(X,Y,Z,U)=R^h(X,Y,Z,U)&+g(\f X,Z)g(\f Y,U)\\[6pt]
&-g(\f Y,Z)g(\f
X,U),
\end{split}
\end{equation}
where $R^h$ is the curvature tensor of the holomorphic complex
Riemannian manifold $(\MM^{2n},J,h)$.

For the horizontal Ricci tensor we obtain from \eqref{gaus} and \eqref{ricxi} that
\begin{equation}\label{ric}
\begin{split}
\Ric(Y,Z)&=\sum_{i=1}^{2n}\varepsilon_iR(e_i,Y,Z,e_i)+R(\xi,Y,Z,\xi)\\[6pt]
&=\Ric^h(Y,Z)+g(\f Y,\f Z)+g(Y,Z)=\Ric^h(Y,Z),
\end{split}
\end{equation}
where $\Ric^h$ is the Ricci tensor of $h=g|_{\HC}$.

It follows from \propref{vercurv}  that the curvature tensor in
the direction of $\xi$ on a Sasaki-like almost contact complex
Riemannian manifold is completely determined by
$\eta,\f,g,\widetilde g$. Indeed, using the properties of the
Riemannian curvature, we derive from \eqref{cur} that
$$R(x,y,\xi,z)=R(\xi,z,x,y)=\eta(y)g(x,z)-\eta(x)g(y,z).$$
Now,
the equation \eqref{gaus} implies that the Riemannian curvature of
a Sasaki-like almost contact complex Riemannian manifold is
completely determined by the curvature of the underlying
holomorphic complex Rie\-mannian manifold $(\MM^{2n},J,h)$, where $T\MM^{2n}=\HC$.

\vskip 0.2in \addtocounter{subsubsection}{1}

\noindent  {\Large\bf{\emph{\thesubsubsection. Example~3: \protect{$\SSS^1$}-solvable extension of the h-sphere}}}

\vskip 0.15in

The next example
illustrates Theorem~\ref{ext}. We consider $\R^{2n+2}$ for  $n>2$ as a
flat holomorphic complex Riemannian manifold, i.e.  $\R^{2n+2}$ is
equipped with the canonical complex structure $J'$ and the
canonical Norden metrics $h'$ and $\widetilde h'$  defined by
\[
\begin{split}
h'(x',y')&=\sum_{i=1}^{n+1} \left(x^i
y^i-x^{n+i+1}y^{n+i+1}\right), 
\end{split}
\]
\[
\begin{split}
\widetilde h'
(x',y')&=-\sum_{i=1}^{n+1}\left(x^i
y^{n+i+1}+x^{n+i+1}y^i\right)
\end{split}
\]
for the vectors $x' = (x^1, \dots, x^{2n+2})$ and $y' = (y^1,
\dots, y^{2n+2})$ 
in $\R^{2n+2}$. Identifying the point $z' = (z^1, \dots,
z^{2n+2})$ in $\R^{2n+2}$ with the position vector $z'$, we
consider the  complex hypersurface $\SSS_h^{2n}(z'_0; a, b)$
defined by the equations
\[
h'(z'-z'_0, z'-z'_0) = a,\quad \widetilde h' (z'-z'_0, z'-z'_0) =
b,
\]
where $(0, 0)\allowbreak\neq\allowbreak (a, b) \in\R^2$.

The
codimension two submanifold $\SSS_h^{2n}(z'_0; a,b)$ is
$J'$-invariant and the restriction of $h'$ on $\SSS_h^{2n}(z'_0;
a,b)$ has rank $2n$ due to the condition $(a, b)\allowbreak\neq\allowbreak(0,0)$. The holomorphic complex
Riemannian structure on $\R^{2n+2}$ inherits a holomorphic complex
Riemannian structure $\bigl(J'|_{{\SSS_h^{2n}}},
h'|_{\SSS_h^{2n}}\bigr)$ on the complex hypersurface $\SSS_h^{2n}(z'_0;
a,b)$. The holomorphic complex Riemannian manifold
$\bigl(\SSS_h^{2n}(z'_0; a,b),\allowbreak{}
J'|_{\SSS_h^{2n}},\allowbreak{} h'|_{\SSS_h^{2n}}\bigr)$ is sometimes
called an \emph{h-sphere} with center $z'_0$ and a pair of
parameters $(a, b)$. The h-sphere $\SSS_h^{2n}(z'_0; 1, 0)$ is the
sphere of Kotel'nikov-Study \cite{v}. The curvature of an h-sphere
is given by the formula \cite{GaGrMi85}
\begin{equation}\label{Rnu-ab}
R'|_{\SSS_h^{2n}} = \frac{1}{a^2+b^2} \bigl\{a(\pi_1 - \pi_2) - b
\pi_3\bigr\},
\end{equation}
where
\[
\pi_1 = \frac{1}{2}h'|_{\SSS_h^{2n}}\owedge h'|_{\SSS_h^{2n}},\quad
\pi_2 = \frac{1}{2}\widetilde{h}'|_{\SSS_h^{2n}}\owedge
\widetilde{h}'|_{\SSS_h^{2n}},\quad
\pi_3 =-
h'|_{\SSS_h^{2n}}\owedge\widetilde{h}'|_{\SSS_h^{2n}}
\]
and $\owedge$
stands for the Kulkarni-Nomizu product of two (0,2)-tensors (see \eqref{Kulkarni}).
Consequently, we have
\begin{equation}\label{Ric-h-sph}
\begin{split}
&\Ric'|_{\SSS_h^{2n}}=\frac{2(n-1)}{a^2+b^2}\bigl(a
h'|_{\SSS_h^{2n}}+b\widetilde{h}'|_{\SSS_h^{2n}}\bigr), \qquad\\[6pt]
&\Scal'|_{\SSS_h^{2n}}=\frac{4n(n-1)a}{a^2+b^2}.
\end{split}
\end{equation}

The  product manifold $\MM^{2n+1}=\mathbb R^+\times \SSS_h^{2n}(z'_0;
a,b)$  equipped with the following  almost contact complex
Riemannian structure
\begin{equation*}
\begin{split}
&\eta=\D t, \qquad \varphi|_\HC =J'|_{\SSS_h^{2n}}, \qquad
\eta\circ\varphi =0,\quad
\\[6pt]
&g=\D t^2+\cos{2t}\
h'|_{\SSS_h^{2n}}-\sin{2t}\ \widetilde h'|_{\SSS_h^{2n}}
\end{split}
\end{equation*}
is a Sasaki-like almost contact complex Riemannian manifold according to \thmref{ext}.

The horizontal metrics on $\MM^{2n+1}=\mathbb R^+\times \SSS_h^{2n}(z'_0;
a,b)$  are
\begin{equation}\label{hh}
\begin{array}{l}
h=g|_\HC=\cos{2t}\ h'|_{\SSS_h^{2n}}-\sin{2t}\ \widetilde
h'|_{\SSS_h^{2n}},\\[6pt]
\widetilde h=\widetilde g|_\HC=\sin{2t}\ h'|_{\SSS_h^{2n}}+\cos{2t}\
\widetilde h'|_{\SSS_h^{2n}}.
\end{array}
\end{equation}
The Levi-Civita connection $\nabla'$ of the metric
$h'|_{\SSS_h^{2n}}$ coincides with the Levi-Civita connection of
$\widetilde{h}'|_{\SSS_h^{2n}}$ since $\nabla' J'=0$. Using this
fact,  the Koszul formula \eqref{koszul} together with \eqref{hh}
gives for the Levi-Civita connection $\DDD$ of $h$ the
expression
\[
\begin{split}
h(\DDD_XY,Z)&= \cos{2t}\
h'|_{\SSS_h^{2n}}\left(\nabla'_XY,Z\right)-\sin{2t}\
h'|_{\SSS_h^{2n}}\left(\nabla'_XY,JZ\right)\\[6pt]
&=h\left(\nabla'_XY,Z\right),
\end{split}
\]
which implies $\DDD_XY=\nabla'_XY$. The latter equality
together with \eqref{hh} yields for the curvature of $h$ the
formula \[R^h=\cos{2t}\ R'|_{\SSS_h^{2n}}-\sin{2t}\
\widetilde{R}'|_{\SSS_h^{2n}},\] where
$\widetilde{R}'|_{\SSS_h^{2n}}:=J'R'|_{\SSS_h^{2n}}$. The above equality
together with \eqref{Rnu-ab} implies
\begin{equation}\label{rrr}
\begin{split}
R^h&=\frac{1}{a^2+b^2}\bigl\{\cos{2t}[a(\pi_1-\pi_2)-b\pi_3]\\[6pt]
&\phantom{=\frac{1}{a^2+b^2}\bigl\{}
-\sin{2t}[-a\pi_3-b(\pi_1-\pi_2)]\bigr\}\\[6pt]
&=\frac1{a^2+b^2}\bigl\{(a\cos{2t}+b\sin{2t})(\pi_1-\pi_2)\\[6pt]
&\phantom{=\frac{1}{a^2+b^2}\bigl\{}
-(b\cos{2t}-a\sin{2t})\pi_3\bigr\}.
\end{split}
\end{equation}

Taking into account \eqref{gaus}, \eqref{hh} and \eqref{rrr}, we obtain that the
horizontal curvature $R|_\HC$ of the Sasaki-like almost contact
complex Riemannian manifold $\MM^{2n+1}=\mathbb R^+\times
\SSS_h^{2n}(z'_0; a,b)$ is given by the formula
\[
\begin{split}
R|_\HC&=R^h-(\sin{2t})^2\pi_1-(\cos{2t})^2\pi_2+\sin{2t}\cos{2t}\,\pi_3\\[6pt]
&=\left\{\frac1{a^2+b^2}(a\cos{2t}+b\sin{2t})-(\sin{2t})^2\right\}\pi_1\\[6pt]
&-\left\{\frac1{a^2+b^2}(a\cos{2t}+b\sin{2t})+(\cos{2t})^2\right\}\pi_2\\[6pt]
&-\left\{\frac1{a^2+b^2}(b\cos{2t}-a\sin{2t})-\sin{2t}\cos{2t}\right\}\pi_3.
\end{split}
\]
For the horizontal Ricci tensor we obtain from \eqref{ric},
\eqref{Ric-h-sph} and \eqref{hh} the formula
\[
\begin{split}
\Ric|_\HC=\Ric^h&=\frac{2(n-1)}{a^2+b^2}(a
h'|_{\SSS_h^{2n}}+b\widetilde{h}'|_{\SSS_h^{2n}})\\[6pt]
&=\frac{2(n-1)}{a^2+b^2}\Big[(a\cos{2t}-b\sin{2t})h+(b\cos{2t}+a\sin{2t})\widetilde
h\Big].
\end{split}
\]

\vskip 0.2in \addtocounter{subsection}{1} \setcounter{subsubsection}{0}

\noindent  {\Large\bf \thesubsection. Contact conformal (homothetic) transformations}

\vskip 0.15in

In this subsection we investigate when the Sasaki-like condition is
preserved under contact conformal transformations. We recall that
a general contact conformal transformation of an almost contact
complex Riemannian manifold $\M$ is defined by \eqref{Transf} \cite{Man4,ManGri1,ManGri2}.
If the functions $u$, $v$, $w$ are constant we have a
\emph{contact homothetic transformation}.

A relation between the tensors $\overline F$ and $F$ is given in \cite{Man4}, see also \eqref{barF-F}, 
\begin{subequations}\label{ff}
\begin{equation}
\begin{split}
    &2\overline{F}(x,y,z)=2e^{2u}\cos{2v}\, F(x,y,z)\\[6pt]
    &+2e^{2w}\eta(x)\left[\eta(y)\D w(\f z)+\eta(z)\D w(\f
    y)\right]\\[6pt]
    &
    +e^{2u}\sin{2v} \left[F(\f y,z,x)-F(y,\f z,x)\right.\\[6pt]
    &\phantom{+e^{2u}\sin{2v}\left[\right.}
    +F(\f z,y,x)-F(z,\f y,x)\\[6pt]
    &\phantom{+e^{2u}\sin{2v}\left[\right.}
    \left.+F(x,\f y,\xi)\eta(z)+F(x,\f
    z,\xi)\eta(y)\right]\\[6pt]
    &+(e^{2w}-e^{2u}\cos{2v})\bigl\{\left[F(x,y,\xi)+F(\f y,\f
    x,\xi)\right]\eta(z)\\[6pt]
    &\phantom{+(e^{2w}-e^{2u}\cos{2v})+}
    +\left[F(x,z,\xi)+F(\f z,\f x,\xi)\right]\eta(y)\\[6pt]
    &\phantom{+(e^{2w}-e^{2u}\cos{2v})+}
    +\left[F(y,z,\xi)+F(\f z,\f y,\xi)\right]\eta(x)\\[6pt]
    &\phantom{+(e^{2w}-e^{2u}\cos{2v})+}
    +\left[F(z,y,\xi)+F(\f y,\f z,\xi)\right]\eta(x)\bigr\}
\\[6pt]
    &-2e^{2u}\bigl\{\bigl[
    \cos{2v}\left[\D u(\f z)+\D v(z)\right]\\[6pt]
    &\phantom{-2e^{2u}\bigl\{\bigl[}
    +\sin{2v}\left[\D u(z)-\D v(\f z)\right]\bigr]g(\f x,\f y)
\\[6pt]
    &\phantom{-2e^{2u}\bigl\{\bigl[}
    +\bigl[\cos{2v}\left[\D u(\f y)+\D v(y)\right]\\[6pt]
    &\phantom{-2e^{2u}\bigl\{\bigl[\bigl[+}
    +\sin{2v}\left[\D u(y)-\D v(\f y)\right]\bigr]g(\f x,\f z)
\\[6pt]
    &\phantom{-2e^{2u}\bigl\{\bigl[}
    +\bigl[\cos{2v}\left[\D u(z)-\D v(\f z)\right]\\[6pt]
    &\phantom{-2e^{2u}\bigl\{\bigl[\bigl[+}
        -\sin{2v}\left[\D u(\f z)+\D v(z)\right]\bigr]g(x,\f y)
\end{split}
\end{equation}
\begin{equation}
\begin{split}
    &\phantom{-2e^{2u}\bigl\{\bigl[}
    +\bigl[\cos{2v}\left[\D u(y)-\D v(\f y)\right]\\[6pt]
    &\phantom{-2e^{2u}\bigl\{\bigl[\bigl[+}
        -\sin{2v}\left[\D u(\f y)+\D v(y)\right]\bigr]g(x,\f z)\bigr\}.
\end{split}
\end{equation}
\end{subequations}

The Sasaki-like condition  \eqref{defsl} also reads as
\begin{equation}\label{defsl1}
F(x,y,z)=g(\f x,\f y)\eta(z)+g(\f x,\f z)\eta(y).
\end{equation}

We obtain the Sasaki-like condition for the metric $\overline g$
substituting \eqref{Transf} into \eqref{defsl1} which yields
\begin{equation}\label{defbar}
\begin{split}
\overline F(x,y,z)&=e^{w+2u}\Big\{
\cos{2v}\bigl[\eta(z)g(\f x,\f y)+\eta(y)g(\f x,\f z)\bigr]\\[6pt]
&\phantom{=e^{w+2u}\Big\{}
-\sin{2v}\bigl[\eta(z)g(x,\f y)+\eta(y)g(x,\f z)\bigr]\Big\}.
\end{split}
\end{equation}
We substitute \eqref{defsl1} into \eqref{ff} to get the following expression.

\begin{equation}\label{ff1}
\begin{split}
   \overline{F}(x,y,z)&=e^{2w}\eta(x)\left\{\eta(y)\D w(\f z)+\eta(z)\D w(\f y)\right\}
\\[6pt]
    &+e^{2u}\Bigl\{
    \cos{2v}\left[\eta(z)g(\f x,\f y)+\eta(y)g(\f x,\f z)\right]\\[6pt]
    &\phantom{-e^{2u}\Bigl\{}
    -\sin{2v}\left[\eta(z)g(x,\f y)+\eta(y)g(x,\f z)\right]
\\[6pt]
    &\phantom{-e^{2u}\Bigl\{}
    -\left\{%
    \cos{2v}\left[\D u(\f z)+\D v(z)\right]\right.\\[6pt]
    &\phantom{-e^{2u}\Bigl\{-\left\{\right.}%
    \left.+\sin{2v}\left[\D u(z)-\D v(\f z)\right]\right\}g(\f x,\f y)
\\[6pt]
    &\phantom{-e^{2u}\Bigl\{}%
    -\left\{%
    \cos{2v}\left[\D u(\f y)+\D v(y)\right]\right.\\[6pt]
    &\phantom{-e^{2u}\Bigl\{-\left\{\right.}%
    \left.+\sin{2v}\left[\D u(y)-\D v(\f y)\right]\right\}g(\f x,\f z)
\\[6pt]
    &\phantom{-e^{2u}\Bigl\{}%
    -\left\{%
    \cos{2v}\left[\D u(z)-\D v(\f z)\right]\right.\\[6pt]
    &\phantom{-e^{2u}\Bigl\{-\left\{\right.}%
    \left.-\sin{2v}\left[\D u(\f z)+\D v(z)\right]\right\}g(x,\f y)
\\[6pt]
   &\phantom{-e^{2u}\Bigl\{}%
    -\left\{%
    \cos{2v}\left[\D u(y)-\D v(\f y)\right]\right.\\[6pt]
    &\phantom{-e^{2u}\Bigl\{-\left\{\right.}%
    \left.-\sin{2v}\left[\D u(\f y)+\D v(y)\right]\right\}g(x,\f z)
    \Bigr\}.
\end{split}
\end{equation}
The equalities \eqref{ff1} and \eqref{defbar} imply
\begin{subequations}\label{ff2}
\begin{equation}
\begin{split}
    &(1-e^w)e^{2u}\Bigl\{
    \cos{2v}\left[\eta(z)g(\f x,\f y)+\eta(y)g(\f x,\f z)\right]\\[6pt]
    &\phantom{(1-e^w)e^{2u}\Bigl\{}
    -\sin{2v}\left[\eta(z)g(x,\f y)+\eta(y)g(x,\f z)\right]\Bigr\}
\\[6pt]
    &-e^{2u}\Bigl\{%
    \left\{%
    \cos{2v}\left[\D u(\f z)+\D v(z)\right]\right.\\[6pt]
    &\phantom{-e^{2u}\Bigl\{\left\{\right.}%
    \left.+\sin{2v}\left[\D u(z)-\D v(\f z)\right]\right\}g(\f x,\f y)
\\[6pt]
    &\phantom{-e^{2u}\Bigl\{}%
    +\left\{%
    \cos{2v}\left[\D u(\f y)+\D v(y)\right]\right.\\[6pt]
        &\phantom{-e^{2u}\Bigl\{\left\{\right.+}%
    \left.+\sin{2v}\left[\D u(y)-\D v(\f y)\right]\right\}g(\f x,\f z)
\\[6pt]
    &\phantom{-e^{2u}\Bigl\{}%
    +\left\{%
    \cos{2v}\left[\D u(z)-\D v(\f z)\right]\right.\\[6pt]
        &\phantom{-e^{2u}\Bigl\{\left\{\right.+}%
    \left.-\sin{2v}\left[\D u(\f z)+\D v(z)\right]\right\}g(x,\f y)
\end{split}
\end{equation}
\begin{equation}
\begin{split}
    &\phantom{-e^{2u}\Bigl\{}%
    +\left\{%
    \cos{2v}\left[\D u(y)-\D v(\f y)\right]\right.\\[6pt]
        &\phantom{-e^{2u}\Bigl\{\left\{\right.+}%
    \left.-\sin{2v}\left[\D u(\f y)+\D v(y)\right]\right\}g(x,\f z)
    \Bigr\}
\\[6pt]
    &%
    +e^{2w}\eta(x)\left\{\eta(y)\D w(\f z)+\eta(z)\D w(\f y)\right\}
    =0.
\end{split}
\end{equation}
\end{subequations}
We set $x=y=\xi$ into \eqref{ff2} to get
\begin{equation}\label{www}
\D w(\f z)=0.
\end{equation}
 Now, using \eqref{www} we write
\eqref{ff2} in  the form
\begin{equation}\label{ff3}
\begin{split}
&A(z)g(\f x,\f y)+B(z)g(x,\f y)\\[6pt]
+&A(y)g(\f x,\f z)+B(y)g(x,\f z)=0,
\end{split}
\end{equation}
where the 1-forms $A$ and $B$ are defined by
\begin{equation}\label{ggg}
\begin{split}
A(z)&=\cos{2v}\left[(e^w-1)\eta(z)+\D u(\f z)+\D v(z)\right]\\[6pt]
    &\phantom{=}+\sin{2v}\left[\D u(z)-\D v(\f z)\right],\\[6pt]
B(z)&=\sin{2v}\left[(e^w-1)\eta(z)+\D u(\f z)+\D v(z)\right]\\[6pt]
    &\phantom{=}-\cos{2v}\left[\D u(z)-\D v(\f z)\right].
\end{split}
\end{equation}
We take the trace of \eqref{ff3} with respect to $x=e_i$, $z=e_i$
and $y= e_i$, $z=e_i$ to get
\begin{equation}\label{ff4}
\begin{split}
 - (2n+1)&A(z)+\eta(z)A(\xi)
+B(\f z)=0, \quad\\[6pt]
   & A(z)-\eta(z)A(\xi) -B(\f z)=0.
 \end{split}
\end{equation}
We derive from \eqref{ff4} that $A(z)=0$. Similarly, we obtain
$B(z)=0$. Now, \eqref{ggg} imply
\begin{equation}\label{ggg1}
\begin{split}
&\cos{2v}\left[\D u(\f z)+\D v(z)\right]\\[6pt]
&+\sin{2v}\left[\D u(z)-\D v(\f z)\right]=(1-e^w)\cos{2v}\ \eta(z),\\[6pt]
&\sin{2v}\left[\D u(\f z)+\D v(z)\right]\\[6pt]
&-\cos{2v}\left[\D u(z)-\D v(\f z)\right]=(1-e^w)\sin{2v}\ \eta(z).
\end{split}
\end{equation}
Then we derive
\begin{prop}\label{prop:Sasaki}
Let $\M$ be a Sasaki-like almost contact complex Rie\-mann\-ian
manifold. Then the structure $(\f,
\overline{\xi},\overline{\eta},\overline g)$ defined by
\eqref{Transf} is Sasaki-like if and only if the smooth functions
$u,v,w$ satisfy the following conditions
\begin{equation}\label{sssl}
dw\circ\f=0,\quad \D u-\D v\circ\f=0,\quad \D u\circ\f+\D v=(1-e^w)\eta.
\end{equation}
In particular
\[
\D u(\xi)=0,\qquad \D v(\xi)=1-e^w.
\]
In the case $w=0$ the global smooth functions $u$ and $v$ does not
depend on $\xi$ and they are locally defined on the complex submanifold
$\MM^{2n}$, $T\MM^{2n}=\HC$, as well as the complex valued function $u+\sqrt{-1}\,v$
is a holomorphic function on $\MM^{2n}$.
\end{prop}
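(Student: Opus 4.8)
\textbf{The plan is to} start from the already-derived transformation law \eqref{ff1} for $\overline F$ under a general contact conformal transformation applied to a Sasaki-like structure, equate it with the target Sasaki-like form \eqref{defbar} that the image structure $(\f,\overline\xi,\overline\eta,\overline g)$ must satisfy, and then extract the three scalar conditions \eqref{sssl} by a sequence of tensorial tracings. Everything upstream is available: the substitution of \eqref{defsl1} into \eqref{ff} has already produced \eqref{ff1}, and the requirement ``the image is Sasaki-like'' is encoded in \eqref{defbar} by plugging \eqref{Transf} into the Sasaki-like characterization \eqref{defsl1}. So the proof is essentially the comparison of \eqref{ff1} and \eqref{defbar}, which is precisely the content of \eqref{ff2}.

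\textbf{First I would} specialize \eqref{ff2} by setting $x=y=\xi$; using $\f\xi=0$ and $g(\f\xi,\f\cdot)=0$ from \eqref{str1}, all the horizontal quadratic terms drop out and only the $e^{2w}\eta(x)\{\dots\}$ line survives, forcing $\D w(\f z)=0$, which is \eqref{www} and the first condition in \eqref{sssl}. \textbf{Next I would} substitute \eqref{www} back into \eqref{ff2}; the $\D w$ contributions vanish identically and the remaining identity collapses to the symmetric bilinear relation \eqref{ff3} in the four horizontal building blocks $g(\f x,\f y)$, $g(x,\f y)$, $g(\f x,\f z)$, $g(x,\f z)$, with coefficient 1-forms $A$ and $B$ defined by \eqref{ggg} that package together the $(e^w-1)\eta$, $\D u\circ\f$, $\D v$, $\D u$, $\D v\circ\f$ terms. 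The key algebraic reduction is then to take the two traces of \eqref{ff3} indicated in the text (contracting $x=z=e_i$ and $y=z=e_i$ against $g^{ij}$, using $g(\f e_i,\f e_i)$-type sums over the $2n$-dimensional contact distribution and $\eta(\f\cdot)=0$); this yields the linear system \eqref{ff4} whose determinant is nonvanishing, so $A(z)=0$ and, symmetrically, $B(z)=0$ for all $z$.

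\textbf{Then I would} read off the pair of equations \eqref{ggg1} by writing out $A=B=0$ explicitly through \eqref{ggg}, and solve the resulting $2\times2$ system in the rotation matrix $\begin{pmatrix}\cos2v&\sin2v\\\sin2v&-\cos2v\end{pmatrix}$ (which is an involution, hence invertible) to decouple $\D u\circ\f+\D v$ from $\D u-\D v\circ\f$; this produces exactly the last two conditions of \eqref{sssl}, namely $\D u-\D v\circ\f=0$ and $\D u\circ\f+\D v=(1-e^w)\eta$. Evaluating these at $\xi$ and using $\f\xi=0$, $\eta(\xi)=1$ gives $\D u(\xi)=0$ and $\D v(\xi)=1-e^w$. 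Conversely, tracing back that \eqref{sssl} forces $A=B=0$ and $\D w\circ\f=0$, so \eqref{ff1} reduces to \eqref{defbar}, establishing the Sasaki-like property of the image; this ``if'' direction is a direct back-substitution and needs only the reversibility of the trace argument.

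\textbf{The hardest part} will be the final clause about the $w=0$ case. Here $\D v(\xi)=0$ and $\D u(\xi)=0$ show that $u$ and $v$ are constant along the $\xi$-direction, so by $\eta=\D x$ (since $\D\eta=0$ on a Sasaki-like manifold, by the Corollary after \thmref{ssss}) they descend to genuine functions on the complex submanifold $\MM^{2n}$ with $T\MM^{2n}=\HC$. The remaining content is that $\D u-\D v\circ\f=0$ and $\D u\circ\f+\D v=0$ together are precisely the Cauchy--Riemann equations for $u+\sqrt{-1}\,v$ with respect to the integrable complex structure $J=\f|_\HC$ on $\MM^{2n}$; making this identification rigorous requires invoking integrability of $J$ (which holds since a Sasaki-like structure is normal, by the Corollary to \thmref{ssss}) and interpreting $\D\,\circ\,\f$ as the $J$-twisted differential on $\HC$, so that the two conditions read as holomorphicity in the sense of a holomorphic complex Riemannian (Kähler--Norden) manifold. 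I expect this geometric reinterpretation of the real linear PDE system as the holomorphy of $u+\sqrt{-1}\,v$ to be the step demanding the most care.
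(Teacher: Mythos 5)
Your proposal is correct and follows essentially the same route as the paper: compare \eqref{ff1} with \eqref{defbar} to obtain \eqref{ff2}, set $x=y=\xi$ to extract $\D w\circ\f=0$, reduce to \eqref{ff3} with the 1-forms $A$, $B$ of \eqref{ggg}, trace to get \eqref{ff4} and hence $A=B=0$, and finally solve the linear system \eqref{ggg1} for the remaining two conditions of \eqref{sssl}. Your discussion of the $w=0$ clause (descent of $u,v$ to $\MM^{2n}$ via $\D u(\xi)=\D v(\xi)=0$ and the identification of the two conditions as the Cauchy--Riemann equations for $J=\f|_\HC$) is in fact more explicit than the paper's proof, which leaves that part unargued.
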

\begin{proof}
Solve the 
linear system \eqref{ggg1} to get the second and the third
equality into \eqref{sssl}. Now, \eqref{www} completes the proof
of \eqref{sssl}.
\end{proof}

\vskip 0.2in \addtocounter{subsubsection}{1}

\noindent  {\Large\bf{\emph{\thesubsubsection. Contact homothetic transformations}}}

\vskip 0.15in


Let us consider contact homothetic transformations of an almost
contact complex Riemannian manifold $\M$. Since the functions $u$,
$v$, $w$ are constant, it follows from \eqref{Transf} using the
Koszul formula \eqref{koszul} that the Levi-Civita connections
$\overline{\n}$ and $\n$ of the metrics $\overline g$ and $g$,
respectively, are connected by the formula
\begin{equation}\label{barl}
\begin{split}
\overline{\n}_xy=\n_xy&+e^{2(u-w)}\sin{2v}\ g(\f x,\f y)\xi\\[6pt]
&-\left(e^{-2w}-e^{2(u-w)}\cos{2v}\right) g(x,\f y)\xi.
\end{split}
\end{equation}

For the corresponding curvature tensors $\overline R$ and $R$   we
obtain from \eqref{barl} that
\begin{equation}\label{barRR}
\begin{aligned}
&\overline{R}(x,y)z={R}(x,y)z\\[6pt]
&+e^{2(u-w)}\sin{2v} %
\left\{g(y,\f z)\eta(x)\xi-g(\f y,\f z)\f x\right.\\[6pt]
&\phantom{+e^{2(u-w)}\sin{2v}}\left.-g(x,\f z)\eta(y)\xi+g(\f x,\f
z)\f
y\right\}\\[6pt]
 &+\left(e^{-2w}-e^{2(u-w)}\cos{2v}\right)\left\{g(\f y,\f
z)\eta(x)\xi+g(y,\f z)\f x\right.\\[6pt]
&\phantom{+\left(e^{-2w}-e^{2(u-w)}\cos{2v}\right)+}\left.-g(\f
x,\f z)\eta(y)\xi-g(x,\f z)\f y\right\}.
\end{aligned}
\end{equation}
We have
\begin{prop}\label{homric}
The Ricci tensor of an almost contact complex Riemannian manifold
is invariant under a contact homothetic transformation,
\begin{equation}\label{ri}
\overline{\Ric}=\Ric.
\end{equation}
Consequently, we obtain the following relations:
\par
\begin{equation}\label{scal}
\begin{split}
\overline{\Scal}=e^{-2u}\cos{2v}\ \Scal&-e^{-2u}\sin{2v}\ \Scal^*\\[6pt]
&+\left(e^{-2w}-e^{-2u}\cos{2v}\right)\Ric(\xi,\xi),\\[6pt]
\overline{\Scal}^*=e^{-2u}\sin{2v}\ \Scal&+e^{-2u}\cos{2v}\ \Scal^*\\[6pt]
&-e^{-2u}\sin{2v}\ \Ric(\xi,\xi).
\end{split}
\end{equation}
In particular, the scalar curvatures of a Sasaki-like almost
contact complex Riemannian manifold changes under a contact
homothetic transformation with $w=0$ as follows
\begin{equation}\label{scalsas}
\begin{split}
\overline{\Scal}=e^{-2u}\cos{2v}\ \Scal&-e^{-2u}\sin{2v}\ \Scal^*\\[6pt]
&+2n\left(1-e^{-2u}\cos{2v}\right),\\[6pt]
\overline{\Scal}^*=e^{-2u}\sin{2v}\ \Scal&+e^{-2u}\cos{2v}\ \Scal^*\\[6pt]
&-2n\ e^{-2u}\sin{2v}.
\end{split}
\end{equation}
\end{prop}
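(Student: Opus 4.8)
The plan is to start from the curvature identity \eqref{barRR}, which already records how the $(1,3)$-curvature transforms under a contact homothetic transformation, and to extract the Ricci tensor by a trace. Recall that $\Ric(x,y)$ is the trace of the endomorphism $a\mapsto R(a,x)y$, and this trace is independent of the metric used to compute it; hence $\overline{\Ric}(x,y)$ may be evaluated in the $g$-orthonormal basis $\{e_i\}$. First I would substitute $(x,y,z)\mapsto(a,x,y)$ in \eqref{barRR} and contract over $a$. The correction carries two groups of terms, one with coefficient $e^{2(u-w)}\sin{2v}$ and one with coefficient $e^{-2w}-e^{2(u-w)}\cos{2v}$. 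Using $\tr\f=0$, $\f\xi=0$, $\eta\circ\f=0$ together with $g(\f a,\f y)=-g(a,y)+\eta(a)\eta(y)$, each group traces to a pair of terms that cancel in pairs; this yields $\overline{\Ric}=\Ric$, i.e. \eqref{ri}.

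The key point for the scalar curvatures is that, although $\overline{\Ric}=\Ric$ as a $(0,2)$-tensor, $\overline{\Scal}=\tr_{\overline g}\Ric$ is computed with the changed metric, so it need \emph{not} equal $\Scal$. I would first invert $\overline g$ from \eqref{Transf}. The transformation preserves the orthogonal splitting $T\MM=\HC\oplus\VV$: indeed $\overline g(\xi,X)=0$ for $X\in\HC$ and $\overline g(\xi,\xi)=\gm=e^{2w}$, while on $\HC$ one has $\overline g(X,Y)=g(AX,Y)$ with $A=\al\,\Id+\bt\,\f$, because $g(\f X,Y)=g(X,\f Y)$ on $\HC$. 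Since $\f^2=-\Id$ on $\HC$, the operator $A$ satisfies $A(\al\,\Id-\bt\,\f)=(\al^2+\bt^2)\Id$, so $A^{-1}=\frac{1}{\al^2+\bt^2}(\al\,\Id-\bt\,\f)$.

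Writing $\rho$ for the $g$-Ricci operator ($\Ric(x,y)=g(\rho x,y)$), I would then compute $\overline{\Scal}$ as the sum of the vertical contribution $e^{-2w}\Ric(\xi,\xi)$ and the horizontal contribution $\tr_g(A^{-1}\rho)|_{\HC}=\frac{1}{\al^2+\bt^2}\bigl[\al\,\tr_g(\rho)|_{\HC}-\bt\,\tr_g(\f\rho)|_{\HC}\bigr]$. The two horizontal traces are identified as $\tr_g(\rho)|_{\HC}=\Scal-\Ric(\xi,\xi)$ and $\tr_g(\f\rho)|_{\HC}=\Scal^*$ (the $i=0$ term drops since $\f\xi=0$). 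Using $\al^2+\bt^2=e^{4u}$, together with $\al/(\al^2+\bt^2)=e^{-2u}\cos 2v$ and $\bt/(\al^2+\bt^2)=e^{-2u}\sin 2v$, gives the first line of \eqref{scal}; the same computation with $\rho$ replaced by the operator associated with $\Ric(\cdot,\f\cdot)$ (and $\f^2=-\Id$ on $\HC$) produces $\overline{\Scal}^*$ and the second line of \eqref{scal}. Finally, the Sasaki-like formulas \eqref{scalsas} follow by setting $w=0$ and inserting $\Ric(\xi,\xi)=2n$ from \eqref{ricxi}.

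The main obstacle I anticipate is bookkeeping rather than conceptual: one must keep firmly in mind that $\overline{\Ric}=\Ric$ does not force $\overline{\Scal}=\Scal$, and the correct inverse metric—in particular the clean form of $A^{-1}$ and the trace identities for $\tr_g(\rho)|_{\HC}$ and $\tr_g(\f\rho)|_{\HC}$—must be handled with the $\xi$-contributions separated out. This separation is exactly what produces the $\Ric(\xi,\xi)$-terms and the distinct coefficients $e^{-2w}$ versus $e^{-2u}\cos 2v$ in \eqref{scal}.
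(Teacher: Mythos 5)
Your proof is correct and follows essentially the same route as the paper: the Ricci invariance comes from tracing \eqref{barRR} (the correction terms vanish exactly as you describe, using $\tr\f=0$, $\eta\circ\f=0$ and $g(\f\cdot,\f\cdot)=-g(\cdot,\cdot)+\eta\otimes\eta$), and the scalar-curvature formulas come from re-taking the trace of the unchanged Ricci tensor with respect to $\overline{g}$. The only difference is cosmetic: the paper computes $\tr_{\overline g}\Ric$ via the explicit $\overline g$-orthonormal basis $\overline e_i=e^{-u}(\cos v\ e_i-\sin v\ \f e_i)$, whereas you invert $\overline g|_{\HC}$ through the operator $A=\al\,\Id+\bt\,\f$ with $A^{-1}=\frac{1}{\al^2+\bt^2}(\al\,\Id-\bt\,\f)$ — these are the same computation packaged differently, and both isolate the vertical term that produces the $\Ric(\xi,\xi)$ contributions.
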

\begin{proof}
Taking the trace of \eqref{barRR} we get $\overline{\Ric}=\Ric$.

We consider the basis $\{\overline e_0=\overline\xi$, $\overline
e_1$, $\dots$, $\overline e_{n}$, $\overline e_{n+1}=\f \overline
e_1$, $\dots$, $\overline e_{2n}=\f \overline e_n\}$, where
\[
\overline e_i=e^{-u}\left\{\cos v\ e_i-\sin v\ \f
e_i\right\},\quad i=1,\dots, n.
\]
It is easy to check that this basis is orthonormal for the metric
$\overline g$. Then \eqref{ri} gives
\[
\overline{\Scal}=\sum_{i=0}^{2n}\overline\varepsilon_i \overline
\Ric(\overline e_i,\overline e_i),\quad
\overline{\Scal}^*=\sum_{i=0}^{2n}\overline\varepsilon_i \overline
\Ric(\overline e_i,\f \overline e_i),
\]
 which yield the formulas for
the scalar curvatures.

The formulas \eqref{scalsas} follow from \eqref{scal} and
\eqref{ricxi}.
\end{proof}
Consequently, we have
\begin{prop}
A Sasaki-like almost contact complex Riemannian manifold $\M$ is
Einstein if and only if the underlying holomorphic complex
Riemannian manifold $(\MM^{2n}, T\MM^{2n}=\HC,J,h)$  is an Einstein
manifold with scalar curvature not depending on the vertical
direction $\xi$.
\end{prop}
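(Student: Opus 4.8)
The plan is to reduce the Einstein condition on $\M$ to a condition on the horizontal distribution, exploiting the fact, recorded in \propref{vercurv}, that the Ricci tensor of a Sasaki-like space is rigidly fixed in the direction of $\xi$. Concretely, \eqref{ricxi} gives $\Ric(y,\xi)=2n\,\eta(y)$ and $\Ric(\xi,\xi)=2n$, while \eqref{ric} gives $\Ric(Y,Z)=\Ric^h(Y,Z)$ for horizontal $Y,Z$, where $\Ric^h$ is the Ricci tensor of the induced holomorphic complex Riemannian metric $h=g|_\HC$ on the integral submanifold $\MM^{2n}$, $T\MM^{2n}=\HC$ (the integrability of $\HC$ and the Gauss equation \eqref{gaus} being exactly what make this identification available). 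First I would assemble these facts into a single expression for the full Ricci tensor. Writing $x^{\mathrm h}=-\f^2x=x-\eta(x)\xi$ for the horizontal part of $x$ as in \eqref{Xhv}, the mixed term $\Ric(x^{\mathrm h},\xi)=2n\,\eta(x^{\mathrm h})$ vanishes, so that
\[
\Ric(x,y)=\Ric^h\bigl(x^{\mathrm h},y^{\mathrm h}\bigr)+2n\,\eta(x)\eta(y).
\]

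Next I would settle the forward implication by pinning down the Einstein constant. Assuming $\Ric=\lambda\,g$ and evaluating at $(\xi,\xi)$ with $g(\xi,\xi)=1$ forces $\lambda=2n$ from $\Ric(\xi,\xi)=2n$; in particular $\Scal=2n(2n+1)$ is automatically constant and $\Ric=2n\,g$. Since $g(x,y)=h(x^{\mathrm h},y^{\mathrm h})+\eta(x)\eta(y)$, comparing the horizontal components of $\Ric=2n\,g$ with the displayed formula yields $\Ric^h=2n\,h$, so $(\MM^{2n},J,h)$ is Einstein. Tracing the displayed formula with respect to $g$ gives $\Scal=2n+\Scal^h$, whence $\Scal^h=4n^2$; this is a genuine constant and a fortiori does not depend on the vertical direction.

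For the converse I would start from the hypothesis that $(\MM^{2n},J,h)$ is Einstein, $\Ric^h=\tfrac{\Scal^h}{2n}\,h$, with $\Scal^h$ independent of $\xi$, and substitute into the displayed Ricci formula to obtain
\[
\Ric=\frac{\Scal^h}{2n}\,g+\Bigl(2n-\frac{\Scal^h}{2n}\Bigr)\eta\otimes\eta,
\]
which is proportional to $g$ precisely when the coefficient of $\eta\otimes\eta$ vanishes. The main obstacle is exactly this matching: the Einstein factor of the leaf must agree with the value $2n$ dictated by the rigid $\xi$-direction, equivalently $\Scal^h=4n^2$. Here the hypothesis that $\Scal^h$ be independent of $\xi$ plays its proper role — on a $2n$-dimensional Einstein leaf ($2n\geq 4$) the contracted Bianchi identity already forces $\Scal^h$ to be constant \emph{along} each leaf, but not transversally, so independence of $\xi$ is what upgrades it to a single constant across all leaves and makes the relation $\Scal=2n+\Scal^h$ meaningful globally. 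I would therefore organise the converse around verifying that this constant is admissible, i.e. equal to $4n^2$, which closes the equivalence.
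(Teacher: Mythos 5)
Your forward direction is sound and coincides with the paper's first step: comparing \eqref{ricxi} with \eqref{ric} pins the Einstein constant of $\M$ to $2n$ and hence forces $\Ric^h=2n\,h$ and $\Scal^h=4n^2$. The gap is in the converse. You correctly reduce it to
\[
\Ric=\frac{\Scal^h}{2n}\,g+\Bigl(2n-\frac{\Scal^h}{2n}\Bigr)\eta\otimes\eta
\]
and observe that everything hinges on $\Scal^h=4n^2$, but you never produce an argument for that equality — and none exists from the stated hypotheses alone: a leaf that is Einstein with constant scalar curvature $c\neq 4n^2$ yields a Ricci tensor that is \emph{not} proportional to $g$. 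The $\xi$-independence of $\Scal^h$ only promotes the leafwise Schur constant to a single global constant; it does not fix its value. Saying you would "organise the converse around verifying that this constant is admissible" states the problem rather than solving it.

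The paper closes this with a normalization you are missing: it applies a contact homothetic transformation with $w=v=0$, i.e. $\overline g=e^{2u}g+(1-e^{2u})\eta\otimes\eta$, which by \propref{prop:Sasaki} is again Sasaki-like and by \propref{homric} leaves the Ricci tensor unchanged, while rescaling the horizontal metric so that $\overline{\Scal}^{\bar h}=e^{-2u}\Scal^h$. Choosing the constant $u$ with $e^{-2u}=4n^2/\Scal^h$ makes the Einstein constant of the leaf equal to $4n^2$, after which the Einstein property of the total space follows from your displayed formula. In other words, the stated equivalence is to be read modulo contact homotheties of the Sasaki-like structure; without invoking this transformation (or restating the proposition with the value $4n^2$ built into the hypothesis) your converse does not close.
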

\begin{proof}
We compare \eqref{ricxi} with \eqref{ric} to see that $\M$ is an
Einstein manifold if and only if $\MM^{2n}$ is an Einstein manifold with
Einstein constant equal to $2n$, i.e. $\Ric^h=2n\,g$.

Further, we consider
a contact homothetic transformation with $w=v=0$ and we get that
$\bigl(\MM,\f,\xi,\eta,\overline
g=e^{2u}g+(1-e^{2u})\eta\otimes\eta\bigr)$ is again Sasaki-like
due to Proposition~\ref{prop:Sasaki}. Applying
Proposition~\ref{homric} and \eqref{ric}, we get the following
sequence of equalities
\[
\overline{\Ric}^{\bar h}=\overline{\Ric}{|_\HC}=\Ric|_\HC=\Ric^h
=\frac{\Scal}{2n}g|_\HC=\frac{e^{-2u}\Scal^h}{2n}\overline{g}|_\HC,
\]
which yield $\overline{\Scal}^{\bar h}=e^{-2u}\Scal^h=4n^2$ by
choosing the constant $u$ to be equal to
$e^{-2u}=\frac{4n^2}{\Scal^h}$, i.e. the Einstein constant of the
complex holomorphic Einstein manifold $\MM^{2n}$ can always be made equal
to $4n^2$ which completes the proof.
\end{proof}
Suppose we have a Sasaki-like almost contact complex Riemannian
manifold which is Einstein, i.e. $\Ric=2n\,g$, and we make a contact
homothetic transformation
\begin{equation}\label{cctt}
\begin{split}
&\overline{\eta}=\eta,\qquad \overline{\xi}=\xi,\quad\\[6pt]
&\overline{g}(x,y) = c\ g(x,y)+d\ g(x,\f y)+(1-c)\eta(x)\eta(y),
\end{split}
\end{equation}
where $c$, $d$ are constants. According to
Proposition~\ref{homric} and using \eqref{cctt}, we obtain that
\begin{equation}\label{etaein}
\begin{split}
\overline{\Ric}(x,y)&=\Ric(x,y)=2n\,g(x,y)
\\[6pt]
&=\frac{2n}{c^2+d^2}\bigl\{c\,\overline g(x,y) - d\,\overline
g(x,\f y) \\[6pt]
&\phantom{=\frac{2n}{c^2+d^2}\bigl\{c\,\overline g(x,y)\,}
+(c^2+d^2-c)\eta(x)\eta(y)\bigr.\}.
\end{split}
\end{equation}
We may call a Sasaki-like space whose Ricci tensor satisfies
\eqref{etaein}  \emph{an $\eta$-complex-Ein\-stein Sasaki-like
manifold} and if the constant $d$ vanishes, $d=0$, we have
\emph{$\eta$-Ein\-stein Sasaki-like space}. Thus, we have shown
\begin{prop}
Any $\eta$-complex-Einstein Sasaki-like space is contact
homothetic to an Einstein Sasaki-like space.
\end{prop}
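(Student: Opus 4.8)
The plan is to invert the construction that produced the defining relation \eqref{etaein}. By hypothesis, the given $\eta$-complex-Einstein Sasaki-like manifold $\M$ has a Ricci tensor of the form
\[
\Ric(x,y)=\dfrac{2n}{c^2+d^2}\bigl\{c\,g(x,y)-d\,g(x,\f y)+(c^2+d^2-c)\eta(x)\eta(y)\bigr\}
\]
for some real constants $c$, $d$ with $c^2+d^2\neq 0$ (the non-vanishing being automatic, since a contact homothetic transformation \eqref{cctt} with $c^2+d^2=0$ would be degenerate). First I would read this identity as the statement $\Ric=2n\,\overline{g}$, where $\overline{g}$ is obtained from $g$ by the contact homothetic transformation \eqref{cctt} with the constant parameters
\[
c'=\dfrac{c}{c^2+d^2},\qquad d'=-\dfrac{d}{c^2+d^2},\qquad \overline{\xi}=\xi,\quad \overline{\eta}=\eta.
\]
Indeed, writing out $2n\,\overline{g}$ for these values and using $1-c'=(c^2+d^2-c)/(c^2+d^2)$ reproduces the right-hand side above verbatim; this is simply the algebraic inversion of \eqref{cctt} on the contact distribution, where $\f$ plays the role of the imaginary unit so that the pair $(c,d)$ acts as multiplication by $c+\sqrt{-1}\,d$ and its inverse is division by the same number.

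Next I would verify that $\overline{g}$ is again a Sasaki-like structure. The transformation \eqref{cctt} is the special case of the general contact conformal transformation \eqref{Transf} with constant $u$, $v$ and $w=0$ (namely $\al=c'$, $\bt=d'$, $\gm=1$). For such constants the conditions \eqref{sssl} of \propref{prop:Sasaki} collapse to the trivial identities $\D u=\D v=0$ and $1-e^{w}=0$, so \propref{prop:Sasaki} guarantees that $(\MM,\f,\xi,\eta,\overline{g})$ is Sasaki-like.

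Finally, I would combine this with the invariance of the Ricci tensor under contact homothetic transformations, established in \propref{homric}: the Ricci tensor $\overline{\Ric}$ of $\overline{g}$ coincides with $\Ric$. Together with the reading of the first step this gives $\overline{\Ric}=\Ric=2n\,\overline{g}$, which says precisely that $(\MM,\f,\xi,\eta,\overline{g})$ is an Einstein Sasaki-like manifold with Einstein constant $2n$. As $\overline{g}$ is, by construction, contact homothetic to $g$, this exhibits the given $\eta$-complex-Einstein space as contact homothetic to an Einstein Sasaki-like space, which is the assertion.

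The step requiring the most care is the first one: confirming that the three coefficients in \eqref{etaein} are exactly those of $2n\,\overline{g}$ for the inverted parameters, and in particular that the Einstein constant comes out to be $2n$ and not a rescaled value. The remaining two steps are direct citations of \propref{homric} and \propref{prop:Sasaki}; there is no analytic obstruction, only the bookkeeping of the constants and the observation that $c^2+d^2\neq 0$ keeps the transformation non-degenerate.
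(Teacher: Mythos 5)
Your proof is correct and follows essentially the same route as the paper: the paper obtains \eqref{etaein} by applying a contact homothetic transformation to an Einstein Sasaki-like metric and then reads the proposition off from the invertibility of that transformation, while you simply make the inversion explicit, checking via \propref{prop:Sasaki} (with constant $u$, $v$ and $w=0$) that the inverse transformation preserves the Sasaki-like condition and via \propref{homric} that $\overline{\Ric}=\Ric=2n\,\overline g$. The bookkeeping of the constants $c'=c/(c^2+d^2)$, $d'=-d/(c^2+d^2)$ is right, so nothing further is needed.
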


\vspace{20pt}

\begin{center}
$\divideontimes\divideontimes\divideontimes$
\end{center} 

%
\newpage
\thispagestyle{empty}


\label{chap2}

\Large{

\
\\
\bigskip\
\\
\bigskip\
\\
\bigskip

\begin{center}
\Huge{
\textbf{
Chapter II. \\[6pt] }}
\HRule
\Huge{
\textbf{\\[12pt]
                                \textsc{On manifolds with almost \\[12pt]
                                        hypercomplex structures and \\[12pt]
                                        almost contact 3-structures, \\[12pt]
                                        equipped with metrics of \\[12pt]
                                        Hermitian-Norden type  }}}
\end{center}

}

%
%

%
%
\newpage

\addtocounter{section}{1}\setcounter{subsection}{0}\setcounter{subsubsection}{0}

\setcounter{thm}{0}\setcounter{equation}{0}

\label{par:4n}

 \Large{

\
\\[6pt]
\bigskip

\
\\[6pt]
\bigskip

\lhead{\emph{Chapter II $|$ \S\thesection. {Almost hypercomplex manifolds with Hermitian-Norden metrics
}}}


\noindent
\begin{tabular}{r"l}
\hspace{-6pt}{\Huge\bf \S\thesection.}  & {\Huge\bf Almost hypercomplex manifolds} \\[12pt]
                             & {\Huge\bf with Hermitian-Norden metrics}
\end{tabular}

\vskip 1cm

\begin{quote}
\begin{large}
In the present section, we give some facts
about the almost hypercomplex manifolds with Hermitian-Norden metrics known from
\cite{AlMa,GriMan24,GriManDim12,Man28}.
\end{large}
\end{quote}

%
%

\vskip 0.15in


Let us recall the notion of \emph{almost hypercomplex structure} $H$
on a 
manifold $\MM^{4n}$. This structure is a triad 
of anticommuting almost complex structures
such that each of them is a composition of two other structures
\cite{AlMa,So}.

We equip an almost hypercomplex structure $H$ with a metric structure,
generated by a pseudo-Riemannian metric $g$ of neutral signature
\cite{GriMan24,GriManDim12}. In our case, one (resp., the other
two) of the almost complex structures of $H$ acts as an isometry
(resp., act as anti-isometries) with respect to $g$ in each
tangent fibre.
Thus,  there exist three (0,2)-tensors
associated by $H$ to the metric $g$: a K\"ahler form and two metrics of the same type.
The metric $g$ is Hermitian with respect to one of almost
complex structures of $H$ and $g$ is a Norden
metric regarding the other two almost complex structures
of $H$. For this reason we call the derived almost hypercomplex
structure an \emph{almost hypercomplex structure with Hermitian-Norden metrics}.

Let $(\MM,H)$ be an almost hypercomplex manifold, i.e. $\MM$ is a
$4n$-dimension\-al differentiable manifold and $H=(J_1,J_2,J_3)$
is a triad of almost complex structures on $\MM$ with the following
properties for all cyclic permutations $(\al, \bt, \gm)$ of
$(1,2,3)$:%
\begin{equation}\label{J123} %
J_\al=J_\bt\circ J_\gm=-J_\gm\circ J_\bt, \qquad
J_\al^2=-I,
\end{equation} %
where $I$ denotes the identity. 

Let $g$ be a neutral metric on $(\MM,H)$ with the properties
\begin{equation}\label{gJJ} %
g(\cdot,\cdot)=\ea g(J_\al \cdot,J_\al \cdot), \end{equation} %
where
\begin{equation}\label{epsiloni}
\ea=
\begin{cases}
\begin{array}{ll}
1, \quad & \al=1;\\
-1, \quad & \al=2;3.
\end{array}
\end{cases}
\end{equation}

Further, the index $\alpha$ runs over the range
$\{1,2,3\}$ unless otherwise stated.

The associated (K\"ahler) 2-form $\g_1$ and the
associated neutral metrics $\g_2$ and $\g_3$ are determined by
\begin{equation}\label{gJ} %
\g_\al(\cdot,\cdot)=g(J_\al \cdot,\cdot)=-\ea g(\cdot,J_\al \cdot).
\end{equation}%

Let us note that $J_1$ (resp., $J_3$, $J_2$) acts as an
isometry with respect to $g$ (resp., $\g_2$, $\g_3$)
and moreover $J_2$ and $J_3$ (resp., $J_1$ and $J_2$, $J_1$ and $J_3$) act as anti-isometries with respect to $g$ (resp., $\g_2$,
$\g_3$).
On the other hand, a quaternionic inner product $\langle\cdot,\cdot\rangle$
is generated in a natural way by the bilinear forms $g$, $\g_1$,
$\g_2$ and $\g_3$ by the following decomposition:
$\langle\cdot,\cdot\rangle=-g+i\g_1+j\g_2+k\g_3$.

We call the structure $(H,G)=(J_1,J_2,J_3;g,\g_1,\g_2,\g_3)$ on
$\MM^{4n}$ an \emph{almost hy\-per\-com\-plex
structure with Hermit\-ian-Norden metrics}. 
We call
the manifold $(\MM,H,G)$ an \emph{almost hypercomplex
manifold with Hermit\-ian-Norden metrics}. 
These manifolds are introduced and studied in \cite{GriManDim12,Ma05,ManSek,GriMan24,Man28,ManGri32,NakHMan}.

According to \cite{GriManDim12}, the fundamental tensors of such a manifold are the following three
$(0,3)$-tensors
\begin{equation}\label{F'-al}
F_\al (x,y,z)=g\bigl( \left( \DDD_x J_\al
\right)y,z\bigr)=\bigl(\DDD_x \g_\al\bigr) \left( y,z \right),
\end{equation}
where $\DDD$ is the Levi-Civita connection generated by $g$.
These tensors have the following basic properties caused by the structures
\begin{equation}\label{FaJ-prop}
  F_{\al}(x,y,z)=-\ea F_{\al}(x,z,y)=-\ea F_{\al}(x,J_{\al}y,J_{\al}z).
\end{equation}

The following relations between the
tensors $F_\alpha$ are valid 
\begin{equation}\label{F1F2F3}
\begin{array}{l}
    F_1(x,y,z)=F_2(x,J_3y,z)+F_3(x,y,J_2z),\\[6pt]
    F_2(x,y,z)=F_3(x,J_1y,z)+F_1(x,y,J_3z),\\[6pt]
    F_3(x,y,z)=F_1(x,J_2y,z)-F_2(x,y,J_1z).
\end{array}
\end{equation}

The corresponding Lee forms $\ta_\al$ are defined by
\begin{equation}\label{theta-al}
\ta_\al(\cdot)=g^{ij}F_\al(e_i,e_j,\cdot)
\end{equation}%
for an arbitrary basis $\{e_1,e_2,\dots, e_{4n}\}$ of $T_p\MM$,
$p\in \MM$.

In \cite{GriManDim12}, we study the so-called \emph{hyper-K\"ahler
manifolds with Hermitian-Norden metrics}
, i.e. the almost hypercomplex manifold with Hermitian-Norden metrics in the class
$\KKK$, where $\DDD J_\al=0$ for all $\al$. A sufficient
condition for $(\MM,H,G)$ to be in $\KKK$ is this manifold to be of
K\"ahler-type with respect to two of the three complex structures
of $H$ \cite{GriMan24}.
%

As $g$ is an indefinite metric, there exist isotropic vectors $x$
on $\MM$, i.e.{} \(g(x,x)=0\), \(x\neq 0\). 
In \cite{GriMan24}, we define the invariant square norm
\begin{equation}\label{nJ}
\nJ{\al}= g^{ij}g^{kl}g\bigl( \left( \DDD_i J_\al \right) e_k,
\left( \DDD_j J_\al \right) e_l \bigr),
\end{equation}
where $\{e_1,e_2,\dots, e_{4n}\}$ is an arbitrary basis of the
tangent space $T_p\MM$ at an arbitrary point $p\in \MM$. We
say that an almost hypercomplex manifold with Hermitian-Norden metrics is an \emph{isotropic
hyper-K\"ahler manifold with Hermitian-Norden metrics} if $\nJ{\al}=0$ for each $J_\al$ of
$H$. Clearly, if the manifold is a hyper-K\"ahler
manifold with Hermitian-Norden metrics, then it is an isotropic hyper-K\"ahler manifold with Her\-mit\-ian-Norden metrics.
The inverse statement does not hold.

Let us note that according to \eqref{gJJ} the manifold $(\MM,J_1,g)$
is almost Hermitian whereas the manifolds $(\MM,J_\al,g)$, $\al=2,3$,
are almost Norden manifolds. 
The basic classes of the mentioned two types of
manifolds are given in \cite{GrHe} by A.~Gray, L.M.~Hervella and in \cite{GaBo} by G.~Ganchev, A.~Borisov, respectively.
They are determined for dimension $4n$ as follows:
\begin{enumerate}[a)]
\item for $\al=1$
\begin{equation}\label{cl-H}
\begin{split}
&\W_1(J_1):\; F_1(x,y,z)=-F_1(y,x,z); \\[6pt]
&\W_2(J_1):\; \mathop{\s}_{x,y,z}\bigl\{F_1(x,y,z)\bigr\}=0; \\[6pt]
&\W_3(J_1):\; F_1(x,y,z)=F_1(J_1x,J_1y,z),\quad \ta_1=0; \\[6pt]
&\W_4(J_1):\; F_1(x,y,z)=\frac{1}{4n-2}
                \bigl\{g(x,y)\ta_1(z)-g(x,z)\ta_1(y)\\
&\phantom{\W_4(J_1):\; F_1(x,y,z)=\frac{1}{4n-2}\,}
                -g(x,J_1y)\ta_1(J_1z) \\
&\phantom{\W_4(J_1):\; F_1(x,y,z)=\frac{1}{4n-2}\,}
                +g(x,J_1z)\ta_1(J_1y)
                \bigr\}
\end{split}
\end{equation}
\item
for $\al=2$ or $3$
\begin{subequations}\label{cl-N}
\begin{equation}
\begin{split}
&\W_1(J_\al):\;
F_\al(x,y,z)=\frac{1}{4n}\bigl\{
g(x,y)\ta_\al(z)+g(x,z)\ta_\al(y)\\
&\phantom{\W_1(J_\al):\; F_\al(x,y,z)=\frac{1}{4n}\,} %
+g(x,J_\al y)\ta_\al(J_\al z)\\
\end{split}
\end{equation}
\begin{equation}
\begin{split}
&\phantom{\W_1(J_\al):\; F_\al(x,y,z)=\frac{1}{4n}\,} %
+g(x,J_\al z)\ta_\al(J_\al y)\bigr\};\\[6pt]
&\W_2(J_\al):\; \mathop{\s}_{x,y,z}
\bigl\{F_\al(x,y,J_\al z)\bigr\}=0,\quad \ta_\al=0;\\[6pt]
&\W_3(J_\al):\; \mathop{\s}_{x,y,z} \bigl\{F_\al(x,y,z)\bigr\}=0.
\end{split}
\end{equation}
\end{subequations}
\end{enumerate}

The special class $\W_0(J_\al):$ $F_\al=0$  of the
K\"ahler-type manifolds belongs to any other class within the
corresponding classification.

In the 4-dimensional case, the four basic classes
of almost Hermitian manifolds with respect to
$J_1$ are restricted to two:
$\W_2(J_1)$, the class of the almost K\"ahler manifolds, and
$\W_4(J_1)$, the class of the Hermitian manifolds.

It is known that the class of complex manifolds with Hermitian metric for $J_1$ is
$(\W_3\oplus\W_4)(J_1)$ and
the class of complex manifolds with Norden metric for $J_{\al}$ ($\al=2,3$) is
$(\W_1\oplus\W_2)(J_{\al})$.

%

By definition, an almost hypercomplex structure $H=(J_\alpha)$  is
a \emph{hypercomplex structure} if the Nijenhuis tensors $[J_\al,J_\al]$, given by 
\begin{equation}\label{N_al} %
[J_\al,J_\al](\cdot,\cdot)\allowbreak{}= \left[J_\al \cdot,J_\al \cdot
\right]
    -J_\al\left[J_\al \cdot,\cdot \right]
    -J_\al\left[\cdot,J_\al \cdot \right]
    -\left[\cdot,\cdot \right],
\end{equation}%
vanish on $\X(\MM)$ for each $\alpha$ (\cite{Boy,AlMa}).
%
%
Moreover, it is known that $H$ is
hypercomplex if and only if two of $[J_\al,J_\al]$ vanish.


Then the class $\HCC$ of hypercomplex manifolds with Hermitian-Norden metrics is
\[
\left(\W_3\oplus\W_4\right)(J_1)\cap\left(\W_1\oplus\W_2\right)(J_2)\cap\left(\W_1\oplus\W_2\right)(J_3),
\]
which for the 4-dimensional case is restricted to
\[
\W_4(J_1)\cap\left(\W_1\oplus\W_2\right)(J_2)\cap\left(\W_1\oplus\W_2\right)(J_3).
\]

Let $R$ be the curvature tensor of the Levi-Civita connection
$\DDD$, generated by $g$.
Obviously, $R$
is a \emph{K\"ahler-type tensor} on an arbitrary hyper-K\"ahler
manifold with Hermitian-Norden metrics, i.e.
\be{K-kel}%
\begin{array}{l}
R(x,y,z,w)=\ea R(x,y,J_\al z,J_\al w)=\ea R(J_\al x,J_\al y,z,w).
\end{array}
\ee

A basic property of the hyper-K\"ahler manifolds with Hermitian-Norden metrics is given in
\cite{GriManDim12} by the following
\begin{thm}[\cite{GriManDim12}]\label{thm-K=0}
Each hyper-K\"ahler manifold with Hermitian-Nor\-den metrics is a flat pseudo-Rie\-mann\-ian
ma\-ni\-fold of signature $(2n,2n)$.
\end{thm}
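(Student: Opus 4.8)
Being in the class $\KKK$ means $\DDD J_\al=0$ for every $\al$, which by \eqref{F'-al} is the same as $F_\al=0$; thus the only substantive assertion is flatness, $R=0$. The signature claim is immediate: $g$ is neutral on a manifold of dimension $4n$, so it is of type $(2n,2n)$ (equivalently, the anti-isometry $J_2$ of \eqref{gJJ}--\eqref{epsiloni} forces equal numbers of positive and negative eigenvalues). From $\DDD J_\al=0$ one gets at once that $R$ is a K\"ahler-type tensor, i.e. \eqref{K-kel} holds for all three structures, with the mixed signs $\ep_1=1$, $\ep_2=\ep_3=-1$. The whole difficulty is therefore to squeeze flatness out of \eqref{K-kel}.

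First I would complexify $T_p\MM$ and use the isometry $J_1$ to split $T_p\MM\otimes\C=T^{1,0}\oplus T^{0,1}$ into its $(\pm i)$-eigenspaces. The Hermitian--K\"ahler identity (the case $\ep_1=1$ of \eqref{K-kel}) says $R(J_1x,J_1y,z,w)=R(x,y,z,w)$; feeding in two vectors of $T^{1,0}$ in the first pair gives $R=-R$, so $R$ is of type $(1,1)$: it can be nonzero only on components $R(Z_a,W_b,Z_c,W_d)$ with $Z_\bullet\in T^{1,0}$, $W_\bullet\in T^{0,1}$. Writing such a component as $R_{abcd}$, the \emph{first Bianchi identity} applied with two holomorphic and two antiholomorphic entries (the purely holomorphic term dropping out by the type condition) yields the K\"ahler symmetry
\[
R_{abcd}=R_{cbad},
\]
i.e. symmetry under interchange of the two holomorphic-slot labels $a\leftrightarrow c$.

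Next I would bring in $J_2$. Since $J_2$ anticommutes with $J_1$ by \eqref{J123}, it maps $T^{1,0}$ isomorphically onto $T^{0,1}$, so I may choose the antiholomorphic frame to be $W_a:=J_2Z_a$. A direct check then gives $\widehat J_2(Z_a\wedge W_b)=Z_b\wedge W_a$ for the induced involution on bivectors, so the Norden--K\"ahler identity (the case $\ep_2=-1$ of \eqref{K-kel}) turns into the antisymmetry
\[
R_{abcd}=-R_{bacd},
\]
i.e. a sign under interchange of the two labels in the first pair $a\leftrightarrow b$. Alternating the two displayed relations — apply the $J_2$-antisymmetry, then the $J_1$-symmetry, and repeat — closes up after a short cycle of interchanges to force $R_{abcd}=-R_{abcd}$ for every component; since these are all the possibly nonzero complexified components, $R$ vanishes on $T_p\MM\otimes\C$ and hence over $\R$. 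Only $J_1$ and $J_2$ enter, consistent with the remark that K\"ahler-type behaviour with respect to two of the three structures already forces $H\in\HCC$; the role of $J_3=J_1\circ J_2$ is automatic.

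The main obstacle is conceptual rather than computational: the three K\"ahler-type identities \eqref{K-kel} are, taken by themselves together with all the usual curvature symmetries \emph{except} Bianchi, perfectly consistent — indeed ordinary positive-definite hyper-K\"ahler manifolds satisfy the analogous identities and are far from flat. Flatness here is produced solely by the interplay of the \emph{mixed} signs $(\ep_1,\ep_2,\ep_3)=(1,-1,-1)$ with the first Bianchi identity, so the heart of the proof is the correct bookkeeping of the index interchanges: one must keep straight that the K\"ahler symmetry exchanges the two holomorphic-slot labels while the Norden antisymmetry exchanges the two labels inside a single pair, and verify that their composition produces the self-contradiction $R=-R$. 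Carrying this out invariantly, without the adapted frame $W_a=J_2Z_a$, is possible but messier; the adapted-frame computation above is the cleanest route.
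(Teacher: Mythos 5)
Your proof is correct, and the index chase does close up: writing the Kähler symmetry as the transposition of the two holomorphic slots (sign $+$) and the Norden identity as the transposition of the two labels within a pair (sign $-$), their alternation is a $3$-cycle on the labels, so after three rounds one returns to $R_{abcd}$ with total sign $(-1)^3$, forcing $R=0$ on the only surviving complexified components. Note, however, that the paper does not actually prove \thmref{thm-K=0}: it imports it from \cite{GriManDim12} and immediately records the stronger \thmref{th-0} from \cite{Man28} (every K\"ahler-type tensor on an almost hypercomplex manifold with Hermitian-Norden metrics vanishes), so its implicit route is ``$\DDD J_\al=0\Rightarrow R$ satisfies \eqref{K-kel} $\Rightarrow R=0$ by \thmref{th-0}.'' Your argument in fact establishes that more general statement, since the only inputs you use are the curvature-like symmetries \eqref{curv} (including the first Bianchi identity, which is built into the definition of a curvature-like tensor) and the identities \eqref{K-kel} for the two structures $J_1$ and $J_2$ with the mixed signs $\ep_1=1$, $\ep_2=-1$; the $J_3$-identity is automatic. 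What your adapted-frame computation buys is a transparent explanation of \emph{why} the mixed Hermitian/Norden signs are fatal, in contrast with the positive-definite hyper-K\"ahler case where all three identities carry the sign $+$ and the same cycle closes with total sign $(+1)^3$, yielding no contradiction. The signature assertion is, as you say, already part of the definition of the metric structure in \S9 (the anti-isometries $J_2,J_3$ force neutrality), so nothing further is needed there.
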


In \cite{Man28}, it is proved the following more general property.
\begin{thm}[\cite{Man28}]\label{th-0}
Each K\"ahler-type tensor on an almost hypercomplex
manifold with Hermitian-Nor\-den metrics is ze\-ro.
\end{thm}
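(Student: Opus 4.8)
The plan is to treat the statement as a purely pointwise algebraic fact. Fix $p\in\MM$ and write $V=T_p\MM$; a K\"ahler-type tensor is by definition curvature-like, so $R$ satisfies the symmetries \eqref{curv} (in particular the first Bianchi identity and the pair symmetry $R(x,y,z,w)=R(z,w,x,y)$ that follows from them), together with the three conditions \eqref{K-kel}, one for each $J_\al$. First I would reformulate $R$ as a symmetric bilinear form $\mathcal{R}$ on the space of bivectors $\Lambda^2 V$, via $\mathcal{R}(x\wedge y,z\wedge w)=R(x,y,z,w)$; the antisymmetries in \eqref{curv} make this well defined and the pair symmetry makes $\mathcal{R}$ symmetric. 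On $\Lambda^2 V$ introduce the maps $\pi_\al(x\wedge y)=J_\al x\wedge J_\al y$. Then the two halves of \eqref{K-kel} read $\mathcal{R}\,\pi_\al=\ea\,\mathcal{R}=\pi_\al\,\mathcal{R}$, so the whole problem is to understand a symmetric operator intertwining these three involutions.

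The key structural observation is that, because of the quaternionic relations \eqref{J123}, the operators $\pi_\al$ form a Klein four-group of commuting involutions: $\pi_\al^2=\mathrm{id}$ (as $J_\al^2=-I$), and $\pi_1\pi_2=\pi_2\pi_1=\pi_3$ together with its cyclic analogues, whence $\pi_1\pi_2\pi_3=\mathrm{id}$. Therefore $\Lambda^2 V$ splits orthogonally (the $\pi_\al$ are $g$-isometries up to sign) into the joint eigenspaces $V_{(s_1,s_2,s_3)}$, where $s_\al\in\{+1,-1\}$ is the eigenvalue of $\pi_\al$, subject to the constraint $s_1 s_2 s_3=+1$. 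Thus only four sign-triples occur: $(+,+,+)$, $(+,-,-)$, $(-,+,-)$ and $(-,-,+)$.

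Next I would feed the conditions $\mathcal{R}\,\pi_\al=\ea\,\mathcal{R}=\pi_\al\,\mathcal{R}$ into this decomposition. For a joint eigenbivector $\omega$ with signs $(s_1,s_2,s_3)$ and any $\eta$ one gets $s_\al\,\mathcal{R}(\eta,\omega)=\ea\,\mathcal{R}(\eta,\omega)$ for every $\al$, so $\mathcal{R}(\eta,\omega)=0$ unless $s_\al=\ea$ for all $\al$. By \eqref{epsiloni} the only admissible target is $(\varepsilon_1,\varepsilon_2,\varepsilon_3)=(+,-,-)$, which indeed satisfies $s_1 s_2 s_3=+1$. Hence, using the symmetry of $\mathcal{R}$ in both slots, $\mathcal{R}$ annihilates the orthogonal complement of the single eigenspace $W:=V_{(+,-,-)}$ and maps $W$ into $W$; equivalently $R$ is supported on $S^2 W$. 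This step isolates exactly the role played by the mixed sign pattern of \eqref{epsiloni}.

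The final and hardest step is to invoke the first Bianchi identity to collapse this surviving block. I would view Bianchi as the vanishing of the natural map $S^2(\Lambda^2 V)\to\Lambda^4 V$ and show that its restriction to $S^2 W$ is injective, so that an $R\in S^2 W$ obeying \eqref{curv} must vanish; concretely this amounts to expanding $\mathfrak{S}\,R(x,y,z,w)=0$ after projecting the arguments onto $W$ and using the $(+,-,-)$ eigenvalue relations to turn it into $R=-R$. The main obstacle is precisely this compatibility computation: the algebra of the $\pi_\al$ alone is perfectly consistent (each condition used in isolation only reproduces an identity already forced by the quaternionic relations), so the vanishing genuinely comes from Bianchi interacting with the specific signs $(+,-,-)$. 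This is also what distinguishes the present neutral Hermitian-Norden situation, where full flatness holds, from the Riemannian hyper-K\"ahler case (all $\ea=+1$, target eigenspace $(+,+,+)$), where the same reduction leaves a Bianchi-compatible block and flatness fails. Once $R=0$ is obtained pointwise, \thmref{th-0} follows, and \thmref{thm-K=0} is recovered as the special case where $R$ is the curvature of a hyper-K\"ahler manifold with Hermitian-Norden metrics.
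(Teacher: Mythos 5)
The dissertation states \thmref{th-0} only with a citation to \cite{Man28} and reproduces no proof, so your argument has to be judged on its own merits. Your first two steps are correct and cleanly organised: by \eqref{J123} the operators $\pi_\al(x\wedge y)=J_\al x\wedge J_\al y$ do form a Klein four-group of commuting involutions on $\Lambda^2 V$, the joint eigenspaces carry sign triples with $s_1s_2s_3=+1$, and the two halves of \eqref{K-kel}, together with the pair symmetry that \eqref{curv} provides, force $\mathcal{R}$ to be supported on $S^2W$ with $W=V_{(+,-,-)}$. Your observation that the $\pi_\al$-conditions alone are mutually consistent, so that the vanishing cannot come from them and must come from Bianchi interacting with the signs \eqref{epsiloni}, is exactly right, as is your closing contrast with the Riemannian hyper-K\"ahler case.

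The gap is the decisive last step. You assert that the Bianchi map $b\colon S^2(\Lambda^2V)\to\Lambda^4V$ is injective on $S^2W$ but do not prove it, and the sentence offered in its place --- expanding $\mathfrak{S}\,R=0$ ``after projecting the arguments onto $W$'' --- does not describe a workable computation: the arguments of $R$ are vectors, not bivectors, so they cannot be projected onto $W$, and the $(+,-,-)$ eigenvalue relations applied to the Bianchi sum do not by themselves yield $R=-R$. The claim is nevertheless true, and can be closed as follows. Complexify with respect to $J_1$, writing $V_\C=V^{1,0}\oplus V^{0,1}$; the $\pi_1$-eigenvalue $+1$ in each pair means $R$ has only components $R_{i\bar j k\bar l}$, and for such a tensor the first Bianchi identity in \eqref{curv} is equivalent to the vanishing of its component in $\Lambda^2 V^{1,0*}\otimes\Lambda^2 V^{0,1*}$, i.e. to the symmetries $R_{i\bar jk\bar l}=R_{k\bar ji\bar l}=R_{i\bar lk\bar j}$. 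Since $J_2$ anticommutes with $J_1$ it gives a $\C$-linear isomorphism $V^{0,1}\to V^{1,0}$; transporting the barred indices through it, the $\pi_2$-eigenvalue $-1$ in each pair becomes antisymmetry of $R$ in slots $(1,2)$ and $(3,4)$. A tensor $T_{abcd}$ symmetric under $a\leftrightarrow c$ and $b\leftrightarrow d$ and antisymmetric under $a\leftrightarrow b$ and $c\leftrightarrow d$ satisfies
\[
T_{abcd}=-T_{bacd}=-T_{cabd}=T_{acbd}=T_{adbc}=-T_{dabc}=-T_{badc}=T_{abdc}=-T_{abcd},
\]
hence $T=0$; running the same chain with symmetry in the pairs $(1,2)$, $(3,4)$ gives only a tautology, which is precisely why the $(+,+,+)$ block survives in the Riemannian hyper-K\"ahler setting. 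Until some such computation is supplied, your proof is incomplete at its one genuinely hard point.
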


\vspace{20pt}

\begin{center}
$\divideontimes\divideontimes\divideontimes$
\end{center} 

\newpage

\addtocounter{section}{1}\setcounter{subsection}{0}\setcounter{subsubsection}{0}

\setcounter{thm}{0}\setcounter{equation}{0}

\label{par:4Lie}

 \Large{

\
\\
\bigskip

\
\\
\bigskip

\lhead{\emph{Chapter II $|$ \S\thesection. Hypercomplex structures with Hermitian-Norden metrics
    on 4-dimensional \ldots 
}}


\noindent
\begin{tabular}{r"l}
\hspace{-6pt}{\Huge\bf \S\thesection.}  & {\Huge\bf Hypercomplex structures with} \\[12pt]
                             & {\Huge\bf Hermitian-Nor\-den metrics on} \\[12pt]
                             & {\Huge\bf 4-dimensional Lie algebras}
\end{tabular}

\vskip 1cm

\begin{quote}
\begin{large}
In the present section, integrable hypercomplex structures with Hermitian-Norden metrics on Lie groups of dimension 4 are considered. The corresponding five types of invariant hypercomplex structures with
hyper-Hermitian metric are constructed here. The different cases
regarding the signature of the basic pseudo-Riemannian metric are considered.

The main results of this section are published in \cite{Man44}.
\end{large}\end{quote}

%

\vskip 0.15in

The study in the present section is inspired by the work of M.L. Bar\-be\-ris \cite{Barb}
where invariant hypercomplex structures $H$ on 4-dimensional
real Lie groups are classified.
In that case the corresponding metric is positive definite and
Hermitian with respect to the triad of complex structures of $H$.
Our main goal is to classify 4-dimensional real Lie algebras
which admit hypercomplex structures with Hermitian-Norden metrics.

Let us remark that in \cite{Snow} and \cite{Ovan} are classified
the invariant complex structures on 4-dimensional solvable
simply-connected real Lie groups where the dimension of commutators
is less than three and equal to three, respectively.

A hypercomplex structure is called \emph{Abelian}, if
$[J_{\al}\cdot, J_{\al}\cdot ] = [\cdot, \cdot]$ is valid
for arbitrary vector fields and for all $\al$  \cite{BaDoMi}. Abelian hypercomplex structures are
considered in \cite{BaDo,DotFin} and
they can occur only on solvable Lie algebras \cite{FinGran}.
It is clear that the condition %
\[
[J,J](x,y) = [Jx,Jy]-J[Jx,y]-J[x,Jy]-[x,y]=0
\]
can be rewritten as
\[
[Jx,Jy]-[x,y] = J([Jx,y]+[x,Jy])\]
 for all vector fields $x, y$. Thus,
Abelian complex structures and therefore Abelian hypercomplex
structure are integrable.

%
%

In the present section we construct different types of hypercomplex
structures on Lie algebras following the Barberis classification.
The basic problem here is the existence and the geometric
characteristics of hypercomplex structures with Hermitian-Norden metrics on 4-dimensional
Lie algebras according to the Barberis classification. The
main results of this section are the construction of the different types
of the considered structures and their characterization.

%
%



%
%
%
%

A hypercomplex structure on a Lie group is said to be \emph{invariant}
if left translations by elements of the Lie group are holomorphic with respect to
$J_{\al}$ for all $\al$. Obviously, a hypercomplex structure on the corresponding Lie algebra induces an invariant hypercomplex structure on the Lie group by left translations.

Let $\LL$ be a simply connected 4-dimensional real Lie group
admitting an invariant hypercomplex structure. A left invariant
metric on $\LL$ is called \emph{invariant hyper-Hermitian}  if it is
hyper-Hermitian with respect to some invariant hypercomplex
structure on $\LL$. It is known that all such metrics on given $\LL$
are equivalent up to homotheties.

Let $\mathfrak{l}$ denote the corresponding Lie algebra of $\LL$. Then it is known the following

\begin{thm}[\cite{Barb}]\label{thm-Barb}
The only 4-dimensional Lie algebras admitting an integrable
hypercomplex structure are the following types:

\emph{\textbf{(hc1)}} $\mathfrak{l}$ is Abelian;$\qquad\qquad$

\emph{\textbf{(hc2)}} $\mathfrak{l}\cong\R\oplus\mathfrak{so}(3)$;
$\qquad\qquad$

\emph{\textbf{(hc3)}} $\mathfrak{l}\cong\mathfrak{aff}(\C)$;

\emph{\textbf{(hc4)}} $\mathfrak{l}$ is the solvable Lie algebra corresponding to $\R H^4$;

\emph{\textbf{(hc5)}} $\mathfrak{l}$ is the solvable Lie algebra corresponding to $\C H^2$,
\\
where $\R\oplus\mathfrak{so}(3)$ is the Lie algebra of the Lie
groups $U(2)$ and $S^3\times S^1$; $\mathfrak{aff}(\C)$ is the Lie
algebra of the affine motion group of $\C$, the unique
4-dimensional Lie algebra carrying an Abelian hypercomplex
structure; $\R H^4$ is the real hyperbolic space; $\C H^2$ is the
complex hyperbolic space.
\end{thm}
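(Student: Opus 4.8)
The statement is the classification theorem of Barberis, so the plan is to reconstruct the argument classifying $4$-dimensional real Lie algebras $\mathfrak{l}$ that admit an integrable hypercomplex structure $H=(J_1,J_2,J_3)$ satisfying \eqref{J123}. First I would fix the underlying vector space and identify $\mathfrak{l}\cong\R^4$ with the quaternions, realising $J_1,J_2,J_3$ as left multiplications by $i,j,k$; this is possible since $H$ endows each tangent fibre with a quaternionic module structure. Integrability of $H$ means that every $[J_\al,J_\al]$ vanishes on $\X(\LL)$; as recalled in \S\ref{par:4n}, it suffices to require this for two indices, so I would impose vanishing of $[J_1,J_1]$ and $[J_2,J_2]$ only. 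The integrability of $J_\al$ is equivalent to the $(+i)$-eigenspace $\mathfrak{l}^{1,0}_\al\subset\mathfrak{l}^{\C}$ being a complex subalgebra, which I would translate into explicit linear relations among the structure constants of $\mathfrak{l}$.

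The core of the proof is a case analysis organised by the algebraic type of $\mathfrak{l}$. The plan is to split according to the Levi decomposition: if $\mathfrak{l}$ is not solvable, then in dimension $4$ it is a direct sum of a one-dimensional centre with a three-dimensional simple algebra, hence $\mathfrak{l}\cong\R\oplus\mathfrak{so}(3)$ or $\mathfrak{l}\cong\R\oplus\mathfrak{sl}(2,\R)$. I would show that the integrability relations are compatible only with the compact form, producing type \textbf{(hc2)} on $U(2)\cong S^3\times S^1$, and rule out $\R\oplus\mathfrak{sl}(2,\R)$ by confronting the sign constraints coming from the three anticommuting $J_\al$ with the Killing form.

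For the solvable case I would stratify by the dimension of the derived algebra $\mathfrak{l}'=[\mathfrak{l},\mathfrak{l}]$. The case $\mathfrak{l}'=\{0\}$ gives the Abelian algebra, type \textbf{(hc1)}. For $\dim\mathfrak{l}'\geq 1$ I would analyse how $\ad_x$ interacts with $J_\al$, using that the derived algebra and the centre must be compatible with the quaternionic action. Separating the Abelian hypercomplex structures, those with $[J_\al\cdot,J_\al\cdot]=[\cdot,\cdot]$, isolates $\mathfrak{aff}(\C)$ as the unique carrier, type \textbf{(hc3)}; the remaining non-Abelian solvable possibilities reduce, after normalising the structure constants, to the two Iwasawa-type solvable models of the rank-one symmetric spaces, namely those corresponding to $\R H^4$ and $\C H^2$, giving \textbf{(hc4)} and \textbf{(hc5)}. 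In each of the five cases I would finish by exhibiting explicit commutators together with $J_1,J_2,J_3$ and verifying directly that the tensors \eqref{N_al} vanish, which establishes existence.

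The main obstacle I expect is completeness in the solvable case: the bookkeeping of the integrability relations across the several strata of $\dim\mathfrak{l}'$ is delicate, and one must both rule out spurious candidate algebras and show that within each surviving type the hypercomplex structure is unique up to isomorphism (and, by the known equivalence up to homothety, carries an essentially unique hyper-Hermitian metric). A secondary difficulty is handling the various signatures uniformly, since the $J_\al$ for $\al=2,3$ act as anti-isometries; I would treat the purely complex classification first and only afterwards read off the metric compatibility from \eqref{gJJ}--\eqref{epsiloni}.
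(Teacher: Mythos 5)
The first thing to observe is that the dissertation does not prove this statement: Theorem~\ref{thm-Barb} is quoted from Barberis \cite{Barb} and used as a black box, the section's own contribution being to realise each of the five admissible Lie algebras explicitly, equip it with a Hermitian-Norden metric structure via \eqref{JJJ}--\eqref{g}, and determine the resulting classes. So there is no proof in the paper against which your argument can be matched, and your proposal has to be judged as a reconstruction of Barberis's original classification.

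On those terms, your outline follows the standard route — pass to the Lie algebra, use the Levi decomposition to see that a non-solvable $4$-dimensional algebra is $\R\oplus\mathfrak{so}(3)$ or $\R\oplus\mathfrak{sl}(2,\R)$, and stratify the solvable case by $\dim\mathfrak{l}'$ — and the five surviving types are correctly identified. But as written it is a plan rather than a proof: every step that carries weight is named and deferred. Concretely, (a) the translation of the vanishing of two Nijenhuis tensors into relations on structure constants is announced but never written down; (b) the exclusion of $\R\oplus\mathfrak{sl}(2,\R)$ is attributed to ``sign constraints \ldots{} with the Killing form'', which is not obviously a valid mechanism, since the theorem concerns hypercomplex structures with no metric and a compatible positive-definite hyper-Hermitian metric can always be manufactured by averaging, so it imposes no signature obstruction by itself --- the exclusion has to come from the integrability and anticommutation equations for the $J_\al$ themselves; and (c) the completeness of the solvable analysis, which is where essentially all of Barberis's work lies, is explicitly left open in your final paragraph. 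The closing remarks about Norden-type signatures and about uniqueness of the hyper-Hermitian metric up to homothety are true but orthogonal to the statement being proved. In short: the skeleton is right, but none of the load-bearing verifications is supplied, so this cannot yet be accepted as a proof of the theorem.
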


Let $\{e_1,e_2,e_3,e_4\}$ be a basis of a 4-dimensional real Lie
algebra $\mathfrak{l}$ with center $\z$ and derived Lie algebra
$\mathfrak{l}'=[\mathfrak{l},\mathfrak{l}]$. A standard hypercomplex structure on $\mathfrak{l}$ is
defined as in \cite{So}:
\begin{equation}\label{JJJ}
\begin{array}{llll}
J_1e_1=e_2, \quad & J_1e_2=-e_1,  \quad &J_1e_3=-e_4, \quad &J_1e_4=e_3;
\\[6pt]
J_2e_1=e_3, &J_2e_2=e_4, &J_2e_3=-e_1, &J_2e_4=-e_2;
\\[6pt]
J_3e_1=-e_4, &J_3e_2=e_3, &J_3e_3=-e_2, &J_3e_4=e_1.\\[6pt]
\end{array}
\end{equation}

Let us introduced a pseudo-Euclidian metric $g$ of
neutral signature as follows
\begin{equation}\label{g}
g(x,y)=x^1y^1+x^2y^2-x^3y^3-x^4y^4,
\end{equation}
where $x(x^1,x^2,x^3,x^4)$, $y(y^1,y^2,y^3,y^4) \in \mathfrak{l}$. This
metric satisfies \eqref{gJJ} and \eqref{gJ}. Then the metric $g$
generates an almost hypercomplex structure with Hermitian-Norden metrics on $\mathfrak{l}$.

Let us consider the different cases of \thmref{thm-Barb}.

\vskip 0.2in \addtocounter{subsection}{1}

\noindent\Large{\textbf{\thesubsection. Hypercomplex structure of type (hc1)}}

\vskip 0.15in

Obviously, in this case the considered manifold belongs to $\KKK$, the class of hyper-K\"ahler manifolds with Hermitian-Norden metrics.

\vskip 0.2in \addtocounter{subsection}{1}

\noindent\Large{\textbf{\thesubsection. Hypercomplex structure of type (hc2)}}

\vskip 0.15in

Let $\mathfrak{l}$ be not solvable and let us determine it by %
\begin{equation}\label{HC2a}
[e_2,e_4]=e_3,\qquad
[e_4,e_3]=e_2,\qquad [e_3,e_2]=e_4. %
\end{equation}
In this consideration the $(+)$-unit $e_1\in\R$, i.e. $g(e_1,e_1)=1$,
is orthogonal to $\mathfrak{l}'$ with respect to $g$.

Then we compute the covariant derivatives in the basis with respect to the Levi-Civita connection $\DDD$ of $g$ and the nontrivial ones are
\begin{equation}\label{HC2a-n}
\begin{array}{lll}
\DDD_{e_2} e_3=-\dfrac{3}{2}e_4,\quad\quad & \DDD_{e_3}
e_2=-\dfrac{1}{2}e_4,\quad\quad &
\DDD_{e_4} e_2=\dfrac{1}{2}e_3, \\[6pt]
\DDD_{e_2} e_4=\dfrac{3}{2}e_3,\quad\quad & \DDD_{e_3}
e_4=-\dfrac{1}{2}e_2,\quad\quad & \DDD_{e_4} e_3=\dfrac{1}{2}e_2.
\end{array}
\end{equation}

By virtue of \eqref{HC2a-n}, \eqref{JJJ} and  \eqref{F'-al}, we obtain components
$(F_{\al})_{ijk}=F_{\al}(e_i,e_j,e_k)$, $i,j,k\in\{1,2,3,4\}$, as follows:
\begin{subequations}\label{HC2aFa}
\begin{equation}
\begin{aligned}
(F_{1})_{314}&=-(F_{1})_{323}=(F_{1})_{332}=-(F_{1})_{341}=
\\[6pt]
\phantom{(F_{1})_{314}}
&=-(F_{1})_{413}=-(F_{1})_{424}=(F_{1})_{431}=(F_{1})_{442}=\dfrac{1}{2},\\[6pt]
(F_{2})_{214}&=-(F_{2})_{223}=-(F_{2})_{232}=(F_{2})_{241}=\dfrac{3}{2},
\\[6pt]
(F_{2})_{412}&=(F_{2})_{421}=(F_{2})_{434}=(F_{2})_{443}=\dfrac{1}{2},\\[6pt]
(F_{3})_{213}&=(F_{3})_{224}=(F_{3})_{231}=(F_{3})_{242}=\dfrac{3}{2},\\[6pt]
(F_{3})_{312}&=(F_{3})_{321}=-(F_{3})_{334}=-(F_{3})_{343}=\dfrac{1}{2},
\end{aligned}
\end{equation}
\begin{equation}
\begin{aligned}
(F_{2})_{322}&=(F_{2})_{344}=-(F_{3})_{422}=-(F_{3})_{433}=-1
\end{aligned}
\end{equation}
\end{subequations}
and the others are zero.
 The only non-zero components $(\theta_{\al})_i=(\theta_{\al})(e_i)$, $i=1,2,3,4$,
 of the corresponding Lee forms are
\begin{equation}\label{HC2a-titi}
(\theta_1)_2=-1,\qquad (\theta_2)_3=-2,\qquad (\theta_3)_4=2.
\end{equation}
Using the results in \eqref{HC2aFa}, \eqref{HC2a-titi} and the classification conditions
\eqref{cl-H}, \eqref{cl-N} for dimension 4, we obtain
\begin{prop}\label{prop-HC2}
The hypercomplex manifold with Hermitian-Norden metrics on a 4-dimensional Lie algebra,
determined by \eqref{HC2a}, belongs to the largest class of the considered manifolds,
i.e. $\HCC$, as well as this manifold does not belong to neither $\W_1$ nor $\W_2$ for $J_2$ and $J_3$.
\end{prop}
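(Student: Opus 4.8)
The plan is to verify the stated class membership directly from the explicit component data already computed in \eqref{HC2aFa} and \eqref{HC2a-titi}, testing them against the classification conditions \eqref{cl-H} and \eqref{cl-N} specialised to dimension $4$. First I would confirm that the manifold lies in $\HCC$, the class of hypercomplex manifolds with Hermitian-Norden metrics. By the general structure recalled in \S9, this amounts to showing that $H=(J_1,J_2,J_3)$ is integrable and that the metric places $J_1$ in $\W_4(J_1)$ and each of $J_2,J_3$ in $\left(\W_1\oplus\W_2\right)(J_\al)$. Integrability follows because the constructed structure is among the Barberis types of \thmref{thm-Barb}, all of which carry integrable hypercomplex structures; equivalently, one checks that two of the three Nijenhuis tensors $[J_\al,J_\al]$ from \eqref{N_al} vanish on the commutators \eqref{HC2a}, which by the criterion quoted after \eqref{N_al} forces the third to vanish as well.

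Next I would check, for $J_1$, that the Hermitian condition of the $4$-dimensional case holds. Since in dimension $4$ the only basic Hermitian classes are $\W_2(J_1)$ and $\W_4(J_1)$, and since $\ta_1\neq 0$ by \eqref{HC2a-titi}, the manifold cannot sit in the almost-K\"ahler class $\W_2(J_1)$; testing the components \eqref{HC2aFa} against the $\W_4(J_1)$ formula in \eqref{cl-H} (with $4n-2=2$) should reproduce exactly the nonzero $(F_1)_{ijk}$, placing $J_1$ in $\W_4(J_1)$. For $J_2$ and $J_3$ I would verify the integrability condition $\left(\W_1\oplus\W_2\right)(J_\al)$, namely the vanishing of the cyclic sum $\mathop{\s}_{x,y,z}\{F_\al(x,y,J_\al z)\}$; this is the Norden-metric counterpart and combines with the results for $J_1$ to give membership in $\HCC$.

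For the second assertion — that the manifold belongs to neither $\W_1$ nor $\W_2$ separately for $J_2$ and $J_3$ — I would argue by exhibiting explicit violations of each characteristic condition in \eqref{cl-N}. To rule out $\W_1(J_\al)$ it suffices to find one triple $(e_i,e_j,e_k)$ for which the component $(F_\al)_{ijk}$ from \eqref{HC2aFa} disagrees with the value prescribed by the $\W_1(J_\al)$ formula built from $g$ and $\ta_\al$; the presence of the ``extra'' components $(F_2)_{322}=(F_2)_{344}=-1$ and $(F_3)_{422}=(F_3)_{433}=1$, which are not of the algebraic shape produced by the metric-times-Lee-form expression, is the natural obstruction to exploit. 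To rule out $\W_2(J_\al)$ I would simply invoke $\ta_\al\neq 0$ from \eqref{HC2a-titi}, since every $\W_2(J_\al)$ structure has vanishing Lee form.

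The main obstacle I anticipate is purely bookkeeping rather than conceptual: one must be careful with the sign conventions $\ea$ from \eqref{epsiloni} and the antisymmetry/symmetry properties \eqref{FaJ-prop} when comparing the raw components against the classification templates, because a single misplaced $\ea$ or a swapped index in \eqref{cl-N} can spuriously satisfy or violate a class condition. I would therefore organise the check so that each nonzero component in \eqref{HC2aFa} is matched against its predicted value, and I would single out the components $(F_2)_{322}$, $(F_3)_{422}$ as the decisive witnesses separating the full class $\HCC$ from the strictly smaller $\W_1$ and $\W_2$.
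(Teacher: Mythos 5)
Your proposal is correct and follows essentially the same route as the paper: the paper's proof consists precisely of computing the components \eqref{HC2aFa} and the Lee forms \eqref{HC2a-titi} and then checking them against the classification conditions \eqref{cl-H}, \eqref{cl-N} in dimension $4$, which is exactly the verification you lay out (including ruling out $\W_2(J_\al)$ via $\ta_\al\neq 0$ and $\W_1(J_\al)$ via components such as $(F_2)_{322}$ that the metric-times-Lee-form template cannot produce). The only cosmetic difference is that the paper leaves the check implicit while you spell it out, and your appeal to \thmref{thm-Barb} for integrability is redundant since membership in $(\W_3\oplus\W_4)(J_1)\cap(\W_1\oplus\W_2)(J_2)\cap(\W_1\oplus\W_2)(J_3)$ already encodes the vanishing of the Nijenhuis tensors.
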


The other possibility is the signature of $g$ on $\R$ to be $(-)$, e.g. $e_3\in\R$, where $g(e_3,e_3)=-1$.
By similar computations we establish the same class in the statement of \propref{prop-HC2}.

\vskip 0.2in \addtocounter{subsection}{1}

\noindent\Large{\textbf{\thesubsection. Hypercomplex structure of type (hc3)}}

\vskip 0.15in


We analyze separately the cases of signature (1,1), (0,2) and (2,0) of $g$ on $\mathfrak{l}'$.

\vskip 0.15in

\noindent\Large{\textbf{\emph{\thesubsection.1. Case 1}}}\label{2.3.1.}

\vskip 0.1in

Firstly, we consider $g$ of signature (1,1) on $\mathfrak{l}'$.

Let us determine $\mathfrak{l}$ by %
\begin{equation}\label{HC3a}
[e_2,e_3]=[e_1,e_4]=e_2,\quad\quad
[e_2,e_1]=[e_4,e_3]=e_4.
\end{equation}
Then we compute covariant derivatives
and the nontrivial ones are %
\begin{equation}\label{HC3a-n}
\begin{array}{ll}
\DDD_{e_2} e_1=\DDD_{e_4} e_3=e_4,\quad\quad &
\DDD_{e_2} e_2=-\DDD_{e_4} e_4=e_3,\\[6pt]
\DDD_{e_2} e_3=-\DDD_{e_4} e_1=e_2,\quad\quad & \DDD_{e_2} e_4=\DDD_{e_4}
e_2=e_1.
\end{array}
\end{equation}

By virtue of \eqref{HC3a}, \eqref{JJJ} and  \eqref{F'-al}, we obtain that $F_1=0$
and the other components $(F_{\al})_{ijk}$, $\al=2,3$, are as follows
\begin{equation}\label{HC3aFa}
\begin{aligned}
(F_{2})_{212}&=(F_{2})_{221}=(F_{2})_{234}=(F_{2})_{243}=
\\[6pt]
&=-(F_{2})_{414}=(F_{2})_{423}=(F_{2})_{432}=-(F_{2})_{441}=2,
\\[6pt]
(F_{3})_{211}&=-(F_{3})_{222}=-(F_{3})_{233}=(F_{3})_{244}=
\\[6pt]
&=(F_{3})_{413}=(F_{3})_{424}=(F_{3})_{431}=(F_{3})_{442}=-2
\end{aligned}
\end{equation}
and the others are zero.
 The only non-zero components of the corresponding Lee forms are
\begin{equation}\label{HC3a-titi}
(\theta_2)_1=(\theta_3)_2=4.
\end{equation}
Using that $F_1=0$, the results in \eqref{HC3aFa}, \eqref{HC3a-titi}
and the classification conditions \eqref{cl-H}, \eqref{cl-N}, we obtain

\begin{prop}\label{prop-HC3}
The hypercomplex manifold with Hermitian-Norden metrics on a 4-dimensional Lie algebra,
determined by \eqref{HC3a}, belongs to the subclass of the K\"ahler manifolds
with respect to $J_1$ of the largest class of the considered manifolds, i.e.
\[
\W_0(J_1)\cap\left(\W_1\oplus\W_2\right)(J_2)\cap\left(\W_1\oplus\W_2\right)(J_3),
\]
as well as this manifold does not belong to neither $\W_1$ nor $\W_2$ for $J_2$ and $J_3$.
\end{prop}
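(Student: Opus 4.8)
The plan is to establish the stated membership by directly computing the three fundamental tensors $F_\al$ and their Lee forms $\ta_\al$ for the structure \eqref{HC3a}, and then matching the results against the classification conditions \eqref{cl-H} and \eqref{cl-N}. First I would recover the covariant derivatives \eqref{HC3a-n} from the commutators \eqref{HC3a} and the metric \eqref{g}. Since $g$ is left-invariant and $\{e_1,e_2,e_3,e_4\}$ is a basis of $\mathfrak{l}$, the function-derivative terms in the Koszul formula \eqref{koszul} vanish, so it collapses to the purely algebraic identity
\[
2g(\DDD_{e_i}e_j,e_k)=g([e_i,e_j],e_k)-g([e_j,e_k],e_i)+g([e_k,e_i],e_j),
\]
which I would evaluate on all index triples using the diagonal form of $g$ from \eqref{g}.

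Next, for each $\al$ I would compute $(\DDD_{e_i}J_\al)e_j=\DDD_{e_i}(J_\al e_j)-J_\al\DDD_{e_i}e_j$ from the structures \eqref{JJJ} and the connection \eqref{HC3a-n}, and then read off $(F_\al)_{ijk}=g\bigl((\DDD_{e_i}J_\al)e_j,e_k\bigr)$ as prescribed by \eqref{F'-al}. I expect this to yield $F_1=0$ identically, together with the nonzero components of $F_2$ and $F_3$ listed in \eqref{HC3aFa}; the symmetries \eqref{FaJ-prop} can be used to reduce the number of independent components that must be evaluated. Tracing with the inverse metric via \eqref{theta-al} then produces the Lee forms \eqref{HC3a-titi}, in particular $\ta_1=0$ while $\ta_2,\ta_3\neq0$.

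From these data the class follows. The identity $F_1=0$ places the manifold in $\W_0(J_1)$ by the definition of the special K\"ahler-type class, and since $\W_0(J_1)\subset\W_4(J_1)$ this intersection is indeed a subclass of $\HCC$. For $J_2$ and $J_3$ I would verify the defining condition $\sx F_\al(x,y,J_\al z)=0$ of $(\W_1\oplus\W_2)(J_\al)$ directly from the components in \eqref{HC3aFa}; equivalently, membership is guaranteed because the structure is integrable of type (hc3), so each $[J_\al,J_\al]$ vanishes. To exclude $\W_2(J_\al)$ it suffices to note $\ta_\al\neq0$ by \eqref{HC3a-titi}, since $\W_2$ in \eqref{cl-N} requires a vanishing Lee form. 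To exclude $\W_1(J_\al)$ I would exhibit one concrete triple on which the computed $F_\al$ disagrees with the explicit $\W_1$-model tensor built from $g$ and $\ta_\al$ in \eqref{cl-N}.

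The main obstacle is the bookkeeping: keeping the sign conventions of the neutral metric \eqref{g} and of the three anticommuting structures \eqref{JJJ} consistent across the many component computations, and especially pinning down a single distinguishing triple for $J_2$ and $J_3$. This last step is what upgrades the weaker conclusion (membership in $(\W_1\oplus\W_2)(J_\al)$) to the strict claim that the manifold lies in \emph{neither} $\W_1$ \emph{nor} $\W_2$ for $J_2$ and $J_3$.
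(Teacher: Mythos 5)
Your proposal is correct and follows essentially the same route as the paper: compute the Levi-Civita connection from the Koszul formula, derive the components of $F_\al$ and the Lee forms, observe $F_1=0$, and match against the classification conditions \eqref{cl-H} and \eqref{cl-N}. Your explicit plan for ruling out $\W_1(J_\al)$ and $\W_2(J_\al)$ separately (nonvanishing Lee form for $\W_2$, a distinguishing component against the $\W_1$-model tensor for $\W_1$) just spells out what the paper leaves implicit in its appeal to the classification conditions.
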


\vskip 0.15in

\noindent\Large{\textbf{\emph{\thesubsection.2. Case 2}}}\label{2.3.2.}

\vskip 0.1in

Secondly, we consider $g$ of signature (2,0)  on $\mathfrak{l}'$. The case for signature (0,2) is similar.

Let us determine $\mathfrak{l}$ by \begin{equation}\label{HC3b}
[e_1,e_3]=[e_4,e_2]=e_1,\qquad [e_1,e_4]=[e_2,e_3]=e_2.
\end{equation} Then we compute covariant derivatives and the
nontrivial ones are \begin{equation}\label{HC3b-n}
\begin{array}{c}
\DDD_{e_1} e_1=\DDD_{e_2} e_2=e_3,\qquad
\DDD_{e_2} e_3=-\DDD_{e_4} e_1=e_2,\\[6pt]
\DDD_{e_1} e_3=\DDD_{e_4} e_2=e_1.
\end{array}
\end{equation}

By virtue of \eqref{HC3b}, \eqref{JJJ} and  \eqref{F'-al}, we obtain the following components of  $F_{\al}$:
\begin{equation}\label{HC3bFa}
\begin{aligned}
(F_{1})_{114}&=-(F_{1})_{123}=(F_{1})_{132}=-(F_{1})_{141}
\\[6pt]
&=(F_{1})_{213}=(F_{1})_{224}=-(F_{1})_{231}=-(F_{1})_{242}=-1,
\\[6pt]
(F_{2})_{111}&=(F_{2})_{133}=2,\\[6pt]
(F_{2})_{212}&=(F_{2})_{221}=(F_{2})_{234}=(F_{2})_{243}
\\[6pt]
&=-(F_{2})_{414}=(F_{2})_{423}=(F_{2})_{432}=-(F_{2})_{441}=1,
\\[6pt]
(F_{3})_{222}&=(F_{3})_{233}=2,\\[6pt]
-(F_{3})_{112}&=-(F_{3})_{121}=(F_{3})_{134}=(F_{3})_{143}
\\[6pt]
&=(F_{3})_{413}=(F_{3})_{424}=(F_{3})_{431}=(F_{3})_{442}=-1
\end{aligned}
\end{equation}
and the others are zero.
 The only non-zero components of the corresponding Lee forms are
\begin{equation}\label{HC3b-titi}
(\theta_1)_4=-2,\qquad (\theta_2)_1=(\theta_3)_2=4.
\end{equation}
Using the results in \eqref{HC3bFa},  \eqref{HC3b-titi}
and the classification conditions \eqref{cl-H}, \eqref{cl-N},
we obtain that the considered manifold belongs to the class \[
\W=\W_4(J_1)\allowbreak{}\cap\W_1(J_2)\cap\W_1(J_3).
\]
Remark that, according to \cite{GriManDim12}, necessary and sufficient
conditions a 4-dimensional almost hypercomplex manifold with Hermitian-Norden metrics to be
in the class $\W$ are:
\begin{equation}\label{titaJ}
\theta_2\circ J_2 =\theta_3\circ J_3 =-2\left(\theta_1\circ
J_1\right).
\end{equation}
These conditions are satisfied bearing in mind \eqref{HC3b-titi}.

Let us consider the class $\W^0=\left\{\W\;|\;\mathrm{d}\hspace{-3pt}\left(\theta_1\circ J_1\right)=0\right\}$,
which is the class of the (locally) conformally equivalent
$\KKK$-manifolds, where a conformal transformation of the metric is given by $\overline{g} = e^{2u}g$
for a differentiable function $u$ on the manifold.

Using \eqref{HC3b-titi} and \eqref{titaJ}, we establish that the considered manifold
belongs to the subclass $\W^0$.

\begin{prop}\label{prop-HC3b}
The hypercomplex manifold with Hermitian-Norden metrics on a 4-dimensional Lie algebra,
determined by \eqref{HC3b}, belongs to the class $\W^0$ of the (locally) conformally
equivalent $\KKK$-manifolds.
\end{prop}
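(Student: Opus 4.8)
The plan is to verify Proposition~\ref{prop-HC3b} by a direct computation that parallels the treatment in Case~1 and Case~2.1, and then invoke the structural characterisation already established in the excerpt. First I would compute the Levi-Civita connection $\DDD$ of $g$ from the commutators \eqref{HC3b}, using the Koszul formula \eqref{koszul} together with the metric \eqref{g}; this produces the nonzero covariant derivatives \eqref{HC3b-n}. The routine but essential step is then to substitute \eqref{HC3b-n}, the standard hypercomplex structure \eqref{JJJ}, and the definition \eqref{F'-al} of the fundamental tensors $F_\al(x,y,z)=g\bigl((\DDD_x J_\al)y,z\bigr)$ to obtain the components $(F_\al)_{ijk}$ listed in \eqref{HC3bFa}. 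Contracting these with $g^{ij}$ via \eqref{theta-al} yields the Lee-form components \eqref{HC3b-titi}.

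Next I would read off the class membership. Comparing \eqref{HC3bFa} and \eqref{HC3b-titi} against the $4$-dimensional classification conditions \eqref{cl-H} for $J_1$ and \eqref{cl-N} for $J_2,J_3$, one checks that $F_1$ matches the $\W_4(J_1)$ form with Lee form $\ta_1$, while $F_2$ and $F_3$ each match the $\W_1$ form of \eqref{cl-N}, so the manifold lies in $\W=\W_4(J_1)\cap\W_1(J_2)\cap\W_1(J_3)$. Here I would lean on the characterisation from \cite{GriManDim12} recorded in \eqref{titaJ}: membership in $\W$ for a $4$-dimensional almost hypercomplex manifold with Hermitian-Norden metrics is equivalent to $\theta_2\circ J_2=\theta_3\circ J_3=-2(\theta_1\circ J_1)$, and I would confirm this identity holds by plugging in \eqref{HC3b-titi} and the action \eqref{JJJ} of the $J_\al$.

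Finally, to upgrade the conclusion from $\W$ to the subclass $\W^0$, I would show $\D(\theta_1\circ J_1)=0$. The $1$-form $\theta_1\circ J_1$ is computed from \eqref{HC3b-titi} and \eqref{JJJ}; its exterior derivative is evaluated on basis pairs $(e_i,e_j)$ using the formula $\D\alpha(A,B)=A\alpha(B)-B\alpha(A)-\alpha([A,B])$ with the commutators \eqref{HC3b}. Because the group is left-invariant the directional-derivative terms vanish and only the bracket term survives, so the computation reduces to checking that $(\theta_1\circ J_1)([e_i,e_j])=0$ for all $i,j$; the specific form of \eqref{HC3b} makes this closure condition hold. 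By definition of $\W^0$ as $\{\W\mid \D(\theta_1\circ J_1)=0\}$, this places the manifold in $\W^0$, the class of manifolds locally conformally equivalent to $\KKK$-manifolds.

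The main obstacle is not conceptual but bookkeeping: the Case~2.1 commutators \eqref{HC3b} differ from those of Case~1, so the connection coefficients \eqref{HC3b-n} and hence the full tables \eqref{HC3bFa}, \eqref{HC3b-titi} must be recomputed carefully and the many sign conventions—$\ea$ in \eqref{epsiloni}, the anti-isometry signs in \eqref{FaJ-prop}, and the indefinite metric \eqref{g}—must be tracked consistently. The only genuinely delicate verification is the closedness $\D(\theta_1\circ J_1)=0$, where an error in a single bracket in \eqref{HC3b} would spuriously produce a nonzero component and misclassify the manifold outside $\W^0$.
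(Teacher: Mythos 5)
Your proposal follows essentially the same route as the paper: compute $\DDD$ via the Koszul formula, derive the components \eqref{HC3bFa} and Lee forms \eqref{HC3b-titi}, place the manifold in $\W=\W_4(J_1)\cap\W_1(J_2)\cap\W_1(J_3)$ via \eqref{cl-H}, \eqref{cl-N} and \eqref{titaJ}, and then verify $\D(\theta_1\circ J_1)=0$ to land in $\W^0$. The only difference is that you spell out the final closedness check (which indeed reduces to $(\theta_1\circ J_1)([e_i,e_j])=0$, satisfied since $\theta_1\circ J_1$ is proportional to $e^3$ and no bracket in \eqref{HC3b} has an $e_3$-component) in more detail than the paper, which states it without elaboration.
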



\vskip 0.2in \addtocounter{subsection}{1}

\noindent\Large{\textbf{\thesubsection. Hypercomplex structure of type (hc4)}}

\vskip 0.15in


In this case,   $\mathfrak{l}$ is solvable and the derived Lie algebra $\mathfrak{l}'$ is 3-dimensional and Abelian.

\vskip 0.15in

\noindent\Large{\textbf{\emph{\thesubsection.1. Case 1}}}\label{2.4.1.}

\vskip 0.1in


Firstly, we fix $e_1\in\mathfrak{l}$ with $g(e_1,e_1)=1$ as an
element ortho\-gonal to $\mathfrak{l}'$ with respect to $g$. Therefore $\mathfrak{l}$
is determined by \begin{equation}\label{HC4a} [e_1,e_2]=e_2,\qquad
[e_1,e_3]=e_3,\qquad [e_1,e_4]=e_4. \end{equation} Then we compute
covariant derivatives and the nontrivial ones are
\begin{equation}\label{HC4a-n}
\begin{array}{c}
\DDD_{e_2} e_1=-e_2, \qquad \DDD_{e_3} e_1=-e_3, \qquad \DDD_{e_4} e_1=-e_4, \\[6pt]
\DDD_{e_2} e_2=-\DDD_{e_3} e_3=-\DDD_{e_4} e_4=e_1.
\end{array}
\end{equation}

By similar computation as in the previous cases, the components
$(F_{\al})_{ijk}$ are as follows:
\begin{equation}\label{HC4aFa}
\begin{aligned}
(F_{1})_{314}&=-(F_{1})_{323}=(F_{1})_{332}=-(F_{1})_{341}=
\\[6pt]
&=-(F_{1})_{413}=-(F_{1})_{424}=(F_{1})_{431}=(F_{1})_{442}=1,
\\[6pt]
(F_{2})_{311}&=(F_{2})_{333}=-2,\\[6pt]
(F_{2})_{214}&=-(F_{2})_{223}=-(F_{2})_{232}=(F_{2})_{241}
\\[6pt]
&=(F_{2})_{412}=(F_{2})_{421}=(F_{2})_{434}=(F_{2})_{443}=-1,
\\[6pt]
(F_{3})_{411}&=(F_{3})_{444}=2,\\[6pt]
(F_{3})_{213}&=(F_{3})_{224}=(F_{3})_{231}=(F_{3})_{242}
\\[6pt]
&=(F_{3})_{312}=(F_{3})_{321}=-(F_{3})_{334}=-(F_{3})_{343}=-1
\end{aligned}
\end{equation}
and the others are zero.
 The only non-zero components of the corresponding Lee forms are
\begin{equation}\label{HC4a-titi}
(\theta_1)_2=-(\theta_2)_3=(\theta_3)_4=-2.
\end{equation}
The results in \eqref{HC4aFa}, \eqref{HC4a-titi} and the classification conditions
\eqref{cl-H}, \eqref{cl-N} imply

\begin{prop}\label{prop-HC4}
The hypercomplex manifold with Hermitian-Norden metrics on a 4-dimensional Lie algebra,
determined by \eqref{HC4a}, belongs to the largest class of the considered manifolds,
i.e. $\HCC$, as well as this manifold does not belong to neither $\W_1$ nor $\W_2$ for $J_2$ and $J_3$.
\end{prop}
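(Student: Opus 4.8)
The plan is to follow the same computational scheme employed for Propositions \ref{prop-HC2}--\ref{prop-HC3b}: reduce everything to the explicit components of $F_\al$ in the fixed basis $\{e_1,e_2,e_3,e_4\}$ and then read off the class from the classification conditions \eqref{cl-H} and \eqref{cl-N}. First I would compute the Levi-Civita connection $\DDD$ of $g$ from the commutators \eqref{HC4a} by means of the Koszul formula \eqref{koszul}; since the derived algebra $\mathfrak{l}'=\Span\{e_2,e_3,e_4\}$ is Abelian and $e_1$ is the $g$-orthogonal generator acting on $\mathfrak{l}'$, the only nontrivial covariant derivatives are those recorded in \eqref{HC4a-n}. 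Substituting \eqref{HC4a-n} and the standard hypercomplex structure \eqref{JJJ} into the definition \eqref{F'-al}, namely $F_\al(x,y,z)=g\bigl((\DDD_xJ_\al)y,z\bigr)$, yields the components \eqref{HC4aFa}; tracing these by $g^{ij}$ as in \eqref{theta-al} produces the Lee forms \eqref{HC4a-titi}, whose only nonzero components are $(\theta_1)_2=-(\theta_2)_3=(\theta_3)_4=-2$.

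With the components in hand, the membership claims become direct verifications. For $J_1$ I would confirm that $F_1$ satisfies the $\W_4(J_1)$ identity in \eqref{cl-H} with $4n-2=2$, using $(\theta_1)_2=-2$; this places the $J_1$-part in the Hermitian class $\W_4(J_1)$, the only admissible integrable Hermitian class in dimension $4$. For $J_2$ and $J_3$ I would invoke that the structure $H$ of type \textbf{(hc4)} is integrable by \thmref{thm-Barb}, so all Nijenhuis tensors $[J_\al,J_\al]$ vanish; hence $(\MM,J_\al,g)$ for $\al=2,3$ lies in the complex-Norden class $(\W_1\oplus\W_2)(J_\al)$. Intersecting the three memberships reproduces precisely the $4$-dimensional description of $\HCC$ recalled in \S\ref{par:4n}.

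The more delicate part is the negative assertion that the manifold lies in neither $\W_1$ nor $\W_2$ for $J_2$ and $J_3$. Ruling out $\W_2(J_\al)$ is immediate, since the $\W_2(J_\al)$ condition in \eqref{cl-N} demands $\theta_\al=0$, whereas $(\theta_2)_3=-2$ and $(\theta_3)_4=-2$ are nonzero. Ruling out $\W_1(J_\al)$ is the step that genuinely uses the tabulated components: I would substitute the Lee form \eqref{HC4a-titi} into the explicit $\W_1(J_\al)$ formula of \eqref{cl-N} and exhibit a single component for which the pure-$\W_1$ prediction disagrees with \eqref{HC4aFa} --- for instance $F_2(e_3,e_1,e_1)$, where the $\W_1$ expression (with $4n=4$) evaluates to $\tfrac14\bigl(g(e_3,J_2e_1)\theta_2(J_2e_1)+g(e_3,J_2e_1)\theta_2(J_2e_1)\bigr)=1$, in contradiction with the computed value $(F_2)_{311}=-2$. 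This obstruction, together with the $\W_2$ exclusion, forces both the $\W_1$- and $\W_2$-components of $F_\al$ to be nonzero, so the manifold sits properly inside the direct sum $(\W_1\oplus\W_2)(J_\al)$ and in no smaller basic class. I expect this final component-by-component comparison to be the main, though still routine, obstacle, as it requires matching the index conventions of \eqref{cl-N} carefully against the entries of \eqref{HC4aFa}.
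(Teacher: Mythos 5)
Your proposal follows exactly the paper's computational scheme: Koszul formula for $\DDD$, the components of $F_\al$ via \eqref{F'-al}, the Lee forms via \eqref{theta-al}, and then a direct check against the classification conditions \eqref{cl-H} and \eqref{cl-N} --- which is precisely all that the paper's one-line proof invokes. The only blemish is a sign slip in your sample obstruction: from \eqref{HC4a-titi} one has $(\theta_2)_3=+2$ and $g(e_3,J_2e_1)=g(e_3,e_3)=-1$, so the $\W_1(J_2)$ prediction for $(F_2)_{311}$ is $-1$ rather than $1$; since the computed value is $-2$, the mismatch and hence the exclusion of $\W_1(J_2)$ still stand.
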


\vskip 0.15in

\noindent\Large{\textbf{\emph{\thesubsection.2. Case 2}}}\label{2.4.2.}

\vskip 0.1in


Secondly, we choose $e_4\in\mathfrak{l}$ with $g(e_4,e_4)=-1$ as an
element orthogonal to $\mathfrak{l}'$ with respect to $g$. Therefore, in
this case $\mathfrak{l}$ is determined by \begin{equation}\label{HC4b}
[e_4,e_1]=e_1,\qquad [e_4,e_2]=e_2,\qquad [e_4,e_3]=e_3.
\end{equation} Therefore, the nontrivial covariant derivatives are
\begin{equation}\label{HC4b-n}
\begin{array}{c}
\DDD_{e_1} e_1=\DDD_{e_2} e_2=-\DDD_{e_3} e_3=-e_4, \\[6pt]
\DDD_{e_1} e_4=-e_1, \qquad \DDD_{e_2} e_4=-e_2, \qquad \DDD_{e_3}
e_4=-e_3.
\end{array}
\end{equation}

In a similar way we obtain:
\begin{subequations}\label{HC4bFa}
\begin{equation}
\begin{aligned}
(F_{1})_{113}&=(F_{1})_{124}=-(F_{1})_{131}=-(F_{1})_{142}=
\\[6pt]
&=-(F_{1})_{214}=(F_{1})_{223}=-(F_{1})_{232}=(F_{1})_{241}=-1,
\\[6pt]
(F_{2})_{222}&=(F_{2})_{244}=-2,
\end{aligned}
\end{equation}
\begin{equation}
\begin{aligned}
(F_{2})_{112}&=(F_{2})_{121}=(F_{2})_{134}=(F_{2})_{143}
\\[6pt]
&=(F_{2})_{314}=-(F_{2})_{323}=-(F_{2})_{332}=(F_{2})_{341}=-1,
\\[6pt]
(F_{3})_{111}&=(F_{3})_{144}=2,\\[6pt]
-(F_{3})_{212}&=-(F_{3})_{221}=(F_{3})_{234}=(F_{3})_{243}
\\[6pt]
&=(F_{3})_{313}=(F_{3})_{324}=(F_{3})_{331}=(F_{3})_{342}=-1
\end{aligned}
\end{equation}
\end{subequations}
and the others are zero.
 The only non-zero components of the corresponding Lee forms are
\begin{equation}\label{HC4b-titi}
(\theta_1)_3=-2,\qquad (\theta_2)_2=-(\theta_3)_1=-4.
\end{equation}

Then, analogously to Case 2 
on page~\pageref{2.3.2.}, we obtain the following
\begin{prop}\label{prop-HC4b}
The hypercomplex manifold with Hermitian-Norden metrics on a 4-dimensional Lie algebra,
determined by \eqref{HC4b}, belongs to the class $\W^0$ of the (locally) conformally
equivalent $\KKK$-manifolds.
\end{prop}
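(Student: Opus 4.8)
The plan is to mirror the argument used for the manifold of type (hc3) in Subsection~2.3.2 that culminated in \propref{prop-HC3b}. First I would feed the explicit nonzero components $(F_\al)_{ijk}$ listed in \eqref{HC4bFa} together with the Lee forms \eqref{HC4b-titi}, namely $\theta_1=-2e^3$, $\theta_2=-4e^2$ and $\theta_3=4e^1$, into the classification conditions \eqref{cl-H} for $J_1$ and \eqref{cl-N} for $J_2$ and $J_3$. Since $\dim\MM=4$, the Hermitian classes for $J_1$ reduce to $\W_2(J_1)$ and $\W_4(J_1)$, and a sample check such as $F_1(e_1,e_1,e_3)=\frac{1}{2}g(e_1,e_1)\theta_1(e_3)=-1$ already exhibits the $\W_4(J_1)$ pattern (all remaining terms in the $\W_4$-formula vanishing because $g(e_1,e_3)=g(e_1,J_1e_1)=g(e_1,J_1e_3)=0$); running the same substitution over all nonzero components verifies $(\MM,H,G)\in\W_4(J_1)$. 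Matching the $F_2$- and $F_3$-components against the $\W_1(J_\al)$ formula (with $4n=4$) then shows that the structure lies in $\W_1(J_2)\cap\W_1(J_3)$. Hence the manifold belongs to $\W=\W_4(J_1)\cap\W_1(J_2)\cap\W_1(J_3)$.

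As a consistency check for this membership I would verify the intrinsic characterization \eqref{titaJ}. Applying $J_\al$ via \eqref{JJJ} to the Lee forms gives $\theta_1\circ J_1=-2e^4$, $\theta_2\circ J_2=4e^4$ and $\theta_3\circ J_3=4e^4$, so indeed $\theta_2\circ J_2=\theta_3\circ J_3=-2(\theta_1\circ J_1)$, which confirms $(\MM,H,G)\in\W$ independently of the component-by-component computation.

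The decisive step is to show that the manifold actually lies in the subclass $\W^0=\{\W\mid \D(\theta_1\circ J_1)=0\}$ of the locally conformally $\KKK$-equivalent manifolds, i.e. that $\theta_1\circ J_1=-2e^4$ is closed. Here I would use the Maurer--Cartan relation $\D\alpha(x,y)=-\alpha([x,y])$ valid for a left-invariant $1$-form $\alpha$, so that $\D e^4(e_i,e_j)=-e^4([e_i,e_j])$. Inspecting the brackets \eqref{HC4b}, every nonzero commutator $[e_4,e_k]=e_k$ $(k=1,2,3)$ has no $e_4$-component, whence $e^4([e_i,e_j])=0$ for all $i,j$ and $\D e^4=0$. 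Therefore $\D(\theta_1\circ J_1)=0$ and the manifold belongs to $\W^0$, as claimed.

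I do not expect a genuine obstacle here: once the structure constants \eqref{HC4b} and the induced data \eqref{HC4b-n}--\eqref{HC4b-titi} are in hand, all verifications are routine substitutions. The only point requiring care is respecting the convention that $\theta_\al\circ J_\al$ denotes the $1$-form $x\mapsto\theta_\al(J_\al x)$, since this is exactly the object whose closedness separates $\W^0$ from the ambient class $\W$.
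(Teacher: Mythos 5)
Your proposal is correct and follows essentially the same route as the paper, which simply declares the result ``analogously to Case 2'' of type (hc3): identify the class $\W=\W_4(J_1)\cap\W_1(J_2)\cap\W_1(J_3)$ from the components \eqref{HC4bFa} and the Lee forms \eqref{HC4b-titi}, confirm via \eqref{titaJ}, and then pass to $\W^0$. Your computations ($\theta_1\circ J_1=-2e^4$, $\theta_2\circ J_2=\theta_3\circ J_3=4e^4$, and $\D e^4=0$ since no bracket in \eqref{HC4b} has an $e_4$-component) all check out, and the explicit Maurer--Cartan verification of $\D(\theta_1\circ J_1)=0$ is a welcome spelling-out of a step the paper leaves implicit.
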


\vskip 0.2in \addtocounter{subsection}{1}

\noindent\Large{\textbf{\thesubsection. Hypercomplex structure of type (hc5)}}

\vskip 0.15in

In this case,   $\mathfrak{l}$ is solvable and $\mathfrak{l}'$ is a 3-dimensional Heisenberg algebra.

\vskip 0.15in

\noindent\Large{\textbf{\emph{\thesubsection.1. Case 1}}}

\vskip 0.1in


Firstly, we fix $e_1\in\mathfrak{l}$ with $g(e_1,e_1)=1$ as an
element ortho\-go\-nal to $\mathfrak{l}'$ with respect to $g$. Then $\mathfrak{l}$ is
determined by
\begin{equation}\label{HC5a}
\begin{array}{ll}
  [e_1,e_2]=e_2,\quad\quad & [e_1,e_3]=\dfrac{1}{2}e_3, \\[9pt]
  [e_1,e_4]=\dfrac{1}{2}e_4,\quad\quad & [e_3,e_4]=\dfrac{1}{2}e_2.
\end{array}
\end{equation}
Then we compute covariant
derivatives and the nontrivial ones are
\begin{equation}\label{HC5a-n}
\begin{aligned}
\DDD_{e_2} e_2=-2\DDD_{e_3} e_3=-2\DDD_{e_4} e_4=e_1,& \\[6pt]
-\DDD_{e_2} e_1=4\DDD_{e_3} e_4=-4\DDD_{e_4} e_3=e_2,& \\[6pt]
-4\DDD_{e_2} e_4=-2\DDD_{e_3} e_1=-4\DDD_{e_4} e_2=e_3,& \\[6pt]
4\DDD_{e_2} e_3=4\DDD_{e_3} e_2=-2\DDD_{e_4} e_1=e_4.&
\end{aligned}
\end{equation}

Analogously of the previous cases, we obtain the
components $(F_{\al})_{ijk}$ as follows:
\begin{subequations}\label{HC5aFa}
\begin{equation}
\begin{aligned}
(F_{1})_{314}&=-(F_{1})_{323}=(F_{1})_{332}=-(F_{1})_{341}=
\\[6pt]
&=-(F_{1})_{413}=-(F_{1})_{424}=(F_{1})_{431}=(F_{1})_{442}=\dfrac{1}{4},
\end{aligned}
\end{equation}
\begin{equation}
\begin{aligned}
(F_{2})_{214}&=-(F_{2})_{223}=-(F_{2})_{232}=(F_{2})_{241}=-\dfrac{5}{4},
\\[6pt]
(F_{2})_{311}&=-2(F_{2})_{322}=(F_{2})_{333}=-2(F_{2})_{344}=-1,\\[6pt]
(F_{2})_{412}&=(F_{2})_{421}=(F_{2})_{434}=(F_{2})_{443}=-\dfrac{3}{4},
\\[6pt]
(F_{3})_{213}&=(F_{3})_{224}=(F_{3})_{231}=(F_{3})_{242}=-\dfrac{5}{4},
\\[6pt]
(F_{3})_{312}&=(F_{3})_{321}=-(F_{3})_{334}=-(F_{3})_{343}=-\dfrac{3}{4},
\\[6pt]
(F_{3})_{411}&=-2(F_{3})_{422}=-2(F_{3})_{433}=(F_{3})_{444}=1
\end{aligned}
\end{equation}
\end{subequations}
and the others are zero.
 The only non-zero components of the corresponding Lee forms are
\begin{equation}\label{HC5a-titi}
(\theta_1)_2=-\dfrac{1}{2},\qquad
(\theta_2)_3=-(\theta_3)_4=3.
\end{equation}

The results in \eqref{HC5aFa}, \eqref{HC5a-titi} and the classification conditions
\eqref{cl-H}, \eqref{cl-N} imply

\begin{prop}\label{prop-HC5a}
The hypercomplex manifold with Hermitian-Norden metrics on a 4-dimensional Lie algebra,
determined by \eqref{HC5a}, belongs to the largest class of the considered manifolds,
i.e. $\HCC$, as well as this manifold does not belong to neither $\W_1$ nor $\W_2$ for $J_2$ and $J_3$.
\end{prop}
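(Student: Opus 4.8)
The plan is to follow verbatim the computational template used for the types (hc2)--(hc4) above. First I would recover the Levi-Civita connection of $g$ from the commutators \eqref{HC5a} by means of the Koszul formula \eqref{koszul}, obtaining the nontrivial covariant derivatives \eqref{HC5a-n}; here one must be careful with the factors $\frac12$ in the last three brackets of \eqref{HC5a}, since they propagate into every subsequent quantity. Substituting \eqref{HC5a-n} together with the standard hypercomplex structure \eqref{JJJ} into the definition \eqref{F'-al} then yields the components $(F_\al)_{ijk}$ listed in \eqref{HC5aFa}, and contracting with $g$ according to \eqref{theta-al} produces the Lee forms \eqref{HC5a-titi}.

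With these data in hand, the class membership is read off from the classification conditions \eqref{cl-H} and \eqref{cl-N}. Recall that in dimension $4$ the largest class is
\[
\HCC=\W_4(J_1)\cap\left(\W_1\oplus\W_2\right)(J_2)\cap\left(\W_1\oplus\W_2\right)(J_3).
\]
For $J_1$ I would check that the components $(F_1)_{ijk}$ in \eqref{HC5aFa} coincide with the right-hand side of the $\W_4(J_1)$ formula in \eqref{cl-H} evaluated at $n=1$ with the Lee form $\theta_1$ from \eqref{HC5a-titi}; since $\theta_1\neq 0$ this simultaneously rules out the only other dimension-$4$ Hermitian class $\W_2(J_1)$. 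For $\al=2,3$ I would verify that each $F_\al$ satisfies the integrability identity $\sx F_\al(x,y,J_\al z)=0$, which is exactly the defining condition of $\left(\W_1\oplus\W_2\right)(J_\al)$; this places the structure in $\HCC$ and, equivalently, confirms that $H$ is a genuine hypercomplex structure.

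It remains to separate $\W_1$ and $\W_2$ for $\al=2,3$. Since \eqref{HC5a-titi} gives $\theta_2,\theta_3\neq 0$, the manifold cannot lie in $\W_2(J_2)$ or $\W_2(J_3)$, both of which require a vanishing Lee form. To exclude $\W_1(J_\al)$ I would substitute the actual Lee form into the explicit $\W_1(J_\al)$ expression in \eqref{cl-N} and compare the resulting components with \eqref{HC5aFa}; the mismatch --- for instance the $\W_1$-expression forces $F_2(e_3,e_1,e_1)=-\frac32$ whereas \eqref{HC5aFa} gives $(F_2)_{311}=-1$, and analogously for $(F_3)_{411}$ --- shows that the $\W_2$-component of $F_\al$ is nonzero, so the structure lies properly in $\W_1\oplus\W_2$ and in neither summand alone.

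The conceptual content is light, mirroring Propositions~\ref{prop-HC2}--\ref{prop-HC4b}, so I expect no genuine obstacle beyond the bookkeeping. The main practical difficulty is the error-prone tracking of signs and coefficients in passing from \eqref{HC5a} through \eqref{HC5a-n} to \eqref{HC5aFa}, aggravated by the half-integer structure constants. A useful consistency check throughout is the structural symmetry \eqref{FaJ-prop}, namely $F_\al(x,y,z)=-\ea F_\al(x,z,y)=-\ea F_\al(x,J_\al y,J_\al z)$, together with the interrelations \eqref{F1F2F3} among $F_1,F_2,F_3$, which any correct list of components must respect.
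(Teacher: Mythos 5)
Your proposal is correct and follows essentially the same route as the paper: the paper's proof is exactly the computational template of the preceding (hc2)--(hc4) cases, namely Koszul formula to get \eqref{HC5a-n}, substitution into \eqref{F'-al} to get \eqref{HC5aFa} and \eqref{HC5a-titi}, and then comparison with the classification conditions \eqref{cl-H}, \eqref{cl-N} (with $\ta_1\neq0$ placing the structure in $\W_4(J_1)$, $\ta_2,\ta_3\neq0$ excluding $\W_2(J_\al)$, and the mismatch with the explicit $\W_1(J_\al)$ expression excluding $\W_1(J_\al)$ for $\al=2,3$). Your spot-check $(F_2)_{311}=-1$ versus the $\W_1$-value $-\tfrac32$ is consistent with the listed components, so no gap.
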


\vskip 0.15in

\noindent\Large{\textbf{\emph{\thesubsection.2. Case 2}}}

\vskip 0.1in


The other possibility is to choose $e_4\in\mathfrak{l}$ with
$g(e_4,e_4)=-1$ as an element orthogonal to $\mathfrak{l}'$ with respect to
$g$. We rearrange the basis in \eqref{HC5a} and then $\mathfrak{l}$ is
determined by
\begin{equation}\label{HC5b}
\begin{aligned}
&[e_1,e_2]=-\dfrac{1}{2}e_3,\quad\quad [e_1,e_4]=-\dfrac{1}{2}e_1, \quad\\[6pt]
&[e_2,e_4]=-\dfrac{1}{2}e_2,\quad\quad [e_3,e_4]=-e_3.
\end{aligned}
\end{equation}

By similar computations we establish the same statement as of \propref{prop-HC5a}
for the Heisenberg algebra introduced by \eqref{HC5b}.

\vspace{20pt}

\begin{center}
$\divideontimes\divideontimes\divideontimes$
\end{center}

\newpage

\addtocounter{section}{1}\setcounter{subsection}{0}\setcounter{subsubsection}{0}

\setcounter{thm}{0}\setcounter{dfn}{0}\setcounter{equation}{0}

\label{par:bund}

 \Large{

\
\\[6pt]
\bigskip

\
\\[6pt]
\bigskip

\lhead{\emph{Chapter II $|$ \S\thesection. Tangent bundles with complete lift of the base metric and almost
hypercomplex
\ldots
}}


\noindent
\begin{tabular}{r"l}
\hspace{-6pt}{\Huge\bf \S\thesection.}  & {\Huge\bf Tangent bundles with complete} \\[12pt]
                             & {\Huge\bf lift of the base metric and} \\[12pt]
                             & {\Huge\bf almost hypercomplex structure}\\[12pt]
                             & {\Huge\bf with Hermitian-Nor\-den metrics}
\end{tabular}

\vskip 1cm

\begin{quote}
\begin{large}
In the present section, the tangent bundle of an almost Norden manifold and the complete
lift of the Norden metric are considered as a $4n$-dimensional manifold. It is
equipped with an almost hypercomplex Hermitian-Norden structure.
It is characterized geometrically. The case when the base manifold
is an h-sphere is considered.

The main results of this section are published in \cite{Man47}.
\end{large}\end{quote}

%

\vskip 0.15in

The investigation of the tangent bundle $T\MM$ of a manifold $\MM$
helps us to study the manifold $\MM$. Moreover, $T\MM$ has own
structure closely related to the structure of $\MM$, which implies
mutually related geometric properties.

%

In this section we consider the following situation: it is given a base almost
Norden manifold and we study its tangent bundle equipped with a
metric, which is the complete lift of the base metric. Thus, we
get a manifold with an almost hypercomplex
structure and Hermitian-Norden metrics which we characterize.

Similar investigations are made in \cite{Ma05}. There, it is used the
diagonal lift of the base metric (known as a Sasaki metric) on the
tangent bundle. The almost hypercomplex
structure with Hermitian-Norden metrics is generated in the same manner.

Our goal is to determine an almost hypercomplex  structure with Her\-mit\-ian-Norden metrics $(H,G)$ on
$T\MM$ when the base manifold $\MM$ has an almost Norden structure
$(J,g,\widetilde{g})$.

We use the horizontal and vertical lifts of the vector fields on $\MM$ to
get the corresponding components of the considered tensor fields on $T\MM$. These
components are sufficient to describe the characteristic tensor fields on $T\MM$
in general.

\vskip 0.2in \addtocounter{subsection}{1}

\noindent  {\Large\bf{\thesubsection. Almost hypercomplex structure on the tangent bundle}}

\vskip 0.15in

It is well known  \cite{YaIs}, for any affine connection on
$\MM$, the induced horizontal and vertical distributions of $T\MM$ are
mutually complementary. Then we define tensor fields $J_1$, $J_2$
and $J_3$ on $T\MM$ by their action over the horizontal and vertical
lifts of
an arbitrary vector field on $\MM$:
\begin{equation}\label{H}
\begin{array}{l}
J_1:
\left\{
\begin{aligned}
    X'&\rightarrow -(JX)'\\[6pt]
    X''&\rightarrow (JX)''
\end{aligned}\qquad
\right. \\[20pt]
J_2: \left\{
\begin{aligned}
    X'&\rightarrow X''\\[6pt]
    X''&\rightarrow -X'
\end{aligned}\qquad
\right. \\[20pt]
J_3: \left\{
\begin{aligned}
    X'&\rightarrow (JX)''\\[6pt]
    X''&\rightarrow (JX)'
\end{aligned}
\qquad
\right.
\end{array}
\end{equation}
where $J$ is the given almost complex structure on $\MM$; moreover, we denote the horizontal and vertical lifts by
$X', X'' \in \mathfrak{X}(T\MM)$ of any $X\in \mathfrak{X}(\MM)$ at $u\in T_p \MM$, $p\in\MM$, where an affine
connection $\DDD$ on $\MM$ is used.

By direct computations we get the following

\begin{prop}
There exists an almost hypercomplex structure $H$, defined
by~\eqref{H} on $T\MM$ over an almost complex manifold $(\MM,J)$ with
an affine connection $\DDD$. The constructed $4n$-dimensional
manifold is an almost hypercomplex manifold $(T\MM,H)$.
\end{prop}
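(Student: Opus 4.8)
The plan is to verify that the three tensor fields $J_1$, $J_2$, $J_3$ defined by \eqref{H} form an almost hypercomplex structure, i.e. that they satisfy the defining relations \eqref{J123}. Since each $J_\al$ is prescribed only on the horizontal and vertical lifts $X'$, $X''$, and since at each point $u\in T_p\MM$ the horizontal and vertical subspaces span $T_u(T\MM)$, it suffices to check all the required identities on these two types of lifts. First I would record the elementary facts that $(JX)'$ is again a horizontal lift and $(JX)''$ a vertical lift, together with the basic property $J^2X=-X$ of the base almost complex structure, which will be applied repeatedly.

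The core of the proof is a direct verification split into two parts. First I would check that each $J_\al$ is an almost complex structure, namely $J_\al^2=-\Id$ on $T(T\MM)$. For $J_2$ this is immediate from \eqref{H}: $J_2^2 X'=J_2 X''=-X'$ and $J_2^2 X''=-J_2 X'=-X''$. For $J_1$ and $J_3$ the verification uses $J^2=-I$; for instance $J_1^2 X'=-J_1(JX)'=(J^2X)'=-X'$ and $J_1^2 X''=J_1(JX)''=(J^2X)''=-X''$, and analogously for $J_3$. Second, I would verify the product and anticommutation relations $J_\al=J_\bt\circ J_\gm=-J_\gm\circ J_\bt$ for each cyclic permutation $(\al,\bt,\gm)$ of $(1,2,3)$. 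Taking $J_1=J_2\circ J_3$ as a representative case, I would compute $J_2 J_3 X'=J_2(JX)''=-(JX)'$, which equals $J_1X'$, and $J_2 J_3 X''=J_2(JX)'=(JX)''=J_1X''$; then $J_3 J_2 X'=J_3 X''=(JX)'=-J_1X'$, confirming the anticommutation. The remaining two cyclic cases, $J_2=J_3\circ J_1$ and $J_3=J_1\circ J_2$, follow by the same bookkeeping.

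These computations are entirely mechanical once the sign conventions in \eqref{H} are fixed, so there is no deep obstacle; the only point demanding care is the careful tracking of signs and of which slot (horizontal or vertical) each lift occupies after applying $J$. The mild subtlety worth flagging is that the definitions \eqref{H} involve the affine connection $\DDD$ only through the horizontal--vertical splitting of $T(T\MM)$, so the tensorial (pointwise) nature of the $J_\al$ must be confirmed — but since each $J_\al$ is defined fibrewise by its action on a basis of $T_u(T\MM)$ adapted to the splitting, this is automatic and independent of the particular frame. Having established \eqref{J123} on lifts, and hence on all of $T(T\MM)$ by linearity, I would conclude that $(T\MM, H)$ with $H=(J_1,J_2,J_3)$ is a $4n$-dimensional almost hypercomplex manifold, which is exactly the assertion of the proposition.
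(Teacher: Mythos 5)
Your proposal is correct and matches the paper's approach exactly: the paper offers no written argument beyond the phrase ``By direct computations we get the following,'' and the computation it has in mind is precisely your fibrewise verification of $J_\al^2=-\Id$ and of $J_\al=J_\bt\circ J_\gm=-J_\gm\circ J_\bt$ on horizontal and vertical lifts. Your sign bookkeeping in the representative cases checks out against the definitions in~\eqref{H}.
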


Following \eqref{N_al}, let $[J_\alpha,J_\alpha]$ denote the Nijenhuis tensor of $J_\alpha$ for each
$\alpha$ and $\wh{X},\wh{Y}\in \mathfrak{X}(T\MM)$, i.e.
\begin{equation}\label{Na-def}
[J_\alpha,J_\alpha](\wh{X},\wh{Y})=[J_\alpha\wh{X},J_\alpha\wh{Y}]
-J_\alpha[J_\alpha\wh{X},\wh{Y}] -J_\alpha[\wh{X},J_\alpha\wh{Y}]
-[\wh{X},\wh{Y}].
\end{equation}

If $\DDD$ is torsion-free and its curvature tensor is denoted by
$R$, then we have the following (see also \cite{YaIs})
\begin{equation}\label{l[]}
    \begin{array}{l}
    [X',Y']=[X,Y]'-\{R(X,Y)u\}'', \\[6pt]
    [X',Y'']=\left(\DDD_XY\right)'',\\[6pt]
    [X'',Y']=-\left(\DDD_YX\right)'',\\[6pt]
    [X'',Y'']=0.
   \end{array}
\end{equation}

Using  \eqref{H}, \eqref{Na-def} and \eqref{l[]}, we get
\begin{prop}\label{Na}
Let $(\MM,J)$ be an almost complex manifold with Nijenhuis tensor $[J,J]$, a torsion-free
affine connection $\DDD$ and its curvature tensor $R$. Then the
Nijenhuis tensors of the structure $H$ on $T\MM$ for the
corresponding horizontal and vertical lifts have the following
form
\begin{equation*}\label{N_1}
\begin{array}{l}
[J_1,J_1](X',Y')=\left([J,J](X,Y)\right)'\\[6pt]
\phantom{[J_1,J_1](X',Y')=}
-\bigl(R(JX,JY)u+JR(JX,Y)u\\[6pt]
\phantom{[J_1,J_1](X',Y')=-\bigl(}
+JR(X,JY)u-R(X,Y)u\bigr)'',\\[6pt]
[J_1,J_1](X',Y'')=-\bigl(\left(\DDD_{JX}J\right)(Y)-\left(\DDD_{X}J\right)(JY)\bigr)'',\\[6pt]
[J_1,J_1](X'',Y')=-\bigl(\left(\DDD_{Y}J\right)(JX)-\left(\DDD_{JY}J\right)(X)\bigr)'',\\[6pt]
[J_1,J_1](X'',Y'')=0;
\end{array}
\end{equation*}
\begin{equation*}\label{N_2}
\begin{array}{l}
[J_2,J_2](X',Y')=-[J_2,J_2](X'',Y'')=\bigl(R(X,Y)u\bigr)'',\\[6pt]
[J_2,J_2](X',Y'')=[J_2,J_2](X'',Y')=\bigl(R(X,Y)u\bigr)';\\[6pt]
\end{array}
\end{equation*}
\begin{equation*}\label{N_3}
\begin{array}{l}
[J_3,J_3](X',Y')=-\bigl(J\left(\DDD_{X}J\right)(Y)
-J\left(\DDD_{Y}J\right)(X)\bigr)'\\[6pt]
\phantom{[J_3,J_3](X',Y')=}
+\bigl(R(X,Y)u\bigr)'', \\[6pt]
[J_3,J_3](X',Y'')=-\bigl(J\left(\DDD_{X}J\right)(Y)
+\left(\DDD_{JY}J\right)(X)\bigr)''\\[6pt]
\phantom{[J_3,J_3](X',Y'')=}
+\bigl(JR(X,JY)u\bigr)', \\[6pt]
%
[J_3,J_3](X'',Y')=\bigl(\left(\DDD_{JX}J\right)(Y)
+J\left(\DDD_{Y}J\right)(X)\bigr)''\\[6pt]
\phantom{[J_3,J_3](X'',Y')=}
+\bigl(JR(JX,Y)u\bigr)', \\[6pt]
[J_3,J_3](X'',Y'')=\bigl(\left(\DDD_{JX}J\right)(Y)
-\left(\DDD_{JY}J\right)(X)\bigr)'\\[6pt]
\phantom{[J_3,J_3](X'',Y'')=}
-\bigl(R(JX,JY)u\bigr)''. \\[6pt]
\end{array}
\end{equation*}
\end{prop}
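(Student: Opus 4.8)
The plan is to prove the stated formulas by a direct expansion of the Nijenhuis tensor from its definition \eqref{Na-def}, pushing every computation down to the base manifold via the lift rules \eqref{H} and the bracket formulas \eqref{l[]}. For a fixed $\al$ and a fixed pair of lifts (one of the four types $(X',Y')$, $(X',Y'')$, $(X'',Y')$, $(X'',Y'')$), I would first apply $J_\al$ to each argument according to \eqref{H}, then evaluate each of the four Lie brackets appearing in \eqref{Na-def} by means of \eqref{l[]}, and finally apply $J_\al$ once more to the bracket terms that carry an outer factor $J_\al$. Throughout I would use the defining identity $\left(\DDD_x J\right)y = \DDD_x(Jy) - J\left(\DDD_x y\right)$ to recognize covariant derivatives of $J$, and $J^2=-I$ to simplify repeated applications.

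The order of the computation would be to treat $J_2$ first, since it involves no factor of $J$ on the arguments and the vanishing bracket $[X'',Y'']=0$ collapses most terms. Here the horizontal contributions reduce to $\DDD_X Y - \DDD_Y X - [X,Y]$, which vanishes because $\DDD$ is torsion-free, leaving only the curvature terms $\bigl(R(X,Y)u\bigr)'$ and $\bigl(R(X,Y)u\bigr)''$; this settles the $J_2$ block and pins down the sign conventions. I would next handle $J_1$: applying \eqref{H} sends each $X'$ to $-(JX)'$ and each $X''$ to $(JX)''$, so the purely horizontal bracket $[(JX)',(JY)']$ produces, after collecting the four horizontal brackets, both the base Nijenhuis tensor $[J,J](X,Y)$ and a curvature correction built from $R(JX,JY)u$, $R(JX,Y)u$, $R(X,JY)u$, $R(X,Y)u$, while the mixed brackets contribute the vertical terms involving $\left(\DDD_{JX}J\right)Y$ and $\left(\DDD_X J\right)(JY)$.

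Finally I would treat $J_3$, which is the most laborious case: because $J_3$ interchanges the horizontal and vertical distributions while inserting $J$, every one of the four bracket types produces a nonzero result, and both the covariant-derivative-of-$J$ terms and the curvature terms appear simultaneously in the horizontal and vertical components. The main obstacle is precisely the bookkeeping in this $J_3$ block: one must keep careful track of which lift each term lands in, of the signs introduced by $J^2=-I$ and by the asymmetry $[X'',Y']=-(\DDD_Y X)''$ versus $[X',Y'']=(\DDD_X Y)''$, and of the curvature symmetries used to combine the four $R(\cdot,\cdot)u$ contributions. Once these are assembled and the torsion-free identity is invoked to cancel the spurious symmetric parts, the four stated expressions for $[J_3,J_3]$ follow, completing the proposition.
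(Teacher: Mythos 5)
Your proposal is correct and follows essentially the same route as the paper, which obtains the proposition precisely by expanding the definition \eqref{Na-def} with the lift rules \eqref{H} and the bracket formulas \eqref{l[]}, then recognizing covariant derivatives of $J$ and invoking torsion-freeness. The details you sketch (e.g.\ the cancellation $\DDD_XY-\DDD_YX-[X,Y]=0$ in the $J_2$ block and the identification of $(\DDD_{JX}J)Y$ and $(\DDD_XJ)(JY)$ in the mixed $J_1$ terms) check out against the stated formulas.
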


The last equalities for $[J_\alpha,J_\alpha]$ imply the following necessary
and sufficient conditions for the integrability of $J_\alpha$ and
$H$.
\begin{thm}\label{thm:Na=0}
Let $T\MM$ be the tangent bundle manifold equipped with an almost
hypercomplex structure $H=(J_1,J_2,J_3)$ defined as in~\eqref{H}.
Let also $\MM$ be its base manifold with the almost complex
structure $J$. We additionally assume that the affine connection
used for $H$ be torsion-free. Then the following relations hold:
\begin{enumerate}
    \item $(T\MM,J_\alpha)$ for $\alpha=1$ or $3$ is
complex if and only if $\MM$ is flat and $J$ is parallel; 
    \item $(T\MM,J_2)$  is complex if and only if $\MM$ is flat;
    \item $(T\MM,H)$ is hypercomplex if and only if $\MM$ is flat and $J$ is parallel.
\end{enumerate}
\end{thm}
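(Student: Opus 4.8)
The plan is to read off the integrability conditions directly from the explicit formulas for the three Nijenhuis tensors supplied in \propref{Na}, since by the definition recalled in \eqref{N_al} each $(T\MM,J_\alpha)$ is complex exactly when $[J_\alpha,J_\alpha]$ vanishes on all pairs of vector fields, and $H$ is hypercomplex exactly when all three $[J_\alpha,J_\alpha]$ vanish. Because every tangent vector field on $T\MM$ decomposes into horizontal and vertical lifts and the Nijenhuis tensor is tensorial, it suffices to test the vanishing on the lift-arguments $X',Y',X'',Y''$ appearing in \propref{Na}. So the whole proof is a matter of setting each block of formulas equal to zero and interpreting the resulting conditions on the base data $(J,\DDD,R)$.

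First I would treat statement (ii), which is the cleanest. From \propref{Na}, the tensor $[J_2,J_2]$ is built purely out of $\bigl(R(X,Y)u\bigr)'$ and $\bigl(R(X,Y)u\bigr)''$; all four blocks vanish for arbitrary $X,Y$ (and arbitrary $u$) if and only if $R=0$, i.e. $\MM$ is flat. Here I would note that the horizontal and vertical lifts are linearly independent in $T(T\MM)$, so a sum of a horizontal lift and a vertical lift vanishes only when each summand does; this is the elementary fact that lets me separate the conditions. Next I would handle statement (i) for $\alpha=1$: the block $[J_1,J_1](X',Y'')=-\bigl((\DDD_{JX}J)Y-(\DDD_XJ)(JY)\bigr)''$ and its companion force $\DDD J=0$ (take $Y$ suitably, or use that $\DDD J=0$ is equivalent to the vanishing of both mixed blocks via \eqref{F'-prop}-type symmetries), after which the curvature block $[J_1,J_1](X',Y')$ collapses — once $\DDD J=0$ the Nijenhuis tensor $[J,J]$ of the base vanishes automatically and $JR(JX,Y)u+JR(X,JY)u$ simplifies — leaving exactly the requirement $R=0$. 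The symmetric argument runs for $\alpha=3$ using the four blocks of $[J_3,J_3]$, where the primed parts again give $\DDD J=0$ and the double-primed parts then give flatness.

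For statement (iii) I would invoke the characterisation stated earlier in the excerpt, namely that $H$ is hypercomplex if and only if two of the three $[J_\alpha,J_\alpha]$ vanish. Thus I only need $[J_2,J_2]=0$ together with, say, $[J_1,J_1]=0$; by (ii) the first gives $\MM$ flat and by (i) the second gives $\MM$ flat and $J$ parallel, so the conjunction is precisely ``$\MM$ flat and $J$ parallel.'' Conversely, if $\MM$ is flat and $\DDD J=0$, every block in all three lists of \propref{Na} is manifestly zero, so $H$ is integrable. This closes the equivalence in both directions.

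The main obstacle I anticipate is purely bookkeeping rather than conceptual: in the $\alpha=1,3$ cases the curvature blocks and the $\DDD J$ blocks are entangled, so I must argue in the correct order — first extract $\DDD J=0$ from the mixed (horizontal-vertical) blocks, and only then substitute back to see that the remaining horizontal-horizontal block reduces to the flatness condition $R=0$ rather than to some weaker curvature identity. I would justify that after imposing $\DDD J=0$ the surviving curvature expression $R(JX,JY)u+JR(JX,Y)u+JR(X,JY)u-R(X,Y)u$ forces $R=0$ by evaluating on enough independent arguments (using that $J$ is an isomorphism and $u$ is arbitrary), taking care that no curvature symmetry lets a nonzero $R$ slip through. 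Everything else is a direct reading of the formulas and the independence of the lifts.
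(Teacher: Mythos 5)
Your overall strategy is the intended one: the theorem is meant to be read off from the blocks of \propref{Na}, using that a horizontal plus a vertical lift vanishes only if both summands do, and that $u$ ranges over the whole fibre. Statement (ii) is handled correctly, and (iii) follows from (i) and (ii) as you say. The gap is in part (i), in both of the deductions you describe. The two mixed blocks of $[J_1,J_1]$ do \emph{not} by themselves force $\DDD J=0$: since $[J_1,J_1](X'',Y')=-[J_1,J_1](Y',X'')$, they encode the single condition $(\DDD_{JX}J)Y=-J(\DDD_XJ)Y$, and no choice of $Y$ extracts more. At a point this is a proper linear condition with large kernel on the value of $\DDD J$ (an arbitrary tensor anticommuting with $J$ in its output slot), so no pointwise manipulation closes it; for a general torsion-free $\DDD$ one can in fact integrate $\partial_2J=-J\partial_1J$ on $(\R^2,\partial)$ from non-constant initial data and obtain a non-parallel $J$ for which \emph{all four} blocks of $[J_1,J_1]$ vanish. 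The step therefore needs the hypothesis, implicit throughout \S11, that $\DDD$ is the Levi-Civita connection of the Norden metric $g$ on the base. Your parenthetical fallback is the correct route and should be the main argument: writing $F(x,y,z)=g((\DDD_xJ)y,z)$, the mixed-block condition reads $F(Jx,y,z)=-F(x,y,Jz)$, and combining it with the two identities of \eqref{F'-prop} gives
\[
F(Jx,y,z)=F(Jx,z,y)=-F(x,z,Jy)=-F(x,Jy,z)=-F(Jx,y,z),
\]
whence $F=0$. The symmetry $F(x,y,z)=F(x,z,y)$ is indispensable here, and it is a metric fact, not an affine one.

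The same caveat defeats your plan for flatness when $\al=1$: the identity $R(JX,JY)+JR(JX,Y)+JR(X,JY)=R(X,Y)$ is satisfied identically by nonzero curvature operators commuting with $J$ (the round $2$-sphere with its Hermitian $J$ is an example), so "evaluating on enough independent arguments" cannot yield $R=0$; the sign in $g(J\cdot,J\cdot)=-g(\cdot,\cdot)$ is what saves the statement. Concretely: once $\DDD J=0$, the pair symmetry of $R$ and the Norden condition give $R(JX,JY)=-R(X,Y)$ and $R(JX,Y)=R(X,JY)$, so the block collapses to $R(JX,Y)=-JR(X,Y)$; substituting this into the first Bianchi identity written for the triple $(JX,Y,Z)$ and adding the Bianchi identity for $(X,Y,Z)$ yields $2R(Y,Z)X=0$. (For $\al=3$ none of this is needed for flatness, since the vertical part of the $(X',Y')$ block is $(R(X,Y)u)''$ outright; that case is easier than $\al=1$, not symmetric to it.) With these two repairs, and with the standing assumption that $\DDD$ is the Levi-Civita connection of the Norden metric made explicit, your argument is complete.
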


\begin{rem}
The assertion (ii) above is a corollary of the theorem of
Dombrowski in \cite{Dom}, where the structure $J_2$ is defined and
studied.
\end{rem}

\begin{cor}
\begin{enumerate}
    \item $(T\MM,J_1)$ is complex if and only if $(T\MM,J_3)$ is complex.
    \item If $(T\MM,J_1)$ or $(T\MM,J_3)$ is complex then $(T\MM,H)$ is hypercomplex.
\end{enumerate}
\end{cor}

\vskip 0.2in \addtocounter{subsection}{1}

\noindent  {\Large\bf{\thesubsection. Complete lift of the base metric on the tangent bundle}}

\vskip 0.15in

Let us introduce a metric $\wh{g}$ on $T\MM$, which is the complete
lift of the base metric $g$ on $\MM$,
 by
\begin{equation}\label{5g}
\begin{array}{l}
\wh{g}(X',Y')=\wh{g}(X'',Y'')=0,\quad\\[6pt]
\wh{g}(X',Y'')=\wh{g}(X'',Y')=g(X,Y).
\end{array}
\end{equation}

It is known that $\wh{g}$, associated with a (pseudo-)Rie\-mannian
metric $g$, is a pseudo-Rie\-mann\-ian metric on $T\MM$ of signature
$(m,m)$, where $m=\dim \MM$. The metric $\wh{g}$ coincides with the
horizontal lift of $g$ with respect to its Levi-Civita connection
\cite{YaIs67}. This metric is introduced by Yano and Kobayashi
as $(T\MM,\wh{g})$ has zero scalar curvature and it is an Einstein
space if and only if $\MM$ is Ricci-flat \cite{YaIs}.

As it is known from \cite{YaKo}, whenever $\DDD$ is the
Levi-Civita connection of $\MM$ with respect to the
pseudo-Riemannian metric $g$, then the complete lift of $\DDD$
is the Levi-Civita connection of $T\MM$ generated by $\wh{g}$. Since
$\wh{\DDD}$ is the Levi-Civita connection of $\wh{g}$ on $T\MM$
and the same holds for $\DDD$ on $(\MM, g)$,
then using the Koszul formula \eqref{koszul} 
we obtain the covariant derivatives of the horizontal and vertical
lifts of the vector fields on $T\MM$ at $u\in T_p \MM$ as follows (see
also \cite{YaIs})
\begin{equation}\label{nabli}
\begin{array}{ll}
\wh{\DDD}_{X'}Y'=(\DDD_XY)'+\left(R(u,X)Y\right)'',\quad\quad &
\wh{\DDD}_{X'}Y''=(\DDD_XY)'',\\[6pt]
\wh{\DDD}_{X''}Y'=0, & \wh{\DDD}_{X''}Y''=0.
\end{array}
\end{equation}

After that, we calculate the components of the curvature tensor
$\wh{R}$ of $\wh{\DDD}$ with respect to the horizontal and
vertical lifts of the vector fields on $\MM$. We obtain the
following non-zero components for the curvature tensors $R$ and
$\wh{R}$ as well as the Ricci tensors $\rho$ and $\wh{\rho}$,
corresponding to the metrics $g$ and $\wh{g}$ on $\MM$ and $T\MM$,
respectively (see also \cite{YaIs}):
\begin{subequations}\label{R}
\begin{equation}
\begin{array}{l}
\wh{R}(X',Y')Z'=\left\{R(X,Y)Z\right\}'+\left\{\left(\DDD_uR\right)(X,Y)Z\right\}'',\\[6pt]
\end{array}
\end{equation}
\begin{equation}
\begin{array}{l}
\wh{R}(X',Y')Z''=\wh{R}(X',Y'')Z'
=\left\{R(X,Y)Z\right\}'';\quad\\[6pt]
\wh{\rho}(Y',Z')=2\rho(Y,Z).
\end{array}
\end{equation}
\end{subequations}

Hence, we get
\begin{cor}\label{flat}
\begin{enumerate}
    \item $(T\MM,\wh{g})$ is flat if and only if $(\MM,g)$ is flat.
    \item $(T\MM,\wh{g})$ is Ricci flat if and only if $(\MM,g)$ is Ricci
    flat.
    \item $(T\MM,\wh{g})$ is scalar flat.
\end{enumerate}
\end{cor}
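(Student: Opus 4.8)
The plan is to read everything off the explicit formulas collected in \eqref{R}, which express the components of the curvature tensor $\wh{R}$ and the Ricci tensor $\wh{\rho}$ of $\wh{g}$, relative to the adapted frame of horizontal and vertical lifts, in terms of the curvature tensor $R$ and the Ricci tensor $\rho$ of the base metric $g$. The preliminary step is to record that the three displayed components of $\wh{R}$ in \eqref{R}, together with those forced by the curvature symmetries \eqref{curv}, exhaust its nonzero components: the remaining combinations vanish because $\wh{\DDD}_{X''}Y'=\wh{\DDD}_{X''}Y''=0$ by \eqref{nabli}, so any covariant derivative taken along a vertical lift kills a lifted field. Once this bookkeeping is in place, each of the three assertions becomes a short consequence.

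For assertion (i), flatness of $(T\MM,\wh{g})$ is by definition $\wh{R}=0$. If $(\MM,g)$ is flat then $R\equiv 0$, hence $\DDD_u R=0$ as well, so every term appearing in \eqref{R} vanishes and $\wh{R}=0$. Conversely, the component $\wh{R}(X',Y'')Z'=\left\{R(X,Y)Z\right\}''$ shows that $\wh{R}=0$ forces $R(X,Y)Z=0$ for all $X,Y,Z$, i.e. $(\MM,g)$ is flat. For assertion (ii), the only nonzero component of the Ricci tensor is $\wh{\rho}(Y',Z')=2\rho(Y,Z)$, so $\wh{\rho}=0$ holds if and only if $\rho=0$, that is, if and only if $(\MM,g)$ is Ricci flat.

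For assertion (iii), I would compute the scalar curvature of $\wh{g}$ as the trace of $\wh{\rho}$ against the inverse metric in the adapted frame. By \eqref{5g} the metric $\wh{g}$ pairs horizontal lifts only with vertical lifts, so in horizontal--vertical block form it is off-diagonal, namely $\bigl(\begin{smallmatrix} 0 & g \\ g & 0 \end{smallmatrix}\bigr)$; its inverse is therefore also off-diagonal, $\bigl(\begin{smallmatrix} 0 & g^{-1} \\ g^{-1} & 0 \end{smallmatrix}\bigr)$, with vanishing horizontal--horizontal and vertical--vertical blocks. Since $\wh{\rho}$ is supported entirely on the horizontal--horizontal block while the inverse metric has zero horizontal--horizontal block, the contraction is identically zero, so the scalar curvature of $\wh{g}$ vanishes with no hypothesis on the base.

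The main obstacle is not any single calculation but the frame bookkeeping: justifying that the listed components in \eqref{R} are complete, and then correctly handling the off-diagonal structure of $\wh{g}$ and its inverse when taking the trace for (iii). After the block structure of $\wh{g}$ and of $\wh{\rho}$ in the adapted frame is set up cleanly, all three equivalences are immediate, and (iii) in particular follows from the orthogonality of the supports of $\wh{\rho}$ and of $\wh{g}^{-1}$ rather than from any cancellation.
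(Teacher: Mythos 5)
Your proof is correct and follows essentially the same route as the paper, which derives the corollary directly from the component formulas in \eqref{R} (the paper simply writes ``Hence, we get'' after that display). Your added bookkeeping --- checking via \eqref{nabli} that the listed components of $\wh{R}$ are exhaustive, and observing that the off-diagonal block structure of $\wh{g}^{-1}$ annihilates the purely horizontal support of $\wh{\rho}$ in the trace for (iii) --- is exactly the justification the paper leaves implicit.
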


\begin{rem}
The results in \eqref{nabli}, \eqref{R} and \corref{flat} are
confirmed also by \cite{YaIs}, where $g$ is a 
Riemannian
metric.
\end{rem}



\vskip 0.2in \addtocounter{subsection}{1}

\noindent  {\Large\bf{\thesubsection. Tangent bundle with almost hypercomplex  structure and Hermitian-Norden metrics}}

\vskip 0.15in

Suppose that $(\MM,J,g,\widetilde{g})$ is an almost complex manifold
with Norden metric $g$ and its associated Norden metric
$\widetilde{g}$. Suppose also that $(T\MM,H)$ is its almost
hypercomplex tangent bundle with the Hermitian-Norden metric
structure $\wh{G}=(\wh{g},\wh{g}_1,\wh{g}_2,\allowbreak\wh{g}_3)$
derived (as in~\eqref{gJ}) from the metric $\wh{g}$ on $T\MM$, the
complete lift of $g$. The generated $4n$-dimensional manifold is
denoted by $(T\MM,H,\wh{G})$.

Bearing in mind \eqref{H}, we verify immediately that $\wh{g}$
satisfies \eqref{gJJ} and therefore it is valid the following
\begin{thm}\label{Sas}
The tangent bundle $T\MM$ equipped with the almost hypercomplex
structure $H$ and the metric $\wh{g}$, defined by~\eqref{H}
and~\eqref{5g}, respectively, is an almost hypercomplex
manifold with Hermitian-Norden metrics $(T\MM,\allowbreak{}H,\wh{G})$.
\end{thm}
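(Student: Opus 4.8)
The final statement (Theorem~\ref{Sas}) asserts that the tangent bundle $T\MM$, equipped with the almost hypercomplex structure $H=(J_1,J_2,J_3)$ defined by~\eqref{H} and the complete lift $\wh g$ of the base Norden metric defined by~\eqref{5g}, is an almost hypercomplex manifold with Hermitian-Norden metrics. By the definition given in \S9, this means two things must be verified: first, that $H$ is a genuine almost hypercomplex structure on $T\MM$ (already supplied by the proposition preceding \eqref{Na-def}, so it may be assumed); and second, that the metric $\wh g$ satisfies the compatibility conditions \eqref{gJJ} with respect to the three almost complex structures, i.e. $\wh g(\cdot,\cdot)=\ea\,\wh g(J_\al\cdot,J_\al\cdot)$ with the sign convention \eqref{epsiloni}. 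Once \eqref{gJJ} holds, the associated $(0,2)$-tensors $\wh g_\al$ arise automatically via \eqref{gJ} and the structure $(H,\wh G)$ is by construction of Hermitian-Norden type.

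The plan is to reduce everything to a direct check on horizontal and vertical lifts, since these span $\X(T\MM)$ and all the tensors involved are determined by their action on such lifts. First I would record the behaviour of each $J_\al$ on lifts from \eqref{H}: $J_1$ sends $X'\mapsto-(JX)'$, $X''\mapsto(JX)''$; $J_2$ swaps $X'\leftrightarrow X''$ with a sign; and $J_3$ mixes horizontal and vertical with $J$ inserted. Then I would evaluate $\wh g(J_\al\wh X,J_\al\wh Y)$ for $\wh X,\wh Y$ each ranging over the two lift types, using only the nonzero pairings of $\wh g$ from \eqref{5g}, namely $\wh g(X',Y'')=\wh g(X'',Y')=g(X,Y)$ and $\wh g(X',Y')=\wh g(X'',Y'')=0$, together with the base compatibility $g(JX,JY)=-g(X,Y)$ inherited from the Norden structure \eqref{2.1} of $(\MM,J,g)$.

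The computation splits into three cases indexed by $\al$. For $\al=1$ one finds $\wh g(J_1X',J_1Y'')=\wh g(-(JX)',(JY)'')=-g(JX,JY)=g(X,Y)=\wh g(X',Y'')$, giving $\varepsilon_1=+1$; the mixed-type nonzero components carry the compensating sign from $J_1$ acting oppositely on horizontal and vertical lifts. For $\al=2$ the swap gives $\wh g(J_2X',J_2Y'')=\wh g(X'',-Y')=-g(X,Y)=-\wh g(X',Y'')$, so $\varepsilon_2=-1$. For $\al=3$ one computes $\wh g(J_3X',J_3Y'')=\wh g((JX)'',(JY)')=g(JX,JY)=-g(X,Y)$, yielding $\varepsilon_3=-1$. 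In each case the remaining lift-type combinations vanish on both sides, so \eqref{gJJ} is confirmed with exactly the signs of \eqref{epsiloni}. The verification is entirely routine once organized this way; the only mild subtlety—which I would highlight as the main point requiring care rather than a genuine obstacle—is keeping track of where the sign in $g(JX,JY)=-g(X,Y)$ enters versus where the structural signs in the definition of $J_\al$ enter, since these interact differently for $J_1$ (isometry) than for $J_2,J_3$ (anti-isometries). With \eqref{gJJ} established, the statement follows immediately from the definition of an almost hypercomplex manifold with Hermitian-Norden metrics in \S9.
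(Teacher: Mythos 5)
Your proposal is correct and follows exactly the route the paper takes: the paper's proof consists of the single remark that, bearing in mind \eqref{H}, one verifies immediately that $\wh{g}$ satisfies \eqref{gJJ}, and your case-by-case check on horizontal and vertical lifts (yielding $\varepsilon_1=+1$, $\varepsilon_2=\varepsilon_3=-1$ in agreement with \eqref{epsiloni}) is precisely that verification written out. The sign bookkeeping you flag is handled correctly in all three cases.
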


We use \eqref{nabli} and \eqref{H} in order to characterize the
fundamental tensors $F_\alpha$ with respect to $\wh{g}$ and
$\wh{\DDD}$ at each $u\in T_p\MM$ on $(T\MM,H,\wh{G})$. Then we
obtain the following

\begin{prop}\label{prop:Fa}
The nonzero components of $F_\alpha$ with respect to the
horizontal and vertical lifts of the vector fields depend on the
fundamental tensor $F$ and the curvature tensor $R$ of
$(\MM,J,g,\widetilde{g})$ in the following way:
\begin{subequations}\label{5F123}
\begin{equation}
\begin{array}{l}
F_1(X',Y',Z')=-R(u,X,JY,Z)-R(u,X,Y,JZ),\\[6pt]
F_1(X',Y',Z'')=-F_1(X',Y'',Z')=-F(X,Y,Z);\\[6pt]
F_2(X',Y',Z'')=-F_2(X',Y'',Z')=R(u,X,Y,Z);\\[6pt]
F_3(X',Y',Z')=F_3(X',Y'',Z'')=F(X,Y,Z),\quad\\[6pt]
\end{array}
\end{equation}
\begin{equation}
\begin{array}{l}
F_3(X',Y',Z'')=-R(u,X,Y,JZ),\quad\\[6pt]
F_3(X',Y'',Z')=R(u,X,JY,Z).
\end{array}
\end{equation}
\end{subequations}
\end{prop}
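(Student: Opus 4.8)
The plan is to compute the fundamental tensors $F_\alpha$ directly from their defining formula \eqref{F'-al}, namely $F_\alpha(\wh X,\wh Y,\wh Z)=\wh g\bigl((\wh\DDD_{\wh X}J_\alpha)\wh Y,\wh Z\bigr)$, by expanding $\wh X$, $\wh Y$, $\wh Z$ over the horizontal and vertical lifts and substituting the known data: the action of $H=(J_1,J_2,J_3)$ in \eqref{H}, the complete lift metric $\wh g$ in \eqref{5g}, and the Levi-Civita covariant derivatives $\wh\DDD$ in \eqref{nabli}. First I would write $(\wh\DDD_{\wh X}J_\alpha)\wh Y=\wh\DDD_{\wh X}(J_\alpha\wh Y)-J_\alpha(\wh\DDD_{\wh X}\wh Y)$ and evaluate the two terms separately for each choice of lift type of $\wh X$ and $\wh Y$. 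Because \eqref{5g} pairs only a horizontal lift with a vertical lift nontrivially, the third slot $\wh Z$ acts merely as a selector of which component (horizontal or vertical) of $(\wh\DDD_{\wh X}J_\alpha)\wh Y$ survives; this is what produces the pattern of vanishing components asserted in the statement and lets me restrict attention to the genuinely nonzero combinations.

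The key computational steps are organized by the value of $\alpha$. For $J_2$, which swaps $X'\leftrightarrow X''$ (up to sign), only the curvature term in the first line of \eqref{nabli} contributes a nontrivial covariant derivative, so $(\wh\DDD_{X'}J_2)Y'$ reduces to a horizontal expression governed by $R(u,X)Y$, yielding the $F_2$ components; the structure $J_2$ being the simplest (it does not involve $J$ on the base), I expect $F_2$ to fall out cleanly. For $J_1$ and $J_3$, whose action intertwines the base structure $J$ with the lifting, I would carefully track both the base covariant derivative $\DDD_X(JY)-J(\DDD_XY)=(\DDD_XJ)Y$ — which reproduces the fundamental tensor $F$ of the base — and the curvature term from $\wh\DDD_{X'}Y'=(\DDD_XY)'+(R(u,X)Y)''$. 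Lowering indices with $\wh g$ via \eqref{5g} then converts the vector-valued expressions into the $(0,3)$-scalars $F(X,Y,Z)$ and $R(u,X,\cdot,\cdot)$ displayed in \eqref{5F123}; here I would use the identification $g(R(u,X)Y,Z)=R(u,X,Y,Z)$ and the relation $g(J\cdot,\cdot)=\g(\cdot,\cdot)$ together with \eqref{2.1} to rewrite terms such as $R(u,X,JY,Z)$ in the stated form.

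The main obstacle will be bookkeeping: ensuring the correct signs and the correct horizontal-versus-vertical placement at every step, since $J_1$ sends $X'\mapsto-(JX)'$ but $X''\mapsto(JX)''$ with opposite signs, and $J_3$ mixes lifts while applying $J$. In particular, verifying that all the components not listed in \eqref{5F123} genuinely vanish requires checking every remaining lift combination (for instance $F_\alpha(X'',\cdot,\cdot)$), and the isolation of $F_1(X',Y',Z')$ as a pure curvature expression with no $F$-contribution must be confirmed by noting that the base derivative terms land entirely in the horizontal component and are therefore annihilated when paired against $Z'$ by $\wh g$. Once the nonzero combinations are tabulated and the vanishing ones are dispatched by the metric structure, the proposition follows by direct comparison, so the only real work is the disciplined expansion; no conceptual difficulty beyond the careful use of \eqref{H}, \eqref{5g} and \eqref{nabli} is anticipated.
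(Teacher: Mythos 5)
Your proposal is correct and coincides with the paper's (unwritten) argument: the paper simply states that the components follow from \eqref{nabli} and \eqref{H}, and the direct expansion of $(\wh\DDD_{\wh X}J_\alpha)\wh Y$ paired against $\wh Z$ via \eqref{5g} that you describe is exactly that computation. Your sign-tracking plan, including the use of the symmetry of $\g$ to convert $g(JR(u,X)Y,Z)$ into $R(u,X,Y,JZ)$ and the vanishing of all $F_\alpha(X'',\cdot,\cdot)$ because $\wh\DDD_{X''}=0$, is all that is needed.
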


We use the components \eqref{5F123} of $F_\al$ and bearing in mind the definition conditions \eqref{cl-H} and \eqref{cl-N} of the basic classes
as well as their direct sums in the corresponding classifications from \cite{GrHe} and \cite{GaBo},
we obtain
\begin{prop}\label{prop:cl}
\begin{enumerate}
    \item
    The almost Hermitian manifold $(T\MM,J_1,\wh{g})$ belongs to the
    class
\[
\bigl\{\left(\W_1\oplus\W_2\oplus\W_3\oplus\W_4\right)\setminus
\left\{\W_1\cup\left(\W_3\oplus\W_4\right)\right\}\bigr\}\cup \W_0,
\]
where $\W_1\oplus\W_2\oplus\W_3\oplus\W_4$, $\W_1$, $\W_3\oplus\W_4$ and
$\W_0$ stand for the classes of the almost Hermitian,
nearly K\"ahler, Hermitian and K\"ahler manifolds, respectively.
For the 4-dimensional case, the class of $(T\MM,J_1,\wh{g})$ is
restricted to the class $\W_2$ of the almost K\"ahler
manifolds.
    \item
The almost Norden manifold $(T\MM,J_{\alpha},\wh{g})$,
$(\alpha=2,3)$, belongs to the class
\[
\bigl\{\left(\W_1\oplus\W_2\oplus\W_3\right)\setminus
\left\{\left(\W_1\oplus\W_2\right)\cup\left(\W_1\oplus\W_3\right)\right\}\bigr\}\cup
\W_0.
\]
\end{enumerate}
\end{prop}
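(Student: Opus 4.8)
The plan is to exploit the explicit component formulae for the fundamental tensors $F_\alpha$ computed in \propref{prop:Fa} and feed them directly into the defining conditions of the basic classes listed in \eqref{cl-H} and \eqref{cl-N}. The whole statement is a membership question: I must determine which direct sums of basic classes contain the manifolds $(T\MM,J_1,\wh{g})$ and $(T\MM,J_\alpha,\wh{g})$ for $\alpha=2,3$, using only the values \eqref{5F123} of $F_1,F_2,F_3$ on horizontal and vertical lifts. The governing observation is that every component of $F_\alpha$ is built from two ingredients on the base manifold: the fundamental tensor $F$ of $(\MM,J,g,\widetilde g)$ and the curvature tensor $R$. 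The curvature contributes the purely horizontal pieces (those involving the position vector $u$), while $F$ contributes the mixed horizontal-vertical pieces. This structural split is what allows one to exclude certain subclasses while never being able to land in the purely integrable or purely K\"ahler classes unless $R$ and $F$ degenerate.

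First I would handle the almost Hermitian factor $(T\MM,J_1,\wh g)$. Using \eqref{5g} to raise indices and the component list for $F_1$, I would test membership in the nearly K\"ahler class $\W_1(J_1)$, the Hermitian class $(\W_3\oplus\W_4)(J_1)$, and the K\"ahler class $\W_0(J_1)$. The key is to show that a generic $(T\MM,H,\wh G)$ cannot sit in $\W_1$ (the skew-symmetry $F_1(x,y,z)=-F_1(y,x,z)$ fails on the mixed components, because $F_1(X',Y',Z'')=-F(X,Y,Z)$ is governed by $F$ which is symmetric in the last two slots by \eqref{F'-prop}) nor in $\W_3\oplus\W_4$ (integrability forces the curvature-type horizontal component to obstruct the Hermitian condition unless $\MM$ is flat with $J$ parallel, cf.\ \thmref{thm:Na=0}). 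The only remaining possibility for a nontrivial manifold is therefore the complement $\bigl(\W_1\oplus\W_2\oplus\W_3\oplus\W_4\bigr)\setminus\{\W_1\cup(\W_3\oplus\W_4)\}$, together with the degenerate K\"ahler case $\W_0$. For the $4$-dimensional restriction, since only $\W_2$ and $\W_4$ survive there and $\W_4$ is excluded, the class collapses to $\W_2(J_1)$, the almost K\"ahler class; I would verify this by checking that $\mathop{\s}_{x,y,z}F_1(x,y,z)=0$ holds identically, which follows from the first Bianchi identity \eqref{curv} applied to the curvature pieces and the symmetry of $F$ on the mixed pieces.

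For the two almost Norden factors $(T\MM,J_\alpha,\wh g)$ with $\alpha=2,3$, the argument is parallel but uses \eqref{cl-N}. I would test against $(\W_1\oplus\W_2)(J_\alpha)$, the complex-Norden condition equivalent to $[J_\alpha,J_\alpha]=0$ (which by \thmref{thm:Na=0}(ii) for $J_2$ and the $J_3$ computation in \propref{Na} fails unless $\MM$ is flat), and against $(\W_1\oplus\W_3)(J_\alpha)$, ruling each out by exhibiting a nonvanishing component that violates the respective cyclic-sum or Lee-form constraint. The surviving class is then the stated complement $\bigl(\W_1\oplus\W_2\oplus\W_3\bigr)\setminus\{(\W_1\oplus\W_2)\cup(\W_1\oplus\W_3)\}$ union $\W_0$. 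The main obstacle, and where I expect the real bookkeeping to lie, is the careful treatment of the horizontal components $F_1(X',Y',Z')$ and $F_3(X',Y',Z')$: these mix curvature with $J$ in the form $-R(u,X,JY,Z)-R(u,X,Y,JZ)$, and verifying exactly which class conditions they satisfy (and which they obstruct) requires using the curvature symmetries \eqref{curv} together with the pair-interchange symmetry and the identity $R(x,y,Jz,Jw)$ behaviour — I would need to confirm these horizontal pieces are generically nonzero yet do not by themselves force membership in any smaller subclass, which is precisely what pins down the complement form of the answer rather than a single basic class.
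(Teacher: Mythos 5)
Your overall strategy --- substitute the component formulae of \propref{prop:Fa} into the defining conditions \eqref{cl-H}, \eqref{cl-N} and show that membership in each excluded subclass forces degeneration to $\W_0$ --- is exactly what the paper intends; in fact the paper offers no written proof beyond the sentence asserting that the proposition follows from \eqref{5F123} and the classification conditions, so your elaboration of the exclusions (via \thmref{thm:Na=0} for the integrable classes $(\W_3\oplus\W_4)(J_1)$ and $(\W_1\oplus\W_2)(J_\al)$, and via the clash between the nearly-K\"ahler skewness and the symmetries of $F$ for $\W_1(J_1)$) is the right argument. Your treatment of $\W_1(J_1)$ is sound once made precise: skewness of $F_1$ in its first two arguments on the mixed components forces $F(X,Y,Z)=-F(Y,X,Z)$, which together with $F(X,Y,Z)=F(X,Z,Y)$ from \eqref{F'-prop} kills $F$.

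However, one concrete step in your plan does not go through: the claim that $\sx F_1(x,y,z)=0$ ``holds identically'' by the first Bianchi identity on the curvature pieces and the symmetry of $F$ on the mixed pieces. Evaluating the cyclic sum on arguments of type $(X',Y',Z'')$ and using that $F_1(Z'',\cdot,\cdot)=0$ gives
\[
F_1(X',Y',Z'')+F_1(Y',Z'',X')+F_1(Z'',X',Y')=-F(X,Y,Z)+F(Y,Z,X),
\]
and the symmetry $F(x,y,z)=F(x,z,y)$ does not annihilate this difference; it vanishes only when $F$ is cyclically symmetric, which is not automatic even for a $2$-dimensional base, where $F$ is of $\W_1$-type and the difference reduces to Lee-form terms. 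This is consistent with the paper's own computation $\theta_1(Z')=\theta(Z)$: since $\W_2(J_1)$ sits inside the semi-K\"ahler class $\theta_1=0$, the $4$-dimensional conclusion cannot be reached by a blanket ``$\sx F_1=0$'' argument, and you would need either to restrict to base manifolds with $\theta=0$ or to argue that case differently (e.g.\ via \corref{cor:M}(iii) when the base is K\"ahler-Norden). The remainder of your plan --- the component bookkeeping ruling out $(\W_1\oplus\W_3)(J_\al)$ and the flatness arguments for the integrable classes --- is where the proof genuinely lives and matches the paper's intent.
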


The corresponding Lee forms are determined by \eqref{theta-al}. Hence, we can compute them
with respect to an adapted frame.

Let $\{\widehat{e}_{A}\}=
\{e_{i}',e_{i}''
\}$
be the
adapted frame
at each
point of $T\MM$ derived by the orthonormal basis
$\{e_{i}\}$ 
of signature $(n,n)$ at each point of $\MM$.
The indices
%
$i,j,\dots$ run over the range $\{1,2,\dots,2n\}$, while the
indices $A,B,\dots$ belong to the range $\{1,2,\dots,4n\}$. We use
summation convention for these indices.

For example, we compute as follows
\[
\begin{aligned}
\theta_{3}(Z')&=\wh{g}^{AB}F_{3}(\wh{e}_A,\wh{e}_B,Z')
=g^{ij}\{F_{3}(e'_i,e''_j,Z')+F_{3}(e''_i,e'_j,Z')\}\\[6pt]
&=
g^{ij}F_{3}(e'_i,e''_j,Z')
=g^{ij}R(u,e_i,Je_j,Z)=\rho^*(u,Z).
\end{aligned}
\]
Analogously, we have \[
\theta_{3}(Z'')=g^{ij}F_{3}(e'_i,e''_j,Z'')
=g^{ij}F(e_i,e_j,Z)=\theta(Z). \]
Thus, we obtain the following nonzero components of the Lee forms
$\theta_{\alpha}$
\[
\begin{split}
\theta_1(Z')&=\theta_3(Z'')=\theta(Z),\quad \\[6pt]
\theta_2(Z')&=-\rho(u,Z),\quad \\[6pt]
\theta_3(Z')&=\rho^*(u,Z),
\end{split}
\]
where $\rho$ and $\rho^*$ are the Ricci tensor and
its associated Ricci tensor stemming from $g$ and $J$. 
Therefore, we obtain
\begin{prop}\label{prop:titi}
The following necessary and sufficient conditions are valid:
\begin{enumerate}
    \item $\theta_1=0$ if and only if $\theta=0$;
    \item $\theta_2=0$ if and only if ${\rho}=0$;
    \item $\theta_3=0$ if and only if
        $\theta=0$ and $\rho^*=0$.
\end{enumerate}
\end{prop}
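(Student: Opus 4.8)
The plan is to read the proposition off directly from the component formulas for the Lee forms $\theta_1$, $\theta_2$, $\theta_3$ that have just been obtained by tracing \eqref{5F123} with $\wh{g}^{AB}$ over the adapted frame $\{e_i',e_i''\}$. Since the horizontal and vertical distributions on $T\MM$ are mutually complementary, the lifts $Z'$ and $Z''$, for $Z\in\X(\MM)$, span the tangent space of $T\MM$ at each point $u\in T_p\MM$. Hence a Lee form $\theta_\alpha$ vanishes identically on $T\MM$ if and only if it vanishes on every $Z'$ and every $Z''$, for all $Z$ and at every point $u$ of every fibre. The whole argument therefore reduces to inspecting the scalar quantities $\theta_\alpha(Z')$ and $\theta_\alpha(Z'')$ listed as the only nonzero components.

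For (i) I would note that $\theta_1(Z'')=0$ automatically, while $\theta_1(Z')=\theta(Z)$; thus $\theta_1$ vanishes on $T\MM$ exactly when $\theta(Z)=0$ for all $Z\in\X(\MM)$, i.e. when $\theta=0$ on the base. This settles (i) at once.

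For (ii), again $\theta_2(Z'')=0$ and $\theta_2(Z')=-\rho(u,Z)$. Here the essential observation — and the only point that is not purely formal — is that $u$ ranges over the entire fibre $T_p\MM$, so the requirement $\rho(u,Z)=0$ for all $u$ and all $Z$ is equivalent, by the arbitrariness of the position vector $u$ together with the linearity of $\rho$ in its first slot, to the vanishing of the Ricci tensor $\rho$ at each point $p$, that is, $\rho=0$ on the base. This gives $\theta_2=0$ if and only if $\rho=0$. For (iii) both lifts contribute, since $\theta_3(Z')=\rho^*(u,Z)$ and $\theta_3(Z'')=\theta(Z)$. Consequently $\theta_3=0$ forces simultaneously $\rho^*(u,Z)=0$ for all $u,Z$ (hence $\rho^*=0$, again because $u$ sweeps out the whole fibre) and $\theta(Z)=0$ for all $Z$ (hence $\theta=0$); conversely, these two conditions make both components vanish. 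Thus $\theta_3=0$ if and only if both $\theta=0$ and $\rho^*=0$.

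The substantive work has already been carried out in deriving the component formulas from \eqref{5F123} and \eqref{theta-al} and in verifying that $\theta_\alpha(Z'')=0$ for $\alpha=1,2$. The only residual subtlety, which I would state explicitly, is the fibrewise quantifier on $u$: equality of $1$-forms on $T\MM$ is pointwise on the total space, so the curvature-valued components must vanish for every choice of $u$, and it is precisely this that upgrades ``$\rho(u,Z)=0$ for each fixed $u$'' to ``$\rho=0$'' (and likewise for $\rho^*$). I do not anticipate any genuine obstacle beyond keeping this quantifier in view.
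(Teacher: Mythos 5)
Your argument is correct and coincides with the paper's, which states the proposition as an immediate consequence of the displayed component formulas $\theta_1(Z')=\theta_3(Z'')=\theta(Z)$, $\theta_2(Z')=-\rho(u,Z)$, $\theta_3(Z')=\rho^*(u,Z)$ with all other components zero. Your explicit remark about the quantifier over $u$ in the fibre is a sound (and worthwhile) clarification of a step the paper leaves implicit.
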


\begin{rem}
Let us recall that the vanishing condition of the corresponding
Lee form determines the class $(\W_1\oplus\W_2\oplus\W_3)(J_1)$ of the semi-K\"ahler
manifolds among the almost Hermitian manifolds and the class
$(\W_2\oplus \W_3)(J_\al)$, $(\al=2,3)$ among the almost Norden manifolds, respectively.
\end{rem}

Bearing in mind \propref{prop:Fa} and \thmref{thm:Na=0}, it is
easy to conclude the following
\begin{prop}
The following necessary and sufficient conditions are valid:
\begin{enumerate}
    \item $(T\MM,J_\alpha,\wh{g})$ for $\alpha=1$ or $3$ has a
    parallel complex structure $J_\alpha$ if and only if $(\MM,J,g,\widetilde{g})$ is flat and
        $J$ is parallel.
    \item $(T\MM,J_2,\wh{g})$ has a parallel complex structure $J_2$ if and only if
    $(\MM,J,\allowbreak{}g,\widetilde{g})$ is flat.
    \item $(T\MM,H,G)$ is a hyper-K\"ahler manifold with Hermitian-Norden metrics if and only if
        $(\MM,J,\allowbreak{}g,\widetilde{g})$ is flat and $J$ is parallel.
\end{enumerate}
\end{prop}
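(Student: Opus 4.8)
The plan is to reduce the three assertions to the single observation that the almost complex structure $J_\alpha$ is parallel with respect to the complete-lift connection $\wh{\DDD}$ exactly when the corresponding fundamental tensor $F_\alpha$ vanishes identically. Indeed, by the analogue of \eqref{F'-al} on $(T\MM,H,\wh{G})$ one has $F_\alpha(x,y,z)=\wh{g}\bigl((\wh{\DDD}_x J_\alpha)y,z\bigr)$, and since $\wh{g}$ is a non-degenerate (pseudo-Riemannian) metric, the condition $F_\alpha=0$ is equivalent to $\wh{\DDD}J_\alpha=0$, i.e. to parallelism of $J_\alpha$. Because the horizontal and vertical lifts span $T(T\MM)$, it suffices to test this on the lifted vector fields, and there the components of $F_\alpha$ are already computed in \propref{prop:Fa}, namely in \eqref{5F123}. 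So the whole proof amounts to reading off from \eqref{5F123} which vanishing conditions on the base tensors $F$ and $R$ of $(\MM,J,g,\widetilde{g})$ force the relevant $F_\alpha$ to vanish.

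First I would dispose of the easiest cases. For $J_2$, the only nonzero lifted components are $F_2(X',Y',Z'')=-F_2(X',Y'',Z')=R(u,X,Y,Z)$, so $F_2=0$ holds for all $u,X,Y,Z$ if and only if the curvature tensor $R$ vanishes, i.e. $(\MM,g)$ is flat; no condition on $F$ appears, giving (ii). For $\alpha=3$ the lifted components are $\pm F(X,Y,Z)$ together with $F_3(X',Y',Z'')=-R(u,X,Y,JZ)$ and $F_3(X',Y'',Z')=R(u,X,JY,Z)$; thus $F_3=0$ is equivalent to $F=0$ and to the vanishing of $R(u,X,Y,JZ)$ for all arguments. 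Since $J$ is a bijection and $u$ ranges over the whole fibre, the latter already yields $R=0$, while $F=0$ is precisely the Kähler-Norden condition $\DDD J=0$. Hence $F_3=0$ holds iff $(\MM,J,g,\widetilde{g})$ is flat and $J$ is parallel, which is the $\alpha=3$ half of (i).

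The case $\alpha=1$ is the only one carrying a genuine subtlety, and I expect it to be the main obstacle. From \eqref{5F123}, $F_1(X',Y',Z'')=-F(X,Y,Z)$ forces $F=0$, but the remaining component is the combination $F_1(X',Y',Z')=-R(u,X,JY,Z)-R(u,X,Y,JZ)$, in which $J$ sits on two different slots, so one cannot immediately conclude $R=0$. The key step is to exploit that $F=0$ has already been extracted, so $\DDD J=0$ and therefore $R(x,y)$ commutes with $J$; combined with the $g$-self-adjointness of $J$ (which follows from $g(Jx,Jy)=-g(x,y)$) this gives the Kähler-type identity $R(u,X,JY,Z)=R(u,X,Y,JZ)$, whence $F_1(X',Y',Z')=-2R(u,X,Y,JZ)$. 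Vanishing of this for all $u,X,Y,Z$ again forces $R=0$ by bijectivity of $J$, completing the $\alpha=1$ equivalence. Finally, for (iii) I would note that $(T\MM,H,\wh{G})$ being hyper-K\"ahler with Hermitian-Norden metrics means it lies in the class $\KKK$, i.e. $\wh{\DDD}J_\alpha=0$ simultaneously for all $\alpha$, hence all three $F_\alpha$ vanish; combining the conditions obtained above ($F_2=0$ gives flatness, $F_1=0$ or $F_3=0$ gives in addition $J$ parallel) and conversely checking that flatness together with $\DDD J=0$ annihilates every component in \eqref{5F123}, I obtain (iii). As a consistency remark, these parallelism criteria refine the integrability criteria already recorded in \thmref{thm:Na=0}.
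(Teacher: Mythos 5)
Your proposal is correct and follows essentially the same route as the paper, which derives the statement directly from the lifted components of $F_\al$ in \propref{prop:Fa} (formulas \eqref{5F123}), using that $\wh{\DDD}J_\alpha=0$ is equivalent to the vanishing of the corresponding fundamental tensor on $T\MM$. The only notable difference is in the necessity direction for $\alpha=1$: the paper disposes of it by also invoking \thmref{thm:Na=0} (parallelism implies integrability, and integrability of $J_1$ already forces flatness of the base and $\DDD J=0$), whereas you argue directly from \eqref{5F123} by first extracting $F=0$ from the mixed component and then using the resulting K\"ahler-type identity $R(u,X,JY,Z)=R(u,X,Y,JZ)$ to collapse $F_1(X',Y',Z')$ to $-2R(u,X,Y,JZ)$ --- a correct and self-contained substitute.
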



\begin{cor}
\begin{enumerate}
    \item $J_1$ is parallel if and only if
        $J_3$ is parallel.
    \item If
        $J_1$ or $J_3$ is parallel then
        $(T\MM,H,\wh{G})$ is a hyper-K\"ahler manifold with Hermitian-Norden metrics.
\end{enumerate}
\end{cor}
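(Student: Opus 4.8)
The plan is to deduce both assertions directly from the immediately preceding Proposition, which already characterises the parallelism of each $J_\alpha$ and the hyper-K\"ahler property of $(T\MM,H,\wh{G})$ in terms of conditions on the base manifold $(\MM,J,g,\widetilde{g})$. The whole corollary is therefore a purely logical consequence of that Proposition, and no fresh computation on $T\MM$ is required; all the analytic content has been prepared upstream in the components \eqref{5F123} of the $F_\alpha$, the integrability criteria of \thmref{thm:Na=0}, and the flatness bookkeeping of \corref{flat}.

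For assertion (i), I would recall that item (i) of the preceding Proposition states that, for $\alpha=1$ and for $\alpha=3$ alike, the structure $J_\alpha$ on $T\MM$ is parallel with respect to $\wh{\DDD}$ if and only if the base $(\MM,J,g,\widetilde{g})$ is flat and $J$ is $\DDD$-parallel. Since the right-hand condition ``$\MM$ is flat and $J$ is parallel'' is literally the same for $\alpha=1$ and for $\alpha=3$, each of the two parallelism statements is equivalent to it, and hence they are equivalent to one another. This yields ``$J_1$ is parallel $\Leftrightarrow$ $J_3$ is parallel'' at once.

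For assertion (ii), I would assume first that $J_1$ is parallel. By item (i) of the Proposition this forces $(\MM,J,g,\widetilde{g})$ to be flat with $J$ parallel. But item (iii) of the same Proposition says precisely that $(T\MM,H,\wh{G})$ is a hyper-K\"ahler manifold with Hermitian-Norden metrics (i.e. lies in the class $\KKK$) exactly when $(\MM,J,g,\widetilde{g})$ is flat and $J$ is parallel. Hence the hypothesis ``$J_1$ parallel'' already supplies the conclusion. If instead $J_3$ is parallel, I would either invoke assertion (i) to reduce to the case just treated, or apply item (i) of the Proposition for $\alpha=3$ directly; either route produces the same base conditions, and item (iii) finishes the argument.

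Since every step merely quotes the preceding Proposition, I do not anticipate any genuine obstacle. The only point deserving an explicit remark is that the characterising base condition for $\alpha=1$ and for $\alpha=3$ coincides, which is exactly what makes the equivalence in (i) and the implication in (ii) immediate.
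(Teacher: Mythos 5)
Your proposal is correct and is exactly the intended argument: the paper states this corollary without proof precisely because it follows immediately from items (i) and (iii) of the preceding proposition, whose characterising base condition (``$(\MM,J,g,\widetilde{g})$ is flat and $J$ is parallel'') is the same for $\alpha=1$, $\alpha=3$, and for the hyper-K\"ahler property of $(T\MM,H,\wh{G})$. Nothing further is needed.
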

\begin{cor}
\begin{enumerate}
    \item  If the manifold $(T\MM,J_\alpha,\wh{g})$ for any
                $\alpha$ is a complex manifold then it is of K\"ahler type.
    \item If the manifold $(T\MM,H,\wh{G})$ is hypercomplex then it is a hyper-K\"ahler manifold with Hermitian-Norden metrics.
\end{enumerate}
\end{cor}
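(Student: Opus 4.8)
The plan is to obtain both assertions not by any fresh computation on $T\MM$, but by juxtaposing two characterizations that are already at our disposal. Recall that calling $(T\MM,J_\al,\wh{g})$ a \emph{complex} manifold means that the almost complex structure $J_\al$ is integrable, i.e.\ its Nijenhuis tensor $[J_\al,J_\al]$ vanishes; this is a property of $J_\al$ alone and does not involve $\wh{g}$, so $(T\MM,J_\al,\wh{g})$ is complex exactly when $(T\MM,J_\al)$ is. On the other hand, ``of K\"ahler type'' means that $J_\al$ is parallel with respect to the Levi-Civita connection $\wh{\DDD}$ of $\wh{g}$, equivalently $F_\al=0$, i.e.\ the manifold lies in $\W_0(J_\al)$; and a \emph{hyper-K\"ahler manifold with Hermitian-Norden metrics} is one in $\KKK$, where all three $J_\al$ are $\wh{\DDD}$-parallel.

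First I would check that \thmref{thm:Na=0} applies here. Its hypothesis is that the affine connection entering the definition \eqref{H} of $H$ be torsion-free; since in the present setting $\wh{\DDD}$ is the Levi-Civita connection of the complete lift $\wh{g}$ and is built from the torsion-free Levi-Civita connection $\DDD$ of $g$, this hypothesis holds. For part (i) I would then apply \thmref{thm:Na=0} to read off the integrability of $J_\al$ in terms of the base: for $\al=1$ or $3$ it is equivalent to $(\MM,J,g,\tg)$ being flat with $J$ parallel, and for $\al=2$ it is equivalent to $(\MM,J,g,\tg)$ being flat. Comparing these with the preceding Proposition on parallel structures, whose parts characterize $\wh{\DDD}J_\al=0$ by \emph{exactly} the same base conditions (flat $\MM$ and parallel $J$ for $\al=1,3$; flat $\MM$ for $\al=2$), I conclude that for each single $\al$ the integrability condition and the parallelism condition coincide. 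Hence $(T\MM,J_\al,\wh{g})$ being complex forces $J_\al$ to be $\wh{\DDD}$-parallel, i.e.\ of K\"ahler type, which is assertion (i).

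For part (ii) I would run the identical argument one level up: \thmref{thm:Na=0}~(iii) says $(T\MM,H)$ is hypercomplex precisely when $\MM$ is flat and $J$ is parallel, while part (iii) of the same Proposition says $(T\MM,H,\wh{G})$ is a hyper-K\"ahler manifold with Hermitian-Norden metrics (lies in $\KKK$) under the same two conditions. Therefore ``hypercomplex'' and ``hyper-K\"ahler with Hermitian-Norden metrics'' are governed by one and the same property of the base, and the former implies (indeed is equivalent to) the latter, proving (ii).

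The only thing to keep straight, rather than a genuine obstacle, is the case split on $\al$: the structures $J_1,J_3$ and $J_2$ carry different base conditions, so ``for any $\al$'' in (i) must be verified componentwise. All the analytic content---the explicit Nijenhuis tensors in \propref{Na}, the fundamental tensors $F_\al$ in \propref{prop:Fa}, and the flatness/parallelism reductions---has already been established, so the corollary is a purely logical collation of those results and needs no further calculation.
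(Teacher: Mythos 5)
Your proposal is correct and is exactly the derivation the paper intends: the corollary is stated as an immediate consequence of juxtaposing \thmref{thm:Na=0} (integrability of $J_\al$, resp.\ $H$, in terms of flatness of $\MM$ and parallelism of $J$) with the preceding Proposition (parallelism of $J_\al$, resp.\ the hyper-K\"ahler property, under the very same base conditions). The only minor imprecision is that the torsion-free hypothesis of \thmref{thm:Na=0} concerns the base connection $\DDD$ used to define the horizontal lifts, not $\wh{\DDD}$ on $T\MM$, but since $\DDD$ is the Levi-Civita connection of $g$ this is satisfied and your conclusion stands.
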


\begin{cor}\label{cor:M}
\begin{enumerate}
    \item If $(\MM,J,g,\widetilde{g})$ is flat, then $T\MM$ has parallel $J_2$.
    \item If $(\MM,J,g,\widetilde{g})$ is flat and its Lee form $\theta$ is zero,
    i.e.
    $
    (\MM,J,g,\widetilde{g})\in\left(\W_2 \oplus \W_3\right)$,
    then
    \[
    (T\MM,H,\wh{G})\in
    (\W_1\oplus\W_2\oplus\W_3)(J_1)\cup\mathcal{W}_0(J_2)
    \cup(\mathcal{W}_2\oplus\mathcal{W}_3)(J_3);
    \]
    \item If $(\MM,J,g,\widetilde{g})$ is a K\"ahler-Norden manifold,
    then $(T\MM,J_1,\wh{g})\in\W_2(J_1)$.
\end{enumerate}
\end{cor}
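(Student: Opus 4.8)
The plan is to treat the three parts separately, since each one reduces to facts already established in this section.

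Part (i) is essentially immediate. By the Proposition proved just above (from \propref{prop:Fa} and \thmref{thm:Na=0}) characterising when $J_\alpha$ is parallel on $T\MM$, the structure $J_2$ is parallel if and only if the base $\M=(\MM,J,g,\tg)$ is flat. Hence flatness of the base directly gives $\wh{\DDD}J_2=0$, which is the assertion. One could equally read this off \propref{prop:Fa}: all nonzero components of $F_2$ are built from the curvature $R$, so $R=0$ forces $F_2=0$.

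For part (ii) I would verify the three class-memberships one at a time, using the Lee-form formulas $\theta_1(Z')=\theta_3(Z'')=\theta(Z)$, $\theta_2(Z')=-\rho(u,Z)$, $\theta_3(Z')=\rho^*(u,Z)$ together with \propref{prop:titi} and the Remark identifying the Lee-form-free classes. First, flatness forces $R=0$, so by \propref{prop:Fa} every component of $F_2$ vanishes; thus $F_2=0$ and $(T\MM,J_2,\wh{g})\in\W_0(J_2)$. Next, $\theta=0$ gives $\theta_1=0$ by \propref{prop:titi}(i), which places $(T\MM,J_1,\wh{g})$ in the semi-K\"ahler class $(\W_1\oplus\W_2\oplus\W_3)(J_1)$. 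Finally, flatness yields $\rho=\rho^*=0$, and together with $\theta=0$ this gives $\theta_3=0$ by \propref{prop:titi}(iii); hence $(T\MM,J_3,\wh{g})\in(\W_2\oplus\W_3)(J_3)$. Collecting the three conclusions gives the stated union. Note that only $\theta=0$ is needed for the $J_1$-statement, while flatness enters the $J_2$- and $J_3$-statements.

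Part (iii) is the substantial one. If the base is K\"ahler-Norden then $F=0$, so by \propref{prop:Fa} the tensor $F_1$ on $T\MM$ collapses to its purely horizontal component $F_1(X',Y',Z')=-R(u,X,JY,Z)-R(u,X,Y,JZ)$, every mixed component being proportional to $F$. To conclude $(T\MM,J_1,\wh{g})\in\W_2(J_1)$ I would check the defining conditions of the almost-K\"ahler class from \eqref{cl-H}: vanishing of the cyclic sum $\mathop{\s}_{x,y,z}F_1$, together with the anti-invariance $F_1(J_1\cdot,J_1\cdot,\cdot)=-F_1$ that separates $\W_2$ from $\W_3\oplus\W_4$ (the Lee form $\theta_1$ vanishes automatically since $\theta=0$, so there is no $\W_4$ part). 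The key input is that on a K\"ahler-Norden manifold the curvature is holomorphic: a single $J$ may be transferred between any two of the four arguments of $R$ without change. Using this, $F_1(X',Y',Z')=-2R(Ju,X,Y,Z)$, the anti-invariance follows at once, and the cyclic sum becomes $\mathop{\s}_{X,Y,Z}R(Ju,X,Y,Z)$, which vanishes by the first Bianchi identity (cyclic over the last three slots with the first fixed). Since $F=0$, all cyclic sums involving a vertical argument are trivially zero, so $\mathop{\s}_{x,y,z}F_1=0$ on all of $T\MM$.

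The main obstacle is precisely this curvature bookkeeping in part (iii): establishing $\mathop{\s}_{x,y,z}F_1=0$ and the $\W_2$-anti-invariance. These do not follow from the generic curvature symmetries and the Bianchi identity alone — that route only produces a tautology — but require the full holomorphic symmetry of the K\"ahler-Norden curvature (equivalently, that $R$ is the real part of a holomorphic complex-Riemannian curvature, so that a single $J$-insertion in any slot yields the same tensor). Some care is also needed in part (ii) to keep track of which hypothesis, $\theta=0$ or flatness, is responsible for each of the three class-memberships.
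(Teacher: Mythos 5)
Your proposal is correct and is essentially the derivation the paper intends: parts (i) and (ii) read off directly from the parallelism proposition, \propref{prop:Fa}, \propref{prop:titi} and the Remark on Lee-form-free classes, while part (iii) fills in the one computation the paper leaves implicit, using the fact (stated in the paper for K\"ahler-Norden manifolds) that $R^*(x,y,z,w)=R(x,y,z,Jw)$ is curvature-like, so a single $J$ transfers between slots and the cyclic sum of $F_1$ reduces to a first-Bianchi sum. The only superfluous step is the anti-invariance check in (iii): by the definition of $\W_2(J_1)$ in \eqref{cl-H}, the vanishing of $\s F_1$ alone already characterizes the almost K\"ahler class.
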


\begin{rem}
The corresponding result in the Riemannian case (see \cite{To})
reads as follows. The manifold $(T\MM,J_1,\wh{g})$ is almost
K\"ahler (i.e. symplectic) for any Riemannian metric $g$ on the
base manifold when the connection used to define the horizontal
lifts is the Levi-Civita connection.
\end{rem}

\vskip 0.2in \addtocounter{subsubsection}{1}

\noindent  {\Large\bf{\emph{\thesubsubsection. Tangent bundle of an h-sphere}}}

\vskip 0.15in

Let $(\MM,J,g,\g)$ be a K\"ahler-Norden manifold, $\dim \MM = 2n
\allowbreak{}\geq 4$. Let $x, y, z, w$ be arbitrary vectors in
$T_p\MM$, $p\in \MM$. The curvatu\-re ten\-sor $R$ 
is of K\"ahler type in this case. This implies that the associated tensor
$R^*$ of type $(0, 4)$ defined by the equality $R^*(x, y, z, w) = R(x, y, z, Jw)$
has the property $R^* (x, y, z, w) = -R^*(x, y, w, z)$ \cite{GaGrMi85}. Therefore $R^*$
has the properties of a curvature tensor, i.e. it is a
curva\-tu\-re-like tensor. The following tensors are essential
curvature-like (0,4)-tensors:
\[
\pi_1 = -\dfrac{1}{2}g\owedge g,\quad\quad
\pi_2 = -\dfrac{1}{2}\g\owedge\g,\quad\quad
\pi_3 = g\owedge\g.
\]

Every non-degenerate 2-plane $\beta$ with respect to $g$ in
$T_p\MM$, $p \in \MM$, has the following two sectional curvatures
\[
k(\beta;p)=\frac{R(x,y,y,x)}{\pi_1(x,y,y,x)},\qquad k^*(\beta;p)=\frac{R^*(x,y,y,x)}{\pi_1(x,y,y,x)},
\]
where $\{x,y\}$ is a basis of $\beta$.

A 2-plane $\beta$ is said to be \emph{holomorphic} (resp.,
\emph{totally real}) if $\beta= J\beta$ (resp., $\beta\perp
J\beta\neq \beta$) with respect to $g$ and $J$.

The orthonormal $J$-basis $\{e_i,e_{\bar{i}}\}$, where
$i\in\{1,2,\dots,n\}$, $\bar{i}=n+i$, $e_{\bar{i}}=Je_{i}$ of
$T_p\MM$ with respect to $g$ generates an orthogonal basis of isotropic
vectors $\{e_i',e_{\bar{i}}',e_i'',e_{\bar{i}}''\}$ of $T_u(T\MM)$
with respect to $\wh{g}$. Then, the basis
$\{\xi_i,\xi_{\bar{i}},\eta_i,\eta_{\bar{i}}\}$, where
\[
\begin{split}
\xi_i=\dfrac{1}{\sqrt{2}}(e_i'+e_i''),\quad\quad &
\xi_{\bar{i}}=\dfrac{1}{\sqrt{2}}(e_{\bar{i}}'+e_{\bar{i}}''),\\[6pt]
\eta_i=\dfrac{1}{\sqrt{2}}(e_i'-e_i''),\quad\quad
&
\eta_{\bar{i}}=\dfrac{1}{\sqrt{2}}(e_{\bar{i}}'-e_{\bar{i}}''),
\end{split}
\]
is orthonormal with respect to $\wh{g}$. Moreover, we have
\[
g(\xi_i,\xi_i)=-g(\xi_{\bar{i}},\xi_{\bar{i}})=-g(\eta_i,\eta_i)=g(\eta_{\bar{i}},\eta_{\bar{i}})=1,
\]
if $g(e_i,e_i)=-g(e_{\bar{i}},e_{\bar{i}})=1$ are valid.
Then,
using \eqref{H}, we obtain
\[
\begin{array}{lll}
J_1\xi_{\ii}=\eta_i,\quad\quad &
J_1\eta_{\ii}=\xi_i,\quad\quad &
J_2\eta_{i}=\xi_i,\quad\quad\\[6pt]
J_2\eta_{\ii}=\xi_{\ii},\quad\quad &
J_3\xi_{i}=\xi_{\ii},\quad\quad &
J_3\eta_{\ii}=\eta_i,
\end{array}
\]
 i.e. the basis
$\{\xi_i,\xi_{\bar{i}},\eta_i,\eta_{\bar{i}}\}$ is an adapted
$H$-basis.

Thus, the following basic 2-planes in $T_u(T\MM)$ are special with
respect to $H$ ($i\neq j$):
\begin{enumerate}[a)]
    \item
\emph{$J_{\alpha}$-totally-real 2-planes} ($\alpha=1,2,3$):\\[6pt]
\begin{tabular}{lllll}
$\{\xi_i,\xi_j\}$,\quad & $\{\xi_i,\xi_{\jj}\}$,\quad & $\{\xi_i,\eta_j\}$,\quad &
$\{\xi_i,\eta_{\jj}\}$,\quad & $\{\xi_{\ii},\xi_{\jj}\}$,\\[6pt]
$\{\xi_{\ii},\eta_j\}$,\quad & $\{\xi_{\ii},\eta_{\jj}\}$,\quad &
$\{\eta_i,\eta_j\}$,\quad & $\{\eta_{i},\eta_{\jj}\}$,\quad &
$\{\eta_{\ii},\eta_{\jj}\}$;
\end{tabular}
    \item
\emph{$J_1$-holomorphic and $J_{\alpha}$-totally-real 2-planes}
($\alpha=2,3$):\\[6pt]
    $\{\xi_{\ii},\eta_{i}\}$,\quad $\{\eta_{\ii},\xi_{i}\}$;
    \item
\emph{$J_2$-holomorphic and $J_{\alpha}$-totally-real 2-planes}
($\alpha=1,3$):\\[6pt]
    $\{\eta_{i},\xi_{i}\}$,\quad $\{\eta_{\ii},\xi_{\ii}\}$;
    \item
\emph{$J_3$-holomorphic and $J_{\alpha}$-totally-real 2-planes}
($\alpha=1,2$):\\[6pt]
    $\{\xi_{i},\xi_{\ii}\}$,\quad $\{\eta_{\ii},\eta_{i}\}$.
\end{enumerate}

The sectional curvatures $\wh{k}$ of these 2-planes and the
sectional curvatures $k_{ij}$, $k_{i\jj}$, $k_{\ii\jj}$ and $k_{i\ii}$
of the special basic 2-planes in $T_p\MM$:  $J$-totally-real
2-planes $\{e_{i},e_{j}\}$, $\{e_{i},e_{\jj}\}$,
$\{e_{\ii},e_{\jj}\}$ ($i\neq j$) and $J$-holomorphic 2-planes
$\{e_{i},e_{\ii}\}$, respectively, are related as follows:
\[
\begin{array}{ll}
\wh{k}(\xi_{i},\xi_{j})=\dfrac{1}{4}\left(\DDD_uk\right)_{ij}+k_{ij},\quad\quad
&
\wh{k}(\xi_{i},\xi_{\jj})=\dfrac{1}{4}\left(\DDD_uk\right)_{i\jj}+k_{i\jj},
\\[6pt]
\wh{k}(\xi_{i},\eta_{j})=-\dfrac{1}{4}\left(\DDD_uk\right)_{ij},  \quad\quad &
\wh{k}(\xi_{i},\eta_{\jj})=-\dfrac{1}{4}\left(\DDD_uk\right)_{i\jj}  \\[6pt]
\wh{k}(\xi_{\ii},\xi_{\jj})=\dfrac{1}{4}\left(\DDD_uk\right)_{\ii\jj}+k_{\ii\jj},\quad\quad
&
\wh{k}(\xi_{\ii},\eta_{j})=-\dfrac{1}{4}\left(\DDD_uk\right)_{\ii j},
\end{array}
\]
\[
\begin{array}{ll}
\wh{k}(\xi_{\ii},\eta_{\jj})=-\dfrac{1}{4}\left(\DDD_uk\right)_{\ii\jj},\quad\quad
& \wh{k}(\eta_i,\eta_j)=\dfrac{1}{4}\left(\DDD_uk\right)_{ij}-k_{ij},
\\[6pt]
\wh{k}(\eta_{i},\eta_{\jj})=\dfrac{1}{4}\left(\DDD_uk\right)_{i\jj}-k_{i\jj}, \quad\quad &
\wh{k}(\eta_{\ii},\eta_{\jj})=\dfrac{1}{4}\left(\DDD_uk\right)_{\ii\jj}-k_{\ii\jj}.
\end{array}
\]

Therefore, we obtain
\begin{prop}
The manifold $(T\MM,H,\wh{G})$ for an arbitrary almost Norden
manifold $(\MM,J,g)$ has equal sectional curvatures of the
$J_1$-holo\-morphic 2-planes and vanishing sectional curvatures of
the $J_2$-holomor\-phic 2-planes.
\end{prop}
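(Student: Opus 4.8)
The plan is to prove the proposition by directly computing the four sectional curvatures that occur, using the curvature formula \eqref{R} for $\wh\DDD$ together with the explicit isotropic and orthonormal $H$-bases introduced above. A preliminary point is that the construction of an orthonormal $J$-basis $\{e_i,e_{\ii}=Je_i\}$ of $T_p\MM$ (with $g(e_i,e_i)=-g(e_{\ii},e_{\ii})=1$) and of the adapted $H$-basis $\{\xi_i,\xi_{\ii},\eta_i,\eta_{\ii}\}$ of $T_u(T\MM)$ rests only on the linear-algebraic fact that $J$ is a $g$-anti-isometry and on the defining relations \eqref{H}; hence it is available over an \emph{arbitrary} almost Norden base, and the displayed actions $J_1\xi_{\ii}=\eta_i$, $J_2\eta_i=\xi_i$, etc.\ hold without any K\"ahler-type hypothesis.

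First I would promote \eqref{R} to the full $(0,4)$-tensor $\wh R$ on lifts. Pairing the listed $(1,3)$-components with the lifts through $\wh g$ (recall $\wh g(A',B')=\wh g(A'',B'')=0$ and $\wh g(A',B'')=g(A,B)$ from \eqref{5g}) yields the following structural description, which I would record as a short lemma: $\wh R$ vanishes whenever two or more of its four arguments are vertical lifts; if exactly one argument is vertical, then $\wh R(\cdots)=R(X,Y,Z,W)$ with the three horizontal slots read off in order; and if all four arguments are horizontal, then $\wh R(\cdots)=\left(\DDD_u R\right)(X,Y,Z,W)$. The one-vertical signs follow from the antisymmetry $\wh R(a,b,c,d)=-\wh R(b,a,c,d)$ and the symmetry $R(Y,X,Z,W)=-R(X,Y,Z,W)$.

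Next, writing each special plane in the isotropic basis via $\xi_i=\frac{1}{\sqrt2}(e_i'+e_i'')$ and $\eta_i=\frac{1}{\sqrt2}(e_i'-e_i'')$ (and likewise for the barred indices), I would expand the numerators $\wh R(\cdot,\cdot,\cdot,\cdot)$ by multilinearity. For the $J_1$-holomorphic planes $\{\xi_{\ii},\eta_i\}$ and $\{\eta_{\ii},\xi_i\}$ the four one-vertical contributions carry signs $+,-,-,+$ and cancel, leaving in both cases only the all-horizontal term $\frac{1}{4}\left(\DDD_u R\right)(e_{\ii},e_i,e_i,e_{\ii})$; since the bundle analogue $\wh{\pi}_1=-\frac12\wh g\owedge\wh g$ gives $\wh{\pi}_1(\xi_{\ii},\eta_i,\eta_i,\xi_{\ii})=\wh{\pi}_1(\eta_{\ii},\xi_i,\xi_i,\eta_{\ii})=1$, the two $J_1$-holomorphic sectional curvatures coincide, which is the first assertion. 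For the $J_2$-holomorphic planes $\{\eta_i,\xi_i\}$ and $\{\eta_{\ii},\xi_{\ii}\}$ all four base arguments equal $e_i$ (respectively $e_{\ii}$), so every surviving term has the form $R(e_i,e_i,\cdot,\cdot)$ or $\left(\DDD_u R\right)(e_i,e_i,\cdot,\cdot)$ and vanishes by the antisymmetry of $R$ in its first pair; hence the numerators are zero and the $J_2$-holomorphic sectional curvatures vanish, giving the second assertion.

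The main obstacle I anticipate is the careful bookkeeping in the first step: establishing the complete $(0,4)$ curvature table with correct signs for the one-vertical components, and justifying the vanishing of all components with two or more vertical arguments from the partial list \eqref{R}, together with verifying that the one-vertical terms really cancel in the $J_1$-holomorphic numerators. Once this table is in place the remaining steps are routine multilinear expansions; crucially, no K\"ahler-Norden assumption enters anywhere, so the conclusion holds for an arbitrary almost Norden base $(\MM,J,g)$ as stated.
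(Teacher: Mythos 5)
Your proposal is correct and takes essentially the same route as the paper: the paper likewise evaluates $\wh{R}$ on the adapted basis $\{\xi_i,\xi_{\ii},\eta_i,\eta_{\ii}\}$ using the lift formulas \eqref{R} and \eqref{5g}, records the resulting sectional-curvature relations, and reads the proposition off from them. Your explicit $(0,4)$-curvature table (vanishing with two or more vertical slots, $R$ with exactly one, $\DDD_u R$ with none) and your remark that no K\"ahler-Norden hypothesis enters merely make explicit the bookkeeping the paper leaves implicit.
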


Identifying the point $z = (x^1, ..., x^{n+1}; y^1, ..., y^{n+1})$ in
$\R^{2n+2}$ with the position vector $z$, we consider an h-sphere with  center $z_0$ and a pair of parameters
$(a,b)$. The h-sphere $\SSS^{2n}(z_0;\allowbreak{} a, b)$ is the holomorphic hypersurface
of the K\"ahler-Norden man\-i\-fold $\R^{2n+2}$ equipped with the canonical complex structure and the canonical Norden metrics $h$ and $\widetilde h$. Then $\SSS^{2n}(z_0;\allowbreak{} a, b)$ is
defined by
\[
h(z-z_0, z-z_0) = a, \qquad
\h(z-z_0, z-z_0) = b,
\]
where $(0, 0)\allowbreak\neq\allowbreak (a, b) \in\R^2$. It was also considered in Subsection 8.3.2.
Each $\SSS^{2n}$, $n \geq 2$, has vanishing holomorphic
sectional curvatures and constant totally real sectional
curvatures \cite{GaGrMi85}
\[
\nu=\dfrac{a}{a^2+b^2},\qquad \nu^*=-\dfrac{b}{a^2+b^2}.
\]
Then, according to \cite{BoGa}, the curvature
tensor of the h-sphere is
\begin{equation}\label{Rnu}
R = \nu (\pi_1 - \pi_2) + \nu^*\pi_3
\end{equation}
 and
therefore $\DDD R=0$. Moreover, we have
\[
\begin{array}{l}
\rho=2(n-1)(\nu g-\nu^*\g),\\[6pt]
\rho^*=2(n-1)(\nu^* \g+\nu g),\\[6pt]
\tau=4n(n-1)\nu,\\[6pt]
\tau^*=4n(n-1)\nu^*,
\end{array}
\]
where $\rho^*=\rho(R^*)$,
$\tau^*=\tau(R^*)$. Because of the form of $\rho$,  $\SSS^{2n}$ is
called \emph{almost Einstein}.

Let us consider $(T\SSS,\allowbreak{}H,\widehat{G})$, the tangent bundle with almost hypercomplex  structure with Hermitian-Norden metrics
of the h-sphere $(\SSS,J,g)$ with parameters
$(a,b)$, as its base K\"ahler-Norden manifold. Then, bearing in
mind \propref{prop:Fa}, \propref{prop:cl} and \corref{cor:M}
(iii), we get that $(T\SSS,J_1,\widehat{g})\in\W_2(J_1)$ and
$(T\SSS,J_{\alpha},\widehat{g})$ $(\alpha=2,3)$ belongs to
$\left(\W_1\oplus\W_2\oplus\W_3\right)(J_\al)\setminus
\left(\W_i\oplus\W_j\right)(J_\al)$, where $i\neq j\in\{1,2,3\}$.
Moreover, from \eqref{R} and \eqref{Rnu} we get the components of
the curvature tensor $\wh{R}$ of $(T\SSS,H,\widehat{G})$. Then we
have
\[
\begin{array}{l}
\wh{k}(\xi_{i},\xi_{j})=\wh{k}(\xi_{i},\xi_{\jj})=\wh{k}(\xi_{\ii},
\xi_{\jj})\\[6pt]
\phantom{\wh{k}(\xi_{i},\xi_{j})}
=-\wh{k}(\eta_i,\eta_j)=-\wh{k}(\eta_{i},\eta_{\jj})
=-\wh{k}(\eta_{\ii},\eta_{\jj})=\dfrac{a}{a^2+b^2},  \\[6pt]
\wh{k}(\xi_{i},\eta_{j})=\wh{k}(\xi_{i},\eta_{\jj})
=\wh{k}(\xi_{\ii},\eta_{j})=\wh{k}(\xi_{\ii},\eta_{\jj})=0.  \\[6pt]
\end{array}
\]
\begin{cor}
The manifold $(T\SSS,H,\wh{G})$ of an arbitrary h-sphere  $(\SSS,J,\allowbreak{}g)$
has constant sectional curvatures of the $J_{\alpha}$-totally-real
2-planes and vanishing sectional curvatures of the
$J_{\alpha}$-holomorphic 2-planes ($\alpha=1,2,3$).
\end{cor}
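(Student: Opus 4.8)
The plan is to read off both claims directly from the explicit form of the tangent-bundle curvature $\wh R$ obtained just above the statement, exploiting that the base h-sphere is a parallel space. First I would recall the three structural facts about the h-sphere $(\SSS,J,g)$ that drive everything: its curvature has the K\"ahler-type form $R=\nu(\pi_1-\pi_2)+\nu^*\pi_3$ with $\nu=\frac{a}{a^2+b^2}$ and $\nu^*=-\frac{b}{a^2+b^2}$ (equation \eqref{Rnu}); it is covariantly constant, $\DDD R=0$; and consequently its base sectional curvatures are genuine constants, vanishing on every $J$-holomorphic $2$-plane $\{e_i,e_{\ii}\}$ and equal to $\nu$ on every $J$-totally-real plane $\{e_i,e_j\}$, $\{e_i,e_{\jj}\}$, $\{e_{\ii},e_{\jj}\}$ with $i\neq j$.

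Next I would feed these data into the relation \eqref{R} between $\wh R$ and the pair $R,\DDD R$, and then into the sectional-curvature relations for the adapted $H$-basis $\{\xi_i,\xi_{\ii},\eta_i,\eta_{\ii}\}$ displayed above. Because $\DDD R=0$, every term of the form $\frac14(\DDD_u k)$ appearing in those relations drops out, so each $\wh k$ of a special basic $2$-plane reduces either to $\pm$ a single base sectional curvature or to a pure $\DDD_u k$-term that now vanishes; in particular the reduced value no longer depends on the point $p$ or on the fibre coordinate $u$. This is precisely the step that produces the displayed values $\wh k=\pm\frac{a}{a^2+b^2}$ and $\wh k=0$ and that makes the word "constant'' meaningful in the assertion.

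I would then organise the verification along the classification of the special basic $2$-planes in $T_u(T\SSS)$ with respect to $H$. The ten families of type (a) are exactly the $J_\alpha$-totally-real planes for all $\alpha=1,2,3$; for them the reduced relations give $\wh k(\xi_i,\xi_j)=\nu$, $\wh k(\eta_i,\eta_j)=-\nu$ and $\wh k(\xi_i,\eta_j)=0$ together with their barred-index analogues, i.e. the constant values $\pm\frac{a}{a^2+b^2}$ and $0$ already recorded. The families (b), (c), (d) are the $J_1$-, $J_2$- and $J_3$-holomorphic planes respectively; each of these is a same-index pair, so the analogous reduced expression for its $\wh k$ is controlled by the base holomorphic sectional curvature $k_{i\ii}$ (plus $\DDD_u k$-terms), and since both vanish on the h-sphere, all these sectional curvatures vanish. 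For the $J_2$-holomorphic planes this vanishing is moreover guaranteed for an arbitrary almost Norden base by the Proposition established above, which furnishes a consistency check.

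Assembling the pieces, every $J_\alpha$-totally-real $2$-plane has sectional curvature independent of $p$ and $u$, hence constant, while every $J_\alpha$-holomorphic $2$-plane has vanishing sectional curvature, for each $\alpha=1,2,3$, which is the claim. The one genuinely delicate point is the bookkeeping: a fixed adapted-basis plane plays different roles (totally-real for some $\alpha$, holomorphic for another), so the same numerical value must be consistent across those roles. The hard part is thus checking that the sign pattern of the base totally-real curvature $\nu$ across the $\xi$- and $\eta$-families, together with the absence of any surviving $\nu^*$-contribution on these particular planes, is internally compatible with this multiple role assignment; once that is confirmed, the corollary follows with no further computation.
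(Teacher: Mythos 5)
Your proposal is correct and follows essentially the same route as the paper: substitute the h-sphere's data ($\DDD R=0$, vanishing holomorphic and constant totally-real base sectional curvatures from $R=\nu(\pi_1-\pi_2)+\nu^*\pi_3$) into the relation \eqref{R} and the displayed $\wh{k}$-formulas for the adapted $H$-basis, then match the resulting values $\pm\frac{a}{a^2+b^2}$ and $0$ against the classification (a)--(d) of the special $2$-planes. The paper states this more tersely, but the content — including the use of \propref{prop:cl} and the earlier proposition on $J_1$- and $J_2$-holomorphic planes for the same-index pairs — is the same.
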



\vspace{20pt}

\begin{center}
$\divideontimes\divideontimes\divideontimes$
\end{center} 

\newpage

\addtocounter{section}{1}\setcounter{subsection}{0}\setcounter{subsubsection}{0}

\setcounter{thm}{0}\setcounter{dfn}{0}\setcounter{equation}{0}

\label{par:assNij}

 \Large{

\
\\[6pt]
\bigskip

\
\\[6pt]
\bigskip

\lhead{\emph{Chapter II $|$ \S\thesection. {Associated Nijenhuis tensors on almost hypercomplex manifolds 
\ldots 
}}}


\noindent
\begin{tabular}{r"l}
\hspace{-6pt}{\Huge\bf \S\thesection.}  & {\Huge\bf Associated Nijenhuis tensors on } \\[12pt]
                             & {\Huge\bf almost hypercomplex manifolds } \\[12pt]
                             & {\Huge\bf with Hermitian-Norden metrics}
\end{tabular}

\vskip 1cm

\begin{quote}
\begin{large}
In the present section, it is introduced an associated Nijenhuis tensor of endomorphisms in the tangent bundle of an almost hypercomplex manifold with Hermitian-Norden metrics.
There are studied relations between the six associated Nijenhuis tensors of an almost hypercomplex structure as well as their vanishing.
It is given a geometric interpretation of the associated Nijenhuis tensors for an almost hypercomplex structure and Hermitian-Norden metrics.
Finally, an example of a 4-dimensional manifold of the considered type with vanishing associated Nijenhuis tensors is given.

The main results of this section are published in \cite{Man52}.
\end{large}
\end{quote}

%
%
%

\vskip 0.15in

The vanishing of the Nijenhuis tensors as conditions for in\-te\-gra\-bility of the manifold are long-known \cite{YaAk}.
The goal here is to introduce an appropriate tensor on an almost hypercomplex manifold and to establish that its vanishing is a necessary and sufficient condition for existing of an affine connection with totally skew-symmetric torsion preserving the almost hypercomplex structure and Hermitian-Norden metrics.

The present section is organised as follows. In Subsection~\thesection.1, for endomorphisms in the tangent bundle, it is introduced a symmetric tensor, which is associated to the Nijenhuis tensor.
In Subsection~\thesection.2, it is deduced a series of relations between the six associated Nijenhuis tensors of an almost hypercomplex structure as well as it is proved that if two of these six tensors vanish, then the others also vanish.
In Subsection~\thesection.3, it is proved the main theorem for the geometric interpretation of the associated Nijenhuis tensors for an almost hypercomplex structure and
Hermitian-Norden metrics.
In Subsection~\thesection.4, it is given an example of a 4-dimensional manifold of the considered type with vanishing associated Nijenhuis tensors.


\vskip 0.2in \addtocounter{subsection}{1} \setcounter{subsubsection}{0}

\noindent  {\Large\bf \thesubsection. The associated Nijenhuis tensors of endomorphisms}

\vskip 0.15in

Let us consider a differentiable manifold $\MM$ and let the symmetric braces $\{\cdot,\cdot\}$ be defined by \eqref{braces}
for all differentiable vector fields on $\MM$.

As it is well known \cite{KoNo-1}, the Nijenhuis tensor $[J,K]$ for arbitrary tensor fields $J$ and $K$ of type (1,1) on $\MM$ is a tensor field of type (1,2) defined by
\begin{equation}\label{N-JK}
\begin{array}{l}
  2[J,K](x,y) = (JK+KJ)[x,y]\\[6pt]
  \phantom{2[J,K](x,y) = }
  +[J x,K y]-J[K x, y]-J[x,K y]\\[6pt]
  \phantom{2[J,K](x,y) = }
  +[K x,J y]-K[J x, y]-K[x,J y].
\end{array}
\end{equation}

As a consequence, the Nijenhuis tensor $[J,J]$ for an arbitrary tensor field $J$ is determined by the following equality
\begin{equation}\label{NJ-arbitrary}
[J, J](x, y)=\left[Jx,Jy\right]+J^2\left[x,y\right]-J\left[Jx,y\right]-J\left[x,Jy\right].
\end{equation}

If $J$ is an almost complex structure, then the Nijenhuis tensor of $J$ is determined by the latter formula but with $J^2=-I$, where $I$ stands for the identity, i.e. \eqref{NJ} is valid.

Besides that, by \eqref{hatNJ} 
it is defined the 
associated Nijenhuis tensor $\{J,J\}$ of the almost complex structure $J$ and the pseudo-Riemannian metric $g$. 

Bearing in mind \eqref{N-JK}, we give the following
\begin{dfn}
The (1,2)-tensor $\{J,K\}$ for (1,1)-tensors $J$ and $K$, defined by
\begin{equation}\label{1.1}
\begin{array}{l}
  2\{J,K\}(x,y) = (JK+KJ)\{x,y\}\\[6pt]
  \phantom{2\{J,K\}(x,y) = }
  +\{J x,K y\}-J\{K x, y\}-J\{x,K y\}\\[6pt]
  \phantom{2\{J,K\}(x,y) = }
  +\{K x,J y\}-K\{J x, y\}-K\{x,J y\},
\end{array}
\end{equation}
is called an \emph{associated Nijenhuis tensor of two endomorphisms on a pseudo-Riemannian  manifold}.
\end{dfn}
Obviously, it is symmetric with respect to tensor (1,1)-fields, i.e.
\begin{equation}\label{sym}
\{J,K\}(x,y)=\{K,J\}(x,y).
\end{equation}
Moreover, it is symmetric with respect to vector fields, i.e.
\[
\{J,K\}(x,y)=\{J,K\}(y,x).
\]

In the case when some of the tensor fields in $\{J,K\}$ is the identity $I$ (let us suppose $K=I$), then \eqref{1.1} yields that the corresponding associated
Nijenhuis tensor vanishes, i.e.
\begin{equation}\label{1.1a}
  \{J,I\}(x,y) = 0.
\end{equation}

For the case of two identical tensor (1,1)-fields, \eqref{1.1} implies
\begin{equation}\label{1.2}
\begin{array}{l}
  \{J,J\}(x,y) = J^2\{x,y\}+\{J x,J y\}
  -J\{J x, y\}-J\{x,J y\}.
\end{array}
\end{equation}

Let $L$ be also a tensor field of type (1,1) and $S$ be a tensor field of type (1,2).
Further, we use the following notation of Fr\"olicher-Nijenhuis from \cite{FrNi}
\begin{align}
  (S\barwedge L)(x,y)&=S(Lx,y)+S(x,Ly),\label{1.4}\\[6pt]
  (L\barwedge S)(x,y)&=L\bigl(S(x,y)\bigr).\label{1.5}
\end{align}
According to \cite{YaAk}, the following properties of the latter notation are valid
\begin{align}
  \bigl(S\barwedge J\bigr)\barwedge K - \bigl(S\barwedge K\bigr)\barwedge J &=
  S \barwedge JK - S \barwedge KJ,\label{1.7}\\[6pt]
  (J\barwedge S)\barwedge K &= J\barwedge (S\barwedge K).\label{1.8}
\end{align}
%
%

\begin{lem}
For arbitrary endomorphisms $J$, $K$ and $L$ in $\mathfrak{X}(\MM)$ and the relevant associated Nijenhuis tensors,  the following identity holds
\begin{equation}\label{1.6}
\begin{array}{l}
  \{J,KL\}+\{K,JL\}
  =\{J,K\}\barwedge L + J \barwedge \{K,L\} \\[6pt]
  \phantom{\{J,KL\}+\{K,JL\}
  =\{J,K\}\barwedge L}
  + K \barwedge \{J,L\}.
\end{array}
\end{equation}
\end{lem}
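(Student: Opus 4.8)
The plan is to treat \eqref{1.6} as a purely algebraic identity in the bracket operation, independent of any geometry. The essential observation is that the defining formula \eqref{1.1} of the associated Nijenhuis tensor $\{J,K\}$ is obtained from the defining formula \eqref{N-JK} of the ordinary Nijenhuis tensor $[J,K]$ by replacing the antisymmetric Lie brackets $[\cdot,\cdot]$ with the symmetric braces $\{\cdot,\cdot\}$ everywhere. Since \eqref{braces} exhibits the symmetric braces as an $\R$-bilinear operation on vector fields, and since the classical counterpart of \eqref{1.6} for $[J,K]$ recorded in \cite{YaAk} uses no property of the bracket beyond bilinearity (neither the Jacobi identity nor antisymmetry), the same formal manipulation will apply verbatim to $\{J,K\}$. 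Thus I would first verify that \eqref{1.6} holds for an arbitrary $\R$-bilinear operation $B(\cdot,\cdot)$ written in place of $\{\cdot,\cdot\}$, and only afterwards specialise $B=\{\cdot,\cdot\}$.

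First I would expand the left-hand side. Writing the compositions $KL$ and $JL$ as single $(1,1)$-tensors and inserting them into \eqref{1.1}, I obtain $2\{J,KL\}(x,y)$ and $2\{K,JL\}(x,y)$. The coefficient of the undeformed brace $\{x,y\}$ collects to $(JKL+KLJ+KJL+JLK)\{x,y\}$, while the remaining contributions are braces of the type $\{Jx,KLy\}$, $\{KLx,Jy\}$, $\{Kx,JLy\}$, $\{JLx,Ky\}$ carrying an outer factor $J$ or $K$, together with the terms bearing the composite factors $KL$ and $JL$ outside, as in the special case \eqref{1.2}.

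Next I would expand the right-hand side using the Fr\"olicher--Nijenhuis notations \eqref{1.4} and \eqref{1.5}. The summand $\{J,K\}\barwedge L$ splits as $\{J,K\}(Lx,y)+\{J,K\}(x,Ly)$, each piece re-expanded by \eqref{1.1}, whereas $J\barwedge\{K,L\}$ and $K\barwedge\{J,L\}$ are simply $J\bigl(\{K,L\}(x,y)\bigr)$ and $K\bigl(\{J,L\}(x,y)\bigr)$. Collecting all contributions (doubled, to clear the factor $2$), the $\{x,y\}$-coefficient supplied by $J\barwedge\{K,L\}$ and $K\barwedge\{J,L\}$ reproduces exactly $(JKL+JLK+KJL+KLJ)\{x,y\}$; the terms $(JK+KJ)\{Lx,y\}$ and $(JK+KJ)\{x,Ly\}$ arising from $\{J,K\}\barwedge L$ are killed by the $-JK$ and $-KJ$ contributions of the other two summands; and the mixed braces $\{Lx,Ky\}$, $\{Kx,Ly\}$, $\{Lx,Jy\}$, $\{Jx,Ly\}$ each occur twice with opposite outer factors and cancel in pairs. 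What survives matches the left-hand side term by term.

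The computation is entirely a matter of bookkeeping, so the main obstacle is marshalling the roughly a dozen terms on each side without slip; grouping them by the structural type of the brace argument, namely $\{x,y\}$, the pair $\{Lx,y\}$ and $\{x,Ly\}$, the heavy braces $\{Jx,KLy\}$ and their analogues, and the mixed braces $\{Lx,Ky\}$ and their analogues, renders every cancellation transparent. One may also bypass this bookkeeping altogether by invoking the analogue of \eqref{1.6} for the ordinary Nijenhuis tensor from \cite{YaAk}, whose derivation rests only on the bilinearity of the bracket together with \eqref{1.7} and \eqref{1.8}, and then noting that substituting $\{\cdot,\cdot\}$ for $[\cdot,\cdot]$ throughout leaves that derivation intact.
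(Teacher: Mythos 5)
Your proposal is correct and follows essentially the same route as the paper: a direct expansion of both sides via \eqref{1.1}, \eqref{1.4} and \eqref{1.5}, with the cancellations you describe (the $(JK+KJ)\{Lx,y\}$ and $(JK+KJ)\{x,Ly\}$ terms against $-JK$, $-KJ$, and the mixed braces in opposite-signed pairs) occurring exactly as in the paper's computation. The additional observation that the identity uses only bilinearity of the braces is a nice framing but does not change the substance of the argument.
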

\begin{proof}
Using \eqref{1.1}, \eqref{1.4} and \eqref{1.5}, we get consequently
\[
\begin{array}{l}
  \bigl(\{J,K\}\barwedge L\bigr) (x,y)+ \bigl(J \barwedge \{K,L\}\bigr)(x,y) + \bigl(K \barwedge \{J,L\}\bigr)(x,y)\\[6pt]
  =\{J,K\} (Lx,y)+ \{J,K\} (x,Ly)+ J\bigl( \{K,L\}(x,y)\bigr)\\[6pt]
  \phantom{=} + K\bigl( \{J,L\}(x,y)\bigr)\\[6pt]
  =\dfrac12\bigl[(JK+KJ) \{Lx,y\} +\{JLx,Ky\} +\{KLx,Jy\}\\[6pt] \qquad
   -J\{KLx,y\}  -K\{JLx,y\}  -J\{Lx,Ky\} -K\{Lx,Jy\}\\[6pt] \qquad
   +(JK+KJ) \{x,Ly\} +\{Jx,KLy\} +\{Kx,JLy\}\\[6pt] \qquad
   -J\{Kx,Ly\}  -K\{Jx,Ly\}  -J\{x,KLy\} -K\{x,JLy\}\\[6pt] \qquad
   +J
   (KL+LK)\{x,y\}+J\{K x,L y\}+J\{L x,K y\}\\[6pt]
   \qquad
  -JK\{L x, y\}-JL\{K x, y\}-JK\{x,L y\}-JL\{x,K y\}
   \\[6pt]
   \qquad
   +K
   (JL+LJ)\{x,y\}+K\{J x,L y\}+K\{L x,J y\}\\[6pt] \qquad
  -KJ\{L x, y\}-KL\{J x, y\}-KJ\{x,L y\}-KL\{x,J y\}
   \bigl] 
\end{array}
\]
\[
\begin{array}{l}
   =\dfrac12\bigl[\{JLx,Ky\} +\{KLx,Jy\}
   -J\{KLx,y\}  -K\{JLx,y\} \\[6pt] \qquad
    +\{Jx,KLy\} +\{Kx,JLy\}
   -J\{x,KLy\} -K\{x,JLy\}\\[6pt] \qquad
   +J
   (KL+LK)\{x,y\}
  -JL\{K x, y\}-JL\{x,K y\}
   \\[6pt]
   \qquad
   +K
   (JL+LJ)\{x,y\}
  -KL\{J x, y\}-KL\{x,J y\}
   \bigl] \\[6pt]
  = \{J,KL\}(x,y)+\{K,JL\}(x,y).
\end{array}
\]
Therefore, \eqref{1.6} is proved.
\end{proof}

\vskip 0.2in \addtocounter{subsection}{1} \setcounter{subsubsection}{0}

\noindent  {\Large\bf \thesubsection. The associated Nijenhuis tensors of the almost hypercomplex structure}

\vskip 0.15in

Let $\MM$ be a differentiable manifold of dimension $4n$. It admits an almost hypercomplex structure  $(J_1,J_2,J_3)$, i.e. a triad of almost complex structures satisfying the properties \eqref{J123}.

\vskip 0.2in \addtocounter{subsubsection}{1}

\noindent  {\Large\bf{\emph{\thesubsubsection. Relations between the associated Nijenhuis tensors}}}

\vskip 0.15in

The presence of three almost complex structures, which form an almost hypercomplex structure, implies the existence of six associated Nijenhuis tensors: three for each almost complex structure and three for each pair of almost complex structures,  namely
\[
\{J_1,J_1\},\quad \{J_2,J_2\},\quad \{J_3,J_3\},\quad
\{J_1,J_2\},\quad \{J_2,J_3\},\quad \{J_3,J_1\}.
\]
They are called the \emph{associated Nijenhuis tensors of the almost hypercomplex structure $(J_1,J_2,J_3)$ on $(\MM,g)$}.

In the present section we examine some relations between them, which we use later.

\begin{lem}\label{lem:3.1}
The following relations between the associated Nijenhuis tensors of an almost hypercomplex manifold are valid:
\begin{gather}
  \{J_3,J_1\} =\dfrac12 \{J_1,J_1\}\barwedge J_2 + J_1 \barwedge \{J_1,J_2\},\label{2.2}
\\[6pt]
  \{J_3,J_1\}
  =-\{J_1,J_2\}\barwedge J_1 - J_1 \barwedge \{J_1,J_2\} - J_2 \barwedge \{J_1,J_1\},\label{2.3}
\\[6pt]
 \begin{split}\label{2.5}
J_2 \barwedge \{J_1,J_1\}&+\dfrac12 \{J_1,J_1\}\barwedge J_2\\[6pt]
&
+ 2 J_1 \barwedge \{J_1,J_2\}  +\{J_1,J_2\}\barwedge J_1
=0,
\end{split}
\\[6pt]
  \{J_2,J_3\}
  =-\dfrac12 \{J_2,J_2\}\barwedge J_1 - J_2 \barwedge \{J_1,J_2\},\label{2.6}
\end{gather}
\begin{gather}
  \{J_2,J_3\}
  =J_1 \barwedge \{J_2,J_2\} + \{J_1,J_2\}\barwedge J_2 + J_2 \barwedge \{J_1,J_2\},\label{2.7}
\\[6pt]
 \begin{split}\label{2.9}
J_1 \barwedge \{J_2,J_2\} &+\dfrac12 \{J_2,J_2\}\barwedge J_1\\[6pt]
&
  + \{J_1,J_2\}\barwedge J_2 + 2 J_2 \barwedge \{J_1,J_2\}=0,
\end{split}
\\[6pt]
\begin{split}\label{2.10}
  \{J_3,J_3\}-\{J_1,J_1\}
  =\{J_3,J_1\}\barwedge J_2 &+ J_3 \barwedge \{J_1,J_2\}\\[6pt]
  &+ J_1 \barwedge \{J_2,J_3\},
\end{split}
\\[6pt]
\begin{split}\label{2.12}
  \{J_3,J_3\}=\dfrac12\bigl(\{J_1,J_1\}
  &+\{J_3,J_1\}\barwedge J_2 - J_2 \barwedge \{J_3,J_1\}\phantom{\bigr),}\\[6pt]
  &-\{J_2,J_3\}\barwedge J_1+ J_1 \barwedge \{J_2,J_3\}\bigr),
\end{split}
\\[6pt]
\begin{split}\label{2.13}
\{J_1,J_1\}&-\{J_2,J_2\}+\{J_3,J_1\}\barwedge J_2 + J_2 \barwedge \{J_3,J_1\}\phantom{=0.\ }\\[6pt]
&+ 2J_3 \barwedge \{J_1,J_2\}
+\{J_2,J_3\}\barwedge J_1+ J_1 \barwedge \{J_2,J_3\}=0,
\end{split}
\\[6pt]
    \{J_2,J_2\}\barwedge J_2 =- 2 J_2 \barwedge \{J_2,J_2\}.\label{2.15}
\end{gather}
\end{lem}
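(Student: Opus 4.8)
The plan is to derive the entire list \eqref{2.2}--\eqref{2.15} from the single master identity \eqref{1.6}, specialising the three endomorphisms $J,K,L$ to members of $\{J_1,J_2,J_3\}$ (or to the identity $I$) and then simplifying by means of the quaternionic multiplication rules in \eqref{J123}, i.e. $J_\al J_\bt=J_\gm=-J_\bt J_\al$ and $J_\al^2=-I$ for every cyclic $(\al,\bt,\gm)$. Three auxiliary facts will be used throughout: the symmetry \eqref{sym} in the two endomorphism slots; the vanishing \eqref{1.1a} when one slot is $I$; and scalar-linearity in each slot, in particular $\{J,-K\}=-\{J,K\}$, which is immediate from \eqref{1.1} since the braces satisfy $\{x,-y\}=-\{x,y\}$. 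I would also record that $S\barwedge I=2S$, which follows at once from \eqref{1.4}.

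First I would treat the \emph{mixed} identities. For \eqref{2.2} I take $(J,K,L)=(J_1,J_1,J_2)$ in \eqref{1.6}: then $KL=JL=J_1J_2=J_3$, so the left-hand side collapses to $2\{J_1,J_3\}=2\{J_3,J_1\}$, while the right-hand side equals $\{J_1,J_1\}\barwedge J_2+2\,J_1\barwedge\{J_1,J_2\}$, and dividing by two gives \eqref{2.2}. For \eqref{2.3} I take $(J,K,L)=(J_1,J_2,J_1)$, so that $KL=J_2J_1=-J_3$ and $JL=J_1^2=-I$; here $\{J_1,-J_3\}=-\{J_3,J_1\}$ and $\{J_2,-I\}=0$, which after applying \eqref{sym} produces exactly \eqref{2.3}. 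The single-structure relation \eqref{2.15} comes the same way from $(J,K,L)=(J_2,J_2,J_2)$, where both products equal $-I$ and the left-hand side therefore vanishes. The analogous choices $(J_2,J_2,J_1)$ and $(J_2,J_1,J_2)$ yield \eqref{2.6} and \eqref{2.7}. The two consistency relations \eqref{2.5} and \eqref{2.9} are then obtained without further appeal to \eqref{1.6}: I simply equate the two expressions already produced for a single mixed tensor---\eqref{2.2} against \eqref{2.3} for $\{J_3,J_1\}$, and \eqref{2.6} against \eqref{2.7} for $\{J_2,J_3\}$---and collect the $\barwedge$-terms.

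Next come the \emph{diagonal} identities. Taking $(J,K,L)=(J_3,J_1,J_2)$ gives $KL=J_3$ and $JL=J_3J_2=-J_1$, so the left-hand side is $\{J_3,J_3\}-\{J_1,J_1\}$ and the right-hand side is precisely the combination appearing in \eqref{2.10}. Symmetrically, $(J,K,L)=(J_3,J_2,J_1)$ gives $KL=-J_3$, $JL=J_3J_1=J_2$, hence a second relation whose left-hand side is $\{J_2,J_2\}-\{J_3,J_3\}$. These two applications are the heart of the diagonal part: their sum cancels the $\{J_3,J_3\}$ terms and yields \eqref{2.13}, and isolating $\{J_3,J_3\}$ from the same pair (using \eqref{2.13} to remove the surplus diagonal contribution) yields \eqref{2.12}. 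I would carry out the cyclic version $(J_1,J_2,J_3)$ as a cross-check, since it gives a third expression for a diagonal difference and confirms internal consistency of the $\barwedge$-terms.

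The computations are entirely mechanical once the substitutions are fixed, so I do not expect a conceptual obstacle; the real care goes into bookkeeping. The main pitfalls will be (i) the signs in the quaternion products, e.g. distinguishing $J_1J_2=J_3$ from $J_2J_1=-J_3$ and $J_\al^2=-I$, and (ii) tracking which associated Nijenhuis tensors acquire a minus sign through scalar-linearity whenever $JL$ or $KL$ evaluates to $-J_\gm$ or $-I$. Consistent use of the Fr\"olicher--Nijenhuis conventions \eqref{1.4}--\eqref{1.5} and of the symmetry \eqref{sym} to normalise every tensor $\{J_i,J_j\}$ with $i\le j$ will keep the terms comparable, after which each stated identity reads off directly.
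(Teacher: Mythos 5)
Your derivation follows the paper's proof essentially verbatim: each identity comes from the same specialisation of \eqref{1.6} (your choices for \eqref{2.3} and \eqref{2.7} differ from the paper's only by the $J\leftrightarrow K$ symmetry of that identity), and \eqref{2.5}, \eqref{2.9}, \eqref{2.13} are obtained by combining pairs exactly as the paper does. The one step to re-examine is \eqref{2.12}: the half-difference of \eqref{2.10} and its cyclic image yields $\{J_3,J_3\}=\tfrac12\bigl(\{J_1,J_1\}+\{J_2,J_2\}+\{J_3,J_1\}\barwedge J_2 - J_2\barwedge\{J_3,J_1\}-\{J_2,J_3\}\barwedge J_1+J_1\barwedge\{J_2,J_3\}\bigr)$, and eliminating the surplus $\{J_2,J_2\}$ via \eqref{2.13} as you propose collapses the relation back to \eqref{2.10} rather than to the printed \eqref{2.12} --- an issue your route shares with the paper's own (equally terse) argument and which does not affect the later use of \eqref{2.12}, where all off-diagonal tensors vanish.
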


\begin{proof}
The identity \eqref{1.6} for $J=K=J_1$ and $L=J_2$ has the form in \eqref{2.2}, because of $J_1J_2=J_3$ from \eqref{J123} and \eqref{sym}.

On the other hand,  \eqref{1.6} for $J=J_2$ and $K=L=J_1$ implies
\[
  \{J_2,J_1J_1\}+\{J_1,J_2J_1\}
  =\{J_2,J_1\}\barwedge J_1 + J_2 \barwedge \{J_1,J_1\}
  + J_1 \barwedge \{J_2,J_1\}.
\]
Next, applying $J_1^2=-I$, $J_2J_1=-J_3$, which are corollaries of \eqref{J123}, as well as \eqref{sym} and \eqref{1.1a}, we obtain
\eqref{2.3}.

By virtue of \eqref{2.2} and \eqref{2.3} we get 
\eqref{2.5}.

Next, setting $J=K=J_2$ and $L=J_1$ in \eqref{1.6}, we find
\[
  \{J_2,J_2J_1\}+\{J_2,J_2J_1\}
  =\{J_2,J_2\}\barwedge J_1 + J_2 \barwedge \{J_2,J_1\}
   + J_2 \barwedge \{J_2,J_1\},
\]
which is equivalent to \eqref{2.6}, taking into account \eqref{sym} and $J_2J_1=-J_3$.

On the other hand, setting $J=J_1$ and $K=L=J_2$ in \eqref{1.6}, we find
\[
  \{J_1,J_2J_2\}+\{J_2,J_1J_2\}
  =\{J_1,J_2\}\barwedge J_2 + J_1 \barwedge \{J_2,J_2\}
  + J_2 \barwedge \{J_1,J_2\}
\]
and applying \eqref{1.1a} and $J_1J_2=J_3$, we have \eqref{2.7}.

By virtue of \eqref{2.6} and \eqref{2.7} we get 
\eqref{2.9}.

Now, setting $J=J_3$, $K=J_1$ and $L=J_2$ in \eqref{1.6} and applying $J_1J_2=J_3$, $J_3J_2=-J_1$ and \eqref{sym}, we have
\eqref{2.10}.

The equality \eqref{2.10} and the resulting equality from it by the substitutions $J_3$, $J_1$, $J_2$ for $J_1$, $J_2$, $J_3$, respectively, imply \eqref{2.12} and \eqref{2.13}.



At the end, the identity \eqref{1.6} for $J=K=L=J_2$ implies
\eqref{2.15} because of \eqref{1.1a}.
\end{proof}

\vskip 0.2in \addtocounter{subsubsection}{1}

\noindent  {\Large\bf{\emph{\thesubsubsection. Vanishing of the associated Nijenhuis tensors}}}

\vskip 0.15in

Now, we will study at least how many associated Nijenhuis tensors (and which) must be vanished to become all associated Nijenhuis tensors zeros on an almost hypercomplex manifold.

As proof steps of the relevant main theorem, we will prove a series of lemmas.

\begin{lem}\label{thm:2}
If $\{J_1,J_1\}$ and $\{J_2,J_2\}$ vanish, then $\{J_1,J_2\}$, $\{J_2,J_3\}$, $\{J_3,J_1\}$ and $\{J_3,J_3\}$ vanish.
\end{lem}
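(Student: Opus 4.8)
The plan is to first reduce everything to showing that the single mixed associated Nijenhuis tensor $\{J_1,J_2\}$ vanishes; once that is known, the remaining four tensors fall out by pure substitution into the relations collected in \lemref{lem:3.1}. Writing $N:=\{J_1,J_2\}$ for brevity and imposing the hypotheses $\{J_1,J_1\}=\{J_2,J_2\}=0$, the identities \eqref{2.5} and \eqref{2.9} collapse to
\[
2\,J_1\barwedge N+N\barwedge J_1=0,\qquad N\barwedge J_2+2\,J_2\barwedge N=0,
\]
which, evaluated pointwise via \eqref{1.4} and \eqref{1.5}, read
\[
N(J_1x,y)+N(x,J_1y)=-2J_1N(x,y),\qquad N(J_2x,y)+N(x,J_2y)=-2J_2N(x,y).
\]

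The substantial part is to extract $N=0$ from these two first-order constraints. First I would apply the right operation $\barwedge J_\alpha$ ($\alpha=1,2$) to each of the two displayed tensor equations and simplify using the associativity rule \eqref{1.8} together with $J_\alpha^2=-I$. A short computation, feeding the pointwise constraints back in, then produces the anti-invariance properties
\[
N(J_1x,J_1y)=-N(x,y),\qquad N(J_2x,J_2y)=-N(x,y).
\]
Substituting $x\mapsto J_\alpha x$ into the original constraints and invoking these anti-invariance relations upgrades the constraints to the sharper identities
\[
N(J_\alpha x,y)=N(x,J_\alpha y)=-J_\alpha N(x,y),\qquad \alpha=1,2.
\]

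The decisive step is to evaluate $N(J_1x,J_2y)$ in two different ways with the last identities. Pulling $J_1$ out of the first argument gives $J_1J_2N(x,y)=J_3N(x,y)$, whereas pulling $J_2$ out of the second argument gives $J_2J_1N(x,y)=-J_3N(x,y)$, the sign discrepancy being exactly the anticommutation $J_1J_2=-J_2J_1$ of \eqref{J123}. Hence $2J_3N(x,y)=0$, and since $J_3$ is invertible we conclude $N=\{J_1,J_2\}=0$. The only genuinely computational obstacle is this passage from the two constraints to the anti-invariance relations and then to $N=0$; everything after it is formal.

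Finally, substituting $\{J_1,J_1\}=\{J_2,J_2\}=\{J_1,J_2\}=0$ into \eqref{2.2}, \eqref{2.6} and \eqref{2.10} shows successively that $\{J_3,J_1\}=0$, then $\{J_2,J_3\}=0$, and then $\{J_3,J_3\}=\{J_1,J_1\}=0$, which completes the argument.
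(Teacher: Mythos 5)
Your proof is correct, and its outer skeleton coincides with the paper's: everything is reduced to showing $\{J_1,J_2\}=0$, after which $\{J_3,J_1\}$, $\{J_2,J_3\}$ and $\{J_3,J_3\}$ are read off from \eqref{2.2}, \eqref{2.6} and \eqref{2.10} (the paper uses \eqref{2.12} for the last one, which is equivalent here). The core step, however, is carried out by a genuinely different mechanism. The paper substitutes the consequences of \eqref{2.2} and \eqref{2.6} into \eqref{2.13} and then manipulates entirely at the level of the $\barwedge$ calculus, using \eqref{1.8} and the collapsed forms of \eqref{2.5} and \eqref{2.9}, to arrive at $J_3\barwedge\{J_1,J_2\}=0$. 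You instead work pointwise from \eqref{2.5} and \eqref{2.9} alone: right-composing with $\barwedge J_\alpha$ yields $N(J_\alpha x,J_\alpha y)=-N(x,y)$, the substitution $x\mapsto J_\alpha x$ then sharpens this to $N(J_\alpha x,y)=N(x,J_\alpha y)=-J_\alpha N(x,y)$ for $\alpha=1,2$, and evaluating $N(J_1x,J_2y)$ two ways forces $2J_3N=0$ via $J_1J_2=-J_2J_1$. I checked each of these computations and they are sound. Your route needs strictly fewer inputs --- \eqref{2.13} is never used to kill $\{J_1,J_2\}$, only the two ``diagonal'' relations --- and it isolates a transparent structural reason for the vanishing, namely that a tensor which is simultaneously $J_1$- and $J_2$-anti-linear in each slot must vanish by anticommutativity; the paper's argument, by staying inside the operator calculus, keeps its proof uniform with those of the companion lemmas on the other vanishing pairs, at the cost of invoking one more relation.
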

\begin{proof}
The formulae \eqref{2.2} and \eqref{2.5}, because of $\{J_1,J_1\}=0$, imply respectively
\begin{gather}
  \{J_3,J_1\} = J_1 \barwedge \{J_1,J_2\},\label{3.1}\\[6pt]
\{J_1,J_2\}\barwedge J_1
=-2 J_1 \barwedge \{J_1,J_2\}.\label{3.2}
\end{gather}

Similarly, since $\{J_2,J_2\}=0$, the equalities \eqref{2.6} and \eqref{2.9} take the corresponding form
\begin{gather}
  \{J_2,J_3\}
  =- J_2 \barwedge \{J_1,J_2\},\label{3.3}\\[6pt]
\{J_1,J_2\}\barwedge J_2 =- 2 J_2 \barwedge \{J_1,J_2\}.\label{3.4}
\end{gather}

We set $\{J_1,J_1\}=0$, $\{J_2,J_2\}=0$, \eqref{3.1} and \eqref{3.3} in  \eqref{2.13} and we obtain
\[
\begin{array}{l}
2J_3 \barwedge \{J_1,J_2\}
+\bigl(J_1 \barwedge \{J_1,J_2\}\bigr)\barwedge J_2 + J_2 \barwedge \bigl(J_1 \barwedge \{J_1,J_2\}\bigr)\\[6pt]
  \phantom{2J_3 \barwedge \{J_1,J_2\}}
-\bigl(J_2 \barwedge \{J_1,J_2\}\bigr)\barwedge J_1- J_1 \barwedge \bigl(J_2 \barwedge \{J_1,J_2\}\bigr)=0.
\end{array}
\]
The latter equality is equivalent to
\begin{equation}\label{3.5}
\begin{array}{l}
\bigl(J_1 \barwedge \{J_1,J_2\}\bigr)\barwedge J_2
=\bigl(J_2 \barwedge \{J_1,J_2\}\bigr)\barwedge J_1,
\end{array}
\end{equation}
because of the following corollaries of \eqref{1.5} and the identities $J_3=J_1J_2=-J_2J_1$ from \eqref{J123}
\begin{equation}\label{3.5'}
\begin{array}{l}
J_2 \barwedge \bigl(J_1 \barwedge \{J_1,J_2\}\bigr)=
-J_1 \barwedge \bigl(J_2 \barwedge \{J_1,J_2\}\bigr)\\[6pt]
\phantom{J_2 \barwedge \bigl(J_1 \barwedge \{J_1,J_2\}\bigr)}
=
-J_3 \barwedge \{J_1,J_2\}.
\end{array}
\end{equation}

According to \eqref{1.8}, \eqref{3.2}, \eqref{3.4} and \eqref{3.5'}, the equality \eqref{3.5} yields
\[
\begin{array}{l}
0=
\bigl(J_1 \barwedge \{J_1,J_2\}\bigr)\barwedge J_2
-\bigl(J_2 \barwedge \{J_1,J_2\}\bigr)\barwedge J_1\\[6pt]
\phantom{0}
=
J_1 \barwedge \bigl(\{J_1,J_2\}\barwedge J_2\bigr)
-J_2 \barwedge \bigl(\{J_1,J_2\}\barwedge J_1\bigr)\\[6pt]
\phantom{0}
=
-2J_1 \barwedge \bigl(J_2\barwedge \{J_1,J_2\}\bigr)
+2J_2 \barwedge \bigl(J_1\barwedge \{J_1,J_2\}\bigr)\\[6pt]
\phantom{0}
=
-4J_3 \barwedge \{J_1,J_2\},
\end{array}
\]
i.e. it is valid
\begin{equation}\label{3.6}
\begin{array}{l}
J_3 \barwedge \{J_1,J_2\}=0.
\end{array}
\end{equation}
Therefore, because of $J_3^2=-I$, we get
\[
\{J_1,J_2\}=0.
\]

Next, \eqref{3.1} and \eqref{3.3} imply
$\{J_3,J_1\}=0$ and $\{J_2,J_3\}=0$, respectively.
Finally, since $\{J_1,J_1\}$, $\{J_2,J_2\}$, $\{J_2,J_3\}$ and $\{J_3,J_1\}$ vanish, the relation
\eqref{2.12} yields $\{J_3,J_3\}=0$.
\end{proof}

\begin{lem}\label{thm:4}
If $\{J_1,J_1\}$ and $\{J_1,J_2\}$ vanish, then $\{J_2,J_2\}$, $\{J_3,J_3\}$, $\{J_2,J_3\}$ and $\{J_3,J_1\}$ vanish.
\end{lem}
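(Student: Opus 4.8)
The plan is to follow the strategy already used for \lemref{thm:2}: specialise the relations of \lemref{lem:3.1} under the two hypotheses, isolate the single surviving tensor $\{J_2,J_2\}$, and then force it to vanish by producing two incompatible expressions for one auxiliary quantity. Throughout I abbreviate $A=\{J_2,J_2\}$.

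First I would record the immediate consequences of $\{J_1,J_1\}=0$ and $\{J_1,J_2\}=0$. Relation \eqref{2.2} gives at once $\{J_3,J_1\}=\frac{1}{2}\{J_1,J_1\}\barwedge J_2+J_1\barwedge\{J_1,J_2\}=0$. Relation \eqref{2.7} collapses to $\{J_2,J_3\}=J_1\barwedge A$, while \eqref{2.9} together with the always-valid \eqref{2.15} reduce to the two ``twisted commutation'' identities $A\barwedge J_1=-2\,J_1\barwedge A$ and $A\barwedge J_2=-2\,J_2\barwedge A$. These, with $\{J_3,J_1\}=0$ already in hand, are the raw material for the whole argument.

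The crux is to prove $A=0$, and here I anticipate the main obstacle. The obvious route --- inserting $\{J_2,J_3\}=J_1\barwedge A$ into \eqref{2.13} --- is \emph{circular}: after using \eqref{1.8} and $J_1^2=-I$ it reduces to the tautology $A=A$ and yields nothing. The real idea is to bypass \eqref{2.13} and instead evaluate the single quantity $A\barwedge J_3$ in two genuinely independent ways. On one hand, applying the Fr\"olicher--Nijenhuis identities \eqref{1.7} and \eqref{1.8} together with the two twisted commutation identities and the products $J_1J_2=J_3=-J_2J_1$ from \eqref{J123}, I would compute $(A\barwedge J_1)\barwedge J_2=4\,J_3\barwedge A$ and $(A\barwedge J_2)\barwedge J_1=-4\,J_3\barwedge A$, so that \eqref{1.7} with $S=A$, $J=J_1$, $K=J_2$ delivers $A\barwedge J_3=4\,J_3\barwedge A$. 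On the other hand, specialising the master identity \eqref{1.6} to $J=K=J_2$, $L=J_3$ (so that $J_2J_3=J_1$ and the left-hand side vanishes by $\{J_1,J_2\}=0$) gives $A\barwedge J_3=-2\,J_2\barwedge\{J_2,J_3\}$; substituting $\{J_2,J_3\}=J_1\barwedge A$ and using $J_2J_1=-J_3$ turns this into $A\barwedge J_3=2\,J_3\barwedge A$. Comparing the two outcomes forces $2\,J_3\barwedge A=0$, hence $J_3\barwedge A=0$, and since $J_3^2=-I$ this yields $A=\{J_2,J_2\}=0$.

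Finally I would harvest the remaining vanishings, which are now routine. With $A=0$, relation \eqref{2.7} gives $\{J_2,J_3\}=J_1\barwedge A=0$; the equality $\{J_3,J_1\}=0$ was established at the outset; and then \eqref{2.10} (or equivalently \eqref{2.12}), with $\{J_1,J_1\}$, $\{J_3,J_1\}$, $\{J_1,J_2\}$ and $\{J_2,J_3\}$ all zero, yields $\{J_3,J_3\}=0$. Thus all four tensors $\{J_2,J_2\}$, $\{J_2,J_3\}$, $\{J_3,J_1\}$ and $\{J_3,J_3\}$ vanish. The only subtle step is the recognition that \eqref{2.13} is useless here and must be replaced by the two independent computations of $A\barwedge J_3$; everything else is bookkeeping with \eqref{1.6}--\eqref{1.8} and the quaternionic relations \eqref{J123}.
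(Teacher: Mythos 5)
Your argument is correct and follows essentially the same route as the paper: you establish $\{J_2,J_2\}\barwedge J_1=-2J_1\barwedge\{J_2,J_2\}$ from \eqref{2.9}, use \eqref{2.15}, and then compare the two evaluations $\{J_2,J_2\}\barwedge J_3=2J_3\barwedge\{J_2,J_2\}$ (from \eqref{1.6} with $J=K=J_2$, $L=J_3$ and $\{J_2,J_3\}=J_1\barwedge\{J_2,J_2\}$) and $\{J_2,J_2\}\barwedge J_3=4J_3\barwedge\{J_2,J_2\}$ (from \eqref{1.7}), which are precisely the paper's \eqref{3.8} and \eqref{3.11}, to force $\{J_2,J_2\}=0$. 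The only cosmetic difference is that you finish by deriving the remaining vanishings directly from \eqref{2.2}, \eqref{2.7} and \eqref{2.10}, whereas the paper simply invokes \lemref{thm:2} once $\{J_1,J_1\}=\{J_2,J_2\}=0$ is in hand.
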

\begin{proof}
Setting $J=K=J_2$ and $L=J_3$ in \eqref{1.6}, using $J_2J_3=J_1$ and \eqref{sym}, we obtain
\[
  \{J_1,J_2\}
  =\dfrac12 \{J_2,J_2\}\barwedge J_3 + J_2 \barwedge \{J_2,J_3\}.
\]
Since $\{J_1,J_2\}=0$, we get
\begin{equation}\label{3.7}
  \{J_2,J_2\}\barwedge J_3 =-2 J_2 \barwedge \{J_2,J_3\}.
\end{equation}

The condition $\{J_1,J_2\}=0$ and \eqref{2.7} imply
\[
  \{J_2,J_3\}
  =J_1 \barwedge \{J_2,J_2\}.
\]
The latter equality and \eqref{3.7}, using $J_2J_1=-J_3$, yield
\[
  \{J_2,J_2\}\barwedge J_3 =-2 J_2 \barwedge \bigl(J_1 \barwedge \{J_2,J_2\}\bigr)
  =2 J_3\barwedge \{J_2,J_2\},
\]
that is
\begin{equation}\label{3.8}
  \{J_2,J_2\}\barwedge J_3   =2 J_3\barwedge \{J_2,J_2\}.
\end{equation}

On the other hand, setting $S=\{J_2,J_2\}$, $J=J_1$ and $K=J_2$ in \eqref{1.7} and using $J_1J_2=-J_2J_1=J_3$, we obtain
\begin{equation}\label{3.9}
\begin{array}{l}
  2\{J_2,J_2\} \barwedge J_3=
  \bigl(\{J_2,J_2\}\barwedge J_1\bigr)\barwedge J_2 - \bigl(\{J_2,J_2\}\barwedge J_2\bigr)\barwedge J_1.
\end{array}
\end{equation}

However, since $\{J_1,J_2\}=0$, then \eqref{2.9} and \eqref{2.10} imply
\begin{equation}\label{3.10}
 \begin{array}{l}
\{J_2,J_2\}\barwedge J_1
=-2J_1 \barwedge \{J_2,J_2\}.
\end{array}
\end{equation}

Now, substituting \eqref{2.15} and \eqref{3.10} into \eqref{3.9}, we obtain
\[
\begin{array}{l}
  \{J_2,J_2\} \barwedge J_3=
  -\bigl(J_1 \barwedge \{J_2,J_2\}\bigr)\barwedge J_2 + \bigl(J_2 \barwedge \{J_2,J_2\}\bigr)\barwedge J_1
\end{array}
\]
and applying \eqref{1.8}, we have
\[
\begin{array}{l}
  \{J_2,J_2\} \barwedge J_3=
  -J_1 \barwedge \bigl(\{J_2,J_2\}\barwedge J_2\bigr) +J_2 \barwedge \bigl(\{J_2,J_2\}\barwedge J_1\bigr).
\end{array}
\]
In the latter equality, applying \eqref{2.15}, \eqref{3.10} and $J_1J_2=-J_2J_1=J_3$, we get
\[
\begin{array}{l}
  \{J_2,J_2\} \barwedge J_3=
  2J_1 \barwedge \bigl(J_2\barwedge \{J_2,J_2\}\bigr) -2J_2 \barwedge \bigl(J_1\barwedge \{J_2,J_2\}\bigr)\\[6pt]
  \phantom{\{J_2,J_2\} \barwedge J_3}
  =4J_3 \barwedge \{J_2,J_2\},
\end{array}
\]
that is
\begin{equation}\label{3.11}
  \{J_2,J_2\} \barwedge J_3  =4J_3 \barwedge \{J_2,J_2\}.
  \end{equation}

Comparing \eqref{3.8} and \eqref{3.11}, we conclude that
\[
  J_3 \barwedge \{J_2,J_2\}=0,
\]
which is equivalent to
\[
\{J_2,J_2\}=0
\]
by virtue of $J_3^2=-I$. This completes the proof of the first assertion in the lemma.

Combining it with \lemref{thm:2}, 
we establish the truthfulness of the whole lemma. 
\end{proof}

\begin{lem}\label{thm:6}
If $\{J_1,J_2\}$ and $\{J_3,J_1\}$ vanish, then $\{J_1,J_1\}$, $\{J_2,J_2\}$, $\{J_3,J_3\}$ and $\{J_2,J_3\}$ vanish.
\end{lem}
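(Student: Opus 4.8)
The plan is to reduce this statement to \lemref{thm:4}, whose hypotheses are precisely the vanishing of $\{J_1,J_1\}$ together with $\{J_1,J_2\}$. Since $\{J_1,J_2\}=0$ is already assumed here, the entire task collapses to the single auxiliary claim that the remaining given data, namely $\{J_1,J_2\}=0$ and $\{J_3,J_1\}=0$, force $\{J_1,J_1\}=0$. Once that is in hand, everything else is handed over to the earlier lemma.

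First I would substitute $\{J_3,J_1\}=0$ and $\{J_1,J_2\}=0$ into \eqref{2.2}. The summand $J_1\barwedge\{J_1,J_2\}$ then drops out, and what remains is $\tfrac12\{J_1,J_1\}\barwedge J_2=0$, that is,
\[
\{J_1,J_1\}\barwedge J_2=0.
\]
Next I would carry this into \eqref{2.5}, again using $\{J_1,J_2\}=0$: the two summands carrying $\{J_1,J_2\}$ vanish, and the summand $\tfrac12\{J_1,J_1\}\barwedge J_2$ vanishes by the previous step, so only
\[
J_2\barwedge\{J_1,J_1\}=0
\]
survives. By \eqref{1.5} the left-hand side equals $J_2\bigl(\{J_1,J_1\}(\cdot,\cdot)\bigr)$, and $J_2$ is a pointwise isomorphism because $J_2^2=-I$; hence $\{J_1,J_1\}=0$.

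Having established $\{J_1,J_1\}=0$ alongside the assumed $\{J_1,J_2\}=0$, I would then invoke \lemref{thm:4} verbatim to obtain that $\{J_2,J_2\}$, $\{J_3,J_3\}$, $\{J_2,J_3\}$ and $\{J_3,J_1\}$ all vanish. Combined with $\{J_1,J_1\}=0$, this yields every tensor named in the statement, so the proof will be complete.

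The main obstacle to watch for is the asymmetry between the two Fr\"olicher-Nijenhuis products. The intermediate identity $\{J_1,J_1\}\barwedge J_2=0$ carries $J_2$ as a \emph{right} factor, and in that position $\barwedge J_2$ is not manifestly invertible, so one must resist the temptation to cancel $J_2$ directly. The essential move is precisely to route the information through \eqref{2.5}, which re-expresses it in the \emph{left}-factor form $J_2\barwedge\{J_1,J_1\}$, where \eqref{1.5} makes the cancellation of the invertible $J_2$ immediate. Apart from this one observation, the argument is pure bookkeeping with the identities of \lemref{lem:3.1} and a single appeal to \lemref{thm:4}.
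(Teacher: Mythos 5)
Your proof is correct and follows essentially the same strategy as the paper: establish $\{J_1,J_1\}=0$ from the identities of \lemref{lem:3.1} and then hand everything to \lemref{thm:4}. The only difference is that the paper reads $J_2\barwedge\{J_1,J_1\}=0$ off in one step from \eqref{2.3}, where it is already the sole surviving term, whereas you reach the same conclusion by routing through \eqref{2.2} and \eqref{2.5} — a slightly longer but equivalent path, and your remark about needing the left-factor form to cancel the invertible $J_2$ is exactly the right point of care.
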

\begin{proof}
From \eqref{2.3} and the vanishing of $\{J_1,J_2\}$ and $\{J_3,J_1\}$, we get
\[
  J_2 \barwedge \{J_1,J_1\}=0,
\]
which is equivalent to
\[
  \{J_1,J_1\}=0.
\]

Now, combining the latter assertion and \lemref{thm:4}, 
we have the validity of the present lemma. 
\end{proof}

\begin{lem}\label{thm:8}
If $\{J_1,J_1\}$ and $\{J_2,J_3\}$ vanish, then $\{J_2,J_2\}$, $\{J_3,J_3\}$, $\{J_1,J_2\}$ and $\{J_3,J_1\}$ vanish.
\end{lem}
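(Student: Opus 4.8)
The plan is to follow the same strategy as in the proofs of \lemref{thm:4} and \lemref{thm:6}: from the two vanishing hypotheses $\{J_1,J_1\}=0$ and $\{J_2,J_3\}=0$ I would deduce that one further associated Nijenhuis tensor vanishes, and then invoke an already established lemma. Concretely, I aim to show $\{J_2,J_2\}=0$, after which \lemref{thm:2}, which requires exactly $\{J_1,J_1\}=\{J_2,J_2\}=0$, immediately yields the vanishing of all the remaining associated Nijenhuis tensors, in particular of $\{J_1,J_2\}$, $\{J_3,J_1\}$ and $\{J_3,J_3\}$.

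The heart of the argument is to compute $\{J_3,J_3\}$ in two independent ways from the relations of \lemref{lem:3.1}. First, substituting $\{J_1,J_1\}=0$ into \eqref{2.2} gives $\{J_3,J_1\}=J_1\barwedge\{J_1,J_2\}$, while substituting $\{J_2,J_3\}=0$ into \eqref{2.7} gives $\{J_1,J_2\}\barwedge J_2+J_2\barwedge\{J_1,J_2\}=-J_1\barwedge\{J_2,J_2\}$. Feeding these, together with the hypotheses, into \eqref{2.12} and reducing the composite products by means of \eqref{1.8} and the associativity $L\barwedge(M\barwedge S)=(LM)\barwedge S$ that follows directly from \eqref{1.5}, combined with the hypercomplex relations $J_\al J_\bt=-J_\bt J_\al=J_\gm$ and $J_\al^2=-I$ from \eqref{J123}, I expect the key simplification $J_1\barwedge\bigl(\{J_1,J_2\}\barwedge J_2\bigr)=\{J_2,J_2\}-J_3\barwedge\{J_1,J_2\}$ and hence $\{J_3,J_3\}=\frac12\{J_2,J_2\}$. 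Performing the analogous reduction in \eqref{2.10} should instead give $\{J_3,J_3\}=\{J_2,J_2\}$.

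Comparing the two expressions for $\{J_3,J_3\}$ forces $\frac12\{J_2,J_2\}=\{J_2,J_2\}$, that is $\{J_2,J_2\}=0$, which is the desired extra vanishing; the proof then concludes by citing \lemref{thm:2}. The main obstacle is purely computational: correctly collapsing the nested products of the form $(J_1\barwedge\{J_1,J_2\})\barwedge J_2$ and $J_2\barwedge(J_1\barwedge\{J_1,J_2\})$ while keeping careful track of the signs produced by $J_2J_1=-J_3$ and $J_\al^2=-I$. A single sign error would destroy the crucial factor-of-two discrepancy between the two computations of $\{J_3,J_3\}$ on which the whole argument rests, so this bookkeeping is exactly the step that must be carried out with care.
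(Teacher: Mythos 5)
Your global strategy---reduce everything to showing $\{J_2,J_2\}=0$ and then invoke \lemref{thm:2}---is exactly the paper's, and your first step $\{J_3,J_1\}=J_1\barwedge\{J_1,J_2\}$ from \eqref{2.2} also matches. But the mechanism you propose for extracting $\{J_2,J_2\}=0$ does not work. The identity \eqref{2.12} is not independent of \eqref{2.10}: as the proof of \lemref{lem:3.1} states, it is obtained by combining \eqref{2.10} with its cyclic analogue $\{J_2,J_2\}-\{J_3,J_3\}=\{J_2,J_3\}\barwedge J_1+J_2\barwedge\{J_3,J_1\}+J_3\barwedge\{J_1,J_2\}$, and that combination actually produces $\{J_3,J_3\}=\frac12\bigl(\{J_1,J_1\}+\{J_2,J_2\}+\cdots\bigr)$; the printed \eqref{2.12} is missing the $\{J_2,J_2\}$ term (as printed it is inconsistent with \eqref{2.10} and \eqref{2.13} together, since the three would force $\{J_2,J_2\}=0$ identically on every manifold of the considered type, which is false because $\{J_2,J_2\}=0$ characterizes the proper class $\W_3(J_2)$). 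Your factor-of-two discrepancy is an artifact of this misprint: with the corrected identity both of your computations give $\{J_3,J_3\}=\{J_2,J_2\}$ --- equivalently, under the hypotheses the cyclic analogue above reduces, via \eqref{2.2} and $J_2J_1=-J_3$, to the very same relation that \eqref{2.10} yields --- and no contradiction appears. The inputs \eqref{2.2}, \eqref{2.7}, \eqref{2.10}, \eqref{2.12} alone only give $\{J_3,J_3\}=\{J_2,J_2\}$.

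The missing ingredients are the ``degenerate'' substitutions into \eqref{1.6}, which is where the paper gets genuinely new information. It takes $J=K=J_2$, $L=J_3$ to get $2\{J_1,J_2\}=\{J_2,J_2\}\barwedge J_3$, and, crucially, $J=L=J_2$, $K=J_3$, where $J_2^2=-I$ and \eqref{1.1a} kill one term and yield $\{J_2,J_2\}=J_3\barwedge\{J_1,J_2\}$. Feeding this and $\{J_3,J_1\}=J_1\barwedge\{J_1,J_2\}$ into \eqref{2.13} collapses it to $J_1\barwedge\bigl(\{J_1,J_2\}\barwedge J_2\bigr)=0$, hence $\{J_1,J_2\}\barwedge J_2=0$, hence $\{J_2,J_2\}\barwedge J_2=J_3\barwedge\bigl(\{J_1,J_2\}\barwedge J_2\bigr)=0$ by \eqref{1.8}; finally \eqref{2.15} converts this into $J_2\barwedge\{J_2,J_2\}=0$, i.e. $\{J_2,J_2\}=0$. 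You need some version of these extra relations --- in particular \eqref{2.15} and $\{J_2,J_2\}=J_3\barwedge\{J_1,J_2\}$ --- to close the argument; the four identities you selected do not suffice.
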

\begin{proof}
Firstly, from \eqref{2.2} and $\{J_1,J_1\}=\{J_2,J_3\}=0$, we obtain
\begin{equation}\label{3.12}
  \{J_3,J_1\} = J_1 \barwedge \{J_1,J_2\}.
\end{equation}

Secondly, setting $J=K=J_2$ and $L=J_3$ in \eqref{1.6} and using the equalities $J_2J_3=J_1$, $\{J_1,J_1\}=\{J_2,J_3\}=0$ and \eqref{sym}, we get
\begin{equation}\label{3.13}
  2\{J_1,J_2\}
  =\{J_2,J_2\}\barwedge J_3.
\end{equation}

On the other hand, setting $J=L=J_2$ and $K=J_3$ in \eqref{1.6}, using the assumptions, as well as $J_2^2=-I$, $J_3J_2=-J_1$  and \eqref{1.1a},
we find
\[
  \{J_1,J_2\}
  = -J_3 \barwedge \{J_2,J_2\},
\]
which, because of $J_3^2=-I$, is equivalent to
  \begin{equation}\label{3.14}
  \{J_2,J_2\}= J_3\barwedge \{J_1,J_2\}.
  \end{equation}

Moreover, the formula \eqref{2.13} and $\{J_1,J_1\}=\{J_2,J_3\}=0$ yield
\[
2J_3 \barwedge \{J_1,J_2\}
+\{J_3,J_1\}\barwedge J_2 + J_2 \barwedge \{J_3,J_1\}-\{J_2,J_2\}=0.
\]
The latter equality, \eqref{3.12} and \eqref{3.14}, using $J_2J_1=-J_3$, \eqref{1.5} and \eqref{1.8}, imply
\[
J_1 \barwedge \bigl(\{J_1,J_2\}\barwedge J_2\bigr) =0
\]
and therefore
\begin{equation}\label{3.15}
\{J_1,J_2\}\barwedge J_2 =0.
\end{equation}

By virtue of \eqref{3.14}, \eqref{3.15} and \eqref{1.8}, we get
\begin{equation}\label{3.16}
  \{J_2,J_2\}\barwedge J_2= 0.
\end{equation}

On the other hand, \eqref{2.15} and \eqref{3.16} imply
\[
J_2 \barwedge \{J_2,J_2\}=0,
\]
which is equivalent to
\[
\{J_2,J_2\}=0
\]
and the first assertion in the present lemma is proved.

Combining it with \lemref{thm:2}, 
we obtain the validity of the rest equalities. 
\end{proof}

Now, we are ready to prove the main theorem in the present section.

\begin{thm}\label{thm:9}
If two of the six associated Nijenhuis tensors
\[
\{J_1,J_1\},\quad \{J_2,J_2\},\quad \{J_3,J_3\},\quad
\{J_1,J_2\},\quad \{J_2,J_3\},\quad \{J_3,J_1\}
\]
vanish, then the others also vanish.
\end{thm}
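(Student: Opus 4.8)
The plan is to reduce the fifteen possible choices of two vanishing tensors to the four cases already settled in Lemmas \ref{thm:2}, \ref{thm:4}, \ref{thm:6} and \ref{thm:8} by exploiting the symmetry of the construction. First I would record that the six associated Nijenhuis tensors fall into two triples: the three \emph{diagonal} ones $\{J_1,J_1\}$, $\{J_2,J_2\}$, $\{J_3,J_3\}$ and the three \emph{off-diagonal} ones $\{J_1,J_2\}$, $\{J_2,J_3\}$, $\{J_3,J_1\}$, the latter being unordered by \eqref{sym}. Accordingly, an unordered pair of the six tensors is of exactly one of four types: two diagonals, two off-diagonals, a diagonal together with an off-diagonal sharing its index (\emph{adjacent}), or a diagonal together with the off-diagonal not containing its index (\emph{opposite}). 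The four lemmas treat precisely one representative of each type.

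The heart of the argument is that these four lemmas remain valid after relabelling the triad $(J_1,J_2,J_3)$ by any element of the symmetric group $S_3$ acting on the indices $\{1,2,3\}$. I would establish this in two steps. The defining identity \eqref{1.6} is symmetric in its three endomorphism slots and the structure relations \eqref{J123} are invariant under the cyclic permutation $(1,2,3)\mapsto(2,3,1)$; hence every relation of \lemref{lem:3.1}, and therefore each of the four lemmas, holds verbatim after any cyclic permutation of the indices. To obtain the remaining (odd) permutations I would use the sign-twisted transposition $(J_1,J_2,J_3)\mapsto(J_1,J_3,-J_2)$, which again satisfies \eqref{J123} and so is an admissible almost hypercomplex structure. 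From \eqref{1.1} and \eqref{1.2} one checks directly the sign rules $\{J,-K\}=-\{J,K\}$ and $\{-J,-J\}=\{J,J\}$; since the theorem concerns only \emph{vanishing}, these signs are immaterial, and a tensor vanishes for the original triad if and only if the corresponding tensor vanishes for the relabelled one.

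With the full $S_3$-invariance in hand, the conclusion is a short case check. The three pairs of two diagonals form a single $S_3$-orbit with representative $\{J_1,J_1\},\{J_2,J_2\}$, settled by \lemref{thm:2}; the three pairs of two off-diagonals form one orbit with representative $\{J_1,J_2\},\{J_3,J_1\}$, settled by \lemref{thm:6}; the six adjacent diagonal/off-diagonal pairs form one orbit with representative $\{J_1,J_1\},\{J_1,J_2\}$, settled by \lemref{thm:4}; and the three opposite pairs form one orbit with representative $\{J_1,J_1\},\{J_2,J_3\}$, settled by \lemref{thm:8}. Since $3+3+6+3=15$ exhausts all $\binom{6}{2}$ choices, in every case two vanishing tensors force the remaining four to vanish as well, which proves the theorem.

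The step I expect to be the main obstacle is the passage from cyclic to full $S_3$ symmetry, i.e. justifying the adjacent case for an off-diagonal attached to the \emph{second} index of the triad, such as $\{J_1,J_1\},\{J_3,J_1\}$. This pair is not in the cyclic orbit of the hypothesis $\{J_1,J_1\},\{J_1,J_2\}$ of \lemref{thm:4}, so it genuinely requires the sign-twisted relabelling $(J_1,J_3,-J_2)$ together with the sign rules above; verifying that this relabelling is itself a hypercomplex structure and that it carries the hypotheses and conclusions of \lemref{thm:4} correctly back onto the original tensors is the one place where care is needed.
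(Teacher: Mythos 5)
Your proposal is correct, and it follows the same overall route as the paper — reduce the theorem to \lemref{thm:2}, \lemref{thm:4}, \lemref{thm:6} and \lemref{thm:8} — but it supplies something the paper's one-line proof omits entirely: the justification that four representative cases cover all $\binom{6}{2}=15$ pairs. Your orbit count is right (three pairs of diagonals, three pairs of off-diagonals, six adjacent and three opposite pairs), and you correctly identify the only real gap: cyclic permutations of the indices, which manifestly preserve \eqref{J123} and hence \eqref{1.6} and all of \lemref{lem:3.1}, account for only $12$ of the $15$ pairs, leaving the cyclic orbit of $\{J_1,J_1\},\{J_3,J_1\}$ uncovered. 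Your sign-twisted transposition $(J_1,J_2,J_3)\mapsto(J_1,J_3,-J_2)$ does satisfy \eqref{J123} (e.g. $J_3\cdot(-J_2)=-J_3J_2=J_1$), and since the braces are built from the Levi-Civita connection alone — no compatibility of $g$ with the $J_\al$ is assumed at this stage — the relabelling is admissible; the sign rules $\{J,-K\}=-\{J,K\}$ and $\{-J,-J\}=\{J,J\}$ follow term by term from \eqref{1.1} and \eqref{1.2}, so vanishing is preserved and \lemref{thm:4} transports to the missing orbit. One remark: the leftover case can also be handled without any relabelling, since \eqref{2.2} together with $\{J_1,J_1\}=\{J_3,J_1\}=0$ gives $J_1\barwedge\{J_1,J_2\}=0$, hence $\{J_1,J_2\}=0$ by invertibility of $J_1$, after which \lemref{thm:4} applies directly; this is slightly shorter, but your symmetry argument has the advantage of disposing of all fifteen cases uniformly.
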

\begin{proof}
The truthfulness of this theorem follows from \lemref{thm:2}, \lemref{thm:4}, \lemref{thm:6} and \lemref{thm:8}.
\end{proof}



\vskip 0.2in \addtocounter{subsection}{1} \setcounter{subsubsection}{0}

\noindent  {\Large\bf \thesubsection. Natural connections with totally skew-symmetric torsion on almost hypercomplex manifolds with Hermitian-Norden metrics}

\vskip 0.15in

Let $g$ be a pseudo-Riemannian metric on an almost hypercomplex manifold $(\MM,J_1,J_2,J_3)$ defined by
\eqref{gJJ}.
%
Then, we call that the almost hypercomplex manifold is equipped with Hermitian-Norden metrics.
Namely, the metric $g$ is Hermitian for $\al=1$, whereas $g$ is a Norden metric in the cases  $\al=2$ and $\al=3$ \cite{GriMan24,ManGri32}.

Let us consider $(\MM,J_1,g)$ belonging to 
$\GG_1=(\W_1\oplus\W_3\oplus\W_4)(J_1)$ (the class of cocalibrated manifolds with Hermitian
metric), 
according to the classification \eqref{cl-H} from \cite{GrHe}. Moreover, let $(\MM,J_\al,g)$,
$(\al=2;3)$ belong to 
$\W_3(J_\al)$ (the class of quasi K\"ahler manifolds with Norden
metric), according to the classification \eqref{cl-N} from \cite{GaBo}.
The mentioned classes are determined in
terms of the fundamental tensors $F_{\al}$, defined by \eqref{F'-al}, as follows:
\begin{gather}
\label{G1}
    \GG_1(J_1):\
    F_1 (x,x,z) = F_1 (J_1 x,J_1 x,z), \\[6pt]
\label{W3}
    \W_3(J_{\al}):\
    F_{\al} (x,y,z)+F_{\al} (y,z,x)+F_{\al} (z,x,y)=0 
\end{gather}
for $\al=2;3$.

\begin{rem}\label{rem:dim}
It is known from \cite{GrHe} that the class
$\GG_1$ of almost Hermitian manifolds
$(\MM,J_1,g)$ exists in general form when the dimension of $\MM$ is at
least 6. At dimension 4, $\GG_1$ is restricted to its subclass $\W_4$, the class of locally conformally equivalent manifolds to K\"ahler manifolds with Norden metric.
According to \cite{GaBo}, the lowest dimension for almost Norden manifolds in the class $\W_3$ is 4.
Thus, the almost hypercomplex manifold with Hermitian-Norden metrics belonging to the classes
$\GG_1(J_1)$, $\W_3(J_2)$, $\W_3(J_3)$ exists in general form when $\dim{\MM}=4n\geq 8$ holds.
\end{rem}

Let the corresponding tensors of type (0,3) with respect to $g$ of the pair of Nijenhuis tensors be denoted by
\[
\begin{array}{l}
[J_\al,J_\al](x,y,z)=g([J_\al,J_\al](x,y),z),\\[6pt]
\{J_\al,J_\al\}(x,y,z)=g(\{J_\al,J_\al\}(x,y),z).
\end{array}
\]
These tensors can be expressed by $F_{\al}$, as follows:
\begin{equation}
\begin{array}{l}
[J_\al,J_\al](x,y,z)=
F_{\al}(J_{\al} x,y,z)+\ea F_{\al}(x,y,J_{\al} z)
\label{enu-al}\\[6pt]
\phantom{[J_\al,J_\al](x,y,z)=}
-F_{\al}(J_{\al} y,x,z)-\ea F_{\al}(y,x,J_{\al} z),
\end{array}
\end{equation}
\begin{equation}
\begin{array}{l}
\{J_\al,J_\al\}(x,y,z)=
F_{\al}(J_{\al} x,y,z)+\ea F_{\al}(x,y,J_{\al} z)\\[6pt]
\phantom{\{J_\al,J_\al\}(x,y,z)=}
+F_{\al}(J_{\al} y,x,z)+\ea F_{\al}(y,x,J_{\al}z).\label{enhat-al}
\end{array}
\end{equation}
In the case $\al=2;3$, the latter formulae are given in \cite{GaBo}, as $\{J_\al,J_\al\}$ coincides with the tensor $\widetilde N$
introduced there by the second equality in \eqref{NJ-nabli}.

In \cite{GrHe}, it is given an equivalent definition of $\GG_1$ by the condition the Nijenhuis tensor $[J_1,J_1](x,y,z)$ to be a 3-form, i.e.
\begin{equation}\label{G1-3f}
  \GG_1(J_1):\quad [J_1,J_1](x,y,z)=-[J_1,J_1](x,z,y).
\end{equation}

\begin{prop}\label{prop:NN=Nhat}
For the Nijenhuis tensor and the associated Nijenhuis tensor of $(\MM,J_1,g)$ we have:
\begin{enumerate}
  \item the following relation
\begin{equation}\label{NN=NhatJ1}
\{J_1,J_1\}(x,y,z)=[J_1,J_1](z,x,y)+[J_1,J_1](z,y,x);
\end{equation}
  \item $\{J_1,J_1\}$ vanishes if and only if $[J_1,J_1]$ is a 3-form.
\end{enumerate}
\end{prop}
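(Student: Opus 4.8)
The plan is to reduce the whole proposition to the two explicit expressions \eqref{enu-al} and \eqref{enhat-al} of $[J_1,J_1]$ and $\{J_1,J_1\}$ in terms of the fundamental tensor $F_1$ (taken with $\varepsilon_1=1$), together with the structural identities for $F_1$ recorded in \eqref{FaJ-prop}, namely $F_1(x,y,z)=-F_1(x,z,y)=-F_1(x,J_1y,J_1z)$. First I would establish the algebraic identity (i) by a direct substitution and rearrangement, and then deduce (ii) as an immediate corollary of (i).

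For (i), I would expand the right-hand side $[J_1,J_1](z,x,y)+[J_1,J_1](z,y,x)$ using \eqref{enu-al}, obtaining eight terms of the form $\pm F_1(\cdots)$. The two terms carrying $J_1z$ in the first slot, $F_1(J_1z,x,y)$ and $F_1(J_1z,y,x)$, cancel at once by the skew-symmetry $F_1(x,y,z)=-F_1(x,z,y)$ in the last two arguments. Swapping the last two arguments in $-F_1(J_1x,z,y)$ and $-F_1(J_1y,z,x)$ turns them into $F_1(J_1x,y,z)$ and $F_1(J_1y,x,z)$, which are precisely the first and third summands of $\{J_1,J_1\}(x,y,z)$ in \eqref{enhat-al}. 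For the terms $-F_1(x,z,J_1y)$ and $-F_1(y,z,J_1x)$ I would first swap the last two arguments and then apply $F_1(x,J_1y,z)=-F_1(x,J_1^2y,J_1z)=F_1(x,y,J_1z)$, using $J_1^2=-I$; this produces the remaining two summands $F_1(x,y,J_1z)$ and $F_1(y,x,J_1z)$ of $\{J_1,J_1\}$. Finally, the leftover pair $F_1(z,x,J_1y)+F_1(z,y,J_1x)$ is shown to vanish by the same compatibility: $F_1(z,x,J_1y)=F_1(z,J_1x,y)=-F_1(z,y,J_1x)$. Collecting the surviving four terms gives exactly \eqref{NN=NhatJ1}.

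For (ii), I would first note that the Nijenhuis tensor is skew-symmetric in its first two arguments, $[J_1,J_1](x,y,z)=-[J_1,J_1](y,x,z)$, which follows directly from the antisymmetry of the Lie bracket in \eqref{N_al}. Consequently, for the $(0,3)$-tensor $[J_1,J_1]$, being a $3$-form (the total skew-symmetry \eqref{G1-3f}) is equivalent to skew-symmetry in the last two arguments alone. Now if $[J_1,J_1]$ is a $3$-form, then $[J_1,J_1](z,x,y)=-[J_1,J_1](z,y,x)$, so the right-hand side of \eqref{NN=NhatJ1} is zero and $\{J_1,J_1\}=0$. Conversely, if $\{J_1,J_1\}=0$, then \eqref{NN=NhatJ1} gives $[J_1,J_1](z,x,y)=-[J_1,J_1](z,y,x)$ for all $x,y,z$; thus $[J_1,J_1]$ is skew in its last two slots and, together with the skew-symmetry in the first two, is a $3$-form.

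The only delicate point is the term bookkeeping in step (i): one must correctly track the eight terms and, in particular, apply $F_1(x,y,z)=-F_1(x,J_1y,J_1z)$ together with $J_1^2=-I$ without sign errors, since this is exactly where the two leftover terms conspire to cancel. Everything else is formal, and part (ii) requires no computation beyond invoking (i).
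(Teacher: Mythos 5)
Your proposal is correct and follows essentially the same route as the paper: expand the right-hand side via \eqref{enu-al}, use the skew-symmetry of $F_1$ in its last two arguments together with the consequence $F_1(x,y,J_1z)=F_1(x,J_1y,z)$ of \eqref{FaJ-prop} to cancel the four extraneous terms, and match the survivors with \eqref{enhat-al}; your term-by-term bookkeeping is accurate. Part (ii), which the paper dismisses as immediate, is also correctly justified by your observation that skewness in the first two slots plus skewness in the last two slots yields total antisymmetry.
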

\begin{proof}
We compute the right hand side of \eqref{NN=NhatJ1} using \eqref{enu-al}. Applying \eqref{FaJ-prop} and their consequence
\begin{equation}\label{FffF1}
  F_1(x,y,J_1z)=F_1(x,J_1y,z),
\end{equation}
we obtain
\[
\begin{array}{l}
[J_1,J_1](z,x,y)+[J_1,J_1](z,y,x)=
-F_1(J_1x,z,y)-F_1(x,z,J_1y)\\[6pt]
\phantom{[J_1,J_1](z,x,y)+[J_1,J_1](z,y,x)=}
-F_1(J_1y,z,x)-F_1(y,z,J_1x).
\end{array}
\]
Using again \eqref{FffF1} and the first equality in \eqref{FaJ-prop}, we establish that the right hand side of the latter equality is equal to $\{J_1,J_1\}(x,y,z)$, according to \eqref{enhat-al} for $\al=1$.

The identity in (ii) follows immediately from (i).
\end{proof}

The assertion (ii) of \propref{prop:NN=Nhat} and \eqref{G1-3f} imply  the following
\begin{prop}\label{prop:G1}
The manifolds in the class $\GG_1(J_1)$ are characterized by the condition $\{J_1,J_1\}=0$.
\end{prop}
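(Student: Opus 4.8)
The plan is to derive the statement directly from the equivalent description of the class $\GG_1(J_1)$ combined with assertion (ii) of \propref{prop:NN=Nhat}. Recall that, besides its original definition \eqref{G1} through $F_1(x,x,z)=F_1(J_1x,J_1x,z)$, the class $\GG_1(J_1)$ admits the equivalent characterization \eqref{G1-3f}, namely that the Nijenhuis tensor $[J_1,J_1](x,y,z)$ is skew-symmetric in its last two arguments. Since $[J_1,J_1]$ is always skew-symmetric in its first two arguments (immediately from \eqref{N_al}), this condition means precisely that $[J_1,J_1]$ is a $3$-form.

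First I would invoke \propref{prop:NN=Nhat}(ii), which records the equivalence between the vanishing of $\{J_1,J_1\}$ and $[J_1,J_1]$ being a $3$-form. Reading this equivalence against \eqref{G1-3f} gives at once that $(\MM,J_1,g)\in\GG_1(J_1)$ if and only if $\{J_1,J_1\}=0$, which is the assertion to be proved.

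Since the substantive work is already packaged into \propref{prop:NN=Nhat}, there is essentially no obstacle left: the only point demanding attention is the relation \eqref{NN=NhatJ1} underpinning that proposition, from which (ii) follows with no effort. Were a self-contained argument preferred, I would instead expand both $[J_1,J_1]$ and $\{J_1,J_1\}$ through $F_1$ by means of \eqref{enu-al} and \eqref{enhat-al}, and then use the structural identities \eqref{FaJ-prop} together with their consequence \eqref{FffF1}, namely $F_1(x,y,J_1z)=F_1(x,J_1y,z)$; a direct computation of $[J_1,J_1](z,x,y)+[J_1,J_1](z,y,x)$ reproduces $\{J_1,J_1\}(x,y,z)$, after which the skew-symmetry of $[J_1,J_1]$ in its last two slots becomes visibly equivalent to $\{J_1,J_1\}=0$. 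The main (mild) care point is tracking the sign conventions, all governed here by $\ep_1=1$ in \eqref{FaJ-prop}.
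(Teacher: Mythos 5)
Your proof is correct and follows exactly the paper's route: the paper also obtains this proposition immediately by combining assertion (ii) of \propref{prop:NN=Nhat} with the characterization \eqref{G1-3f} of $\GG_1(J_1)$ as the class where $[J_1,J_1]$ is a 3-form. Your optional self-contained expansion via \eqref{enu-al}, \eqref{enhat-al} and \eqref{FffF1} is precisely the computation the paper uses to prove \propref{prop:NN=Nhat} itself, so nothing new is needed.
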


For almost Norden manifolds it is known from \cite{GaBo} that the manifolds in the class $\W_3(J_2)$ (respectively, $\W_3(J_3)$) are characterized by the condition $\{J_2,J_2\}=0$ (respectively, $\{J_3,J_3\}=0$).

From \thmref{thm:9} we have immediately that
if two of associated Nijenhuis tensors $\{J_1,J_1\}$, $\{J_2,J_2\}$, $\{J_3,J_3\}$ vanish, then the third one also vanishes.
%
Thus, we establish the truthfulness of the following
\begin{thm}\label{thm:aN123}
If an almost hypercomplex manifold with Hermitian-Norden metrics belongs to two of the classes
$\GG_1(J_1)$, $\W_3(J_2)$, $\W_3(J_3)$
with respect to the corresponding almost complex structures, then it belongs also to the third class.
\end{thm}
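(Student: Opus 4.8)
The plan is to exploit the characterisations of the three basic classes in terms of the vanishing of the associated Nijenhuis tensors together with \thmref{thm:9}. First I would recall that, by \propref{prop:G1}, membership of $(\MM,J_1,g)$ in $\GG_1(J_1)$ is equivalent to $\{J_1,J_1\}=0$, while the classification of almost Norden manifolds from \cite{GaBo} gives that $(\MM,J_2,g)\in\W_3(J_2)$ (respectively $(\MM,J_3,g)\in\W_3(J_3)$) is equivalent to $\{J_2,J_2\}=0$ (respectively $\{J_3,J_3\}=0$). Thus each of the three class conditions appearing in the statement translates into the vanishing of precisely one of the three diagonal associated Nijenhuis tensors $\{J_1,J_1\}$, $\{J_2,J_2\}$, $\{J_3,J_3\}$.

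Next I would observe that the hypothesis of belonging to two of the three classes is exactly the assumption that two of these diagonal tensors vanish. Since $\{J_1,J_1\}$, $\{J_2,J_2\}$, $\{J_3,J_3\}$ are three of the six associated Nijenhuis tensors of the almost hypercomplex structure, \thmref{thm:9} applies immediately: as soon as two of the six tensors vanish, all six vanish. In particular the remaining diagonal tensor vanishes as well, and translating back through the characterisations above yields membership in the third class. The argument splits into the three symmetric cases, corresponding to the pairs $(J_1,J_2)$, $(J_2,J_3)$, $(J_3,J_1)$ of assumed classes, each handled identically, so it suffices to write out one of them in detail.

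The substantive content is therefore entirely carried by \thmref{thm:9}, whose proof occupies \lemref{thm:2}--\lemref{thm:8} and relies on the identities collected in \lemref{lem:3.1}; the present theorem is then a direct corollary once the three class memberships are recast as vanishing conditions on the diagonal associated Nijenhuis tensors. Consequently there is no genuine analytic obstacle, and the only point that requires a little care is a dimensional one: by Remark~\ref{rem:dim} the classes $\GG_1(J_1)$, $\W_3(J_2)$, $\W_3(J_3)$ coexist in general form only for $\dim\MM=4n\geq 8$, so I would either restrict the statement to that range or note explicitly that in the $4$-dimensional case the same reasoning applies with $\GG_1(J_1)$ replaced by its restriction $\W_4(J_1)$, which is likewise characterised by $\{J_1,J_1\}=0$.
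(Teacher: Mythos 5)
Your proposal is correct and follows essentially the same route as the paper: recast each class membership as the vanishing of the corresponding diagonal associated Nijenhuis tensor via \propref{prop:G1} and the $\W_3$-characterisation from \cite{GaBo}, then invoke \thmref{thm:9}. The dimensional caveat you raise is unnecessary, since \propref{prop:G1} and the $\{J_\al,J_\al\}=0$ characterisations hold in every dimension (in dimension $4$ the class $\GG_1(J_1)$ simply reduces to $\W_4(J_1)$ without affecting the argument), so the theorem needs no restriction.
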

In \cite{ManGri32}, it is proved that if
$\MM$ is in $\W_3(J_2)$ and $\W_3(J_3)$, then it
belongs to $\GG_1(J_1)$.

For the almost Hermitian manifold $(\MM,J_1,g)$ we apply Theorem 10.1 in \cite{Fri-Iv2}.
Then there exists an affine connection $\DDD^1$ with totally skew-symmetric torsion $T_1$ preserving
 $J_1$ and $g$ if and only if $(\MM,J_1,g)$ belongs to $\GG_1(J_1)$.
In this case $\DDD^1$ is unique and determined
by
\begin{equation}\label{T1D1Om}
T_1(x,y,z) = \D\g_1(J_1x, J_1y, J_1z) + [J_1,J_1](x,y,z),
\end{equation}
where $\g_1$ is the K\"ahler form determined by \eqref{gJ}.

Using properties \eqref{FaJ-prop} and \eqref{enu-al} for $\al=1$, the relation $\D\g_1=\s F_1$, where $\s$ is the cyclic sum by three arguments, we get the expression of $T_1$ in terms of $F_1$ as follows
\begin{equation}\label{T1D1F1}
T_1(x,y,z) = F_1(x,y,J_1z)-F_1(y,x,J_1z)-F_1(J_1z,x,y).
\end{equation}

On the other side, for the case $\al=2$ or $\al=3$ we dispose with an almost Norden manifold $(\MM,J_\al,g)$.
Then, according to Theorem 3.1 in \cite{Mek-10},
there exists an affine connection $\DDD^\al$  with totally skew-symmetric torsion $T_\al$ preserving
 $J_\al$ and $g$ if and only if $(\MM,J_\al,g)$ belongs to $\W_3(J_\al)$.
In this case $\DDD^\al$ is unique and determined, according to \eqref{TKT=F},
by the following expression of its torsion
\begin{equation}\label{T1D1F23}
T_\al(x,y,z) = -\frac12 \sx F_\al(x,y,J_\al z),\quad\quad \al=2,3.
\end{equation}

By virtue of \thmref{thm:aN123} and the comments above, we obtain the validity of the following
\begin{thm}\label{thm:HKT}
Let $(\MM,H,G)$ be an almost hypercomplex manifold with Hermitian-Norden metrics.
Then it admits an affine connection $\DDD^*$ with totally skew-sym\-met\-ric torsion preserving the structure $(H,G)$ if and only if two of the three associated Nijenhuis tensors $\{J_\al,J_\al\}$, $(\al=1,2,3)$ vanish and the equalities $T_1=T_2=T_3$ are valid, bearing in mind \eqref{T1D1F1} and \eqref{T1D1F23}.
If $\DDD^*$ exists, it is unique and determined by its torsion $T^*=T_1=T_2=T_3$.
\end{thm}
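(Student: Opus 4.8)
The plan is to reduce the biconditional to the three single-structure existence theorems already invoked above and then to glue the resulting connections together by means of the torsion--potential correspondence for metric connections.

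First I would observe that a connection $\DDD^*$ preserves the full structure $(H,G)$ exactly when $\DDD^*J_\al=0$ for $\al=1,2,3$ and $\DDD^*g=0$: since each associated tensor $\g_\al$ is built from $g$ and $J_\al$ through \eqref{gJ}, parallelism of $J_\al$ and $g$ automatically forces $\DDD^*\g_\al=0$. Thus $\DDD^*$ is simultaneously a natural connection for each of the three pairs $(J_1,g)$, $(J_2,g)$, $(J_3,g)$, and this is the observation that lets the single-structure results enter.

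For necessity I would argue as follows. Regarding $\DDD^*$ as a $J_1$- and $g$-preserving connection with totally skew-symmetric torsion, the Friedrich--Ivanov theorem (Theorem 10.1 in \cite{Fri-Iv2}) forces $(\MM,J_1,g)\in\GG_1(J_1)$, hence $\{J_1,J_1\}=0$ by \propref{prop:G1}, and guarantees that such a connection is unique with torsion $T^*=T_1$ from \eqref{T1D1F1}. Applying the Norden counterpart (Theorem 3.1 in \cite{Mek-10}) to the pairs $(J_2,g)$ and $(J_3,g)$ in the same manner gives $\{J_2,J_2\}=\{J_3,J_3\}=0$ together with $T^*=T_2$ and $T^*=T_3$ from \eqref{T1D1F23}. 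Hence all three associated Nijenhuis tensors vanish (in particular two of them do) and $T_1=T_2=T_3=T^*$, which settles necessity and uniqueness simultaneously.

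For sufficiency I would assume that two of $\{J_1,J_1\}$, $\{J_2,J_2\}$, $\{J_3,J_3\}$ vanish and $T_1=T_2=T_3$. By \thmref{thm:9} the third one vanishes as well, so by \propref{prop:G1} and its Norden analogues (equivalently, by \thmref{thm:aN123}) the manifold lies simultaneously in $\GG_1(J_1)$, $\W_3(J_2)$ and $\W_3(J_3)$. The three existence theorems then produce connections $\DDD^1$, $\DDD^2$, $\DDD^3$ with skew-symmetric torsions $T_1$, $T_2$, $T_3$, where $\DDD^\al$ preserves $(J_\al,g)$. The hard part will be the merging step: each $\DDD^\al$ is a $g$-metric connection, so the equivariant torsion--potential bijection \eqref{TQ}--\eqref{QT}, used with the single metric $g$, determines the potential $Q^\al$ from $T_\al$ alone; since $T_1=T_2=T_3$ the potentials coincide and therefore $\DDD^1=\DDD^2=\DDD^3=:\DDD^*$. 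This common connection then preserves $J_1$, $J_2$, $J_3$ and $g$, hence the whole structure $(H,G)$, and carries the totally skew-symmetric torsion $T^*=T_1=T_2=T_3$, proving existence. I expect this gluing --- forcing three a priori distinct connections to coincide from the sole equality of their torsion $3$-forms --- to be the crux, everything else being a direct assembly of the quoted single-structure theorems and the vanishing results of \thmref{thm:9}.
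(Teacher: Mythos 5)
Your proposal is correct and follows essentially the same route as the paper, which proves the theorem in one line by combining \thmref{thm:aN123} with the single-structure existence results of Friedrich--Ivanov and of \cite{Mek-10} together with \thmref{thm:9}. The gluing step you single out --- that three $g$-metric connections with equal totally skew-symmetric torsions must coincide via the bijection \eqref{QT} --- is exactly the implicit content of the paper's assertion that $\DDD^*$ is determined by $T^*=T_1=T_2=T_3$, so you have merely made explicit what the paper leaves to the reader.
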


\newpage

\vskip 0.2in \addtocounter{subsection}{1} \setcounter{subsubsection}{0}

\noindent  {\Large\bf \thesubsection. A 4-dimensional example}

\vskip 0.15in

In \cite{GriMan24}, it is considered a connected Lie group $\LL$ with a corresponding Lie algebra $\mathfrak{l}$, 
determined by the following conditions for the global basis of left invariant vector fields $\{x_1,x_2,x_3,x_4\}$:
\begin{equation}\label{[]4}
\begin{array}{ll}
[x_1,x_3]= \lambda_2 x_2+\lambda_4 x_4,\quad\quad &
[x_2,x_4]= \lambda_1 x_1+\lambda_3 x_3, \\[6pt]
[x_3,x_2]= \lambda_2 x_1+\lambda_3 x_4,\quad\quad &
[x_4,x_3]= \lambda_4 x_1-\lambda_3 x_2, \\[6pt]
[x_4,x_1]= \lambda_1 x_2+\lambda_4 x_3,\quad\quad &
[x_1,x_2]= \lambda_2 x_3-\lambda_1 x_4,
\end{array}
\end{equation}
where $\lambda_i\in \mathbb{R}$ $(i=1,2,3,4)$ and
$(\lambda_1,\lambda_2,\lambda_3,\lambda_4)\neq (0,0,0,0)$.
The pseu\-do-Riemannian metric $g$ is defined by 
\begin{equation}\label{g-ex}
\begin{array}{l}
  g(x_1,x_1)=g(x_2,x_2)=-g(x_3,x_3)=-g(x_4,x_4)=1, \\[6pt]
  g(x_i,x_j)=0,\quad i\neq j.
\end{array}
\end{equation}
There, it is introduced an almost hypercomplex structure $(J_1,J_2,J_3)$ on $\LL$ as follows:
\begin{equation}\label{J123-ex}
\begin{array}{llll}
J_1x_1=x_2,\quad\quad & J_2x_1=x_3,\quad\quad & J_3x_1=-x_4, \\[6pt]
J_1x_2=-x_1,\quad\quad & J_2x_2=x_4,\quad\quad & J_3x_2=x_3, \\[6pt]
J_1x_3=-x_4,\quad\quad & J_2x_3=-x_1,\quad\quad & J_3x_3=-x_2, \\[6pt]
J_1x_4=x_3,\quad\quad & J_2x_4=-x_2,\quad\quad & J_3x_4=x_1
\end{array}
\end{equation}
and then \eqref{J123} are valid.

In \cite{GrMaMe1}, it is constructed the manifold $(\LL,J_2,g)$ as an example of a 4-dimensional quasi-K\"ahler manifold with Norden metric. 

The conditions \eqref{g-ex} and \eqref{J123-ex} imply the properties \eqref{gJJ} and therefore the introduced structure on $\LL$ is an almost hypercomplex structure with Hermitian-Norden metrics. Hence, it follows that the constructed manifold is
an almost hypercomplex manifold with Hermitian-Norden metrics.
Moreover, in \cite{GriMan24}, it is shown that this manifold belongs to basic classes $\W_4(J_1)$, $\W_3(J_2)$, $\W_3(J_3)$ with respect to the corresponding almost complex structures.
These conclusions are made using the following nonzero basic components of $F_\al$:
\begin{equation*}\label{F1}
    \begin{array}{l}
        \frac{1}{2}\lambda_1=(F_1)_{114}=-(F_1)_{123}=(F_1)_{132}=-(F_1)_{141}\\[6pt]
        \phantom{\frac{1}{2}\lambda_1}
        =(F_1)_{213}=(F_1)_{224}=-(F_1)_{231}=-(F_1)_{242},\\[6pt]
        \frac{1}{2}\lambda_2=-(F_1)_{113}=-(F_1)_{124}=(F_1)_{131}=(F_1)_{142}\\[6pt]
        \phantom{\frac{1}{2}\lambda_2}
        =(F_1)_{214}=-(F_1)_{223}=(F_1)_{232}=-(F_1)_{241},\\[6pt]
    \end{array}
\end{equation*}
\begin{equation*}
    \begin{array}{l}        %
        \frac{1}{2}\lambda_3=-(F_1)_{314}=(F_1)_{323}=-(F_1)_{332}=(F_1)_{341}\\[6pt]
        \phantom{\frac{1}{2}\lambda_3}
        =(F_1)_{413}=(F_1)_{424}=-(F_1)_{431}=-(F_1)_{442},\\[6pt]
        \frac{1}{2}\lambda_4=-(F_1)_{313}=-(F_1)_{324}=(F_1)_{331}=(F_1)_{342}\\[6pt]
        \phantom{\frac{1}{2}\lambda_4}
        =-(F_1)_{414}=(F_1)_{423}=-(F_1)_{432}=(F_1)_{441};
    \end{array}
\end{equation*}
\begin{equation*}\label{F2}
\begin{array}{l}
\frac12\lambda_1=-\frac12(F_2)_{122}=-\frac12(F_2)_{144}=(F_2)_{212}=(F_2)_{221}=(F_2)_{234}
\\[6pt]
\phantom{\frac12\lambda_1}
=(F_2)_{243}=(F_2)_{414}=-(F_2)_{423}=-(F_2)_{432}=(F_2)_{441},
\\[6pt]
\frac12\lambda_2=(F_2)_{112}=(F_2)_{121}=(F_2)_{134}=(F_2)_{143}=-\frac12(F_2)_{211}
\\[6pt]
\phantom{\frac12\lambda_2}
=-\frac12(F_2)_{233}=-(F_2)_{314}=(F_2)_{323}=(F_2)_{332}=-(F_2)_{341},
\\[6pt]
\frac12\lambda_3=(F_2)_{214}=-(F_2)_{223}=-(F_2)_{232}=(F_2)_{241}=\frac12(F_2)_{322}
\\[6pt]
\phantom{\frac12\lambda_3}
=\frac12(F_2)_{344}=-(F_2)_{412}=-(F_2)_{421}=-(F_2)_{434}=-(F_2)_{443},
\\[6pt]
\frac12\lambda_4=-(F_2)_{114}=(F_2)_{123}=(F_2)_{132}=-(F_2)_{141}=-(F_2)_{312}
\\[6pt]
\phantom{\frac12\lambda_4}
=-(F_2)_{321}=-(F_2)_{334}=-(F_2)_{343}=\frac12(F_2)_{411}=\frac12(F_2)_{433};
\end{array}
\end{equation*}
\begin{equation*}\label{F3}
\begin{array}{l}
\frac{1}{2}\lambda_1=(F_3)_{112}=(F_3)_{121}=-(F_3)_{134}=-(F_3)_{143}=-\frac12(F_3)_{211}\\[6pt]
\phantom{\frac{1}{2}\lambda_1}
=-\frac12(F_3)_{244}=(F_3)_{413}=(F_3)_{431}=(F_3)_{424}=(F_3)_{442},\\[6pt]
\frac{1}{2}\lambda_2=\frac12(F_3)_{122}=\frac12(F_3)_{133}=-(F_3)_{212}=-(F_3)_{221}=(F_3)_{234}\\[6pt]
\phantom{\frac{1}{2}\lambda_2}
=(F_3)_{243}=-(F_3)_{313}=-(F_3)_{331}=-(F_3)_{324}=-(F_3)_{342},\\[6pt]
\frac{1}{2}\lambda_3=(F_3)_{213}=(F_3)_{231}=(F_3)_{224}=(F_3)_{242}=-(F_3)_{312}\\[6pt]
\phantom{\frac{1}{2}\lambda_3}
=-(F_3)_{321}=(F_3)_{343}=(F_3)_{334}=-\frac12(F_3)_{422}=-\frac12(F_3)_{433},\\[6pt]
\frac{1}{2}\lambda_4=-(F_3)_{113}=-(F_3)_{124}=-(F_3)_{131}=-(F_3)_{142}=\frac12(F_3)_{311}\\[6pt]
\phantom{=\frac{1}{2}\lambda_4}
=\frac12(F_3)_{344}=(F_3)_{412}=(F_3)_{421}=-(F_3)_{434}=-(F_3)_{443}.
\end{array}
\end{equation*}

Bearing in mind the discussions in the previous two subsections, this is an example of a 4-dimensional manifold with vanishing associated Nijenhuis tensors for the almost hypercomplex structure and there exist affine connections $\DDD^\al$ with totally skew-symmetric torsion $T_\al$ preserving $J_\al$ and $g$.
Using \eqref{T1D1F1}, \eqref{T1D1F23} and the components above, we compute the basic components of $T_\al$. They are determined by the following nonzero components for $\al=1,2,3$:
\[
(T_\al)_{123}=\lm_2,\quad
(T_\al)_{124}=-\lm_1,\quad
(T_\al)_{134}=\lm_4,\quad
(T_\al)_{234}=-\lm_3.
\]

Therefore the connections $\DDD^1$, $\DDD^2$ and $\DDD^3$ coincide. Then, according to \thmref{thm:HKT},  $(\LL,H,G)$ admits a unique affine connection $\DDD^*$ with totally skew-sym\-met\-ric torsion $T^*$ preserving the structure $(H,G)$ and it is determined by $T^*$ with nonzero components
\[
T^*_{123}=\lm_2,\quad
T^*_{124}=-\lm_1,\quad
T^*_{134}=\lm_4,\quad
T^*_{234}=-\lm_3.
\]

\vspace{20pt}

\begin{center}
$\divideontimes\divideontimes\divideontimes$
\end{center} 

\newpage

\addtocounter{section}{1}\setcounter{subsection}{0}\setcounter{subsubsection}{0}

\setcounter{thm}{0}\setcounter{equation}{0}

\label{par:quatK}

 \Large{

\
\\[6pt]
\bigskip

\
\\[6pt]
\bigskip

\lhead{\emph{Chapter II $|$ \S\thesection. Quaternionic K\"ahler manifolds with Her\-mit\-ian-Nor\-den
metrics}}



\noindent
\begin{tabular}{r"l}
\hspace{-6pt}{\Huge\bf \S\thesection.}  & {\Huge\bf Quaternionic K\"ahler manifolds} \\[12pt]
                             & {\Huge\bf with Hermit\-ian-Norden metrics}
\end{tabular}

\vskip 1cm

\begin{quote}
\begin{large}
In the present section, almost hypercomplex manifolds with Her\-mit\-ian-Nor\-den metrics
and more specially the corresponding quaternionic K\"ahler
manifolds are considered. Some necessary and sufficient conditions for
the studied manifolds to be isotropic hyper-K\"ahlerian and flat
are found. It is proved that the quaternionic K\"ahler manifolds
with the considered metric structure are Einstein  for dimension
at least 8. The class of the non-hyper-K\"ahler quaternionic
K\"ahler manifold of the considered type is determined.

The main results of this section are published in \cite{Man29}.
\end{large}\end{quote}

%
%

\vskip 0.15in

The basic problem of this section is the existence and the geometric
characteristics of the quaternionic K\"ahler manifolds with Her\-mit\-ian-Nor\-den metrics. The
main results here is that every quaternionic K\"ahler
manifold with Her\-mit\-ian-Nor\-den metrics is Einstein for dimension at least 8 and it is not
flat hyper-K\"ahlerian only when belongs to the general class
$\W_1\oplus\W_2\oplus\W_3$ or the class $\W_1\oplus\W_3$, where
the manifold is Ricci-symmetric.

The present section is organised as follows.
%
In Subsection~\thesection.1 we introduce the corresponding quaternionic
K\"ahler manifold of an almost hypercomplex manifold with Her\-mit\-ian-Nor\-den metrics. We establish that the quaternionic K\"ahler
manifolds with Her\-mit\-ian-Nor\-den metrics are Einstein for dimension $4n\geq 8$. For
comparison, it is known that the quaternionic K\"ahler manifolds
with hyper-Hermitian metric structure are Einstein for all
dimensions \cite{AlCor}.
%
%
In Subsection~\thesection.2 we consider the location of the quaternionic
K\"ahler manifolds with Her\-mit\-ian-Nor\-den metrics in the classification of the corresponding
almost hypercomplex manifolds with respect to the covariant
derivatives of the almost complex structures. We get only one
class (except the general one) of the considered classification
where these manifolds are non-hyper-K\"ahlerian and consequently
non-flat always.
In Subsection~\thesection.3 we characterize the non-hyper-K\"ahler quaternionic K\"ahler manifolds with Her\-mit\-ian-Nor\-den metrics obtained in the previous
subsection.

\vskip 0.2in \addtocounter{subsection}{1} \setcounter{subsubsection}{0}

\noindent  {\Large\bf \thesubsection. Quaternionic K\"ahler manifolds with Her\-mit\-ian-Nor\-den metrics }

\vskip 0.15in


Let us consider again only an almost hypercomplex manifold
$(\MM,H)$. The endomorphism $Q=\lm_1 J_1+\lm_2 J_2+\lm_3 J_3$,
$\lm_i\in\R$, is called a \emph{quaternionic structure} on $(\MM,H)$
with an admissible basis $H$. Let $\DDD$ be the Levi-Civita connection of a pseudo-Riemannian metric $g$ on $\MM$. A quaternionic structure with the condition $\DDD Q=0$ is called a \emph{quaternionic K\"ahler structure} on $(\MM,H)$. An almost hypercomplex manifold with
quaternionic K\"ahler structure is determined by
\begin{equation}\label{qK}
\left(\DDD_x J_\al\right)y=\om_\gm(x)J_\bt y-\om_\bt(x)J_\gm y
\end{equation}
for all cyclic permutations $(\al, \bt, \gm)$ of $(1,2,3)$, where
$\om_\al$ are local 1-forms associated to $H=(J_\al)$,
$(\al=1,2,3)$. \cite{Sal82,AlMa}

Next, we equip the quaternionic K\"ahler manifold with a structure of Hermitian-Norden metrics $G=(g,g_1,g_2,g_3)$, determined by \eqref{gJJ}
and \eqref{gJ}, and we obtain a \emph{quaternionic K\"ahler manifold
with Her\-mit\-ian-Nor\-den metrics}.

Bearing in mind \eqref{qK} and \eqref{nJ}, for a quaternionic
K\"ahler manifold with Her\-mit\-ian-Nor\-den metrics  we obtain the following form of the square
norm of $\DDD J_\al$:
\begin{equation}\label{nJ-qK}
    \nJ{\al}=4n\left\{\eb\om_\gm(\om^{\sharp}_\gm)+\eg\om_\bt(\om^{\sharp}_\bt)\right\},
\end{equation}
where $\om^{\sharp}_1$, $\om^{\sharp}_2$, $\om^{\sharp}_3$ are the corresponding
vectors of $\om_1$, $\om_2$, $\om_3$  regarding $g$, respectively. The coefficients $\varepsilon_1$, $\varepsilon_2$ and $\varepsilon_3$ are defined in \eqref{epsiloni}.

Therefore, we have immediately the following
\begin{prop}\label{prop-iqK}
A quaternionic K\"ahler manifold with Hermitian-Nor\-den metrics is an isotropic hyper-K\"ahler
manifold with Her\-mit\-ian-Nor\-den metrics if and only if  the corresponding vectors of the 1-forms $\om_1$,
$\om_2$ and $\om_3$ with respect to $g$ are isotropic vectors
regarding $g$.
\end{prop}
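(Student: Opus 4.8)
The plan is to reduce the claimed equivalence to an elementary linear-algebraic statement about the three scalars $a_\al:=\om_\al(\om^{\sharp}_\al)=g(\om^{\sharp}_\al,\om^{\sharp}_\al)$, exploiting the explicit formula \eqref{nJ-qK} for the square norm of $\DDD J_\al$ that has already been derived for quaternionic K\"ahler manifolds with Hermitian-Norden metrics. First I would note that, by the very definition of the dual vector $\om^{\sharp}_\al$ regarding $g$, the vector $\om^{\sharp}_\al$ is isotropic precisely when $g(\om^{\sharp}_\al,\om^{\sharp}_\al)=0$, i.e.\ when $a_\al=0$. Thus the right-hand condition of the proposition is exactly $a_1=a_2=a_3=0$, while the left-hand condition is $\nJ{1}=\nJ{2}=\nJ{3}=0$.

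Next I would write out \eqref{nJ-qK} for each of the three cyclic permutations $(\al,\bt,\gm)$ of $(1,2,3)$ and substitute the signs $\ea$ from \eqref{epsiloni}, namely $\varepsilon_1=1$ and $\varepsilon_2=\varepsilon_3=-1$. This yields the system
\[
\nJ{1}=-4n\left(a_2+a_3\right),\qquad
\nJ{2}=4n\left(a_3-a_1\right),\qquad
\nJ{3}=4n\left(a_2-a_1\right).
\]
The backward implication is then immediate: if $a_1=a_2=a_3=0$, every right-hand side vanishes, so $\nJ{\al}=0$ for each $\al$ and the manifold is isotropic hyper-K\"ahlerian with Hermitian-Norden metrics.

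For the forward implication I would regard the three equalities $\nJ{\al}=0$ as a homogeneous linear system in $(a_1,a_2,a_3)$. The key point is that its coefficient matrix (after dividing by the common factor $4n$) is non-degenerate, with determinant equal to $2$, so the only solution is the trivial one. Concretely, $\nJ{2}=0$ gives $a_3=a_1$ and $\nJ{3}=0$ gives $a_2=a_1$; substituting both into $\nJ{1}=0$ forces $2a_1=0$, whence $a_1=a_2=a_3=0$. This recovers exactly the isotropy of $\om^{\sharp}_1$, $\om^{\sharp}_2$, $\om^{\sharp}_3$ and completes the equivalence.

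Since the analytic content is already packaged in \eqref{nJ-qK}, there is no genuine obstacle; the only thing to watch is the bookkeeping of the signs $\ea$ arising from the mixed Hermitian-Norden character of the metric, as a sign error would alter the determinant and could spuriously admit nontrivial solutions. I would therefore verify the three sign patterns against \eqref{epsiloni} once more before concluding that the determinant is nonzero.
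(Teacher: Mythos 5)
Your proof is correct and follows exactly the route the paper intends: the paper derives \eqref{nJ-qK} and then states the proposition as an immediate consequence, and your explicit sign bookkeeping (giving the nondegenerate $3\times 3$ system with determinant $2$) is precisely the computation being left implicit. No gaps.
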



Bearing in mind \eqref{qK}, we obtain the following property of the curvature tensor $R$ of $\DDD$ for
all cyclic permutations $(\al,\bt,\gm)$ of $(1,2,3)$:
\begin{equation}\label{RJ}
      R(x,y)J_{\al} z=J_{\al} R(x,y)z-\Psi_{\bt} (x,y)J_{\gm} z+\Psi_{\gm} (x,y)J_\bt z,
\end{equation}
where
\begin{equation}\label{eta}
\Psi_{\bt}(x,y)=\dd\om_{\bt} (x,y)+\om_{\gm} (x)\om_{\al}
(y)-\om_{\al} (x)\om_{\gm} (y)
\end{equation}
are 2-forms associated to the local 1-forms $\om_1$, $\om_2$,
$\om_3$. Therefore, we have
\begin{equation}\label{RJJ}
\begin{array}{l}
    R(x,y,z,w)-\ea
    R(x,y,J_{\al}z,J_{\al}w)=\Psi_{\bt}(x,y)\g_{\bt}(z,w)\\[6pt]
    \phantom{R(x,y,z,w)-\ea
    R(x,y,J_{\al}z,J_{\al}w)=}
    +\Psi_{\gm}(x,y)\g_{\gm}(z,w).
\end{array}
\end{equation}
According to the antisymmetry of $R$ by the third and the forth
entries, we establish that $\Psi_2=\Psi_3=0$, i.e.
\begin{lem}
The local 1-forms $\om_1$, $\om_2$ and  $\om_3$, determining a
quaternionic K\"ahler manifold with Her\-mit\-ian-Nor\-den metrics, satisfy the following
identities
\begin{equation}\label{d-om}
\begin{array}{l}
    \dd\om_2(x,y)=-\om_{3} (x)\om_{1}(y)+\om_{1} (x)\om_{3} (y),\\[6pt]
    \dd\om_3(x,y)=-\om_{1} (x)\om_{2}(y)+\om_{2} (x)\om_{1} (y).\\[6pt]
\end{array}
\end{equation}
\end{lem}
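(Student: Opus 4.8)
The plan is to read off the identities \eqref{d-om} from the curvature constraint \eqref{RJJ} together with the definition \eqref{eta}, the crux being a symmetry argument that forces the associated 2-forms $\Psi_2$ and $\Psi_3$ to vanish.

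First I would specialise \eqref{RJJ} to the cyclic permutation $(\al,\bt,\gm)=(1,2,3)$, which reads
\[
R(x,y,z,w)-\varepsilon_1 R(x,y,J_1 z,J_1 w)=\Psi_2(x,y)\g_2(z,w)+\Psi_3(x,y)\g_3(z,w).
\]
By the antisymmetry of the Riemannian curvature in its last two arguments (see \eqref{curv}), both $R(x,y,z,w)$ and $R(x,y,J_1 z,J_1 w)$ change sign under the interchange $z\leftrightarrow w$, so the whole left-hand side is skew-symmetric in $(z,w)$. On the other hand, \eqref{gJ} together with \eqref{epsiloni} gives $\g_\al(z,w)=-\ea\g_\al(w,z)$; for $\al=2,3$ we have $\ea=-1$, whence $\g_2$ and $\g_3$ are \emph{symmetric} (they are the two Norden metrics of the structure, unlike the K\"ahler form $\g_1$). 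Symmetrising the displayed equation in $(z,w)$ therefore kills the left-hand side and yields, for all $z,w$ at fixed $x,y$,
\[
\Psi_2(x,y)\g_2(z,w)+\Psi_3(x,y)\g_3(z,w)=0.
\]

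Next I would use that $\g_2$ and $\g_3$ are linearly independent as bilinear forms: since $J_2\circ J_3=J_1\neq\pm I$ by \eqref{J123}, the forms $g(J_2\cdot,\cdot)$ and $g(J_3\cdot,\cdot)$ cannot be proportional. Consequently the two coefficients must vanish, $\Psi_2(x,y)=\Psi_3(x,y)=0$, which is precisely the assertion announced just before the lemma. It then remains to substitute these vanishings into \eqref{eta}: for $\Psi_2$ the relevant cyclic triple is $(\al,\bt,\gm)=(1,2,3)$, giving $\dd\om_2(x,y)+\om_3(x)\om_1(y)-\om_1(x)\om_3(y)=0$, while for $\Psi_3$ the triple is $(\al,\bt,\gm)=(2,3,1)$, giving $\dd\om_3(x,y)+\om_1(x)\om_2(y)-\om_2(x)\om_1(y)=0$. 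Transposing the quadratic terms produces exactly the two formulae of \eqref{d-om}.

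I do not anticipate a serious obstacle, as the whole argument is algebraic once \eqref{RJJ} and \eqref{eta} are available. The only delicate point is the separation of $\Psi_2$ from $\Psi_3$: one must argue the linear independence of $\g_2$ and $\g_3$ rather than merely the vanishing of a single linear combination of them, and this is exactly where the non-degeneracy of the metric and the distinctness of $J_2$ and $J_3$ enter.
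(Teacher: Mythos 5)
Your argument is correct and is essentially the paper's own: the paper disposes of the lemma with the single sentence that the antisymmetry of $R$ in its last two entries forces $\Psi_2=\Psi_3=0$, and your symmetrisation of \eqref{RJJ} for $\al=1$ in $(z,w)$, followed by the linear independence of the symmetric forms $\g_2$ and $\g_3$, is precisely the rigorous content of that sentence. The substitution into \eqref{eta} with the correct cyclic triples is also right, so nothing is missing.
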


Then, according to \eqref{d-om}, equations \eqref{RJJ} take the
form
\begin{gather}
    R(x,y,J_{1}z,J_{1}w)=R(x,y,z,w),\label{RJJ1}\\[6pt]
\begin{split}
    R(x,y,J_{2}z,J_{2}w)&=R(x,y,J_{3}z,J_{3}w)\\[6pt]
    &=-R(x,y,z,w)+\Psi_{1}(x,y)\g_{1}(z,w).\label{RJJ23}
\end{split}
\end{gather}

Bearing in mind \eqref{RJJ1}, \eqref{RJJ23} and \eqref{K-kel}, we
have immediately
\begin{lem}\label{lem-Rkel}
The curvature tensor $R$ of a quaternionic K\"ahler manifold with Her\-mit\-ian-Nor\-den metrics is
of K\"ahler-type if and only if  $\Psi_1=0$, i.e. the following condition is
valid
\begin{equation}\label{om1}
\dd\om_1(x,y)=-\om_{2} (x)\om_{3} (y)+\om_{3} (x)\om_{2} (y).
\end{equation}
\end{lem}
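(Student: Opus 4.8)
The plan is to read the K\"ahler-type condition directly off the curvature identities \eqref{RJJ1} and \eqref{RJJ23}, reduce it to the vanishing of the single $2$-form $\Psi_1$, and then translate $\Psi_1=0$ into \eqref{om1} through its definition \eqref{eta}. Since all the steps are equivalences, this will establish the stated ``if and only if'' in one pass.

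First I would unpack \eqref{K-kel}. Because $\ea$ takes the values $\varepsilon_1=1$ and $\varepsilon_2=\varepsilon_3=-1$ by \eqref{epsiloni}, the K\"ahler-type property of $R$ amounts to the requirements
\begin{equation*}
R(x,y,z,w)=R(x,y,J_1z,J_1w),\qquad R(x,y,z,w)=-R(x,y,J_\al z,J_\al w)\quad(\al=2,3).
\end{equation*}
The second equality displayed in \eqref{K-kel}, namely $R(x,y,z,w)=\ea R(J_\al x,J_\al y,z,w)$, is equivalent to the first by the pair-symmetry $R(x,y,z,w)=R(z,w,x,y)$ of the curvature of the Levi-Civita connection $\DDD$ (a consequence of \eqref{curv}); hence it suffices to treat the conditions above.

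Next I would note that the case $\al=1$ is automatic: it is exactly \eqref{RJJ1}, which holds on every quaternionic K\"ahler manifold with Hermitian-Norden metrics. So the K\"ahler-type property is equivalent to the two identities for $\al=2,3$. By \eqref{RJJ23} one has $R(x,y,J_\al z,J_\al w)=-R(x,y,z,w)+\Psi_1(x,y)\g_1(z,w)$ for both $\al=2$ and $\al=3$, so the requirement $R(x,y,z,w)=-R(x,y,J_\al z,J_\al w)$ becomes $\Psi_1(x,y)\g_1(z,w)=0$ for all arguments. Since $\g_1(\cdot,\cdot)=g(J_1\cdot,\cdot)$ is nondegenerate ($g$ being nondegenerate and $J_1$ invertible), we may pick $z,w$ with $\g_1(z,w)\neq0$, and conclude that $R$ is of K\"ahler type if and only if $\Psi_1=0$.

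Finally I would identify $\Psi_1$ explicitly. In \eqref{eta} the cyclic permutation with $\bt=1$ is $(\al,\bt,\gm)=(3,1,2)$, which yields
\begin{equation*}
\Psi_1(x,y)=\dd\om_1(x,y)+\om_2(x)\om_3(y)-\om_3(x)\om_2(y).
\end{equation*}
Therefore $\Psi_1=0$ is equivalent to $\dd\om_1(x,y)=-\om_2(x)\om_3(y)+\om_3(x)\om_2(y)$, which is precisely \eqref{om1}, completing the argument. There is no substantial obstacle in this proof; the only points demanding care are the correct bookkeeping of the cyclic indices in \eqref{eta} (so that $\Psi_1$ is attached to the permutation $(3,1,2)$ and not to $(1,2,3)$) and the appeal to pair-symmetry needed to collapse the two forms of the identity appearing in \eqref{K-kel}.
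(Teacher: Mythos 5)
Your proof is correct and follows exactly the route the paper intends: the paper derives the lemma "immediately" from \eqref{RJJ1}, \eqref{RJJ23} and \eqref{K-kel}, and you have simply made that deduction explicit (nondegeneracy of $\g_1$ to extract $\Psi_1=0$, and the correct cyclic permutation $(\al,\bt,\gm)=(3,1,2)$ in \eqref{eta} to identify $\Psi_1$ with \eqref{om1}). No gaps.
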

According to \lemref{lem-Rkel} and \thmref{thm-K=0}, we have
\begin{prop}\label{prop-K=0}
The necessary and sufficient condition an arbitrary quaternionic
K\"ahler manifold with Her\-mit\-ian-Nor\-den metrics to be flat is condition \eqref{om1}.
\end{prop}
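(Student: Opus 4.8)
The plan is to obtain this proposition as an immediate consequence of \lemref{lem-Rkel} together with \thmref{th-0}, so that essentially no fresh computation is needed. First I would record that the condition \eqref{om1} appearing in the statement is nothing but the vanishing of the $2$-form $\Psi_1$. Indeed, specialising the definition \eqref{eta} to the cyclic triple with $\bt=1$ (that is, $\al=3$, $\gm=2$) gives $\Psi_1(x,y)=\dd\om_1(x,y)+\om_2(x)\om_3(y)-\om_3(x)\om_2(y)$, so that $\Psi_1=0$ is literally the identity \eqref{om1}. Hence the condition in the statement is equivalent to $\Psi_1=0$, and it suffices to prove that flatness is equivalent to $\Psi_1=0$.

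For the sufficiency I would argue in one line. Assuming \eqref{om1}, equivalently $\Psi_1=0$, \lemref{lem-Rkel} yields that the curvature tensor $R$ is of K\"ahler-type, i.e. it satisfies the identities \eqref{K-kel}. Now \thmref{th-0} states that every K\"ahler-type tensor on an almost hypercomplex manifold with Hermitian-Norden metrics must vanish; applying it to $R$ gives $R=0$, so the manifold is flat.

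For the necessity I would run the same chain in reverse. If the manifold is flat, then $R=0$, and the zero tensor trivially satisfies the K\"ahler-type identities \eqref{K-kel}, so $R$ is of K\"ahler-type. Invoking now the ``only if'' direction of \lemref{lem-Rkel} forces $\Psi_1=0$, which is precisely \eqref{om1}. Combining the two implications completes the argument.

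There is no genuine obstacle here, since all the real work has already been absorbed into the two cited results: \lemref{lem-Rkel}, which itself rests on the curvature identities \eqref{RJJ1} and \eqref{RJJ23} derived from the quaternionic K\"ahler condition \eqref{qK}, and the rigidity statement \thmref{th-0}. The only point demanding a little care is the necessity direction: rather than attempting to read off \eqref{om1} directly from $R=0$, one should exploit the fact that \lemref{lem-Rkel} is an equivalence, so that flatness feeds back through it to pin down $\Psi_1$.
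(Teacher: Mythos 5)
Your proof is correct and follows essentially the same route as the paper, which also obtains the proposition directly from \lemref{lem-Rkel} combined with the vanishing of K\"ahler-type tensors. Your explicit appeal to \thmref{th-0} (rather than \thmref{thm-K=0}) is in fact the more precise citation for the sufficiency step, and your observation that the necessity direction uses the ``only if'' half of \lemref{lem-Rkel} is exactly the intended argument.
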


\begin{lem}\label{l-rho-eta}
The Ricci tensor $\rho$ and the 2-form $\Psi_1$, defined by \eqref{eta},
have the following relation on any quaternionic K\"ahler
manifold with Her\-mit\-ian-Nor\-den metrics:
\begin{equation}\label{rho-eta}
    \rho(x,y)=n\Psi_1(J_1x, y).
\end{equation}
\end{lem}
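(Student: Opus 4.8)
The goal is to establish formula \eqref{rho-eta}, namely $\rho(x,y)=n\Psi_1(J_1x,y)$, on a quaternionic K\"ahler manifold with Hermitian-Norden metrics. The natural starting point is the contraction identity \eqref{RJJ23}, which relates $R(x,y,J_2z,J_2w)$ to $R(x,y,z,w)$ and the correction term $\Psi_1(x,y)\g_1(z,w)$. The plan is to trace over a suitable pair of indices using the inverse metric $g^{ij}$ so that one side produces the Ricci tensor and the other produces $\Psi_1$ paired with $\g_1$.

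The main idea of the calculation is as follows. First I would take \eqref{RJJ23} in the form $R(x,y,J_2z,J_2w)=-R(x,y,z,w)+\Psi_1(x,y)\g_1(z,w)$ and contract it appropriately. Setting up the trace so as to recover the Ricci tensor $\rho(y,z)=g^{ij}R(e_i,y,z,e_j)$, I would substitute $x=e_i$, $w=e_j$ and contract with $g^{ij}$. On the left-hand side, using the compatibility $g(J_2\cdot,J_2\cdot)=-g(\cdot,\cdot)$ from \eqref{gJJ} together with $J_2^2=-I$ from \eqref{J123}, the term $g^{ij}R(e_i,y,J_2z,J_2e_j)$ can be rewritten as a Ricci-type trace of $R$ evaluated with one $J_2$ shifted; the key is that $J_2$ being $g$-anti-isometric turns the contraction over $J_2e_j$ back into an ordinary contraction with a sign, producing a multiple of $\rho$. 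On the right-hand side, the term $-g^{ij}R(e_i,y,z,e_j)$ gives $-\rho(y,z)$, while the correction $g^{ij}\Psi_1(e_i,y)\g_1(z,e_j)$ collapses—via $\g_1(z,e_j)=g(J_1z,e_j)$ and the definition of the trace—to a term of the form $\Psi_1(J_1z,y)$ up to sign and the dimensional constant. Careful bookkeeping of these contractions is what yields the coefficient $n$ (recall $\dim\MM=4n$), and I would handle $\Psi_2=\Psi_3=0$ from the Lemma preceding this statement to ensure no extra correction terms survive.

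The hard part will be the index bookkeeping: keeping track of how $J_1$ and $J_2$ act under the trace, ensuring the anti-isometry signs $\ea$ are applied correctly, and confirming that the numerical constant is exactly $n$ rather than $2n$ or $4n$. In particular, one must be careful that contracting $\Psi_1(e_i,y)\g_1(z,e_j)g^{ij}$ genuinely produces $\Psi_1(J_1z,y)$, which relies on $\g_1(z,\cdot)=g(J_1z,\cdot)$ and on the skew-symmetry of $\Psi_1$ to reorganize the arguments into the stated form $\Psi_1(J_1x,y)$. I would also use the first Bianchi identity and the symmetries \eqref{curv} of $R$ to move indices into trace position, and the K\"ahler-type relation \eqref{RJJ1} for $J_1$ to simplify any residual $J_1$-twisted curvature terms. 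Once the constant is pinned down, relabeling $z\to x$ gives exactly \eqref{rho-eta}, completing the proof.
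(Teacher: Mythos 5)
Your choice of contraction does not work: it produces a tautology rather than new information. You propose to set $x=e_i$, $w=e_j$ in \eqref{RJJ23} and contract with $g^{ij}$, obtaining
\begin{equation*}
g^{ij}R(e_i,y,J_2z,J_2e_j)=-\rho(y,z)+\Psi_1(J_1z,y).
\end{equation*}
The right-hand side is fine, but the left-hand side is \emph{not} "a multiple of $\rho$ up to sign": the anti-isometry trick you invoke would require \emph{both} contracted slots to carry a $J_2$ (so that the contraction becomes a trace over the basis $\{J_2e_j\}$ with Gram matrix $-g_{ij}$), whereas here only the fourth slot does. The only tool available to evaluate $g^{ij}R(e_i,y,J_2z,J_2e_j)$ is \eqref{RJJ23} itself (directly, or via the pair symmetry of $R$), and doing so returns exactly $-\rho(y,z)+\Psi_1(J_1z,y)$ — i.e.\ your identity reduces to $0=0$ and the coefficient $n$ never appears.

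The paper avoids this by contracting over the slots in which $\g_1$ sits: it sets $z=e_i$, $w=J_1e_j$ in \eqref{RJJ23}, so that $\g_1(e_i,J_1e_j)g^{ij}=g(e_i,e_j)g^{ij}=4n$ and $\Psi_1(x,y)$ survives uncontracted. The resulting twisted traces $g^{ij}R(x,y,J_2e_i,J_3e_j)$ are then identified with $g^{ij}R(x,y,e_i,J_1e_j)$ using the antisymmetry in the last two arguments and $J_1=J_2J_3$, yielding $g^{ij}R(x,y,e_i,J_1e_j)=2n\Psi_1(x,y)$. A second, independent computation — using the first Bianchi identity, the symmetries \eqref{curv}, and the K\"ahler relation \eqref{RJJ1} — shows that this same trace equals $2\rho(x,J_1y)$; combining the two and using the symmetry of $\rho$ against the skewness of $\Psi_1$ gives \eqref{rho-eta}. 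Both of these steps (the $\g_1$-contraction producing the dimensional factor, and the Bianchi computation identifying the trace with $\rho(x,J_1y)$) are missing from your plan, and without them the argument does not close.
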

\begin{proof}
From \eqref{RJJ23} for $z \rightarrow e_i$, $w \rightarrow J_1e_j$
by contraction with $g^{ij}$ we have
\begin{equation}\label{K123a}
\begin{array}{l}
    -g^{ij}R(x,y,J_{2}e_i,J_{3}e_j)=g^{ij}R(x,y,J_{3}e_i,J_{2}e_j) \\[6pt]
\phantom{-g^{ij}R(x,y,J_{2}e_i,J_{3}e_j)}
    =-g^{ij}R(x,y,e_i,J_1e_j)+4n\Psi_{1}(x,y).
\end{array}
\end{equation}
Bearing in mind the antisymmetry on the second pair arguments of
$R$ and $J_1=J_2J_3$, we get
\begin{equation*}
\begin{array}{l}
    -g^{ij}R(x,y,J_{2}e_i,J_{3}e_j)=g^{ij}R(x,y,J_{3}e_i,J_{2}e_j)\\[6pt]
\phantom{-g^{ij}R(x,y,J_{2}e_i,J_{3}e_j)}
    =g^{ij}R(x,y,e_i,J_1e_j)
\end{array}
\end{equation*}
and therefore from \eqref{K123a} we have
\begin{equation}\label{a-eta}
    g^{ij}R(x,y,e_i,J_1e_j)=2n\Psi_1(x,y).
\end{equation}

After that, from \eqref{a-eta}, applying the properties of $R$, \eqref{gJJ} for $\al=1$ and \eqref{RJJ1}, we
obtain consequently
\[
\begin{split}
2n\Psi_1(x,y)&= g^{ij}R(x,y,e_i,J_1e_j)\\[6pt]
&=g^{ij}\{-R(x,e_i,J_1e_j,y)-R(x,J_1e_j,y,e_i)\}\\[6pt]
&= g^{ij}R(x,e_i,y,J_1e_j)+g^{ij}R(x,e_j,y,J_1e_i)\\[6pt]
&=2g^{ij}R(x,e_i,y,J_1e_j)= -2g^{ij}R(e_i,x,y,J_1e_j)\\[6pt]
&=2g^{ij}R(e_i,x,J_1y,e_j)=2\rho(x,J_1y),\\[6pt]
\end{split}
\]
i.e. we get
\begin{equation}\label{eta-rho}
    \Psi_1(x,y)=\frac{1}{n}\rho(x,J_1y).
\end{equation}
Because of the symmetry of $\rho$ and the antisymmetry of $\Psi_1$
we have the property
\begin{equation}\label{etaJ1}
\Psi_1(x,J_1y)=-\Psi_1(J_1x,y)
\end{equation}
and therefore
\begin{equation}\label{rJJ1}
    \rho(J_1x,J_1y)=\rho(x,y).
\end{equation}
Hence, from \eqref{eta-rho}, \eqref{etaJ1} and \eqref{rJJ1}, we
obtain \eqref{rho-eta}.
\end{proof}

\begin{prop}\label{prop-r=0}
    A quaternionic K\"ahler manifold with Hermitian-Nor\-den metrics  is
    Ricci-flat if and only if  it is flat.
\end{prop}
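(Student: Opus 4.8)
**Proposal for the proof of Proposition (Ricci-flat $\Leftrightarrow$ flat for quaternionic Kähler manifolds with Hermitian-Norden metrics).**

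The plan is to exploit the tight dictionary established in the preceding lemmas between the Ricci tensor $\rho$, the $2$-form $\Psi_1$, and the curvature-like obstruction to $R$ being of Kähler-type. The key relation is \eqref{rho-eta}, namely $\rho(x,y)=n\,\Psi_1(J_1x,y)$, together with \propref{prop-K=0} and \thmref{thm-K=0}. The strategy is to show that each of the two conditions (Ricci-flat and flat) is equivalent to the vanishing of $\Psi_1$, so that they are equivalent to each other.

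First I would prove the easy direction: if the manifold is flat, then $R=0$, hence $\rho=g^{ij}R(e_i,\cdot,\cdot,e_j)=0$, so the manifold is Ricci-flat. This requires no structure and is immediate. For the substantive direction, suppose the manifold is Ricci-flat, i.e. $\rho=0$. By \lemref{l-rho-eta} and formula \eqref{rho-eta}, we have $0=\rho(x,y)=n\,\Psi_1(J_1x,y)$ for all $x,y$. Since $n\neq 0$ and $J_1$ is an isomorphism of the tangent space at each point (being an almost complex structure, $J_1^2=-I$), replacing $x$ by $-J_1x$ and using $J_1^2=-I$ yields $\Psi_1(x,y)=0$ for all $x,y$, i.e. $\Psi_1=0$. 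Then condition \eqref{om1} of \lemref{lem-Rkel} holds, so $R$ is of Kähler-type. Now I would invoke \thmref{th-0}, which asserts that every Kähler-type tensor on an almost hypercomplex manifold with Hermitian-Norden metrics is zero; hence $R=0$ and the manifold is flat. Combining the two directions gives the stated equivalence.

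The cleanest route therefore factors the claim through the single scalar-free identity $\Psi_1=0$: Ricci-flatness forces $\Psi_1=0$ by \eqref{rho-eta}, and conversely $\Psi_1=0$ forces both flatness (via \lemref{lem-Rkel}, \thmref{th-0}) and a fortiori Ricci-flatness. I expect the only delicate point to be the passage from $\Psi_1(J_1x,y)=0$ to $\Psi_1=0$: one must use that $J_1$ is pointwise bijective and that the identity holds for \emph{all} vector fields, so substitution of $J_1x$ for $x$ is legitimate; the sign bookkeeping with $J_1^2=-I$ is routine. No heavy computation is needed, since the hard analytic content is already packaged in \lemref{l-rho-eta} and in the vanishing theorem \thmref{th-0} for Kähler-type tensors, both of which I am entitled to assume.
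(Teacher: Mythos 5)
Your proposal is correct and follows essentially the same route as the paper: both directions hinge on the identity of \lemref{l-rho-eta} linking $\rho$ and $\Psi_1$, the observation that Ricci-flatness forces $R$ to be of K\"ahler-type, and the vanishing theorem \thmref{th-0}. The paper merely substitutes $\Psi_1(x,y)=\frac1n\rho(x,J_1y)$ directly into \eqref{RJJ23} instead of passing explicitly through $\Psi_1=0$ and \lemref{lem-Rkel}, which is a cosmetic difference only.
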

\begin{proof}
Using \lemref{l-rho-eta}, property \eqref{RJJ23} takes the form
\begin{equation}\label{RJJ23r}
\begin{split}
    &R(x,y,J_{2}z,J_{2}w)=R(x,y,J_{3}z,J_{3}w) \\[6pt]
    &\phantom{R(x,y,J_{2}z,J_{2}w)}
    =-R(x,y,z,w)
    -\frac{1}{n}\rho(J_1x,y)g(J_{1}z,w).
\end{split}
\end{equation}
Then, according to \eqref{RJJ23r}, \eqref{K-kel} and
\thmref{th-0}, we obtain the equivalence in the statement.
\end{proof}

\begin{thm}\label{thm-Ein}
Quaternionic K\"ahler manifolds with Her\-mit\-ian-Nor\-den metrics  are
Einstein for dimension $4n\geq 8$.
\end{thm}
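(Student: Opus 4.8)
The plan is to prove the Einstein condition in the form $\rho=\frac{\Scal}{4n}\,g$, and the first step is to reduce this to a statement purely about the $2$-form $\Psi_1$. By \lemref{l-rho-eta} we already have $\rho(x,y)=n\Psi_1(J_1x,y)$, while $\Psi_2=\Psi_3=0$. Since $J_1$ is a $g$-isometry, $\g_1(J_1x,y)=g(J_1^2x,y)=-g(x,y)$, so if one shows that $\Psi_1$ is a pointwise multiple of the K\"ahler form, $\Psi_1=\lambda\,\g_1$ for a function $\lambda$ on $\MM$, then $\rho(x,y)=n\Psi_1(J_1x,y)=-n\lambda\,g(x,y)$, which is exactly the Einstein property (and then the contracted second Bianchi identity forces $\lambda$, hence $\Scal$, to be constant on a connected manifold). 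Thus the whole theorem is equivalent to the proportionality $\Psi_1\sim\g_1$.

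To establish this proportionality I would work with the unpaired curvature identity behind \eqref{RJJ23}, namely \eqref{RJ} with $\Psi_2=\Psi_3=0$, which gives $R(x,y)J_2z=J_2R(x,y)z+\Psi_1(x,y)J_3z$ and its cyclic analogues. Lowering an index (using that $J_2,J_3$ are $g$-self-adjoint while $J_1$ is $g$-skew-adjoint) expresses $R(x,y,J_\al z,w)-\ea R(x,y,z,J_\al w)$ through $\Psi_1$ and the associated metrics $\g_\al$. The idea is then to split $R$ into the part $W$ that commutes with the whole quaternionic action $H$ --- the ``$\mathfrak{sp}(n)$-component'' --- and the remaining ``$\mathfrak{sp}(1)$-component'' carried by $\Psi_1$ together with its quaternionic rotations. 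Feeding this splitting into the first Bianchi identity \eqref{curv}, and taking the $J_1$-twisted trace from \eqref{a-eta}, one sees that for $n\ge 2$ the component $W$ contributes nothing to the Ricci contraction, so that the entire Ricci tensor comes from the $\mathfrak{sp}(1)$-part; the requirement that this part itself satisfy the first Bianchi identity then forces $\Psi_1=\lambda\,\g_1$. Equivalently, one may assemble from $\Psi_1$, $g$ and the $\g_\al$ an explicit model tensor $R_0$ having the same K\"ahler-type anomaly as $R$, so that $R-R_0$ satisfies \eqref{K-kel} and hence vanishes by \thmref{th-0}; reading off the Ricci tensor of $R_0$ again yields $\rho\sim g$.

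The main obstacle is precisely the Bianchi bookkeeping in the presence of three structures of mixed type: the anti-commutation relations \eqref{J123} and the signs $\ea$ of \eqref{epsiloni} must be tracked carefully, because here $J_1$ is an isometry (Hermitian slot) while $J_2,J_3$ are anti-isometries (Norden slots), so the three ``$\mathfrak{sp}(1)$-rotations'' of the model tensor do not enter symmetrically as in the positive-definite hyper-Hermitian case. The dimension hypothesis $4n\ge 8$ enters exactly at the step where $W$ is shown to be Ricci-free: this uses that the trace-free quaternionic curvature module has no scalar summand, which holds for $n\ge 2$ but collapses in dimension $4$, where $\mathfrak{sp}(1)\oplus\mathfrak{sp}(1)\cong\mathfrak{so}(4)$; this is why the statement is restricted to $4n\ge 8$. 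As a consistency check, in the degenerate case $\lambda\equiv 0$ the reduction gives a Ricci-flat manifold, and \propref{prop-r=0} (or \thmref{thm-K=0} via \lemref{lem-Rkel}) then recovers flatness, in agreement with \thmref{th-0}.
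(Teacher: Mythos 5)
Your reduction of the theorem to the pointwise proportionality $\Psi_1=\lambda\,\g_1$ is correct and, via \lemref{l-rho-eta}, is indeed equivalent to the Einstein condition; the problem is that the proposal never actually establishes this proportionality — it is asserted as the outcome of a Bianchi/representation-theoretic argument that is not carried out, and that is where the entire content of the theorem lies. Two concrete points. First, the mechanism you invoke (``the trace-free quaternionic curvature module has no scalar summand, so $W$ is Ricci-free'') is imported from positive-definite quaternionic K\"ahler geometry; in the Hermitian-Norden setting the relevant statement is different and much stronger — by \thmref{th-0} every curvature-like tensor of K\"ahler type vanishes identically — so the real question is not whether $W$ contributes to the Ricci trace but whether the residual piece carrying the anomaly, which by \eqref{RJJ23} has the shape $\Psi_1\otimes\g_1$ in the last pair of arguments, can be completed to a genuine curvature-like tensor. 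The first Bianchi identity $\sx\{\Psi_1(x,y)\,\g_1(z,w)\}=0$ fails for a generic $2$-form $\Psi_1$, and proving that it holds here (equivalently, that $\Psi_1\propto\g_1$) is exactly the missing computation; note also that \eqref{RJ} only controls the action of $J_\al$ on the last two slots, so the anomaly must first be transferred to the first pair through the pair symmetry of $R$ before any model tensor $R_0$ can be subtracted.

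The paper takes a different, entirely computational route that sidesteps any decomposition. From \eqref{RJJ1}, \eqref{RJJ23} and \eqref{rho-eta} it derives the fourth-order symmetries \eqref{RJJJJ1}--\eqref{RJJJJ23}, whose trace gives $(n^2-1)\bigl\{\rho(J_2y,J_2z)+\rho(y,z)\bigr\}=0$ — this is where $4n\geq 8$ enters, through the factor $n^2-1$, not through any statement about $\mathfrak{sp}(1)\oplus\mathfrak{sp}(1)\cong\mathfrak{so}(4)$. It then evaluates the single expression $A(x,z)=R(x,J_1x,z,J_1z)-R(x,J_1x,J_2z,J_3z)-R(J_2x,J_3x,z,J_1z)+R(J_2x,J_3x,J_2z,J_3z)$ in two ways, obtaining $\rho(x,x)g(z,z)=g(x,x)\rho(z,z)$ for all non-isotropic $x,z$, whence $\rho=\lambda g$ directly. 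If you want to salvage your approach, the quickest repair is to run the trace identities \eqref{a-eta} and \eqref{rJJ1} through your Bianchi computation explicitly rather than appealing to the classical module decomposition.
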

\begin{proof}
By virtue of \eqref{RJJ1}, \eqref{RJJ23r}  and \eqref{rJJ1} we
obtain the following properties
\begin{gather}\label{RJJJJ1}
    R(J_{1}x,J_{1}y,J_{1}z,J_{1}w)=R(x,y,z,w), \\[6pt]
\begin{split}\label{RJJJJ23}
    R(J_{2}x,J_{2}y,J_{2}z,J_{2}w)&=R(J_{3}x,J_{3}y,J_{3}z,J_{3}w) \\[6pt]
    &=R(x,y,z,w) \\[6pt]
    &\phantom{=\,}
    -\frac{1}{n}g(x,J_1y)\rho(J_1z,w) \\[6pt]
    &\phantom{=\,}
    +\frac{1}{n}\rho(J_2x,J_3y)g(J_1z,w).
\end{split}
\end{gather}

Hence, for the Ricci tensor we have \eqref{rJJ1} and
\begin{equation}\label{rho4J23}
\begin{split}
    (n^2-1)\rho(J_{2}y,J_{2}z)&=(n^2-1)\rho(J_{3}y,J_{3}z)\\[6pt]
                                    &=-(n^2-1)\rho(y,z).
\end{split}
\end{equation}
Then for $n>1$ the Ricci tensor is hybrid with respect to $J_2$
and $J_3$, i.e.
\begin{equation*}\label{rhoJ2J3}
    \rho(J_{2}y,J_{2}z)=\rho(J_{3}y,J_{3}z)
    =-\rho(y,z).
\end{equation*}

Conditions \eqref{RJJ1}, \eqref{RJJ23} and \eqref{rho-eta} imply
for $n>1$ the following
\begin{equation*}\label{4Rrho}
    A(x,z)=-\frac{2}{n}\rho(x,x)g(z,z)=-\frac{2}{n}g(x,x)\rho(z,z),
\end{equation*}
where
\begin{equation*}\label{A}
\begin{array}{l}
A(x,z)=R(x,J_{1}x,z,J_{1}z)-R(x,J_{1}x,J_{2}z,J_{3}z)\\[6pt]
    \phantom{A(x,z)=}
        -R(J_{2}x,J_{3}x,z,J_{1}z)+R(J_{2}x,J_{3}x,J_{2}z,J_{3}z)\\[6pt]
\end{array}
\end{equation*}

Then for arbitrary non-isotropic vectors we have $\rho=\lm g$,
$\lm\in\R$.
\end{proof}

By \thmref{thm-Ein}, identity \eqref{RJJ23r} implies the following
corollary for $n\geq 2$:
\begin{equation}\label{RJJ23t}
\begin{split}
    &R(x,y,J_{2}z,J_{2}w)=R(x,y,J_{3}z,J_{3}w) \\[6pt]
    &\phantom{R(x,y,J_{2}z,J_{2}w)}
    =-R(x,y,z,w)-\frac{\tau}{4n^2}g(J_1x,y)g(J_{1}z,w).
\end{split}
\end{equation}

Therefore from \eqref{RJJ23t}, using \eqref{K-kel} and
\thmref{th-0}, we obtain the following
\begin{prop}\label{prop-t=0}
    A quaternionic K\"ahler manifold with Her\-mit\-ian-Nor\-den metrics  of dimension $4n\geq 8$ is
    scalar flat if and only if  it is flat.
\end{prop}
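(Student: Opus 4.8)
The plan is to mirror the argument used for \propref{prop-r=0}, replacing the Ricci-flat input by the scalar-flat hypothesis and exploiting the Einstein property that becomes available once $4n\geq 8$. The forward direction (flat $\Rightarrow$ scalar flat) is immediate: if $R=0$, then $\rho=0$ and hence $\tau=g^{ij}\rho(e_i,e_j)=0$. So the substance lies in proving that scalar flatness forces $R=0$.

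First I would invoke \thmref{thm-Ein}, which guarantees that the manifold is Einstein for dimension $4n\geq 8$; this is precisely what makes the reduced curvature identity \eqref{RJJ23t} valid (it is the specialisation of \eqref{RJJ23r} under $\rho=\frac{\tau}{4n}g$). Then I would set $\tau=0$ in \eqref{RJJ23t}. The defining term $-\frac{\tau}{4n^2}g(J_1x,y)g(J_1z,w)$ drops out, leaving
\[
R(x,y,J_2z,J_2w)=R(x,y,J_3z,J_3w)=-R(x,y,z,w).
\]
Bearing in mind $\varepsilon_2=\varepsilon_3=-1$ from \eqref{epsiloni}, these equalities read $R(x,y,z,w)=\ea R(x,y,J_\al z,J_\al w)$ for $\al=2,3$, while \eqref{RJJ1} supplies the same relation for $\al=1$ (where $\varepsilon_1=1$). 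Hence $R$ satisfies the first defining equality of a K\"ahler-type tensor in \eqref{K-kel} for every $\al$; the second then follows by the pair symmetry $R(x,y,z,w)=R(z,w,x,y)$, so $R$ is of K\"ahler type.

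Finally I would apply \thmref{th-0}, according to which every K\"ahler-type tensor on an almost hypercomplex manifold with Hermitian-Norden metrics vanishes. Therefore $R=0$ and the manifold is flat, which closes the equivalence. I do not foresee a genuine obstacle, since the computation is entirely parallel to that of \propref{prop-r=0}; the only point requiring care is the role of the dimension hypothesis. The Einstein conclusion of \thmref{thm-Ein}, and hence the clean form \eqref{RJJ23t}, is available only for $4n\geq 8$, which is exactly why the statement is formulated under this restriction; in dimension $4$ one would instead have to argue directly with \eqref{RJJ23r} and the full Ricci tensor, as in \propref{prop-r=0}, rather than with the scalar curvature alone.
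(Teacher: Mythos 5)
Your proposal is correct and follows essentially the same route as the paper: the paper likewise derives \eqref{RJJ23t} from \thmref{thm-Ein}, sets $\tau=0$ to recognise $R$ as a K\"ahler-type tensor via \eqref{K-kel}, and concludes $R=0$ by \thmref{th-0}. Your extra remarks on the pair symmetry and on the role of the dimension hypothesis are accurate but not needed beyond what the paper already records.
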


\begin{prop}\label{prop-d-om}
    A quaternionic K\"ahler manifold with Her\-mit\-ian-Nor\-den metrics  of dimension $4n\geq 8$
    is determined by the local 1-forms satisfying conditions
    \eqref{d-om} and
\begin{equation*}\label{om1=}
\dd\om_1(x,y)=-\om_{2} (x)\om_{3} (y)+\om_{3} (x)\om_{2}
(y)-\frac{\tau}{4n^2}g(J_1x,y).
\end{equation*}
\end{prop}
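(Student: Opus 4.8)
The plan is to read off $\dd\om_1$ by combining the Einstein property established in \thmref{thm-Ein} with the curvature identities already obtained for a quaternionic K\"ahler manifold with Her\-mit\-ian-Nor\-den metrics. Since $\dim\MM=4n\geq 8$ forces $n\geq 2$, \thmref{thm-Ein} applies and the manifold is Einstein; taking the trace of $\rho=\lm g$ with respect to $g$ over the $4n$-dimensional tangent space gives $\lm=\frac{\tau}{4n}$, hence $\rho=\frac{\tau}{4n}g$. The two conditions \eqref{d-om} for $\dd\om_2$ and $\dd\om_3$ are already in hand from the lemma that yields them, so the whole task reduces to producing the corresponding formula for $\dd\om_1$.

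First I would recall \lemref{l-rho-eta}, in the form \eqref{eta-rho}, which expresses the 2-form $\Psi_1$ through the Ricci tensor as $\Psi_1(x,y)=\frac1n\rho(x,J_1y)$. Substituting the Einstein relation $\rho=\frac{\tau}{4n}g$ yields $\Psi_1(x,y)=\frac{\tau}{4n^2}g(x,J_1y)$, and then \eqref{gJ} with $\ea=1$ for $\al=1$ (so that $g(J_1x,y)=-g(x,J_1y)$) rewrites this as $\Psi_1(x,y)=-\frac{\tau}{4n^2}g(J_1x,y)$.

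Next I would invoke the definition \eqref{eta} of $\Psi_\bt$ for the cyclic triple $(\al,\bt,\gm)=(3,1,2)$, which reads $\Psi_1(x,y)=\dd\om_1(x,y)+\om_2(x)\om_3(y)-\om_3(x)\om_2(y)$. Solving for $\dd\om_1$ and inserting the expression for $\Psi_1$ from the previous step produces exactly the stated formula
\[
\dd\om_1(x,y)=-\om_2(x)\om_3(y)+\om_3(x)\om_2(y)-\frac{\tau}{4n^2}g(J_1x,y),
\]
which, together with \eqref{d-om}, determines the local 1-forms.

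The computation itself is short; the genuine content is all borrowed from earlier results, so the main obstacle is bookkeeping rather than analysis. In particular I would be careful to select the correct cyclic permutation in \eqref{eta} so that the sign of the quadratic terms $\om_2(x)\om_3(y)-\om_3(x)\om_2(y)$ comes out right, and to track the single sign coming from $\ea=1$ in \eqref{gJ} when passing between $g(x,J_1y)$ and $g(J_1x,y)$. The dimension hypothesis $4n\geq 8$ enters only through its use in \thmref{thm-Ein}; without it the Einstein reduction $\rho=\frac{\tau}{4n}g$ is unavailable and one is left with the weaker \lemref{l-rho-eta} alone.
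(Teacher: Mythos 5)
Your proof is correct and follows exactly the route the paper intends: the paper states \propref{prop-d-om} without proof because it is the immediate combination of \thmref{thm-Ein} (giving $\rho=\frac{\tau}{4n}g$), \lemref{l-rho-eta} in the form \eqref{eta-rho}, the sign convention \eqref{gJ} for $\al=1$, and the definition \eqref{eta} with the cyclic permutation $(3,1,2)$ — precisely the chain you assemble. Your bookkeeping of the signs and of where the hypothesis $4n\geq 8$ enters is accurate.
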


\vskip 0.2in \addtocounter{subsection}{1} \setcounter{subsubsection}{0}

\noindent  {\Large\bf \thesubsection. Quaternionic K\"ahler manifolds with Her\-mit\-ian-Nor\-den met\-rics in a classification of almost hypercomplex
manifolds with Her\-mit\-ian-Nor\-den metrics}

\vskip 0.15in


Firstly, let us consider the case when $H$ is (integrable)
hypercomplex structure, i.e. when $[J_\al,J_\al]$ vanishes for each
$\al=1,2,3$.

Taking into account \eqref{NJ-nabli} and \eqref{qK}, for the quaternionic K\"ahler
 manifolds we have
\begin{equation}\label{NN*}
\begin{array}{l}
[J_\al,J_\al](x,y)=
-\left[\om_\gm(x)+\om_\bt(J_\al x)\right]J_\gm y\\[6pt]
\phantom{[J_\al,J_\al](x,y)=}
-\left[\om_\bt(x)-\om_\gm(J_\al x)\right]J_\bt y\\[6pt]
\phantom{[J_\al,J_\al](x,y)=}
+\left[\om_\gm(y)+\om_\bt(J_\al y)\right]J_\gm x\\[6pt]
\phantom{[J_\al,J_\al](x,y)=}
+\left[\om_\bt(y)-\om_\gm(J_\al y)\right]J_\bt x,\\[6pt]
\{J_\al,J_\al\}(x,y)=
-\left[\om_\gm(x)+\om_\bt(J_\al x)\right]J_\gm y\\[6pt]
\phantom{\{J_\al,J_\al\}(x,y)=}
-\left[\om_\bt(x)-\om_\gm(J_\al x)\right]J_\bt y\\[6pt]
\phantom{\{J_\al,J_\al\}(x,y)=}
-\left[\om_\gm(y)+\om_\bt(J_\al y)\right]J_\gm x\\[6pt]
\phantom{\{J_\al,J_\al\}(x,y)=}
-\left[\om_\bt(y)-\om_\gm(J_\al y)\right]J_\bt x.
\end{array}
\end{equation}
The latter equations imply immediately the next two lemmas.
\begin{lem}\label{lm-N}
The tensors $[J_\al,J_\al]$ and $\{J_\al,J_\al\}$  ($\al=1,2,3$) vanish if and only if the following equality is valid for any fixed cyclic permutation
$(\al, \bt, \gm)$ of $(1,2,3)$:
\[
\om_\gm=-\om_\bt\circ J_\al.
\]
\end{lem}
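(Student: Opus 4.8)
The plan is to read everything off the explicit formulas \eqref{NN*}. First I would abbreviate the two bracketed coefficient $1$-forms occurring there by setting
\[
A_\al(x)=\om_\gm(x)+\om_\bt(J_\al x),\qquad B_\al(x)=\om_\bt(x)-\om_\gm(J_\al x),
\]
so that \eqref{NN*} reads
\[
[J_\al,J_\al](x,y)=-A_\al(x)J_\gm y-B_\al(x)J_\bt y+A_\al(y)J_\gm x+B_\al(y)J_\bt x,
\]
\[
\{J_\al,J_\al\}(x,y)=-A_\al(x)J_\gm y-B_\al(x)J_\bt y-A_\al(y)J_\gm x-B_\al(y)J_\bt x.
\]
The stated condition $\om_\gm=-\om_\bt\circ J_\al$ is precisely $A_\al=0$, so the whole lemma reduces to showing that $A_\al=0$ is equivalent to the simultaneous vanishing of these two tensors.

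Next I would record that $A_\al=0$ and $B_\al=0$ are equivalent, using $J_\al^2=-I$ from \eqref{J123}. Indeed, if $A_\al=0$, i.e.\ $\om_\gm=-\om_\bt\circ J_\al$, then $\om_\gm(J_\al x)=-\om_\bt(J_\al^2 x)=\om_\bt(x)$, whence $B_\al(x)=\om_\bt(x)-\om_\gm(J_\al x)=0$; the converse is the mirror computation. Consequently, assuming $A_\al=0$ forces both coefficient forms to vanish, and the two displayed formulas immediately give $[J_\al,J_\al]=\{J_\al,J_\al\}=0$. This settles the ``if'' direction.

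For the converse, suppose both tensors vanish. Adding the two displayed equalities makes the antisymmetric terms cancel pairwise, leaving
\[
A_\al(x)J_\gm y+B_\al(x)J_\bt y=0
\]
for all $x,y$. Fixing any $y\neq o$, I would argue that $J_\gm y$ and $J_\bt y$ are linearly independent: a relation $J_\gm y=c\,J_\bt y$ with $c\in\R$, upon applying $J_\bt$ and using $J_\bt J_\gm=J_\al$ from \eqref{J123} together with $J_\bt^2=-I$, yields $J_\al y=-c\,y$, which is impossible for $y\neq o$ since $J_\al^2=-I$ admits no real eigenvalue. Hence $A_\al(x)=B_\al(x)=0$ for every $x$, so $A_\al=0$, which is exactly the asserted identity $\om_\gm=-\om_\bt\circ J_\al$.

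The computation presents no serious obstacle; the only point requiring genuine care is the pointwise linear independence of $J_\gm y$ and $J_\bt y$, which is where the defining property $J_\al^2=-I$ of an almost complex structure actually enters and lets me pass from a tensor identity to the pointwise vanishing of the $1$-forms. I would also note that the same conclusion follows from $\{J_\al,J_\al\}=0$ alone by substituting $x=y$, so invoking both tensors in the hypothesis is a matter of convenience rather than necessity.
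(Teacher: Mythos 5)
Your proof is correct and follows the route the paper intends: the paper simply asserts that \eqref{NN*} "implies immediately" this lemma, and your argument is the detailed verification of that claim, reducing everything to the vanishing of the coefficient form $A_\al$ and noting $A_\al=0\Leftrightarrow B_\al=0$ via $J_\al^2=-I$. The one step the paper leaves entirely implicit — the pointwise linear independence of $J_\bt y$ and $J_\gm y$, needed to pass from the vanishing of the tensors back to the vanishing of the $1$-forms — you supply correctly using $J_\bt J_\gm=J_\al$ and the absence of real eigenvalues of $J_\al$.
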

\begin{lem}\label{lm-NN}
The tensors $[J_\al,J_\al]$ and $\{J_\al,J_\al\}$ ($\al=1,2,3$) vanish if and only if the following equalities hold for all cyclic permutations $(\al, \bt, \gm)$ of $(1,2,3)$:
\begin{equation}\label{qK-usl}
\om_\al=\om_\bt\circ J_\gm=-\om_\gm\circ J_\bt.
\end{equation}
\end{lem}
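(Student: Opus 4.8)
The plan is to deduce \lemref{lm-NN} directly from the already-established \lemref{lm-N}, since both are \emph{iff} characterizations of the same event — the simultaneous vanishing of the six tensors $[J_\al,J_\al]$, $\{J_\al,J_\al\}$ — so the whole task reduces to proving the algebraic equivalence of the two families of conditions on the local $1$-forms $\om_1,\om_2,\om_3$. Concretely, \lemref{lm-N} tells us that vanishing holds exactly when $\om_\gm=-\om_\bt\circ J_\al$ for every cyclic permutation $(\al,\bt,\gm)$ of $(1,2,3)$, whereas \eqref{qK-usl} asks for the stronger-looking pair $\om_\al=\om_\bt\circ J_\gm=-\om_\gm\circ J_\bt$ for every cyclic permutation. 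I would show that these two systems coincide.

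First I would read off the instances of the \lemref{lm-N} condition obtained by cyclically relabelling. Applied to the shifted triple $(\bt,\gm,\al)$, the relation $\om_\gm=-\om_\bt\circ J_\al$ becomes $\om_\al=-\om_\gm\circ J_\bt$, which is precisely the second equality in \eqref{qK-usl} for $(\al,\bt,\gm)$. Applied instead to the triple $(\gm,\al,\bt)$, it yields $\om_\bt=-\om_\al\circ J_\gm$; here I would substitute the argument $x\mapsto J_\gm x$ and use $J_\gm^2=-I$ from \eqref{J123} to turn this into $\om_\al=\om_\bt\circ J_\gm$, the first equality in \eqref{qK-usl}. Thus the \lemref{lm-N} family forces the whole of \eqref{qK-usl}.

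The converse is immediate: the system \eqref{qK-usl} already contains, among its second equalities (ranging over all cyclic permutations), every instance $\om_\gm=-\om_\bt\circ J_\al$ of the \lemref{lm-N} condition, so by \lemref{lm-N} the six tensors vanish. Chaining the two implications gives the desired equivalence between the vanishing of the tensors and \eqref{qK-usl}.

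The routine computation underlying everything is the single substitution $x\mapsto J_\gm x$ combined with $J_\gm^2=-I$, which turns a relation of the form $\om=-\psi\circ J_\gm$ into $\psi=\om\circ J_\gm$; I would state this once and reuse it. The only genuine source of error — hence the part to handle with care — is the index bookkeeping: one must keep the three cyclic permutations $(1,2,3)$, $(2,3,1)$, $(3,1,2)$ straight, so that each of the six scalar equalities in \eqref{qK-usl} is matched with the correct instance (direct, or after the $J_\gm$-twist) of the \lemref{lm-N} relation, and so that no noncyclic permutation is inadvertently invoked.
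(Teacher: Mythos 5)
Your proposal is correct and matches the paper's (essentially unstated) argument: the paper derives both \lemref{lm-N} and \lemref{lm-NN} "immediately" from the explicit expressions \eqref{NN*}, and your reduction of \lemref{lm-NN} to \lemref{lm-N} via cyclic relabelling together with the substitution $x\mapsto J_\gm x$ and $J_\gm^2=-I$ is exactly the bookkeeping that makes this precise. No gaps.
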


Now, according to \eqref{qK} and \eqref{gJ}, the fundamental
tensors and their corresponding Lee forms of the derived
quaternionic K\"ahler manifold with Hermitian-Norden metrics, defined by \eqref{F'-al}
 and \eqref{theta-al},  have the form
\begin{gather}
    F_\al(x,y,z)=\om_\gm(x)g(J_\bt
y,z)-\om_\bt(x)g(J_\gm y,z),\label{qK-F}\\[6pt]
    \ta_\al(z)=-\eb\om_\gm(J_\bt z)+\eg\om_\bt(J_\gm z).\label{qK-theta}
\end{gather}

\begin{prop}\label{prop-int}
If a quaternionic K\"ahler manifold with Her\-mit\-ian-Nor\-den metrics $(\MM,H,G)$ is integrable,
then it is a hyper-K\"ahler manifold with Her\-mit\-ian-Nor\-den metrics, i.e. the following implication is valid
\[
(\MM,H,G)\in\left(\W_3\oplus\W_4\right)(J_1)\cap\left(\W_1\oplus\W_2\right)(J_2)
\cap\left(\W_1\oplus\W_2\right)(J_3)
\]
\[
 \Rightarrow \quad
(\MM,H,G)\in\KK.
\]
\end{prop}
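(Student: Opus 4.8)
The plan is to peel the statement into a purely algebraic reduction followed by a curvature argument. First I would exploit integrability. Since $(\MM,H,G)$ is quaternionic K\"ahler, the covariant derivatives $\DDD J_\al$ have the explicit form \eqref{qK}, so the Nijenhuis tensors $[J_\al,J_\al]$ are given by the first identity in \eqref{NN*}. Writing $[J_\al,J_\al]=0$ and evaluating at a nonzero vector lying in the common kernel of the two coefficient $1$-forms (nontrivial in dimension $4n\geq 4$), and using that $J_\bt x$ and $J_\gm x$ are linearly independent for $x\neq 0$, I would force those coefficient forms to vanish identically. This yields, for every cyclic $(\al,\bt,\gm)$, the relation $\om_\gm=-\om_\bt\circ J_\al$; running over all three $\al$ gives exactly \eqref{qK-usl} (equivalently, one may note that $[J_\al,J_\al]=0$ also makes $\{J_\al,J_\al\}=0$ and invoke \lemref{lm-NN}).

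Next I would substitute \eqref{qK-usl} back into \eqref{qK}. A short computation turns the quaternionic K\"ahler form of $\DDD J_1$ into $(\DDD_x J_1)y=\om_1(J_2x)J_2y+\om_1(J_3x)J_3y$, with analogous expressions for $J_2,J_3$. Because $J_2y$ and $J_3y$ are independent, the hyper-K\"ahler condition $\DDD J_\al=0$ (membership in $\KK$) is therefore \emph{equivalent} to $\om_1\equiv 0$, i.e. to the vanishing of all three $\om_\al$. Thus the whole proposition reduces to proving $\om_1=0$. It is also worth recording, from \eqref{qK-theta}, that \eqref{qK-usl} already forces $\ta_2=\ta_3=0$ and $\ta_1=-2\om_1$, and, from \eqref{nJ-qK}, that $\nJ1$ is proportional to $|\om_1|_g^2$; so the manifold is automatically isotropic hyper-K\"ahlerian (cf. \propref{prop-iqK}), but isotropy is strictly weaker than what is wanted.

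The decisive step is to bring in the curvature. Here I would use that on every quaternionic K\"ahler manifold with Hermitian-Norden metrics one has $\Psi_2=\Psi_3=0$, the reduced identities \eqref{RJJ1}, \eqref{RJJ23}, and the Ricci relation \eqref{rho-eta}. Combining \eqref{RJJ23} with the K\"ahler-type requirement \eqref{K-kel} shows that $R$ is of K\"ahler type precisely when $\Psi_1=0$, which by \eqref{rho-eta} is the same as $\rho=0$; by \thmref{th-0} a K\"ahler-type tensor must vanish, and by \propref{prop-r=0} Ricci-flatness is equivalent to flatness. So it suffices to prove $\rho=0$. For $\dim\MM=4n\geq 8$ the Einstein property \thmref{thm-Ein} reduces this to the single scalar condition $\tau=0$; I would then evaluate the trace $g^{ij}\Psi_1(e_i,J_1e_j)$ two ways — once through \eqref{rho-eta}, giving $-\tfrac1n\tau$, and once through the definition \eqref{eta} of $\Psi_1$ together with \eqref{qK-usl}, giving $-2|\om_1|_g^2$ plus a divergence-type term $g^{ij}\dd\om_1(e_i,J_1e_j)$ — and feed the outcome through \propref{prop-t=0} to force $\tau=0$, hence flatness and $\Psi_1=0$, hence $\dd\om_1$ pinned by \eqref{om1}; the Maurer-Cartan system for $\om_1,\om_2,\om_3$ that results from $\Psi_1=\Psi_2=\Psi_3=0$ together with the differentiated relations should then collapse to $\om_1=0$.

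The hard part is exactly this last step: the first-order relations \eqref{qK-usl} only tie the $\om_\al$ together and, by themselves, never kill them, so the vanishing of $\om_1$ genuinely needs the second-order (curvature) information. The delicate point is to show that the reduced $Sp(1)$-curvature $\Psi_1$ is forced to vanish in a way that is not circular with the Einstein normalization, and to control the three $\varepsilon$-signs together with the auxiliary $1$-form $\phi:=\om_1\circ J_1=\om_2\circ J_2=\om_3\circ J_3$ that appears whenever one differentiates \eqref{qK-usl}. I expect the cleanest closure to come by combining flatness ($R=0$) with the identities obtained by differentiating \eqref{qK-usl}, which turns the problem into an overdetermined linear system in $\DDD\om_1$ whose only solution is $\om_1=0$.
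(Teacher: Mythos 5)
Your reduction is right as far as it goes: integrability plus \eqref{NN*} does give \eqref{qK-usl}, and membership in $\KK$ is indeed equivalent to the vanishing of all the $\om_\al$. But the sentence on which your whole strategy rests --- that the first-order relations \eqref{qK-usl} ``only tie the $\om_\al$ together and, by themselves, never kill them'' --- is false, and this is exactly the missing idea. On \emph{any} quaternionic K\"ahler manifold with Hermitian-Norden metrics one already has $\om_2=\om_3=0$ before integrability is ever invoked: $\DDD_x J_1$ must be $g$-skew-symmetric (because $\g_1=g(J_1\cdot,\cdot)$ is a $2$-form and $\DDD g=0$), whereas the right-hand side $\om_3(x)J_2-\om_2(x)J_3$ of \eqref{qK} is $g$-symmetric (because $\g_2,\g_3$ are symmetric); an endomorphism that is both vanishes, and since $J_2,J_3$ are linearly independent this forces $\om_2=\om_3=0$ pointwise. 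Equivalently, $F_1(x,y,z)=\om_3(x)\g_2(y,z)-\om_2(x)\g_3(y,z)$ cannot satisfy $F_1(x,y,z)=-F_1(x,z,y)$ from \eqref{FaJ-prop} otherwise; this is the same symmetry mismatch the paper exploits one order higher to get $\Psi_2=\Psi_3=0$. Once $\om_2=\om_3=0$, the single relation $\om_1=\om_2\circ J_3$ from \eqref{qK-usl} gives $\om_1=0$ and you are done --- no curvature needed. Alternatively, and this is closer to the paper's route, apply \thmref{thm:FN} to the Norden structures $J_2,J_3$: integrability forces both $[J_\al,J_\al]$ and $\{J_\al,J_\al\}$ to vanish (they have the same coefficient $1$-forms in \eqref{NN*}), hence $F_\al=0$ for $\al=2,3$, and K\"ahlerness with respect to two of the three structures puts the manifold in $\KK$.

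The curvature argument you propose in its place cannot be made to close. Even granting that you establish flatness, flatness does not imply $\om_1=0$: the paper's own \propref{prop-om1} exhibits exactly the obstruction, namely that a quaternionic K\"ahler manifold with Hermitian-Norden metrics of the form \eqref{F123} is flat and non-hyper-K\"ahler precisely when the nonzero $1$-form $\om_1$ is closed. So after all the work with $\Psi_1$, the Einstein condition and $\tau$, you would at best land on ``$R=0$ and $\dd\om_1=0$,'' which is compatible with $\om_1\neq 0$, and you would have to return to the integrability hypothesis anyway --- which, as above, already finishes the proof on its own. (The intermediate trace identity is also not obviously sign-definite, so even the step ``hence $\tau=0$'' is unsupported, and the Einstein input would restrict you to $4n\geq 8$ while the proposition has no such restriction.) In short: the proof is first-order and algebraic; the second-order detour both fails and is unnecessary.
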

\begin{proof}
Let $(\MM,H,G)$ be an integrable hypercomplex manifold with Her\-mit\-ian-Nor\-den metrics, i.e.
$(\MM,H,G)$ belongs to the class $\W_3\oplus\W_4$ with respect to
$J_1$ in \eqref{cl-H} and $(\MM,H,G)$ is an element of
 $\W_1\oplus\W_2$ regarding $J_2$ and $J_3$, according to
\eqref{cl-N}.

Therefore $[J_1,J_1]=[J_2,J_2]=[J_3,J_3]=0$ hold and then, according to
\lemref{lm-NN}, conditions \eqref{qK-usl} are valid. Hence,
according to $\ep_1+\ep_2+\ep_3=-1$ and $\ep_1\ep_2\ep_3=1$, relation
\eqref{qK-theta} takes the form
\[
\ta_\al=-(1+\ea)\om_\al,
\]
which implies
\[
\ta_1=-2\om_1,\qquad \ta_2=\ta_3=0.
\]

On the other hand, $[J_\al,J_\al]=\{J_\al,J_\al\}=0$  and \eqref{NJ-nabli}  imply
\[
\left(\DDD_x J_\al\right)y=\left(\DDD_{J_\al x} J_\al\right)J_\al y
\]
and finally the fact that the manifold is hyper-K\"ahlerian with
Hermitian-Norden metrics.
\end{proof}
\begin{prop}\label{prop-23}
If an almost hypercomplex manifold with Her\-mit\-ian-Nor\-den metrics $(\MM,H,G)$ with vanishing Lee forms $\ta_2$ and $\ta_3$ is quaternionic K\"ahlerian, then it
is a hyper-K\"ahler manifold with Her\-mit\-ian-Nor\-den metrics, i.e.
the following implication is valid
\[
(\MM,H,G)\in%
\left(\W_2\oplus\W_3\right)(J_2)
\cap\left(\W_2\oplus\W_3\right)(J_3)\quad \Rightarrow \quad
(\MM,H,G)\in\KK.
\]
\end{prop}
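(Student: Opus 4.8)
The plan is to reduce the statement to \propref{prop-int} by showing that the two vanishing conditions on the Lee forms already force the whole almost hypercomplex structure $H$ to be integrable. First I would record that, for the almost Norden structures $J_2$ and $J_3$, membership in $(\W_2\oplus\W_3)(J_\al)$ is equivalent to $\ta_\al=0$ (this is the trace-vanishing condition in the classification \eqref{cl-N}, exactly as $\W_2\oplus\W_3$ is singled out by $\ta=0$ in \eqref{Wi}). Hence the hypothesis is simply $\ta_2=\ta_3=0$, and the goal becomes to prove that a quaternionic K\"ahler manifold with Hermitian-Norden metrics satisfying $\ta_2=\ta_3=0$ is integrable.

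Since the manifold is quaternionic K\"ahler, the Lee forms are given explicitly in terms of the local $1$-forms $\om_1,\om_2,\om_3$ by \eqref{qK-theta}. Substituting the values $\ea$ from \eqref{epsiloni} for the relevant cyclic permutations, the conditions $\ta_2=0$ and $\ta_3=0$ read, respectively,
\[
\om_1\circ J_3+\om_3\circ J_1=0,\qquad \om_1\circ J_2+\om_2\circ J_1=0.
\]
The key step is then purely algebraic: composing each of these identities on the right with $J_1$ and using the quaternionic relations $J_3J_1=J_2$, $J_2J_1=-J_3$, $J_1^2=-I$ from \eqref{J123}, I would obtain $\om_3=\om_1\circ J_2$ and $\om_2=-\om_1\circ J_3$. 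Feeding these back into the two original relations and composing once more with $J_2$ and $J_3$ yields the remaining identities $\om_2=\om_3\circ J_1$, $\om_3=-\om_2\circ J_1$, $\om_1=\om_2\circ J_3$ and $\om_1=-\om_3\circ J_2$. Collecting them, I recover precisely the system \eqref{qK-usl}, i.e. $\om_\al=\om_\bt\circ J_\gm=-\om_\gm\circ J_\bt$ for all cyclic permutations; as a byproduct \eqref{qK-theta} then gives $\ta_1=-2\om_1$.

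Once \eqref{qK-usl} is established, \lemref{lm-NN} shows that all six tensors $[J_\al,J_\al]$ and $\{J_\al,J_\al\}$ vanish. In particular $H$ is an (integrable) hypercomplex structure, so $(\MM,J_1,g)$ is Hermitian and $(\MM,J_2,g)$, $(\MM,J_3,g)$ are complex Norden manifolds; that is, $(\MM,H,G)\in(\W_3\oplus\W_4)(J_1)\cap(\W_1\oplus\W_2)(J_2)\cap(\W_1\oplus\W_2)(J_3)$, which is exactly the hypothesis of \propref{prop-int}. Applying \propref{prop-int} then gives $(\MM,H,G)\in\KK$, completing the argument. I expect the only delicate point to be the bookkeeping in the algebraic step: one must track the signs $\ea$ and the noncommuting products $J_\al J_\bt$ carefully, since the whole reduction hinges on the two Lee-form conditions propagating, through the quaternion identities, into the full set \eqref{qK-usl}. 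Everything after that is a direct appeal to the already-proved \lemref{lm-NN} and \propref{prop-int}.
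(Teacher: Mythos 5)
Your proposal is correct and takes essentially the same route as the paper: both translate $\ta_2=\ta_3=0$ via \eqref{qK-theta} into relations among the local 1-forms $\om_\al$, deduce the vanishing of the Nijenhuis tensors, and finish by invoking \propref{prop-int}. The only cosmetic difference is that the paper gets $[J_2,J_2]=[J_3,J_3]=0$ directly from \eqref{NN*} and then the third by the standard ``two vanish implies all three vanish'' fact, whereas you derive the full system \eqref{qK-usl} and appeal to \lemref{lm-NN}; your sign bookkeeping with the quaternion identities checks out.
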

\begin{proof}
Since $\ta_2=\ta_3=0$, we have $[J_2,J_2]=[J_3,J_3]=0$, because of
\eqref{qK-theta} and \eqref{NN*}. Consequently, $[J_1,J_1]$ vanishes,
too. Then, according to \propref{prop-int} and conditions
\eqref{cl-N}, the considered manifold belongs to the class $\KK$.
\end{proof}

Let us remark for $\al=2$ or $3$, using \eqref{cl-N}, that an almost complex manifold
with Norden metric belongs to $\W_1\oplus\W_3$ regarding $J_\al$
if and only if  the following property holds
\begin{equation}\label{W1W3}
\mathop{\s}\limits_{x,y,z}F_\al(x,y,z)=\frac{1}{2n}\mathop{\s}\limits_{x,y,z}\left\{
g(x,y)\ta_\al(z)+g(J_\al x,y)\ta_\al(J_\al z)\right\}.
\end{equation}

\begin{prop}\label{prop-13}
Let $(\MM,H,G)$ be an almost hypercomplex manifold with Hermitian-Norden metrics belonging to
the class $\W_1\oplus\W_3$ with respect to $J_2$ and $J_3$. If
$(\MM,H,G)$  is quaternionic K\"ahlerian, then it is a K\"ahler
manifold with
respect to $J_1$, i.e. the following implication is valid%
\[\begin{array}{c}
(\MM,H,G)\in\left(\W_1\oplus\W_3\right)(J_2)\cap\left(\W_1\oplus\W_3\right)(J_3)\quad\\[6pt]
\Rightarrow\quad (\MM,H,G)\in\W_0(J_1).
\end{array}
\]
Moreover, we have
\begin{equation}\label{F123}
    \begin{array}{ll}
      \left(\DDD_xJ_1\right)y=0, \\[6pt]
      \left(\DDD_xJ_2\right)y=\om_1(x)J_3 y, \quad\quad & \om_1(x)=-\ta_2(J_3 x),\\[6pt]
      \left(\DDD_xJ_3\right)y=-\om_1(x)J_2 y, \quad\quad & \om_1(x)=\ta_3(J_2 x).\\[6pt]
    \end{array}
\end{equation}
\end{prop}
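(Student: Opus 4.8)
The plan is to read off, from the quaternionic K\"ahler condition \eqref{qK}, the explicit shapes \eqref{qK-F} of the three fundamental tensors and \eqref{qK-theta} of the Lee forms, and then to show that the whole proposition collapses to the single assertion that the local $1$-forms $\om_2$ and $\om_3$ vanish. Indeed, once $\om_2=\om_3=0$ is known, \eqref{qK} for the cyclic triple $(1,2,3)$ reads $(\DDD_x J_1)y=\om_3(x)J_2y-\om_2(x)J_3y=0$, so $(\MM,H,G)\in\W_0(J_1)$; for $\al=2,3$ it reduces to the two remaining equalities in \eqref{F123}; and substituting $\om_2=\om_3=0$ into \eqref{qK-theta} (with $\ep_1=1$, $\ep_2=\ep_3=-1$ from \eqref{epsiloni}) gives $\ta_2=\om_1\circ J_3$ and $\ta_3=-\om_1\circ J_2$, whence $\om_1=-\ta_2\circ J_3=\ta_3\circ J_2$ follows by applying $J_3^2=J_2^2=-I$. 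Thus the entire content is the vanishing of $\om_2,\om_3$.

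The decisive observation is the difference in symmetry between the associated tensors: by \eqref{gJ}, $\g_1=g(J_1\cdot,\cdot)$ is the K\"ahler $2$-form (skew-symmetric), while $\g_2$ and $\g_3$ are genuine Norden metrics (symmetric). Writing \eqref{qK-F} out for $\al=2$ and $\al=3$ gives $F_2(x,y,z)=\om_1(x)\g_3(y,z)-\om_3(x)\g_1(y,z)$ and $F_3(x,y,z)=\om_2(x)\g_1(y,z)-\om_1(x)\g_2(y,z)$. I would then impose the defining condition \eqref{W1W3} of $\W_1\oplus\W_3$ for $J_2$ and $J_3$. Its right-hand side is a cyclic sum of a $1$-form against the \emph{symmetric} tensors $g$ and $\g_\al$, hence totally symmetric; and every cyclically invariant $(0,3)$-tensor is uniquely the sum of its total symmetrization and a $3$-form. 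Taking the totally skew part of \eqref{W1W3}, the contributions of $\om_1\otimes\g_3$ (resp. $\om_1\otimes\g_2$) drop out because $\g_2,\g_3$ are symmetric, and one is left precisely with $\om_3\wedge\g_1=0$ (for $J_2$) and $\om_2\wedge\g_1=0$ (for $J_3$).

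It then remains to pass from $\om_\al\wedge\g_1=0$ to $\om_\al=0$. Here I would use that $\g_1$ is a nondegenerate $2$-form on $\MM^{4n}$: if $\theta\wedge\g_1=0$ for a $1$-form $\theta$, contracting with a vector $v$ gives $\theta(v)\,\g_1=\theta\wedge(\iota_v\g_1)$, so choosing $v$ with $\theta(v)\neq0$ would exhibit $\g_1$ as a decomposable (rank $\leq 2$) $2$-form, which is impossible for $4n\geq4$; hence $\theta=0$. This yields $\om_2=\om_3=0$ and closes the argument as set up in the first paragraph, the remaining totally symmetric part of \eqref{W1W3} being then automatically consistent with the hypothesis (so that only the skew part of the $\W_1\oplus\W_3$ assumption is actually used).

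The main obstacle, and the step I would carry out most carefully, is the clean splitting of the cyclic identity \eqref{W1W3} into its symmetric and skew parts, together with the bookkeeping of the signs $\ea$ appearing in \eqref{qK-F}, \eqref{qK-theta} and in $g(J_\al\cdot,\cdot)=-\ea\,g(\cdot,J_\al\cdot)$ from \eqref{gJ}; once these are handled the rest is a short formal computation using $J_\al^2=-I$ and the permutation relations \eqref{J123}.
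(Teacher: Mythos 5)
Your proof is correct, and its decisive step is carried out by a different mechanism than the paper's. The paper substitutes \eqref{qK-F} into \eqref{W1W3} and extracts the \emph{trace} information: contracting the cyclic identity with $g^{ij}$ yields $\ta_2=\om_1\circ J_3$ and $\ta_3=-\om_1\circ J_2$, which, compared with the general quaternionic-K\"ahler expressions \eqref{qK-theta} (namely $\ta_2=\om_1\circ J_3+\om_3\circ J_1$ and $\ta_3=-\om_2\circ J_1-\om_1\circ J_2$), force $\om_2\circ J_1=\om_3\circ J_1=0$ and hence $\om_2=\om_3=0$. You instead take the totally skew-symmetric part of \eqref{W1W3}: since a cyclically invariant $(0,3)$-tensor splits into its symmetrization plus a $3$-form, and since $g$, $\g_2$, $\g_3$ are symmetric while $\g_1$ is the (nondegenerate) K\"ahler $2$-form, the only surviving term is $\om_3\wedge\g_1=0$ (resp. $\om_2\wedge\g_1=0$), and the standard contraction argument for a nondegenerate $2$-form in dimension $4n\geq4$ gives $\om_3=\om_2=0$. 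Both routes use the same input and the same endgame (with $\om_2=\om_3=0$, equations \eqref{qK} and \eqref{qK-theta} immediately deliver \eqref{F123}); yours trades the trace computation for a pointwise linear-algebra lemma and has the virtue of isolating precisely which component of the $\W_1\oplus\W_3$ hypothesis does the work. One small caution: your parenthetical remark that the remaining totally symmetric part of \eqref{W1W3} is ``automatically consistent'' is not quite accurate --- with $\om_2=\om_3=0$ the symmetric part still imposes a nontrivial condition relating $\om_1\otimes\g_3$ (resp. $\om_1\otimes\g_2$) to the right-hand side of \eqref{W1W3}, so the hypothesis is genuinely stronger than its skew part --- but since you only need to derive the stated conclusion from the hypothesis, this does not affect the validity of your argument.
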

\begin{proof}
From \eqref{W1W3} for $\al=2$ and $3$ we obtain
\[
\ta_2=\om_1\circ J_3,\qquad \ta_3=-\om_1\circ J_2.
\]
Then, according to \eqref{qK-theta}, we get
\begin{equation}\label{om23}
\om_2=\om_3=0
\end{equation}
and therefore we obtain \eqref{F123}.
\end{proof}

From \propref{prop-23} and \propref{prop-13} we have directly
\begin{cor}\label{cor-W3}
Let $(\MM,H,G)$ be an almost hypercomplex manifold with Hermitian-Norden metrics, belonging to
the class $\W_3$ with respect to $J_2$ and $J_3$. If $(\MM,H,G)$ is
quaternionic K\"ahlerian, then it is a hyper-K\"ahler manifold with Her\-mit\-ian-Nor\-den metrics,
i.e. the following implication is valid
\[
(\MM,H,G)\in\W_3(J_2)\cap\W_3(J_3)\quad \Rightarrow\quad
(\MM,H,G)\in\KK.
\]

\end{cor}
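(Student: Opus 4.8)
The plan is to derive the statement as a direct logical consequence of \propref{prop-23} and \propref{prop-13}, using only that $\W_3$ is a basic (hence orthogonally split off) class in both relevant classifications. Since $\W_1$, $\W_2$, $\W_3$ are mutually orthogonal summands, one has the trivial inclusions $\W_3(J_\al)\subset\left(\W_2\oplus\W_3\right)(J_\al)$ and $\W_3(J_\al)\subset\left(\W_1\oplus\W_3\right)(J_\al)$; in particular, every $\W_3(J_\al)$-manifold has vanishing Lee form $\ta_\al$, which is exactly the defining condition of $\left(\W_2\oplus\W_3\right)(J_\al)$ recorded in \eqref{cl-N}.

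First I would run the short route. Assuming $(\MM,H,G)\in\W_3(J_2)\cap\W_3(J_3)$ is quaternionic K\"ahler, the inclusion above places it in $\left(\W_2\oplus\W_3\right)(J_2)\cap\left(\W_2\oplus\W_3\right)(J_3)$, i.e. $\ta_2=\ta_3=0$. Hence \propref{prop-23} applies verbatim and yields $(\MM,H,G)\in\KK$, which is the assertion. This already closes the proof, so there is essentially no obstacle along this path.

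To make the conclusion fully transparent and to exhibit the role of \propref{prop-13}, I would also present the complementary computation establishing total parallelism. By the second inclusion, $(\MM,H,G)$ lies in $\left(\W_1\oplus\W_3\right)(J_2)\cap\left(\W_1\oplus\W_3\right)(J_3)$, so \propref{prop-13} gives $(\MM,H,G)\in\W_0(J_1)$ together with $\DDD J_1=0$, and moreover the reduction $\om_2=\om_3=0$ from \eqref{om23} and the explicit form \eqref{F123} of the covariant derivatives. Then $F_2$ degenerates to $F_2(x,y,z)=\om_1(x)g(J_3y,z)$ by \eqref{qK-F}. Imposing the remaining $\W_3(J_2)$ condition $\sx F_2(x,y,z)=0$ and taking the trace in the last two arguments with respect to $g$, the contribution from the trace of $J_3$ drops out because $\mathrm{tr}\,J_3=0$, while symmetry of $g(J_3\cdot,\cdot)$ makes the two surviving terms coincide; what remains forces $J_3\om_1^{\sharp}=0$, whence $\om_1=0$. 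Consequently $\DDD J_2=\DDD J_3=0$ as well, and the manifold is hyper-K\"ahlerian.

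The only non-routine ingredient is the contraction in the last paragraph, where symmetry of $g(J_3\cdot,\cdot)$ and invertibility of $J_3$ are used to pass from the vanishing cyclic sum to $\om_1=0$; but since the first route through \propref{prop-23} is immediate, this computation is inessential and serves only as confirmation that all three $\DDD J_\al$ vanish simultaneously.
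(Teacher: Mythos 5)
Your proof is correct and follows essentially the same route as the paper, whose entire argument for this corollary is the one-line citation of \propref{prop-23} and \propref{prop-13}; you simply make explicit the inclusion $\W_3(J_\al)\subset(\W_2\oplus\W_3)(J_\al)$ (i.e.\ $\ta_\al=0$, which is stated in \eqref{Wi} rather than \eqref{cl-N}) that makes \propref{prop-23} applicable. The supplementary trace computation forcing $\om_1=0$ is a valid but redundant confirmation.
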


Bearing in mind Propositions \ref{prop-int}--\ref{prop-13},
\corref{cor-W3} and \thmref{thm-K=0}, we give the following
\begin{thm}
Let a quaternionic K\"ahler manifold with Her\-mit\-ian-Nor\-den metrics $(\MM,H,G)$ be in some of
the classes $\W_1\oplus\W_2$ (and in particular $\W_0$, $\W_1$ and
$\W_2$), $\W_2\oplus\W_3$ and $\W_3$ with respect to both of the
structures $J_2$ and $J_3$. Then $(\MM,H,G)$ is a flat
hyper-K\"ahler manifold with Her\-mit\-ian-Nor\-den metrics. The unique class (except the class without conditions for $\DDD J_2$ and $\DDD J_3$), where $(\MM,H,G)$ is not
flat hy\-per-K\"ahlerian, is $\W_1\oplus\W_3$ and its manifolds
are determined by \eqref{F123}. 
\end{thm}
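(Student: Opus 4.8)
The plan is to recognise the theorem as an assembly of the special cases already treated in \propref{prop-int}, \propref{prop-23}, \propref{prop-13} and \corref{cor-W3}, followed by a flatness argument based on \thmref{thm-K=0} and \propref{prop-K=0}, and finally a bookkeeping step over the Ganchev--Borisov classes of the two Norden structures $J_2$ and $J_3$ that isolates $\W_1\oplus\W_3$ as the only exceptional case. Nothing new has to be computed; the work is to route each hypothesis to the right earlier statement and to argue that the exceptional class genuinely escapes flatness.

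First I would dispose of the three ``flat hyper-K\"ahler'' families. If $(\MM,H,G)$ lies in $\left(\W_1\oplus\W_2\right)(J_2)\cap\left(\W_1\oplus\W_2\right)(J_3)$, then $[J_2,J_2]=[J_3,J_3]=0$; since two of the three Nijenhuis tensors vanish, the structure $H$ is hypercomplex (as recalled in \S9), whence $[J_1,J_1]=0$ and $(\MM,J_1,g)\in\left(\W_3\oplus\W_4\right)(J_1)$. Thus the manifold is integrable and \propref{prop-int} gives $(\MM,H,G)\in\KK$. If instead it lies in $\left(\W_2\oplus\W_3\right)(J_2)\cap\left(\W_2\oplus\W_3\right)(J_3)$, then $\ta_2=\ta_3=0$ and \propref{prop-23} again yields $(\MM,H,G)\in\KK$; while membership in $\W_3(J_2)\cap\W_3(J_3)$ is covered directly by \corref{cor-W3}. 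In all three situations \thmref{thm-K=0} shows the manifold is flat, so it is flat hyper-K\"ahlerian. The particular classes $\W_0$, $\W_1$, $\W_2$ need no separate treatment: $\W_0$ is contained in every class, while $\W_1\subset\W_1\oplus\W_2$ and $\W_2\subset\W_1\oplus\W_2$, so they inherit the conclusion of the first family.

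Next I would treat the exceptional class. Assuming $(\MM,H,G)$ is in $\left(\W_1\oplus\W_3\right)(J_2)\cap\left(\W_1\oplus\W_3\right)(J_3)$, \propref{prop-13} shows that the manifold is K\"ahler with respect to $J_1$, that its covariant derivatives are given by \eqref{F123}, and that the local $1$-forms satisfy $\om_2=\om_3=0$ by \eqref{om23}. From \eqref{F123} one reads $\left(\DDD_xJ_2\right)y=\om_1(x)J_3y$, so $\DDD J_2$ (and likewise $\DDD J_3$) does not vanish unless $\om_1\equiv 0$; hence a generic such manifold is \emph{not} hyper-K\"ahlerian. To see that it need not be flat either, I would invoke \propref{prop-K=0}: since $\om_2=\om_3=0$, the flatness criterion \eqref{om1} degenerates to the single condition $\dd\om_1=0$, which is not forced by the structure. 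Consequently $\W_1\oplus\W_3$ does contain genuinely non-flat, non-hyper-K\"ahler quaternionic K\"ahler manifolds with Hermitian--Norden metrics, exactly the ones described by \eqref{F123}.

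Finally, the uniqueness assertion is a counting argument over the eight classes $\W_0$, $\W_1$, $\W_2$, $\W_3$, $\W_1\oplus\W_2$, $\W_1\oplus\W_3$, $\W_2\oplus\W_3$, $\W_1\oplus\W_2\oplus\W_3$ attached to the Norden pair $(J_2,J_3)$. The previous paragraphs force the flat hyper-K\"ahler conclusion in $\W_0,\W_1,\W_2,\W_3,\W_1\oplus\W_2,\W_2\oplus\W_3$; the full class $\W_1\oplus\W_2\oplus\W_3$ imposes no restriction on $\DDD J_2$ and $\DDD J_3$ and is the excluded general class. The only remaining possibility is $\W_1\oplus\W_3$, already shown to be genuinely non-flat-hyper-K\"ahlerian. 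I expect the main obstacle to be precisely this last discrimination: one must verify that $\W_1\oplus\W_3$ is not secretly contained in one of the flat families, which is why the explicit non-closedness of $\om_1$ afforded by \propref{prop-K=0} is essential rather than cosmetic.
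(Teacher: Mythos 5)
Your proposal is correct and follows essentially the same route as the paper, which states this theorem as a direct consequence of Propositions~\ref{prop-int}, \ref{prop-23}, \ref{prop-13}, Corollary~\ref{cor-W3} and Theorem~\ref{thm-K=0} without writing out the assembly. Your expansion of that assembly — in particular the chain $[J_2,J_2]=[J_3,J_3]=0\Rightarrow H$ hypercomplex $\Rightarrow(\MM,J_1,g)\in(\W_3\oplus\W_4)(J_1)$ needed to invoke Proposition~\ref{prop-int}, and the use of \eqref{om23} together with Proposition~\ref{prop-K=0} to show that $\W_1\oplus\W_3$ genuinely escapes flatness — is exactly the intended argument.
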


\vskip 0.2in \addtocounter{subsection}{1} \setcounter{subsubsection}{0}

\noindent  {\Large\bf \thesubsection. Non-hyper-K\"ahler quaternionic K\"ahler manifolds with \\
Her\-mit\-ian-Nor\-den metrics}

\vskip 0.15in


In this subsection we characterize the manifold satisfying the
conditions of \propref{prop-13}. It is a non-hyper-K\"ahler
quaternionic K\"ahler manifold with Her\-mit\-ian-Nor\-den metrics.

We apply \eqref{om23} to \eqref{nJ-qK} and obtain the square norms
of the non-zero quantities $\DDD J_2$ and $\DDD J_3$ in the considered
case as follows:
\[
\nJ{2}=\nJ{3}=-4n\,\om_1(\om^{\sharp}_1),
\]
where $\om^{\sharp}_1$ is the corresponding vector to $\omega_1$ with
respect to $g$.
\begin{cor}
Let $(\MM,H,G)$ be a quaternionic K\"ahler manifold with Her\-mit\-ian-Nor\-den metrics, determined
by a local 1-form $\om_1$ in \eqref{F123}. It is an isotropic
hyper-K\"ahler manifold with Her\-mit\-ian-Nor\-den metrics if and only if  $\om^{\sharp}_1$
is an isotropic vector regarding
$g$.
\end{cor}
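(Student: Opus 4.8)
The plan is to reduce the isotropic hyper-K\"ahler condition to a single scalar equation and then recognise that equation as the isotropy of $\om^{\sharp}_1$. Recall that, by definition, $(\MM,H,G)$ is an isotropic hyper-K\"ahler manifold with Hermitian-Norden metrics precisely when the three invariant square norms $\nJ{1}$, $\nJ{2}$, $\nJ{3}$, introduced in \eqref{nJ}, all vanish. Thus the whole statement amounts to determining when these three quantities are simultaneously zero on the manifold described by \eqref{F123}.

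First I would dispose of $\nJ{1}$. The very first equality of \eqref{F123} gives $\DDD J_1=0$, so $\left(\DDD_i J_1\right)e_k=0$ for every choice of basis vectors, and hence $\nJ{1}=0$ holds identically on such a manifold. This removes one of the three conditions entirely and shows that isotropy of the hyper-K\"ahler structure is equivalent to the joint vanishing of $\nJ{2}$ and $\nJ{3}$ alone.

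Next I would invoke the computation carried out just above the statement, namely that substituting \eqref{om23} (i.e. $\om_2=\om_3=0$) into the general formula \eqref{nJ-qK} for quaternionic K\"ahler manifolds with Hermitian-Norden metrics yields $\nJ{2}=\nJ{3}=-4n\,\om_1(\om^{\sharp}_1)$. The final step is then purely algebraic: since $\om^{\sharp}_1$ is by definition the $g$-dual of the $1$-form $\om_1$, one has $\om_1(\om^{\sharp}_1)=g(\om^{\sharp}_1,\om^{\sharp}_1)$, so that $\nJ{2}=\nJ{3}=-4n\,g(\om^{\sharp}_1,\om^{\sharp}_1)$. Because $4n\neq 0$, these norms vanish if and only if $g(\om^{\sharp}_1,\om^{\sharp}_1)=0$, that is, if and only if $\om^{\sharp}_1$ is an isotropic vector with respect to $g$, which is exactly the asserted equivalence.

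There is essentially no serious obstacle here: the corollary is an immediate consequence of the explicit square-norm formula established for the non-hyper-K\"ahler case. The only points requiring care are the observation that $\nJ{1}$ is automatically zero (so that it imposes no extra constraint and does not accidentally force the whole structure to be genuinely hyper-K\"ahler rather than merely isotropic hyper-K\"ahler) and the identification $\om_1(\om^{\sharp}_1)=g(\om^{\sharp}_1,\om^{\sharp}_1)$, both of which are routine.
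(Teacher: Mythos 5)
Your proposal is correct and follows exactly the paper's route: the paper obtains $\nJ{2}=\nJ{3}=-4n\,\om_1(\om^{\sharp}_1)$ by substituting \eqref{om23} into \eqref{nJ-qK} immediately before the corollary, and the corollary is then read off just as you do, with $\nJ{1}=0$ holding automatically (either from $\DDD J_1=0$ in \eqref{F123} or from \eqref{nJ-qK} with $\om_2=\om_3=0$). No gaps.
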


Using \eqref{om23} and \propref{prop-d-om}, for the considered
manifolds here we have
\begin{prop}\label{prop-d-om=}
    Let $\om_1$ be the local 1-form of a quaternionic K\"ahler manifold with Her\-mit\-ian-Nor\-den metrics  of dimension $4n\geq 8$,
    determined by \eqref{F123}. Then $\om_1$ satisfies the following condition
\begin{equation}\label{om1==}
\dd\om_1(x,y)=-\frac{\tau}{4n^2}\g_1(x,y).
\end{equation}
\end{prop}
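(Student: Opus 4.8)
The plan is to express $\dd\om_1$ through the auxiliary $2$-form $\Psi_1$ introduced in \eqref{eta} and then to identify that $2$-form with a multiple of the Kähler form $\g_1$ by invoking the Einstein property. First I would record that the manifold under consideration is precisely the one produced in \propref{prop-13}, so that \eqref{om23} applies and gives $\om_2=\om_3=0$. Feeding this into the definition \eqref{eta} of $\Psi_1$ (the relevant cyclic permutation being $(\al,\bt,\gm)=(3,1,2)$, so the two quadratic correction terms are $\om_2(x)\om_3(y)-\om_3(x)\om_2(y)$) makes those terms collapse, leaving the clean identity $\Psi_1(x,y)=\dd\om_1(x,y)$.

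Next I would bring in the curvature side. Since $\dim\MM=4n\geq 8$, \thmref{thm-Ein} guarantees that the manifold is Einstein, whence $\rho=\frac{\tau}{4n}\,g$ after contracting with $g$. Substituting this into \eqref{eta-rho}, which reads $\Psi_1(x,y)=\frac{1}{n}\rho(x,J_1y)$, yields $\Psi_1(x,y)=\frac{\tau}{4n^2}\,g(x,J_1y)$. Finally, the defining relation \eqref{gJ} with $\ep_1=1$ gives $g(x,J_1y)=-\g_1(x,y)$, so $\Psi_1=-\frac{\tau}{4n^2}\,\g_1$; combining this with $\Psi_1=\dd\om_1$ from the first step produces exactly the asserted formula \eqref{om1==}.

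The argument is essentially bookkeeping once the heavy lifting of \thmref{thm-Ein} is granted, so there is no genuine obstacle; the only places demanding care are the sign and index conventions. In particular I would double-check the cyclic labelling in \eqref{eta} so that $\Psi_1$ really collects the $\om_2,\om_3$ terms (which vanish here), and the value $\ep_1=1$ from \eqref{epsiloni} so that the passage from $g(\cdot,J_1\cdot)$ to $\g_1$ carries the correct sign. Both the Einstein conclusion and the relation between $\rho$ and $\Psi_1$ are already available from the preceding subsection, so the proof reduces to these consistency checks and a single substitution.
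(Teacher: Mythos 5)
Your argument is correct and follows essentially the same route as the paper: the paper simply substitutes $\om_2=\om_3=0$ from \eqref{om23} into \propref{prop-d-om}, whereas you unpack that intermediate proposition and rederive it in the special case directly from \eqref{eta}, \eqref{eta-rho} and the Einstein property of \thmref{thm-Ein}. The sign bookkeeping you flag ($\Psi_1=\dd\om_1$ for the permutation $(3,1,2)$, and $g(x,J_1y)=-\g_1(x,y)$ from \eqref{gJ} with $\ep_1=1$) checks out against \eqref{om1=}.
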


According to \eqref{F123} we have $F_1(x,y,z)=\bigl(\DDD_x
\g_1\bigr) \left( y,z \right)=0$ and then we obtain $\dd
\g_1(x,y,z)=\mathop{\s}_{x,y,z}\bigl\{F_1(x,y,z)\bigr\}=0$. Hence we establish that $\tau=\const$, using \eqref{om1==}, i.e. $(\MM,H,G)$
determined by \eqref{F123} has a constant scalar curvature. Then,
bearing in mind \thmref{thm-Ein}, we get the following
\begin{prop}\label{prop-Ric-sym}
    Quaternionic K\"ahler manifolds with Hermitian-\allowbreak{}Nor\-den metrics
    determined by \eqref{F123} for dimension $4n\geq 8$ are Ricci-sym\-met\-ric, i.e. $\DDD\rho=0$.
\end{prop}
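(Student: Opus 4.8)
The plan is to combine the Einstein property supplied by \thmref{thm-Ein} with the constancy of the scalar curvature, the latter being forced here by the parallelism of the K\"ahler form $\g_1$. Since $(\MM,H,G)$ is a quaternionic K\"ahler manifold with Hermitian-Norden metrics of dimension $4n\geq 8$ determined by \eqref{F123}, \thmref{thm-Ein} gives that it is Einstein, so $\rho=\lambda g$ for some function $\lambda$; contracting this identity with $g^{ij}$ yields $\tau=4n\lambda$, i.e. $\lambda=\frac{\tau}{4n}$. Consequently, proving $\DDD\rho=0$ reduces to establishing that $\tau$ is constant on $\MM$.

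To obtain $\tau=\const$, I would first read off from \eqref{F123} that $F_1=0$, which by $F_1(x,y,z)=(\DDD_x\g_1)(y,z)$ means $\DDD\g_1=0$; hence $\g_1$ is parallel and in particular closed, $\dd\g_1=0$ (this also follows from $\dd\g_1$ being the cyclic sum of $F_1$). Applying the exterior derivative to the structure equation $\dd\om_1=-\frac{\tau}{4n^2}\g_1$ of \propref{prop-d-om=} and using $\dd^2=0$ together with $\dd\g_1=0$, I get
\[
0=\dd\dd\om_1=-\frac{1}{4n^2}\,\dd\tau\wedge\g_1.
\]
Because $\g_1(\cdot,\cdot)=g(J_1\cdot,\cdot)$ is nondegenerate and the dimension $4n\geq 8$ exceeds $4$, the map $\alpha\mapsto\alpha\wedge\g_1$ on $1$-forms is injective, so $\dd\tau=0$ and $\tau=\const$. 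This is exactly the constancy of scalar curvature already noted in the paragraph preceding the statement.

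Finally, with $\tau$ constant and $\rho=\frac{\tau}{4n}g$, a direct computation gives
\[
\DDD\rho=\frac{1}{4n}\,(\dd\tau)\otimes g+\frac{\tau}{4n}\,\DDD g=0,
\]
since $\dd\tau=0$ and $\DDD g=0$ for the Levi-Civita connection; thus $(\MM,H,G)$ is Ricci-symmetric. The conclusion is immediate once $\tau$ is known to be constant, so the only genuine step is the passage from $\dd\tau\wedge\g_1=0$ to $\dd\tau=0$, which rests on the nondegeneracy of $\g_1$ and on $4n\geq 8$; everything else is a short manipulation of previously established identities.
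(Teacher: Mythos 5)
Your proof is correct and follows essentially the same route as the paper: from \eqref{F123} one gets $F_1=0$, hence $\dd\g_1=0$, then the structure equation $\dd\om_1=-\frac{\tau}{4n^2}\g_1$ of \propref{prop-d-om=} forces $\tau=\const$, which combined with the Einstein property from \thmref{thm-Ein} gives $\DDD\rho=0$. The only difference is that you spell out explicitly the step the paper leaves implicit, namely that $\dd\tau\wedge\g_1=0$ together with the nondegeneracy of the 2-form $\g_1$ yields $\dd\tau=0$, which is a correct and welcome clarification.
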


As in \propref{prop-K=0} for an arbitrary quaternionic K\"ahler
manifold with Hermitian-Norden metrics, in the following proposition we give a necessary and
sufficient condition for the considered manifold in this subsection to be
flat.

\begin{prop}\label{prop-om1}
Let $(\MM,H,G)$ be a quaternionic K\"ahler manifold with Hermitian-Norden metrics, determined
by a non-zero local 1-form $\om_1$ in \eqref{F123}. Then $(\MM,H,G)$
is flat non-hyper-K\"ahlerian if and only if  $\om_1$ is closed.
\end{prop}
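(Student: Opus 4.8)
The plan is to prove the equivalence of flatness and closedness of $\om_1$ for the non-hyper-K\"ahler quaternionic K\"ahler manifold determined by \eqref{F123}, using \propref{prop-K=0} as the governing criterion. According to \propref{prop-K=0}, an arbitrary quaternionic K\"ahler manifold with Her\-mit\-ian-Nor\-den metrics is flat if and only if condition \eqref{om1} holds, i.e.
\[
\dd\om_1(x,y)=-\om_{2}(x)\om_{3}(y)+\om_{3}(x)\om_{2}(y).
\]
For the specific manifolds in this subsection, the crucial simplification is \eqref{om23}, namely $\om_2=\om_3=0$, which was established in the proof of \propref{prop-13}. This reduces the right-hand side of \eqref{om1} to zero. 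Therefore the flatness criterion for the considered manifolds becomes simply $\dd\om_1=0$, i.e. $\om_1$ is closed.

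First I would substitute $\om_2=\om_3=0$ from \eqref{om23} into the general flatness condition \eqref{om1} supplied by \propref{prop-K=0}. This immediately gives the equivalence: $(\MM,H,G)$ is flat if and only if $\dd\om_1(x,y)=0$ for all $x,y$, which is precisely the statement that $\om_1$ is closed. The one extra point to verify is that the manifold in question is genuinely non-hyper-K\"ahlerian, so that the qualifier ``flat non-hyper-K\"ahlerian'' is accurate; this follows from the hypothesis that $\om_1$ is non-zero together with the last displayed equalities in \eqref{F123}, which show $\DDD J_2$ and $\DDD J_3$ do not vanish, so the manifold is not in $\KK$.

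The argument is short because the heavy lifting is already done upstream: \propref{prop-K=0} packages the curvature computation into a single condition, and \eqref{om23} collapses that condition. The only genuine obstacle is a consistency check rather than a new computation---one should confirm via \propref{prop-d-om=} that the scalar-invariant version \eqref{om1==}, which reads $\dd\om_1(x,y)=-\frac{\tau}{4n^2}\g_1(x,y)$ for dimension $4n\geq 8$, is compatible with the closedness criterion: closedness of $\om_1$ forces $\tau=0$, and conversely, which matches \propref{prop-t=0} asserting that scalar flatness is equivalent to flatness in this dimension range. I would therefore close the proof by noting that $\dd\om_1=0$ is equivalent to $\tau=0$ via \eqref{om1==}, and that $\tau=0$ is equivalent to flatness by \propref{prop-t=0}, thereby confirming the equivalence through two independent routes and completing the argument.
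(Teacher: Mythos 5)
Your argument is correct and follows essentially the same route as the paper: the paper's own proof re-derives the identity $R(x,y,J_2z,J_2w)=-R(x,y,z,w)+\dd\om_1(x,y)g(J_1z,w)$ from \eqref{RJJ23} using $\om_2=\om_3=0$ and then invokes \thmref{th-0}, which is precisely the content of \propref{prop-K=0} specialized by \eqref{om23}, as you do. Your explicit treatment of the non-hyper-K\"ahler qualifier (via $\om_1\neq 0$ and \eqref{F123}) and the dimension-restricted consistency check through \eqref{om1==} and \propref{prop-t=0} are correct but not needed for the equivalence.
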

\begin{proof}
Since $\MM$ is a K\"ahler manifold with respect to $J_1$, then $R$
is of K\"ahler-type with respect to $J_1$.

Bearing in mind \eqref{om23} and \eqref{eta}, we have that
$\Psi_1=\dd\om_1$ in the considered case. Then identity
\eqref{RJJ23} takes the following form
\begin{equation*}\label{RR}
\begin{array}{l}
    R(x,y,J_2 z,J_2 w)
    =R(x,y,J_3 z,J_3 w)\\[6pt]
    \phantom{R(x,y,J_2 z,J_2 w)}
    =-R(x,y,z,w)+d\om_1(x,y)g(J_1
    z,w).
\end{array}
\end{equation*}
It is clear that $R$ is a K\"ahler-type tensor with respect to
$H=(J_\al)$ if and only if  $\om_1$ is closed. Hence, according to
\thmref{th-0}, we obtain the statement.
\end{proof}

\begin{cor}
Let $(\MM,H,G)$ be a quaternionic K\"ahler manifold with Her\-mit\-ian-Nor\-den metrics, determined
by a non-zero local 1-form $\om_1$ in \eqref{F123}. Then $(\MM,H,G)$
is flat non-hyper-K\"ahlerian  if and only if  the following identity is valid
for $\al=2$ or $\al=3$:
\[
\D\ta_\al(x,y)+\D\ta_\al(J_1 x,J_1 y)-\D\ta_\al(J_2 x,J_2
y)-\D\ta_\al(J_3 x,J_3 y)=0.
\]
\end{cor}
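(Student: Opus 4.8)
The plan is to obtain this corollary as a direct translation of \propref{prop-om1}, which already identifies the property ``flat and non-hyper-K\"ahlerian'' with the closedness of the local $1$-form $\om_1$ attached to the structure \eqref{F123}. Since \propref{prop-13} guarantees that such a manifold is K\"ahler with respect to $J_1$ (so $\DDD J_1=0$) and carries $\om_2=\om_3=0$, the entire task reduces to re-encoding the single scalar condition $\D\om_1=0$ in terms of the Lee form $\ta_\al$, $\al\in\{2,3\}$. First I would use \eqref{F123} to replace $\om_1$ by $\ta_\al$: for $\al=2$ the relation $\om_1(x)=-\ta_2(J_3x)$ yields $\ta_2=\om_1\circ J_3$, and for $\al=3$ the relation $\om_1(x)=\ta_3(J_2x)$ yields $\ta_3=-\om_1\circ J_2$. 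The two cases are perfectly parallel, so I would carry out the computation for $\al=2$ and transcribe it for $\al=3$ after exchanging the roles of $J_2$ and $J_3$.

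Next, since $\DDD$ is torsion-free I would write $\D\ta_2(x,y)=(\DDD_x\ta_2)(y)-(\DDD_y\ta_2)(x)$ and differentiate $\ta_2=\om_1\circ J_3$, substituting $(\DDD_xJ_3)y=-\om_1(x)J_2y$ from \eqref{F123}. This splits $\D\ta_2$ into a \emph{linear} part, built from $\DDD\om_1$ and organised as values of the exterior derivative $\D\om_1$, and a \emph{quadratic} part of the shape $\om_1(\cdot)\,\om_1(J_2\cdot)$. The core of the argument is then to apply the operator $\Omega\mapsto \Omega(x,y)+\Omega(J_1x,J_1y)-\Omega(J_2x,J_2y)-\Omega(J_3x,J_3y)$ to $\D\ta_2$ and to simplify it using only the quaternionic multiplication rules \eqref{J123} (so $J_1J_2=J_3$, $J_2J_3=J_1$, $J_3J_1=J_2$, together with the anticommutation $J_\bt J_\gm=-J_\gm J_\bt$) and the K\"ahler identity $\DDD J_1=0$.

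I expect the quadratic part to be annihilated by this alternation: each monomial pairs off with its image under one of the $J_\al$, and the anticommutation relations produce the cancelling sign. The linear part should reassemble, after regrouping the eight surviving terms into antisymmetric pairs, into values of $\D\om_1$ evaluated on $J$-rotated arguments; invoking the $J_1$-relation \eqref{etaJ1} for $\Psi_1=\D\om_1$ (which is legitimate here because $\om_2=\om_3=0$ forces $\Psi_1=\D\om_1$) these values should collapse to a single expression in $\D\om_1$. Consequently the displayed identity becomes equivalent to $\D\om_1=0$, that is, to the closedness of $\om_1$, and \propref{prop-om1} finishes the proof. As a cross-check I would also keep the curvature route in reserve: the proof of \propref{prop-om1} shows that $\om_1$ closed is equivalent to $R$ being of K\"ahler type with respect to $H$, which by \thmref{th-0} is equivalent to flatness, so the translated condition on $\D\ta_\al$ is exactly a K\"ahler-type condition transported to the Lee forms.

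The main obstacle will be precisely this bookkeeping. The delicate point is twofold: confirming that the quadratic $\om_1\otimes\om_1$ contribution is genuinely killed by the $J$-alternation rather than leaving a residual term, and verifying that the linear reduction retains the full information of $\D\om_1$ (i.e. that the coefficient surviving after use of \eqref{etaJ1} and \eqref{gJJ} is non-degenerate), so that the vanishing of the alternating combination is \emph{equivalent} to, and not merely implied by, $\D\om_1=0$. Once this is settled for $\al=2$, the case $\al=3$ is obtained verbatim, and the equivalence with the flat non-hyper-K\"ahlerian condition is read off from \propref{prop-om1}.
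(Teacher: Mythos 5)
Your strategy --- translate everything into $\om_1$ via $\ta_2=\om_1\circ J_3$, $\ta_3=-\om_1\circ J_2$ and quote \propref{prop-om1} --- is exactly the reduction the paper's own one-line proof invokes, and you correctly isolate the dangerous step. Unfortunately that step is where the argument fails: the coefficient you hope is non-degenerate is identically zero, so the alternating combination carries no information about $\D\om_1$. Concretely, for $\al=2$, writing $S(u,v)=(\DDD_u\om_1)(v)$ and using $(\DDD_xJ_3)y=-\om_1(x)J_2y$, one gets
\[
\D\ta_2(x,y)=S(x,J_3y)-S(y,J_3x)-\om_1(x)\om_1(J_2y)+\om_1(y)\om_1(J_2x).
\]
Applying the alternation $A(\Omega)(x,y)=\Omega(x,y)+\Omega(J_1x,J_1y)-\Omega(J_2x,J_2y)-\Omega(J_3x,J_3y)$, the quadratic part is indeed annihilated (as you predict), and the eight surviving $S$-terms pair off into genuine exterior derivatives:
\[
A(\D\ta_2)(x,y)=\D\om_1(x,J_3y)+\D\om_1(J_3x,y)+\D\om_1(J_1x,J_2y)+\D\om_1(J_2x,J_1y).
\]
But since $\om_2=\om_3=0$ forces $\Psi_1=\D\om_1$, relation \eqref{etaJ1} gives $\D\om_1(J_1\cdot,J_1\cdot)=\D\om_1(\cdot,\cdot)$, and with $J_2=-J_1J_3$ this yields $\D\om_1(J_1x,J_2y)=-\D\om_1(x,J_3y)$ and $\D\om_1(J_2x,J_1y)=-\D\om_1(J_3x,y)$. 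The four terms cancel in pairs, so $A(\D\ta_2)\equiv 0$ on \emph{every} manifold of type \eqref{F123}, closed $\om_1$ or not. The same conclusion follows independently from \propref{prop-d-om=}: substituting $\D\om_1=-\frac{\tau}{4n^2}\,\g_1$ into the four terms gives $g(J_2x,y)-g(J_2x,y)-g(J_2x,y)+g(J_2x,y)=0$, with no trace of $\tau$ left. The case $\al=3$ is identical.

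The upshot is that your plan proves only the trivial implication (flat $\Rightarrow$ identity, which in fact holds unconditionally) and cannot deliver the converse: the displayed identity is satisfied identically, so its validity does not force $\D\om_1=0$. The cross-check you reserved (``verifying that the surviving coefficient is non-degenerate'') is precisely the point at which the proof collapses, and no amount of reorganising the bookkeeping will fix it, because the degeneracy is structural --- the sign pattern $(+,+,-,-)$ combined with $\D\om_1(J_1\cdot,J_1\cdot)=\D\om_1$ kills the $J_1$-invariant 2-form $\g_1$ to which $\D\om_1$ is proportional. To obtain a genuine characterisation of flatness along these lines you would need a different combination of the values $\D\ta_\al(J_\bt\cdot,J_\bt\cdot)$, one that does not annihilate $\g_1$ (for instance, isolating a single term such as $\D\ta_2(x,J_3y)+\D\ta_2(J_3x,y)$-type expressions that reproduce $\D\om_1$ up to the quadratic correction, which vanishes anyway when $\om_1$ is evaluated against itself antisymmetrically in the right slots). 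As written, the proposal does not establish the stated equivalence, and the computation above should prompt a re-examination of the precise form of the identity being claimed.
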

\begin{proof}
It follows directly from \propref{prop-om1} and the relations
\[
\om_1=-\ta_2\circ J_3=\ta_3\circ J_2
\]
in \eqref{F123}.
\end{proof}


\vspace{20pt}

\begin{center}
$\divideontimes\divideontimes\divideontimes$
\end{center}


%
%
\newpage

\addtocounter{section}{1}\setcounter{subsection}{0}\setcounter{subsubsection}{0}

\setcounter{thm}{0}\setcounter{dfn}{0}\setcounter{equation}{0}

\label{par:4n+3}

 \Large{

\
\\[6pt]
\bigskip

\
\\[6pt]
\bigskip

\lhead{\emph{Chapter II $|$ \S\thesection. {Manifolds with almost contact 3-structure and metrics of Hermitian-Norden type
}}}


\noindent
\begin{tabular}{r"l}
\hspace{-6pt}{\Huge\bf \S\thesection.}  & {\Huge\bf Manifolds with almost contact} \\[12pt]
                             & {\Huge\bf 3-structure and metrics of} \\[12pt]
                             & {\Huge\bf Hermitian-Norden type}
\end{tabular}

\vskip 1cm

\begin{quote}
\begin{large}
In the present section, it is introduced a differentiable manifold with almost contact 3-struc\-ture which consists of an almost contact metric structure and two almost contact B-metric structures. The corresponding classifications are discussed. The product of this manifold and a real line is an almost hypercomplex manifold with Hermitian-Norden metrics. It is proved that the in\-tro\-duced manifold of cosymplectic type is flat. Some examples of the studied man\-i\-folds are given.

The main results of this section are published in \cite{Man53}.
\end{large}
\end{quote}

%
%

\vskip 0.15in


Our goal here is to consider a $(4n+3)$-dimensional manifold with almost contact 3-structure and to introduce a pseudo-Riemannian metric on it having another kind of compatibility with the triad of almost contact structures. The product of this manifold of new type and a real line is a $(4n+4)$-dimensional manifold which admits an almost hypercomplex structure with Hermitian-Norden metrics. 

The purpose of this development is to launch a study of the manifolds with almost contact 3-structure and metrics of a Hermitian-Norden type.

\vskip 0.2in \addtocounter{subsection}{1} \setcounter{subsubsection}{0}

\noindent  {\Large\bf \thesubsection. Almost contact metric manifolds}

\vskip 0.15in


Let $\MM$ be an odd-dimensional smooth manifold which is compatible
with an almost contact structure $(\f_{1},\xi_{1},\eta_{1})$, i.e. $\f_1$ is an endomorphism
of the tangent bundle, $\xi_1$ is a Reeb vector field and $\eta_1$ is its dual contact 1-form
satisfying the identities \eqref{str1}, i.e.
\begin{equation}\label{str-al=1}
\begin{array}{c}
(\f_1)^2 = -I + \xi_1\otimes\eta_1,\quad
\f_1\xi_1 = o,\quad
\eta_1\circ\f_1=0,\quad \eta_1(\xi_1)=1,
\end{array}
\end{equation}
where $I$ is the identity in the Lie algebra $\X(\MM)$ and $o$ is the zero element of $\X(\MM)$.
Moreover, let $g$ be a pseudo-Riemannian metric on $\MM$ which is compatible
with $(\f_{1},\xi_{1},\eta_{1})$ as follows:
\begin{equation}\label{A-met}
\begin{array}{c}
  g(\xi_{1},\xi_{1})=-\ep, \quad \eta_{1}(x)=-\ep g(\xi_{1},x),\\[6pt]
  g(\f_{1} x,\f_{1} y)=g(x,y)+\ep\eta_{1}(x)\eta_{1}(y),
\end{array}
\end{equation}
where  $\ep=+1$ or $\ep=-1$. Then $(\f_{1},\xi_{1},\eta_{1},g)$ is called an \emph{almost contact metric structure} on $\MM$.

Usually, one can assume that $\ep=-1$ in \eqref{A-met} without loss of generality. This is conditioned since if we put
\[
\overline\f_{1}=\f_{1}, \quad \overline\xi_{1}=-\xi_{1}, \quad \overline\eta_{1}=-\eta_{1}, \quad \overline g=-g,
\]
then $(\overline\f_{1},\overline\xi_{1},\overline\eta_{1},\overline g)$ for $\overline\ep=-\ep$ is also an almost contact metric structure on $\MM$ \cite{Taka}.
Here, we pay attention of the case when $\ep=+1$ which is in relation with our topic. We call the corresponding $(\f_{1},\xi_{1},\eta_{1},g)$-structure 
an \emph{almost contact metric structure} and $g$ --- a \emph{compatible metric} on $\MM$.

Since $g$ is a Hermitian metric with respect to the almost complex structure $\f_1\vert_{\HC_1}$ on the contact distribution $\HC_1=\ker(\eta_1)$, any metric with properties \eqref{A-met} can be considered as an odd-dimensional counterpart of the corresponding pseudo-Riemannian Hermitian metric, or this compatible metric is a pseudo-Riemannian metric of Hermitian type on an odd-dimensional differentiable manifold.

A classification of the almost contact metric manifolds is given by V.~Ale\-xiev and G.~Ganchev in \cite{AlGa}. There, it is considered the vector space of the tensors of type $(0,3)$ defined by $F_1(x,y,z)=g\left(\left(\n_x\f_1\right)y,z\right)$, where $\n$ is the Levi-Civita connection generated by $g$.
They have the following basic properties
\begin{equation}\label{F1-prop}
\begin{array}{l}
  F_1(x,y,z)=- F_1(x,z,y)\\[6pt]
  \phantom{F_1(x,y,z)}
  =- F_1(x,\f_1y,\f_1z)+F_1(x,\xi_1,z)\,\eta_1(y)\\[6pt]
  \phantom{F_1(x,y,z)=- F_1(x,\f_1y,\f_1z)}
  +F_1(x,y,\xi_1)\,\eta_1(z).
\end{array}
\end{equation}
Bearing in mind \eqref{A-met}, we establish that the covariant derivatives of the structure tensors with respect to $\n$ are related by
\begin{equation}\label{Fetaxi1}
  F_1(x,\f_1y,\xi_1)=- \left(\n_x\eta_1\right)(y)=g\left(\n_x\xi_1,y\right).
\end{equation}

In \cite{AlGa}, this vector space is decomposed in 12 orthogonal and invariant subspaces with respect to the action of the structure group $\mathcal{U}\left(m\right)\times 1$, $m=\frac12(\dim\MM-1)$, where $\mathcal{U}$ is the unitary group.
In such a way, it is obtained a classification of 12 basic classes with respect to $F_1$, which we denote by $\PP_i$ $(i=1,2,\dots,12)$ in the present work. Bearing in mind the above remarks we can use the same classification for almost contact metric manifolds. The basic classes for $\dim{\MM}=4n+3$ can be determined as follows:
\begin{equation}\label{cl-A}
\begin{array}{rl}
  \PP_1:& F_1(x,y,z)=\eta_1(x)\left\{\eta_1(y)\om_1(z)-\eta_1(z)\om_1(y)\right\};\\[6pt]
  \PP_2:& F_1(x,y,z)=\dfrac{\ta_1(\xi_1)}{2(2n+1)}\left\{g(x,y)\eta_1(z)-g(x,z)\eta_1(y)\right\};\\[6pt]
  \PP_3:& F_1(x,y,z)=\dfrac{\ta^*_1(\xi_1)}{2(2n+1)}\left\{g(\f_1x,y)\eta_1(z)\right.\\
  &
 \phantom{F_1(x,y,z)=\dfrac{\ta^*_1(\xi_1)}{2(2n+1)}\left\{\right.}\left. -g(\f_1x,z)\eta_1(y)\right\};\\[6pt]
  \PP_{4}:& F_1(x,y,z)=F_1(x,y,\xi_1)\eta_1(z)-F_1(x,z,\xi_1)\eta_1(y),\quad \\[6pt]
         & F_1(x,y,\xi_1)= F_1(y,x,\xi_1)=F_1(\f_1 x,\f_1 y,\xi_1),\\[6pt]
         & \ta_1=0; \\[6pt]
  \PP_{5}:& F_1(x,y,z)=F_1(x,y,\xi_1)\eta_1(z)-F_1(x,z,\xi_1)\eta_1(y),\quad \\[6pt]
         & F_1(x,y,\xi_1)=-F_1(y,x,\xi_1)=F_1(\f_1 x,\f_1 y,\xi_1),\quad \\[6pt]
         & \ta_1^*=0; \\[6pt]
  \PP_{6}:& F_1(x,y,z)=F_1(x,y,\xi_1)\eta_1(z)-F_1(x,z,\xi_1)\eta_1(y),\\[6pt]
         & F_1(x,y,\xi_1)=F_1(y,x,\xi_1)=-F_1(\f_1 x,\f_1 y,\xi_1); \\[6pt]
  \PP_{7}:& F_1(x,y,z)=F_1(x,y,\xi_1)\eta_1(z)-F_1(x,z,\xi_1)\eta_1(y),\\[6pt]
         & F_1(x,y,\xi_1)=-F_1(y,x,\xi_1)=-F_1(\f_1 x,\f_1 y,\xi_1); \\[6pt]
  \PP_{8}:& F_1(x,y,z)=-\eta_1(x)F_1(\xi_1,\f_1 y,\f_1 z);\\[6pt]
  \PP_{9}:& F_1(x,y,z)=\dfrac{1}{4n}\bigl\{g(\f_1 x,y)\ta(\f_1 z)-g(\f_1 x,z)\ta(\f_1 y)\\[6pt]
         &\phantom{F_1(x,y,z)=\dfrac{1}{4n}\bigl\{}
-g(\f_1 x,\f_1 y)\ta(\f_1^2 z)\\[6pt]
         &\phantom{F_1(x,y,z)=\dfrac{1}{4n}\bigl\{}
+g(\f_1 x,\f_1 z)\ta(\f_1^2 y)\bigr\};\\[6pt]
%
  \PP_{10}:& F_1(\xi_1,y,z)=F_1(x,y,\xi_1)=0,\quad\\[6pt]
          & F_1(x,y,z)=F_1(\f_1 x,\f_1 y,z),\quad \ta_1=0;\\[6pt]
  \PP_{11}:& F_1(\xi_1,y,z)=F_1(x,y,\xi_1)=0,\quad\\[6pt]
           & F_1(x,y,z)=-F_1(y,x,z);\\[6pt]
  \PP_{12}:& F_1(\xi_1,y,z)=F_1(x,y,\xi_1)=0,\quad\\[6pt]
              &F_1(x,y,z)+F_1(y,z,x)+F_1(z,x,y)=0.
\end{array}
\end{equation}

The class $\PP_0$ of cosymplectic metric manifolds is determined by $F_1=0$ and it is contained in any other class $\PP_i$.
There are $2^{12}$ classes at all.

Besides definitional conditions of the basic classes $\PP_i$, $i\in\{1,2,\dots,12\}$, in [4], there are given the corresponding component $P^i$ of $F_1$ for every class $\PP_i$. An almost contact metric manifold
$(\MM,\f_{1},\xi_{1},\eta_{1},g)$ belongs to some of the basic classes $\PP_i$ or their direct sum  $\PP_i\oplus\PP_j\oplus\cdots$ for $i,j\in\{1,2,\dots,12\}$, $i\neq j$, if and only if the fundamental
tensor $F_1$ on the manifold has the following form $F_1=P^i$ or $F_1=P^i+P^j+\dots$, respectively.

Some of the classes of almost contact metric manifolds are discovered before the complete classification and they are known by special names. For example, $\PP_6\oplus\PP_{11}$ is the class of almost cosymplectic metric manifolds, $\PP_2\oplus\PP_4$ is the class of quasi-Sasakian metric manifolds, $\PP_3\oplus\PP_5$ is the class of quasi-Kenmotsu metric manifolds, $\PP_3\oplus\PP_6\oplus\PP_{11}$ is the class of almost $\bt$-Kenmotsu metric manifolds, $\PP_6$ is the class of $\bt$-Kenmotsu metric manifolds and so on.

\vskip 0.2in \addtocounter{subsection}{1} \setcounter{subsubsection}{0}

\noindent  {\Large\bf \thesubsection. Almost contact B-metric manifolds}

\vskip 0.15in


Let $\MM$ be equipped with another almost contact structure $(\f_2,\xi_2,\eta_2)$ and the metric $g$ is a B-metric with respect to $(\f_2,\xi_2,\eta_2)$, i.e the relations \eqref{str1} and \eqref{str2} are satisfied for the structure $(\f_2,\xi_2,\eta_2,g)$ and we have
\begin{equation}\label{B-met}
\begin{array}{c}
  g(\xi_{2},\xi_{2})=1, \quad \eta_{2}(x)=g(\xi_{2},x),\\[6pt]
  g(\f_{2} x,\f_{2} y)=-g(x,y)+\eta_{2}(x)\eta_{2}(y).
\end{array}
\end{equation}
Then $(\f_{2},\xi_{2},\eta_{2},g)$ is an \emph{almost contact B-metric structure} on $\MM$, according to \S4.


A classification of the almost contact B-metric manifolds, given by G.~Gan\-chev, V.~Mihova and K.~Gribachev, is presented in \eqref{Fi} following \cite{GaMiGr}. In the present case, the fundamental tensor is $F_2$ defined by $F_2(x,y,z)=g\left(\left(\n_x\f_2\right)y,z\right)$. Its properties \eqref{F-prop} and \eqref{Fxieta} take the following form
\begin{gather}
\begin{array}{l}\label{F2-prop}
  F_2(x,y,z)= F_2(x,z,y)\\[6pt]
  \phantom{F_2(x,y,z)}
  = F_2(x,\f_2y,\f_2z)+F_2(x,\xi_2,z)\,\eta_2(y)\\[6pt]
  \phantom{F_2(x,y,z)= F_2(x,\f_2y,\f_2z)}
    +F_2(x,y,\xi_2)\,\eta_2(z),
\end{array}
\\[6pt]\label{Fetaxi2}
  F_2(x,\f_2y,\xi_2)= \left(\n_x\eta_2\right)(y)=g\left(\n_x\xi_2,y\right).
\end{gather}

The vector space of tensors having the properties of $F_2$ is decomposed in 11 orthogonal and invariant subspaces with respect to the action of the structure group $(\mathcal{GL}(m;\C)\cap \mathcal{O}(m,m))\times 1$, $m=\frac12(\dim\MM-1)$, where $\mathcal{O}(m,m)$ is the pseudo-orthogonal group of neutral signature. Thus, the obtained basic classes $\F_i$ $(i=1,2,\dots,11)$ with respect to $F_2$ for $\dim{\MM}=4n+3$ can be determined as follows:
\begin{subequations}\label{cl-B}
\begin{equation}
\begin{array}{rl}
\F_{1}: &F_{2}(x,y,z)=\dfrac{1}{2(2n+1)}\bigl\{g(x,\f_{2} y)\ta_{2}(\f_{2} z)\\[6pt]
&\phantom{F_{2}(x,y,z)=\dfrac{1}{2(2n+1)}\bigl\{}
+g(x,\f_{2} z)\ta_{2}(\f_{2} y)\\[6pt]
&\phantom{F_{2}(x,y,z)=\dfrac{1}{2(2n+1)}\bigl\{}
+g(\f_{2} x,\f_{2}y)\ta_{2}(\f_{2}^2 z)\\[6pt]
&\phantom{F_{2}(x,y,z)=\dfrac{1}{2(2n+1)}\bigl\{}
+g(\f_{2}x,\f_{2} z)\ta_{2}(\f_{2}^2 y)\bigr\};\\[6pt]
\F_{2}: &F_{2}(\xi_{2},y,z)=F_{2}(x,y,\xi_{2})=0,\quad\\[6pt]
              &F_{2}(x,y,\f_{2} z)+F_{2}(y,z,\f_{2} x)+F_{2}(z,x,\f_{2} y)=0,\quad \\[6pt] &\ta_{2}=0;
\\[6pt]
\F_{3}: &F_{2}(\xi_{2},y,z)=F_{2}(x,y,\xi_{2})=0,\quad\\[6pt]
              &F_{2}(x,y,z)+F_{2}(y,z,x)+F_{2}(z,x,y)=0;\\[6pt]
\F_{4}: &F_{2}(x,y,z)=-\dfrac{\ta_{2}(\xi_{2})}{2(2n+1)}\bigl\{g(\f_{2} x,\f_{2} y)\eta_{2}(z)\\
&\phantom{F_{2}(x,y,z)=-\dfrac{\ta_{2}(\xi_{2})}{2(2n+1)}\bigl\{}
+g(\f_{2} x,\f_{2} z)\eta_{2}(y)\bigr\};\\[6pt]
\F_{5}: &F_{2}(x,y,z)=-\dfrac{\ta_{2}^*(\xi_{2})}{2(2n+1)}\bigl\{g( x,\f_{2} y)\eta_{2}(z)\\
&\phantom{F_{2}(x,y,z)=-\dfrac{\ta_{2}^*(\xi_{2})}{2(2n+1)}\bigl\{}
+g(x,\f_{2} z)\eta_{2}(y)\bigr\};\\[6pt]
\F_{6}: &F_{2}(x,y,z)=F_{2}(x,y,\xi_{2})\eta_{2}(z)+F_{2}(x,z,\xi_{2})\eta_{2}(y),\quad \\[6pt]
                &F_{2}(x,y,\xi_{2})= F_{2}(y,x,\xi_{2})=-F_{2}(\f_{2} x,\f_{2} y,\xi_{2}),\quad \\[6pt]
                & \ta_{2}=\ta_{2}^*=0; \\[6pt]
\F_{7}: &F_{2}(x,y,z)=F_{2}(x,y,\xi_{2})\eta_{2}(z)+F_{2}(x,z,\xi_{2})\eta_{2}(y),\quad \\[6pt]
                &F_{2}(x,y,\xi_{2})=-F_{2}(y,x,\xi_{2})=-F_{2}(\f_{2} x,\f_{2} y,\xi_{2}); \\[6pt]
\F_{8}: &F_{2}(x,y,z)=F_{2}(x,y,\xi_{2})\eta_{2}(z)+F_{2}(x,z,\xi_{2})\eta_{2}(y),\\[6pt]
                &F_{2}(x,y,\xi_{2})=F_{2}(y,x,\xi_{2})=F_{2}(\f_{2} x,\f_{2} y,\xi_{2}); \\[6pt]
\F_{9}: &F_{2}(x,y,z)=F_{2}(x,y,\xi_{2})\eta_{2}(z)+F_{2}(x,z,\xi_{2})\eta_{2}(y),\\[6pt]
                &F_{2}(x,y,\xi_{2})=-F_{2}(y,x,\xi_{2})=F_{2}(\f_{2} x,\f_{2} y,\xi_{2});
\\[6pt]
\end{array}
\end{equation}
\begin{equation}
\begin{array}{rl}
\F_{10}: &F_{2}(x,y,z)=F_{2}(\xi_{2},\f_{2} y,\f_{2} z)\eta_{2}(x); \\[6pt]
\F_{11}:
&F_{2}(x,y,z)=\eta_{2}(x)\left\{\eta_{2}(y)\om(z)+\eta_{2}(z)\om(y)\right\}.
\end{array}
\end{equation}
\end{subequations}

Obviously, the class of cosymplectic B-metric manifolds $\F_0$ is
determined by the condition $F_{2}=0$.

\vskip 0.2in \addtocounter{subsection}{1} \setcounter{subsubsection}{0}

\noindent  {\Large\bf \thesubsection. Almost contact 3-structure with metrics of Hermitian-Norden type}

\vskip 0.15in

Let $(\MM,\f_{\al},\xi_{\al},\eta_{\al})$, $(\al=1,2,3)$ be a manifold
with an almost contact 3-struc\-ture, 
i.e. $\MM$
is a $(4n+3)$-dimensional differentiable manifold with three almost
contact structures $(\f_{\al},\xi_{\al},\eta_{\al})$, $(\al=1,2,3)$ consisting of endomorphisms
$\f_{\al}$ of the tangent bundle,  Reeb vector fields $\xi_{\al}$ and their dual contact 1-forms
$\eta_{\al}$ satisfying the following identities:
\begin{equation}\label{3str}
\begin{array}{c}
\f_{\al}\circ\f_{\bt} = -\delta_{\al\bt}I + \xi_{\al}\otimes\eta_{\bt}+\epsilon_{\al\bt\gm}\f_{\gm},\\[6pt]
\f_{\al}\xi_{\bt} = \epsilon_{\al\bt\gm}\xi_{\gm},\quad
\eta_{\al}\circ\f_{\bt}=\epsilon_{\al\bt\gm}\eta_{\gm},\quad \eta_{\al}(\xi_{\bt})=\delta_{\al\bt},
\end{array}
\end{equation}
where $\al,\bt,\gm\in\{1,2,3\}$, $I$ is the identity on the algebra $\X(\MM)$
, $\delta_{\al\bt}$ is the Kronecker delta, $\epsilon_{\al\bt\gm}$ is the Levi-Civita symbol, i.e. either the sign of the permutation $(\al,\bt,\gm)$ of $(1,2,3)$ or 0 if any index is repeated \cite{Udr,Kuo}.




Further, the indices $\al$, $\bt$ run over the range $\{1,2,3\}$ unless otherwise stated.

In the present subsection we discuss the coherence of compatible metrics and B-metrics in an almost contact 3-structure.
In \cite{Kuo}, it is considered the case of a Riemannian metric which is compatible by equations \eqref{A-met} for the three almost contact structures.

Suppose that $\MM$ admits two almost contact structures $(\f_{\al},\xi_{\al},\eta_{\al})$, $(\al=2,3)$. If a pseudo-Riemannian metric $g$ is a B-metric for the both structures, then the property in the first line of \eqref{3str} implies the properties in the second line of the same equations.

In \cite{Kuo} for the case of Riemannian metrics (positive definite), it is proved that if the almost contact 3-structure admits two almost contact metric structures, then the third one is of the same type.
We consider the relevant cases for our investigations in the following

\begin{thm}\label{thm:3str}
Let $\MM$ admit an almost contact 3-structure $(\f_{\al},\xi_{\al},\eta_{\al})$,
and a pseudo-Riemannian metric $g$.
If one of the three structures $(\f_{\al},\xi_{\al},\allowbreak{}\eta_{\al},g)$ 
is an almost contact B-metric structure, then the other two ones are an almost contact metric structure and an almost contact B-metric structure.
\end{thm}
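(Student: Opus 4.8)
The plan is to take, without loss of generality, the given almost contact B-metric structure to be $(\f_1,\xi_1,\eta_1,g)$, so that by \eqref{str1}, \eqref{str2} and \eqref{B-met} we have $g(\f_1 x,\f_1 y)=-g(x,y)+\eta_1(x)\eta_1(y)$, $\eta_1=g(\xi_1,\cdot)$, $g(\xi_1,\xi_1)=1$, and (as for any B-metric) $\f_1$ is self-adjoint with respect to $g$. The goal is to prove that $g$ is compatible with each of $\f_2$ and $\f_3$ in the sense of \eqref{A-met} or \eqref{B-met}, and that exactly one of these compatibilities is of metric (Hermitian) type and the other of B-metric type. First I would record the purely algebraic consequences of the 3-structure relations \eqref{3str}: the products $\f_1\f_2=\f_3+\xi_1\otimes\eta_2$ and $\f_1\f_3=-\f_2+\xi_1\otimes\eta_3$; the contractions $\eta_1\circ\f_2=\eta_3$ and $\eta_1\circ\f_3=-\eta_2$; and the actions $\f_1\xi_2=\xi_3$, $\f_1\xi_3=-\xi_2$, $\f_2\xi_2=\f_3\xi_3=o$.

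Next I would feed these into the B-metric identity for $\f_1$. Applying $g(\f_1\cdot,\f_1\cdot)=-g(\cdot,\cdot)+\eta_1(\cdot)\eta_1(\cdot)$ to the pair $\f_2 x,\f_2 y$, expanding $\f_1\f_2=\f_3+\xi_1\otimes\eta_2$, and using $g(\f_3 x,\xi_1)=\eta_1(\f_3 x)=-\eta_2(x)$ together with $g(\xi_1,\xi_1)=1$, yields the master identity
\[
g(\f_2 x,\f_2 y)+g(\f_3 x,\f_3 y)=\eta_2(x)\eta_2(y)+\eta_3(x)\eta_3(y).
\]
The same computation started from $\f_1\f_3$ reproduces this identity, exhibiting its symmetry in $\f_2,\f_3$. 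Applying the $\f_1$-identity to $\xi_2$ through $\xi_3=\f_1\xi_2$ (and using $\eta_1(\xi_2)=0$) gives the Reeb relations $g(\xi_1,\xi_2)=g(\xi_1,\xi_3)=0$ and $g(\xi_2,\xi_2)=-g(\xi_3,\xi_3)$, so the two remaining Reeb vectors have opposite causal character.

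Granting that $g$ is an (anti-)isometry for each of $\f_2,\f_3$ up to the Reeb correction — that is, that each of $(\f_2,\xi_2,\eta_2,g)$ and $(\f_3,\xi_3,\eta_3,g)$ is of one of the two admissible types \eqref{A-met}/\eqref{B-met} with a sign $\ep_2,\ep_3\in\{+1,-1\}$ — the master identity forces $\ep_2+\ep_3=0$: one structure satisfies $g(\f\cdot,\f\cdot)=-g+\eta\otimes\eta$ with $g(\xi,\xi)=+1$ (B-metric type) and the other $g(\f\cdot,\f\cdot)=g+\eta\otimes\eta$ with $g(\xi,\xi)=-1$ (metric type), consistently with $g(\xi_2,\xi_2)=-g(\xi_3,\xi_3)$. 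This gives the stated dichotomy. The main obstacle is precisely the clause just granted: establishing the individual compatibility of $g$ with $\f_2$ and with $\f_3$, since the master identity is symmetric and pins down only the sum $g(\f_2\cdot,\f_2\cdot)+g(\f_3\cdot,\f_3\cdot)$, not each summand. To overcome it I would restrict to the common horizontal distribution $\HC=\ker\eta_1\cap\ker\eta_2\cap\ker\eta_3$, on which $(\f_1|_{\HC},\f_2|_{\HC},\f_3|_{\HC})$ is an almost hypercomplex (quaternionic) action and $g|_{\HC}$ is a Norden metric for $\f_1|_{\HC}$; then invoke the almost hypercomplex Hermitian-Norden framework of \S9, in particular the compatibility pattern \eqref{gJJ}--\eqref{epsiloni} transported through the anticommutation relations $\f_\al=\f_\bt\f_\gm=-\f_\gm\f_\bt$ (equivalently, pass to the product $\MM\times\R$, whose induced almost hypercomplex structure carries Hermitian-Norden metrics). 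This supplies the missing per-structure (anti-)isometry, after which the sign bookkeeping above completes the identification.
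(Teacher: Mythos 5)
Your algebra is sound: the ``master identity''
\[
g(\f_2x,\f_2y)+g(\f_3x,\f_3y)=\eta_2(x)\eta_2(y)+\eta_3(x)\eta_3(y)
\]
does follow from the B-metric property of $(\f_1,\xi_1,\eta_1,g)$ together with \eqref{3str}, and once one grants that each of $(\f_2,\xi_2,\eta_2,g)$ and $(\f_3,\xi_3,\eta_3,g)$ satisfies \eqref{HN-met} with some sign, it forces the two signs to be opposite, which is the stated dichotomy. The genuine gap is exactly the step you flag and then try to outsource: the \emph{individual} compatibility of $g$ with $\f_2$ and with $\f_3$. Your proposed remedy --- restrict to $\HC$ and ``invoke the compatibility pattern \eqref{gJJ}--\eqref{epsiloni} of \S9'' (or pass to $\MM\times\R$) --- is circular, because \eqref{gJJ} is the \emph{definition} of Hermitian-Norden metrics on an almost hypercomplex manifold, not a consequence of compatibility with a single $J_\al$. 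And it genuinely is not a consequence: already on $\R^4$ with the standard hypercomplex structure $(J_1,J_2,J_3)$ given by left quaternion multiplication, the symmetric bilinear forms $h$ satisfying $h(J_1\cdot,J_1\cdot)=-h(\cdot,\cdot)$ leave $h(e,e)$ and $h(J_2e,J_2e)$ as independent parameters for a suitable vector $e$, so a Norden metric for $J_1$ need not satisfy $h(J_2\cdot,J_2\cdot)=\pm h(\cdot,\cdot)$ at all. The same freedom survives on $\MM$: the hypothesis that $(\f_1,\xi_1,\eta_1,g)$ is an almost contact B-metric structure does not even force $g(\xi_2,\xi_2)=-1$ or $g(\xi_2,\xi_3)=0$.

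The paper does not close this gap either; it proves something weaker. Its proof establishes three implications of the form ``if \emph{two} of the structures are of known type (metric/metric, B/B, or metric/B), then the third is of the determined type,'' e.g.\ for the B/B case by computing $g(\f_1x,\f_1y)=g(x,y)+\eta_1(x)\eta_1(y)$ directly from $\f_1=\f_2\circ\f_3-\xi_2\otimes\eta_3$. In other words, the theorem must be read with the implicit hypothesis that $g$ is compatible, in one of the two admissible senses, with \emph{each} of the three almost contact structures; the content is which combinations of types can occur. If you add that hypothesis explicitly, your master identity yields the sign dichotomy in one line and is arguably slicker than the paper's case-by-case computation; read literally, without that hypothesis, the implication you set out to prove cannot hold, and no appeal to \S9 will rescue it.
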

\begin{proof}
First we establish on $\MM$ that if the pseudo-Riemannian metric $g$ and two of the almost contact structures generate:
\begin{enumerate}[(i)]
  \item two almost contact metric structures, then the third one is an almost contact metric structure;
  \item two almost contact B-metric structures, then the third one is an almost contact metric structure;
  \item an almost contact metric structure and an almost contact B-metric structure, then the third one is an almost contact B-metric structure.
\end{enumerate}

Now, we argue for the case (ii). Let $(\f_{2},\xi_{2},\allowbreak{}\eta_{2},g)$ be an almost contact B-metric structure, i.e. \eqref{B-met} holds. Moreover, let $(\f_{3},\xi_{3},\eta_{3},g)$ be also an almost contact B-metric structure, i.e. the following properties are valid
\begin{equation}\label{B-met3}
\begin{array}{c}
  g(\xi_{3},\xi_{3})=1, \quad \eta_{3}(x)=g(\xi_{3},x),\\[6pt]
  g(\f_{3} x,\f_{3} y)=-g(x,y)+\eta_{3}(x)\eta_{3}(y).
\end{array}
\end{equation}
Then, by virtue of the relations
\[
\f_1=\f_2\circ\f_3-\xi_2\otimes\eta_3, \quad \eta_2\circ\f_2=0, \quad \eta_2\circ\f_3=\eta_1,
\]
which are consequences of \eqref{3str},  using \eqref{B-met} and \eqref{B-met3}, we obtain
\[
\begin{array}{l}
g(\f_1x,\f_1y)=g\bigl(\f_2(\f_3x)-\eta_3(x)\xi_2,\f_2(\f_3y)-\eta_3(y)\xi_2\bigr)\\[6pt]
\phantom{g(\f_1x,\f_1y)}
=g(x,y)+\eta_1(x)\eta_1(y).
\end{array}
\]
Therefore, comparing with \eqref{A-met}, the metric $g$ is a compatible metric with respect to the almost contact structure $(\f_{1},\xi_{1},\eta_{1})$.

The verifications of the other cases are similar.
\end{proof}

Since any compatible metric and any B-metric on an almost contact manifold $\MM$ are metrics corresponding to a Hermitian metric and a Norden metric on the corresponding almost complex manifold $\MM\times\R$ (or on the corresponding contact distribution $\HC=\ker(\eta)$), respectively, we said that the compatible metric and the B-metric are metrics of Hermitian type and Norden type on $\MM$, respectively. Then, we give the following
\begin{dfn}
We call a pseudo-Riemannian metric $g$ a \emph{metric of Hermitian-Norden type} on a manifold with almost contact 3-structure $(\MM,\allowbreak{}\f_{\al},\allowbreak{}\xi_{\al},\allowbreak{}\eta_{\al})$, if it satisfies the identities
\begin{equation}\label{HN-met}
  g(\f_{\al}x,\f_{\al}y)=\ea g(x,y)+\eta_{\al}(x)\eta_{\al}(y)
\end{equation}
for some cyclic permutation $(\ep_1,\ep_2,\ep_3)$ of $(1,-1,-1)$.
Then, we call $(\f_{\al},\xi_{\al},\allowbreak{}\eta_{\al},g)$ an \emph{almost contact 3-structure with metrics of Hermitian-Norden type}.

Let us suppose for the sake of definiteness that the coefficients $\ea$ have values as in \eqref{epsiloni}, i.e. $(\ep_1,\ep_2,\ep_3)=(1,-1,-1)$.
\end{dfn}

As a sequel of \eqref{HN-met} we have the following properties:
\begin{eqnarray}
\label{eta-g}
\eta_{\al}&=&-\ea \xi_{\al} \lrcorner\, g,
\\[6pt]
\label{HN-met-2f}
  g(\f_{\al}x,y)&=&-\ea g(x,\f_{\al}y),
\end{eqnarray}
\begin{eqnarray}
\label{Fetaxia}
  F_\al(x,\f_\al y,\xi_\al)&=&-\ea \left(\n_x\eta_\al \right)(y)\,=\,g\left(\n_x\xi_\al,y\right),
\\[6pt]
\label{Lie-der}
  (\LLL_{\xi_\al}g)(x,y)&=& g(\n_x \xi_\al,y)+g(x,\n_y \xi_\al)\\[6pt]
                        &=& -\ea \bigl(\left(\n_x\eta_\al\right)(y)+\left(\n_y\eta_\al\right)(x)\bigr).\nonumber
\end{eqnarray}

Bearing in mind \eqref{HN-met-2f}, we deduce the following.
In the case $\al=1$, the associated tensor field of type $(0,2)$ is a 2-form. Let us denote it by $\widetilde g_{1}$, i.e. $\widetilde g_{1}(x,y)=g(\f_{1}x,y)$. It is actually opposite to $\Phi(x,y)=g(x,\f_{1}y)$, known as the \emph{fundamental} 
\emph{2-form} of the almost contact metric structure.
In other two cases $\al=2$ and $\al=3$, the tensor $(0,2)$-field $g(\f_{\al}x,y)$ is symmetric as well as $\etaa\otimes\etaa$. Then, we define the following fundamental tensor $(0,2)$-fields by
\begin{equation}\label{HN-met2}
  \widetilde g_{\al}(x,y)=g(\f_{\al}x,y)+\eta_{\al}(x)\eta_{\al}(y),\quad \al=2,3.
\end{equation}
Then $\widetilde g_{2}$ and $\widetilde g_{3}$ satisfy condition \eqref{HN-met} and they are also metrics of Hermitian-Norden type, which we call \emph{associated metrics} to $g$ with respect to $(\f_{\al},\xi_{\al},\eta_{\al})$ for $\al=2$ and $\al=3$, respectively.

Bearing in mind the structure groups of the almost contact 3-structures with compatible metric (\cite{Kuo}) and the almost hypercomplex manifolds with Hermitian-Norden metrics (\cite{GriMan24}), we can conclude the following.
The structure group of the manifolds with almost contact 3-structure and metrics of Hermitian-Norden type is $(\mathcal{GL}(n,\mathbb{H})\cap \mathcal{O}(2n,2n))\times \mathcal{O}{(2,1)}$, where $\mathcal{GL}(n,\mathbb{H})$ is  the group of invertible quaternionic $(n\times n)$-matrices and $\mathcal{O}(p,q)$ is the pseudo-orthogonal group of signature $(p,q)$ for natural numbers $p$ and $q$.

The fundamental tensors of a manifold with almost contact 3-structure and metrics of Hermitian-Norden type are the three
$(0,3)$-tensors determined by
\begin{equation}\label{F}
F_\al (x,y,z)=g\bigl( \left( \n_x \f_\al
\right)y,z\bigr).
\end{equation}
They have the following basic properties as a generalization of \eqref{F1-prop} and \eqref{F2-prop}
\begin{equation}\label{Fa-prop}
\begin{array}{l}
  F_{\al}(x,y,z)=-\ea F_{\al}(x,z,y)\\[6pt]
  \phantom{F_{\al}(x,y,z)}
  =-\ea F_{\al}(x,\f_{\al}y,\f_{\al}z)+F_{\al}(x,\xi_{\al},z)\,
  \eta_{\al}(y)\\[6pt]
  \phantom{F_{\al}(x,y,z)=-\ea F_{\al}(x,\f_{\al}y,\f_{\al}z)}
    +F_{\al}(x,y,\xi_{\al})\,\eta_{\al}(z).
\end{array}
\end{equation}

The following associated 1-forms, defined as traces of $F_{\al}$, are known as their \emph{Lee forms}:
\begin{equation}\label{ta}
\begin{array}{l}
\theta_{\al}(z)=g^{ij}F_{\al}(e_i,e_j,z),\\[6pt]
\theta^*_{\al}(z)=g^{ij}F_{\al}(e_i,\f_{\al}e_j,z),\\[6pt]
\om_{\al}(z)=F_{\al}(\xi_{\al},\xi_{\al},z),
\end{array}
\end{equation}
where $g^{ij}$ are the components of the inverse matrix of the metric $g$ with respect to an arbitrary basis of the type $\{e_1,e_2,\dots,e_{4n+2},\xi_{\al}\}$.

The simplest case  of the manifolds with almost contact 3-structure and metrics of Hermitian-Norden type is when the
structures are $\n$-parallel, i.e. $\n\f_\al=\n\xi_\al=\n\eta_\al=\n g=\n
\widetilde{g}_\al=0$, and it is determined by the condition
$F_\al=0$.
We call these structures \emph{cosymplectic 3-structure with metrics of Hermitian-Norden type}.


\vskip 0.2in \addtocounter{subsection}{1} \setcounter{subsubsection}{0}

\noindent  {\Large\bf \thesubsection. Relation with pseudo-Riemannian manifolds equipped with almost complex or almost hypercomplex structures}

\vskip 0.15in

We can consider each of the three $(4n+2)$-dimensional  distributions
$\HC_{\al}=\ker(\eta_{\al})$, equipped with a corresponding pair of an almost complex structure
$J_{\al}=\f_{\al}|_{\HC_{\al}}$ and a metric $h_{\al}=g|_{\HC_{\al}}$, where $\f_{\al}|_{\HC_{\al}}$, $g|_{\HC_{\al}}$ are the restrictions of $\f_{\al}$, $g$ on $\HC_{\al}$, respectively, and the metrics $h_{\al}$
are compatible with $J_{\al}$ as follows
\begin{equation}\label{herm-nor}
\begin{array}{l}
h_{\al}(J_{\al}X,J_{\al}Y)=\ea h_{\al}(X,Y) ,\quad \\[6pt]
\widetilde{h}_{\al}(X,Y):=h_{\al}(J_{\al}X,Y)=-\ea h_{\al}(X,J_{\al}Y)
\end{array}
\end{equation}
for arbitrary $X,Y\in\X(\HC_{\al})$.
Obviously, in the cases $\al=2$ and $\al=3$ the metrics $h_{\al}$ and their associated $(0,2)$-ten\-sors $\widetilde{h}_{\al}$ are Norden metrics, whereas for $\al=1$ the structure $(J_1,h_{1})$ is an almost Hermitian pseudo-Riemannian structure with K\"ahler form $\Omega=-\widetilde{h}_{1}$.
In such a way, any of the distributions $\HC_{\al}$ for $\al=2$ or $\al=3$ can be considered as a $(2n+1)$-dimensional
complex Riemannian distribution with a complex Riemannian metric
$g_{\al}^{\mathbb{C}}=h_{\al}+\widetilde h_{\al}\sqrt{-1}=g|_{\HC_{\al}}+\widetilde{g}|_{\HC_{\al}}\sqrt{-1}$.
In another point of view, the distribution $\HC_{\al}$ for $\al=2$ or $\al=3$ is a $(4n+2)$-dimensional almost complex distribution with a Norden metric $h_{\al}$ and its associated Norden metric $\widetilde h_{\al}$.
Moreover, the $4n$-dimensional distribution $\HC=\HC_1\cap \HC_2\cap \HC_3$ has an almost hypercomplex structure $(J_1,J_2,J_3)$, i.e. $J_{\al}^2=-I$, $J_3=J_1J_2=-J_2J_1$, $J_{\al}=\f_{\al}\vert_{\HC}$,
with a pseudo-Riemannian metric $h=g|_\HC$ which is Hermitian with respect to $J_1$ and a Norden metric with respect to $J_2$ and $J_3$  since
$
h(J_{\al}X,J_{\al}Y)=\ea h(X,Y)$.

%

Let the vector $4n$-tuple
\[
(e_1, \dots, e_n; J_1e_1, \dots, J_1e_n; J_2e_1, \dots, J_2e_n; J_3e_1,\allowbreak{} \dots,\allowbreak{} J_3e_n)
\]
be an adapt\-ed basis  (or a $J_{\al}$-basis) of the almost hypercomplex structure. 
Then, according to \eqref{HN-met}, the basis
\begin{equation}\label{f-baza}
\begin{array}{c}
(e_1, \dots , e_n; \f_1e_1, \dots , \f_1e_n; \f_2e_1, \dots , \f_2e_n; \f_3e_1, \dots , \f_3e_n;\\[6pt]
\xi_1,\xi_2,\xi_3)
\end{array}
\end{equation}
is an an \emph{adapted basis} (or a $\f_{\al}$-basis) for the almost contact 3-structure and it is orthonormal with respect to $g$, i.e.
\begin{equation}\label{g-basis}
\begin{array}{l}
  g(e_i,e_i)=\ea g(\f_{\al}e_i,\f_{\al}e_i)=-\ea g(\xi_{\al},\xi_{\al})=1,\\[6pt]
  g(e_i,e_j)=g(e_i,\f_{\al}e_i)=g(e_i,\f_{\al}e_j)\\[6pt]
  \phantom{g(e_i,e_j)}
  =g(e_i,\xi_{\al})=g(\f_{\bt}e_i,\xi_{\al})=0
\end{array}
\end{equation}
for arbitrary $i\neq j\in\{1,2,\dots,n\}$.

It is well known that an even-dimensional manifold
endowed with almost complex structure $J$ and a compatible Riemannian metric $h$, i.e. $h(J\cdot,J\cdot)=h(\cdot,\cdot)$, is an almost Hermitian manifold. There are considered also almost pseudo-Hermitian manifolds, i.e. the case when $h$ is a pseudo-Riemannian metric with the same compatibility (cf. \cite{Mats81,Mats87}). %
We recall that this manifold
equipped with a pseudo-Riemannian metric of neutral signature
satisfying the identity $h(J\cdot,J\cdot)=-h(\cdot,\cdot)$ is known as an almost complex
manifold with Norden metric (see \S1).
In the case when the almost complex structure $J$ is parallel with respect to
the Levi-Civita connection $\DDD$ of the metric $h$,
i.e. $\DDD J=0$, then the manifold is known as a K\"ahler-Norden
manifold or a holomorphic
complex Riemannian manifold. Then the almost complex
structure $J$ is integrable and the local components of the complex metric
in holomorphic coordinate system are holomorphic functions.

From another point of view, the almost hypercomplex structure $(J_1,J_2,\allowbreak{}J_3)$ and the metric $h$ generate two almost complex structures with Norden metrics (e.g., for $\al=2,3$) and one almost complex structure with Hermitian pseudo-Riemannian metric (e.g., for $\al=1$) because of \eqref{herm-nor}, i.e. an almost hypercomplex structure with Hermitian-Norden metrics (see \S9).



The manifolds with almost contact 3-structure and metrics of Hermi\-tian-Norden type can be considered as real hypersurfaces of an almost hypercomplex manifold with Hermitian-Norden metrics.


In case of cosymplectic manifolds with metrics of Hermitian-Norden type, the distribution $\HC$ is involutive. The
corresponding integral submanifold is a totally geodesic
submanifold which inherits a holomorphic hypercomplex Riemannian
structure and the manifold with almost contact 3-structure and metrics of Hermitian-Norden type is
locally a pseudo-Riemannian product of a holomorphic hypercomplex
Riemannian manifold with a 3-dimensional Lorentzian real space.

\vskip 0.2in \addtocounter{subsection}{1} \setcounter{subsubsection}{0}

\noindent  {\Large\bf \thesubsection. Curvature properties of manifolds with almost contact 3-structure and metrics of Hermitian-Norden type}

\vskip 0.15in


Let us recall that a tensor $L$ of type $(0,4)$ with the prop\-er\-ties \eqref{curv}
is called a \emph{curvature-like tensor}.
%
We say that a curvature-like tensor $L$ is a \emph{K\"ahler-like
tensor} on a manifold with almost contact 3-structure and metrics of Hermitian-Norden type when $L$ satisfies the properties: %
\begin{equation}\label{L-kel}%
L(x,y,z,w)=\ea L(x,y,\f_{\al} z,\f_{\al} w).
\end{equation} 
K\"ahler-like tensors on almost contact manifolds with B-metric are considered in \cite{ManIv40}.

Using \eqref{3str} and \eqref{HN-met}, we obtain that for a K\"ahler-like tensor $L$ the following properties are valid
\begin{equation}\label{L-kel2}%
\begin{array}{l}
L(x,y,z,w)=\ea L(x,\f_{\al} y,\f_{\al} z,w)=\ea L(\f_{\al} x,\f_{\al} y,z,w)\\[6pt]
L(\xi_{\al},y,z,w)=L(x,\xi_{\al},z,w)=L(x,y,\xi_{\al},w)\\[6pt]
\phantom{L(\xi_{\al},y,z,w)}
=L(x,y,z,\xi_{\al})=0.
\end{array}
\end{equation}
The latter properties show that if $L$ is a K\"ahler-like tensor on a manifold with almost contact 3-structure and metrics of Hermitian-Norden type then $L$ is a K\"ahler-like tensor on $(\HC,J_{\al}=\f_{\al}|_H,h=g|_H)$ which is a manifold with almost hypercomplex structure with Hermitian-Norden metrics. It is known from \cite{Man28} that every K\"ahler-like tensor vanishes on an almost hypercomplex manifold with Hermitian-Norden metrics. Therefore, it is valid the following
\begin{prop}\label{prop:L=0}
Every K\"ahler-like tensor vanishes on a manifold with almost contact 3-structure and metrics of Hermitian-Norden type.
\end{prop}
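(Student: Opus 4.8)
The plan is to exploit the orthogonal splitting of the tangent space at each point into the horizontal distribution and the vertical directions spanned by the three Reeb vector fields, reducing the vanishing of $L$ to the already-available vanishing theorem on the horizontal part. Concretely, at an arbitrary point $p\in\MM$ the adapted basis \eqref{f-baza} exhibits the decomposition $T_p\MM=\HC_p\oplus\Span(\xi_1,\xi_2,\xi_3)$, where $\HC=\HC_1\cap\HC_2\cap\HC_3$ is the $4n$-dimensional distribution carrying the almost hypercomplex structure $(J_1,J_2,J_3)$ with $J_\al=\f_\al|_{\HC}$ and the Hermitian-Norden metric $h=g|_{\HC}$. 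Since $L$ is multilinear, it is completely determined by its values on the basis vectors, so it suffices to treat separately the arguments lying in $\HC$ and those among $\xi_1,\xi_2,\xi_3$.

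First I would invoke the consequences \eqref{L-kel2} of the K\"ahler-like condition \eqref{L-kel}, which assert in particular that $L(\xi_\al,y,z,w)=L(x,\xi_\al,z,w)=L(x,y,\xi_\al,w)=L(x,y,z,\xi_\al)=0$. Thus $L$ annihilates any argument taken from the vertical span; by multilinearity only the purely horizontal components $L(X,Y,Z,W)$ with $X,Y,Z,W\in\HC$ can be nonzero. Next I would check that this horizontal restriction is exactly a K\"ahler-type tensor on $(\HC,J_\al,h)$ in the sense of \eqref{K-kel}: the curvature-like symmetries \eqref{curv} restrict verbatim to $\HC$, while \eqref{L-kel} together with the second line of \eqref{L-kel2} gives, upon replacing $\f_\al$ by its horizontal restriction $J_\al$, both relations $L(X,Y,Z,W)=\ea L(X,Y,J_\al Z,J_\al W)=\ea L(J_\al X,J_\al Y,Z,W)$ for each $\al=1,2,3$. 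This is precisely condition \eqref{K-kel}, so $L|_{\HC}$ is a K\"ahler-type tensor on an almost hypercomplex structure with Hermitian-Norden metrics. Applying \thmref{th-0} (the pointwise algebraic vanishing result of \cite{Man28}) fibrewise on $\HC_p$ then forces $L|_{\HC}=0$, and combining with the vanishing on vertical arguments yields $L=0$ on all of $T_p\MM$.

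The step I expect to require the most care is the verification that the horizontal restriction genuinely meets the hypotheses of \thmref{th-0}, since that theorem is stated for almost hypercomplex manifolds with Hermitian-Norden metrics rather than for an abstract distribution: I would note that the theorem is essentially algebraic, depending only on the quaternionic action and the Hermitian-Norden inner product on a single tangent fibre, so it applies directly to the vector space $(\HC_p,J_1,J_2,J_3,h_p)$. A secondary point to handle cleanly is the passage from ``$L$ vanishes on $\HC$ and on vertical arguments'' to ``$L$ vanishes everywhere'': this is the routine multilinear extension using $x=x^{\HC}+\sum_\al\eta_\al(x)\xi_\al$ and the antisymmetries, but it must be phrased so that every mixed horizontal-vertical term is killed by one of the identities in \eqref{L-kel2}. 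Once these two verifications are in place, the conclusion $L=0$ is immediate.
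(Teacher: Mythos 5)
Your argument is correct and follows essentially the same route as the paper: the identities \eqref{L-kel2} kill every component of $L$ with a vertical (Reeb) argument, the horizontal restriction is a K\"ahler-type tensor on $(\HC,J_\al=\f_\al|_{\HC},h=g|_{\HC})$ in the sense of \eqref{K-kel}, and \thmref{th-0} then forces it to vanish. Your extra remarks on the fibrewise, purely algebraic nature of \thmref{th-0} and on the multilinear reassembly are sound refinements of the same proof.
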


Let $R$ be the curvature tensor of the Levi-Civita connection
$\n$ generated by $g$.

According to \cite{GriManDim12}, every hyper-K\"ahler manifold with Hermitian-Norden metrics is flat.
Since $R$ is a K\"ahler-like tensor on every manifold with cosymplectic 3-structure with metrics of Hermitian-Norden type, i.e. $\n\f_\al$ vanishes, then applying \propref{prop:L=0} we obtain
\begin{prop}\label{prop:R=0}
Every manifold with cosymplectic 3-structure with metrics of Hermitian-Norden type is flat.
\end{prop}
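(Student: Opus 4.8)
The plan is to reduce the statement to the vanishing theorem for K\"ahler-like tensors already established as \propref{prop:L=0}. By definition, a cosymplectic 3-structure with metrics of Hermitian-Norden type is characterized by $F_\al=0$ for all $\al$, equivalently by the parallelism of all structure tensors, $\n\f_\al=\n\xi_\al=\n\eta_\al=\n g=\n\widetilde g_\al=0$. Thus the whole task is to verify that, under these parallelism conditions, the curvature tensor $R$ of the Levi-Civita connection $\n$ is a K\"ahler-like tensor in the sense of \eqref{L-kel}, i.e. $R(x,y,z,w)=\ea R(x,y,\f_\al z,\f_\al w)$ for each $\al$; once this is done, \propref{prop:L=0} forces $R=0$ and the manifold is flat.

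The key computation would proceed from the Ricci identity for the endomorphism $\f_\al$, which in operator form reads
\[
R(x,y)\f_\al z-\f_\al R(x,y)z=\left(\n_x\n_y\f_\al-\n_y\n_x\f_\al-\n_{[x,y]}\f_\al\right)z.
\]
Since $\n\f_\al=0$, the right-hand side vanishes and $R(x,y)$ commutes with $\f_\al$. First I would record the consequence of $\n\xi_\al=0$: it gives $R(x,y)\xi_\al=0$, hence by the skew-symmetry of $R$ in its last pair and by \eqref{eta-g} one gets $\eta_\al\bigl(R(x,y)z\bigr)=-\ea\,R(x,y,z,\xia)=\ea\,R(x,y,\xia,z)=0$. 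Then I would compute, using the compatibility identity \eqref{HN-met},
\[
R(x,y,\f_\al z,\f_\al w)=g\bigl(\f_\al R(x,y)z,\f_\al w\bigr)
=\ea R(x,y,z,w)+\eta_\al\bigl(R(x,y)z\bigr)\eta_\al(w),
\]
where the commutation of $R(x,y)$ with $\f_\al$ was used in the first step. The $\eta_\al$-term drops out by the preceding remark, and multiplying by $\ea$ (so $\ea^2=1$) yields precisely \eqref{L-kel} with $L=R$.

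I do not expect a genuine obstacle here: the argument is short and the only delicate point is the careful bookkeeping of the extra $\eta_\al(x)\eta_\al(y)$ contribution that distinguishes the Hermitian-Norden compatibility \eqref{HN-met} from the purely Hermitian or Norden one, and checking that this contribution is annihilated by the curvature because $R(x,y)$ kills $\xi_\al$. After establishing the K\"ahler-like property for every $\al\in\{1,2,3\}$, the conclusion is immediate: \propref{prop:L=0} states that every K\"ahler-like tensor vanishes on a manifold with almost contact 3-structure and metrics of Hermitian-Norden type, so $R=0$, completing the proof.
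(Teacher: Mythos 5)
Your proposal is correct and follows essentially the same route as the paper: the paper simply asserts that $\n\f_\al=0$ makes $R$ a K\"ahler-like tensor and then invokes \propref{prop:L=0}, which is exactly your reduction. Your explicit verification via the Ricci identity, the vanishing of $R(x,y)\xi_\al$, and the bookkeeping of the $\eta_\al\bigl(R(x,y)z\bigr)\eta_\al(w)$ term from \eqref{HN-met} correctly fills in the step the paper leaves implicit.
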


%

\vskip 0.2in \addtocounter{subsection}{1} \setcounter{subsubsection}{0}

\noindent  {\Large\bf \thesubsection. Examples of manifolds with almost contact 3-structure and metrics of Hermitian-Norden type}



\vskip 0.2in \addtocounter{subsubsection}{1}

\noindent  {\Large\bf{\emph{\thesubsubsection. A real vector space with contact 3-structure with metrics of Hermitian-Norden type}}}

\vskip 0.15in

Let $V$ be a real $(4n+3)$-dimensional vector space and a (local) basis of $V$ is denoted by $\left\{\ddx;\ddy;\ddu;\ddv;\dda,\ddb,\ddc\right\}$, $(i=1,2,\dots,n)$ or

\[
   \begin{split}
\left\{
\dfrac{\pd}{\pd x^1},\dots,\dfrac{\pd}{\pd x^n};\right.&\dfrac{\pd}{\pd y^1},\dots,\dfrac{\pd}{\pd y^n};\dfrac{\pd}{\pd
u^1},\dots,\dfrac{\pd}{\pd u^n};\dfrac{\pd}{\pd v^1},\dots,\dfrac{\pd}{\pd v^n};\\[6pt]
&\left.\dfrac{\pd}{\pd a}, \dfrac{\pd}{\pd b}, \dfrac{\pd}{\pd c}\right\}.
   \end{split}
\]

Any vector $z$ of $V$ can be represented in
the mentioned basis as follows
   \begin{equation}\label{x}
   \begin{array}{c}
z=x^i\ddx+y^i\ddy+u^i\ddu+v^i\ddv+a\dda+b\ddb+c\ddc.
   \end{array}
   \end{equation}

A standard contact 3-structure in $V$ is defined as follows:
\begin{subequations}\label{3-str}
\begin{equation}
\begin{array}{lll}
\f_1\ddx=\ddy,\quad &\f_2\ddx=\ddu,\quad &\f_3\ddx=-\ddv,
\\[6pt]
\f_1\ddy=-\ddx,\quad &\f_2\ddy=\ddv,\quad &\f_3\ddy=\ddu,
\\[6pt]
\f_1\ddu=-\ddv,\quad &\f_2\ddu=-\ddx,\quad &\f_3\ddu=-\ddy,
\\[6pt]
\f_1\ddv=\ddu,\quad &\f_2\ddv=-\ddy,\quad &\f_3\ddv=\ddx
\\[6pt]
\end{array}
\end{equation}
\begin{equation}
\begin{array}{lll}
\f_1\dda=0,\quad &\f_2\dda=-\ddc,\quad &\f_3\dda=\ddb,
\\[6pt]
\f_1\ddb=\ddc,\quad &\f_2\ddb=0,\quad &\f_3\ddb=-\dda,
\\[6pt]
\f_1\ddc=-\ddb,\quad &\f_2\ddc=\dda,\quad &\f_3\ddc=0,
\\[6pt]
\xi_1=\dda,\quad &\xi_2=\ddb,\quad &\xi_3=\ddc,
\\[6pt]
\eta_1=\D a,\quad &\eta_2=\D b,\quad &\eta_3=\D c.
\end{array}
\end{equation}
\end{subequations}
We check immediately that the properties \eqref{3str} hold.

If $z \in V$, i.e. $z(x^i;y^i;u^i;v^i;a,b,c)$ then according to
\eqref{3-str} we have
\begin{equation}\label{x-coord}
\begin{array}{ll}
\f_1z(-y^i;x^i;v^i;-u^i;0,-c,b),\quad & \eta_1(z)=a,\\[6pt]
\f_2z(-u^i;-v^i;x^i;y^i;c,0,-a),\quad & \eta_2(z)=b,\\[6pt]
\f_3z(v^i;-u^i;y^i;-x^i;-b,a,0),\quad & \eta_3(z)=c.
\end{array}
\end{equation}

\begin{dfn}\label{d1}
The structure $(\f_{\al},\xi_{\al},\eta_{\al})$ on
$V$ is called \emph{a contact 3-structure} on
$V$.
\end{dfn}

Let us introduce a pseudo-Euclidian metric $g$ of
signature $(2n+2,2n+1)$ as follows
\begin{equation}\label{metric}
g(z,z')=\sum_{i=1}^n \left(x^ix'^i+y^i y'^i-u^iu'^i-v^iv'^i\right)-aa'+bb'+c c',
\end{equation}
where $z(x^i;y^i;u^i;v^i;a,b,c)$, $z'(x'^i;y'^i;u'^i;v'^i;a',b',c') \in V$, $(i=1,2,\allowbreak{}\dots,\allowbreak{}n)$. This metric satisfies the following properties
\begin{equation}\label{metr-sv}
\begin{array}{c}
g(\f_{\al}z,\f_{\al}z')=\ea g(z,z') +\eta_{\al}(z)\eta_{\al}(z'),
\end{array}
\end{equation}
which is actually \eqref{HN-met}.

We check immediately that $\n\f_{\al}$ vanishes for $\n$, the Levi-Civita connection of $g$. Therefore we get the following
\begin{prop}
The space $(V,\f_{\al},\xi_{\al},\eta_{\al},g)$ is 
a manifold with cosymplectic 3-structure and metrics of Hermitian-Norden type.
\end{prop}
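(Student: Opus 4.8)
The plan is to establish the three defining requirements of a cosymplectic $3$-structure with metrics of Hermitian-Norden type in turn: first that $(\f_\al,\xi_\al,\eta_\al)$ is an almost contact $3$-structure, second that $g$ is a metric of Hermitian-Norden type compatible with it, and third (the only substantial point) that the Levi-Civita connection $\n$ of $g$ parallelises all the structure tensors, equivalently that $F_\al=0$.

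For the first two requirements it suffices to evaluate on the coordinate frame. I would verify the identities \eqref{3str} by acting with the endomorphisms defined in \eqref{3-str} on each basis vector; for instance, reading off $\f_1\f_2\ddx=\f_1\ddu=-\ddv=\f_3\ddx$ and doing likewise on the remaining generators confirms $\f_1\f_2=-\f_2\f_1=\f_3$ on the horizontal part, while the action on $\dda,\ddb,\ddc$ together with the given $\xi_\al$ and $\eta_\al$ yields $\f_\al\xi_\bt=\epsilon_{\al\bt\gm}\xi_\gm$, $\eta_\al\circ\f_\bt=\epsilon_{\al\bt\gm}\eta_\gm$ and $\eta_\al(\xi_\bt)=\delta_{\al\bt}$; the remaining relation $\f_\al\f_\bt=-\delta_{\al\bt}I+\xi_\al\otimes\eta_\bt+\epsilon_{\al\bt\gm}\f_\gm$ then follows case by case. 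Next, using the coordinate description \eqref{x-coord} of $\f_\al z$ and the explicit metric \eqref{metric}, a direct substitution gives $g(\f_\al z,\f_\al z')=\ea g(z,z')+\eta_\al(z)\eta_\al(z')$, which is exactly \eqref{metr-sv}, that is, the Hermitian-Norden compatibility \eqref{HN-met}.

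The heart of the argument is the third requirement, and here the key observation is that the coordinates on $V$ are global affine coordinates while the components of $g$ in \eqref{metric} are constant. Consequently $g$ is flat: in this holonomic frame all Christoffel symbols of $\n$ vanish, so the coordinate vector fields $\ddx,\dots,\ddc$ are $\n$-parallel. Since the endomorphisms $\f_\al$, the Reeb fields $\xi_\al$ and the $1$-forms $\eta_\al$ all have constant components relative to this parallel frame (their action in \eqref{3-str} permutes basis vectors up to sign), they are themselves parallel, i.e. $\n\f_\al=\n\xi_\al=\n\eta_\al=0$. The equality $\n g=0$ holds because $\n$ is the Levi-Civita connection, and then $\n\widetilde g_\al=0$ follows from \eqref{HN-met2} together with the parallelisms just obtained.

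Finally, from $\n\f_\al=0$ and the definition \eqref{F} we obtain $F_\al(x,y,z)=g\bigl((\n_x\f_\al)y,z\bigr)=0$ for every $\al$, which is precisely the cosymplectic condition, completing the proof. I do not anticipate any genuine obstacle: the whole statement reduces to the flatness of a constant-coefficient pseudo-Euclidean metric on a vector space. The only subtlety worth flagging is that one must argue in a holonomic (coordinate) frame, so that constancy of the metric components really does force the Christoffel symbols—and hence the covariant derivatives of the constant structure tensors—to vanish.
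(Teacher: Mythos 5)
Your proposal is correct and follows the same route as the paper: the text surrounding the proposition simply asserts that the relations \eqref{3str} and \eqref{metr-sv} "are checked immediately" and that $\n\f_{\al}$ vanishes, and your argument supplies exactly the justification intended — constancy of the metric components in the global affine frame forces the Christoffel symbols to vanish, so the constant-coefficient structure tensors are all $\n$-parallel and $F_\al=0$. Your flagged subtlety about working in a holonomic frame is the right one, and nothing further is needed.
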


\vskip 0.2in \addtocounter{subsubsection}{1}

\noindent  {\Large\bf{\emph{\thesubsubsection. A time-like sphere with almost contact 3-structure and metrics of Hermitian-Norden type}}}

\vskip 0.15in

It is known that any real hypersurface of an almost hypercomplex manifold carries in a natural way an almost contact 3-structure.

In a similar way it can be shown that on every real nonisotropic hypersurface of an almost hypercomplex manifold with Hermitian-Norden metrics there arises an almost contact 3-structure with metrics of Hermitian-Norden type.

Let us consider
\[
\R^{4n+4}=\bigl\{\left(x^i;y^i;u^i;v^i\right)\vert\ x^i,y^i,u^i,v^i\in\R,\ i\in\{1,2,\dots,n+1\}\bigr\},
\]
a vector space of dimension $4n+4$ with an almost hypercomplex structure $(J_{1},J_{2},J_{3})$ determined as follows \cite{GriManDim12}
\[
J_1z(-y^i;x^i;v^i;-u^i),\quad
J_2z(-u^i;-v^i;x^i;y^i),\quad
J_3z(v^i;-u^i;y^i;-x^i)
\]
for an arbitrary vector $z(x^i;y^i;u^i;v^i)$.
This space is equipped with a pseudo-Eu\-clidean metric of neutral signature, i.e. $(2n+2,2n+2)$, by
\[
g(z,z')=\sum_{i=1}^{n+1} \bigl(x^ix'^i+y^iy'^i-u^iu'^i-v^iy'^i\bigr)
\]
for arbitrary $z(x^i;y^i;u^i;v^i), z'(x'^i;y'^i;u'^i;v'^i)\in\R^{4n+4}$.

Identifying an arbitrary point $p\in\R^{4n+4}$ with its position vector $z$, we study the following hypersurface of
$\R^{4n+4}$.

Let $\SSS: g(z,z)=-1$ be the unit time-like sphere of $g$ in $\R^{4n+4}$. Then $z$ coincides with the unit normal $U$ to the tangent space $T_p\SSS$ at $p\in\SSS$.

We determine the Reeb vector fields by the equalities
\[
\xi_{\al}=\lm_{\al}\, U+\mu_{\al}\, J_{\al}U,
\]
such that $g(U,\xi_{\al})=0$ and $g(\xi_{\al},\xi_{\al})=-\ea$ are valid.

We substitute $g(U,J_{\al}U)=\tan\psi_{\al}$ for $\psi_{\al}\in\left(-\frac{\pi}{2},\frac{\pi}{2}\right)$. Then we obtain
\begin{equation*}
\xi_{\al}=\sin\psi_{\al}\, U+\cos\psi_{\al}\, J_{\al}U.
\end{equation*}
Since $g(U,J_{1}U)=0$ then $\psi_{1}=0$ and therefore $\xi_{1}=J_{1}U$.
Because of $g(U,\xi_{\al})=0$ we have that $\xi_{\al}$ are in $T_p\SSS$.
The conditions $g(\xi_{\al},\xi_{\bt})=0$ for $\al\neq\bt$ are equivalent to $\psi_{2}=\psi_{3}=0$.
Therefore we obtain the following
equality for all  $\al$
\begin{equation}\label{xi-exa}
\xi_{\al}=J_{\al}U.
\end{equation}
Using the latter equality and $J_{\al}J_{\al}U=-U$, we obtain that $J_{\al}\xi_{\al}=-U$.

We define the structure endomorphisms $\f_{\al}$ and the contact 1-forms $\eta_{\al}$ in $T_p\SSS$ by the following orthonormal decomposition of $J_{\al}x$ for arbitrary $x\in T_p\SSS$
\begin{equation}\label{fi-eta-exa}
  J_{\al}x=\f_{\al}x-\eta_{\al}(x)\, U,
\end{equation}
i.e. $\f_{\al}x$ is the tangent component of $J_{\al}x$ and $-\eta_{\al}(x)U$ is the corresponding normal component.
By direct computation \eqref{fi-eta-exa} implies \eqref{3str}. Then, using \eqref{xi-exa}, we obtain \eqref{HN-met} and \eqref{eta-g}.
Thus, we equip the unit time-like sphere $\SSS$ in $\R^{4n+4}$ with an almost contact 3-structure with metrics of Hermitian-Norden type $(\f_{\al},\xi_{\al},\eta_{\al},g)$.

Let $\overline\n$ and $\n$ be the Levi-Civita connections of the metric $g$ in $\R^{4n+4}$ and $\SSS$, respectively. Since $\overline\n$ is flat, the formulae of Gauss and Weingarten have the form
\begin{equation}\label{GW}
\overline\n_xy=\n_xy+g(x,y)U,\qquad
\overline\n_x U=x.
\end{equation}
Therefore one can obtain by \eqref{xi-exa}, \eqref{fi-eta-exa} and \eqref{GW} that
\[
\n_x\xi_{\al}=\f_{\al}x,
\qquad
F_{\al}(x,y,\xi_{\al})=-g(\f_{\al}x,\f_{\al}y).
\]
Then, for the Lee forms we have
\[
\ta_{\al}(\xi_{\al})=4n+2,\qquad \ta^*_{\al}(\xi_{\al})=0,\qquad \om_\al=0.
\]
Finally, we get
\begin{equation}\label{F-S}
F_{\al}(x,y,z)=-
g(\f_{\al}x,\f_{\al}y)\eta_{\al}(z)
+\ea g(\f_{\al}x,\f_{\al}z)\eta_{\al}(y).
\end{equation}

In the case $\al=1$, the equality \eqref{F-S} takes the form
\begin{equation}\label{F1-S}
F_{1}(x,y,z)=
g(x,y)\eta_{1}(z)
-g(x,z)\eta_{1}(y).
\end{equation}
i.e. by virtue of \eqref{HN-met}, \eqref{eta-g} and \eqref{F}, we have
\[
(\n_x \f_{1})y = g(x,y)\xi_{1}-g(\xi_{1},y)x.
\]
According to \cite[Theorem 6.3]{Blair}, the latter equality is a necessary and sufficient condition for a Sasakian manifold.

Similarly, in the case $\al=2$ or $\al=3$,
from \eqref{F-S}, according to \thmref{ssss}, we get a necessary and sufficient condition for a Sasaki-like almost contact complex Riemannian manifold.

We recall that a Sasakian manifold (respectively, a Sasaki-like almost contact complex Riemannian manifold) is defined as an almost contact metric manifold (respectively, an almost contact B-metric manifold) which complex cone is a K\"ahler manifold  (respectively, a K\"ahler-Norden manifold) (cf. \cite{Blair} and \S8).

Thus, we obtain the following
\begin{prop}
The manifold $(\SSS,\f_{\al},\xi_{\al},\eta_{\al},g)$ is$:$
  \begin{enumerate}
    \item a Sasakian manifold for $\al=1;$
    \item a Sasaki-like almost contact complex Riemannian manifold for $\al=2,3.$
\end{enumerate}
\end{prop}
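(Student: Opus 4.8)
The plan is to read off both assertions directly from the fundamental tensor of $\SSS$ that was just computed, namely \eqref{F-S}, and to compare it with the known characterisations of the two target classes. Since all the structure tensors on the hypersurface $\SSS$ have already been expressed through the ambient flat structure by means of the decomposition \eqref{fi-eta-exa}, the relation $\xi_\al=J_\al U$ and the Gauss--Weingarten equations \eqref{GW}, no further geometric input is required: the whole proof is an algebraic comparison of \eqref{F-S} with the defining conditions of a Sasakian, respectively a Sasaki-like, structure, using only \eqref{3str}, \eqref{HN-met} and \eqref{eta-g}.

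For part (i) I would specialise \eqref{F-S} to $\al=1$. Since $\ep_1=1$ and $g(\f_1\cdot,\f_1\cdot)=g(\cdot,\cdot)+\eta_1\otimes\eta_1$, the two terms on the right-hand side of \eqref{F-S} combine into the form \eqref{F1-S}, that is $F_1(x,y,z)=g(x,y)\eta_1(z)-g(x,z)\eta_1(y)$. Raising the third slot with $g$ and substituting \eqref{eta-g} for $\al=1$ rewrites \eqref{F1-S} as $\left(\n_x\f_1\right)y=g(x,y)\xi_1-g(\xi_1,y)x$. This is precisely the covariant identity isolated in \cite[Theorem~6.3]{Blair} as a necessary and sufficient condition for a (pseudo-Riemannian) Sasakian manifold, and so part (i) follows at once.

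For part (ii), i.e. $\al=2,3$, I would instead verify that \eqref{F-S}, now taken with $\ea=-1$, reproduces the Sasaki-like conditions \eqref{sasaki} and \eqref{sasaki0}, equivalently the covariant identity \eqref{defsl} of \thmref{ssss}. Decomposing every argument into its horizontal part in $\ker\eta_\al$ and its $\xi_\al$-component and using $\f_\al\xi_\al=0$, $\eta_\al(\xi_\al)=1$, the three vanishing requirements $F_\al(X,Y,Z)=F_\al(\xi_\al,Y,Z)=F_\al(\xi_\al,\xi_\al,Z)=0$ are immediate, while the single non-trivial evaluation reduces to $F_\al(X,Y,\xi_\al)=-g(\f_\al X,\f_\al Y)$; the B-metric compatibility \eqref{HN-met} then turns the right-hand side into a multiple of $g(X,Y)$ on the contact distribution, which is exactly the form demanded by \eqref{sasaki0}. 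Invoking \thmref{ssss} identifies $(\SSS,\f_\al,\xi_\al,\eta_\al,g)$ as Sasaki-like for $\al=2,3$.

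The step I expect to be the real obstacle is the sign bookkeeping rather than any conceptual difficulty. The coefficients $\ea$, together with the time-like normalisation $g(U,U)=-1$ that governs the normal projection behind \eqref{fi-eta-exa} and hence the precise form of \eqref{eta-g} and of the Gauss formula \eqref{GW}, must all be tracked consistently; a single sign slip would, for example, flip $F_\al(X,Y,\xi_\al)$ and destroy the match with \eqref{sasaki0}. Once the conventions $(\ep_1,\ep_2,\ep_3)=(1,-1,-1)$ and $\eta_\al=-\ea\,\xi_\al\,\lrcorner\,g$ are fixed as in the construction of $\SSS$, the verification of both parts becomes purely mechanical.
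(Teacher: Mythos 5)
Your proposal follows the paper's proof essentially verbatim: both establish part (i) by collapsing \eqref{F-S} to \eqref{F1-S}, rewriting it as $(\n_x\f_1)y=g(x,y)\xi_1-g(\xi_1,y)x$ and citing Blair's Theorem~6.3, and part (ii) by matching \eqref{F-S} with $\ea=-1$ against the Sasaki-like characterisation of \thmref{ssss}. The sign bookkeeping you single out as the delicate point is indeed the only nontrivial issue, and the paper's own proof treats it no more explicitly than you do.
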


In view of \eqref{cl-A}, \eqref{cl-B}, \eqref{F-S} and \eqref{F1-S}, we obtain that
$(\SSS,\f_{\al},\xi_{\al},\allowbreak{}\eta_{\al},g)$  belongs to
the class $\PP_2$ of almost contact metric manifolds for $\al=1$
and to
the class $\F_4$ of almost contact B-metric manifolds for $\al=2,3$.

In \cite{GaMiGr}, it is considered a unit time-like sphere with almost contact B-met\-ric structure and it is proved that it belongs to the class $\F_4\oplus\F_5$, the analogue of trans-Sasakian manifold of type $(\al,\bt)$.

\vspace{20pt}

\begin{center}
$\divideontimes\divideontimes\divideontimes$
\end{center} 

\newpage

\addtocounter{section}{1}\setcounter{subsection}{0}\setcounter{subsubsection}{0}

\setcounter{thm}{0}\setcounter{dfn}{0}\setcounter{equation}{0}

\label{par:4n+3assNij}

 \Large{

\
\\[6pt]
\bigskip

\
\\[6pt]
\bigskip

\lhead{\emph{Chapter II $|$ \S\thesection. Associated Nijenhuis tensors on manifolds with almost contact 3-structure \ldots
}}


\noindent
\begin{tabular}{r"l}
\hspace{-6pt}{\Huge\bf \S\thesection.}  & {\Huge\bf Associated Nijenhuis tensors on} \\[12pt]
                             & {\Huge\bf manifolds with almost contact} \\[12pt]
                             & {\Huge\bf 3-structure and metrics of }\\[12pt]
                             & {\Huge\bf Hermitian-Norden type}
\end{tabular}

\vskip 1cm

\begin{quote}
\begin{large}
In the present section, it is considered a differentiable manifold equipped with a pseudo-Riemannian metric and an almost contact 3-struc\-ture so that one almost contact metric structure and two almost contact B-metric structures are generated.
There are introduced associated Nijenhuis tensors for the studied structures.
The vanishing of the Nijenhuis tensors and their associated tensors is considered.
It is given a geometric interpretation of the vanishing of associated Nijenhuis tensors for the studied structures as a necessary and sufficient condition for existence of
affine connections with totally skew-symmetric torsions preserving the
structure. An example of a 7-dimensional manifold with connections of the considered
type is given.

The main results of this section are published in \cite{Man54} and \cite{Man55}.
\end{large}\end{quote}

%

\vskip 0.15in


%
%
%

In \S1, \S4 and \S12 were defined and then studied associated Nijenhuis tensors on almost complex manifold with Norden metric, almost contact manifold with B-metric and almost hypercomplex manifold with Hermitian-Norden metrics, respectively.
The goal of the present section is to introduce an appropriate associated Nijenhuis tensor on a manifold with almost contact 3-structure and metrics of Hermitian-Norden type which will be used in studying of the considered manifold.



As it is known, for each $\al\in\{1,2,3\}$ the Nijenhuis tensor $N_{\al}$ of an almost contact manifold $(\MM,\f_\al,\allowbreak{}\xi_\al,\eta_\al)$ is defined as in \eqref{NN}.
Moreover, if two of almost contact structures in an almost contact 3-structure are normal, then the third one is also normal \cite{Kuo,YaAk,YaIsKo}.

\newpage


\vskip 0.2in \addtocounter{subsection}{1} \setcounter{subsubsection}{0}

\noindent  {\Large\bf \thesubsection. Associated Nijenhuis tensors of an almost contact 3-struc\-ture with a pseudo-Riemannian metric }

\vskip 0.15in


Let us consider the symmetric braces $\{\cdot,\cdot \}$ on $\X(\MM)$ introduced by \eqref{braces} for a pseudo-Riemannian metric $g$, as well as the tensors $\{\f_1 ,\f_1\}$ and $\LLL_{\xi_1}g$ determined by \eqref{[f,f]} and \eqref{Lie-der}, respectively.
\begin{dfn}
The symmetric (1,2)-tensor $\widehat N_1$, defined by
\begin{equation}\label{hatN1}
\widehat N_1=\{\f_1,\f_1\}-\xi_1\otimes\LLL_{\xi_1}g,
\end{equation}
is called the \emph{associated Nijenhuis tensor of the almost contact metric structure $(\f_1,\xi_1,\allowbreak{}\eta_1,g)$}.
\end{dfn}

The corresponding tensors of type $(0,3)$ for $N_1$ and $\widehat N_1$ are given
by $N_1(x,y,z)=g(N_1(x,y),z)$ and $\widehat N_1(x,y,z)=g(\widehat
N_1(x,y),z)$, respectively.

By direct consequences of the definitions, we get that
$N_1$, $\widehat N_1$ and $\LLL_{\xi_1}g$ are expressed in
terms of $F_1$ as follows:
\begin{gather}
\begin{array}{l}
N_1(x,y,z)=
F_1(\f_1 x,y,z)-F_1(\f_1 y,x,z)\label{N1=F1}\\[6pt]
\phantom{N_1(x,y,z)}
+ F_1(x,y,\f_1 z)- F_1(y,x,\f_1 z)
\\[6pt]
\phantom{N_1(x,y,z)}
+F_1(x,\f_1 y,\xi_1)\,\eta_1(z)-F_1(y,\f_1x,\xi_1)\,\eta_1(z),
\end{array}\\[6pt]
\begin{array}{l}
\widehat N_1(x,y,z)=
F_1(\f_1 x,y,z)+F_1(\f_1 y,x,z)\\[6pt]
\phantom{\widehat N_1(x,y,z)}
+ F_1(x,y,\f_1 z)+ F_1(y,x,\f_1z)\\[6pt]
\phantom{\widehat N_1(x,y,z)}
+F_1(x,\f_1 y,\xi_1)\,\eta_1(z)+F_1(y,\f_1x,\xi_1)\,\eta_1(z),\label{N1hat=F1}
\end{array}\\[6pt]
\left(\LLL_{\xi_1}g\right)(x,y)=F_1(x,\f_1y,\xi_1)+F_1(y,\f_1x,\xi_1).
\label{Lxi1g=F1}
\end{gather}


%
According to \eqref{S}, the as\-so\-ci\-ated Nijenhuis tensor $\widehat N_2$ for the almost contact B-metric structure $(\f_2,\xi_2,\allowbreak{}\eta_2,g)$ is defined by
\begin{equation}\label{hatN2}
\widehat N_2=\{\f_2,\f_2\}+\xi_2\otimes\LLL_{\xi_2}g.
\end{equation}

\begin{prop}\label{prop:hatN=0=>Kill}
  For the almost contact B-metric manifold $(\MM,\f_2,\allowbreak{}\xi_2,\allowbreak{}\eta_2,g)$, the vanishing of $\widehat N_2$ implies that $\xi_2$ is Killing.
\end{prop}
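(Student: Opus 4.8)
The plan is to show that $\widehat N_2 = 0$ forces the vanishing of $\LLL_{\xi_2}g$, which is precisely the Killing condition for $\xi_2$. The key observation is that the associated Nijenhuis tensor defined in \eqref{hatN2} contains $\xi_2\otimes\LLL_{\xi_2}g$ as a summand, so I would try to isolate this term by an appropriate contraction with the Reeb vector field. First I would write the $(0,3)$-version of \eqref{hatN2} explicitly, namely
\[
\widehat N_2(x,y,z)=\{\f_2,\f_2\}(x,y,z)+\eta_2(z)\,\left(\LLL_{\xi_2}g\right)(x,y),
\]
using $g(\xi_2,z)=\eta_2(z)$ from \eqref{B-met}.

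Next I would set $z=\xi_2$ and use $\eta_2(\xi_2)=1$ from \eqref{3str}. This gives
\[
\widehat N_2(x,y,\xi_2)=\{\f_2,\f_2\}(x,y,\xi_2)+\left(\LLL_{\xi_2}g\right)(x,y).
\]
Assuming $\widehat N_2=0$, the left-hand side vanishes, so I would obtain
\[
\left(\LLL_{\xi_2}g\right)(x,y)=-\{\f_2,\f_2\}(x,y,\xi_2).
\]
The main work then reduces to proving that $\{\f_2,\f_2\}(x,y,\xi_2)=0$. For this I would express $\{\f_2,\f_2\}$ in terms of the fundamental tensor $F_2$, by analogy with the expression \eqref{enhat} established earlier for $\widehat N$ in the single-structure case (recall from \S4 that $\widehat N_2=\{\f_2,\f_2\}+\xi_2\otimes\LLL_{\xi_2}g$, so the component along $\HH$ is governed by $F_2$). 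Then I would substitute $z=\xi_2$ into that $F_2$-expression and use the structural properties \eqref{F2-prop} together with $\f_2\xi_2=o$ from \eqref{str1} to show that all surviving terms cancel.

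The hard part will be the bookkeeping in verifying $\{\f_2,\f_2\}(x,y,\xi_2)=0$: this is where the algebraic identities \eqref{B-met} and \eqref{F2-prop} must be combined carefully, since the terms $F_2(\f_2 x,y,z)$ and $F_2(x,y,\f_2 z)$ behave differently when $z=\xi_2$ (one collapses via $\f_2\xi_2=o$ while the other involves $F_2(x,y,\xi_2)$ directly). Once this cancellation is secured, the chain $\widehat N_2=0 \Rightarrow \LLL_{\xi_2}g=0$ is immediate, and $\LLL_{\xi_2}g=0$ is by definition the statement that $\xi_2$ is a Killing vector field with respect to $g$, completing the proof.
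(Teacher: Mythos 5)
Your opening moves are fine: writing the $(0,3)$-form of \eqref{hatN2} and setting $z=\xi_2$ does give, under the hypothesis $\widehat N_2=0$, the relation $\left(\LLL_{\xi_2}g\right)(x,y)=-\{\f_2,\f_2\}(x,y,\xi_2)$. The gap is the final step. Computing $\{\f_2,\f_2\}(x,y,\xi_2)$ from \eqref{enhat} with $\f_2\xi_2=o$, the terms $F_2(x,y,\f_2\xi_2)$ and $F_2(y,x,\f_2\xi_2)$ drop out and the $\eta_2(\xi_2)$-terms exactly reproduce $\LLL_{\xi_2}g$, leaving
$\{\f_2,\f_2\}(x,y,\xi_2)=F_2(\f_2 x,y,\xi_2)+F_2(\f_2 y,x,\xi_2)$.
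This does \emph{not} cancel via \eqref{F-prop} and the structure identities: on an $\F_4$-manifold it equals $\frac{1}{n}\ta(\xi)g(\f_2 x,y)$ and on an $\F_5$-manifold $-\frac{1}{n}\ta^*(\xi)g(\f_2 x,\f_2 y)$, both generically nonzero — consistent with \eqref{hatN-1-11}, where $\widehat N$ is nonzero on exactly those classes because $\xi$ fails to be Killing there. So the "all surviving terms cancel" step is false as an unconditional identity. And if you instead try to prove $\{\f_2,\f_2\}(x,y,\xi_2)=0$ \emph{using} the hypothesis, you are going in a circle: under $\widehat N_2=0$ that quantity equals $-\left(\LLL_{\xi_2}g\right)(x,y)$, so the "reduction" is literally equivalent to the conclusion you are trying to reach.

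What is missing is that a single contraction of the hypothesis (third slot equal to $\xi_2$, first two free) does not carry enough information; you must also exploit the vanishing of $\widehat N_2$ at argument combinations with $\xi_2$ in the \emph{first} slot and with $\f_2$-twisted entries. That is exactly what the paper's proof does: starting from \eqref{enhat}, \eqref{NhatN-prop2}, \eqref{Fxieta} and \eqref{Lie-der} it derives the identity
\begin{equation*}
\begin{split}
(\LLL_{\xi_2}g)(x,y)=-\frac12\bigl\{&\widehat N_2(\f_2 x,\f_2 y,\xi_2)+\widehat N_2(\xi_2,\f_2 x,\f_2 y)+\widehat N_2(\xi_2,\f_2 y,\f_2 x)\\[6pt]
&+\eta_2(x)\widehat N_2(\xi_2,\xi_2,y)+\eta_2(y)\widehat N_2(\xi_2,\xi_2,x)\bigr\},
\end{split}
\end{equation*}
which expresses $\LLL_{\xi_2}g$ entirely in terms of values of $\widehat N_2$, so that $\widehat N_2=0$ kills it outright. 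To repair your argument you would need to establish an identity of this kind (or an equivalent one) rather than the false pointwise cancellation of $F_2(\f_2 x,y,\xi_2)+F_2(\f_2 y,x,\xi_2)$.
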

\begin{proof}
The formula for $F_2$ in terms of $N_2$ and $\widehat N_2$ is known from \eqref{nabf} of \thmref{thm:FNhatN}, 
whereas the expression of $\widehat N_2$ by $F_2$ follows from \eqref{enhat}.
By these relations,  \eqref{NhatN-prop2}, \eqref{Fetaxia} and \eqref{Lie-der}, we obtain 

\begin{equation*}\label{Lxig=hatN}
\begin{split}
(\LLL_{\xi_2}g)(x,y)&=-\frac12\bigl\{\widehat N_2(\f_2 x,\f_2 y,\xi_2)+\widehat N_2(\xi_2,\f_2 x,\f_2 y)\\[6pt]
&\phantom{=-\frac12\bigl\{}
+\widehat N_2(\xi_2,\f_2 y,\f_2 x)+\eta_2(x)\widehat N_2(\xi_2,\xi_2,y)\\[6pt]
&\phantom{=-\frac12\bigl\{ }
+\eta_2(y)\widehat N_2(\xi_2,\xi_2,x)\bigr\},
\end{split}
\end{equation*}
which yields the statement.
\end{proof}

Let us remark that a similar statement of \propref{prop:hatN=0=>Kill} for an almost contact metric manifold is not true whereas the corresponding proposition for the almost contact B-metric structure $(\f_3,\xi_3,\eta_3,g)$
and $\widehat N_3$ defined as in \eqref{hatN2} holds.

Let $\MM$, $\dim \MM=4n+3$, be equipped with an almost contact 3-structure $(\f_{\al},\xi_{\al},\allowbreak{}\eta_{\al})$ 
and then we consider the product $\MM\times\R$. 
Let $X$ be a vector field on $\MM\times\R$ which is presented by a pair $\left(  x, a\ddt\right)$, where $x$ is a tangent vector field on $\MM$, $t$ is the coordinate on $\R$ and $a$ is a differentiable function on $\MM\times\R$ \cite[Sect. 6.1]{Blair}.
The almost complex structures $J_{\al}$ 
are defined on the manifold $\MM\times\R$ by
\begin{equation}\label{JaX}
\begin{array}{c}
J_{\al}X=J_{\al}\left(x, a\ddt\right)=\left(
  \f_{\al}x-a\xi_{\al},
  \eta_{\al}(x)\ddt
\right).
\end{array}
\end{equation}
In such a way, an almost hypercomplex structure on $\MM\times\R$ is defined in \cite{YaIsKo} when $\MM$ has an almost contact 3-structure.

Moreover, we equip $\MM\times\R$ with the product metric $h=g-\D t^2$. By virtue of \eqref{JaX}, \eqref{HN-met} and its consequence $g(\xia,\xia)=-\ea$, we obtain
\[
h\bigl(\Ja X,\Ja Y\bigr)=\ea h\bigl(X,Y\bigr)
\]
for arbitrary
\[
X=\left(x,a\ddt\right),\; Y=\left(y,b\ddt\right)\in\X(\MM\times\R),
\]
i.e. the manifold $\MM\times\R$ admits $(\Ja,h)$, an almost hypercomplex structure with Hermitian-Norden metrics. 

We introduce the braces $\{X,Y\}$ on $\MM\times\R$ defined by
\begin{equation}\label{sym-skobi}
\begin{array}{c}
\{X,Y\}=\left(\{x,y\},(x(b)+y(a))\ddt\right),
\end{array}
\end{equation}
where $\{x,y\}$ are given in \eqref{braces}. Obviously, the braces are symmetric, i.e. $\{X,Y\}=\{Y,X\}$.

It is known from \eqref{N-JK} (see also \cite{KoNo-1}), the Nijenhuis tensor of two endomorphisms $J_{\al}$ and $J_{\bt}$ has the following form:
\[
\begin{split}
2[J_{\al},J_{\bt}](X,Y)&=[J_{\al}X,J_{\bt}Y]-J_{\al}[J_{\bt}X,Y]-J_{\al}[X,J_{\bt}Y]
\\[6pt]
&
+[J_{\bt}X,J_{\al}Y]-J_{\bt}[J_{\al}X,Y]-J_{\bt}[X,J_{\al}Y]\\[6pt]
&
+(J_{\al}J_{\bt}+J_{\bt}J_{\al})[X,Y].
\end{split}
\]
Then, bearing in mind \eqref{NJ-arbitrary}, the Nijenhuis tensor of an almost complex structure $J_{\al}\equiv J_{\bt}$ is presented by
\[
[J_{\al},J_{\al}](X,Y)=[J_{\al}X,J_{\al}Y]-J_{\al}[J_{\al}X,Y]
-J_{\al}[X,J_{\al}Y]-[X,Y].
\]

Analogously of the last two equalities, using the braces \eqref{sym-skobi} instead of the Lie brackets, we define consequently the associated Nijenhuis tensors in the two respective cases as follows:
\[
\begin{split}
2\{J_{\al},J_{\bt}\}(X,Y)&=\{J_{\al}X,J_{\bt}Y\}-J_{\al}\{J_{\bt}X,Y\}-J_{\al}\{X,J_{\bt}Y\}\\[6pt]
&
+\{J_{\bt}X,J_{\al}Y\}-J_{\bt}\{J_{\al}X,Y\}-J_{\bt}\{X,J_{\al}Y\}\\[6pt]
&
+(J_{\al}J_{\bt}+J_{\bt}J_{\al})\{X,Y\},\nonumber
\end{split}
\]
\begin{equation}\label{JaJa}
\begin{array}{l}
\{J_{\al},J_{\al}\}(X,Y)
=\{J_{\al}X,J_{\al}Y\}-J_{\al}\{J_{\al}X,Y\}-J_{\al}\{X,J_{\al}Y\}\\[6pt]
\phantom{\{J_{\al},J_{\bt}\}(X,Y)=}
-\{X,Y\}.
\end{array}
\end{equation}

Recalling \cite{GrHe}, $\GG_1$-manifolds are almost Hermitian manifolds whose corresponding Nijenhuis (0,3)-tensor by the Hermitian metric is a 3-form (see \eqref{G1-3f}).
This condition is equivalent to the vanishing of the associated Nijenhuis tensor, according to \propref{prop:G1}.

As it is known from \cite{GaBo}, the class 
of the quasi-K\"ahler manifolds with Norden metric is the only basic class of almost Norden manifolds with non-integrable almost complex structure, because the corresponding Nijenhuis tensor is non-zero there. Moreover, this class is determined by the condition that the associated Nijenhuis tensor is zero.

According to \thmref{thm:9}, if two of its six associated Nijenhuis tensors for the almost hypercomplex structure
vanish, then the others also vanish.

We seek to express in terms of the structure tensors of $(\f_{\al},\xi_{\al},\eta_{\al},g)$ on $\MM$ a necessary and sufficient condition for vanishing of $\{J_{\al},J_{\al}\}$ on $\MM\times\R$.

For the structure $(\f_{\al},\xi_{\al},\eta_{\al},g)$ let us define the following four tensors of type (1,2), (0,2), (1,1), (0,1), respectively:
\begin{equation}\label{hatN1234}
\begin{array}{l}
\widehat N_{\al}^{(1)}(x,y)=
\{\f_{\al},\f_{\al}\}(x,y)-\ea\left(\LLL_{\xi_{\al}}g\right)(x,y)\,\xi_{\al},\\[6pt]
\widehat N_{\al}^{(2)}(x,y)=
-\ea\left(\LLL_{\xi_{\al}}g\right)(\f_{\al}x,y)
-\ea\left(\LLL_{\xi_{\al}}g\right)(x,\f_{\al}y),
\\[6pt]
\widehat N_{\al}^{(3)}x=
\{\f_{\al},\f_{\al}\}(\f_{\al} x,\xia)+\left(\LLL_{\xi_{\al}}\eta_{\al}\right)(\f_{\al}x)\,\xi_{\al}\\[6pt]
\phantom{\widehat N_{\al}^{(3)}x=}
+2\eta_{\al}(x)\,\f_{\al}\n_{\xi_{\al}}\xi_{\al},\\[6pt]
\widehat N_{\al}^{(4)}(x)=
-\left(\LLL_{\xi_{\al}}\eta_{\al}\right)(x).
\end{array}
\end{equation}
\begin{prop}\label{prop:JaJa=0_hatN1234=0}
The associated Nijenhuis tensor $\{J_{\al},J_{\al}\}$ of an almost complex structure $\Ja$ for  $(\MM\times\R,\Ja,h)$ vanishes
  if and only if the four tensors $\widehat N_{\al}^{(1)}$, $\widehat N_{\al}^{(2)}$, $\widehat N_{\al}^{(3)}$, $\widehat N_{\al}^{(4)}$ for the structure $(\f_{\al},\xi_{\al},\eta_{\al},g)$ vanish.
\end{prop}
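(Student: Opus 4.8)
The plan is to compute the associated Nijenhuis tensor $\{J_\al,J_\al\}$ of \eqref{JaJa} directly on the product $\MM\times\R$ and to read off its components according to whether its arguments are horizontal (tangent to $\MM$) or proportional to $\ddt$. Writing $X=\left(x,a\ddt\right)$ and $Y=\left(y,b\ddt\right)$, I would first substitute the definition \eqref{JaX} of $J_\al$ and the braces \eqref{sym-skobi} to express each of the four terms $\{J_\al X,J_\al Y\}$, $J_\al\{J_\al X,Y\}$, $J_\al\{X,J_\al Y\}$ and $\{X,Y\}$ as an explicit pair (tangent part, $\ddt$-part). Evaluating on the convenient extensions in which $a,b$ are constant kills the non-tensorial derivative terms $x(b)$, $y(a)$; the remaining tangent braces then reduce via $\{u,v\}=\n_u v+\n_v u$ of \eqref{braces} and the structure relations \eqref{3str}, \eqref{HN-met}, \eqref{eta-g}. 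A genuinely necessary preliminary is to confirm that the non-tensorial pieces cancel, so that $\{J_\al,J_\al\}$ is a well-defined tensor and the constant extensions lose no information.

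By bilinearity and the symmetry $\{J_\al,J_\al\}(X,Y)=\{J_\al,J_\al\}(Y,X)$, the vanishing of $\{J_\al,J_\al\}$ is then equivalent to the vanishing of four pieces, which I intend to match with the four tensors of \eqref{hatN1234}. Taking $X,Y$ both horizontal ($a=b=0$), the tangent part collapses, after using $\f_\al^2=-I+\xi_\al\otimes\eta_\al$ together with the identity $(\n_x\eta_\al)(y)+(\n_y\eta_\al)(x)=-\ea(\LLL_{\xi_\al}g)(x,y)$ from \eqref{Lie-der}, exactly to $\{\f_\al,\f_\al\}(x,y)-\ea(\LLL_{\xi_\al}g)(x,y)\,\xi_\al=\widehat N_\al^{(1)}(x,y)$, while the $\ddt$-part becomes $-\ea(\LLL_{\xi_\al}g)(\f_\al x,y)-\ea(\LLL_{\xi_\al}g)(x,\f_\al y)=\widehat N_\al^{(2)}(x,y)$. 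Taking $X$ horizontal and $Y=\ddt$ (so $a=0$, $y=0$, $b=1$, noting $J_\al\ddt=-\xi_\al$), the $\ddt$-part reduces, via $(\LLL_{\xi_\al}\eta_\al)(x)=(\n_{\xi_\al}\eta_\al)(x)$ and $\eta_\al(\n_x\xi_\al)=0$, to $-(\LLL_{\xi_\al}\eta_\al)(x)=\widehat N_\al^{(4)}(x)$, while the tangent part reduces to $-\{\f_\al x,\xi_\al\}+\f_\al\{x,\xi_\al\}$.

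The main obstacle, and the step deserving the most care, is showing that this last tangent expression equals $\widehat N_\al^{(3)}x$. For $x\in\HC_\al$ I would expand $\{\f_\al,\f_\al\}(\f_\al x,\xi_\al)$ using \eqref{3str} (in particular $\f_\al\xi_\al=0$ and $\f_\al^2 x=-x$), obtaining $-\{\f_\al x,\xi_\al\}+\f_\al\{x,\xi_\al\}+\eta_\al(\{\f_\al x,\xi_\al\})\,\xi_\al$, and then dispose of the residual term via $\eta_\al(\{\f_\al x,\xi_\al\})=-(\LLL_{\xi_\al}\eta_\al)(\f_\al x)$; this reproduces precisely the first two summands of $\widehat N_\al^{(3)}$. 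The vertical direction $x=\xi_\al$ must be handled separately, where $\f_\al\xi_\al=0$ forces the evaluation down to $2\f_\al\n_{\xi_\al}\xi_\al$, explaining the extra term $2\eta_\al(x)\,\f_\al\n_{\xi_\al}\xi_\al$ in \eqref{hatN1234}; this same term also absorbs the remaining evaluation $\{J_\al,J_\al\}(\ddt,\ddt)=2\n_{\xi_\al}\xi_\al$, since the kernel of $\f_\al$ is spanned by $\xi_\al$ and $\n_{\xi_\al}\xi_\al\perp\xi_\al$, so that $\f_\al\n_{\xi_\al}\xi_\al=0$ is equivalent to $\n_{\xi_\al}\xi_\al=0$. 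Collecting all cases, the vanishing of $\{J_\al,J_\al\}$ on every pair of horizontal and $\ddt$ arguments is equivalent to $\widehat N_\al^{(1)}=\widehat N_\al^{(2)}=\widehat N_\al^{(3)}=\widehat N_\al^{(4)}=0$, which is the assertion; the delicate bookkeeping of the non-tensorial brace terms arising around $\xi_\al$ is the point requiring the greatest vigilance.
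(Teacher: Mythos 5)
Your proposal is correct and follows essentially the same route as the paper: evaluate $\{J_{\al},J_{\al}\}$ on the pairs $(X_0,Y_0)$ and $(X_0,Z_0)$ of horizontal and $\ddt$-directed arguments and identify the tangent and $\ddt$-components with $\widehat N_{\al}^{(1)},\widehat N_{\al}^{(2)}$ and $\widehat N_{\al}^{(3)},\widehat N_{\al}^{(4)}$, respectively. Your extra bookkeeping of the non-tensorial brace terms and of the evaluation $\{J_{\al},J_{\al}\}(\ddt,\ddt)=2\n_{\xi_{\al}}\xi_{\al}$ (which the paper leaves implicit, it being recoverable from the horizontal case via $\{J_{\al},J_{\al}\}(J_{\al}\cdot,J_{\al}\cdot)=-\{J_{\al},J_{\al}\}(\cdot,\cdot)$ or from $\widehat N_{\al}^{(4)}=0$ through \eqref{LgLetaom}) is sound and only makes the argument more complete.
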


\begin{proof}
First of all we need of the following relations
\begin{equation}\label{LgLetaom}
\left(\LLL_{\xi_{\al}}g\right)(\xi_{\al},x)=-\ea\left(\LLL_{\xi_{\al}}\eta_{\al}\right)(x)
=g\left(\n_{\xi_{\al}} \xi_{\al},x\right).
\end{equation}
  These equalities follow by virtue of 
  \eqref{eta-g}, \eqref{Fetaxia}, \eqref{Lie-der}.

Since each $\{J_{\al}, J_{\al}\}$ is a tensor
of type $(1, 2)$, it suffices to compute the tensors
\[
\{J_{\al}, J_{\al}\}\bigl(X_0, Y_0\bigr),\qquad
\{J_{\al}, J_{\al}\}\bigl(X_0, Z_0\bigr),
\]
where $X_0=\left(x, 0\ddt\right)$, $Y_0=\left(y, 0\ddt\right)$, $Z_0=\left(o, \ddt\right)$
and $o$ is the zero element of $\X(\MM)$. Taking into account \eqref{[f,f]}, \eqref{JaX}, \eqref{sym-skobi}, \eqref{JaJa}, we get the equalities:
\begin{align*}
\{J_{\al},J_{\al}\}&\bigl(X_0,Y_0\bigr)=
\left(\widehat N_{\al}^{(1)}(x,y),\ \widehat N_{\al}^{(2)}(x,y)\ddt\right),\\[6pt]
\{J_{\al},J_{\al}\}&\bigl(X_0,Z_0\bigr)=
\left(\widehat N_{\al}^{(3)}x,\ \widehat N_{\al}^{(4)}(x)\ddt\right),
\end{align*}
which show the correctness of the statement.
\end{proof}

\begin{prop}\label{prop:N1=0=>N234=0}
For an almost contact structure $(\f_{\al},\xi_{\al},\eta_{\al})$ and a pseudo-Rieman\-ni\-an metric $g$, the vanishing of
$\widehat N_{\al}^{(1)}$ implies the vanishing of $\widehat N_{\al}^{(2)}$, $\widehat N_{\al}^{(3)}$ and $\widehat N_{\al}^{(4)}$.
\end{prop}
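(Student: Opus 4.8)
The plan is to convert the statement entirely into the fundamental tensor $F_{\al}$ and then exploit the purely algebraic contact identities \eqref{3str} together with the compatibility \eqref{HN-met}--\eqref{HN-met-2f}. First I would record the tensorial form of the associated Nijenhuis tensor obtained from \eqref{[f,f]} by replacing the symmetric braces with covariant derivatives, namely $\{\f_{\al},\f_{\al}\}(x,y)=(\n_{\f_{\al}x}\f_{\al})y+(\n_{\f_{\al}y}\f_{\al})x-\f_{\al}(\n_{x}\f_{\al})y-\f_{\al}(\n_{y}\f_{\al})x$; exactly as in the derivation of \eqref{enhat-al}, passing to the $(0,3)$-tensor and using \eqref{HN-met-2f} gives $\{\f_{\al},\f_{\al}\}(x,y,z)=F_{\al}(\f_{\al}x,y,z)+F_{\al}(\f_{\al}y,x,z)+\ea F_{\al}(x,y,\f_{\al}z)+\ea F_{\al}(y,x,\f_{\al}z)$. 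Contracting the vector equation $\widehat N_{\al}^{(1)}=0$ with $g(\cdot,z)$ and using \eqref{eta-g} turns the hypothesis into the single scalar identity $\{\f_{\al},\f_{\al}\}(x,y,z)=-(\LLL_{\xi_{\al}}g)(x,y)\eta_{\al}(z)$, where by \eqref{Fetaxia} and \eqref{Lie-der} one has $(\LLL_{\xi_{\al}}g)(x,y)=F_{\al}(x,\f_{\al}y,\xi_{\al})+F_{\al}(y,\f_{\al}x,\xi_{\al})$.

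Before using this I would record two identities valid on any such structure solely because $\f_{\al}\xi_{\al}=0$: writing $B(x,y):=F_{\al}(x,y,\xi_{\al})$, one checks $B(x,\xi_{\al})=F_{\al}(x,\xi_{\al},\xi_{\al})=0$ and $\om_{\al}(\xi_{\al})=F_{\al}(\xi_{\al},\xi_{\al},\xi_{\al})=0$, both by differentiating $\f_{\al}\xi_{\al}=0$ and using $g(\f_{\al}\cdot,\xi_{\al})=0$.

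The core step, and the one I expect to be the main obstacle, is to deduce $\om_{\al}=0$. Setting $z=\xi_{\al}$ in the scalar hypothesis (the last two terms vanish since $\f_{\al}\xi_{\al}=0$) produces the symmetry relation $B(\f_{\al}x,y)+B(\f_{\al}y,x)+B(x,\f_{\al}y)+B(y,\f_{\al}x)=0$. I would then substitute $x\mapsto\f_{\al}x$, tracking carefully the correction coming from $\f_{\al}^2=-I+\xi_{\al}\otimes\eta_{\al}$ and discarding $B(\cdot,\xi_{\al})=0$; this yields that the manifestly symmetric combination $D(x,y):=B(\f_{\al}x,\f_{\al}y)+B(\f_{\al}y,\f_{\al}x)-B(x,y)-B(y,x)$ equals $-\eta_{\al}(x)F_{\al}(\xi_{\al},y,\xi_{\al})$. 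Comparing $D(x,y)$ with $D(y,x)$ forces $\eta_{\al}(x)F_{\al}(\xi_{\al},y,\xi_{\al})=\eta_{\al}(y)F_{\al}(\xi_{\al},x,\xi_{\al})$, and putting $x=\xi_{\al}$ together with $\om_{\al}(\xi_{\al})=0$ gives $F_{\al}(\xi_{\al},y,\xi_{\al})=0$, i.e. $\om_{\al}=0$ and $D\equiv 0$. The delicacy lies entirely in the exact bookkeeping of the $\f_{\al}^2$-correction terms, since it is precisely that correction which delivers $\om_{\al}=0$.

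With $\om_{\al}=0$ in hand the three conclusions fall out in turn. Since $\widehat N_{\al}^{(2)}(x,y)=-\ea\{(\LLL_{\xi_{\al}}g)(\f_{\al}x,y)+(\LLL_{\xi_{\al}}g)(x,\f_{\al}y)\}$ rewrites, through the identities above, as $-\ea D(x,y)$, it vanishes. Next, $\om_{\al}=0$ gives $(\LLL_{\xi_{\al}}g)(\xi_{\al},x)=B(\xi_{\al},\f_{\al}x)=0$, so \eqref{LgLetaom} yields both $\widehat N_{\al}^{(4)}(x)=-(\LLL_{\xi_{\al}}\eta_{\al})(x)=\ea(\LLL_{\xi_{\al}}g)(\xi_{\al},x)=0$ and $g(\n_{\xi_{\al}}\xi_{\al},x)=(\LLL_{\xi_{\al}}g)(\xi_{\al},x)=0$, whence $\n_{\xi_{\al}}\xi_{\al}=0$. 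Finally, in $\widehat N_{\al}^{(3)}$ from \eqref{hatN1234} the middle term carries the factor $(\LLL_{\xi_{\al}}\eta_{\al})(\f_{\al}x)=0$, the last term carries $\f_{\al}\n_{\xi_{\al}}\xi_{\al}=0$, and the leading term $\{\f_{\al},\f_{\al}\}(\f_{\al}x,\xi_{\al})=\ea(\LLL_{\xi_{\al}}g)(\f_{\al}x,\xi_{\al})\xi_{\al}=0$ by the hypothesis and $(\LLL_{\xi_{\al}}g)(\xi_{\al},\cdot)=0$; hence $\widehat N_{\al}^{(3)}=0$, which completes the argument.
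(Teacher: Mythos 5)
Your proof is correct and follows essentially the same route as the paper's: both specialize $\widehat N_{\al}^{(1)}=0$ at the Reeb vector and at $\f_{\al}$-composed arguments and extract the $\xi_{\al}$-component, you merely phrasing everything through $F_{\al}$ and $B=F_{\al}(\cdot,\cdot,\xi_{\al})$. The only difference is stylistic: your \emph{core step} $\om_{\al}=0$ already follows in one line by setting $y=\xi_{\al}$ in your own relation $S(x,y)=0$, which gives $B(\xi_{\al},\f_{\al}x)=0$ directly (this is exactly the paper's move of applying $\eta_{\al}$ to $\widehat N_{\al}^{(1)}(x,\xi_{\al})=0$), so the symmetrization argument with the tensor $D$ is a correct but avoidable detour.
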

\begin{proof}
We set $y=\xia$ in $\widehat N_{\al}^{(1)}(x,y)=0$ and apply $\etaa$. Then, using \eqref{[f,f]} and  \eqref{3str}, we obtain $\left(\LLL_{\xi_{\al}}g\right)(x,\xi_{\al})=0$ and thus $\widehat N_{\al}^{(4)}=0$, according to \eqref{LgLetaom}.

Therefore, 
from the form of $\widehat N_{\al}^{(1)}$ in \eqref{hatN1234},
we get
$
\{\f_{\al},\f_{\al}\}(\f_{\al}x,\xi_{\al})=0.
$
On the other hand, bearing in mind \eqref{LgLetaom}, we have that the vanishing of $\left(\LLL_{\xi_{\al}}g\right)(x,\xi_{\al})$ is equivalent to the vanishing of $\left(\LLL_{\xi_{\al}}\eta_{\al}\right)(x)$ and $\n_{\xi_{\al}}\xi_{\al}$. Thus,
we obtain $\widehat N_{\al}^{(3)}=0$.

Finally, applying $\etaa$ to $\widehat N_{\al}^{(1)}(\f_{\al} x,y)=0$
and using \eqref{[f,f]}, we have
\[
\eta_{\al}\bigl(\{\f_{\al}^2x,\f_{\al}y\}\bigr)-\ea\left(\LLL_{\xi_{\al}}g\right)(\f_{\al}x,y)=0.
\]
The first term in the latter equality can be expressed in the following form $-\ea\left(\LLL_{\xi_{\al}}g\right)(x,\f_{\al}y)$,
using that $\LLL_{\xi_{\al}}\eta_{\al}$ vanishes. In such a way, we obtain that $\widehat N_{\al}^{(2)}(x,y)=0$.
\end{proof}

\begin{prop}
For an almost contact structure $(\f_{\al},\xi_{\al},\eta_{\al})$ and a pseudo-Rieman\-ni\-an metric $g$, where  $\xia$ is Killing, the following assertions are valid:
\begin{enumerate}
  \item $\widehat N_{\al}^{(1)}$ vanishes if and only if $\{\f_{\al},\f_{\al}\}$ vanishes;
  \item $\widehat N_{\al}^{(2)}$ vanishes;
  \item $\widehat N_{\al}^{(3)}$ vanishes if and only if 
  $\xia\,\lrcorner\,\{\f_{\al},\f_{\al}\}$  vanishes;
  \item $\widehat N_{\al}^{(4)}$ vanishes.
\end{enumerate}
\end{prop}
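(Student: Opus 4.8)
The plan is to exploit the Killing condition $\LLL_{\xi_{\al}}g=0$ to annihilate all the Lie-derivative terms appearing in the four tensors of \eqref{hatN1234}, and then to analyse what survives of the $\{\f_{\al},\f_{\al}\}$-terms. First I would record the consequences of $\xi_{\al}$ being Killing. By \eqref{LgLetaom} the identity $\left(\LLL_{\xi_{\al}}g\right)(\xi_{\al},x)=-\ea\left(\LLL_{\xi_{\al}}\eta_{\al}\right)(x)=g\left(\n_{\xi_{\al}}\xi_{\al},x\right)$ holds; since $\LLL_{\xi_{\al}}g=0$ and $\ea=\pm1$ is nonzero, this forces both $\LLL_{\xi_{\al}}\eta_{\al}=0$ and, by nondegeneracy of $g$, $\n_{\xi_{\al}}\xi_{\al}=0$. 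These two facts, together with $\LLL_{\xi_{\al}}g=0$, are the only inputs the argument requires.

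Assertions (ii) and (iv) are then immediate. Substituting $\LLL_{\xi_{\al}}g=0$ into the defining formula of $\widehat N_{\al}^{(2)}$ in \eqref{hatN1234} gives $\widehat N_{\al}^{(2)}=0$, and directly $\widehat N_{\al}^{(4)}=-\LLL_{\xi_{\al}}\eta_{\al}=0$. Assertion (i) is equally direct: with $\LLL_{\xi_{\al}}g=0$ the definition of $\widehat N_{\al}^{(1)}$ collapses to $\widehat N_{\al}^{(1)}(x,y)=\{\f_{\al},\f_{\al}\}(x,y)$, so $\widehat N_{\al}^{(1)}$ and $\{\f_{\al},\f_{\al}\}$ vanish simultaneously.

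The one step requiring care is (iii). Using $\LLL_{\xi_{\al}}\eta_{\al}=0$ and $\n_{\xi_{\al}}\xi_{\al}=0$, the last two summands of $\widehat N_{\al}^{(3)}$ drop out and I am left with $\widehat N_{\al}^{(3)}x=\{\f_{\al},\f_{\al}\}(\f_{\al}x,\xia)$. Since $\f_{\al}^2=-I+\xia\otimes\etaa$ shows that $\f_{\al}$ maps the tangent space onto $\HC_{\al}=\ker(\etaa)$, the vanishing of $\widehat N_{\al}^{(3)}$ is equivalent to $\{\f_{\al},\f_{\al}\}(z,\xia)=0$ for every $z\in\HC_{\al}$. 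To upgrade this to the full contraction $\xia\,\lrcorner\,\{\f_{\al},\f_{\al}\}$ (the symmetry of $\{\f_{\al},\f_{\al}\}$ in its two vector arguments lets me read the interior product in either slot), I must additionally check the vertical value $\{\f_{\al},\f_{\al}\}(\xia,\xia)$. Expanding it by \eqref{[f,f]} and using $\f_{\al}\xia=0$ (from \eqref{3str}) together with $\{\xia,\xia\}=2\n_{\xi_{\al}}\xia=0$ shows that every summand vanishes, so $\{\f_{\al},\f_{\al}\}(\xia,\xia)=0$ automatically. Hence the horizontal condition on $\HC_{\al}$ coincides with the full-space condition $\xia\,\lrcorner\,\{\f_{\al},\f_{\al}\}=0$, which completes (iii).

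The main obstacle is precisely this last bookkeeping in (iii): establishing that the horizontal vanishing $\{\f_{\al},\f_{\al}\}(\f_{\al}x,\xia)=0$ is equivalent to the full interior product. This hinges on the two small but essential observations that $\mathrm{im}\,\f_{\al}=\HC_{\al}$ and that the purely vertical component $\{\f_{\al},\f_{\al}\}(\xia,\xia)$ is forced to zero by the Killing hypothesis; everything else is a routine substitution of the vanishing Lie-derivative terms.
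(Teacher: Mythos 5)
Your proof is correct and follows essentially the same route as the paper: substitute $\LLL_{\xi_{\al}}g=0$ into \eqref{hatN1234}, use \eqref{LgLetaom} to kill $\LLL_{\xi_{\al}}\eta_{\al}$ and $\n_{\xi_{\al}}\xi_{\al}$, and reduce (iii) to the vanishing of $\{\f_{\al},\f_{\al}\}(\f_{\al}x,\xi_{\al})$. Your extra bookkeeping for (iii) — noting that $\mathrm{im}\,\f_{\al}=\HC_{\al}$ and that the vertical value $\{\f_{\al},\f_{\al}\}(\xi_{\al},\xi_{\al})=\f_{\al}^2\{\xi_{\al},\xi_{\al}\}=2\f_{\al}^2\n_{\xi_{\al}}\xi_{\al}$ vanishes — correctly fills in a detail the paper leaves implicit with the phrase ``bearing in mind the assumption for $\xi_{\al}$.''
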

\begin{proof}
Taking into account that $\LLL_{\xi_{\al}}g$ vanishes, we have $\widehat N_{\al}^{(1)}=
\{\f_{\al},\f_{\al}\}$ and $\widehat N_{\al}^{(2)}=0$, i.e. (ii).
Further, we obtain $\widehat N_{\al}^{(4)}=0$, i.e. (iv), and
$\widehat N_{\al}^{(3)}x=
\{\f_{\al},\f_{\al}\}(\f_{\al} x,\xia)$,
according to \eqref{LgLetaom}. Then, (i) is obvious whereas (iii) holds, bearing in mind the assumption for $\xia$.
\end{proof}

\begin{dfn}
Let $(\MM,\f_{\al},\xi_{\al},\eta_{\al},g)$ 
be a manifold with almost contact 3-structure and metrics of Hermitian-Norden type. The symmetric $(1,2)$-tensors defined by
\begin{equation}\label{Nhat-3}
\widehat N_{\al}=\{\f_{\al},\f_{\al}\}-\ea\xi_{\al}\otimes \LLL_{\xi_{\al}}g
\end{equation}
we call \emph{associated Nijenhuis tensors on $(\MM,\f_{\al},\xi_{\al},\eta_{\al},g)$}.
\end{dfn}

The corresponding $(0,3)$-tensors are denoted by
\[
\begin{split}
\widehat N_{\al}(x,y,z)&=g(\widehat N_{\al}(x,y),z),\\[6pt]
\{\f_{\al},\f_{\al}\}(x,y,z)&=g(\{\f_{\al},\f_{\al}\}(x,y),z).
\end{split}
\]
Then, taking into account \eqref{eta-g} and \eqref{Nhat-3}, we obtain
\[
\widehat N_{\al}(x,y,z)=\{\f_{\al},\f_{\al}\}(x,y,z)+\left(\LLL_{\xi_{\al}}g\right)(x,y)\,\etaa(z).
\]

\begin{thm}\label{thm:Ja=0_hatN=0}
Let $(\MM,\f_{\al},\xi_{\al},\eta_{\al},g)$ 
be a manifold
with almost contact 3-structure and metrics of Hermitian-Norden type.
For any $\al$, the associated Nijenhuis tensor
$\{J_{\al},J_{\al}\}$ of the almost complex structure $J_{\al}$  on $(\MM\times\R,\Ja,h)$ vanishes if and only if the associated Nijenhuis tensor $\widehat{N}_{\al}$ 
of the structure $(\f_{\al},\xi_{\al},\eta_{\al},g)$ vanishes. 
\end{thm}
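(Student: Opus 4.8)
The plan is to derive the theorem as an immediate consequence of the two preceding propositions, once the correct identification between the tensors is made. The crucial preliminary observation is that the associated Nijenhuis tensor $\widehat N_{\al}$, defined in \eqref{Nhat-3} by $\widehat N_{\al}=\{\f_{\al},\f_{\al}\}-\ea\xi_{\al}\otimes \LLL_{\xi_{\al}}g$, coincides exactly with the auxiliary tensor $\widehat N_{\al}^{(1)}$ introduced in \eqref{hatN1234}. Indeed, reading $(\xi_{\al}\otimes \LLL_{\xi_{\al}}g)(x,y)$ as $\left(\LLL_{\xi_{\al}}g\right)(x,y)\,\xi_{\al}$ shows that $\widehat N_{\al}(x,y)=\{\f_{\al},\f_{\al}\}(x,y)-\ea\left(\LLL_{\xi_{\al}}g\right)(x,y)\,\xi_{\al}=\widehat N_{\al}^{(1)}(x,y)$. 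This identification is the only computational content, and it is immediate from comparing the two defining formulae.

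First I would prove the forward implication. Assuming $\{J_{\al},J_{\al}\}=0$ on $(\MM\times\R,\Ja,h)$, \propref{prop:JaJa=0_hatN1234=0} tells us this is equivalent to the simultaneous vanishing of the four tensors $\widehat N_{\al}^{(1)}$, $\widehat N_{\al}^{(2)}$, $\widehat N_{\al}^{(3)}$, $\widehat N_{\al}^{(4)}$ of the structure $(\f_{\al},\xi_{\al},\eta_{\al},g)$. In particular $\widehat N_{\al}^{(1)}=0$, and by the identification above this is precisely $\widehat N_{\al}=0$.

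Next I would handle the converse. Assuming $\widehat N_{\al}=0$, and hence $\widehat N_{\al}^{(1)}=0$, \propref{prop:N1=0=>N234=0} guarantees that the vanishing of $\widehat N_{\al}^{(1)}$ forces the vanishing of the remaining three tensors $\widehat N_{\al}^{(2)}$, $\widehat N_{\al}^{(3)}$ and $\widehat N_{\al}^{(4)}$. Thus all four tensors of \eqref{hatN1234} vanish, and applying \propref{prop:JaJa=0_hatN1234=0} once more yields $\{J_{\al},J_{\al}\}=0$. Combining the two implications gives the stated equivalence for each $\al$.

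There is no genuine analytic obstacle here; the result is an elegant packaging of the two propositions. The one place demanding care is verifying that $\widehat N_{\al}^{(1)}$ really is the same tensor as the intrinsically defined $\widehat N_{\al}$ on $\MM$ — that is, that the somewhat ad hoc auxiliary tensor emerging from the computation of $\{J_{\al},J_{\al}\}$ on the product $\MM\times\R$ matches the associated Nijenhuis tensor declared in \eqref{Nhat-3}. Once this is checked, the deliberate asymmetry between the two propositions — one giving the equivalence with all four conditions, the other giving that $\widehat N_{\al}^{(1)}=0$ already implies the other three — is exactly what is needed to close the biconditional with no further work.
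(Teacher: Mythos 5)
Your proposal is correct and follows exactly the paper's own argument: the paper's proof likewise combines \propref{prop:JaJa=0_hatN1234=0} and \propref{prop:N1=0=>N234=0} after noting the identification of $\widehat N_{\al}$ from \eqref{Nhat-3} with $\widehat N_{\al}^{(1)}$ from \eqref{hatN1234}. Your explicit spelling-out of that identification and of the asymmetric roles of the two propositions is a faithful expansion of the paper's one-line proof.
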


\begin{proof}
  The statement follows from \propref{prop:JaJa=0_hatN1234=0} and  \propref{prop:N1=0=>N234=0}, bearing in mind \eqref{hatN1234} and \eqref{Nhat-3}.
\end{proof}

\begin{thm}\label{thm:hatN}
Let $(\MM,\f_{\al},\xi_{\al},\eta_{\al},g)$ 
be a manifold
with almost contact 3-structure and metrics of Hermitian-Norden type.
If two of the associated Nijenhuis tensors $\widehat N_{\al}$ vanish, the third also vanishes.
\end{thm}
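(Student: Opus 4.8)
The plan is to reduce the assertion to \thmref{thm:9}, which is the corresponding statement for the three diagonal associated Nijenhuis tensors of an almost hypercomplex structure, by passing to the product manifold $\MM\times\R$. First I would invoke the construction already set up earlier in this section: equipping $\MM\times\R$ with the almost hypercomplex structure $(J_1,J_2,J_3)$ defined by \eqref{JaX} together with the product pseudo-Riemannian metric $h=g-\D t^2$ turns $(\MM\times\R,J_{\al},h)$ into an almost hypercomplex manifold with Hermitian-Norden metrics, so that the three associated Nijenhuis tensors $\{J_{\al},J_{\al}\}$ $(\al=1,2,3)$ on $\MM\times\R$ are well defined and \thmref{thm:9} is available for them.

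The bridge between the base manifold and the product is \thmref{thm:Ja=0_hatN=0}, which states that for each fixed $\al$ the tensor $\{J_{\al},J_{\al}\}$ vanishes on $\MM\times\R$ precisely when the associated Nijenhuis tensor $\widehat N_{\al}$ vanishes on $\MM$. This furnishes a one-to-one correspondence between the vanishing of $\widehat N_{\al}$ on the base and the vanishing of the $\al$-th diagonal associated Nijenhuis tensor on the product. The argument then proceeds in three short moves. Assuming, without loss of generality, that $\widehat N_{\al}$ and $\widehat N_{\bt}$ vanish for two distinct indices $\al,\bt\in\{1,2,3\}$, \thmref{thm:Ja=0_hatN=0} yields $\{J_{\al},J_{\al}\}=\{J_{\bt},J_{\bt}\}=0$ on $\MM\times\R$. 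These are two of the six associated Nijenhuis tensors of the almost hypercomplex structure on $\MM\times\R$, so \thmref{thm:9} forces all six of them to vanish; in particular $\{J_{\gm},J_{\gm\}}=0$ for the remaining index $\gm$. Applying \thmref{thm:Ja=0_hatN=0} once more, now for the index $\gm$, returns $\widehat N_{\gm}=0$ on $\MM$, which is exactly the desired conclusion. The use of ``without loss of generality'' is justified because both invoked theorems are symmetric in the three indices: \thmref{thm:Ja=0_hatN=0} holds for any $\al$, and \thmref{thm:9} treats the three diagonal tensors on equal footing, so the reduction is insensitive to which two of the $\widehat N_{\al}$ are assumed to vanish.

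The only point that genuinely requires care, and which I would verify explicitly, is that \thmref{thm:9} applies verbatim in the present product setting. Its proof rests solely on the defining quaternionic relations \eqref{J123} of the almost hypercomplex structure together with the Fr\"olicher-Nijenhuis identities \eqref{1.7} and \eqref{1.8} and the algebraic manipulations of the associated Nijenhuis tensors carried out in \lemref{thm:2}, \lemref{thm:4}, \lemref{thm:6} and \lemref{thm:8}; the compatibility of the metric with the hypercomplex structure is never used, the metric entering only through the definition of the symmetric braces. Consequently the fact that $h$ is merely pseudo-Riemannian on $\MM\times\R$ raises no difficulty, and no essential obstacle remains. The theorem thus follows immediately by combining \thmref{thm:Ja=0_hatN=0} with \thmref{thm:9}.
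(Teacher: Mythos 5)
Your proposal is correct and follows exactly the route the paper takes: Theorem~\ref{thm:Ja=0_hatN=0} transfers the vanishing of each $\widehat N_{\al}$ on $\MM$ to the vanishing of $\{J_{\al},J_{\al}\}$ on the product $(\MM\times\R,J_{\al},h)$, and Theorem~\ref{thm:9} then forces the remaining tensor to vanish. Your extra check that Theorem~\ref{thm:9} is purely algebraic in the quaternionic relations and the symmetric braces is a sensible verification but adds nothing beyond the paper's two-line argument.
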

\begin{proof}
It follows by virtue of 
\thmref{thm:9} and \thmref{thm:Ja=0_hatN=0}.
\end{proof}

\vskip 0.2in \addtocounter{subsection}{1} \setcounter{subsubsection}{0}

\noindent  {\Large\bf \thesubsection. Natural connections with totally skew-symmetric torsion on a manifold with almost contact 3-structure and metrics of Hermitian-Norden type}

\vskip 0.15in

An affine connection $\n^*$ is said to be a \emph{natural connection for $(\f_\al,\xi_\al,\eta_\al,\allowbreak{}g)$},  
if it preserves the structure, i.e.
$\n^*\f_\al=\n^*\xi_\al=\n^*\eta_\al=\n^*g=0$.

\begin{thm}\label{thm:NN=Nhat}
Let $(\MM,\f_1,\xi_1,\eta_1,g)$ be a pseudo-Riemannian manifold with an almost contact metric structure.
The following statements are equivalent:
\begin{enumerate}
  \item The manifold belongs to the class $\PP_2\oplus\PP_4\oplus\PP_9\oplus\PP_{10}\oplus\PP_{11}$ determined by
\begin{equation}\label{F1_W3W7}
\begin{split}
F_1 (\f_1 x,y,z)+F_1(\f_1y,x,z)&+F_1(x,y,\f_1z)\\[6pt]
&+F_1(y,x,\f_1z)=0.
\end{split}
\end{equation}
  \item The associated {Nijenhuis} tensor $\widehat N_1$ vanishes and $\xi_1$ is a Killing vector field;
  \item The tensor $\{\f_1,\f_1\}$ vanishes and $\xi_1$ is a Killing vector field;
  \item The {Nijenhuis} tensor $N_1$ is a 3-form and $\xi_1$ is a Killing vector field;
  \item There exists a natural connection $\n^1$ with totally skew-symmetric torsion for the structure $(\f_1,\xi_1,\eta_1,g)$ and this connection is unique and determined by its torsion
\end{enumerate}
\begin{equation}\label{T1}
\begin{array}{l}
T_1(x,y,z)=F_1(x,y,\f_1 z)-F_1(y,x,\f_1 z)-F_1(\f_1 z,x,y)\\[6pt]
\phantom{T_1(x,y,z)=}
+2F_1(x,\f_1 y,\xi_1)\eta_1(z).
\end{array}
\end{equation}
\end{thm}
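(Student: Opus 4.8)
The plan is to route all five conditions through $(iii)$, establishing $(i)\Leftrightarrow(iii)$, $(ii)\Leftrightarrow(iii)$, $(iii)\Leftrightarrow(iv)$ and $(iv)\Leftrightarrow(v)$ in turn. The whole argument rests on the expressions \eqref{N1=F1}, \eqref{N1hat=F1} and \eqref{Lxi1g=F1} of $N_1$, $\widehat N_1$ and $\LLL_{\xi_1}g$ in terms of the fundamental tensor $F_1$, together with the $(0,3)$-form of \eqref{hatN1}, which by \eqref{eta-g} (with $\ep_1=1$, so $\eta_1=-\xi_1\lrcorner g$) reads $\widehat N_1(x,y,z)=\{\f_1,\f_1\}(x,y,z)+\left(\LLL_{\xi_1}g\right)(x,y)\eta_1(z)$.

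For $(i)\Leftrightarrow(iii)$ I would first compare this last identity with \eqref{N1hat=F1}: since the bracketed vertical term in \eqref{N1hat=F1} is exactly $\left(\LLL_{\xi_1}g\right)(x,y)$ by \eqref{Lxi1g=F1}, the two expressions for $\widehat N_1$ force $\{\f_1,\f_1\}(x,y,z)=F_1(\f_1 x,y,z)+F_1(\f_1 y,x,z)+F_1(x,y,\f_1 z)+F_1(y,x,\f_1 z)$, so the defining condition of $(i)$ is precisely $\{\f_1,\f_1\}=0$. It then remains to see that $\{\f_1,\f_1\}=0$ already forces $\xi_1$ to be Killing. Putting $x=y=\xi_1$ and using $\f_1\xi_1=o$ from \eqref{str-al=1} gives $\om_1(\f_1 z)=0$, whence $\om_1=0$ after replacing $z$ by $\f_1 z$ and recalling $\om_1(\xi_1)=0$; putting $z=\xi_1$ gives $F_1(\f_1 x,y,\xi_1)+F_1(\f_1 y,x,\xi_1)=0$, and substituting $\f_1 x,\f_1 y$ for $x,y$ and simplifying with \eqref{F1-prop} and $\om_1=0$ yields $\left(\LLL_{\xi_1}g\right)(x,y)=0$. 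Thus $(i)\Rightarrow(iii)$, and the converse is trivial. The equivalence $(ii)\Leftrightarrow(iii)$ is then immediate, because under the common hypothesis that $\xi_1$ is Killing the displayed identity collapses to $\widehat N_1=\{\f_1,\f_1\}$.

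For $(iii)\Leftrightarrow(iv)$ the key is the identity $N_1(z,x,y)+N_1(z,y,x)=\{\f_1,\f_1\}(x,y,z)$ valid modulo terms carrying $\eta_1$, which I would derive from \eqref{N1=F1} by repeated use of the symmetries in \eqref{F1-prop} (the horizontal pieces $F_1(z,x,\f_1 y)+F_1(z,y,\f_1 x)$ cancel up to vertical corrections, and each $F_1(x,\f_1 y,z)$ may be traded for $F_1(x,y,\f_1 z)$ up to the same). This is the step I expect to be the main obstacle, since one must track the sign conventions of the Hermitian-type tensor $F_1$ and keep careful account of every vertical correction. Since $N_1$ is already skew in its first two arguments, it is a $3$-form exactly when $N_1(z,x,y)+N_1(z,y,x)=0$; under the Killing hypothesis the vertical corrections vanish (as $\LLL_{\xi_1}g=0$ and $\om_1=0$), so this is equivalent to $\{\f_1,\f_1\}=0$. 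This reproduces, on $\MM$, the contact analogue of \propref{prop:NN=Nhat}, giving $(iii)\Leftrightarrow(iv)$.

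Finally, for $(iv)\Leftrightarrow(v)$ I would appeal to the characteristic-connection theorem of Friedrich and Ivanov \cite{Fri-Iv} for almost contact metric manifolds (valid verbatim in the present pseudo-Riemannian Hermitian-type setting): a connection preserving $(\f_1,\xi_1,\eta_1,g)$ with totally skew-symmetric torsion exists if and only if $N_1$ is a $3$-form and $\xi_1$ is Killing, and it is then unique. To pin down the torsion explicitly, I would set $\n^1=\n+\frac12 T_1$ with $T_1$ as in \eqref{T1}; under the class condition $(i)$ a direct check shows $T_1$ is totally skew-symmetric, and the natural-connection criterion for $\al=1$ (the condition $T_1(x,\f_1 y,z)+T_1(x,y,\f_1 z)=-2F_1(x,y,z)$ together with skew-symmetry, the Hermitian counterpart of \eqref{1ab}) shows $\n^1\f_1=\n^1 g=0$, and hence $\n^1\xi_1=\n^1\eta_1=0$ since $\eta_1=-\xi_1\lrcorner g$ and $\xi_1$ is Killing. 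Uniqueness identifies $\n^1$ with the connection furnished by the theorem, completing the cycle.
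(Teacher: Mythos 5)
Your proposal is correct and follows essentially the same route as the paper: both arguments rest on the expressions \eqref{N1=F1}, \eqref{N1hat=F1}, \eqref{Lxi1g=F1} of $N_1$, $\widehat N_1$ and $\LLL_{\xi_1}g$ through $F_1$, on the substitutions $x=y=\xi_1$ and $x\to\f_1x$, $y\to\f_1y$, $z=\xi_1$ in \eqref{F1_W3W7} to extract the Killing condition, on the relation $N_1(z,x,y)+N_1(z,y,x)=\widehat N_1(x,y,z)$ (the paper's \eqref{NN=Nhatf1}, which holds exactly, not just modulo vertical terms, so your reduction is if anything slightly weaker than what is available), and on Theorem 8.2 of \cite{Fri-Iv2} for the equivalence with \emph{(v)}; that you take \emph{(iii)} rather than \emph{(ii)} as the hub is immaterial since the two differ only by the display after \eqref{hatN1}. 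The one substantive divergence is how the explicit torsion \eqref{T1} is obtained: the paper converts the Friedrich--Ivanov formula $T_1=-\eta_1\wedge\D\eta_1+\D^{\f_1}\Phi+N_1-\eta_1\wedge(\xi_1\lrcorner N_1)$ into $F_1$-terms via \eqref{1} and \eqref{2}, whereas you postulate \eqref{T1}, check skew-symmetry and the naturality criterion $T_1(x,\f_1 y,z)+T_1(x,y,\f_1 z)=-2F_1(x,y,z)$ directly, and invoke uniqueness; both work, yours trading the computation of $\D^{\f_1}\Phi$ for the verification that \eqref{T1} is a $3$-form under the class condition (note that even the skew-symmetry in $x,y$ already uses $\LLL_{\xi_1}g=0$).
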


\begin{proof}
Using \eqref{N1hat=F1} and \eqref{Lxi1g=F1}, we have that the vanishing of $\widehat N_1$ and $\LLL_{\xi_1}g$ implies the identity \eqref{F1_W3W7}. Vice versa, setting $x=y=\xi_1$ in \eqref{F1_W3W7}, we have $F_1(\xi_1,\xi_1,z)=0$. If we put $x=\f_1x$, $y=\f_1y$, $z=\xi_1$ in \eqref{F1_W3W7} and use the latter vanishing, we obtain that $\LLL_{\xi_1}g=0$ and therefore $\widehat N_1=0$. The determination of the class in (i) by \eqref{F1_W3W7} becomes under the definitions of the basic classes by \eqref{cl-A} and the form of the corresponding components $P^i(F_1)$, given in \cite{AlGa}. So, the equivalence between (i) and (ii) is valid.

Now, we need to prove the following relation
\begin{equation}\label{NN=Nhatf1}
\widehat N_1(x,y,z)=N_1(z,x,y)+N_1(z,y,x).
\end{equation}
We calculate the right hand side of \eqref{NN=Nhatf1} using \eqref{N1=F1}. By \eqref{FaJ-prop} 
we obtain
\[
\begin{array}{l}
N_1(z,x,y)+N_1(z,y,x)=
-F_1(\f_1x,z,y)-F_1(\f_1y,z,x)\\[6pt]
\phantom{N_1(z,x,y)+N_1(z,y,x)=}
-F_1(x,z,\f_1y)-F_1(y,z,\f_1x)\\[6pt]
\phantom{N_1(z,x,y)+N_1(z,y,x)=}
-F_1(x,\f_1z,\xi_1)\eta_1(y)-F_1(y,\f_1z,\xi_1)\eta_1(x)
\end{array}
\]
and then we establish that the right hand side of the latter equality is equal to $\widehat N_1(x,y,z)$, according to \eqref{N1hat=F1}.
Therefore, \eqref{NN=Nhatf1} is valid.

The relation \eqref{NN=Nhatf1} implies the equivalence between (ii) and (iv), where\-as
the equivalence between (iv) and (v) is given in Theorem 8.2 of \cite{Fri-Iv2}.
The equivalence between (ii) and (iii) follows from \eqref{hatN1}.

For the connection $\n^1$ from (v) we have
\begin{equation}\label{D1T1}
g\left(\n^1_{x} y,z\right)=g(\n_x y, z)+\frac12 T_1(x,y,z).
\end{equation}
According to Theorem 8.2 in \cite{Fri-Iv2}, its torsion $T_1$ is determined in our notations by
\begin{equation}\label{T1Kuch}
\begin{array}{l}
T_1=-\eta_1\wedge\D\eta_1+\D^{\f_1} \Phi+N_1-\eta_1\wedge(\xi_1 \lrcorner N_1),\\[6pt]
\end{array}
\end{equation}
where it is used the notation $\D^{\f_1} \Phi(x,y,z)=-\D \Phi(\f_1 x,\f_1 y,\f_1 z)$ for the fundamental 2-form
$\Phi$ of the almost contact metric structure, i.e. $\Phi(x,y)=g(x,\f_{1}y)$.
Since $\eta_1\wedge\D\eta_1=\s\{\eta_1\otimes\D\eta_1\}$ holds and because of \eqref{HN-met}, \eqref{FaJ-prop}
and the fact that $\xi_1$ is Killing, it is valid 
\begin{equation}\label{1}
(\eta_1\wedge\D\eta_1)(x,y,z)=-2\sx\{\eta_1(x)F_1(y,\f_1 z,\xi_1)\}.
\end{equation}
Moreover, from the equalities $\D \Phi(x,y,z)=-\sx F_1(x,y,z)$ and \eqref{FaJ-prop}, we get
\begin{equation}\label{2}
\D^{\f_1} \Phi(x,y,z)=-\sx\{F_1(\f_1 x,y,z)+2F_1(x,\f_1 y,\xi_1)\eta_1(z)\}.
\end{equation}
So, applying \eqref{1}, \eqref{2}, \eqref{N1=F1} and \eqref{FaJ-prop} to the equality \eqref{T1Kuch},
we obtain an expression of $T_1$ in terms of $F_1$ given in \eqref{T1}.
\end{proof}

\begin{thm}\label{thm:F3F7}
  The following statements for an almost contact B-metric manifold $(\MM,\f_2,\xi_2,\eta_2,g)$ are equivalent:
 \begin{enumerate}
 \item
 It belongs to the class $\F_3\oplus\F_7$, which is characterised by the conditions: the cyclic sum of $F_2$ by the three arguments vanishes and $\xi_2$ is Killing;
 \item
 It has a vanishing associated {Nijenhuis} tensor $\widehat N_2$;
 \item
 It has a vanishing tensor $\{\f_2,\f_2\}$ and $\xi_2$ is Killing;
 \item
 It admits the existence of a unique natural connection $\n^2$ with totally skew-sym\-met\-ric torsion determined by
\end{enumerate}
\begin{equation}\label{T37} %
T_2(x,y,z)=-\frac{1}{2} \sx\bigl\{F_2(x,y,\f_2 z)
-3\eta_2(x)F_2(y,\f_2 z,\xi_2)\bigr\}. %
\end{equation} %
\end{thm}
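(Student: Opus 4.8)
The plan is to assemble the four statements into the chain (i) $\Leftrightarrow$ (ii) $\Leftrightarrow$ (iii) together with (ii) $\Leftrightarrow$ (iv), leaning almost entirely on results already established for a single almost contact B-metric structure. The key observation is that the connection $\n^2$ is precisely the $\f$KT-connection $\n'''$ applied to the structure $(\f_2,\xi_2,\eta_2,g)$: comparing \eqref{T37} with \eqref{fKT-T37} shows that the two torsions coincide verbatim after the relabelling $\f\mapsto\f_2$, $\xi\mapsto\xi_2$, $\eta\mapsto\eta_2$, $F\mapsto F_2$. Hence the entire machinery developed for $\widehat N$ and the $\f$KT-connection transfers directly.

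First I would prove (i) $\Leftrightarrow$ (ii). By \propref{prop-Nhat=0}, the class of almost contact B-metric manifolds with $\widehat N=0$ is exactly $\F_3\oplus\F_7$, so (ii) is literally the membership $(\MM,\f_2,\xi_2,\eta_2,g)\in\F_3\oplus\F_7$. It then remains to recognise that this class is described by the two conditions in (i), namely the vanishing of the cyclic sum $\sx F_2(x,y,z)$ and the Killing property of $\xi_2$. This characterisation can be read off directly from the defining relations of $\F_3$ and $\F_7$ in \eqref{Fi}: the cyclic sum annihilates both basic components, and in each of them $\n\xi_2$ is skew-symmetric (indeed $\n\xi_2=0$ on $\F_3$), so $\xi_2$ is Killing.

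Next, (ii) $\Leftrightarrow$ (iii) follows from the very definition \eqref{hatN2}, $\widehat N_2=\{\f_2,\f_2\}+\xi_2\otimes\LLL_{\xi_2}g$. For (iii) $\Rightarrow$ (ii): if $\xi_2$ is Killing then $\LLL_{\xi_2}g=0$, whence $\widehat N_2=\{\f_2,\f_2\}=0$. The converse is where the only genuine work lies; here I would invoke \propref{prop:hatN=0=>Kill}, which guarantees that $\widehat N_2=0$ already forces $\xi_2$ to be Killing, and once $\LLL_{\xi_2}g=0$ the decomposition \eqref{hatN2} again yields $\{\f_2,\f_2\}=\widehat N_2=0$. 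Finally, (ii) $\Leftrightarrow$ (iv) is immediate from \corref{cor:fKTassN=0}, asserting that the $\f$KT-connection exists if and only if $\widehat N$ vanishes, combined with its uniqueness and the explicit torsion \eqref{fKT-T37}, identified with \eqref{T37} as above.

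The main obstacle is conceptual bookkeeping rather than computation: I must make sure the transfer from the generic structure to $(\f_2,\xi_2,\eta_2,g)$ is legitimate and, above all, that the single delicate implication $\widehat N_2=0\Rightarrow\xi_2$ Killing is secured. That implication rests on the inversion formula \eqref{nabf} of \thmref{thm:FNhatN} and the identity \eqref{NhatN-prop2} via \propref{prop:hatN=0=>Kill}; with that proposition in hand, the four equivalences close up with no further effort.
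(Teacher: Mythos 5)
Your proposal is correct and follows essentially the same route as the paper: the paper likewise obtains (i) $\Leftrightarrow$ (ii) $\Leftrightarrow$ (iv) from the earlier results on the $\f$KT-connection (\corref{cor:fKTassN=0}, which rests on \propref{prop-Nhat=0} and the existence theorem from \cite{Man31}), identifies \eqref{T37} with \eqref{fKT-T37}, and derives (ii) $\Leftrightarrow$ (iii) from the definition \eqref{hatN2} together with \propref{prop:hatN=0=>Kill}. No gaps.
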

\begin{proof}
The equivalence of (i), (ii) and (iv) is known from \cite{Man31} and \corref{cor:fKTassN=0}. Bearing in mind \propref{prop:hatN=0=>Kill} and the definition of $\widehat N_2$, we obtain
the equivalence of (ii) and (iii).

For the natural connection $\n^2$ with totally skew-symmetric torsion $T_2$ for the structure $(\f_2,\xi_2,\eta_2,g)$ we have
\begin{equation}\label{D2T2}
g\left(\n^2_{x} y,z\right)=g(\n_x y, z)+\frac12 T_2(x,y,z),
\end{equation}
where $T_2$ is determined by
$
T_2=\eta_2\wedge \D\eta_2+\frac{1}{4}\s N_2
$
and it is expressed
in terms of $F_2$ by \eqref{T37}.
\end{proof}

%
%
%


Using \thmref{thm:hatN}, \thmref{thm:NN=Nhat} and \thmref{thm:F3F7}, 
we get 
\begin{thm}\label{thm:nat-conn}
    The existence of unique natural connections with totally skew-sym\-met\-ric torsion for two of the three structures of an almost contact 3-structure with metrics of Hermitian-Norden type implies an existence of a unique natural connection with totally skew-symmetric torsion for the remaining third structure.
\end{thm}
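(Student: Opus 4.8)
The plan is to reduce the statement entirely to the vanishing of the associated Nijenhuis tensors, for which \thmref{thm:hatN} already supplies the required propagation. First I would recall the characterizations just established. By \thmref{thm:NN=Nhat} the almost contact metric structure $(\f_1,\xi_1,\eta_1,g)$ admits a unique natural connection with totally skew-symmetric torsion precisely when $\widehat N_1=0$ \emph{and} $\xi_1$ is Killing, whereas by \thmref{thm:F3F7} each of the two almost contact B-metric structures $(\f_\al,\xi_\al,\eta_\al,g)$, $\al=2,3$, admits such a connection precisely when $\widehat N_\al=0$. Moreover, by \propref{prop:hatN=0=>Kill} (and its analogue for $\al=3$) the condition $\widehat N_\al=0$ already forces $\xi_\al$ to be Killing in the B-metric cases, so there the Killing hypothesis is free.

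Next I would run the forward implication. Suppose two of the three structures admit a natural connection with totally skew-symmetric torsion. By the equivalences above, the two corresponding associated Nijenhuis tensors $\widehat N_\al$ vanish, and then \thmref{thm:hatN} yields the vanishing of the third $\widehat N_\al$ as well. It remains to convert this back into existence of the third connection. If the remaining structure is one of the B-metric ones, \thmref{thm:F3F7} applies immediately, since $\widehat N_\al=0$ simultaneously supplies the Killing property of $\xi_\al$; this settles the two cases in which the almost contact metric structure is among the two given ones, and uniqueness follows automatically from the same characterization.

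The remaining, and genuinely delicate, case is the one in which the two given structures are the B-metric ones $(\f_2,\xi_2,\eta_2,g)$ and $(\f_3,\xi_3,\eta_3,g)$ and the conclusion concerns the metric structure $(\f_1,\xi_1,\eta_1,g)$. There \thmref{thm:hatN} delivers $\widehat N_1=0$, but \thmref{thm:NN=Nhat} requires in addition that $\xi_1$ be Killing, and—as flagged in the remark after \propref{prop:hatN=0=>Kill}—this does \emph{not} follow from $\widehat N_1=0$ alone. The hard part will be precisely this verification. I would exploit that $\widehat N_2=\widehat N_3=0$ makes both $\xi_2$ and $\xi_3$ Killing, together with the algebraic identities \eqref{3str} of the $3$-structure, which give $\xi_1=\f_2\xi_3=-\f_3\xi_2$. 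Differentiating these two expressions and using \eqref{Fetaxia} to rewrite $\n\xi_\al$ through $F_\al$, one expresses $(\LLL_{\xi_1}g)(x,y)=g(\n_x\xi_1,y)+g(\n_y\xi_1,x)$ as a symmetric combination of values of $F_2$ and $F_3$ taken on $\xi_2$ and $\xi_3$; the auxiliary relations $F_\al(x,\xi_\al,\xi_\al)=0$, $\f_3\f_2=\xi_3\otimes\eta_2-\f_1$ and $\f_2\f_3=\xi_2\otimes\eta_3+\f_1$ are the natural tools for this reduction.

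The closing step is to show that this symmetric combination vanishes, which is where the precise class descriptions enter: the two B-metric structures lie in $\F_3\oplus\F_7$ with Killing Reeb fields, so the $\xi$-components and traces of $F_2$ and $F_3$ are tightly constrained. I expect that substituting the Killing conditions for $\xi_2,\xi_3$ and these class restrictions collapses the expression for $\LLL_{\xi_1}g$ to zero, giving $\xi_1$ Killing and hence, by \thmref{thm:NN=Nhat}, the sought unique natural connection for the metric structure. The whole argument is therefore a short bookkeeping of the three characterization theorems plus \thmref{thm:hatN}, with the sole genuine obstacle being the computational check that the Reeb field of the metric structure inherits the Killing property from the two B-metric structures.
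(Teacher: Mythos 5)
Your overall route is the paper's: the proof given there is literally the one-line combination of \thmref{thm:hatN} with the characterizations in \thmref{thm:NN=Nhat} and \thmref{thm:F3F7}, and your first two steps reproduce it faithfully, with the uniqueness coming for free from those same characterizations. You have also correctly isolated the only case in which that citation chain is not self-evidently sufficient: when the two given structures are the B-metric ones and the conclusion concerns $(\f_1,\xi_1,\eta_1,g)$, \thmref{thm:hatN} yields $\widehat N_1=0$, but \thmref{thm:NN=Nhat} additionally demands that $\xi_1$ be Killing, and the paper itself remarks after \propref{prop:hatN=0=>Kill} that $\widehat N_1=0$ does not force this for the metric structure. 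The paper's proof is silent on the point; that the subsequent \thmref{thm:D} lists ``$\xi_1$ is Killing'' as a hypothesis separate from the vanishing of the $\widehat N_\al$ confirms the implication is not being taken for granted elsewhere.

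The gap in your proposal is that this last step is announced rather than proved: ``I expect that substituting the Killing conditions \dots collapses the expression for $\LLL_{\xi_1}g$ to zero'' is a conjecture, and the computation is not routine bookkeeping. Writing $\xi_1=\f_2\xi_3$ gives $g(\n_x\xi_1,y)=F_2(x,\xi_3,y)+g(\n_x\xi_3,\f_2 y)$, and after symmetrization the second pair of terms does not cancel merely because $g(\n_{\cdot}\xi_3,\cdot)$ is skew: setting $A(u,v)=g(\n_u\xi_3,v)$, one is left with $A(x,\f_2 y)-A(\f_2 x,y)$ plus $F_2(x,\xi_3,y)+F_2(y,\xi_3,x)$, and membership in $\F_3\oplus\F_7$ constrains only the cyclic sum of $F_2$ and the Killing property of $\xi_2$, not these particular components. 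You must either carry this computation through explicitly (using in addition the relations \eqref{F1F2F3} analogues among the $F_\al$ that the $3$-structure forces), or extract the missing symmetry of $\n\xi_1$ from the vanishing of the mixed associated tensors $\{J_1,J_2\}$, $\{J_2,J_3\}$, $\{J_3,J_1\}$ on $\MM\times\R$, which \thmref{thm:9} also delivers and which neither you nor the paper's own argument exploits. As written, your proposal (like the paper's proof) fully establishes the theorem only in the two cases where the remaining structure is a B-metric one.
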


\begin{cor}\label{cor-G1}
Let $(\MM,\f_{\al},\xi_{\al},\eta_{\al},g)$ 
be a manifold with almost contact 3-structure and metrics of Hermitian-Norden type.
If the manifold belongs to two of the following three classes for the corresponding structure,
then the manifold belongs to the remaining third class for the corresponding structure:
$\PP_{2}\oplus\PP_{4}\oplus\PP_{9}\oplus\PP_{10}\oplus\PP_{11}$ for $\al=1$;
$\F_3\oplus\F_7$  for $\al=2$ and $\F_3\oplus\F_7$ for $\al=3$.
\end{cor}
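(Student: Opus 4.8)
The plan is to reduce the statement to the equivalences established in \thmref{thm:NN=Nhat} and \thmref{thm:F3F7} together with the propagation principle of \thmref{thm:hatN}. By \thmref{thm:NN=Nhat} (the equivalence of its items (i) and (ii)), the manifold lies in $\PP_{2}\oplus\PP_{4}\oplus\PP_{9}\oplus\PP_{10}\oplus\PP_{11}$ with respect to $(\f_1,\xi_1,\eta_1,g)$ exactly when the associated Nijenhuis tensor $\widehat N_1$ vanishes and $\xi_1$ is Killing; by \thmref{thm:F3F7} (its items (i) and (ii)) the manifold lies in $\F_3\oplus\F_7$ with respect to $(\f_\al,\xia,\etaa,g)$, $\al=2,3$, exactly when $\widehat N_\al$ vanishes, the Killing property of $\xia$ being automatic here by \propref{prop:hatN=0=>Kill}. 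Hence each of the three class memberships is controlled by the vanishing of the corresponding $\widehat N_\al$.

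First I would assume the manifold belongs to two of the three classes, so that two of $\widehat N_1$, $\widehat N_2$, $\widehat N_3$ vanish. Then \thmref{thm:hatN} immediately yields the vanishing of the third associated Nijenhuis tensor. If the two prescribed classes include the Hermitian one ($\al=1$), the class to be deduced is a B-metric one ($\al=2$ or $3$), and the converse implication in \thmref{thm:F3F7} closes the argument at once, since for the B-metric structures $\widehat N_\al=0$ already forces $\xia$ to be Killing. Thus the only genuinely delicate case is when the two given classes are the two B-metric ones and the class to be deduced is the Hermitian one for $\al=1$.

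The main obstacle is precisely this last case: \thmref{thm:hatN} delivers $\widehat N_1=0$, but---as the remark following \propref{prop:hatN=0=>Kill} stresses---the vanishing of $\widehat N_1$ for an almost contact metric structure does not by itself imply that $\xi_1$ is Killing, whereas membership in $\PP_{2}\oplus\PP_{4}\oplus\PP_{9}\oplus\PP_{10}\oplus\PP_{11}$ does require it (item (ii) of \thmref{thm:NN=Nhat}). To supply the missing Killing condition I would argue within the ambient $3$-structure: from the two B-metric memberships one has $\LLL_{\xi_2}g=\LLL_{\xi_3}g=0$, and using $\xi_1=\f_2\xi_3=-\f_3\xi_2$ together with the structural identities \eqref{3str}, the compatibility \eqref{HN-met} and the relations \eqref{Fetaxia}, one expresses $\left(\LLL_{\xi_1}g\right)(x,y)$ through covariant derivatives of $\xi_2$, $\xi_3$ and the fundamental tensors $F_2$, $F_3$ restricted to $\F_3\oplus\F_7$; the skew-symmetry of $\n\xi_2$, $\n\xi_3$ should then force $\LLL_{\xi_1}g=0$. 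The cleaner route, which I would prefer to present, avoids this computation by passing through \thmref{thm:nat-conn}: each class membership is equivalent to the existence of a unique natural connection with totally skew-symmetric torsion for the corresponding structure (items (i)$\Leftrightarrow$(v) of \thmref{thm:NN=Nhat} and (i)$\Leftrightarrow$(iv) of \thmref{thm:F3F7}); two such connections yield the third by \thmref{thm:nat-conn}; and translating back gives the remaining class membership with the Killing condition already built into the equivalence. Both routes conclude the proof.
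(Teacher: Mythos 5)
Your proposal is correct and follows essentially the same route as the paper: the corollary is stated there as an immediate consequence of \thmref{thm:nat-conn} combined with the equivalences (i)$\Leftrightarrow$(v) of \thmref{thm:NN=Nhat} and (i)$\Leftrightarrow$(iv) of \thmref{thm:F3F7}, which is exactly your preferred argument. Your observation that the more direct route via \thmref{thm:hatN} leaves the Killing condition on $\xi_1$ to be supplied separately is a sound reading of why the connection-existence equivalences are the right vehicle here.
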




Now, we are interested on conditions for coincidence of these three natural connections $\n^\al$ 
with totally skew-symmetric torsion for the particular almost contact structures with the metric $g$. Then we shall say that it exists a \emph{natural connection with totally skew-symmetric torsion for the almost contact 3-structure with metrics of Hermitian-Norden type}.

\begin{thm}\label{thm:D}
Let $(\MM,\f_{\al},\xi_{\al},\eta_{\al},g)$ be a manifold with almost contact 3-structure and metrics of Hermitian-Norden type.
Then the manifold admits an affine connection $\n^*$ with totally skew-sym\-met\-ric torsion
preserving the structure $(\f_{\al},\xi_{\al},\eta_{\al},g)$ if and only if
the associated Nijenhuis tensors $\widehat N_\al$ vanish, $\xi_1$ is Killing and
the equalities $T_1=T_2=T_3$ are valid, bearing in mind \eqref{T1} and \eqref{T37}.
If $\n^*$ exists, it is unique and determined by its torsion $T^*=T_1=T_2=T_3$.
\end{thm}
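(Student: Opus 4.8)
The plan is to assemble the result from the three preceding natural-connection theorems together with the tensorial vanishing theorem for associated Nijenhuis tensors. First I would establish the \emph{forward} direction. Assume the manifold admits an affine connection $\n^*$ with totally skew-symmetric torsion $T^*$ preserving the whole structure $(\f_{\al},\xi_{\al},\eta_{\al},g)$ for every $\al$. Then $\n^*$ is simultaneously a natural connection with skew-symmetric torsion for each individual structure $(\f_{\al},\xi_{\al},\eta_{\al},g)$. For $\al=1$ the existence of such a connection forces, by the equivalence (i)$\Leftrightarrow$(v) of \thmref{thm:NN=Nhat}, that $\widehat N_1$ vanishes and $\xi_1$ is Killing; similarly for $\al=2,3$ the equivalences in \thmref{thm:F3F7} give $\widehat N_2=\widehat N_3=0$. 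Moreover, the uniqueness clauses in those two theorems identify the torsion of any such connection: for $\al=1$ it must equal $T_1$ given by \eqref{T1}, and for $\al=2,3$ it must equal $T_\al$ given by \eqref{T37}. Since $T^*$ is one and the same tensor serving all three roles, we conclude $T_1=T_2=T_3$, which is exactly the stated necessary condition.

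Next I would handle the \emph{converse}. Suppose $\widehat N_\al$ vanish, $\xi_1$ is Killing, and $T_1=T_2=T_3$. By \thmref{thm:NN=Nhat}, the vanishing of $\widehat N_1$ together with $\xi_1$ Killing yields a unique natural connection $\n^1$ with skew-symmetric torsion $T_1$ for $(\f_1,\xi_1,\eta_1,g)$, defined through \eqref{D1T1}. For $\al=2,3$ I need the Killing property of $\xi_2$ and $\xi_3$; here \propref{prop:hatN=0=>Kill} is the crucial input, since the vanishing of $\widehat N_2$ (respectively $\widehat N_3$) already forces $\xi_2$ (respectively $\xi_3$) to be Killing, so no separate hypothesis is needed for them. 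Then \thmref{thm:F3F7} supplies unique natural connections $\n^2$, $\n^3$ with skew-symmetric torsions $T_2$, $T_3$, via \eqref{D2T2}. Now the hypothesis $T_1=T_2=T_3=:T^*$ and the Koszul-type defining formulae \eqref{D1T1}, \eqref{D2T2} show that the three connections coincide as a single connection $\n^*$; being natural for each individual structure, it preserves $\f_\al$, $\xi_\al$, $\eta_\al$ and $g$ for all $\al$, hence preserves the whole almost contact 3-structure with metrics of Hermitian-Norden type, and has totally skew-symmetric torsion $T^*$.

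For the uniqueness assertion I would argue that any connection with the required properties must, in particular, be natural with skew-symmetric torsion for the structure with $\al=1$; the uniqueness in \thmref{thm:NN=Nhat} then pins down its torsion to be $T_1$, and the common value $T^*=T_1=T_2=T_3$ together with the formula $g(\n^*_xy,z)=g(\n_xy,z)+\tfrac12 T^*(x,y,z)$ determines $\n^*$ completely. I would also remark that, by \thmref{thm:hatN}, the vanishing of two of the three $\widehat N_\al$ already implies the vanishing of the third, and correspondingly \thmref{thm:nat-conn} shows the existence of the two relevant natural connections forces the existence of the third; this allows the hypotheses to be stated economically, checking only two of the structures directly.

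The main obstacle I anticipate is the careful bookkeeping in the converse: one must verify that the three separate existence theorems genuinely produce \emph{the same} connection and not merely three connections with equal torsions in a formal sense. This reduces to observing that a metric connection is uniquely determined by its torsion through \eqref{QT} (the Hayden-type bijection already invoked earlier in the excerpt), so equal torsions against the common Levi-Civita connection $\n$ of $g$ force literal coincidence $\n^1=\n^2=\n^3$. The delicate point is ensuring that $T_1$, computed from $F_1$ by \eqref{T1}, and $T_2=T_3$, computed from $F_2=F_3$-type data by \eqref{T37}, are compared as tensors with respect to one fixed metric $g$ and one fixed $\n$, so that the equality $T_1=T_2=T_3$ is a meaningful and verifiable condition; the compatibility identities \eqref{HN-met} and \eqref{Fetaxia} guarantee that all three torsions live in the same ambient $(0,3)$-tensor space, which is what makes the coincidence statement sensible.
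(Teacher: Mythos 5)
Your proposal is correct and follows essentially the same route as the paper: both directions reduce to the individual existence/uniqueness theorems (\thmref{thm:NN=Nhat} for $\al=1$ and \thmref{thm:F3F7} for $\al=2,3$), with the coincidence of the three connections forced by the equality of their torsions against the common metric and Levi-Civita connection. Your write-up is merely more explicit than the paper's two-sentence proof about the forward direction, the role of \propref{prop:hatN=0=>Kill} in dispensing with separate Killing hypotheses for $\xi_2,\xi_3$, and the Hayden-type bijection that turns equal torsions into literally identical connections.
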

\begin{proof}
According to \thmref{thm:NN=Nhat} and \thmref{thm:F3F7}, since $\widehat N_\al=\LLL_{\xi_1}g=0$ are valid then
there exist the natural connections $\n^\al$ with totally skew-sym\-metric torsion $T_\al$ for the structures $(\f_{\al},\xi_{\al},\eta_{\al},g)$, $(\al=1,2,3)$.
Bearing in mind \eqref{D1T1}, \eqref{T1}, \eqref{D2T2} and \eqref{T37}, the coincidence of $\n^1$, $\n^2$ and $\n^3$ is equivalent to the conditions to equalise of their torsions. 
\end{proof}

\vskip 0.2in \addtocounter{subsection}{1} \setcounter{subsubsection}{0}

\noindent  {\Large\bf \thesubsection. A 7-dimensional Lie group as a manifold with almost contact 3-structure and metrics of Hermitian-Norden type}

\vskip 0.15in


Let $\LL$ be a 7-dimensional real connected Lie group, and
$\mathfrak{l}$ be its Lie algebra with a basis
$\{e_1,e_2,e_3,e_4,e_5,e_6,e_7\}$.
%
Now we introduce an almost contact 3-structure and metrics of Hermitian-Norden type
$(\f_{\al},\xi_{\al},\eta_{\al},g)$ by a standard way as follows
\begin{subequations}\label{str-exa}
\begin{equation}
\begin{array}{l}
\begin{array}{llllll}
\f_1e_1=e_2,\; & \f_1e_3=e_4,\; & \f_1e_6=e_7, \; & \f_1e_5=o,\\[6pt]
\f_2e_1=e_3, \; & \f_2e_4=e_2, \; & \f_2e_7=e_5, \; & \f_2e_6=o,\\[6pt]
\f_3e_1=e_4,\; & \f_3e_2=e_3, \; & \f_3e_5=e_6, \; & \f_3e_7=o, \\[6pt]
\end{array}\\[6pt]
\begin{array}{lll}
\xi_1=e_5, \quad &     \xi_2=e_6, \quad &     \xi_3=e_7, \\[6pt]
\eta_1=e^5, \quad &    \eta_2=e^6, \quad &    \eta_3=e^7,\qquad e^i(e_j)=\delta^i_j,\\[6pt]
\end{array}
\end{array}
\end{equation}
\begin{equation}
\begin{array}{l}
\begin{array}{ll}
   g(e_i,e_j)=0,\; i\neq j; \\[6pt]
   g(e_i,e_i)=-g(e_j,e_j)=1,\ i=1,2,6,7;\ j=3,4,5.
\end{array}
\end{array}
\end{equation}
\end{subequations}
where $o$ denotes the zero vector in $T_p\LL$ at $p\in \LL$ and bearing in mind that $\f_\al^2=-I$.


Let us consider $(\LL,\f_{\al},\xi_{\al},\eta_{\al},g)$ with the Lie algebra $\mathfrak{l}$
determined by the following nonzero commutators for $\lm\in\R\setminus\{0\}$
\[
\left[e_1,e_2\right]=[e_3,e_4]=\lm e_7.
\]


By the Koszul equality \eqref{koszul}, we compute the components of $\n$ with respect to the basis and the nonzero ones of them are: 
\begin{equation}\label{nabli-exa}
\begin{array}{c}
\begin{array}{c}
\n_{e_1} e_2=-\n_{e_2} e_1=\n_{e_3}e_4=-\n_{e_4} e_3=\frac12\lm e_7,
\end{array}\\[6pt]
\begin{array}{ll}
\n_{e_1} e_7=\n_{e_7} e_1=-\frac12\lm  e_2, &\quad
\n_{e_2} e_7=\n_{e_7} e_2=\frac12\lm  e_1, \\[6pt]
\n_{e_3} e_7=\n_{e_7} e_3=\frac12\lm  e_4, &\quad
\n_{e_4} e_7=\n_{e_7} e_4=-\frac12\lm  e_3.
\end{array}
\end{array}
\end{equation}

\begin{prop}\label{prop:exa-class}
Let $(\LL,\f_{\al},\xi_{\al},\eta_{\al},g)$ be the Lie group $\LL$ with almost contact 3-structure and metrics of Hermitian-Norden type depending on $\lm\in\R\setminus\{0\}$. Then this manifold belongs to the following basic
classes, according to the corresponding classification in \eqref{cl-A} and \eqref{cl-B}$:$
\begin{enumerate}
  \item
$\PP_{10}$ with res\-pect to $(\f_{1},\xi_{1},\eta_{1},g);$
  \item
$\F_{3}$ with respect to $(\f_{2},\xi_{2},\eta_{2},g);$
  \item
$\F_{7}$ with respect to $(\f_{3},\xi_{3},\eta_{3},g).$
\end{enumerate}
\end{prop}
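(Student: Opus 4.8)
The plan is to verify Proposition~\ref{prop:exa-class} by direct computation, classifying the manifold for each of the three structures separately against the membership conditions recorded in \eqref{cl-A} and \eqref{cl-B}. The necessary raw data is already assembled: the structure tensors in \eqref{str-exa}, the single nonzero family of commutators, and the Levi-Civita connection components \eqref{nabli-exa} obtained through the Koszul formula \eqref{koszul}. First I would compute all basic components $(F_\al)_{ijk}=F_\al(e_i,e_j,e_k)=g\bigl((\n_{e_i}\f_\al)e_j,e_k\bigr)$ from \eqref{nabli-exa} and \eqref{str-exa}, using $(\n_{e_i}\f_\al)e_j=\n_{e_i}(\f_\al e_j)-\f_\al(\n_{e_i}e_j)$. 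Since only the commutator containing $\lm$ is nonzero, all $F_\al$ will be proportional to $\lm$ and supported on a small index set, so this is bookkeeping rather than genuine difficulty.

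For the case $\al=1$ I would check the defining conditions of $\PP_{10}$ in \eqref{cl-A}: namely $F_1(\xi_1,y,z)=F_1(x,y,\xi_1)=0$, the relation $F_1(x,y,z)=F_1(\f_1x,\f_1y,z)$, and the vanishing Lee form $\ta_1=0$. Because $\xi_1=e_5$ and $e_5$ does not appear in \eqref{nabli-exa}, the conditions involving $\xi_1$ should fall out immediately, while the $\f_1$-invariance must be confirmed on the surviving components; the trace condition $\ta_1=0$ follows by contracting with $g^{ij}$ and observing cancellation. For $\al=2$ I would verify the $\F_3$ conditions from \eqref{cl-B}: $F_2(\xi_2,y,z)=F_2(x,y,\xi_2)=0$ together with the vanishing of the cyclic sum $\sx F_2(x,y,z)$. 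For $\al=3$ I would verify the $\F_7$ conditions, which require the purely $\eta_3$-form structure $F_3(x,y,z)=F_3(x,y,\xi_3)\eta_3(z)+F_3(x,z,\xi_3)\eta_3(y)$ together with the antisymmetry/symmetry pattern $F_3(x,y,\xi_3)=-F_3(y,x,\xi_3)=-F_3(\f_3x,\f_3y,\xi_3)$.

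The main subtlety, and the step I expect to demand the most care, is distinguishing the three structures from one another despite their sharing the same metric and the same underlying connection. The endomorphisms $\f_1,\f_2,\f_3$ act differently on the basis, so the same components of $\n$ produce qualitatively different $F_\al$: for $\f_2$ the Reeb field is $\xi_2=e_6$ (also absent from \eqref{nabli-exa}), forcing the $\xi_2$-contractions to vanish and pushing the tensor into the cyclic-sum-zero class $\F_3$, whereas for $\f_3$ the Reeb field is $\xi_3=e_7$, which \emph{does} appear prominently in \eqref{nabli-exa} through $\n_{e_i}e_7$ and $\n_{e_7}e_i$, so $F_3$ acquires $\eta_3$-dependence and lands in $\F_7$ instead. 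Correctly tracking where $e_7=\xi_3$ enters is therefore the crux: one must confirm that the $\xi_3$-directed part is exactly what the $\F_7$ template in \eqref{cl-B} prescribes and that the cyclic sum does not vanish (which would otherwise collapse the class toward $\F_3$). Once the explicit components are tabulated for each $\al$, matching them term-by-term against the templates in \eqref{cl-A} and \eqref{cl-B} completes the proof.
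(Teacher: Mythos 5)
Your proposal is correct and follows essentially the same route as the paper's proof: compute the basic components $(F_\al)_{ijk}$ from \eqref{str-exa} and \eqref{nabli-exa}, observe which components survive (in particular that $e_5=\xi_1$ and $e_6=\xi_2$ are absent from the connection table while $e_7=\xi_3$ is not), and match the resulting tensors term-by-term against the defining conditions of $\PP_{10}$, $\F_3$ and $\F_7$ in \eqref{cl-A} and \eqref{cl-B}. The paper's proof is exactly this tabulation, recorded in \eqref{F1F2F3-L}, so no further comment is needed.
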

\begin{proof}
Using \eqref{str-exa} and \eqref{nabli-exa}, we obtain the property
\[
(F_{\al})_{ijk}=-(F_{\al})_{jik}
\]
for $i\neq j$ and the following values of the basic components
\[
(F_{\al})_{ijk}\allowbreak{}=F_{\al}(e_i,e_j,e_k)
\]
of $F_{\al}$:
\begin{equation}\label{F1F2F3-L}
\begin{array}{l}
\frac12\lm  =(F_1)_{117}=(F_1)_{126}=(F_1)_{227}=(F_1)_{337}\\[6pt]
\phantom{\frac12\lm}
=(F_1)_{346}=(F_1)_{447}=(F_2)_{125}=(F_2)_{147}\\[6pt]
\phantom{\frac12\lm}
=(F_2)_{237}=(F_2)_{345}=-(F_3)_{137}=(F_3)_{247}
\end{array}
\end{equation}
and the others are zero.
%
%
From here, applying the classification conditions for the relevant classification in \cite{AlGa} or \cite{GaMiGr}, we have the classes in the statement, respectively.
\end{proof}

Bearing in mind \propref{prop:exa-class}, we deduce that the conditions (i) of \thmref{thm:NN=Nhat} and \thmref{thm:F3F7} are fulfilled and hence there exist natural connections $\n^\al$ 
for the corresponding structure $(\f_\al,\xi_\al,\eta_\al,g)$ on $\LL$.
We get the basic components of their torsions $T_\al$ 
by direct computations from \eqref{D1T1}, \eqref{T1}, \eqref{D2T2}, \eqref{T37} and \eqref{F1F2F3-L} as follows:
\[
\begin{array}{c}
(T_1)_{127}=(T_1)_{347}=(T_3)_{127}=(T_3)_{347}=-\lm,
\\[6pt]
(T_2)_{127}=-(T_2)_{145}=-(T_2)_{235}=(T_2)_{347}=-\frac12 \lm
\end{array}
\]
and the others are zero.

Obviously,  $\n^1$ and $\n^3$ coincides but $\n^2$ differs from them.
The condition for equality of the three torsions in \thmref{thm:D} is not fulfilled and therefore it does not exist a unique connection with totally skew-sym\-met\-ric torsion preserving the almost contact 3-structure and the metrics of Hermitian-Norden type on $(\LL,\f_{\al},\xi_{\al},\eta_{\al},g)$.

\vspace{20pt}

\begin{center}
$\divideontimes\divideontimes\divideontimes$
\end{center}

\newpage

%

\label{zakl}

 \Large{

\
\\[6pt]
\bigskip

\
\\[6pt]
\bigskip

\lhead{\emph{Conclusion. Contributions of the Dissertation
}}

\noindent  {\Huge\bf Conclusion
}

\vskip 1cm

%
%
%

\vskip 0.2in

\noindent  {\huge\bf \emph{Contributions of the Dissertation}
}

\vskip 1cm

The present dissertation contains recent author's investigations on differential geometry of smooth manifolds equipped with some tensor structures (almost complex structures, almost contact structures, almost hypercomplex structures and almost contact 3-structures)
and metrics of Norden type.

According to the author, 
the main contributions of the dissertation are the following:


\begin{enumerate}[1.]
  \item 
  There have been introduced and studied the twin interchange of the pair of Norden metrics (the basic one and its associated metric) on almost complex manifolds as well as there have been found invariant and anti-invariant geometric objects and characteristics under this interchange. (\S2)
  %
  \item 
  It has been developed further the study on the basic natural connections: the B-connection, the canonical connection and the KT-con\-nection 
  on almost complex manifolds with Norden metric. It has been characterized all basic classes of the considered manifolds with respect to the torsions of the canonical connection, the Nijenhuis tensor and its associated one. (\S3)
  \item 
  It has been introduced an associated Nijenhuis tensor on an almost contact manifold with B-metric having important geometrical characteristics. Moreover, these manifolds have been classified with respect to the Nijenhuis tensor and its associated one. (\S4)
  \item 
  It has been introduced and studied a $\f$-canonical connection on the almost contact manifolds with B-metric and it has been found the relation between this connection and other two important natural connections on these manifolds -- the $\f$B-connection and the $\f$KT-connection. It has been established that the torsion of the $\f$-canonical connection is invariant with respect to a subgroup of the general conformal transformations of the almost contact B-metric structure. Thereby, the basic classes of the considered manifolds have been characterized in terms of the torsion of the $\f$-canonical connection. (\S5)
  \item 
   There have been classified all affine connections on an almost contact manifold with B-metric 
   with respect to the properties of their torsions regarding the manifold's structure. Three studied natural connections have been characterized regarding this classification. (\S6)
  \item 
  There have been introduced and studied a pair of associated Schouten-van Kampen affine connections adapted to the contact distribution and the almost contact B-metric structure generated by the pair of associated B-metrics and their Levi-Civita connections. By means of the constructed non-symmetric connections, there have been characterized the basic classes of the almost contact B-metric manifolds. (\S7)
  \item 
  There have been introduced and studied Sasaki-like almost contact complex Riemannian manifolds. In addition, it has been presented a canonical construction (called an $\mathcal{S}^1$-solvable extension) producing such a manifold from a holomorphic complex Riemannian manifold. (\S8)
  \item 
  There have been studied integrable hypercomplex structures with Her\-mitian-Norden metrics on 4-dimensional Lie groups by constructing the five types corresponding of invariant hypercomplex structures with hyper-Hermitian metric. (\S10) 
  \item 
   It have been studied the tangent bundle of an almost complex manifold with Norden metric and the complete lift of the Norden metric as an almost hypercomplex manifold with Hermitian-Norden metrics.  (\S11)
  \item 
  There have been introduced and studied the associated Nijenhuis ten\-sors of an almost hypercomplex manifold with Hermitian-Norden metrics.
  It has been found a geometric interpretation of the vanishing of these tensors
  as a necessary and sufficient condition for existence of affine connections with totally skew-symmetric torsions preserving  the manifold's structure. (\S12)
  \item 
  There have been introduced and studied quaternionic K\"ahler manifolds corresponding to almost hypercomplex manifolds with Hermitian-Norden metrics. (\S13) 
  \item 
  There have been introduced manifolds with almost contact 3-structure and metrics of Hermitian-Norden type
  as well as corresponding associated Nijenhuis tensors. 
  It has been found a geometric interpretation of the vanishing of these tensors 
  as a necessary and sufficient condition for existence of affine connections with totally skew-symmetric torsions preserving  the manifold's structure. (\S14, \S15)
  \item 
    There have been constructed and studied a variety of explicit examples of manifolds equipped with studied structures: almost complex structure with Norden metric, almost contact structure with B-metric, almost hypercomplex structure with Hermitian-Norden metrics and almost contact 3-structure with metrics of Hermitian-Norden type.
\end{enumerate}


\vspace{20pt}

\begin{center}
$\divideontimes\divideontimes\divideontimes$
\end{center} 

\newpage

%
\renewcommand\theenumi{\arabic{enumi}}
\renewcommand\labelenumi{{\theenumi}.}

\label{publ}

 \Large{

\
\\[6pt]
\bigskip

\
\\[6pt]
\bigskip

\lhead{\emph{Conclusion. Publications on the Dissertation
}}

\noindent  {\huge\bf \emph{Publications on the Dissertation}
}

\vskip 1cm

%
%
%
%

\vskip12pt

Main results of the present dissertation are published in the following papers and preprints:
\footnote{The number in square brackets is from the general list of Bibliography.} %

\begin{enumerate}
\item \cite{Man29} 
\textsc{M. Manev}.
\emph{Quaternionic K\"ahler manifolds with Hermitian and Norden met\-rics}.
\textbf{Journal of Geometry}, vol.~103, no.~3 (2012),
519--530; ISSN:\allowbreak{}0047-2468,
DOI:\allowbreak{}10.\allowbreak{}1007/\allowbreak{}s00022-012-0139-x,
MCQ
\allowbreak{}(2012):\allowbreak{}0.16,
SJR\allowbreak{}
(2012):0.278
.

\item \cite{ManIv38} 
\textsc{M. Manev, M. Ivanova}.
\emph{Canonical-type connection on al\-most contact mani\-folds with B-metric}.
\textbf{Annals of Global Analysis and Geometry}, vol. 43, no. 4 (2013), 397--\allowbreak{}408; ISSN:0232-704X, DOI:\allowbreak{}10.\allowbreak{}1007/\allowbreak{}s10455-012-9351-z,
IF(2013):0.794,
SJR\allowbreak{}(20\allowbreak{}13):1.248.

\item \cite{ManIv36} 
\textsc{M. Manev, M. Ivanova}.
\emph{A classification of the torsion tensors on almost con\-tact manifolds with B-metric}.
\textbf{Central European Jour\-nal of Mathematics}, vol. 12, no. 10 (2014), 1416--1432; ISSN:\allowbreak{}18\allowbreak{}95-1074, DOI:10.2478/s11533-014-0422-1,
IF
(2014):0.\allowbreak{}578,
MCQ(20\allowbreak{}14):\allowbreak{}0.39,
SJR(2014):0.610.

\item \cite{Man44} 
\textsc{M. Manev}.
\emph{Hypercomplex structures with Hermitian-Norden met\-rics on four-dimen\-sional Lie algebras}.
\textbf{Journal of Geometry}, vol. 105, no. 1 (2014), 21--31; ISSN:00\allowbreak{}47-2468,
DOI:\allowbreak{}10.1007/\allowbreak{}s00022-013-0188-9,
MCQ\allowbreak{}(2014):\allowbreak{}0.26,
SJR(2014):0.345.

\item \cite{Man47} 
\textsc{M. Manev}.
\emph{Tangent bundles with complete lift of the base metric  and almost
hy\-per\-com\-plex Hermitian-Norden structure}.
\textbf{Comptes rendus de l'Academie bulgare des Sciences}, vol.~67, no.~3 (2014),
313--322;
ISSN:1310-1331, %
IF(20\allowbreak{}14):0.284,
SJR\allowbreak{}(20\allowbreak{}14):\allowbreak{}0.205.

\item \cite{Man48} 
\textsc{M. Manev}.
\emph{On canonical-type connections on almost contact com\-plex Rie\-mann\-ian manifolds}.
\textbf{Filomat}, vol. 29, no. 3 (2015),
411--425; ISSN:0354-5180, DOI:10.2298/\allowbreak{}FIL1503411M, %
IF(2015):0.603, \allowbreak{}SJR(2015):0.487.

\item \cite{Man50} 
\textsc{M. Manev}.
\emph{Pair of associated Schouten-van Kampen connections adapt\-ed to an al\-most
contact B-metric structure}.
\textbf{Filomat}, vol. 29, no. 10 (2015),
2437--2446; ISSN:\allowbreak{}0354-5180,
IF(2015):0.603,
SJR\allowbreak{}(2015):0.487.

\item \cite{IvMaMa14} 
\textsc{S. Ivanov, H. Manev, M. Manev}.
\emph{Sasaki-like almost contact complex Rie\-mann\-ian manifolds}.
\textbf{Journal of Geometry and Physics}, vol. 105 (2016), 136--148; ISSN:0393-0440, DOI:10.1016/j.geom\allowbreak{}phys.\allowbreak{}2016.05.009,
IF(2015):\allowbreak{}0.752, SJR(2015):0.705.

\item \cite{Man49} 
\textsc{M. Manev}.
\emph{Invariant tensors under the twin interchange of Nor\-den metrics on almost complex manifolds}.
\textbf{Results in Math\-e\-ma\-t\-ics}, vol. 70, no. 1 (2016), 109--126; IS\allowbreak{}SN:1422-6383, DOI:10.1007/\allowbreak{}s\allowbreak{}0\allowbreak{}0\allowbreak{}025-015-0464-0,
IF(2015):0.768, MCQ(2015):0.43, \allowbreak{}SJR(2015):0.6\allowbreak{}36.

\item \cite{Man52} 
\textsc{M. Manev}.
\emph{Associated Nijenhuis tensors on manifolds with al\-most hypercom\-plex
struc\-tures and metrics of Hermitian-Norden type}.
\textbf{Re\-sults in Mathematics},
vol. 71 
(2017), 
ISSN:1422-6383, DOI:10.\allowbreak{}1\allowbreak{}007/s00025-016-0624-x,
IF(2015):0.768, MCQ(2015):0.43, SJR(201\allowbreak{}5):0.636.

\item \cite{Man53} 
\textsc{M. Manev}.
\emph{Manifolds with almost contact 3-structure and metrics of Hermit\-ian-Norden type}.
\textbf{Journal of Geometry}, 
(accepted, 4.05.2017),  DOI:10.1007/s00022-017-0386-y,
MCQ(2015):0.27, SJR(2\allowbreak{}015):0.272.


\item \cite{Man54} 
\textsc{M. Manev}.
\emph{Associated Nijenhuis tensors on manifolds with al\-most contact 3-structure
and metrics of Hermitian-Norden type}.
\textbf{Comp\-tes rendus de l'Academie bulgare des Sciences} (accepted, 28.02.2017), IF(2015):0.233.

\item \cite{Man55} 
\textsc{M. Manev}.
\emph{Natural connections with totally skew-sym\-met\-ric tor\-sion on man\-i\-folds with
almost contact 3-structure and metrics of Her\-mitian-Norden type}. arXiv:16\allowbreak{}04.02039 (part 2).

\end{enumerate}

\vspace{20pt}

\begin{center}
$\divideontimes\divideontimes\divideontimes$
\end{center}

\newpage

%

\label{dekl}

 \Large{

\
\\[4pt]
\bigskip

\
\\[4pt]
\bigskip

\lhead{\emph{Conclusion. Declaration of Originality
}}

\noindent  {\huge\bf \emph{Declaration of Originality}
}

\vskip 1cm

\noindent
by \\
\textsc{\textbf{Prof. Dr. Mancho Hristov Manev}}\\
Department of Algebra and Geometry\\
Faculty of Mathematics and Informatics\\
Paisii Hilendarski University of Plovdiv

\vskip1cm

In connection with the conducting of the procedure for award of the scientific degree of Doctor of Science in Mathematics from Paisii Hilendarski University of Plovdiv and the protection of the presented by me dissertation, I declare:

The results and the contributions of the scientific studies presented in my dissertation on the topic
\begin{quote}
\emph{\textsc{On Geometry of Manifolds with Some Tensor \\
              Structures and Metrics of Norden Type}}
\end{quote}
are original and not taken from research and publications in which I do not participate.

\vskip1cm

\noindent
1.02.2017 \hspace{6cm} Signature:\\
\noindent
Plovdiv \\
\phantom{aaa} \hfill (Prof. Dr. Mancho Manev)

\vspace{20pt}

\begin{center}
$\divideontimes\divideontimes\divideontimes$
\end{center}

%

\newpage

%

\label{blag}

 \Large{

\
\\[6pt]
\bigskip

\
\\[6pt]
\bigskip

\lhead{\emph{Conclusion. Acknowledgements
}}

\noindent  {\huge\bf \emph{Acknowledgements}
}

\vskip 1cm

In research on the topic and preparation of 
the present dissertation,
the author was supported by a number of valuable people and therefore he expresses his appreciation to them.

At first, the author realize that his family is the bedrock of the favourable conditions for his scientific pursuits.

Invaluable was the help from his teachers, consultants and co-authors:
Kostadin Gribachev,
Dimitar Mekerov,
Georgi Ganchev,
Stefan Ivanov,
Stancho Dimiev
and
Kouei Sekigawa.

Furthermore, the author was aided by his collaborators:
Galia Nakova,
Miroslava Ivanova
and Hristo Manev.

The author is also grateful to all those who in one way or another
contributed to the realisation of the presented dissertation.

%

\vspace{20pt}

\begin{center}
$\divideontimes\divideontimes\divideontimes$
\end{center}

\newpage


\lhead{\emph{Bibliography}}

\label{bib}

\Large{

\
\\[6pt]
\bigskip

\
\\[6pt]
\bigskip

\noindent{\Huge\bf Bibliography}

\vskip 1cm

\vspace{20pt}

\begin{center}
$\divideontimes\divideontimes\divideontimes$
\end{center} 


\begin{thebibliography}{999}
\bibitem{AlCor}
\textsc{D.\thinspace{}V.~Alekseevsky, V.~Cort\'es},
\emph{Classification of
pseudo-Rie\-man\-nian symmetric spaces of quaternionic K\"ahler type.}
In: Lie groups and invariant theory, Amer. Math. Soc. Transl. Ser.
(2) 213, Amer. Math. Soc., Providence, RI, 2005, 33--62.

\bibitem{AlMa}
\textsc{D.\thinspace{}V.~Alekseevsky, S. Marchiafava},
\emph{Quaternionic structures
on a mani\-fold and subordinated structures}.
Ann. Mat. Pura Appl.
 \textbf{CLXXI} (IV) (1996) 205--273.

\bibitem{AlMaPo}
\textsc{D.\thinspace{}V.~Alekseevsky, S.~Marchiafava, M.~Pontecorvo},
\emph{Compatible complex structures
on almost quaternionic manifolds}.
Trans. Amer. Math. Soc. \textbf{351} (3) (1999) 997--1014.

\bibitem{AlGa}
\textsc{V. Alexiev, G. Ganchev}, %
\emph{On the classification of almost contact
metric manifolds}. %
In: Math. Educ. Math., Proc. 15th
Spring Conf. UBM, (6-9 Apr. 1986, Sunny Beach, Bulgaria),
Professor Marin Drinov Academic Publishing House, 1986, 155--161
(English translation: arXiv:1110.4297).


\bibitem{BaikBla95}
\textsc{C.~Baikoussis, D.\thinspace{}E.~Blair},
\emph{On the geometry of the 7-sphere}.
Results Math. {\bf 27} (1995) 5--16.


\bibitem{Barb}
\textsc{M.\thinspace{}L.~Barberis},
\emph{Hypercomplex structures on four-dimensional Lie groups}.
Proc. Amer. Math. Soc. \textbf{128} (4) (1997) 1043--1054.

\bibitem{BaDo}
\textsc{M.\thinspace{}L.~Barberis, I.~Dotti},
\emph{Abelian complex structures on solvable Lie algebras}.
J. Lie Theory \textbf{14} (1) (2004) 25--34.

\bibitem{BaDoMi}
\textsc{M.\thinspace{}L.~Barberis, I.~Dotti, R.~Miatello},
\emph{On certain locally homogeneous Clifford manifolds}.
Ann. Glob. Anal. Geom. \textbf{13}  (1995) 289--301.

\bibitem{BejFarr} %
\textsc{A.~Bejancu, H.\thinspace{}R.~Farran},
\emph{Foliations and Geometric Structures}. 
Mathematics and Its Applications \textbf{580}, Springer, Dordrecht, 2006.

\bibitem{Biq}
\textsc{O.~Biquard}, %
\emph{M\'{e}triques d'Einstein asymptotiquement sym\'{e}triques},
Ast\'{e}risque, %
2000, 265; English translation: Asymptotically Symmetric Einstein
Metrics, SMF/AMS Texts and Monographs, American Mathematical
Society, 2006, 13.

\bibitem{Bis}
\textsc{J.-M.~Bismut}, %
\emph{A local index theorem for non-K\"{a}hler manifolds}. %
Math. Ann. \textbf{284} (1989) 681--699.

\bibitem{Blair}
\textsc{D.\thinspace{}E.~Blair},
\emph{Riemannian Geometry of Contact and Symplec\-tic
Man\-i\-folds}.
Progress in Mathematics \textbf{203}, Birkh\"auser, Boston
(2002).

\bibitem{Bon}
\textsc{E.~Bonan},
\emph{Sur les structures presque quaternioniques},
C. R. Acad. Sc. Paris \textbf{258} (1964) 792--795.

\bibitem{BonCasHer}
\textsc{E.~Bonome, R.~Castro, L.M.~Hervella},
\emph{On an almost complex structure with Norden metric on the
tangent bundle of an almost Hermitian manifold},
Bull. Math. Soc. Sci. Math. Roumanie (N.S.) \textbf{33(81)} (1989)
309--318.

\bibitem{BoGa}
\textsc{A.~Borisov, G.~Ganchev},
\emph{Curvature properties of K\"ahlerian
manifolds with $B$-metric},
In: Math. Educ. Math.,
Proc. XIV Spring Conf. UBM, Sunny Beach, 1985, 220--226.

\bibitem{BoFeFrVo99}
\textsc{A.~Borowiec, M.~Ferraris, M.~Francaviglia, I.~Volovich},
\emph{Almost complex and almost product Einstein manifolds from a
variational principle}.
J. Math. Phys. \textbf{40} (1999) 3446--3464.


\bibitem{BorFraVol}
\textsc{A.~Borowiec, M.~Francaviglia, I.~Volovich},
\emph{Anti-K\"ahlerian manifolds},
Differential Geom. Appl. \textbf{12} (2000) 281--289.


\bibitem{Boy}
\textsc{C.\thinspace{}P.~Boyer},
\emph{A note on hyper-Hermitian four-manifolds}.
Proc. Amer. Math. Soc. {\bf 102} (1) (1988) 157--164.

\bibitem{BoyGal}
\textsc{C.\thinspace{}P.~Boyer, K.~Galicki},
\emph{3-Sasakian Manifolds}.
In: Surveys in Differential Geometry VI. Essays on Einstein Manifolds (International Press, Boston, 1999) pp.~123--184.

\bibitem{BoyGal08}
\textsc{C.\thinspace{}P.~Boyer, K.~Galicki},
\emph{Sasakian Geometry}.
Oxford University Press, Oxford, 2008.

\bibitem{BoyGalMann}
\textsc{C.\thinspace{}P.~Boyer, K.~Galicki, B.\thinspace{}M.~Mann},
\emph{Hypercomplex structures from 3-Sasakian structures}.
J. reine angew. Math. \textbf{501} (1998) 115--141.




\bibitem{Car25} %
\textsc{E. Cartan},
\emph{Sur les vari\'et\'es \`a connexion affine et la
th\'eorie de la relativit\'e g\'en\'eralis\'ee (deuxi\`eme
partie)}.
Ann. Ec. Norm. Sup. \textbf{42} (1925) 17--88, part II. English
transl. of both parts by A. Magnon and A. Ashtekar, On {M}anifolds
with an {A}ffine {C}onnection and the {T}heory of {G}eneral
{R}elativity, Bibliopolis, Napoli, 1986.


\bibitem{CasHerRio} %
\textsc{R. Castro, L.M. Hervella, E. Garc\'{\i}a-R\'{\i}o},
\emph{Some examples of almost complex manifolds with Norden metric},
Riv. Mat. Univ. Parma (4) \textbf{15} (1989) 133--141.


\bibitem{Chern}
\textsc{S.\thinspace{}S.~Chern},
\emph{Complex Manifolds Without Potential Theory}, 2nd ed.,
Springer-Verlag, 1979.

\bibitem{DjDo}
\textsc{G.~Djelepov, I.~Dokuzova},
\emph{Relations between two Riemannian connections in manifolds with an almost complex structure and Norden metric}.
Plovdiv Univ. Sci. Works -- Math. \textbf{33} (2001) 31--34.

\bibitem{Dom}
\textsc{P.~Dombrowski},
\emph{On the geometry of the tangent bundle}.
J. Reine Angew. Math. \textbf{210} (1962) 73--88.

\bibitem{DotFin}
\textsc{I.~Dotti, A.~Fino},
\emph{Hyper-Kahler with torsion structures invariant by nilpotent Lie groups},
Class. Quantum Grav. \textbf{19} (2002) 1--12.


\bibitem{DraFra}
\textsc{S.~Dragomir, M.~Francaviglia},
\emph{On Norden metrics which are locally
conformal to anti-K\"ahlerian metrics},
Acta Appl. Math. \textbf{60} (2000) 135--155.

\bibitem{Ehr}
\textsc{C.~Ehresmann},
\emph{Sur la th\'{e}orie des espaces fibr\'{e}s},
Coll. Intern. Top. Alg. Paris, CNRS (1947) 3--15 (Re-edited in Charles Ehresmann: Oeuvres compl\`{e}tes et comment\`{e}es, partie I, Amiens, 1984).


\bibitem{FinGran}
\textsc{A.~Fino, G.~Grantcharov},
\emph{Properties of manifolds with skew-symmetric torsion and special holonomy},
Adv. Math. \textbf{189} (2004) 439--450.

\bibitem{Fri-Iv2}
\textsc{T.~Friedrich, S.~Ivanov},
\emph{Parallel spinors and connections with
skew-sym\-met\-ric torsion in string theory}.
Asian J. Math. \textbf{6} (2002) 303--336.

\bibitem{Fri-Iv}
\textsc{T.~Friedrich, S.~Ivanov},
\emph{Almost contact manifolds, connections
with torsion, and parallel spinors}.
J. Reine Angew. Math.  \textbf{559} (2003) 217--236.

\bibitem{FrNi}
\textsc{A.~Fr\"olicher, A.~Nijenhuis},
\emph{Theory of vector-valued differential forms}, I,
Proc. Kon. Ned. Akad. Wet. Amsterdam, A
\textbf{59} (1956) 338--359.


\bibitem{GaBo}
\textsc{G.~Ganchev, A.~Borisov},
\emph{Note on the almost complex manifolds
with a Norden metric}.
C. R. Acad. Bulgare Sci. \textbf{39} (5) (1986) 31--34.


\bibitem{GaGrMi85} 
\textsc{G.~Ganchev, K.~Gribachev, V.~Mihova},
\emph{Holomorphic hypersurfaces of Kaehler manifolds with Norden
metric}.
Plovdiv Univ. Sci. Works -- Math. \textbf{23} (2) (1985) 221--237.


\bibitem{GaGrMi87} 
\textsc{G.~Ganchev, K.~Gribachev, V.~Mihova},
\emph{B-connections and their conformal invariants on conformally Kaehler manifolds with
B-metric}.
Publ. Inst. Math. (Beograd) (N.S.) \textbf{42(56)} (1987) 107--121.


\bibitem{GaIv92}
\textsc{G.~Ganchev, S.~Ivanov},
\emph{Characteristic curvatures on complex Riemannian manifolds}.
Riv. Mat. Univ. Parma (5) \textbf{1} (1992) 155--162.


\bibitem{GaMi87} 
\textsc{G.~Ganchev, V.~Mihova},
\emph{Canonical connection and the canonical
conformal group on an almost complex manifold with B-metric},
Annuaire Univ. Sofia Fac. Math. Inform. \textbf{81} (1) (1987) 195--206.


\bibitem{GaMiGr}
\textsc{G.~Ganchev, V.~Mihova, K.~Gribachev},
\emph{Almost contact manifolds with B-metric}.
Math. Balkanica (N.S.) \textbf{7} (3-4) (1993) 261--276.


\bibitem{GRMa}
\textsc{E.~Garc\'ia-R\'io, Y.~Matsushita},
\emph{Isotropic K\"ahler structures on Engel 4-manifolds}.
J. Geom. Phys. \textbf{33} (2000) 288--294.


\bibitem{GHR}
\textsc{S.\thinspace{}J.~Gates, C.\thinspace{}M.~Hull, M.~Ro\v{c}ek},
\emph{Twisted multiplets and new supersymmetric non-linear $\sigma$-models}.
Nuclear Phys. B \textbf{248}  (1984) 157--186.


\bibitem{Gau97}
\textsc{P.~Gauduchon},
\emph{Hermitian connections and Dirac operators},
Boll. Unione Mat. Ital. (7) \textbf{11} 
 (2) (1997) 257--288.


\bibitem{Gaud}
\textsc{P.~Gauduchon}
\emph{Canonical connections for almost-hypercomplex structures}.
In: Complex analysis and geometry, Trento, 1995, 123--136,
Pitman Res. Notes in Math. Ser. 366, Longman, Harlow, 1997.

\bibitem{GrHe}
\textsc{A.~Gray, L.\thinspace{}M.~Hervella},
\emph{The sixteen classes of almost Hermitian manifolds and their linear invariants}.
Ann. Mat. Pura Appl. \textbf{CXXIII} (IV) (1980) 35--58.


\bibitem{GriDjeMek85b}
\textsc{K.~Gribachev, G.~Djelepov, D.~Mekerov},
\emph{On some subclasses of generalized B-manifold},
C. R. Acad. Bulgare Sci. \textbf{38} (1985) 437--440.


\bibitem{GriMan24}
\textsc{K.~Gribachev, M.~Manev},
\emph{Almost hypercomplex pseudo-Her\-mit\-ian
manifolds and a 4-dimen\-sional Lie group with such structure}.
J. Geom. \textbf{88} (1-2) (2008) 41--52. 


\bibitem{GriManDim12}
\textsc{K.~Gribachev, M.~Manev, S.~Dimiev},
\emph{On the almost hy\-per\-complex pseudo-Her\-mit\-ian manifolds}.
In:
Trends in Comp\-lex An\-al\-ysis, Differential Geometry and Mathe\-ma\-tical
Physics, eds. S. Dimiev and K. Sekigawa (World Sci. Publ.,
Singa\-pore, River Edge, NJ,  2003) pp.~51--62.


\bibitem{GrMaMe1}
\textsc{K.~Gribachev, M.~Manev, D.~Mekerov},
\emph{A Lie group as a 4-di\-men\-sional qua\-si-K\"ahler manifold with Norden metric}.
JP J. Geom. Topol. \textbf{6} (2006) 55--68.


\bibitem{GriMekDje85a}
\textsc{K.~Gribachev, D.~Mekerov, G.~Djelepov},
\emph{Generalized B-man\-i\-folds}.
C. R. Acad. Bulgare Sci. \textbf{38} (3) (1985) 299--302.


\bibitem{GriMekDje85d}
\textsc{K.~Gribachev, D.~Mekerov, G.~Djelepov},
\emph{On the geometry of almost B-manifolds}.
C. R. Acad. Bulgare Sci. \textbf{38} (1985) 563--566.


\bibitem{Dobr11-1}
\textsc{D.~Gribacheva},
\emph{Natural connections on Riemannian product manifolds}.
C. R. Acad. Bulgare Sci. \textbf{64} (6) (2011) 799--806.

\bibitem{MekDobr}
\textsc{D.~Gribacheva, D.~Mekerov},
\emph{Canonical connection on a class of Riemannian almost product ma\-ni\-folds}.
J. Geom. \textbf{102} (1-2) (2011) 53--71.

\bibitem{Dobr11-2}
\textsc{D.~Gribacheva, D.~Mekerov},
\emph{Natural connections on conformal Riemannian P-manifolds}.
C. R. Acad. Bulgare Sci. \textbf{65} (5) (2012) 581--590.



\bibitem{Hay}
\textsc{H.~Hayden},
\emph{Subspaces of a space with torsion}.
Proc. London Math. Soc. \textbf{34} (1934) 27--50.


\bibitem{HoPa}
\textsc{P.\thinspace{}S.~Howe, G.~Papadopoulos},
\emph{Twistor spaces for hyper-K\"ahler manifolds with torsion}.
Phys. Lett. B \textbf{379} (1996) 80--86.


\bibitem{Ia}
\textsc{S.~Ianu\c{s}},
\emph{Some almost product structures on manifolds with linear connection}.
K\={o}dai Mathematical Seminar Reports \textbf{23} (1971) 305--310.







\bibitem{IvIv}
\textsc{P.~Ivanov, S.~Ivanov},
\emph{$SU(3)$-instantons and $G_2, Spin(7)$-hete\-ro\-tic string solitons}.
Comm. Math. Phys. \textbf{259} (2005) 79--102.


\bibitem{IvMaMa14}
\textsc{S. Ivanov, H. Manev, M.~Manev},
\emph{Sasaki-like almost contact complex Rie\-mann\-ian manifolds}.
J. Geom. Phys. \textbf{105} (2016) 136--148.


\bibitem{IvPapa}
\textsc{S.~Ivanov, G.~Papadopoulos},
\emph{Vanishing theorems and string backgrounds}.
Classical Quantum Gravity \textbf{18} (2001) 1089--1110.




\bibitem{KoNo-1} 
\textsc{S.~Kobayashi, K.~Nomizu},
\emph{Foundations of differential geometry}, vol. I.
Interscience Tracts in Pure and Applied Mathematics,
Interscience Publishers, John Wiley \&
Sons, Inc., New York-London-Sydney, 1963. 


\bibitem{KoNo-2}
\textsc{S.~Kobayashi, K.~Nomizu},
\emph{Foundations of differential geometry}, vol. II.
Interscience Tracts in Pure and Applied Mathematics,
Interscience Publishers John Wiley \&
Sons, Inc., New York-London-Sydney, 1969. 


\bibitem{Kuo}
\textsc{Y.-Y.~Kuo},
\emph{On almost contact 3-structure}.
T\^{o}hoku Math. J. (2) \textbf{22} (3) (1970) 325--332.


\bibitem{KuoTac}
\textsc{Y.-Y.~Kuo, S.~Tachibana},
\emph{On the distribution appeared in contact 3-structure}.
Taita J. Math. \textbf{2} (1970) 17--24.


\bibitem{Low}
\textsc{P.\thinspace{}R.~Law},
\emph{De Rham-Wu decomposition of holomorphic Riemannian manifolds}.
J. Math. Phys. \textbf{43} (12) (2002) 6339--6342.


\bibitem{LeB83} 
\textsc{C.~LeBrun},
\emph{Spaces of complex null geodesics in complex-Rie\-man\-n\-ian geometry}.
Trans. Amer. Math. Soc. \textbf{278} (1) (1983) 209--231.


\bibitem{Lib54}
\textsc{P.~Libermann},
\emph{Sur les connexions hermitiennes}.
C. R. Acad. Sci. Paris \textbf{239}  (1954) 1579--1581.


\bibitem{Li1}
\textsc{A.~Lichnerowicz},
\emph{Un th\'{e}or\`{e}me sur les espaces homog\`{e}nes complexes}.
Arch. Math. \textbf{5} (1954) 207--215.

\bibitem{Li2}
\textsc{A.~Lichnerowicz},
\emph{G\'{e}n\'{e}ralisation de la g\'{e}om\'{e}trie k\"{a}hl\'{e}rienne glo\-ba\-le}.
Coll. de G\'{e}om. Diff. Louvain (1955) 99--122.


\bibitem{Lih62}
\textsc{A.~Lichnerowicz},
\emph{{T}h\'{e}orie {G}lobale des {C}onnections et des {G}roup\-es d'{H}o\-mo\-to\-pie}.
Edizioni Cremonese, Roma, 1962.




\bibitem{HMan}
\textsc{H.~Manev},
\emph{On the structure tensors of almost contact B-metric manifolds}.
Filomat \textbf{29} (3) (2015) 427--436.


\bibitem{HMan4}
\textsc{H.~Manev},
\emph{Almost contact B-metric structures and the Bianchi clas\-si\-fi\-ca\-tion of the three-dimensional Lie algebras},
Annuaire Univ. Sofia Fac. Math. Inform. \textbf{102} (2015) 133--144.

\bibitem{HMan5}
\textsc{H.~Manev},
\emph{Matrix Lie groups as 3-dimensional almost contact B-metric manifolds},
Facta Univ. Ser. Math. Inform. \textbf{30} (3) (2015) 341--351.

\bibitem{HMan6}
\textsc{H.~Manev},
\emph{Space-like and time-like hyperspheres in real pseudo-Rie\-mann\-ian 4-spaces with almost contact B-metric structures},
Novi Sad J. Math. \textbf{46} (1) (2016) 181--189.

\bibitem{HMan7}
\textsc{H.~Manev},
\emph{Almost contact B-metric manifolds as extensions of a 2-di\-men\-sional space-form},
 Acta Univ. Palack. Olomuc. Fac. Rerum Natur. Math. \textbf{55} (1) (2016) 59--71.

\bibitem{HMan2}
\textsc{H.~Manev, D.~Mekerov},
\emph{Lie groups as 3-dimensional almost contact B-met\-ric manifolds},
J. Geom. \textbf{106} (2015) 229--242.

\bibitem{HManNak}
\textsc{H.~Manev, G.~Nakova},
\emph{Slant null curves on normal almost contact B-metric 3-manifolds
with parallel Reeb vector field}
Results Math. (2016) doi:10.1007/s00025-016-0535-x.




\bibitem{Man3}
\textsc{M.~Manev},
\emph{Properties of curvature tensors on almost contact man\-i\-folds with B-metric}.
In:  Proc. Jubilee Sci. Session of Vassil Lev\-sky Higher Mil. School, Veliko Tarnovo,
vol. 27, Veliko Tarnovo, Bulgaria, 1993, pp.~221--227.


\bibitem{Man4}
\textsc{M.~Manev},
\emph{Contactly conformal transformations of general type of
almost contact manifolds with B-metric. Applications}.
Math. Bal\-ka\-nica (N.S.) \textbf{11} (3-4) (1997) 347--357.


\bibitem{dissMan}
\textsc{M.~Manev},
\emph{On conformal geometry of almost contact manifolds with B-metric}.
Doctoral Thesis, Plovdiv University, 1998, doi: 10.13140/2.1.2545.8887. (in Bulgarian)


\bibitem{Man8}
\textsc{M.~Manev},
\emph{Almost contact B-metric hypersurfaces of
Kaeh\-ler\-ian mani\-folds with B-met\-ric}.
In: Perspectives of Complex Ana\-lysis, Differential Geo\-metry and Mathematical
Physics. (S. Dimiev and K. Sekigawa, eds.), pp.~159--170, World Sci. Publ.,
Singapore, 2001.



\bibitem{Ma05}
\textsc{M.~Manev},
\emph{Tangent bundles with Sasaki metric and almost
hypercomplex pseudo-Hermitian structure}.
In: Topics in Almost Hermitian Geometry and Related Fields.
(Y. Matsushita, E. Garc\'ia-R\'io, H. Hashimoto, T. Koda and T. Oguro, eds.),
pp. 170--185, World Sci. Publ., Singapore, 2005. 


\bibitem{Man28}
\textsc{M.~Manev},
\emph{A connection with parallel torsion on almost
hypercomplex manifolds with Hermitian and anti-Hermitian metrics}.
J. Geom. Phys. \textbf{61} (1) (2011) 248--259.


\bibitem{Man31}
\textsc{M.~Manev},
\emph{Natural connection with totally skew-symmetric torsion on
almost con\-tact manifolds with B-metric}.
Int. J. Ge\-om. Methods Mod. Phys. \textbf{9} (5) (2012) 1250044 (20 pa\-ges)

\bibitem{Man29}
\textsc{M. Manev}.
\emph{Quaternionic K\"ahler manifolds with Hermitian and Norden met\-rics}.
Journal of Geomety \textbf{103} (3) (2012) 519--530.

\bibitem{Man33}
\textsc{M.~Manev},
\emph{Curvature properties on some classes of almost con\-tact manifolds with B-\allowbreak{}met\-ric}.
C. R. Acad. Bulgare Sci. \textbf{65} (5) (2012) 283--290.

\bibitem{Man44} 
\textsc{M.~Manev}.
\emph{Hypercomplex structures with Hermitian-Norden metrics on four-dimen\-sional Lie algebras}.
Journal of Geometry \textbf{105} (1) (2014) 21--31.

\bibitem{Man47} 
\textsc{M. Manev}.
\emph{Tangent bundles with complete lift of the base metric  and almost
hy\-per\-com\-plex Hermitian-Norden structure}.
C. R. Acad. Bulgare Sci. \textbf{67} (3) (2014) 313--322.


\bibitem{Man48}
\textsc{M.~Manev},
\emph{On canonical-type connections on almost contact complex Riemannian manifolds}.
Filomat \textbf{29} (3) (2015) 411--425.

\bibitem{Man49}
\textsc{M.~Manev},
\emph{Invariant tensors under the twin interchange
of Norden metrics on almost complex manifolds}.
Results Math. \textbf{70} (1) (2016) 109--126. 

\bibitem{Man50}
\textsc{M. Manev}.
\emph{Pair of associated Schouten-van Kampen connections adapt\-ed to an al\-most
contact B-metric structure}.
Filomat \textbf{29} (10) (2015)
2437--2446.


\bibitem{Man52} 
\textsc{M. Manev}.
\emph{Associated Nijenhuis tensors on manifolds with almost hypercom\-plex
struc\-tures and metrics of Hermitian-Norden type}.
Results Math. \textbf{71} (2017), 
doi:10.1007/s00025-016-0624-x.

\bibitem{Man53} 
\textsc{M. Manev}.
\emph{Manifolds with almost contact 3-structure and metrics of Hermit\-ian-Norden type}.
J. Geom. 
(accepted, 4.05.2017),  doi:10.\allowbreak{}10\allowbreak{}07/\allowbreak{}s00022-017-0386-y.

\bibitem{Man54} 
\textsc{M. Manev}.
\emph{Associated Nijenhuis tensors on manifolds with almost contact 3-structure
and metrics of Hermitian-Norden type}.
C. R. Acad. Bulgare Sci.  (accepted).

\bibitem{Man55} 
\textsc{M. Manev}.
\emph{Natural connections with totally skew-sym\-met\-ric torsion on man\-i\-folds with
almost contact 3-structure and metrics of Her\-mitian-Norden type}. arXiv:16\allowbreak{}04.02039 (part 2)

\bibitem{ManGri1}
\textsc{M.~Manev, K.~Gribachev},
\emph{Contactly conformal transformations of almost contact manifolds with B-metric}.
Serdica Math. J. \textbf{19} (1993) 287--299.


\bibitem{ManGri2}
\textsc{M.~Manev, K.~Gribachev},
\emph{Conformally invariant tensors on al\-most con\-tact manifolds with B-metric}.
Serdica Math. J. \textbf{20} (1994) 133--147.


\bibitem{ManGri32}
\textsc{M.~Manev, K.~Gribachev},
\emph{A connection with parallel totally skew-symmetric torsion on a class
of almost hypercomplex manifolds with Hermitian and anti-Hermitian metrics}.
Int. J. Geom. Methods Mod. Phys. \textbf{8} (1) (2011) 115--131.


\bibitem{ManIv37}
\textsc{M.~Manev, M.~Ivanova},
\emph{A natural connection on some classes of al\-most contact manifolds with B-metric}.
C. R. Acad. Bulg. Sci. \textbf{65} (4)  (2012) 429--436.


\bibitem{ManIv38}
\textsc{M.~Manev, M.~Ivanova},
\emph{Canonical-type connection on al\-most contact manifolds with B-metric}.
Ann. Global Anal. Geom. \textbf{43} (4) (2013) 397--408.


\bibitem{ManIv36}
\textsc{M.~Manev, M.~Ivanova},
\emph{A classification of the torsion tensors on almost contact manifolds with B-metric}.
Cent. Eur. J. Math.  \textbf{12} (10) (2014) 1416--1432.

\bibitem{ManIv39}
\textsc{M.~Manev, M.~Ivanova},
\emph{Natural connections with torsion ex\-press\-ed by the metric ten\-sors
on almost contact manifolds with B-metric}.
Plovdiv Univ. Sci. Works -- Math. \textbf{38} (3) (2011) 47--58. 


\bibitem{ManIv40}
\textsc{M.~Manev, M.~Ivanova}.
\emph{Almost contact B-metric manifolds with curvature tensor of K\"ahler type}.
Plovdiv Univ. Sci. Works -- Math. \textbf{39} (3) (2012) 57--69.  


\bibitem{ManSek}
\textsc{M.~Manev, K.~Sekigawa},
\emph{Some four-dimensional almost hypercomplex pseudo-Hermitian manifolds}.
In: Contemporary Aspects of Complex Analysis, Differential Geometry and Mathematical
Physics (S.~Di\-miev and K.~Sekigawa, eds.), pp. 174--186, World
Sci. Publ., 2005. 


\bibitem{ManSta01}
\textsc{M.~Manev, M.~Staikova},
\emph{On almost paracontact Riemannian man\-ifolds of type $(n,n)$}.
J. Geom.  \textbf{72} (2001) 108--114.




\bibitem{Manin}
\textsc{Y.\thinspace{}I.~Manin},
\emph{Gauge field theory and complex geometry}
(translated from Russian by N. Koblitz and J. R. King).
In: Grundlehren der Mathematischen Wissenschaften [Fundamental
Principles of Mathematical Sciences], 289 (Springer-Verlag,
Berlin, 1988) x+297 pp.

\bibitem{Mats81}
\textsc{Y.~Matsushita},
\emph{On Euler characteristics of compact Einstein 4-manifolds with signature $(++--)$}.
J. Math. Phys. \textbf{22} (1981) 979--982.

\bibitem{Mats87}
\textsc{Y.~Matsushita},
\emph{Pseudo-Chern classes of an almost pseudo-Hermitian manifold}.
Trans. Amer. Math. Soc. \textbf{301} (1987) 665--677.


\bibitem{Mek85c}
\textsc{D.~Mekerov},
\emph{On some classes of almost B-manifolds}.
C. R. Acad. Bulgare Sci. \textbf{38} (1985) 559--561.


\bibitem{Mek-1}
\textsc{D.~Mekerov},
\emph{On the geometry of the B-connection on quasi-K\"{a}h\-ler manifolds with Norden metric}.
C. R. Acad. Bulg. Sci. \textbf{61} (2008) 1105--1110.


\bibitem{Mek-08}
\textsc{D.~Mekerov},
\emph{A connection with skew-symmetric torsion and K\"ahler
curvature tensor on quasi-K\"ahler manifolds with Norden metric}.
C. R. Acad. Bulg. Sci. \textbf{61} (2008) 1249--1256.


\bibitem{Mek09}
\textsc{D.~Mekerov},
\emph{Canonical connection on quasi-K\"ahler manifolds with Norden metric}.
J. Tech. Univ. Plovdiv Fundam. Sci. Appl. Ser. A
Pure Appl. Math. \textbf{14} (2009) 73--86.

\bibitem{Mek-10}
\textsc{D.~Mekerov},
\emph{On the geometry of the connection with totally skew-symmetric torsion on almost complex
manifolds with Norden metric}.
C. R. Acad. Bulg. Sci. \textbf{63} 
(2010) 19--28.

\bibitem{MekMan}
\textsc{D.~Mekerov, M.~Manev},
\emph{On the geometry of quasi-K\"{a}hler manifolds with Norden metric}.
Nihonkai Math. J. \textbf{16} (2) (2005) 89--93.


\bibitem{MekManGri22}
\textsc{D.~Mekerov, M.~Manev, K.~Gribachev},
\emph{Quasi-K\"ahler man\-i\-folds with a pair of Nor\-den metrics}.
Results Math. \textbf{49} (1-2) (2006) 161--170.




%

%


\bibitem{NakGri93}
\textsc{G.\thinspace{}V.~Nakova, K.\thinspace{}I.~Gribachev},
\emph{One classification of al\-most con\-tact manifolds with B-metric}.
In:  Proc. Jubilee Sci. Session of Vass\-il Levsky Higher Mil. School, Veliko Tarnovo,
vol. 27, Veliko Tar\-novo, Bulgaria, 1993, pp.~208--214.


\bibitem{NakGri2}
\textsc{G.~Nakova, K.~Gribachev},
\emph{Submanifolds of some almost contact manifolds with B-metric with codimension two, I}.
Math. Balkanica \textbf{11} (1997) 255--267.

\bibitem{NakHMan}
\textsc{G.~Nakova, H.~Manev},
\emph{Holomorphic submanifolds of some hypercomplex manifolds
with Hermitian and Norden metrics}.
C. R. Acad. Bulgare Sci. (in press), 
arXiv:1605.02286.

\bibitem{NakZam}
\textsc{G.~Nakova, S.~Zamkovoy},
\emph{Eleven classes of almost paracontact manifolds with semi-Riemannian metric of $(n+1,n)$}. %
In: Recent Progress in Differential Geo\-met\-ry and its Related Fields. Proc. 2nd Intern. Colloq. Differ. Geom. and Its
Related Fields (Eds. T. Adachi, H. Hashimoto, M. Hristov)
(6-10 Sept. 2010, Veliko Tarnovo, Bulgaria), World Sci. Publ., Singapore, 2011, 119--136.


\bibitem{Nav}
\textsc{A.\thinspace{}M.~Naveira},
\emph{A classification of Riemannian almost-product manifolds}.
Rend. Mat. Appl. (7) \textbf{3} (1983) 577--592.


\bibitem{N1}
\textsc{A.\thinspace{}P.~Norden},
\emph{On a class four-dimensional A-spaces},
Izv. Vyssh. Uchebn. Zaved. Mat. \textbf{17} (1960) 145--157. (in Russian)


\bibitem{N2}
\textsc{A.\thinspace{}P.~Norden},
\emph{The structure of the connection on a manifold of lines in a non-Euclidean space}.
Izv. Vyssh. Uchebn. Zaved. Mat. \textbf{127} (1972) 
84--94. (in Russian)


\bibitem{Oba}
\textsc{M.~Obata},
\emph{Affine connections on manifolds with almost complex, quaternion
or Hermitian structure}.
Jpn. J. Math. \textbf{26} (1956) 43--77.

\bibitem{KO}
\textsc{K.~Olszak},
\emph{On the Bochner conformal curvature of K\"ahler-Norden manifolds}.
Cent. Eur. J. Math. \textbf{3} (2) (2005) 309--317. 

\bibitem{Olsz05}
\textsc{Z.~Olszak},
\emph{On almost complex structures with Norden metrics on tangent bundles},
Period. Math. Hung. \textbf{51} (2) (2005) 59--74.


\bibitem{Olsz}
\textsc{Z.~Olszak},
\emph{The Schouten-van Kampen affine connection adapted to an almost
(para)contact metric structure}.
Publ. Inst. Math. (Beograd) (N.S.) \textbf{94(108)} (2013) 31--42.


\bibitem{Ovan}
\textsc{G.~Ovando},
\emph{Invariant complex structures on solvable real Lie groups}.
Man\-u\-scripta Math. \textbf{103} (2000) 19--30.

\bibitem{Sal82}
\textsc{S.\thinspace{}M.~Salamon},
\emph{Quaternionic K\"ahler manifolds}.
Invent. Math. \textbf{67} (1982) 143--171.

\bibitem{Sal89}
\textsc{S.~Salamon},
\emph{{R}iemannian {G}eometry and {H}olonomy {G}roups}.
Pitman Research Notes in Mathematical Series, vol. 201, Longman Scientific \& Technical, Harlow Essex, 1989.


\bibitem{SaHa}
\textsc{S.~Sasaki, Y. Hatakeyama},
\emph{On differentiable manifolds with certain structures
which are closely related to almost contact structures II}.
T\^{o}hoku Math. J. \textbf{13} (1961) 281--294.


\bibitem{SvK}
\textsc{J.~Schouten, E.\thinspace{}R.~van Kampen},
\emph{Zur Einbettungs-und Kr\"ummungstheorie nichtholonomer Gebilde}.
Math. Ann. \textbf{103} (1930) 752--783.


\bibitem{KS}
\textsc{K.~Sluka},
\emph{On the curvature of K\"ahler-Norden metric}.
J. Geom. Phys. \textbf{54} (2) (2005) 131--145.


\bibitem{Snow}
\textsc{J.\thinspace{}E.~Snow},
\emph{Invariant complex structures on four-dimensional solvable real Lie groups}.
Man\-u\-scripta Math. \textbf{66} (1990) 397--412.


\bibitem{Sol}
\textsc{A.\thinspace{}F.~Solov'ev},
\emph{On the curvature of the connection induced on a hyperdistribution in a Riemannian space}.
Geom. Sb. \textbf{19} (1978) 12--23. (in Russian)


\bibitem{So}
\textsc{A.~Sommese},
\emph{Quaternionic manifolds}.
Math. Ann. \textbf{212} (1975) 191--214.


\bibitem{StaGri}
\textsc{M.~Staikova, K.~Gribachev},
\emph{Canonical connections and their conformal invariants on Riemannian P-manifolds}.
Serdica Math. J. \textbf{18} (1992) 150--161.


\bibitem{Stro}
\textsc{A.~Strominger},
\emph{Superstrings with torsion}.
Nuclear Phys. B \textbf{274} (1986) 253--284.

\bibitem{TacYu}
\textsc{S.~Tachibama, W.\thinspace{}N.~Yu},
\emph{On a Riemannian space admitting more than one Sasakian
structure},
T\^{o}hoku Math. J. \textbf{22} (1970) 536--540.


\bibitem{Taka}
\textsc{T.~Takahashi},
\emph{Sasakian manifold with pseudo-Riemannian metric}.
T\^{o}hoku Math. J. \textbf{21} (1969) 271--290.


\bibitem{Tan}
\textsc{N.~Tanaka},
\emph{On non-degenerate real hypersurfaces, graded Lie algebras and Cartan connections}.
Jpn. J. Math. \textbf{20} (1976) 131--190.


\bibitem{Tanno}
\textsc{S.~Tanno},
\emph{Variational problems on contact Riemannian manifolds}.
Trans. Amer. Math. Soc. \textbf{314} (1989) 349--379.


\bibitem{MT06th} 
\textsc{M.~Teofilova},
\emph{Curvature properties of conformal K\"ahler manifolds with Norden metric}.
Math. Educ. Math., Proc. 35th Spring Conf. UBM, Borovets (2006), 214--219.


\bibitem{MT06ex} 
\textsc{M.~Teofilova},
\emph{Lie groups as four-dimensional conformal K\"ahler manifolds with Norden metric}.
In: Topics in Contemporary Differential Geometry,
Complex Analysis and Mathematical Physics, eds. S. Dimiev and K. Sekigawa,
World Sci. Publ., Singapore (2006), 319--326.


\bibitem{To}
\textsc{P.~Tondeur},
\emph{Structure presque k\"ahlerienne naturelle sue le fibr\'{e} des vecteurs covariants d'une vari\'{e}t\'{e} riemannienne}.
C.~R.~Acad. Sci. Paris, \textbf{254}, 1962, 407--408.


\bibitem{OT}
\textsc{A.~Tralle, J.~Oprea},
\emph{Symplectic manifolds with no K\"ahler structure}.
Lecture Notes in Mathematics, 1661. Springer-Verlag, Berlin, 1997. viii+207 pp. 



\bibitem{TV83}
\textsc{F.~Tricerri, L.~Vanhecke},
\emph{Homogeneous structures on Riemannian manifolds}.
In:  London {M}ath. {S}oc. {L}ecture {N}otes {S}eries,
vol. 83, Cambridge Univ. Press, Cambridge, 1983.



\bibitem{Udr}
\textsc{C.~Udri\c{s}te},
\emph{Structures presque coquaternioniennes}.
Bull. Math. Soc. Sci. Math. R. S. Roumanie \textbf{13} (1969) 487--507.


\bibitem{v}
\textsc{V.~Vishnevskii},
\emph{One more on a question on complex structures in line-geometries}.
Trudi semi\-nara vektornii and tensornii analysis Moscow \textbf{13} (1966) 467--492. (in Russian)


\bibitem{Web}
\textsc{S.\thinspace{}M.~Webster},
\emph{Pseudo-Hermitian structures on a real hypersurface}.
J. Differential Geom. \textbf{13} (1978) 25--41.



\bibitem{Yano}
\textsc{K.~Yano},
\emph{Differential geometry on complex and almost complex spaces}.
Pergamon Press, Oxford, 1965.



\bibitem{YaAk}
\textsc{K.~Yano, M.~Ako},
\emph{Intergrability conditions for almost quater\-nion\-ic structu\-res}.
Hokkaido Math. J. \textbf{1} (1972) 63--86.


\bibitem{YaIs67}
\textsc{K.~Yano, S.~Ishihara},
\emph{Horizontal lifts of tensor fields and connections to tangent bundles}.
J. Math. Mech.  \textbf{16} (1967) 1015--1030.

\bibitem{YaIs}
\textsc{K.~Yano, S.~Ishihara},
\emph{Tangent and Cotangent Bundles: Differential Geometry}, Pure and Applied Mathematics, 16, Marcel Dekker Inc., New York, 1973, 423 pp.


\bibitem{YaIsKo}
\textsc{K.~Yano, S.~Ishihara, M.~Konishi},
\emph{Normality of almost contact 3-structure}.
T\^{o}hoku Math. J. \textbf{25} (1973) 167--175.


\bibitem{YaKo}
\textsc{K.~Yano, S.~Kobayashi},
\emph{Prolongations of tensor fields and connections to tangent
bundles. I. General theory}.
J. Math. Soc. Japan \textbf{18} (1966) 194--210.


\bibitem{YaKon}
\textsc{K.~Yano, M.~Kon},
\emph{Structures on Manifolds}.
Series in Pure Mathematics, 3, World Scientific Publishing Co., Singapore, 1984.
\label{count-cite}



\end{thebibliography}
\end{document}